\documentclass[11pt]{article}
\usepackage{amsmath, amssymb, amscd, epsfig, amsthm}
\usepackage{graphicx}
\usepackage{caption, subcaption}
\usepackage{leftidx}
\usepackage{tikz-cd}

\newtheorem{thm}{Theorem}
\newtheorem{prop}[thm]{Proposition}
\newtheorem{lem}[thm]{Lemma}

\newtheorem{cor}[thm]{Corollary}
\newtheorem{rem}[thm]{Remark}
\newtheorem{df}[thm]{Definition}

\renewcommand{\epsilon}{\varepsilon}
\renewcommand{\phi}{\varphi}
\renewcommand{\deg}{\operatorname{deg}}
\renewcommand{\P}{\operatorname{P}}
\newcommand{\Iso}{\operatorname{Iso}}

\newcommand{\BB}{\mathbb}
\newcommand{\g}{\mathfrak}
\newcommand{\separate}{\vskip5pt}

\newcommand{\tr}{\operatorname{Tr}}

\newcommand{\HC}{\BB H_{\BB C}}
\newcommand{\HR}{\BB H_{\BB R}}

\newcommand{\Span}{\text{-}\operatorname{Span}}
\newcommand{\M}{\operatorname{Mx}}

\newcommand{\Res}{\operatorname{Res}}

\usepackage[OT2,T1]{fontenc}
\newcommand\textcyr[1]{{\fontencoding{OT2}\fontfamily{wncyr}\selectfont #1}}
\newcommand{\Zh}{\textit{\textcyr{Zh}}}
\newcommand{\Sh}{\textit{\textcyr{Sh}}}

\begin{document}
\title{\bf Reduction of Symmetry in Quaternionic Analysis and
  Invariant Trilinear Forms}
\author{Igor Frenkel and Matvei Libine}
\maketitle

\begin{abstract}
In our previous papers we repeatedly emphasized the special role in
Quaternionic Analysis of the conformal group $SU(2,2)$ and other real forms
of its complexification $SL(4,\BB C)$.
In particular, the natural product map of the left and right regular functions
into a larger representation that contains the doubly regular functions as
a subquotient is an intertwining operator. In this paper we show, however,
that the spaces of regular and doubly regular functions do not ``interact''
-- there is no invariant trilinear form on the tensor product of these
representations. 

To construct a natural invariant trilinear form, we reduce the conformal
group symmetry to the symplectic subgroup $Sp(4,\BB R)$.
This suggests a new approach to the Quaternionic Analysis in general,
and we make the first steps in this paper.
It turns out that the spaces of regular and doubly regular functions are
still irreducible after the restriction to the symplectic subgroup and have a
composed structure arising from the metaplectic representation of the
double cover of $Sp(4,\BB R)$ -- the metaplectic group.
This also leads us to consider the double covers of the quaternionic spaces
and non-trivial pairings between them. Our study of Quaternionic Analysis
based on the symplectic symmetry group culminates in the construction of
the invariant trilinear forms on the products of spaces of doubly regular
functions and certain counterparts of regular and quasi regular functions.

Additional motivation for constructing invariant trilinear forms comes from
their application to spinor representations of certain quaternionic algebras
based on the doubly regular functions.
The latter can be viewed as the space of solutions of the Maxwell equation,
and their spinor representations are of a great importance to quantum field
theory.
These spinor representations will be the subject of a forthcoming paper.
\end{abstract}

\section{Introduction}

The notions of left and right regular functions form the foundation of
quaternionic analysis; these functions can be viewed as analogues of
holomorphic functions of complex analysis.
We denote by ${\cal V}$ and ${\cal V}'$ the spaces of left and right
regular functions on $\BB H^{\times} = \BB H \setminus \{0\}$
(non-zero quaternions).
These spaces provide unitary representations of the conformal group $SU(2,2)$,
and they belong to the most degenerate series of representations of $SU(2,2)$.
The product of left and right regular regular functions forms a space of
quaternionic valued functions on $\BB H^{\times}$
\begin{equation}  \label{VV'->W}
  {\cal V} \otimes {\cal V}' \to {\cal W}.
\end{equation}
As a representation of the conformal group, ${\cal W}$ contains a subquotient
consisting of the so-called (left and right) doubly regular functions;
these also belong to the most degenerate discrete series.

The multiplication of regular functions \eqref{VV'->W} yields an
$SU(2,2)$-invariant  trilinear form
\begin{equation}  \label{3form-obvious-intro}
  {\cal V} \otimes {\cal W}' \times {\cal V}' \to \BB C,
\end{equation}
where ${\cal W}'$ is the dual of ${\cal W}$, which is another space of
quaternionic valued functions on $\BB H^{\times}$.
As a dual space, ${\cal W}'$ also contains the spaces  of (left and right)
doubly regular functions. A detailed analysis of the structure of ${\cal W}$
and ${\cal W}'$ was performed in \cite{ATMP}.
Unfortunately, the restriction of the trilinear form \eqref{3form-obvious-intro}
to the minimal invariant subspace of ${\cal W}'$ containing the doubly regular
functions yields the zero map. In other words, the spaces of regular and doubly
regular functions do not ``interact''.

In \cite{qreg} we also studied the spaces of left and right quasi (anti) regular
functions ${\cal U}$ and ${\cal U}'$ whose product yields precisely the dual
space to ${\cal W}$:
\begin{equation}  \label{UU'->W'}
  {\cal U} \otimes {\cal U}' \to {\cal W}'.
\end{equation}
Similarly, the multiplication of quasi regular functions \eqref{VV'->W} yields
another $SU(2,2)$-invariant  trilinear form
\begin{equation}  \label{3form-easy-intro}
  {\cal U} \otimes {\cal W} \times {\cal U}' \to \BB C.
\end{equation}
While the quasi regular and doubly regular functions do interact,
any submodule of ${\cal W}$ containing the doubly regular functions
is ``much larger'' than the doubly regular functions themselves
(has higher Gelfand-Kirillov dimension). For this reason, we are not
interested in the trilinear form \eqref{3form-easy-intro} either.

On the other hand, the complex analogues of the above quaternionic spaces
belong to the discrete series of representations of $SU(1,1)$, and there exist
multiplications and trilinear forms which -- unlike the quaternionic case --
provide the ``interactions'' between the representations.
One important application of the complex counterparts of the invariant maps
\eqref{VV'->W}-\eqref{3form-easy-intro} is a construction of the
spinor representations of infinite-dimensional algebras based on the discrete
series of $SU(1,1)$. This leads to the so-called boson-fermion correspondence
in the two-dimensional conformal field theory as a corollary of an isomorphism
of two representations \cite{Fr}.

The most natural way to restore the interaction of the degenerate series
representations of $SU(2,2)$ is to reduce the conformal symmetry group.
In \cite{split} we discussed the relation between different real forms of
quaternions -- the classical quaternions $\BB H$, the split quaternions $\HR$
and the Minkowski space $\BB M$ as well as the hierarchy of the symmetry
groups of these three spaces, which was presented as follows:
\[
\begin{array}{ccccccccc}
SO(5,1) & \: & \: & \: & SO(4,2) & \: & \: & \: & SO(3,3) \\
\: & \diagdown & \: & \diagup & \: & \diagdown & \: & \diagup & \: \\
\: & \: & SO(4,1) & \: & \: & \: & SO(3,2) & \: & \:  \\
\: & \diagup & \: & \diagdown & \: & \diagup & \: & \diagdown & \: \\
SO(4) & \: & \: & \: & SO(3,1) & \: & \: & \: & SO(2,2) \\
\text{\Large$\wr$} & \: & \: & \: & \text{\Large$\wr$} & \: & \: & \: &
\text{\Large$\wr$} \\
\BB H \simeq \BB R^4 & \: & \: & \: & \BB M \simeq \BB R^{3,1} & \: & \: & \: &
\HR \simeq \BB R^{2,2}
\end{array}
\]
(In this diagram some groups are replaced with locally isomorphic ones.)
The groups at the bottom of the diagram are the metric-preserving linear
transformations of the three real forms of the complexified quaternions $\HC$,
while the groups at the top row are the corresponding conformal groups.
The groups in the middle row appear as subgroups of the conformal groups
preserving certain quadratic surfaces.
One of our motivations for this project is a study of the middle row groups
in the quaternionic setting and related representation theory.
In particular, we demonstrate that restricting the conformal group
$SU(2,2)$ -- which is locally isomorphic to $SO(4,2)$ --
to a middle row subgroup
\[
Sp(4,\BB R) \approx SO(3,2) \qquad \text{(local isomorphism)}
\]
allows us to restore the missing interactions between the spaces that include
the doubly regular functions.

Many facts about the restriction of the unitary representations of $SU(2,2)$
to its subgroup $Sp(4,\BB R)$ are well-known, and we want to restate some
of them in the setting of quaternionic analysis.
We also add some new facts that will be used in application to a
representation theoretic approach to the 4-dimensional quantum field theory
in our forthcoming paper \cite{future}.

The first fact that we recall is that the restriction of the degenerate series
of representations of $SU(2,2)$ to $Sp(4,\BB R)$ eliminates the distinction
between the left and right $n$-regular representations. Namely, that there is
an isomorphism of representations of $Sp(4,\BB R)$
\[
{\cal V}^{\pm}_n \simeq {\cal V}'^{\pm}_n, \qquad n=1,2,3,\dots.
\]
Additionally, restricting to $Sp(4,\BB R)$ preserves the irreducibility.
However, these irreducible representations of $Sp(4,\BB R)$ are no longer
the smallest unitary representations of this group (and its cover).
There is a well-known metaplectic representation of the double cover of
$Sp(4, \BB R)$, denoted ${\cal MP}^{\pm}$, where $\pm$ stands for the highest
and lowest weight realization, such that
\[
{\cal MP}^+ \otimes {\cal MP}^+ = \bigoplus_{n \in \BB Z} {\cal V}^+_n, \qquad
{\cal MP}^- \otimes {\cal MP}^- = \bigoplus_{n \in \BB Z} {\cal V}^-_n, \qquad
{\cal V}^{\pm}_n = {\cal V}^{\pm}_{-n},
\]
as representations of $Sp(4,\BB R)$
(see, for example, \cite{FF} and references therein).
This decomposition of the tensor products suggests a composite structure of
the spaces of $n$-regular functions, and, consequently, interactions of
these spaces viewed as representations of $Sp(4,\BB R)$.
In this paper we study the counterparts of the multiplication maps and
trilinear forms \eqref{VV'->W}-\eqref{3form-easy-intro} for the group
$Sp(4,\BB R)$. It is crucial that they all involve the spaces of
doubly regular functions for $SU(2,2)$ restricted to $Sp(4,\BB R)$.

We start with the counterpart of the multiplication \eqref{UU'->W'}, namely
\begin{equation}  \label{UU'->W'-res}
  {\cal U}_{res} \otimes {\cal U}'_{res} \to {\cal W}'_{res},
\end{equation}
where ${\cal U}_{res}$ are the left quasi (anti) regular functions
restricted to the symmetric quaternions
\[
\BB H^{\times}\text{-sym} = \bigl\{ \bigl(\begin{smallmatrix}
  z & it \\ it & \bar z \end{smallmatrix}\bigr);\:
z \in \BB C,\: t \in \BB R \bigr\}
\]
and regarded as representations of $Sp(4,\BB R)$.
Spaces ${\cal U}'_{res}$ and ${\cal W}'_{res}$ are defined similarly.
We show that ${\cal W}'_{res}$ contains the space of doubly regular
functions restricted to $Sp(4,\BB R)$.
In order to derive from \eqref{UU'->W'-res} the counterpart of the
trilinear form \eqref{3form-easy-intro}, we need an explicit construction
of the dual space to ${\cal W}'_{res}$.
This can be done in several steps: First, we construct a new version of
the space of regular functions for the group $Sp(4,\BB R)$, namely
regular functions on $\BB H^{\times}\text{-sym}$
\begin{align*}
{\cal V}_{new} &= \{\text{solutions of $\partial^+f=0$ on
  $\BB H^{\times}\text{-sym}$}\},  \\
{\cal V}'_{new} &=
\{\text{solutions of $g \overleftarrow{\partial^+}=0$ on
  $\BB H^{\times} \text{-sym}$}\},
\end{align*}
where $\partial^+$ is a Dirac-type operator
\[
\partial^+ = \begin{pmatrix}
  \frac{\partial}{\partial \bar z} & \frac{i}2 \frac{\partial}{\partial t} \\
  \frac{i}2 \frac{\partial}{\partial t} & \frac{\partial}{\partial z}
\end{pmatrix},
\]
with new actions of $Sp(4,\BB R)$:
\begin{align*}
\pi^{new}_l(h): \: f(Z) \: &\mapsto \: \bigl( \pi^{new}_l(h)f \bigr)(Z) =
\frac {(cZ+d)^{-1}}{\sqrt{\det(cZ+d)}} \cdot f \bigl( (aZ+b)(cZ+d)^{-1} \bigr),
\\
\pi^{new}_r(h): \: g(Z) \: &\mapsto \: \bigl( \pi^{new}_r(h)g \bigr)(Z) =
g \bigl( (a'-Zc')^{-1}(-b'+Zd') \bigr)
\cdot \frac {(a'-Zc')^{-1}}{\sqrt{\det(a'-Zc')}},
\end{align*}
where $f \in {\cal V}_{new}$, $g \in {\cal V}'_{new}$,
$h = \bigl(\begin{smallmatrix} a' & b' \\ c' & d' \end{smallmatrix}\bigr)
\in Sp(4,\BB R)$ and 
$h^{-1} = \bigl(\begin{smallmatrix} a & b \\ c & d \end{smallmatrix}\bigr)$.
Taking the product of the new left and right regular functions on
$\BB H^{\times}\text{-sym}$, we can construct a counterpart of
\eqref{VV'->W}:
\begin{equation}  \label{VV'->W-new}
  {\cal V}_{new} \otimes {\cal V}'_{new} \to {\cal W}_{new}.
\end{equation}
We show that ${\cal W}_{new}$ still contains the space of doubly regular
functions restricted to $Sp(4,\BB R)$.
Finally, ${\cal W}'_{res}$ and ${\cal W}_{new}$ are almost dual to each other.
To make them precisely dual, we need a double cover of the space of functions on
$\BB H^{\times}\text{-sym}$, which is formed by multiplication by
$\det(Z)^{\frac12}$. As a result, we obtain two pairs of dual spaces:
\begin{align*}
{\cal W}'_{res} \otimes \widetilde{\cal W}_{new} &\to \BB C,  \\
{\cal W}_{new} \otimes \widetilde{\cal W}'_{res} &\to \BB C.
\end{align*}
These dualities combined with the multiplication maps \eqref{UU'->W'-res}
and \eqref{VV'->W-new} yield the counterparts of the invariant trilinear
forms \eqref{3form-obvious-intro} and \eqref{3form-easy-intro}:
\begin{align*}
{\cal V}_{new} \otimes \widetilde{\cal W}'_{res} \otimes {\cal V}'_{new}
&\to \BB C,  \\
{\cal U}_{res} \otimes \widetilde{\cal W}_{new} \otimes {\cal U}'_{res}
&\to \BB C.
\end{align*}

We would like to make a few remarks about the relation of this paper to
the quantum field theory. Although our motivations come from
the development of quaternionic analysis and representation theory, some
relations to the constructions from quantum field theory are impossible to
ignore, and these relations can be beneficial for both mathematicians and
physicists. The restriction of the conformal group of the Minkowski space to
the subgroup $Sp(4,\BB R)$ in the context of quantum electrodynamics
was studied in the '80s -- see \cite{FF} and references therein.
In the past few years there were related studies of the anti-deSitterian
deformations of the Minkowskian representation of the Poincar\'e group.
In particular, it was shown that the unitary representations of $Sp(4,\BB R)$
play an important role in the relation between the two groups,
\cite{EGdOP, GdOP}. In our work we also use extensively non-unitary representations
that might appear in physics in the future. It is interesting to note that the
space of doubly regular functions is precisely the space of solutions of the
Maxwell equations in the vacuum (photons). Thus, the spinor representations
of the algebras containing the doubly regular functions can be regarded as
a photon quantization.
In our forthcoming paper \cite{future} we study the structure of these
algebras, which in turn leads to a generalization of the classical
boson-fermion correspondence to the higher dimensional setting.

Another motivation for studying invariant trilinear forms comes from the
relation with the 4-dimensional quantum field theory, particularly with the
construction of the spinor representation associated to the spaces of the
(left and right) regular functions on $\BB H^{\times}$.
Here is a brief preview of our forthcoming paper \cite{future}.
Let $\{ f_i(Z) \}_{i \in I^{\pm}}$ be a basis of ${\cal V}^{\pm}$, where
$I^{\pm}$ are suitable indexing sets.
And let $\{ g_i(Z) \}_{i \in I^{\mp}}$ be the dual basis of ${\cal V}'^{\mp}$.
Form a vector space ${\cal V} \oplus {\cal V}'$ with basis
$\beta_i \leftrightarrow f_i(Z)$ and $\gamma_i \leftrightarrow g_i(Z)$,
$i \in I = I^- \cup I^+$. Define a symmetric bilinear form on
${\cal V} \oplus {\cal V}'$ by
\[
\langle\beta_i,\gamma_j\rangle = \langle\gamma_j,\beta_i\rangle
= -\tfrac12 \delta_{ij},
\qquad \langle\beta_i,\beta_j\rangle = \langle\gamma_i,\gamma_j\rangle = 0.
\]
Associated to this bilinear form is the (universal) Clifford algebra
$Cl({\cal V} \oplus {\cal V}')$.
It is generated by $1$, $\{\beta_i\}_{i \in I}$ and
$\{\gamma_i\}_{i \in I}$ subject to the relations
\[
\{ \beta_i, \gamma_j \} = \delta_{ij}, \qquad
\{ \beta_i, \beta_j \} = \{ \gamma_i, \gamma_j \} = 0, \qquad
\text{for all $i, j \in I$},
\]
where $\{x,y\}=xy+yx$ is the anticommutator.
This Clifford algebra has a representation in a Grassmann algebra
\[
\Lambda ({\cal V}^- \oplus {\cal V}'^-)
\]
via multiplication and contraction operators. We consider quantum fields
\[
\beta(Z) = \sum_{i \in I} \beta_i g_i(Z), \qquad
\gamma(Z) = \sum_{i \in I} \gamma_i f_i(Z)
\]
(formal series), then, for each $F \in {\cal W}'$, construct
a quadratic element in the Clifford algebra
\begin{equation}  \label{quadratic-elts}
:\langle \beta(\_), F(\_), \gamma(\_) \rangle:,
\end{equation}
where $\langle \_,\_, \_ \rangle$ denotes invariant trilinear form
\eqref{3form-obvious-intro} and the colons indicate what is called the
{\em normal ordering}.
Then the Lie algebra generated by the quadratic elements
\eqref{quadratic-elts} has a natural $SU(2,2)$ invariance.
However -- As was explained earlier -- when we restrict $F$ to the doubly
regular components of ${\cal W}'$, we get zero since the invariant trilinear
form is trivial and the "interaction" between left and right regular and doubly
regular functions disappears! 
In this paper, we construct an invariant trilinear form for a subgroup
$Sp(4,\BB R)$ of $SU(2,2)$, and we show how one can modify the spaces of
regular functions while keeping the spaces of doubly regular functions
to make the trilinear forms nontrivial.
This leads to spinor representations of certain Lie algebras generated by
the doubly regular components and will be studied in the forthcoming paper
\cite{future}.

This paper is organized as follows. In Section \ref{2} we review our notations
and recall basic facts about regular, anti regular and doubly regular functions.
In Section \ref{3} we construct the metaplectic representation of
$\mathfrak{sp}(8,\BB C)$ and decompose it into subspaces isomorphic to
the $n$-regular functions.
As a corollary of this construction, we obtain an isomorphism between
the spaces of the left and right regular functions as
$\mathfrak{sp}(4,\BB C)$-modules (Corollary \ref{left-right-regular-cor}).
We also show that the spaces of $n$-regular functions remain irreducible
after restricting to $\mathfrak{sp}(4,\BB C)$
(Proposition \ref{nreg-irreducibility-prop}).
In Section \ref{4} we start developing quaternionic analysis using our
new approach based on the conformal group symmetry reduction to
$Sp(4,\BB R)$.
Thus, we consider certain representations of $\mathfrak{sp}(4,\BB C)$
in the spaces of scalar valued functions, study their $K$-types and
construct a particular $\mathfrak{sp}(4,\BB C)$-equivariant projection
onto the trivial 1-dimensional component \eqref{M-integral}.
Then in Section \ref{5} we study various explicit realizations of the space
of regular functions restricted to $\mathfrak{sp}(4,\BB C)$.
We also analyze the restrictions of the quasi regular functions and
introduce spaces of solutions of Dirac-type equations on
$\HC^{\times}\text{-sym}$.
In Section \ref{6} we recall the doubly regular functions and their extension
by the biharmonic functions that comes naturally from our study of the
second order pole in quaternionic analysis \cite{ATMP}.
This space is again restricted to $\mathfrak{sp}(4,\BB C)$
-- the result is denoted by ${\cal W}'_{res}$ -- and we study
the $K$-types and decomposition of ${\cal W}'_{res}$ into
irreducible components.
Then we introduce associated spaces ${\cal W}^*_{res}$ and
${\cal W}'_{\frac12}$ that also contain the doubly regular functions as
irreducible components.
Finally, in Section \ref{7} we construct several
$\mathfrak{sp}(4,\BB C)$-invariant trilinear forms involving
the space of doubly regular functions.

The second author was partially supported by Indiana University's
IAS Collaborative Research Award.

\section{Quaternions and Regular Functions}  \label{2}

\subsection{Quaternions}

We continue to use notations established in \cite{FL1}.
In particular, $e_0$, $e_1$, $e_2$, $e_3$ denote the units of the classical
quaternions $\BB H$ corresponding to the more familiar $1$, $i$, $j$, $k$
(we reserve the symbol $i$ for $\sqrt{-1} \in \BB C$).
Thus, $\BB H$ is an algebra over $\BB R$ generated by
$e_0$, $e_1$, $e_2$, $e_3$ with multiplicative structure determined
by the rules
\[
e_0 e_i = e_i e_0 = e_i, \qquad
(e_i)^2 = e_1e_2e_3 = - e_0, \qquad
e_ie_j=-e_ie_j, \qquad 1 \le i< j \le 3,
\]
and the fact that $\BB H$ is a division ring.
Instead of $\BB H$, we usually consider the algebra of complexified
quaternions (also known as biquaternions) $\HC = \BB C \otimes_{\BB R} \BB H$
and write elements of $\HC$ as
\[
Z = z^0e_0 + z^1e_1 + z^2e_2 + z^3e_3, \qquad z^0,z^1,z^2,z^3 \in \BB C,
\]
so that $Z \in \BB H$ if and only if $z^0,z^1,z^2,z^3 \in \BB R$:
\[
\BB H = \{ X = x^0e_0 + x^1e_1 + x^2e_2 + x^3e_3; \: x^0,x^1,x^2,x^3 \in \BB R \}.
\]
For $Z = z^0e_0 + z^1e_1 + z^2e_2 + z^3e_3 \in \HC$, we denote by $Z^+$ its
{\em quaternionic conjugate}:
\[
Z^+ = z^0e_0 - z^1e_1 - z^2e_2 - z^3e_3
\]
(the bar notation is reserved for complex conjugation), and let
\[
N(Z) = ZZ^+ = Z^+Z  \in \BB C,
\]
\[
\HC^{\times} = \{ Z \in \HC ;\: N(Z) \ne 0 \}
= \{\text{invertible complexified quaternions}\}.
\]
We denote by $\BB S$ (respectively $\BB S'$)
the irreducible 2-dimensional left (respectively right) $\HC$-module,
as described in Subsection 2.3 of \cite{FL1}.
The spaces $\BB S$ and $\BB S'$ can be realized as respectively
columns and rows of complex numbers.
Then
\begin{equation*}  
\BB S \otimes \BB S' \simeq \HC.
\end{equation*}
We identify the quaternions $\HC$ with $2 \times 2$ matrices as in
Subsections 2.3, 3.1 of \cite{FL1}:
\begin{equation}  \label{H-matrix-realization}
\HC = \biggl\{ Z = z^0e_0+z^1e_1+z^2e_2+z^3e_3
= \begin{pmatrix} z^0-iz^3 & -iz^1-z^2 \\ -iz^1+z^2 & z^0+iz^3 \end{pmatrix};\:
z^0,z^1,z^2,z^3 \in \BB C \biggr\},
\end{equation}
where
\[
e_0 \longleftrightarrow
\bigl(\begin{smallmatrix} 1 & 0 \\ 0 & 1 \end{smallmatrix}\bigr), \qquad
e_1 \longleftrightarrow
\bigl(\begin{smallmatrix} 0 & -i \\ -i & 0 \end{smallmatrix}\bigr), \qquad
e_2 \longleftrightarrow
\bigl(\begin{smallmatrix} 0 & -1 \\ 1 & 0 \end{smallmatrix}\bigr), \qquad
e_3 \longleftrightarrow
\bigl(\begin{smallmatrix} -i & 0 \\ 0 & i \end{smallmatrix}\bigr).
\]
Then
\[
N(Z) = \det(Z),
\]
and the subspaces of symmetric matrices $\HC \text{-sym}$,
$\HC \text{-sym}^{\times}$ consist of matrices with zero $e_2$-coordinate:
\begin{align}
\HC \text{-sym} &= \{ Z = z^0e_0+z^1e_1+z^3e_3;\: z^0,z^1,z^3 \in \BB C \}
= \bigl\{
\bigl(\begin{smallmatrix} a & b \\ b & d \end{smallmatrix}\bigr);\:
a,b,d \in \BB C \bigr\},   \label{H-sym} \\
\HC^{\times} \text{-sym} &= \bigl\{
Z= \bigl(\begin{smallmatrix} a & b \\ b & d \end{smallmatrix}\bigr);\:
a,b,d \in \BB C,\: N(Z) \ne 0 \bigr\}.   \label{H-sym-x}
\end{align}

For $l=0,\tfrac12,1,\tfrac32,\dots$, we denote by $V_l$ the irreducible
representation of $SU(2)$ (or its Lie algebra) of dimension $2l+1$.
Abusing notation, we also use $V_l$ to denote any irreducible representation
of $U(2)$ of dimension $2l+1$ regardless of the action of the center.

\subsection{Regular Functions}  \label{1-reg-review}

The starting point of quaternionic analysis are the notions of
left and right regular functions.
For a suitable review we refer to Section 2 of \cite{FL1}.
(See also \cite{CSSS} and \cite{Su} for introductions to the subject.)
Recall the differential operators
\begin{align}
\nabla^+ &= e_0 \tfrac{\partial}{\partial z^0}
+ e_1 \tfrac{\partial}{\partial z^1}
+ e_2 \tfrac{\partial}{\partial z^2}
+ e_3 \tfrac{\partial}{\partial z^3}, \\
\nabla &= e_0 \tfrac{\partial}{\partial z^0}
- e_1 \tfrac{\partial}{\partial z^1}
- e_2 \tfrac{\partial}{\partial z^2}
- e_3 \tfrac{\partial}{\partial z^3}, \\
\square = \nabla\nabla^+ = \nabla^+\nabla
&= \tfrac{\partial^2}{(\partial z^0)^2} + \tfrac{\partial^2}{(\partial z^1)^2}
+ \tfrac{\partial^2}{(\partial z^2)^2} + \tfrac{\partial^2}{(\partial z^3)^2}.
\label{Laplacian}
\end{align}
In this paper we are interested in regular functions defined on open subsets of
$\HC$. In this case we require such functions to be holomorphic.

\begin{df}
Let $U$ be an open subset of $\HC$.
A holomorphic function $f: U \to \BB S$ is {\em left regular} if it satisfies
$\nabla^+ f =0$ at all points in $U$.

Similarly, a holomorphic function $g: U \to \BB S'$ is
{\em right regular}\footnote{The arrow over $\nabla^+$ indicates that the
  operator is applied to function $g$ on the right.}
if $g \overleftarrow{\nabla}^+ =0$ at all points in $U$.
\end{df}

Let $\tilde{\cal V}$ and $\tilde{\cal V}'$ denote respectively the
spaces of (holomorphic) left and right anti regular functions on $\HC$,
possibly with singularities.

\begin{thm}  \label{r-action-thm}
\begin{enumerate}
\item
The space $\tilde{\cal V}$ of left regular functions
$\HC \to \BB S$ (possibly with singularities)
is invariant under the following action of $GL(2,\HC)$:
\begin{multline}  \label{pi_l}
\pi_l(h): \: f(Z) \: \mapsto \: \bigl( \pi_l(h)f \bigr)(Z) =
\frac {(cZ+d)^{-1}}{N(cZ+d)} \cdot f \bigl( (aZ+b)(cZ+d)^{-1} \bigr),  \\
h^{-1} = \bigl( \begin{smallmatrix} a & b \\ c & d \end{smallmatrix} \bigr)
\in GL(2,\HC).
\end{multline}
\item
The space $\tilde{\cal V}'$ of right regular functions
$\HC \to \BB S'$ (possibly with singularities)
is invariant under the following action of $GL(2,\HC)$:
\begin{multline}  \label{pi_r}
\pi_r(h): \: g(Z) \: \mapsto \: \bigl( \pi_r(h)g \bigr)(Z) =
g \bigl( (a'-Zc')^{-1}(-b'+Zd') \bigr) \cdot \frac {(a'-Zc')^{-1}}{N(a'-Zc')}, \\
h = \bigl( \begin{smallmatrix} a' & b' \\ c' & d' \end{smallmatrix} \bigr)
\in GL(2,\HC).
\end{multline}
\end{enumerate}
\end{thm}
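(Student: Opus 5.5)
Since $GL(2,\HC)$ is generated by three types of matrices, I would reduce to those. The types are: translations $\bigl(\begin{smallmatrix} 1 & b \\ 0 & 1 \end{smallmatrix}\bigr)$, block-diagonal matrices $\bigl(\begin{smallmatrix} a & 0 \\ 0 & d \end{smallmatrix}\bigr)$ with $a,d \in \HC^\times$, and the inversion $\bigl(\begin{smallmatrix} 0 & 1 \\ 1 & 0 \end{smallmatrix}\bigr)$. This is the usual Bruhat-type decomposition: a generic element factors as a product of these, and the set where the factorization fails is of lower dimension, so it is handled by analytic continuation or by a further factorization.

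First I would verify that $\pi_l$ is a group action, i.e.\ $\pi_l(h_1h_2)=\pi_l(h_1)\pi_l(h_2)$. This is a cocycle computation. Write $h^{-1}=\bigl(\begin{smallmatrix} a & b \\ c & d \end{smallmatrix}\bigr)$ and let $Z \mapsto (aZ+b)(cZ+d)^{-1}$ be the fractional linear map. The factor $(cZ+d)^{-1}/N(cZ+d)$ satisfies the chain rule for composed fractional linear maps, because $N$ is multiplicative. With the action established, it suffices to check invariance of $\tilde{\cal V}$ under each generator.

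Next I would check the generators one at a time.
\begin{itemize}
\item Translations: here $f(Z)\mapsto f(Z+b)$, and $\nabla^+$ commutes with translations.
\item Block-diagonal $h$: the map becomes $f(Z)\mapsto d^{-1}N(d)^{-1} f(aZd^{-1})$. A direct computation using the quaternionic chain rule gives $\nabla^+[f(aZd^{-1})] = (\text{const}) \cdot d'\,(\nabla^+f)(aZd^{-1})$ for an appropriate conjugate of $a$ or $d$, up to left multiplication by an invertible constant. Left multiplying by $d^{-1}$ then preserves regularity. Alternatively, since the block-diagonal group is generated by scalars together with left and right $SU(2)\times SU(2)$-type rotations, it is enough to verify that $\nabla^+$ transforms covariantly under $Z\mapsto aZ$ and $Z\mapsto Zd^{-1}$.
\item Inversion: the map becomes $f(Z)\mapsto \frac{Z^{-1}}{N(Z)} f(Z^{-1})$.
\end{itemize}

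The main obstacle is the inversion case, namely showing that $\nabla^+\bigl(\frac{Z^{-1}}{N(Z)}f(Z^{-1})\bigr)=0$ whenever $\nabla^+f=0$. Since $Z^{-1}=Z^+/N(Z)$, the prefactor is $k(Z)=Z^+/N(Z)^2$. This is, up to a constant, the left and right regular Cauchy–Fueter kernel, so $\nabla^+ k=0$. I would then expand using the product rule in coordinates,
\[
\nabla^+(k\,F)=(\nabla^+k)F+\textstyle\sum_\mu e_\mu k\,\partial_\mu F,
\]
with $F(Z)=f(Z^{-1})$. I would then use the chain rule for $\partial_\mu$ of $Z^{-1}$, namely $\partial_\mu(Z^{-1})=-Z^{-1}e_\mu Z^{-1}$. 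The remaining sum should collapse to $-N(Z)^{-2}\,Z^{-1}\cdot (\nabla^+f)(Z^{-1})\cdot(\ldots)$. To see this, express $\sum_\mu e_\mu Z^+ \,(\cdot)\, Z^{-1}e_\mu Z^{-1}$ via the identity $\sum_\mu e_\mu X e_\mu = -2X^+$ (with signs adjusted for $e_0$), or equivalently via the standard identity $\sum_\mu e_\mu\, u\, e_\mu^+$ for matrices. If this identity bookkeeping becomes messy, I would fall back on a Lie algebra argument. I would compute the infinitesimal action of the $\mathfrak{gl}(2,\HC)$ generators on $\tilde{\cal V}$ and show that each commutes with $\nabla^+$ modulo multiples of $\nabla^+$, since the inversion conjugates translations into special conformal transformations. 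This handles the identity component. The inversion itself can then be checked on the single function $f\equiv$ const together with the already established invariance.

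Part 2 follows by the symmetric argument, or by applying the anti-automorphism $Z\mapsto Z^+$ (transpose-type) which interchanges left and right regular functions and converts $\pi_l$ into $\pi_r$ after replacing $h$ by the appropriate transformed matrix. The formula for $\pi_r$ is written in terms of $h$ rather than $h^{-1}$, and this is consistent with that substitution.
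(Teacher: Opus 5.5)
The paper gives no proof of this theorem; it is recalled from \cite{FL1}. Your strategy is the standard one and is sound: the cocycle identity, then checks on translations, block-diagonal matrices and the inversion. The cocycle and translation steps are fine.

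\textbf{Block-diagonal step.} Your justification rests on two false claims. First, $f(aZd^{-1})$ need not be left regular. Second, left multiplication by a constant does not preserve left regularity, because $\nabla^+(Cg)=\sum_\mu e_\mu C\,\partial_\mu g$ and $C$ does not commute with the $e_\mu$. For the left regular function $f=(z^1-z^0e_1)s$ one gets $\nabla^+[f(Ze_2)]=-2e_3s$ and $\nabla^+(e_2f)=2e_3s$, while $\nabla^+[e_2f(Ze_2)]=0$.

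The fix is to keep the prefactor inside the derivative. Use the form $\langle X,Y\rangle=\frac12\tr(XY^+)$, for which $e_0,\dots,e_3$ are orthonormal and $\langle aXb,Y\rangle=\langle X,a^+Yb^+\rangle$. The chain rule then gives, for constant $C$,
\[
\nabla^+\bigl[C\,f(aZb)\bigr]=\sum_\nu a^+e_\nu\, b^+C\,(\partial_\nu f)(aZb).
\]
For $b=C=d^{-1}$ the factor $b^+C=N(d)^{-1}$ is a scalar, so $\nabla^+\bigl[d^{-1}f(aZd^{-1})\bigr]=\frac{N(a)}{N(d)}\,a^{-1}(\nabla^+f)(aZd^{-1})$.

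\textbf{Inversion step.} Your plan works by exactly this mechanism, but the collapse is not quite what you guessed. The term $\partial_\mu[f(Z^{-1})]$ is the derivative of $f$ at $Z^{-1}$ in the direction $-Z^{-1}e_\mu Z^{-1}$. It is linear in that direction, not a product with it. What collapses the sum is the completeness relation $\sum_\mu\langle e_\mu,Y\rangle e_\mu=Y$; the identity $\sum_\mu e_\mu Xe_\mu=-2X^+$ enters only if you expand $\langle\cdot,\cdot\rangle$. With $k=Z^{-1}/N(Z)$,
\[
\sum_\mu e_\mu k\,\partial_\mu\bigl[f(Z^{-1})\bigr]=-\sum_\nu (Z^{-1})^+e_\nu\,(Z^{-1})^+k\,(\partial_\nu f)(Z^{-1}),
\]
and $(Z^{-1})^+k=N(Z)^{-2}$ is a scalar. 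Together with $\nabla^+k=0$, this gives $\nabla^+[k\,f(Z^{-1})]=-N(Z)^{-3}Z\,(\nabla^+f)(Z^{-1})$, with nothing to the right of $\nabla^+f$.

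The same computation actually treats every $h$ at once, without generators:
\begin{itemize}
\item the differential of $Z\mapsto(aZ+b)(cZ+d)^{-1}$ is $V\mapsto(a'-Zc')^{-1}V(cZ+d)^{-1}$, by Lemma 10 of \cite{FL1};
\item the conjugate of $(cZ+d)^{-1}$ times the prefactor $\frac{(cZ+d)^{-1}}{N(cZ+d)}$ is the scalar $N(cZ+d)^{-2}$;
\item the prefactor is itself left regular: for invertible $c$ it is a translated Cauchy--Fueter kernel times a constant on the right, and the general case follows by continuity in $c$.
\end{itemize}

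\textbf{The fallback.} Drop it. $GL(2,\HC)\cong GL(4,\BB C)$ is connected, so the inversion is already in the identity component, and checking it on the single function $f\equiv\mathrm{const}$ would prove nothing anyway. Also, passing from Lie algebra invariance to group invariance for functions with singularities needs its own argument. Generation of $GL(2,\HC)$ by your three types of matrices is purely algebraic; no analytic continuation is involved.

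\textbf{Part 2.} This is fine once the symmetry is made explicit. The map $T\colon f\mapsto f(Z^T)^T$ sends left regular functions bijectively to right regular ones and satisfies
\[
T\circ\pi_l(h)=\pi_r\bigl(J(h^{-1})^TJ^{-1}\bigr)\circ T,\qquad J=\bigl(\begin{smallmatrix}0&1\\-1&0\end{smallmatrix}\bigr).
\]
Equivalently, one can use $f\mapsto f(Z^+)^\dagger$ with the map $\dagger\colon\BB S\to\BB S'$ from the proof of Proposition \ref{reg-anti-reg-iso-prop}.
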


Differentiating $\pi_l$ and $\pi_r$, we obtain actions of the Lie algebra
$\mathfrak{gl}(2,\HC)$, which we still denote by $\pi_l$ and $\pi_r$
respectively. We spell out these Lie algebra actions
(special case of Lemma 2 in \cite{nreg}):

\begin{lem}  \label{pi-Lie_alg-action}
The Lie algebra action $\pi_l$ of $\mathfrak{gl}(2,\HC)$ on
$\tilde{\cal V}$ is given by
\begin{align*}
\pi_l \bigl( \begin{smallmatrix} A & 0 \\ 0 & 0 \end{smallmatrix} \bigr) &:
f(Z) \mapsto - \tr (AZ \partial) f,  \\
\pi_l \bigl( \begin{smallmatrix} 0 & B \\ 0 & 0 \end{smallmatrix} \bigr) &:
f(Z) \mapsto - \tr (B \partial) f,  \\
\pi_l \bigl( \begin{smallmatrix} 0 & 0 \\ C & 0 \end{smallmatrix} \bigr) &:
f(Z) \mapsto \tr (ZCZ \partial + CZ) f + CZf,  \\
\pi_l \bigl( \begin{smallmatrix} 0 & 0 \\ 0 & D \end{smallmatrix} \bigr) &:
f(Z) \mapsto \tr (ZD \partial +D) f + Df.
\end{align*}

Similarly, the Lie algebra action $\pi_r$ of $\mathfrak{gl}(2,\HC)$ on
$\tilde{\cal V}'$ is given by
\begin{align*}
\pi_r \bigl( \begin{smallmatrix} A & 0 \\ 0 & 0 \end{smallmatrix} \bigr) &:
g(Z) \mapsto - \tr (AZ \partial + A) g - gA,  \\
\pi_r \bigl( \begin{smallmatrix} 0 & B \\ 0 & 0 \end{smallmatrix} \bigr) &:
g(Z) \mapsto - \tr (B \partial) g,  \\
\pi_r \bigl( \begin{smallmatrix} 0 & 0 \\ C & 0 \end{smallmatrix} \bigr) &:
g(Z) \mapsto \tr (ZCZ \partial + ZC) g + gZC,  \\
\pi_r \bigl( \begin{smallmatrix} 0 & 0 \\ 0 & D \end{smallmatrix} \bigr) &:
g(Z) \mapsto \tr (ZD \partial) g.
\end{align*}
\end{lem}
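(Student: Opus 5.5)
The plan is to differentiate the group actions \eqref{pi_l} and \eqref{pi_r} of Theorem \ref{r-action-thm} along one-parameter subgroups, one block type at a time. Since $\pi_l(h)$ is defined through $h^{-1}$, for $X\in\mathfrak{gl}(2,\HC)$ I take $h_t=\exp(tX)$, so that $h_t^{-1}=\exp(-tX)=1-tX+O(t^2)$. Then I compute $\frac{d}{dt}\big|_{t=0}\bigl(\pi_l(h_t)f\bigr)(Z)$. By linearity it suffices to treat the four cases where $X$ has only an $A$, $B$, $C$ or $D$ block. I use the convention that $\partial$ is the $2\times2$ matrix of partial derivatives with $\tr(M\partial)f=\frac{d}{dt}f(Z+tM)\big|_{t=0}$, i.e.\ $\partial_{ij}=\partial/\partial z_{ji}$ in the matrix realization \eqref{H-matrix-realization}. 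This convention has to match the one fixed in \cite{FL1}.

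For $\pi_l$ the computation runs as follows. With $h_t^{-1}=\bigl(\begin{smallmatrix} a&b\\ c&d\end{smallmatrix}\bigr)$ I expand to first order, using $(1+tM)^{-1}=1-tM$ and $N(1+tM)=\det(1+tM)=1+t\tr M$.
\begin{itemize}
\item For the $A$ block, $a=1-tA$, $b=c=0$, $d=1$. This gives $f\bigl((1-tA)Z\bigr)$, with derivative $-\tr(AZ\partial)f$.
\item For the $B$ block, $b=-tB$ gives $f(Z-tB)$, with derivative $-\tr(B\partial)f$.
\item For the $C$ block, $c=-tC$, so $cZ+d=1-tCZ$. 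Its inverse is $1+tCZ$ and $1/N(cZ+d)=1+t\tr(CZ)$. The argument becomes $Z(1+tCZ)=Z+tZCZ$. Collecting the three first-order contributions gives $\tr(ZCZ\partial)f+\tr(CZ)f+CZf$, which matches the stated formula.
\item For the $D$ block, $d=1-tD$. The prefactor contributes $(1+tD)(1+t\tr D)$ and the argument becomes $Z(1+tD)$. This gives $\tr(ZD\partial+D)f+Df$.
\end{itemize}

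For $\pi_r$ the formula uses $h$ itself, $h_t=1+tX$, with $a'=1+tA$, $b'=tB$, $c'=tC$, $d'=1+tD$. I expand $(a'-Zc')^{-1}(-b'+Zd')$ and $(a'-Zc')^{-1}/N(a'-Zc')$ in the same way.
\begin{itemize}
\item For the $A$ block, the argument is $(1-tA)Z$ and the right factor is $(1-tA)(1-t\tr A)$. This gives $-\tr(AZ\partial+A)g-gA$.
\item For the $B$ block, the argument is $Z-tB$, giving $-\tr(B\partial)g$.
\item For the $C$ block, $a'-Zc'=1-tZC$. The argument is $Z+tZCZ$ and the factor is $(1+tZC)(1+t\tr(ZC))$, giving $\tr(ZCZ\partial+ZC)g+gZC$.
\item For the $D$ block, the argument is $Z+tZD$ and there is no factor, giving $\tr(ZD\partial)g$.
\end{itemize}

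None of these steps is genuinely hard; the statement is a special case of Lemma 2 in \cite{nreg}. The only real care is bookkeeping: getting the signs from $h^{-1}$ versus $h$ right, and confirming that the $\partial$ convention makes the chain rule produce $\tr(M\partial)$ rather than $\tr(M^T\partial)$. I would check that convention once on $f(Z)=z_{ij}$, and then all eight formulas follow uniformly.
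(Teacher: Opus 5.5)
Your proposal is correct: the first-order expansions you give reproduce all eight formulas, and your convention $\tr(M\partial)f=\frac{d}{dt}f(Z+tM)\big|_{t=0}$ agrees with the one in \cite{FL1}. This direct differentiation of the group actions of Theorem \ref{r-action-thm} along $h_t=\exp(tX)$ is exactly what the paper relies on; it gives no separate proof and simply cites the statement as a special case of Lemma 2 in \cite{nreg}.
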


Lemma \ref{pi-Lie_alg-action} implies that the Lie algebra actions
$\pi_l$ and $\pi_r$ preserve the spaces of polynomial regular functions
on $\HC^{\times}$
\begin{align*}
{\cal V} &= \bigl\{
  f \in \BB S \otimes \BB C[z_{11},z_{12},z_{21},z_{22}, N(Z)^{-1}] ;\:
  \nabla^+ f =0 \bigr\} \qquad \text{and}  \\
{\cal V}' &= \bigl\{
g \in \BB S' \otimes \BB C[z_{11},z_{12},z_{21},z_{22}, N(Z)^{-1}] ;\:
g \overleftarrow{\nabla}^+ =0 \bigr\}
\end{align*}
respectively. Then
\[
{\cal V} = {\cal V}^+ \oplus {\cal V}^-
\qquad \text{and} \qquad
{\cal V}' = {\cal V}'^+ \oplus {\cal V}'^-,
\]
where
\begin{align*}
{\cal V}^+ &= \bigl\{
  f \in \BB S \otimes \BB C[z_{11},z_{12},z_{21},z_{22}] ;\:
  \nabla^+ f =0 \bigr\},  \\
{\cal V}^- &= \bigl\{
  f \in \BB S \otimes \BB C[z_{11},z_{12},z_{21},z_{22}, N(Z)^{-1}] ;\:
N(Z)^{-1} \cdot Z^{-1} \cdot f(Z^{-1}) \in {\cal V}^+ \bigr\},  \\
{\cal V}'^+ &= \bigl\{
g \in \BB S' \otimes \BB C[z_{11},z_{12},z_{21},z_{22}] ;\:
g \overleftarrow{\nabla}^+ =0 \bigr\},  \\
{\cal V}'^- &= \bigl\{
g \in \BB S' \otimes \BB C[z_{11},z_{12},z_{21},z_{22}, N(Z)^{-1}] ;\:
N(Z)^{-1} \cdot g(Z^{-1}) \cdot Z^{-1} \in {\cal V}'^+ \bigr\}.
\end{align*}
We refer to ${\cal V}^+$ and ${\cal V}'^+$ as
``functions regular at the origin''.
Similarly, we refer to ${\cal V}^-$ and ${\cal V}'^-$ as
``functions regular at infinity''.

\subsection{Anti Regular Functions}  \label{anti-reg-subsect}

Similarly, one can consider left and right anti regular functions defined on
open subsets of $\BB H$ and $\HC$.

\begin{df}  \label{r-definition}
Let $U$ be an open subset of $\BB H$.
A ${\cal C}^1$-function $f: U \to \BB S$ is
{\em left anti regular} if it satisfies
\[
e_0 \tfrac{\partial f}{\partial x^0}
+ e_1 \tfrac{\partial f}{\partial x^1}
+ e_2 \tfrac{\partial f}{\partial x^2}
+ e_3 \tfrac{\partial f}{\partial x^3}
=0 \qquad \text{at all points in $U$}.
\]

Similarly, a ${\cal C}^1$-function $g: U \to \BB S'$ is
{\em right anti regular} if
\[
\tfrac{\partial g}{\partial x^0} e_0
+ \tfrac{\partial g}{\partial x^1} e_1
+ \tfrac{\partial g}{\partial x^2} e_2
+ \tfrac{\partial g}{\partial x^3} e_3
=0 \qquad \text{at all points in $U$}.
\]
\end{df}

We also can talk about regular functions defined on open subsets of
$\HC$. In this case we require such functions to be holomorphic.

\begin{df}
Let $U$ be an open subset of $\HC$.
A holomorphic function $f: U \to \BB S$ is
{\em left anti regular} if it satisfies
$\nabla f =0$ at all points in $U$.

Similarly, a holomorphic function $g: U \to \BB S'$ is
{\em right anti regular} if
$g \overleftarrow{\nabla} =0$ at all points in $U$.
\end{df}

It is clear from \eqref{Laplacian} that anti regular functions are harmonic,
i.e. annihilated by $\square$.
One way to construct left anti regular functions is to start with a harmonic
function $\phi: \BB H \to \BB S$, then $\nabla^+ \phi$ is left anti regular.
Similarly, if $\phi: \BB H \to \BB S'$ is harmonic, then
$\phi \overleftarrow{\nabla^+}$ is right anti regular.

Let $\tilde{\cal V}_a$ and $\tilde{\cal V}'_a$ denote respectively the
spaces of (holomorphic) left and right anti regular functions on $\HC$,
possibly with singularities.

\begin{thm}  \label{ra-action-thm}
\begin{enumerate}
\item
The space $\tilde{\cal V}_a$ of left anti regular functions
$\HC \to \BB S$ (possibly with singularities)
is invariant under the following action of $GL(2,\HC)$:
\begin{multline}  \label{pi_la}
\pi_{la}(h): \: f(Z) \: \mapsto \: \bigl( \pi_{la}(h)f \bigr)(Z) =
\frac{a'-Zc'}{N(a'-Zc')^2} \cdot f \bigl( (a'-Zc')^{-1}(-b'+Zd') \bigr),  \\
h = \bigl( \begin{smallmatrix} a' & b' \\ c' & d' \end{smallmatrix} \bigr)
\in GL(2,\HC).
\end{multline}
\item
The space $\tilde{\cal V}'_a$ of right anti regular functions
$\HC \to \BB S'$ (possibly with singularities)
is invariant under the following action of $GL(2,\HC)$:
\begin{multline}  \label{pi_ra}
\pi_{ra}(h): \: g(Z) \: \mapsto \: \bigl( \pi_{ra}(h)g \bigr)(Z) =
g \bigl( (aZ+b)(cZ+d)^{-1} \bigr) \cdot \frac{cZ+d}{N(cZ+d)^2},  \\
h^{-1} = \bigl( \begin{smallmatrix} a & b \\ c & d \end{smallmatrix} \bigr)
\in GL(2,\HC).
\end{multline}
\end{enumerate}
\end{thm}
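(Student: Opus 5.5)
The plan is to reduce the statement to Theorem \ref{r-action-thm} where possible, and otherwise to verify it on generators of $GL(2,\HC)$. The first step is purely formal: show that $\pi_{la}$ and $\pi_{ra}$ are group actions on the space of all (possibly singular) holomorphic $\BB S$- and $\BB S'$-valued functions. For this we check the cocycle identity for the factor $\frac{a'-Zc'}{N(a'-Zc')^2}$ along the fractional linear map $Z \mapsto (a'-Zc')^{-1}(-b'+Zd')$. This is the same matrix computation as for $\pi_l,\pi_r$ in \cite{FL1}. It uses the multiplicativity of $N=\det$ and the identity
\[
a'_2 - \bigl((a'_1-Zc'_1)^{-1}(-b'_1+Zd'_1)\bigr)c'_2 = (a'_1-Zc'_1)^{-1}\bigl((a'_1-Zc'_1)a'_2-(-b'_1+Zd'_1)c'_2\bigr),
\]
and the analogous identity for $\pi_{ra}$. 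Once the action property is known, it suffices to check invariance of $\tilde{\cal V}_a$ and $\tilde{\cal V}'_a$ under a generating set of $GL(2,\HC)$.

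The generating set we use consists of three types of elements:
\begin{itemize}
\item translations $\bigl(\begin{smallmatrix} 1 & b' \\ 0 & 1\end{smallmatrix}\bigr)$;
\item block-diagonal elements $\bigl(\begin{smallmatrix} a' & 0 \\ 0 & d'\end{smallmatrix}\bigr)$;
\item the inversion $\bigl(\begin{smallmatrix} 0 & 1 \\ 1 & 0\end{smallmatrix}\bigr)$.
\end{itemize}

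Translations act by $f(Z)\mapsto f(Z-b')$ and obviously commute with $\nabla$.

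For the block-diagonal elements, $\pi_{la}$ becomes $f(Z)\mapsto \frac{a'}{N(a')^2} f(a'^{-1}Zd')$. Here we use the matrix realization \eqref{H-matrix-realization}, in which $\nabla$ is, up to a constant, the matrix of partials $\partial_{ij}$ arranged as the adjugate of $\partial$. The chain rule then gives
\[
\nabla\bigl[f(a'^{-1}Zd')\bigr] = \text{const}\cdot (d')^{\,+}\,(\nabla f)(a'^{-1}Zd')\,(a'^{-1})^{+}\text{-type twist},
\]
so the left factor $a'$ exactly absorbs the twist. Concretely, one shows
\[
\nabla\bigl(a' f(a'^{-1}Zd')\bigr) = N(a')\cdot (\text{invertible matrix})\cdot (\nabla f)(a'^{-1}Zd').
\]
This is where anti regularity differs from regularity: for regular functions the factor sits as $d^{-1}$ on the left, whereas here it is $a'$. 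It is a short check in the $2\times2$ picture, and the scalar factor $N(a')^{-2}$ is irrelevant.

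The inversion gives $f(Z)\mapsto \frac{Z}{N(Z)^2} f(Z^{-1})$, up to signs, and this is the step we expect to be the main obstacle. We need $\nabla\bigl(Z N(Z)^{-2} f(Z^{-1})\bigr)=0$ whenever $\nabla f=0$. We would compute with the product rule using three ingredients:
\begin{itemize}
\item the identities $\nabla Z = \text{const}$ and $\nabla N(Z)=2Z$ (up to the conventions of \cite{FL1});
\item the derivative of $Z\mapsto Z^{-1}$, namely $d(Z^{-1}) = -Z^{-1}(dZ)Z^{-1}$;
\item harmonicity, i.e. the fact that $N(Z)^{-1}$ is harmonic, to see the cancellations.
\end{itemize}
An alternative that may be cleaner is to note that the substitution $Z\mapsto Z^+$ exchanges $\nabla$ and $\nabla^+$. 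Combined with the left/right duality between $\BB S$ and $\BB S'$ (via transposition of the column/row realization), this converts $\pi_{la}$ into $\pi_r$ and $\pi_{ra}$ into $\pi_l$. Here $Z^{-1}=Z^+/N(Z)$, and the exponent $N^{-2}$ versus $N^{-1}$ is reconciled because $f \mapsto f^T$ combined with conjugation changes $Z^{-1}$ into $Z^+/N(Z)$. If that bookkeeping works out, the inversion case and in fact the whole statement follow from Theorem \ref{r-action-thm}. Otherwise we fall back on the direct product-rule computation.

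Part 2, for $\pi_{ra}$, is handled identically with all multiplications reversed, or deduced from part 1 by transposition $g\mapsto g^T$. Transposition turns right anti regular $\BB S'$-valued functions into left anti regular $\BB S$-valued functions of $Z^T$, and it exchanges the roles of $h$ and $h^{-1}$ in the formulas.
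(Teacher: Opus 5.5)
Your argument is sound in outline but takes a genuinely different route from the paper. The paper checks no generators. It deduces the statement from Theorem \ref{r-action-thm} via the pointwise maps $\dagger$ of \eqref{dagger}; this is the content of the proof of Proposition \ref{reg-anti-reg-iso-prop}.

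The mechanism is as follows.
\begin{itemize}
\item Since $(Zs)^\dagger = s^\dagger Z^+$, $(s'Z)^\dagger = Z^+ s'^\dagger$ and $e_i^+=-e_i$ for $i=1,2,3$, one has $(\nabla^+ f)^\dagger = f^\dagger \overleftarrow{\nabla}$ and $(g\overleftarrow{\nabla^+})^\dagger = \nabla g^\dagger$.
\item Since $\bigl((cZ+d)^{-1}\bigr)^+ = (cZ+d)/N(cZ+d)$, one gets $(\pi_l(h)f)^\dagger = \pi_{ra}(h) f^\dagger$ and $(\pi_r(h)g)^\dagger = \pi_{la}(h)g^\dagger$ for the \emph{same} $h$.
\end{itemize}
Both parts therefore follow at once for every $h$, with no cocycle check, no generating set and no chain rule. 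The intertwiners of Proposition \ref{reg-anti-reg-iso-prop} come as a by-product. Your route, by contrast, is self-contained (it does not use Theorem \ref{r-action-thm}) and gives explicit transformation laws for $\nabla$, at the cost of more computation.

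Several details in your sketch need correcting.

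\textbf{Block-diagonal step.} In the realization \eqref{H-matrix-realization}, $\nabla = 2\partial$ with $\partial = \bigl(\begin{smallmatrix} \partial_{11} & \partial_{21} \\ \partial_{12} & \partial_{22}\end{smallmatrix}\bigr)$. It is $\nabla^+$, not $\nabla$, that has the adjugate arrangement. For constant $A,B$ the chain rule reads $\nabla[f(AZB)] = B\,\bigl(\nabla(Af)\bigr)(AZB)$, with no conjugations. Hence $\nabla\bigl(a'f(a'^{-1}Zd')\bigr) = d'\,(\nabla f)(a'^{-1}Zd')$ exactly, and there is no factor $N(a')$.

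\textbf{Inversion step.} The step you flag as the main obstacle closes in one line. First, $Z/N(Z)^2 = -\tfrac12\nabla^+\bigl(N(Z)^{-1}\bigr)$ is killed by $\nabla$, because $N(Z)^{-1}$ is harmonic. Second, apply the same chain rule pointwise with $d(Z^{-1}) = -Z^{-1}(dZ)Z^{-1}$, and use that $Z^{-1}\cdot Z/N(Z)^2$ is scalar. Together these give
\[
\nabla\Bigl(\frac{Z}{N(Z)^2}\,f(Z^{-1})\Bigr) = -\frac{Z^{-1}}{N(Z)^2}\,(\nabla f)(Z^{-1}).
\]
The identities you list, $\nabla Z=\text{const}$ and $\nabla N(Z)=2Z$, are not the relevant ones; in fact $\nabla N(Z) = 2Z^+$.

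\textbf{Alternative via $Z\mapsto Z^+$.} This works \emph{without} any transposition. It carries $\pi_{la}(h)$ to $\pi_l(\theta(h))$ for an automorphism $\theta$ of $GL(2,\HC)$. The exponents $N^{-2}$ versus $N^{-1}$ are reconciled by $W^+/N(W)^2 = W^{-1}/N(W)$, not by transposition. Adding the plain transposition $s\mapsto s^T$ actually breaks it: $e_2^T=-e_2$ while $e_0,e_1,e_3$ are symmetric, so one sign comes out wrong. The correct twisted transposition is $s\mapsto s^T\bigl(\begin{smallmatrix} 0 & -1 \\ 1 & 0\end{smallmatrix}\bigr)$, which is exactly the paper's $\dagger$; with it, no substitution in $Z$ is needed at all.

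\textbf{Part 2.} Your transposition argument, which pairs $g\mapsto g^T$ with $Z\mapsto Z^T$, is correct.
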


Differentiating $\pi_{la}$ and $\pi_{ra}$, we obtain actions of the Lie algebra
$\mathfrak{gl}(2,\HC)$, which we still denote by $\pi_{la}$ and $\pi_{ra}$
respectively. We spell out these Lie algebra actions:

\begin{lem}  \label{pi-a-Lie_alg-action}
The Lie algebra action $\pi_{la}$ of $\mathfrak{gl}(2,\HC)$ on
$\tilde{\cal V}_a$ is given by
\begin{align*}
\pi_{la} \bigl( \begin{smallmatrix} A & 0 \\ 0 & 0 \end{smallmatrix} \bigr) &:
f(Z) \mapsto - \tr (AZ \partial + 2A) f + Af,  \\
\pi_{la} \bigl( \begin{smallmatrix} 0 & B \\ 0 & 0 \end{smallmatrix} \bigr) &:
f(Z) \mapsto - \tr (B \partial) f,  \\
\pi_{la} \bigl( \begin{smallmatrix} 0 & 0 \\ C & 0 \end{smallmatrix} \bigr) &:
f(Z) \mapsto \tr (ZCZ \partial + 2ZC) f - ZCf,  \\
\pi_{la} \bigl( \begin{smallmatrix} 0 & 0 \\ 0 & D \end{smallmatrix} \bigr) &:
f(Z) \mapsto \tr (ZD \partial) f.
\end{align*}

Similarly, the Lie algebra action $\pi_{ra}$ of $\mathfrak{gl}(2,\HC)$ on
$\tilde{\cal V}'_a$ is given by
\begin{align*}
\pi_{ra} \bigl( \begin{smallmatrix} A & 0 \\ 0 & 0 \end{smallmatrix} \bigr) &:
g(Z) \mapsto - \tr (AZ \partial) g,  \\
\pi_{ra} \bigl( \begin{smallmatrix} 0 & B \\ 0 & 0 \end{smallmatrix} \bigr) &:
g(Z) \mapsto - \tr (B \partial) g,  \\
\pi_{ra} \bigl( \begin{smallmatrix} 0 & 0 \\ C & 0 \end{smallmatrix} \bigr) &:
g(Z) \mapsto \tr (ZCZ \partial + 2CZ) g - gCZ,  \\
\pi_{ra} \bigl( \begin{smallmatrix} 0 & 0 \\ 0 & D \end{smallmatrix} \bigr) &:
g(Z) \mapsto \tr (ZD \partial + 2D) g - gD.
\end{align*}
\end{lem}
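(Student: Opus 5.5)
The plan is to differentiate the group actions $\pi_{la}$ and $\pi_{ra}$ of Theorem \ref{ra-action-thm} along one-parameter subgroups, one block at a time, in the same way Lemma \ref{pi-Lie_alg-action} follows from Theorem \ref{r-action-thm}. For $X \in \mathfrak{gl}(2,\HC)$ we take $h = \exp(tX)$, so $h^{-1} = \exp(-tX)$, and compute $\frac{d}{dt}\bigr|_{t=0}$.

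For $\pi_{ra}$, which is written in terms of $h^{-1} = \bigl(\begin{smallmatrix} a & b \\ c & d \end{smallmatrix}\bigr)$, the first-order expansions are $a = 1 - tA$, $b = -tB$, $c = -tC$, $d = 1 - tD$ when $X$ is the block matrix with the single nonzero block $A$, $B$, $C$ or $D$ respectively. For $\pi_{la}$, which uses $h = \bigl(\begin{smallmatrix} a' & b' \\ c' & d' \end{smallmatrix}\bigr)$ directly, we have $a' = 1 + tA$, $b' = tB$, $c' = tC$, $d' = 1 + tD$. Three first-order facts are needed.
\begin{itemize}
\item $\frac{d}{dt} f(Z + tY)\bigr|_{t=0} = \tr(Y\partial) f$, with $\partial$ the matrix of partial derivatives as in \cite{FL1}, arranged so that $\tr(Y\partial)$ is the directional derivative.
\item $N(1 + tY) = \det(1 + tY) = 1 + t\tr(Y) + O(t^2)$.
\item $(1 + tY)^{-1} = 1 - tY + O(t^2)$.
\end{itemize}

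For $\pi_{ra}$, the argument of $g$ is $(aZ+b)(cZ+d)^{-1}$.
\begin{itemize}
\item For the $A$ block it is $Z - tAZ$, the factor $cZ+d$ equals $1$, and we get $-\tr(AZ\partial)g$.
\item For the $B$ block we similarly get $-\tr(B\partial)g$.
\item For the $C$ block, $cZ + d = 1 - tCZ$. The argument becomes $Z + tZCZ$. The factor $(cZ+d)/N(cZ+d)^2$ becomes $(1 - tCZ)(1 + 2t\tr(CZ))$. Collecting first-order terms gives $\tr(ZCZ\partial)g + 2\tr(CZ)g - gCZ$.
\item For the $D$ block, $cZ + d = 1 - tD$ and the argument is $Z + tZD$, which gives $\tr(ZD\partial)g + 2\tr(D)g - gD$.
\end{itemize}

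For $\pi_{la}$ the structure is the same, with argument $(a'-Zc')^{-1}(-b'+Zd')$ and prefactor $(a'-Zc')/N(a'-Zc')^2$.
\begin{itemize}
\item For the $A$ block, $a' - Zc' = 1 + tA$ and the argument is $Z - tAZ$. This yields $-\tr(AZ\partial)f - 2\tr(A)f + Af$.
\item For the $B$ block we get $-\tr(B\partial)f$.
\item For the $C$ block, $a' - Zc' = 1 - tZC$ and the argument is $Z + tZCZ$, so we get $\tr(ZCZ\partial)f + 2\tr(ZC)f - ZCf$.
\item For the $D$ block we get $\tr(ZD\partial)f$.
\end{itemize}

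These match the stated formulas, writing $\tr(ZC) = \tr(CZ)$ where convenient. Since the group actions preserve anti regularity by Theorem \ref{ra-action-thm}, so do these derivatives, and no separate verification is needed.

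The main obstacle is bookkeeping, not concept. We must keep consistent sign conventions for $h$ versus $h^{-1}$, the ordering of the noncommuting factors ($CZ$ versus $ZC$, left versus right multiplication), and the normalization of $\tr(Y\partial)$ relative to the matrix coordinates $z_{ij}$. I would fix these by checking one case, say the $B$ block (translations), against Lemma \ref{pi-Lie_alg-action}, and then running all eight computations uniformly.
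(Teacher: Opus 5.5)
Your proposal is correct and takes the same route as the paper, which obtains the lemma by differentiating the group actions of Theorem \ref{ra-action-thm} along $h=\exp(tX)$; all eight of your first-order computations reproduce the stated formulas. The trace terms also come out directly as $2\tr(ZC)$ for $\pi_{la}$ and $2\tr(CZ)$ for $\pi_{ra}$, so you do not even need the identity $\tr(ZC)=\tr(CZ)$.
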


\begin{prop}  \label{reg-anti-reg-iso-prop}
  We have vector space isomorphisms
  $(\pi_l, \tilde {\cal V}) \simeq (\pi_{ra}, \tilde{\cal V}'_a)$ and
  $(\pi_r, \tilde {\cal V}') \simeq (\pi_{la}, \tilde{\cal V}_a)$ that
  intertwine the actions of $GL(2,\HC)$.
\end{prop}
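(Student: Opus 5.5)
The plan is to write down an explicit map and check it directly, using the $2\times 2$ matrix realization \eqref{H-matrix-realization}. Let $\epsilon = \bigl(\begin{smallmatrix} 0 & 1 \\ -1 & 0 \end{smallmatrix}\bigr)$; the sign will be fixed during the computation. For $2 \times 2$ complex matrices one has $\epsilon^{-1} A^T \epsilon = A^+$, the adjugate, which in the quaternionic picture is the quaternionic conjugate. I propose the map
\[
\Phi : \tilde{\cal V} \to \tilde{\cal V}'_a, \qquad (\Phi f)(Z) = f(Z)^T \epsilon .
\]
It sends $\BB S$-valued columns to $\BB S'$-valued rows. It is clearly a linear bijection between spaces of holomorphic functions, with inverse $g \mapsto (g \epsilon^{-1})^T$. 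The analogous map $g \mapsto \epsilon g(Z)^T$ should handle the second isomorphism $\tilde{\cal V}' \simeq \tilde{\cal V}_a$, and the argument there is symmetric.

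\textbf{Step 1: $\Phi$ carries left regular functions to right anti regular ones.} In matrix form, $\nabla^+$ and $\nabla$ are $2\times 2$ matrices of first-order operators, $\nabla = (\nabla^+)^+$ up to the normalization of the partial derivatives. Transposing gives
\[
(\Phi f)\overleftarrow{\nabla} = f^T \epsilon \nabla = f^T \epsilon\, \epsilon^{-1} (\nabla^+)^T \epsilon = (\nabla^+ f)^T \epsilon .
\]
Hence $\nabla^+ f = 0$ if and only if $(\Phi f)\overleftarrow{\nabla} = 0$. The only real work is checking the sign and normalization conventions of $\nabla$, $\nabla^+$ against \eqref{H-matrix-realization}. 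If $\epsilon$ has the wrong sign, I will replace it by $-\epsilon$, which is harmless.

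\textbf{Step 2: intertwining.} Let $W = (aZ+b)(cZ+d)^{-1}$ and $A = cZ+d$. Then
\[
\bigl(\Phi \pi_l(h) f\bigr)(Z) = f(W)^T \, (A^{-1})^T \epsilon \, N(A)^{-1}.
\]
Applying the identity above to $A^{-1}$, and using $(A^{-1})^+ = A/N(A)$, we get
\[
(A^{-1})^T \epsilon = \epsilon (A^{-1})^+ = \epsilon A / N(A).
\]
Therefore
\[
\bigl(\Phi \pi_l(h) f\bigr)(Z) = (\Phi f)(W) \cdot \frac{cZ+d}{N(cZ+d)^2} = \bigl(\pi_{ra}(h) \Phi f\bigr)(Z),
\]
which is exactly \eqref{pi_ra}. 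The second isomorphism works the same way: there the factor $(a'-Zc')^{-1}/N(a'-Zc')$ from \eqref{pi_r} becomes $(a'-Zc')/N(a'-Zc')^2$, matching \eqref{pi_la}.

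The main obstacle is bookkeeping rather than ideas: confirming that the identity $\epsilon^{-1}A^T\epsilon = A^+$ holds with the paper's identification, and that the operator-valued version in Step 1 matches the paper's normalization of $\nabla$ and $\nabla^+$. As a cross-check, I will differentiate the intertwining relation and compare $\Phi$ against Lemmas \ref{pi-Lie_alg-action} and \ref{pi-a-Lie_alg-action}. For example, with $C$ the lower-left block, $\Phi$ should convert the term $CZf$ into $-gCZ$ plus the trace correction $\tr(CZ)$. This uses $\tr(X)I - X = X^+$ for $2\times2$ matrices, which accounts for the shift from $\tr(CZ)$ to $2\tr(CZ)$.
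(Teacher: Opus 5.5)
Your proof is correct and is essentially the paper's argument. The paper uses the maps $\dagger:\mathbb S\leftrightarrow\mathbb S'$ from \eqref{dagger}, which satisfy $(Zs)^{\dagger}=s^{\dagger}Z^+$ and $(s'Z)^{\dagger}=Z^+s'^{\dagger}$; your maps $f\mapsto f^T\epsilon$ and $g\mapsto\epsilon g^T$ are exactly $-\dagger$, and your identity $A^T\epsilon=\epsilon A^+$ is that conjugation rule. Your explicit computations fill in the intertwining check that the paper calls clear, and both $\epsilon^{-1}A^T\epsilon=A^+$ and $\nabla=(\nabla^+)^+$ hold exactly in the realization \eqref{H-matrix-realization}, so none of your fallback adjustments are needed.
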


\begin{proof}
Recall the $\BB C$-linear maps $\BB S \to \BB S'$ and $\BB S' \to \BB S$:
\begin{equation}  \label{dagger}
\begin{pmatrix} s_1 \\ s_2 \end{pmatrix}^{\dagger} = (s_2, -s_1),
\qquad
(s'_1, s'_2)^{\dagger} = \begin{pmatrix} -s'_2 \\ s'_1 \end{pmatrix},
\qquad
\begin{pmatrix} s_1 \\ s_2 \end{pmatrix} \in \BB S ,\:
(s'_1, s'_2) \in \BB S',
\end{equation}
introduced in Subsection 4.3 in \cite{ATMP}.
These maps are similar to the quaternionic conjugation:
\[
(s^{\dagger})^{\dagger}=s, \qquad (s'^{\dagger})^{\dagger}=s', \qquad
(Zs)^{\dagger} = s^{\dagger} Z^+, \qquad
(s'Z)^{\dagger} = Z^+ s'^{\dagger},
\]
for all $Z \in \HC,$ $s \in \BB S$, $s' \in \BB S'$.

Then $f: \HC \to \BB S$ is left regular if and only if
$f^{\dagger} : \HC \to \BB S'$ is right anti regular, and
$g: \HC \to \BB S'$ is right regular if and only if
$g^{\dagger} : \HC \to \BB S$ is left anti regular.
It is also clear that these maps intertwine the actions of $GL(2,\HC)$.  
\end{proof}

As in the case of regular functions, by Lemma \ref{pi-Lie_alg-action},
the Lie algebra actions $\pi_{la}$ and $\pi_{ra}$ preserve the spaces of
polynomial anti regular functions on $\HC^{\times}$
\begin{align*}
{\cal V}_a &= \bigl\{
  f \in \BB S \otimes \BB C[z_{11},z_{12},z_{21},z_{22}, N(Z)^{-1}] ;\:
  \nabla f =0 \bigr\} \qquad \text{and}  \\
{\cal V}'_a &= \bigl\{
g \in \BB S' \otimes \BB C[z_{11},z_{12},z_{21},z_{22}, N(Z)^{-1}] ;\:
g \overleftarrow{\nabla} =0 \bigr\}
\end{align*}
respectively. Then
\[
{\cal V}_a = {\cal V}^+_a \oplus {\cal V}^-_a
\qquad \text{and} \qquad
{\cal V}'_a = {\cal V}'^+_a \oplus {\cal V}'^-_a,
\]
where
\begin{align*}
{\cal V}^+_a &= \bigl\{
  f \in \BB S \otimes \BB C[z_{11},z_{12},z_{21},z_{22}] ;\:
  \nabla f =0 \bigr\},  \\
{\cal V}^-_a &= \bigl\{
  f \in \BB S \otimes \BB C[z_{11},z_{12},z_{21},z_{22}, N(Z)^{-1}] ;\:
N(Z)^{-2} \cdot Z \cdot f(Z^{-1}) \in {\cal V}^+_a \bigr\},  \\
{\cal V}'^+_a &= \bigl\{
g \in \BB S' \otimes \BB C[z_{11},z_{12},z_{21},z_{22}] ;\:
g \overleftarrow{\nabla} =0 \bigr\},  \\
{\cal V}'^-_a &= \bigl\{
g \in \BB S' \otimes \BB C[z_{11},z_{12},z_{21},z_{22}, N(Z)^{-1}] ;\:
N(Z)^{-2} \cdot g(Z^{-1}) \cdot Z \in {\cal V}'^+_a \bigr\}.
\end{align*}
As usual, we refer to ${\cal V}^+_a$ and ${\cal V}'^+_a$ as
``functions regular at the origin''.
Similarly, we refer to ${\cal V}^-_a$ and ${\cal V}'^-_a$ as
``functions regular at infinity''.
The isomorphisms from Proposition \ref{reg-anti-reg-iso-prop}
restrict to isomorphisms of $\mathfrak{gl}(2,\HC)$-modules
\[
(\pi_l, {\cal V}^+) \simeq (\pi_{ra}, {\cal V}'^+_a), \quad
(\pi_l, {\cal V}^-) \simeq (\pi_{ra}, {\cal V}'^-_a), \quad
(\pi_r, {\cal V}'^+) \simeq (\pi_{la}, {\cal V}^+_a), \quad
(\pi_r, {\cal V}'^-) \simeq (\pi_{la}, {\cal V}^-_a).
\]

\subsection{Doubly Regular Functions}

Left and right doubly regular functions were introduced in \cite{ATMP}
and generalized further to $n$-regular functions in \cite{nreg}.
The left doubly regular functions have values in $\BB S \odot \BB S$
-- the symmetric part of the tensor product $\BB S \otimes \BB S$.
Similarly, the right doubly regular functions have values in
$\BB S' \odot \BB S'$.
Following \cite{ATMP}, we introduce four first order differential operators
\begin{align*}
\nabla^+ \otimes 1 &= (e_0 \otimes 1) \frac{\partial}{\partial z^0}
+ (e_1 \otimes 1) \frac{\partial}{\partial z^1}
+ (e_2 \otimes 1) \frac{\partial}{\partial z^2}
+ (e_3 \otimes 1) \frac{\partial}{\partial z^3}, \\
1 \otimes \nabla^+ &= (1 \otimes e_0) \frac{\partial}{\partial z^0}
+ (1 \otimes e_1) \frac{\partial}{\partial z^1}
+ (1 \otimes e_2) \frac{\partial}{\partial z^2}
+ (1 \otimes e_3) \frac{\partial}{\partial z^3}, \\
\nabla \otimes 1 &= (e_0 \otimes 1) \frac{\partial}{\partial z^0}
- (e_1 \otimes 1) \frac{\partial}{\partial z^1}
- (e_2 \otimes 1) \frac{\partial}{\partial z^2}
- (e_3 \otimes 1) \frac{\partial}{\partial z^3}, \\
1 \otimes \nabla &= (1 \otimes e_0) \frac{\partial}{\partial z^0}
- (1 \otimes e_1) \frac{\partial}{\partial z^1}
- (1 \otimes e_2) \frac{\partial}{\partial z^2}
- (1 \otimes e_3) \frac{\partial}{\partial z^3},
\end{align*}
which can be applied to functions with values in
$\BB S \otimes \BB S$ or $\BB S' \otimes \BB S'$ as follows.
If $U$ is an open subset of $\HC$ and $f: U \to \BB S \otimes \BB S$
is a holomorphic function, then these operators can be applied to $f$
on the left. For example,
$$
(\nabla^+ \otimes 1) f = (e_0 \otimes 1) \frac{\partial f}{\partial z^0}
+ (e_1 \otimes 1) \frac{\partial f}{\partial z^1}
+ (e_2 \otimes 1) \frac{\partial f}{\partial z^2}
+ (e_3 \otimes 1) \frac{\partial f}{\partial z^3}.
$$
Similarly, these operators can be applied on the right to
holomorphic functions $g: U \to \BB S' \otimes \BB S'$;
we often indicate this with an arrow above the operator. For example,
$$
g (\overleftarrow{\nabla^+ \otimes 1})
= \frac{\partial g}{\partial z^0} (e_0 \otimes 1)
+ \frac{\partial g}{\partial z^1} (e_1 \otimes 1)
+ \frac{\partial g}{\partial z^2} (e_2 \otimes 1)
+ \frac{\partial g}{\partial z^3} (e_3 \otimes 1).
$$

\begin{df}
Let $U$ be an open subset of $\HC$.
A holomorphic function $f: U \to \BB S \odot \BB S$ is
{\em doubly left regular} if it satisfies $(\nabla^+ \otimes 1) f =0$
and $(1 \otimes \nabla^+) f =0$ for all points in $U$.

Similarly, a holomorphic function $g: U \to \BB S' \odot \BB S'$ is
{\em doubly right regular} if
$g (\overleftarrow{\nabla^+ \otimes 1}) =0$ and
$g (\overleftarrow{1 \otimes \nabla^+}) =0$ for all points in $U$.
\end{df}

It is easy to see that doubly left and right regular functions are harmonic.
Let ${\cal DR}$ and ${\cal DR}'$ denote respectively the spaces of
(holomorphic) doubly left and right regular functions on $\HC$,
possibly with singularities.

\begin{thm} [Theorem 3 in \cite{ATMP}]  \label{dr-action-thm}
\begin{enumerate}
\item
The space ${\cal DR}$ of doubly left regular functions
$\HC \to \BB S \odot \BB S$ (possibly with singularities)
is invariant under the following action of $GL(2,\HC)$:
\begin{multline*}
\pi_{dl}(h): \: f(Z) \: \mapsto \: \bigl( \pi_{dl}(h)f \bigr)(Z) =
\frac{(cZ+d)^{-1} \otimes (cZ+d)^{-1}}{N(cZ+d)}
\cdot f\bigl( (aZ+b)(cZ+d)^{-1} \bigr),  \\
h^{-1} = \bigl( \begin{smallmatrix} a & b \\ c & d \end{smallmatrix} \bigr)
\in GL(2,\HC).
\end{multline*}
\item
The space ${\cal DR}'$ of doubly right regular functions
$\HC \to \BB S' \odot \BB S'$ (possibly with singularities)
is invariant under the following action of $GL(2,\HC)$:
\begin{multline*}
\pi_{dr}(h): \: g(Z) \: \mapsto \: \bigl( \pi_{dr}(h)g \bigr)(Z) =
g \bigl( (a'-Zc')^{-1}(-b'+Zd') \bigr)
\cdot \frac{(a'-Zc')^{-1} \otimes (a'-Zc')^{-1}}{N(a'-Zc')}, \\
h = \bigl( \begin{smallmatrix} a' & b' \\ c' & d' \end{smallmatrix} \bigr)
\in GL(2,\HC).
\end{multline*}
\end{enumerate}
\end{thm}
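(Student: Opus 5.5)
The plan is to reduce the statement to Theorem \ref{r-action-thm} by factoring each doubly regular condition into two ordinary regularity conditions on single tensor factors, and then to check the group law separately. Write $f=\sum_k s_k\otimes t_k$ with $s_k,t_k\in\BB S$ constant. The condition $(\nabla^+\otimes 1)f=0$ says exactly that for every linear functional $\lambda$ on the second tensor factor, the $\BB S$-valued function $(1\otimes\lambda)f$ is left regular. Similarly, $(1\otimes\nabla^+)f=0$ says that $(\mu\otimes 1)f$ is left regular for every functional $\mu$ on the first factor. So membership in ${\cal DR}$ is a statement about the two factors separately.

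First I will fix $h$ and compute $(1\otimes\lambda)\bigl(\pi_{dl}(h)f\bigr)$. The matrix $(cZ+d)^{-1}\otimes(cZ+d)^{-1}$ acts factorwise. Applying $1\otimes\lambda$ turns the second factor into the functional $\lambda\circ(cZ+d)^{-1}$, which now depends on $Z$. This is the main obstacle: the result is not literally $\pi_l(h)$ applied to a fixed regular function, because the functional varies with $Z$.

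To handle it, I will work infinitesimally rather than at the group level. Since $GL(2,\HC)$ is connected, it suffices to show that ${\cal DR}$ is preserved by the differentiated action and that $\pi_{dl}$ is a homomorphism. The differentiated action of $\pi_{dl}$ is read off exactly as in Lemma \ref{pi-Lie_alg-action}:
\begin{itemize}
\item the generators $\bigl(\begin{smallmatrix}0&B\\0&0\end{smallmatrix}\bigr)$ and $\bigl(\begin{smallmatrix}A&0\\0&0\end{smallmatrix}\bigr)$ act by first order scalar differential operators;
\item $\bigl(\begin{smallmatrix}0&0\\0&D\end{smallmatrix}\bigr)$ acts by $\tr(ZD\partial+D)f+(D\otimes1+1\otimes D)f$;
\item $\bigl(\begin{smallmatrix}0&0\\C&0\end{smallmatrix}\bigr)$ acts by $\tr(ZCZ\partial+CZ)f+(CZ\otimes 1+1\otimes CZ)f$.
\end{itemize}

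Next I will check that $(\nabla^+\otimes1)$ of each of these vanishes when $f$ is doubly regular. The terms involving the first factor, together with the trace terms, combine exactly as in the verification of Lemma \ref{pi-Lie_alg-action} for $\pi_l$, applied to $(1\otimes\lambda)f$. The extra term $(1\otimes CZ)f$ acts on the second factor. Its contribution is $(1\otimes C)\sum_j(e_j\otimes e_j^{\pm})f$-type terms, obtained by differentiating $Z$, plus $(1\otimes CZ)(\nabla^+\otimes1)f$, where the second piece vanishes.

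The remaining piece is where I expect the real computation. I would show that it equals a multiple of $(1\otimes C)\,\epsilon$ applied to the antisymmetric projection. Since $\sum_j e_j\otimes e_j^{+}$ restricted to $\BB S\otimes\BB S$ acts as a scalar on the symmetric part and another scalar on the antisymmetric part, I will verify it combines with the corresponding term from the trace part to give zero precisely because $f$ takes values in $\BB S\odot\BB S$. This is where the symmetry hypothesis enters.

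The homomorphism property follows from the cocycle identity for $cZ+d$ already implicit in Theorem \ref{r-action-thm}, applied factorwise. The right-hand version for ${\cal DR}'$ and $\pi_{dr}$ follows by the same argument. Alternatively, one can use the $\dagger$ maps \eqref{dagger} factorwise, as in Proposition \ref{reg-anti-reg-iso-prop}, to transport the left case.
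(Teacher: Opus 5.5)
Your strategy is the right one: treat the first tensor factor with Theorem~\ref{r-action-thm}, and kill the extra term from the $Z$-dependent matrix on the second factor using the symmetry of the values. The paper itself only quotes this result from \cite{ATMP}. However, the decisive step is misdescribed.

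The extra term in $(\nabla^+\otimes 1)\,\pi_{dl}\bigl(\begin{smallmatrix}0&0\\C&0\end{smallmatrix}\bigr)f$ is exactly $(1\otimes C)\sum_{j=0}^3(e_j\otimes e_j)f$, with no quaternionic conjugate. This is because $\nabla^+=\sum_j e_j\,\partial/\partial z^j$ and $\partial(CZ)/\partial z^j=Ce_j$. With $e_0=1$ and $e_j=-i\sigma_j$ (Pauli matrices), one has $\sum_j e_j\otimes e_j=2(1-P)$, where $P$ is the flip. This operator annihilates $\BB S\odot\BB S$ on its own.

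Your remark that the term is a multiple of $(1\otimes C)$ applied to the antisymmetric projection is the correct picture. But you then switch to $e_j^+$ and to a cancellation against the trace part, and that is wrong. There is no trace term left over: $\pi_{dl}$ and $\pi_l$ carry the same weight $N(cZ+d)^{-1}$, so $\tr(ZCZ\partial+CZ)f+(CZ\otimes1)f$ is precisely $\pi_l$ acting on the first factor, and it is entirely used up by the regular case. With your operator $\sum_j e_j\otimes e_j^+=2P$, which acts as $2$ on symmetric tensors, the computation would not close.

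Also, the maps $\dagger$ send left regular functions to right \emph{anti} regular ones (Proposition~\ref{reg-anti-reg-iso-prop}). So they carry ${\cal DR}$ to doubly right anti regular functions, not to ${\cal DR}'$; instead, just repeat the argument on the right.

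The second gap is the reduction ``since $GL(2,\HC)$ is connected, it suffices to check the differentiated action.'' This is false in general: the polynomial spaces ${\cal V}$ are $\mathfrak{gl}(2,\HC)$-invariant but not $GL(2,\HC)$-invariant. To integrate, you would need an intertwining identity valid off the kernel, a flow argument, and care with the domains of functions with singularities.

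None of this is needed, because the obstacle you found at the group level disappears with the product rule. Put $\tilde Z=(aZ+b)(cZ+d)^{-1}$, $M=(cZ+d)^{-1}$ and $u=N(cZ+d)^{-1}(M\otimes 1)f(\tilde Z)$. Then $\pi_{dl}(h)f=(1\otimes M)u$ and $(1\otimes\lambda)u=\pi_l(h)\bigl((1\otimes\lambda)f\bigr)$, so $(\nabla^+\otimes 1)u=0$ by Theorem~\ref{r-action-thm}. Since $\partial M/\partial z^j=-Mce_jM$,
\[
(\nabla^+\otimes 1)\,\pi_{dl}(h)f=(1\otimes M)(\nabla^+\otimes 1)u-(1\otimes Mc)\Bigl(\sum_{j=0}^3 e_j\otimes e_j\Bigr)\pi_{dl}(h)f=0,
\]
because $\pi_{dl}(h)f$ is $\BB S\odot\BB S$-valued. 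The identity $(1\otimes\nabla^+)\pi_{dl}(h)f=0$ then follows from the flip symmetry.
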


Differentiating $\pi_{dl}$ and $\pi_{dr}$, we obtain actions of the Lie algebra
$\mathfrak{gl}(2,\HC)$, which we still denote by $\pi_{dl}$ and $\pi_{dr}$
respectively. We spell out these Lie algebra actions
(Lemma 4 in \cite{ATMP}):

\begin{lem}  \label{dr-action-lem}
The Lie algebra action $\pi_{dl}$ of $\mathfrak{gl}(2,\HC)$ on ${\cal DR}$
is given by
\begin{align*}
\pi_{dl} \bigl( \begin{smallmatrix} A & 0 \\ 0 & 0 \end{smallmatrix} \bigr)
&: f(Z) \mapsto - \tr (AZ \partial) f,  \\
\pi_{dl} \bigl( \begin{smallmatrix} 0 & B \\ 0 & 0 \end{smallmatrix} \bigr)
&: f(Z) \mapsto - \tr (B \partial) f,  \\
\pi_{dl} \bigl( \begin{smallmatrix} 0 & 0 \\ C & 0 \end{smallmatrix} \bigr)
&: f(Z) \mapsto \tr (ZCZ \partial +CZ) f + (CZ \otimes 1 + 1 \otimes CZ) f,  \\
\pi_{dl} \bigl( \begin{smallmatrix} 0 & 0 \\ 0 & D \end{smallmatrix} \bigr)
&: f(Z) \mapsto \tr (ZD \partial +D) f + (D \otimes 1 + 1 \otimes D) f.
\end{align*}

Similarly, the Lie algebra action $\pi_{dr}$ of $\mathfrak{gl}(2,\HC)$ on
${\cal DR}'$ is given by
\begin{align*}
\pi_{dr} \bigl( \begin{smallmatrix} A & 0 \\ 0 & 0 \end{smallmatrix} \bigr)
&: g(Z) \mapsto - \tr (AZ \partial +A) g - g (A \otimes 1 + 1 \otimes A),  \\
\pi_{dr} \bigl( \begin{smallmatrix} 0 & B \\ 0 & 0 \end{smallmatrix} \bigr)
&: g(Z) \mapsto - \tr (B \partial) g,  \\
\pi_{dr} \bigl( \begin{smallmatrix} 0 & 0 \\ C & 0 \end{smallmatrix} \bigr)
&: g(Z) \mapsto \tr (ZCZ \partial +ZC) g + g(ZC \otimes 1 + 1 \otimes ZC),  \\
\pi_{dr} \bigl( \begin{smallmatrix} 0 & 0 \\ 0 & D \end{smallmatrix} \bigr)
&: g(Z) \mapsto \tr (ZD \partial) g.
\end{align*}
\end{lem}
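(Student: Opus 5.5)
We obtain the Lie algebra action by differentiating the group action $\pi_{dl}$ of Theorem \ref{dr-action-thm} along one-parameter subgroups $h_s = \exp(sX)$, with $X \in \mathfrak{gl}(2,\HC)$ of each of the four block types. For $\pi_{dl}$ the formula is written in terms of $h^{-1}=\bigl(\begin{smallmatrix} a & b \\ c & d \end{smallmatrix}\bigr)$. To first order in $s$ we have
\[
h_s^{-1} = \begin{pmatrix} 1 & 0 \\ 0 & 1 \end{pmatrix}
- s\begin{pmatrix} A & B \\ C & D \end{pmatrix} + O(s^2),
\]
so $a = 1 - sA$, $b=-sB$, $c=-sC$, $d=1-sD$. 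We then expand each of the three ingredients of $\pi_{dl}(h_s)f$ to first order and read off the coefficient of $s$.

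\textbf{Argument of $f$.} We have $(aZ+b)(cZ+d)^{-1} = Z + s(-AZ - B + ZCZ + ZD) + O(s^2)$. By the chain rule, $\frac{d}{ds}\big|_{s=0} f(\cdots) = \tr\bigl((-AZ-B+ZCZ+ZD)\partial\bigr) f$, where $\partial$ is the matrix of partial derivatives, normalized as in \cite{FL1} so that $\frac{d}{ds} f(Z+sY) = \tr(Y\partial) f$. This produces the terms $-\tr(AZ\partial)$, $-\tr(B\partial)$, $\tr(ZCZ\partial)$ and $\tr(ZD\partial)$.

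\textbf{Scalar factor.} Next, $N(cZ+d)^{-1} = \det(1 - s(CZ+D))^{-1} = 1 + s\,\tr(CZ+D) + O(s^2)$. This gives the scalar terms $\tr(CZ)$ and $\tr(D)$, which sit inside the traces in the stated formulas.

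\textbf{Tensor factor.} Finally, $(cZ+d)^{-1}\otimes(cZ+d)^{-1} = 1\otimes 1 + s\bigl((CZ+D)\otimes 1 + 1\otimes(CZ+D)\bigr) + O(s^2)$. This gives the matrix terms $CZ\otimes1+1\otimes CZ$ and $D\otimes1+1\otimes D$.

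Collecting the coefficients of $s$ for $X$ purely in each block yields exactly the four formulas for $\pi_{dl}$.

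For $\pi_{dr}$ we proceed in the same way using $h_s = \bigl(\begin{smallmatrix} a' & b' \\ c' & d' \end{smallmatrix}\bigr)$ directly, with $a'=1+sA$, $b'=sB$, $c'=sC$, $d'=1+sD$. The three expansions are:
\begin{itemize}
\item[] $(a'-Zc')^{-1}(-b'+Zd') = Z + s(-AZ - B + ZCZ + ZD) + O(s^2)$, giving the same derivative terms as before;
\item[] $N(a'-Zc')^{-1} = 1 - s\,\tr(A - ZC) + O(s^2)$, giving the scalar terms $-\tr(A)$ and $+\tr(ZC)$;
\item[] the right tensor factor $(a'-Zc')^{-1}\otimes(a'-Zc')^{-1} = 1\otimes 1 - s\bigl((A-ZC)\otimes1 + 1\otimes(A-ZC)\bigr) + O(s^2)$, giving the terms $-g(A\otimes1+1\otimes A)$ and $+g(ZC\otimes1+1\otimes ZC)$.
\end{itemize}
This matches the stated formulas, and in particular the $D$-block acts by $\tr(ZD\partial)$ alone.

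The only real obstacle is bookkeeping. One must confirm the convention for $\partial$ (including any factor from the off-diagonal entries), so that the chain rule produces $\tr(Y\partial)$. One must also check the sign conventions relating $h$ and $h^{-1}$ to the Lie algebra element. Both can be fixed by comparing with Lemma \ref{pi-Lie_alg-action}: the formulas there arise from the identical computation without the second tensor factor. The doubly regular case then differs only by the extra $1\otimes(\cdot)$ terms, so I would present the proof as this comparison plus the tensor-factor expansion.
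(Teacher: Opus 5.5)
Your proposal is correct. Differentiating the group actions of Theorem~\ref{dr-action-thm} along $h_s=\exp(sX)$, using your first-order expansions and the convention $\tr(Y\partial)=\sum_{i,j} y_{ij}\,\partial/\partial z_{ij}$ from \cite{FL1}, yields exactly the stated formulas for both $\pi_{dl}$ and $\pi_{dr}$. This is the same route the paper indicates: it obtains the lemma by differentiating $\pi_{dl}$ and $\pi_{dr}$, referring to Lemma~4 of \cite{ATMP} for the computation.
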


Lemma \ref{dr-action-lem} implies that the Lie algebra actions
$\pi_{dl}$ and $\pi_{dr}$ preserve the spaces of polynomial doubly regular
functions on $\HC^{\times}$
\begin{align*}
{\cal F} &= \bigl\{ f \in (\BB S \odot \BB S) \otimes
\BB C[z_{11},z_{12},z_{21},z_{22}, N(Z)^{-1}] ;\:
(\nabla^+ \otimes 1) f = (1 \otimes \nabla^+) f = 0 \bigr\} \qquad \text{and}  \\
{\cal G} &= \bigl\{ g \in (\BB S' \odot \BB S') \otimes
\BB C[z_{11},z_{12},z_{21},z_{22}, N(Z)^{-1}] ;\:
g (\overleftarrow{\nabla^+ \otimes 1})
= g (\overleftarrow{1 \otimes \nabla^+}) = 0 \bigr\}
\end{align*}
respectively. Then
\[
{\cal F} = {\cal F}^+ \oplus {\cal F}^-
\qquad \text{and} \qquad
{\cal G} = {\cal G}^+ \oplus {\cal G}^-,
\]
where
\begin{align*}
{\cal F}^+ &= \bigl\{ f \in (\BB S \odot \BB S) \otimes
\BB C[z_{11},z_{12},z_{21},z_{22}] ;\:
(\nabla^+ \otimes 1) f = (1 \otimes \nabla^+) f = 0 \bigr\},  \\
{\cal F}^- &= \bigl\{ f \in (\BB S \odot \BB S) \otimes
\BB C[z_{11},z_{12},z_{21},z_{22}, N(Z)^{-1}] ;\:
\tfrac{Z^{-1} \otimes Z^{-1}}{N(Z)} \cdot f(Z^{-1}) \in {\cal F}^+ \bigr\},  \\
{\cal G}^+ &= \bigl\{ g \in (\BB S' \odot \BB S') \otimes
\BB C[z_{11},z_{12},z_{21},z_{22}] ;\:
g (\overleftarrow{\nabla^+ \otimes 1})
= g (\overleftarrow{1 \otimes \nabla^+}) = 0 \bigr\},  \\
{\cal G}^- &= \bigl\{ g \in (\BB S' \odot \BB S') \otimes
\BB C[z_{11},z_{12},z_{21},z_{22}, N(Z)^{-1}] ;\:
g(Z^{-1}) \cdot \tfrac{Z^{-1} \otimes Z^{-1}}{N(Z)} \in {\cal G}^+ \bigr\}.
\end{align*}
As usual, we refer to ${\cal F}^+$ and ${\cal G}'^+$ as
``functions regular at the origin''.
Similarly, we refer to ${\cal F}^-$ and ${\cal F}'^-$ as
``functions regular at infinity''.

It is shown in \cite{ATMP} and summarized in
Subsection \ref{IrredDecom-subsection}
that the spaces of doubly regular functions  $(\pi_{dl},{\cal F}^{\pm})$ and
$(\pi_{dr},{\cal G}^{\pm})$ are irreducible and appear as irreducible components
of $(\rho_2, {\cal W})$ and its dual space $(\rho'_2, {\cal W}')$.

\section{Metaplectic Representation of $\mathfrak{sp}(4r,\BB R)$}  \label{3}

\subsection{Realization of $\mathfrak{sp}(4r,\BB R)$ Using Quadratic Operators}

We fix a positive integer $r$ and consider the following operators acting
on the space of Laurent polynomials in $2r$ variables
\[
W = \BB C \bigl[ w_1,\dots,w_r,w^*_{r+1},\dots,w^*_{2r},
  w_1^{-1},\dots,w_r^{-1},(w^*_{r+1})^{-1},\dots,(w^*_{2r})^{-1} \bigr].
\]
First, we have the {\em creation operators}
\[
a_1,\dots,a_r,a^*_{r+1},\dots,a^*_{2r}
\]
that correspond to multiplication by $w_1,\dots,w_r,w^*_{r+1},\dots,w^*_{2r}$
respectively. Then we have the {\em annihilation operators}
\[
a^*_1,\dots,a^*_r,a_{r+1},\dots,a_{2r}
\]
that correspond to differentiation with respect to
$w_1,\dots,w_r,w^*_{r+1},\dots,w^*_{2r}$ respectively.
These operators satisfy the commutator relations of generators of a
Heisenberg algebra:
\begin{equation}  \label{Heisenberg-rels}
[a_k,a_l]=0, \quad [a^*_k,a^*_l]=0, \quad
[a_k,a^*_l] = \begin{cases} - \delta_{kl} & \text{if $k=1,\dots,r$},\\
  \delta_{kl} & \text{if $k=r+1,\dots,2r$}, \end{cases}
\qquad 1 \le k,l \le 2r.
\end{equation}

The Lie algebra $\mathfrak{sp}(4n,\BB R)$ will be constructed using operators
on $W$ that are quadratic elements of the form
\begin{equation}  \label{quadratic-operators}
a_ka_l, \quad a^*_ka^*_l, \quad :a_ka^*_l: = \tfrac12 (a_ka^*_l+a^*_la_k),
\qquad 1 \le k,l \le 2r.
\end{equation}
Note that $:a_ka^*_l: = a_ka^*_l = a^*_la_k$ if $k \ne l$ and
$:a_ka^*_k: = \tfrac12 (a_ka^*_k+a^*_ka_k)$.

We introduce notations
\[
\pm_j = \begin{cases} + & \text{if $j=1,\dots,r$}, \\
  - & \text{if $j=r+1,\dots,2r$}, \end{cases} \qquad
\mp_j = \begin{cases} - & \text{if $j=1,\dots,r$}, \\
  + & \text{if $j=r+1,\dots,2r$}. \end{cases}
\]

\begin{lem}  \label{commutator-lem1}
We have the following commutation relations between these quadratic operators
and $a_1,\dots,a_{2r},a^*_1,\dots,a^*_{2r}$:
\[
[:a_ka^*_l:, a_m] = \begin{cases} 0  & \text{if $m \ne l$}, \\
  \pm_l a_k & \text{if $m=l$}, \end{cases}
\qquad
[:a_ka^*_l:, a^*_m] = \begin{cases} 0  & \text{if $m \ne k$}, \\
  \mp_k a^*_l & \text{if $m=k$}, \end{cases}
\]
\[
[a_ka_l, a^*_m] = \begin{cases} 0  & \text{if $m \ne k,l$}, \\
  \mp_k a_l & \text{if $k \ne l$ and $m=k$}, \\
  \mp_l a_k & \text{if $k \ne l$ and $m=l$}, \\
  \mp_k 2a_k & \text{if $k=l=m$}, \end{cases}
\qquad [a_ka_l, a_m] = 0,
\]
\[
[a^*_ka^*_l, a_m] = \begin{cases} 0  & \text{if $m \ne k,l$}, \\
  \pm_k a^*_l & \text{if $k \ne l$ and $m=k$}, \\
  \pm_l a^*_k & \text{if $k \ne l$ and $m=l$}, \\
  \pm_k 2a^*_k & \text{if $k=l=m$}, \end{cases}
\qquad [a^*_ka^*_l, a^*_m] = 0,
\]
$1 \le k,l,m \le 2r$.
\end{lem}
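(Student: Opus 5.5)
The plan is to derive every listed bracket from the Heisenberg relations \eqref{Heisenberg-rels} together with the Leibniz identities $[xy,z]=x[y,z]+[x,z]y$ and $[x,yz]=[x,y]z+y[x,z]$. First I restate \eqref{Heisenberg-rels} in sign form. With the notation $\pm_k,\mp_k$ it reads
\[
[a_k,a^*_l]=\mp_k\delta_{kl}, \qquad [a^*_l,a_k]=\pm_k\delta_{kl},
\]
and all $a$'s commute among themselves, as do all $a^*$'s.

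For the normally ordered element, I will observe that $:a_ka^*_l:$ differs from $a_ka^*_l$ by a scalar. If $k\ne l$ they are equal; if $k=l$ then $\tfrac12(a_ka^*_k+a^*_ka_k)=a_ka^*_k-\tfrac12[a_k,a^*_k]$, and the commutator is a scalar. Since scalars are central, it is enough to compute $[a_ka^*_l,\,\cdot\,]$.

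The three families are then handled as follows.
\begin{itemize}
\item $[a_ka^*_l,a_m]=a_k[a^*_l,a_m]+[a_k,a_m]a^*_l=a_k\cdot(\pm_l\delta_{lm})$, which gives the first formula.
\item $[a_ka^*_l,a^*_m]=a_k[a^*_l,a^*_m]+[a_k,a^*_m]a^*_l=\mp_k\delta_{km}a^*_l$, which gives the second.
\item $[a_ka_l,a^*_m]=a_k[a_l,a^*_m]+[a_k,a^*_m]a_l=\mp_l\delta_{lm}a_k+\mp_k\delta_{km}a_l$. Splitting into the cases $m\notin\{k,l\}$, $k\ne l$ with $m=k$ or $m=l$, and $k=l=m$ yields the four cases stated. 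In the last case the two terms add to $\mp_k2a_k$.
\item $[a^*_ka^*_l,a_m]$ is treated identically using $[a^*_l,a_m]=\pm_l\delta_{lm}$.
\item $[a_ka_l,a_m]=0$ and $[a^*_ka^*_l,a^*_m]=0$ hold because the generators involved commute.
\end{itemize}

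There is no real obstacle here. The only step needing care is the sign bookkeeping: one must consistently use that $\pm_k$ is attached to the index $k$ of the pair being contracted, and that $[a^*_l,a_k]=-[a_k,a^*_l]$ flips $\mp$ to $\pm$.
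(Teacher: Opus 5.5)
Your proposal is correct and is the direct computation from the Heisenberg relations \eqref{Heisenberg-rels} that the paper leaves implicit; it states the lemma without proof. Your two key steps both check out: $:a_ka^*_l:$ differs from $a_ka^*_l$ only by a central scalar, and the signs follow from $[a_k,a^*_l]=\mp_k\delta_{kl}$ and $[a^*_l,a_k]=\pm_k\delta_{kl}$, including the doubled terms when $k=l=m$.
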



From these commutation relations we can compute further commutators.

\begin{lem}  \label{commutator-lem2}
We have the following commutation relations between these quadratic operators:
\begin{align*}
[:a_ja^*_k:, :a_la^*_m:] &= \begin{cases} 0  & \text{if $j=k$ and $l=m$}, \\
  0  & \text{if $j \ne m$ and $k \ne l$}, \\
  \pm_j :a_ja^*_j: \mp_k :a_ka^*_k:  & \text{if $j \ne k$, $j=m$ and $k=l$}, \\
  \mp_j a_la^*_k & \text{if $j=m$ and $k \ne l$}, \\
  \pm_k a_ja^*_m & \text{if $j \ne m$ and $k=l$}, \end{cases}  \\
[:a_ja^*_k:, a_la_m] &= \begin{cases} 0  & \text{if $k \ne l,m$}, \\
  \pm_k a_ja_m  & \text{if $k=l$ and $l \ne m$}, \\
  \pm_k a_ja_l & \text{if $k=m$ and $l \ne m$}, \\
  \pm_k 2 a_ja_k & \text{if $k=l=m$}, \end{cases}  \\
[:a_ja^*_k:, a^*_la^*_m] &= \begin{cases} 0  & \text{if $j \ne l,m$}, \\
  \mp_j a^*_ka^*_m  & \text{if $j=l$ and $l \ne m$}, \\
  \mp_j a^*_ka^*_l & \text{if $j=m$ and $l \ne m$}, \\
  \mp_j 2 a^*_ja^*_k & \text{if $j=l=m$}, \end{cases}  \\
[a_ja_k, a^*_la^*_m] &= \begin{cases} 0 & \text{if $j \ne l,m$ and $k \ne l,m$},\\
  \mp_j a_ka^*_m  & \text{if $j=l$, $j \ne k,m$ and $k \ne m$}, \\
  \mp_j a_ka^*_l & \text{if $j=m$ and $j \ne k,l$ and $k \ne m$}, \\
  \mp_j 2 a_ja^*_k & \text{if $j=l=m$ and $j \ne k$}, \\
  \mp_k a_ja^*_m  & \text{if $k=l$, $k \ne j,m$ and $j \ne m$}, \\
  \mp_k a_ja^*_l & \text{if $k=m$ and $k \ne j,l$ and $j \ne m$}, \\
  \mp_k 2 a_ja^*_k & \text{if $k=l=m$ and $j \ne k$}, \\
  \mp_k 2 a_ja^*_m & \text{if $j=k=l$ and $m \ne j$}, \\
  \mp_k 2 a_ja^*_l & \text{if $j=k=m$ and $l \ne j$}, \\
  \mp_k :a_ja^*_j: \mp_j :a_ka^*_k: & \text{if $j=l$, $k=m$ and $j \ne k$}, \\
  \mp_k :a_ja^*_j: \mp_j :a_ka^*_k: & \text{if $j=m$, $k=l$ and $j \ne k$}, \\
  \mp_j 4 :a_ja^*_j: & \text{if $j=k=l=m$}, \end{cases}  \\
[a_ja_k, a_la_m] &= [a^*_ja^*_k, a^*_la^*_m] = 0,
\end{align*}
$1 \le j,k,l,m \le 2r$.
\end{lem}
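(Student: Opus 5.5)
The plan is to derive every identity in Lemma \ref{commutator-lem2} from Lemma \ref{commutator-lem1}, using only that the commutator is a derivation:
\[
[X, YZ] = [X,Y]Z + Y[X,Z].
\]
Before starting, I will make two remarks that simplify the bookkeeping. First, for $k \ne l$ the normal ordered product $:a_ka^*_l:$ equals the plain product $a_ka^*_l$. For $k=l$ it differs from $a_ka^*_k$ only by a scalar, namely $\pm\frac12$ by \eqref{Heisenberg-rels}. Scalars commute with everything, so in the left slot of a commutator we may replace $:a_ja^*_k:$ by $a_ja^*_k$. Second, the results may legitimately contain normal ordered terms, and I will convert back with $a_ka^*_k = :a_ka^*_k: \mp_k \tfrac12$ wherever a diagonal product appears.

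For $[:a_ja^*_k:, a_la_m]$, I expand
\[
[:a_ja^*_k:, a_la_m] = [:a_ja^*_k:,a_l]\,a_m + a_l\,[:a_ja^*_k:,a_m].
\]
Lemma \ref{commutator-lem1} says $[:a_ja^*_k:,a_l] = \pm_k a_j\delta_{kl}$. The four cases then follow immediately, using that the $a$'s commute, and the doubled term appears when $k=l=m$. The computation for $[:a_ja^*_k:, a^*_la^*_m]$ is identical, using $[:a_ja^*_k:,a^*_m] = \mp_j a^*_k\delta_{jm}$. The identities $[a_ja_k,a_la_m]=0$ and $[a^*_ja^*_k,a^*_la^*_m]=0$ follow directly from \eqref{Heisenberg-rels}.

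For $[:a_ja^*_k:, :a_la^*_m:]$, I expand the second argument as $a_la^*_m$, up to a harmless scalar, and obtain
\[
[:a_ja^*_k:,a_l]\,a^*_m + a_l\,[:a_ja^*_k:,a^*_m] = \pm_k\delta_{kl}\,a_ja^*_m \mp_j \delta_{jm}\,a_la^*_k.
\]
Sorting by whether $j=m$ and whether $k=l$ gives the listed cases. Two of them need care:
\begin{itemize}
\item When $j=k$ and $l=m$, both operators are number-type operators, and I will check directly that the two terms cancel.
\item When $j=m$, $k=l$ and $j\ne k$, both terms contain diagonal products. Rewriting $a_ja^*_j$ and $a_ka^*_k$ in normal ordered form leaves a constant, and I must verify that this constant vanishes.
\end{itemize}

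The longest part is $[a_ja_k,a^*_la^*_m]$. There I use the four-term expansion
\[
[a_ja_k,a^*_la^*_m] = a_j[a_k,a^*_l]a^*_m + a_ja^*_l[a_k,a^*_m] + [a_j,a^*_l]a^*_ma_k + a^*_l[a_j,a^*_m]a_k.
\]
Each bracket is $\pm\delta$ by \eqref{Heisenberg-rels}. I then reorder each surviving product into the form $a\,a^*$, which is free when the indices differ. In the coincident-index cases, namely $j=l,\,k=m$, or $j=m,\,k=l$, or $j=k=l=m$, I convert to normal ordered form, and the scalar terms must cancel or combine to give exactly the stated $:a_ja^*_j:$ combinations.

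I expect this last step to be the main obstacle. The issue is not conceptual but a matter of tracking the signs $\pm_j$ versus $\mp_j$, together with the factors of $2$ and $4$, across the many cases. A useful consistency check is to use the ones I already have. Since $[a_ka_l,a^*_m]$ is given in Lemma \ref{commutator-lem1}, I will also compute
\[
[a_ja_k,a^*_la^*_m] = [a_ja_k,a^*_l]\,a^*_m + a^*_l\,[a_ja_k,a^*_m]
\]
directly from that lemma, and compare the two computations case by case.
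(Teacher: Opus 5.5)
Your method is the right one, and it is all the paper offers (it only says these commutators can be computed from Lemma~\ref{commutator-lem1}): expand with the derivation rule, feed in Lemma~\ref{commutator-lem1}, and normal-order diagonal products via $a_ka^*_k = :a_ka^*_k: \mp_k \tfrac12$. The gap is your claim that sorting the cases ``gives the listed cases''. Carried out honestly, your own formulas contradict two entries of the statement as printed, so the proof cannot close as written.

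In the first bracket with $j=m$, $k=l$, $j\ne k$, your expression $\pm_k\delta_{kl}\,a_ja^*_m \mp_j\delta_{jm}\,a_la^*_k$ becomes $\pm_k a_ja^*_j \mp_j a_ka^*_k$. The constants do cancel, but what remains is $\pm_k :a_ja^*_j: \mp_j :a_ka^*_k:$, not $\pm_j :a_ja^*_j: \mp_k :a_ka^*_k:$. These differ by an overall sign whenever exactly one of $j,k$ is $\le r$. For example, with $r=1$, $j=1$, $k=2$,
\[
[:a_1a^*_2:,\, :a_2a^*_1:] = [w_1w^*_2,\, \partial_{w^*_2}\partial_{w_1}] = -\bigl(1 + w_1\partial_{w_1} + w^*_2\partial_{w^*_2}\bigr) = -\bigl(:a_1a^*_1: + :a_2a^*_2:\bigr),
\]
whereas the printed entry gives $+\bigl(:a_1a^*_1: + :a_2a^*_2:\bigr)$. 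The corrected entry is also the one compatible with the proof of Proposition~\ref{iso-prop}. There $:a_ka^*_l:$ acts on $U_1$ as $\mp_k$ times the matrix unit in position $(l,k)$.

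The last bracket has a similar error when $j=l=m\ne k$. Your expansion gives $[a_ja_k, a^*_ja^*_j] = 2[a_j,a^*_j]\,a^*_ja_k = \mp_j 2a_ka^*_j$, not $\mp_j 2a_ja^*_k$. With $r=1$, $j=1$, $k=2$ one gets $[w_1\partial_{w^*_2}, \partial_{w_1}^2] = -2\partial_{w_1}\partial_{w^*_2} = -2a_2a^*_1$, not $-2w_1w^*_2$. Your proposed cross-check via the formula for $[a_ka_l,a^*_m]$ in Lemma~\ref{commutator-lem1} would expose this at once.

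In addition, the conditions of the third and sixth cases of that bracket should read $k\ne l$ and $j\ne l$. As printed, ``$k\ne m$'' and ``$j\ne m$'' are redundant. Those cases then overlap the cases $j=m,\,k=l$ and $j=l,\,k=m$, which carry different values.

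All remaining entries come out exactly as stated. So your argument actually proves the corrected lemma, and you should record these corrections rather than claim agreement with the printed table. The errors do not propagate: Proposition~\ref{iso-prop} only uses that the brackets stay inside the relevant spans, which remains true.
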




Using quadratic operators \eqref{quadratic-operators} we can construct
various Lie algebras.

\begin{prop}  \label{iso-prop}
\begin{enumerate}
\item
Vector space
$\mathfrak{g}_1 = \BB R \Span \{:a_ka^*_l: ;\: 1 \le k,l \le 2r \}$
  is a real Lie algebra isomorphic to $\mathfrak{gl}(2r,\BB R)$;
\item
  Vector space
  $\mathfrak{g}_2 = \BB C \Span \{:a_ka^*_l: ;\: 1 \le k,l \le 2r \}$
  is a complex Lie algebra isomorphic to $\mathfrak{gl}(2r,\BB C)$;
\item
  Vector spaces
  $\mathfrak{g}_3 =
  \BB R \Span \{i:a_ja^*_j:,\: a_ka^*_l-a_la^*_k, \: ia_ka^*_l+ia_la^*_k ;\:
  1 \le j, k,l \le r,\: k<l \}$ and \\
  $\mathfrak{g}'_3 =
  \BB R \Span \{i:a_ja^*_j:,\: a_ka^*_l-a_la^*_k, \: ia_ka^*_l+ia_la^*_k ;\:
  r+1 \le j, k,l \le 2r,\: k<l \}$
  are real Lie algebras isomorphic to $\mathfrak{u}(r)$;
\item
  Vector space
  $\mathfrak{g}_4 =
  \BB R \Span \{i:a_ja^*_j:,\: a_ka^*_l-a_la^*_k, \: ia_ka^*_l+ia_la^*_k ;\:
  1 \le j, k,l \le 2r,\: k<l \}$
  is a real Lie algebra isomorphic to $\mathfrak{u}(r,r)$;  
\item
  Vector space
  $\mathfrak{g}_5 =
  \BB R \Span \{:a_ka^*_l:+:a_{l+r}a^*_{k+r}:,\: a_ka^*_{l+r}+a_la^*_{k+r}, \:
  a_{k+r}a^*_l+a_{l+r}a^*_k ;\: 1 \le k,l \le r \}$
  is a real Lie algebra isomorphic to $\mathfrak{sp}(2r,\BB R)$;
\item
  Vector space
  $\mathfrak{g}_6 =
  \BB C \Span \{:a_ka^*_l:+:a_{l+r}a^*_{k+r}:,\: a_ka^*_{l+r}+a_la^*_{k+r}, \:
  a_{k+r}a^*_l+a_{l+r}a^*_k ;\: 1 \le k,l \le r \}$
  is a complex Lie algebra isomorphic to $\mathfrak{sp}(2r,\BB C)$;
\item
  Vector space
  $\mathfrak{g}_7 =
  \BB R \Span \{:a_ka^*_l:,\: a_ka_l,\: a^*_ka^*_l ;\: 1 \le k,l \le 2r \}$
  is a real Lie algebra isomorphic to $\mathfrak{sp}(4r,\BB R)$;
\item
  Vector space
  $\mathfrak{g}_8 =
  \BB C \Span \{:a_ka^*_l:,\: a_ka_l,\: a^*_ka^*_l ;\: 1 \le k,l \le 2r \}$
  is a complex Lie algebra isomorphic to $\mathfrak{sp}(4r,\BB C)$.
\end{enumerate}
\end{prop}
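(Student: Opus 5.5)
The plan is to reduce every item to one structural fact: the span of the quadratic operators \eqref{quadratic-operators} is the metaplectic (oscillator) realization of $\mathfrak{sp}(4r)$. Concretely, let $V$ be the complex span of $a_1,\dots,a_{2r},a^*_1,\dots,a^*_{2r}$. By \eqref{Heisenberg-rels}, the commutator restricted to $V$ is a nondegenerate skew form $\omega$ with values in scalars. For any quadratic element $q$ (symmetrized product $\tfrac12(xy+yx)$ with $x,y\in V$), the map $\operatorname{ad} q$ preserves $V$, by Lemma \ref{commutator-lem1}. Since $[q,\cdot]$ is a derivation, $\operatorname{ad} q|_V$ lies in $\mathfrak{sp}(V,\omega)$.

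The map $q\mapsto \operatorname{ad}q|_V$ is a Lie algebra homomorphism because $\operatorname{ad}$ respects brackets. It is injective: a symmetrized quadratic commuting with all generators of the Heisenberg algebra would be a scalar, and a nonzero symmetrized quadratic is never a scalar. The dimensions agree, since there are $\binom{4r+1}{2}=\dim\mathfrak{sp}(4r)$ symmetric quadratics. Closure of the span under brackets follows from Lemma \ref{commutator-lem2}; note that the bracket of two symmetrized quadratics produces no constant terms, thanks to the normal ordering $:a_ka^*_k:$. This proves (8). For (7), I will check that $\omega$ is real in the real basis $a_k,a^*_k$ of $V_{\BB R}=\BB R\Span\{a_k,a^*_k\}$ and that the listed real quadratics act by real matrices; this identifies $\mathfrak g_7$ with $\mathfrak{sp}(V_{\BB R},\omega)\simeq\mathfrak{sp}(4r,\BB R)$.

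The remaining items are subalgebras described through their action on $V$. The operators $:a_ka^*_l:$ preserve the Lagrangian subspace $L=\Span\{a_m\}$ and its complement $L^*=\Span\{a^*_m\}$. By Lemma \ref{commutator-lem1}, $[:a_ka^*_l:,a_m]=\pm_l\delta_{lm}a_k$, so up to the signs $\pm_l$ (which a diagonal rescaling absorbs) $:a_ka^*_l:$ acts on $L$ as the elementary matrix $E_{kl}$. The stabilizer of a Lagrangian splitting is $\mathfrak{gl}(L)$, which gives (1) and (2) over $\BB R$ and $\BB C$. For (3) and (4), I will introduce the Hermitian form on $L$ equal to $\operatorname{diag}(I_r,-I_r)$, corresponding to the signs $\pm_l$. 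Then $a_ka^*_l-a_la^*_k$, $i(a_ka^*_l+a_la^*_k)$ and $i:a_ja^*_j:$ correspond to $E_{kl}-E_{lk}$, $i(E_{kl}+E_{lk})$ and $iE_{jj}$, adjusted by the signs. I will verify that these span $\mathfrak u(r,r)$ in the basis where the form is $\operatorname{diag}(I_r,-I_r)$. Restricted to indices $\le r$ or to indices $\ge r+1$, the form is definite, giving $\mathfrak u(r)$. A dimension count ($r^2$, respectively $4r^2$) finishes these items.

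For (5) and (6), I will exhibit a symplectic form $\Omega$ on $L\simeq\BB C^{2r}$ pairing $a_k$ with $a_{k+r}$, namely $\Omega=\bigl(\begin{smallmatrix}0&I\\-I&0\end{smallmatrix}\bigr)$ twisted by the sign $\pm$. I will then check, using the $E_{kl}$ dictionary above, that the three families correspond to the blocks $\bigl(\begin{smallmatrix}A&0\\0&-A^t\end{smallmatrix}\bigr)$, $\bigl(\begin{smallmatrix}0&B\\0&0\end{smallmatrix}\bigr)$ with $B$ symmetric, and $\bigl(\begin{smallmatrix}0&0\\C&0\end{smallmatrix}\bigr)$ with $C$ symmetric, with signs fixed by $\pm_l$. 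Dimension $2r^2+r$ completes the identification. The main obstacle is exactly this sign bookkeeping: the mixed signs $\pm_l$ in $[:a_ka^*_l:,a_m]$ must be matched with the chosen forms, so that for instance $:a_ka^*_l:+:a_{l+r}a^*_{k+r}:$ really acts as $\bigl(\begin{smallmatrix}A&0\\0&-A^t\end{smallmatrix}\bigr)$ and not as $\bigl(\begin{smallmatrix}A&0\\0&A^t\end{smallmatrix}\bigr)$. I will first settle this on the case $r=1$ and then extend indexwise.
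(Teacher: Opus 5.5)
Your argument is sound and is essentially the paper's. The paper also realizes each $\mathfrak g_i$ through the adjoint action on the span of the $a$'s and $a^*$'s. For items 1--6 it uses $\BB R\Span\{a^*_m\}$ where you use the span $L$ of the $a_m$, an alternative its footnote mentions. Your commutator form $\omega$, with the injectivity and dimension count, justifies what the paper simply asserts for $\mathfrak g_7$ via the signed basis of $U_7$, which is a basis in which $\omega$ is standard.

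One correction: the parenthetical claim that a diagonal rescaling absorbs the signs $\pm_l$ is false. Conjugating $\pm_l E_{kl}$ by $\operatorname{diag}(d_1,\dots,d_{2r})$ gives $(d_k/d_l)\,\pm_l E_{kl}$. For $k=l>r$ this is $-E_{ll}$ whatever the $d$'s are. In fact the unsigned assignment $:a_ka^*_l:\mapsto E_{kl}$ is not a Lie algebra homomorphism at all.

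The signs are essential. Without them, the matrices attached to the generators of $\mathfrak g_4$ would span the compact $\mathfrak u(2r)$, not $\mathfrak u(r,r)$. Likewise $:a_ka^*_l:+:a_{l+r}a^*_{k+r}:$ would become $\bigl(\begin{smallmatrix} A & 0 \\ 0 & A^T \end{smallmatrix}\bigr)$, which is exactly the worry you list as the main obstacle.

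If you keep the exact formula $:a_ka^*_l:\mapsto \pm_l E_{kl} = E_{kl}S$ with $S=\operatorname{diag}(I_r,-I_r)$, the bookkeeping is immediate:
\begin{itemize}
\item The image of $\mathfrak g_4$ is $\{MS;\: M\in\mathfrak u(2r)\}=\{X;\: X^*S+SX=0\}\simeq\mathfrak u(r,r)$.
\item The three families in $\mathfrak g_5$ go to $E_{kl}-E_{l+r,k+r}$, $-(E_{k,l+r}+E_{l,k+r})$ and $E_{k+r,l}+E_{l+r,k}$. These span precisely the standard $\bigl(\begin{smallmatrix} A & B \\ C & -A^T \end{smallmatrix}\bigr)$ with $B$, $C$ symmetric.
\end{itemize}
So the untwisted form $\bigl(\begin{smallmatrix} 0 & I \\ -I & 0 \end{smallmatrix}\bigr)$ works, and no $r=1$ preliminary case is needed.
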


\begin{proof}
\begin{enumerate}
\item
  From Lemma \ref{commutator-lem2} we see that $\mathfrak{g}_1$
  is a real Lie algebra. This Lie algebra acts on the space of
  all linear operators on $W$ by commutators.
  Lemma \ref{commutator-lem1} shows that $\mathfrak{g}_1$ preserves
  a real vector subspace $U_1=\BB R \Span\{a^*_1,\dots,a^*_{2r}\}$.
  Thus, we have a Lie algebra homomorphism
  $\mathfrak{g}_1 \to \mathfrak{gl}(U_1)$.
  Moreover, this homomorphism sends a generator $:a_ka^*_l:$ into a matrix
  with $\mp_k 1$ in the $(l,k)$-entry and zero elsewhere. This proves that\footnote{One can also identify $\mathfrak{g}_1$ with endomorphisms of
      $U'_1=\BB R \Span\{a_1,\dots,a_{2r}\}$. The resulting isomorphism
      $\mathfrak{g}_1 \simeq \mathfrak{gl}(U'_1) \simeq \mathfrak{gl}(2r,\BB R)$
      is related to the previous one by taking the negative transpose of
      matrices.}
  $\mathfrak{g}_1 \simeq \mathfrak{gl}(U_1) \simeq \mathfrak{gl}(2r,\BB R)$.
      
\item
  Follows immediately from the first part, since
  $\mathfrak{g}_2 = \BB C \otimes \mathfrak{g}_1$.

\item
  Let $U_2=\BB C \Span\{a^*_1,\dots,a^*_{2r}\} = \BB C \otimes U_1$.
  By Lemma \ref{commutator-lem1}, we have an isomorphism
  $\mathfrak{g}_2 \simeq \mathfrak{gl}(U_2) \simeq \mathfrak{gl}(2r,\BB C)$.
  Then $\mathfrak{g}_3$ and $\mathfrak{g}'_3$ are real subspaces of
  $\mathfrak{g}_2$ corresponding to the skew-Hermitian matrices of the form
  $\bigl( \begin{smallmatrix} A & 0 \\ 0 & 0 \end{smallmatrix}\bigr)$, $A^*=-A$,
  and
  $\bigl( \begin{smallmatrix} 0 & 0 \\ 0 & D \end{smallmatrix}\bigr)$, $D^*=-D$,
  respectively.
  This proves that $\mathfrak{g}_3$ and $\mathfrak{g}'_3$ are real Lie algebras
  isomorphic to $\mathfrak{u}(r)$.
  
\item
  Under the isomorphism
  $\mathfrak{g}_2 \simeq \mathfrak{gl}(U_2) \simeq \mathfrak{gl}(2r,\BB C)$,
  $\mathfrak{g}_4$ corresponds to the matrices of the form
  $\bigl( \begin{smallmatrix} A & B \\ B^* & D \end{smallmatrix}\bigr)$
  with $A^*=-A$, $D^*=-D$. Such matrices form a Lie algebra $\mathfrak{u}(r,r)$
  preserving the Hermitian form
  $\bigl( \begin{smallmatrix} 1 & 0 \\ 0 & -1 \end{smallmatrix}\bigr)$.
  This proves that $\mathfrak{g}_4$ is real Lie algebra isomorphic to
  $\mathfrak{u}(r,r)$.

\item
  Under the isomorphism
  $\mathfrak{g}_1 \simeq \mathfrak{gl}(U_1) \simeq \mathfrak{gl}(2r,\BB R)$,
  $\mathfrak{g}_5$ corresponds to the matrices of the form
  $\bigl( \begin{smallmatrix} A & B \\ C & -A^T \end{smallmatrix}\bigr)$
  with $B^T=B$, $C^T=C$. Such matrices form a Lie algebra
  $\mathfrak{sp}(2r,\BB R)$ preserving the symplectic form
  $\bigl( \begin{smallmatrix} 0 & 1 \\ -1 & 0 \end{smallmatrix}\bigr)$.
  This proves that $\mathfrak{g}_5$ is real Lie algebra isomorphic to
  $\mathfrak{sp}(2r,\BB R)$.

\item
  Follows immediately from the previous part, since
  $\mathfrak{g}_6 = \BB C \otimes \mathfrak{g}_5$.
  
\item
  From Lemma \ref{commutator-lem2} we see that $\mathfrak{g}_7$
  is a real Lie algebra. This Lie algebra acts on the space of
  all linear operators on $W$ by commutators.
  Lemma \ref{commutator-lem1} shows that $\mathfrak{g}_7$ preserves
  a real vector subspace $U_7$ with basis
  $\{a_1,\dots,a_{2r},a^*_1,\dots,a^*_r,-a^*_{r+1},\dots,-a^*_{2r}\}$.
  Thus, we have an injective Lie algebra homomorphism
  $\mathfrak{g}_7 \to \mathfrak{gl}(U_7)$. Under this homomorphism,
  $\mathfrak{g}_7$ corresponds to the real matrices of the form
  $\bigl( \begin{smallmatrix} A & B \\ C & -A^T \end{smallmatrix}\bigr)$
  with $B^T=B$, $C^T=C$.
  Such matrices form a Lie algebra $\mathfrak{sp}(4r,\BB R)$
  preserving the symplectic form
  $\bigl( \begin{smallmatrix} 0 & 1 \\ -1 & 0 \end{smallmatrix}\bigr)$.
  This proves that $\mathfrak{g}_7$ is real Lie algebra isomorphic to
  $\mathfrak{sp}(4r,\BB R)$.

\item
  Follows immediately from the previous part, since
  $\mathfrak{g}_8 = \BB C \otimes \mathfrak{g}_7$.
\end{enumerate}
\end{proof}

Thus, we obtain a representation of $\mathfrak{sp}(4r,\BB C) =
\BB C \Span \{:a_ka^*_l:,\: a_ka_l,\: a^*_ka^*_l ;\: 1 \le k,l \le 2r \}$ on $W$.
Inside $W$ we have $\mathfrak{sp}(4r,\BB C)$-invariant submodules
\[
W^+ = \BB C [w_1,\dots,w_r,w^*_{r+1},\dots,w^*_{2r}]
\]
(genuine polynomials) and
\[
W^{aux} = \BB C \Span \bigl\{ w_1^{\alpha_1} \cdots w_r^{\alpha_r}
(w^*_{r+1})^{\alpha_{r+1}} \cdots (w^*_{2r})^{\alpha_{2r}} \in W ;\:
  \text{at least one $\alpha_k \ge 0$} \bigr\}.
\]
Let $W^-$ be the quotient module $W/W^{aux}$. As a vector space,
\[
W^- = w_1^{-1} \cdots w_r^{-1}(w^*_{r+1})^{-1} \cdots (w^*_{2r})^{-1} \cdot
\BB C \bigl[ w_1^{-1},\dots,w_r^{-1},(w^*_{r+1})^{-1},\dots,(w^*_{2r})^{-1} \bigr].
\]
The operators
\[
a^*_1,\dots,a^*_r,a_{r+1},\dots,a_{2r}
\]
act on $W^-$ by differentiation with respect to
$w_1,\dots,w_r,w^*_{r+1},\dots,w^*_{2r}$ as before.
And the action of operators
\[
a_1,\dots,a_r,a^*_{r+1},\dots,a^*_{2r}
\]
on $W^-$ is modified so that if a multiplication by
$w_1,\dots,w_r,w^*_{r+1},\dots,w^*_{2r}$ results in a monomial with
a non-negative power, that monomial is replaced with zero.
The $\mathfrak{sp}(4r,\BB C)$-modules $(\pi^+, W^+)$ and $(\pi^-, W^-)$
are called the {\em metaplectic representations}
(they extend to the metaplectic Lie group).

We assign degrees to the generators of $W$:
\[
\deg w_k =-1,\quad k=1,\dots,r, \qquad \deg w^*_k=1, \qquad k=r+1,\dots,2r.
\]
Then we have decompositions
\[
W^+ = \bigoplus_{n \in \BB Z} W^+_n \qquad \text{and} \qquad
W^- = \bigoplus_{n \in \BB Z} W^-_n,
\]
where $W^{\pm}_n$ consist of homogeneous polynomials of degree $n$.
Each $W^{\pm}_n$ is invariant under the action of $\mathfrak{gl}(2r,\BB C) =
\BB C \Span \{:a_ka^*_l: ;\: 1 \le k,l \le 2r \}$, and we denote this action
by $\pi^+_n$ and $\pi^-_n$ respectively.
It can be shown that the representations $(\pi^+_n, W^+_n)$ and
$(\pi^-_n, W^-_n)$ of $\mathfrak{gl}(2r,\BB C)$ are irreducible.
The $\mathfrak{gl}(2r,\BB C)$-modules $W^{\pm}_n$ can be completed to unitary
representations of the group $U(r,r)$ with Lie algebra
\[
\mathfrak{u}(r,r) =
\BB R \Span \{i:a_ja^*_j:,\: a_ka^*_l-a_la^*_k, \: ia_ka^*_l+ia_la^*_k ;\:
1 \le j, k,l \le 2r,\: k<l \}.
\]
We have decompositions
\[
W^+ = V_+ \otimes V_+^*, \qquad
V_+=\BB C[w_1,\dots,w_r],\: V_+^*=\BB C[w^*_{r+1},\dots,w^*_{2r}]
\]
and $W^- = V_- \otimes V_-^*$, where
\begin{align*}
V_-&= w_1^{-1} \cdots w_r^{-1} \cdot \BB C\bigl[w_1^{-1},\dots,w_r^{-1}\bigr], \\
V_-^*&= (w^*_{r+1})^{-1} \cdots (w^*_{2r})^{-1} \cdot
\BB C\bigl[(w^*_{r+1})^{-1},\dots,(w^*_{2r})^{-1}\bigr].
\end{align*}
Let $K_r \simeq GL(r,\BB C) \times GL(r,\BB C)$ be the Lie group obtained by
exponentiating
\[
\mathfrak{k}_r = \BB C \Span \{ :a_ka^*_l: ;\: 1 \le k,l \le r
\text{ or } r+1 \le k,l \le 2r \}.
\]
Then the $K_r$-types of $(\pi^+_n, W^+_n)$ are realized by polynomials that are
homogeneous in $w_1,\dots,w_r$ and at the same time homogeneous in
$w^*_{r+1},\dots,w^*_{2r}$.
More precisely, the $K_r$-types of $W^+_n$ are elements of
$V_+(d) \otimes V_+^*(d+n)$, where $V_+(d)$ and $V_+^*(d+n)$ denote homogeneous
polynomials in $V_+$ and $V_+^*$ of degree $d$ and $d+n$ respectively.
Similarly, the $K_r$-types of $(\pi^-_n, W^-_n)$ are realized by polynomials
that are homogeneous in $w_1^{-1},\dots,w_r^{-1}$ and at the same time
homogeneous in $(w^*_{r+1})^{-1},\dots,(w^*_{2r})^{-1}$.
Thus, the $K_r$-types of $W^-_n$ are elements of
$V_-(-d-r-n) \otimes V_-^*(-d-r)$, where $V_-(-d-r-n)$ and $V_-^*(-d-r)$
denote homogeneous polynomials in $V_-$ and $V_-^*$ of degree $-(d+r+n)$ and
$-(d+r)$ respectively.

\subsection{The Metaplectic Representation $W^+$ of $\mathfrak{sp}(8,\BB C)$
and the $n$-Regular Functions}

We restrict ourselves to the case of $r=2$.
Then
\[
W^+= \BB C[w_1,w_2,w^*_3,w^*_4]
= V_+ \otimes V_+^*, \qquad V_+=\BB C[w_1,w_2],\: V_+^*=\BB C[w^*_3,w^*_4].
\]
The isomorphism from Proposition \ref{iso-prop}
\[
\BB R \Span \{:a^*_ka_l: ;\: 1 \le k,l \le 4 \} \simeq \mathfrak{gl}(4,\BB R)
\]
identifies $:a^*_ka_l:$ with a $4 \times 4$ matrix with $\mp_l 1$ in the
$(k,l)$-entry and zero elsewhere, where $\mp_l$ is ``$-$'' if $l=1,2$ and
``$+$'' if $l=3,4$.
We write $K$ in place of $K_2 \simeq GL(2,\BB C) \times GL(2,\BB C)$
-- the Lie group obtained by exponentiating
\[
\mathfrak{k} =
\BB C \Span \{ :a^*_ka_l: ;\: 1 \le k,l \le 2 \text{ or } 3 \le k,l \le 4 \}.
\]

Observe that $W^+_0$ (polynomials of degree zero in $W^+$) has a basis
consisting of monomials of the form
\begin{equation}  \label{monomial-basis}
  p^l_{\mu,\nu}(w_1,w_2,w^*_3,w^*_4) = \tfrac{(-1)^{2l}}
       {(l-\nu)!(l+\nu)!} (w_1)^{l-\nu}(w_2)^{l+\nu}(w^*_3)^{l-\mu}(w^*_4)^{l+\mu},
\qquad
\begin{smallmatrix} l = 0, \frac12, 1, \frac32, \dots, \\
  -l \le \mu,\:\nu \le l, \\ \mu,\:\nu \in \BB Z +l. \end{smallmatrix}
\end{equation}
For a fixed $l$, these monomials span the $K$-type $V_l \boxtimes V_l$.

To each monomial \eqref{monomial-basis} we associate the matrix coefficient
of $SU(2)$ described by equation (27) of \cite{FL1} (cf. \cite{V}):
\begin{equation}  \label{t}
t^l_{\nu\,\underline{\mu}}(Z) = \frac 1{2\pi i}
\oint (sz_{11}+z_{21})^{l-\mu} (sz_{12}+z_{22})^{l+\mu} s^{-l+\nu} \,\frac{ds}s,
\qquad
\begin{smallmatrix} l = 0, \frac12, 1, \frac32, \dots, \\
  -l \le \mu,\:\nu \le l, \\ \mu,\:\nu \in \BB Z +l, \end{smallmatrix}
\end{equation}
where $Z=\bigl(\begin{smallmatrix} z_{11} & z_{12} \\
z_{21} & z_{22} \end{smallmatrix}\bigr) \in \HC$,
the integral is taken over a loop in $\BB C$ going once around the origin
in the counterclockwise direction.
We regard these functions as polynomials on $\HC$, and we frequently use them
to construct $K$-type bases of various spaces as in, for example,
\cite{FL1}, \cite{desitter}, \cite{ATMP}, \cite{qreg}.
This results in a vector space isomorphism
\[
\Phi^+_0 : W^+_0 \xrightarrow{\sim} {\cal H}^+
= \bigl\{ \phi \in \BB C[z_{11},z_{12},z_{21},z_{22}] ;\: \square \phi =0 \bigr\}
= \BB C\text{-span of } \bigl\{  t^l_{\nu \, \underline{\mu}}(Z) \bigr\}.
\]

By direct computation and Lemma 22 in \cite {FL1} we obtain:

\begin{lem}  \label{intertwiner-lem}
We have the following intertwining relations:
\begin{align*}
\Phi^+_0 \begin{pmatrix} a^*_1a_3 & a^*_1a_4 \\ a^*_2a_3 & a^*_2a_4 \end{pmatrix}
p^l_{\mu,\nu} &= - \left(\begin{smallmatrix}
  (l-\mu) t^{l-\frac12}_{\nu+\frac12\,\underline{\mu+\frac12}}(Z) &
  (l+\mu) t^{l-\frac12}_{\nu+\frac12\,\underline{\mu-\frac12}}(Z) \\
  (l-\mu) t^{l-\frac12}_{\nu-\frac12\,\underline{\mu+\frac12}}(Z) &
  (l+\mu) t^{l-\frac12}_{\nu-\frac12\,\underline{\mu-\frac12}}(Z)
\end{smallmatrix}\right)
= - \bigl(\begin{smallmatrix} \partial_{11} & \partial_{12} \\
\partial_{21} & \partial_{22} \end{smallmatrix}\bigr) t^l_{\nu\,\underline{\mu}}(Z),\\
\Phi^+_0 \begin{pmatrix} a^*_3a_1 & a^*_3a_2 \\ a^*_4a_1 & a^*_4a_2 \end{pmatrix}
p^l_{\mu,\nu} &= - \left(\begin{smallmatrix}
  (l-\nu+1) t^{l+\frac12}_{\nu-\frac12\,\underline{\mu-\frac12}}(Z) &
  (l+\nu+1) t^{l+\frac12}_{\nu+\frac12\,\underline{\mu-\frac12}}(Z) \\
  (l-\nu+1) t^{l+\frac12}_{\nu-\frac12\,\underline{\mu+\frac12}}(Z) &
  (l+\nu+1) t^{l+\frac12}_{\nu+\frac12\,\underline{\mu+\frac12}}(Z)
\end{smallmatrix}\right).
\end{align*}
\end{lem}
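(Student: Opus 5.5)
We verify the two matrix identities entry by entry on the monomial basis \eqref{monomial-basis}, and then compare with the derivative formulas for the matrix coefficients $t^l_{\nu\,\underline{\mu}}(Z)$ recorded in Lemma 22 of \cite{FL1}.

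\emph{Action on monomials.} The operators $a^*_1,a^*_2,a_3,a_4$ act on $W^+$ as $\partial/\partial w_1$, $\partial/\partial w_2$, $\partial/\partial w^*_3$, $\partial/\partial w^*_4$. The operators $a_1,a_2,a^*_3,a^*_4$ act as multiplication by $w_1,w_2,w^*_3,w^*_4$. For distinct indices the products in question are already normally ordered, so no correction terms arise.

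Take for instance $a^*_1a_3\,p^l_{\mu,\nu}$. Differentiating in $w_1$ and in $w^*_3$ produces the factor $(l-\nu)(l-\mu)$. The monomial that remains is $(w_1)^{l-\nu-1}(w_2)^{l+\nu}(w^*_3)^{l-\mu-1}(w^*_4)^{l+\mu}$. Writing this as a multiple of $p^{l-\frac12}_{\mu+\frac12,\nu+\frac12}$:
\begin{itemize}
\item the factor $(l-\nu)$ cancels against the change of the factorial $(l-\nu)!$ in the normalization;
\item the sign $(-1)^{2l}$ changes to $(-1)^{2l-1}$, which gives the overall minus sign.
\end{itemize}
Hence
\[
a^*_1a_3\,p^l_{\mu,\nu} = -(l-\mu)\,p^{l-\frac12}_{\mu+\frac12,\nu+\frac12}.
\]
Applying $\Phi^+_0$ turns this into $-(l-\mu)\,t^{l-\frac12}_{\nu+\frac12\,\underline{\mu+\frac12}}(Z)$. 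The other three entries of the first matrix are handled the same way; the surviving coefficient is always the exponent of the differentiated $w^*$ variable, namely $(l\mp\mu)$.

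For the second matrix, take $a^*_3a_1\,p^l_{\mu,\nu}$. This multiplies the monomial by $w_1w^*_3$, giving $(w_1)^{l-\nu+1}(w_2)^{l+\nu}(w^*_3)^{l-\mu+1}(w^*_4)^{l+\mu}$. Renormalizing to $p^{l+\frac12}_{\mu-\frac12,\nu-\frac12}$ produces the factor $(l-\nu+1)$ from $(l-\nu+1)!$, together with a sign change. This yields the stated entries; the other three entries follow analogously.

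\emph{Derivative identity.} The remaining claim is the second equality in the first formula: that $\partial_{ij}t^l_{\nu\,\underline{\mu}}$ equals the listed combinations. One can differentiate under the integral sign in \eqref{t}. For example,
\[
\partial_{11}\,t^l_{\nu\,\underline{\mu}}(Z) = \frac{1}{2\pi i}\oint (l-\mu)\,s\,(sz_{11}+z_{21})^{l-\mu-1}(sz_{12}+z_{22})^{l+\mu}\,s^{-l+\nu}\,\frac{ds}{s}.
\]
The integrand is that of $t^{l-\frac12}_{\nu+\frac12\,\underline{\mu+\frac12}}$, since $-(l-\frac12)+(\nu+\frac12)=-l+\nu+1$, so
\[
\partial_{11}\,t^l_{\nu\,\underline{\mu}}=(l-\mu)\,t^{l-\frac12}_{\nu+\frac12\,\underline{\mu+\frac12}}.
\]
The other three partial derivatives are computed in the same way, or quoted directly from Lemma 22 of \cite{FL1}.

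The main obstacle is purely bookkeeping. One has to keep track of the index conventions, in particular which index of $t$ is underlined and how $\mu$ and $\nu$ pair with the $w$ and $w^*$ variables. One also has to get the sign from $(-1)^{2l}$ right. A mismatch in these would most likely show up as swapped off-diagonal entries.
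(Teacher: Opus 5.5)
Your proposal is correct and is essentially the paper's argument: the paper also obtains the lemma by direct computation of these commuting quadratic operators on the monomials $p^l_{\mu,\nu}$, and it quotes Lemma 22 of \cite{FL1} for the derivative identity, which you alternatively verify by differentiating under the integral in \eqref{t}. The only point left implicit (in the paper as well) is the boundary case $|\nu|=l$ in the first formula, where the would-be target monomial $p^{l-\frac12}_{\mu\pm\frac12,\nu\pm\frac12}$ lies outside the index range; there the left side is $0$ directly because one differentiates $w_1^0$ or $w_2^0$, and the right side vanishes because $t^{l'}_{\nu'\,\underline{\mu'}}=0$ for $|\nu'|>l'$ by the residue formula \eqref{t}.
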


\begin{prop}  \label{harmonic+iso-prop}
  The isomorphism $\Phi^+_0$ intertwines the actions $\pi^+_0$ on $W^+_0$ and
  $\pi^0_l$ on ${\cal H}^+$ of $\mathfrak{sl}(4,\BB C) = \mathfrak{sl}(2,\HC)$.
\end{prop}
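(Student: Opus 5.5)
The plan is to check the intertwining property on a set of generators of $\mathfrak{sl}(4,\BB C)$ rather than on the whole algebra. Since $\Phi^+_0$ is linear and both $\pi^+_0$ and $\pi^0_l$ are Lie algebra representations, it suffices to verify $\Phi^+_0 \circ \pi^+_0(X) = \pi^0_l(X) \circ \Phi^+_0$ for $X$ ranging over generators. Here $\pi^0_l$ is the action on scalar harmonic functions, analogous to Lemma \ref{pi-Lie_alg-action} but without the spinor terms. For generators I will take the two off-diagonal blocks $\bigl(\begin{smallmatrix} 0 & B \\ 0 & 0 \end{smallmatrix}\bigr)$ and $\bigl(\begin{smallmatrix} 0 & 0 \\ C & 0 \end{smallmatrix}\bigr)$. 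Their brackets produce the diagonal blocks $\bigl(\begin{smallmatrix} A & 0 \\ 0 & D \end{smallmatrix}\bigr)$ with $\tr A+\tr D=0$, so together they generate $\mathfrak{sl}(4,\BB C)$.

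First I have to fix the identification of $\mathfrak{sl}(4,\BB C)$ inside $\mathfrak{gl}(4,\BB C)=\BB C\Span\{:a^*_ka_l:\}$ from Proposition \ref{iso-prop}. Under it, the $2\times 2$ blocks of operators $(a^*_ka_l)_{k\in\{1,2\},\,l\in\{3,4\}}$ and $(a^*_ka_l)_{k\in\{3,4\},\,l\in\{1,2\}}$ correspond to the off-diagonal blocks. The block with $k=1,2$ and $l=3,4$ lowers $l$, exactly as $-\tr(B\partial)$ does. The block with $k=3,4$ and $l=1,2$ raises $l$ and should match $\tr(ZCZ\partial+CZ)$, which is the scalar version without the extra term $CZf$. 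The signs $\mp_l$ and the possible transposes must be fixed so that this correspondence is consistent.

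The first half of Lemma \ref{intertwiner-lem} then shows directly that $\Phi^+_0(a^*_ka_l\, p^l_{\mu,\nu})$ equals $-\partial_{kl'}\,t^l_{\nu\,\underline{\mu}}$, where $l'$ is the relabelled index. This is precisely $\pi^0_l\bigl(\begin{smallmatrix} 0 & B \\ 0 & 0 \end{smallmatrix}\bigr)$ applied to $\Phi^+_0 p^l_{\mu,\nu}$ with $B$ an elementary matrix, so the $B$-block intertwines.

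For the $C$-block, the second half of Lemma \ref{intertwiner-lem} gives $\Phi^+_0$ of the raising operators as explicit combinations of $t^{l+\frac12}$. I must then show that $\tr(ZCZ\partial + CZ)\,t^l_{\nu\,\underline{\mu}}$, with $C$ an elementary matrix, gives the same combination. This is the content of Lemma 22 in \cite{FL1}, which computes the action of $\pi^0_l$ on the basis $t^l_{\nu\,\underline{\mu}}$. If I want to avoid relying on it, I can verify the identity directly: differentiate under the integral in \eqref{t}, use $ZCZ$ with $C=E_{ij}$, and integrate by parts in $s$.

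The main obstacle is this last matching for the $C$-block, specifically the coefficients $(l\mp\nu+1)$ and the index conventions: $\Phi^+_0$ pairs $w_1,w_2$ with $\nu$ and $w_3^*,w_4^*$ with $\mu$, while the row and column indices of $Z$ are transposed relative to $s$. I would check it on the lowest cases $l=0$ and $l=\frac12$ to fix signs, and then rely on Lemma 22 of \cite{FL1} for the general case. Finally, $\Phi^+_0$ is bijective by construction, since it maps a basis to a basis, so it is an isomorphism of modules.
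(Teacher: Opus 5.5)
Your proposal is correct and follows the paper's proof: you check the intertwining property only on the off-diagonal blocks $\bigl(\begin{smallmatrix} 0 & B \\ C & 0 \end{smallmatrix}\bigr)$ using Lemma \ref{intertwiner-lem}, and then use that these blocks generate $\mathfrak{sl}(2,\HC)$. The only difference is the source for the $C$-block step: the paper cites equation (15) of \cite{qreg} together with Lemma \ref{intertwiner-lem}, whereas you rely on Lemma 22 of \cite{FL1} or a direct computation.
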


\begin{proof}
  The action $\pi^0_l$ on ${\cal H}^+$ of $\mathfrak{sl}(2,\HC)$ is spelled
  out in Lemma 17 in \cite{FL1}.
  By equation (15) in \cite{qreg} and Lemma \ref{intertwiner-lem},
  $\Phi^+_0$ intertwines the actions on $W^+_0$ and ${\cal H}^+$ of matrices
  of the form
  $\bigl(\begin{smallmatrix} 0 & B \\ C & 0 \end{smallmatrix}\bigr)
  \in \mathfrak{gl}(2,\HC)$, $B, C \in \HC$.
  Since these matrices generate $\mathfrak{sl}(2,\HC)$, the result follows.
\end{proof}

\begin{rem}
The isomorphism $\Phi^+_0$ is {\em not} an intertwiner of the actions of
$\mathfrak{gl}(4,\BB C) = \mathfrak{gl}(2,\HC)$.
Scalar matrices $\lambda \in \mathfrak{gl}(2,\HC)$ act trivially
on $W^+_0$ via the $\pi^+_0$ action, by multiplication by $2\lambda$ 
on ${\cal H}^+$ via the $\pi^0_l$ action and by multiplication by $-2\lambda$ 
on ${\cal H}^+$ via the $\pi^0_r$ action.
\end{rem}

We turn our attention to $W^+_n$ -- polynomials of degree $n$ in $W^+$ --
with $n>0$. This space has a basis consisting of monomials of the form
\begin{equation}  \label{n-monomial-basis}
  P^{(n)}_{l,\mu,\nu}(w_1,w_2,w^*_3,w^*_4) = \tfrac{(-1)^{2l}}{(l-\nu)!(l+\nu)!}
  (w_1)^{l-\nu}(w_2)^{l+\nu}(w^*_3)^{l+\frac{n}2-\mu}(w^*_4)^{l+\frac{n}2+\mu},
\quad
\begin{smallmatrix} l = 0, \frac12, 1, \frac32, \dots, \\
  -l-\tfrac{n}2 \le \mu \le l+\tfrac{n}2, \\  -l \le \nu \le l, \\
  \mu+\tfrac{n}2,\:\nu \in \BB Z +l. \end{smallmatrix}
\end{equation}
For a fixed $l$, these monomials span the $K$-type
$V_l \boxtimes V_{l+\frac{n}2}$.
By direct computation we obtain:

\begin{lem}  \label{pi+-action-lem}
For $n>0$,
elements $\bigl(\begin{smallmatrix} 0 & B \\ C & 0 \end{smallmatrix}\bigr)
\in \mathfrak{sl}(4,\BB C)$, $B, C \in \mathfrak{gl}(2,\BB C)$, act on
$W^+_n$ via $\pi^+_n$ as follows:
\begin{align*}
\begin{pmatrix} a^*_1a_3 & a^*_1a_4 \\ a^*_2a_3 & a^*_2a_4 \end{pmatrix}
P^{(n)}_{l,\mu,\nu} &= - \left(\begin{smallmatrix}
  (l+\tfrac{n}2-\mu) P^{(n)}_{l-\frac12,\mu+\frac12,\nu+\frac12} &
  (l+\tfrac{n}2+\mu) P^{(n)}_{l-\frac12,\mu-\frac12,\nu+\frac12} \\
  (l+\tfrac{n}2-\mu) P^{(n)}_{l-\frac12,\mu+\frac12,\nu-\frac12} &
  (l+\tfrac{n}2+\mu) P^{(n)}_{l-\frac12,\mu-\frac12,\nu-\frac12}
\end{smallmatrix}\right),  \\
\begin{pmatrix} a^*_3a_1 & a^*_3a_2 \\ a^*_4a_1 & a^*_4a_2 \end{pmatrix}
P^{(n)}_{l,\mu,\nu} &= - \left(\begin{smallmatrix}
  (l-\nu+1) P^{(n)}_{l+\frac12,\mu-\frac12,\nu-\frac12} &
  (l+\nu+1) P^{(n)}_{l+\frac12,\mu-\frac12,\nu+\frac12} \\
  (l-\nu+1) P^{(n)}_{l+\frac12,\mu+\frac12,\nu-\frac12} &
  (l+\nu+1) P^{(n)}_{l+\frac12,\mu+\frac12,\nu+\frac12} \end{smallmatrix}\right).
\end{align*}
\end{lem}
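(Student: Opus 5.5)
The identities follow by applying each quadratic operator directly to the monomials \eqref{n-monomial-basis}. For $r=2$, $a_1,a_2$ act on $W^+$ by multiplication by $w_1,w_2$, and $a^*_3,a^*_4$ by multiplication by $w^*_3,w^*_4$. The operators $a^*_1,a^*_2$ act as $\partial/\partial w_1,\partial/\partial w_2$, and $a_3,a_4$ as $\partial/\partial w^*_3,\partial/\partial w^*_4$. Every entry of the two matrices is a product $a^*_ka_l$ with $k\in\{1,2\}$, $l\in\{3,4\}$ or vice versa, so $k\ne l$. Hence $:a^*_ka_l:=a^*_ka_l=a_la^*_k$, no normal-ordering correction appears, and the two factors commute by \eqref{Heisenberg-rels}.

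\emph{First matrix.} Each entry is a second-order differential operator in two distinct variables. For the $(1,1)$ entry, $a^*_1a_3=\partial_{w_1}\partial_{w^*_3}$ multiplies $P^{(n)}_{l,\mu,\nu}$ by $(l-\nu)(l+\tfrac n2-\mu)$ and lowers the exponents of $w_1$ and $w^*_3$ by one. Then I match the result with $P^{(n)}_{l',\mu',\nu'}$, where $l'=l-\tfrac12$, $\nu'=\nu+\tfrac12$, $\mu'=\mu+\tfrac12$, in three steps:
\begin{itemize}
\item The exponents agree: $l'-\nu'=l-\nu-1$, $l'+\nu'=l+\nu$, $l'+\tfrac n2-\mu'=l+\tfrac n2-\mu-1$, and $l'+\tfrac n2+\mu'=l+\tfrac n2+\mu$.
\item The factorials agree: the factor $(l-\nu)$ is absorbed into the normalization, since $\frac{l-\nu}{(l-\nu)!(l+\nu)!}=\frac{1}{(l'-\nu')!(l'+\nu')!}$.
\item The sign flips: $(-1)^{2l}=-(-1)^{2l'}$.
\end{itemize}
This leaves exactly $-(l+\tfrac n2-\mu)P^{(n)}_{l-\frac12,\mu+\frac12,\nu+\frac12}$. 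The other three entries follow the same pattern, with the shifts $\nu\mapsto\nu\mp\tfrac12$ coming from differentiating $w_1$ or $w_2$ and $\mu\mapsto\mu\pm\tfrac12$ coming from differentiating $w^*_3$ or $w^*_4$.

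\emph{Second matrix.} Each entry is a multiplication operator. For $a^*_3a_1=w^*_3w_1$, raising the power of $w_1$ to $l-\nu+1$ with $l'=l+\tfrac12$, $\nu'=\nu-\tfrac12$ requires writing $\frac1{(l-\nu)!}=\frac{l-\nu+1}{(l'-\nu')!}$. This produces the coefficient $(l-\nu+1)$, and again $(-1)^{2l}=-(-1)^{2l'}$. The power of $w^*_3$ becomes $l+\tfrac n2-\mu+1=l'+\tfrac n2-\mu'$ with $\mu'=\mu-\tfrac12$, and the powers of $w_2$ and $w^*_4$ are unchanged under these substitutions. The remaining entries are identical up to relabeling.

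The only point needing care, and the main place an error could slip in, is the boundary cases. When an index leaves the allowed range, e.g.\ $\nu=l$ in the $(1,1)$ entry of the first matrix, the derivative kills the monomial. The displayed coefficient $(l-\nu)$ does not appear in the formula, because it was absorbed into the normalization. Instead the symbol $P^{(n)}$ with out-of-range indices must be interpreted as $0$; this is consistent because $1/(-1)!=0$. The analogous remark applies to the factors $(l+\tfrac n2\mp\mu)$ in the first matrix, which vanish exactly when the derivative in $w^*_3$ or $w^*_4$ kills the monomial. I would state this convention explicitly and verify it in each entry.
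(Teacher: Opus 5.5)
Your proposal is correct and follows the paper's own route: the paper proves the lemma simply "by direct computation," which is what you carry out entry by entry on the monomials $P^{(n)}_{l,\mu,\nu}$. Your explicit convention that $P^{(n)}$ with out-of-range indices equals $0$ (consistent with $1/(-1)!=0$) is a useful clarification that the paper leaves implicit.
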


For each monomial $P^{(n)}_{l,\mu,\nu}$, we define a function on $\HC$
with values in $\underbrace{\BB S \odot \dots \odot \BB S}_{\text{$n$ times}}$:
\[
\Phi^+_n \bigl( P^{(n)}_{l,\mu,\nu} \bigr) =
\Phi^+_0 \biggl( \underbrace{\begin{pmatrix} a_3 \\ a_4 \end{pmatrix}
  \otimes \dots \otimes 
  \begin{pmatrix} a_3 \\ a_4 \end{pmatrix}}_{\text{$n$ times}} \biggr)
P^{(n)}_{l,\mu,\nu}.
\]
(Note that the operators $a_3$ and $a_4$ act by differentiation with respect to
$w^*_3$ and $w^*_4$ respectively. Thus, applying them $n$ times to
$P^{(n)}_{l,\mu,\nu}$ results in a polynomial of degree zero.)

In Section 7 of \cite{nreg} we introduced the spaces of polynomial
left $n$-regular functions $(\pi_{nl}, {\cal F}^+_n)$, $(\pi_{nl}, {\cal F}^-_n)$
as well as polynomial right $n$-regular functions $(\pi_{nr}, {\cal G}^-_n)$,
$(\pi_{nr}, {\cal G}^+_n)$.
When $n=1$, these are just the classical regular functions:
\[
(\pi_{1l}, {\cal F}^+_1) = (\pi_l, {\cal V}^+), \quad
(\pi_{1l}, {\cal F}^-_1) = (\pi_l, {\cal V}^-), \quad
(\pi_{1r}, {\cal G}^+_1) = (\pi_r, {\cal V}'^+), \quad
(\pi_{1r}, {\cal G}^-_1) = (\pi_r, {\cal V}'^-).  
\]

\begin{lem}  \label{n-reg+-lem}
Each function
\[
\Phi^+_n \bigl( P^{(n)}_{l,\mu,\nu} \bigr) :
\HC \to \underbrace{\BB S \odot \dots \odot \BB S}_{\text{$n$ times}}
\]
is left $n$-regular.
\end{lem}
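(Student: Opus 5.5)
The plan is to reduce left $n$-regularity of $\Phi^+_n\bigl(P^{(n)}_{l,\mu,\nu}\bigr)$ to the intertwining relations of Lemma \ref{intertwiner-lem}, combined with the fact that $a_3$ and $a_4$ commute.

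First I will make the function explicit. Fix a multi-index $(j_1,\dots,j_n)$ with $j_s \in \{3,4\}$. The corresponding component of $\Phi^+_n\bigl(P^{(n)}_{l,\mu,\nu}\bigr)$, with respect to the standard basis of $\BB S^{\otimes n}$, is
\[
\Phi^+_0\bigl( a_{j_1} \cdots a_{j_n} P^{(n)}_{l,\mu,\nu} \bigr).
\]
Since $a_3$ and $a_4$ act by differentiation in $w^*_3$ and $w^*_4$, the polynomial $a_{j_1} \cdots a_{j_n} P^{(n)}_{l,\mu,\nu}$ is a scalar multiple of a single monomial $p^l_{\mu',\nu}$ with a shifted $\mu'$. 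Hence each component is a multiple of a matrix coefficient $t^l_{\nu\,\underline{\mu'}}(Z)$. Because $a_3$ and $a_4$ commute, the components are symmetric in the tensor indices, so the function indeed takes values in $\BB S \odot \dots \odot \BB S$. By this symmetry it also suffices to check regularity in the first tensor factor only, i.e. $(\nabla^+ \otimes 1 \otimes \dots \otimes 1) f = 0$.

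Next I extend Lemma \ref{intertwiner-lem} by linearity from the basis monomials $p^l_{\mu,\nu}$ to all of $W^+_0$. This gives, for every $Q \in W^+_0$,
\[
\partial_{ij}\, \Phi^+_0(Q) = - \Phi^+_0\bigl( a^*_i a_{j+2} Q \bigr), \qquad i,j \in \{1,2\}.
\]
I apply this with $Q = a_{k}\, a_{j_2} \cdots a_{j_n} P^{(n)}_{l,\mu,\nu}$, where $k \in \{3,4\}$ is the first-slot index. Note that $Q$ has degree zero, because $P^{(n)}_{l,\mu,\nu}$ has degree $n$ and each $a_j$ lowers the degree by one.

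Then I write $\nabla^+$ in the matrix realization \eqref{H-matrix-realization}. Up to a constant, and possibly a transpose, it is the matrix $\bigl(\begin{smallmatrix} \partial_{22} & -\partial_{21} \\ -\partial_{12} & \partial_{11} \end{smallmatrix}\bigr)$. Consequently each component of $\nabla^+ f$ in the first slot is an $\epsilon$-contraction of the form $\partial_{i1} f_4 - \partial_{i2} f_3$, with the first-slot index $3 \leftrightarrow 1$, $4 \leftrightarrow 2$. Via the identity above, this contraction becomes
\[
-\Phi^+_0\Bigl( a^*_i\,(a_3 a_4 - a_4 a_3)\, a_{j_2} \cdots a_{j_n} P^{(n)}_{l,\mu,\nu} \Bigr) = 0,
\]
since $[a_3,a_4]=0$ by \eqref{Heisenberg-rels}.

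The only genuine obstacle is bookkeeping of conventions. I must pin down exactly how $\nabla^+$ (or, equivalently, $\partial^+$) reads as a $2\times 2$ matrix of $\partial_{ij}$ in the realization \eqref{H-matrix-realization}, with the same ordering as in Lemma \ref{intertwiner-lem} and equation (15) of \cite{qreg}. I must also fix which of $a_3,a_4$ corresponds to which basis vector of $\BB S$, so that the contraction is really antisymmetric in the two $a$'s rather than symmetric. To settle this, I will test the identification on $n=1$, $l=\tfrac12$, where $\Phi^+_1(P)$ should reproduce a constant spinor or the standard degree-one regular functions. If the naive identification yields a symmetric contraction instead, I will correct it by the map $\dagger$ from \eqref{dagger}, which swaps the two components with a sign.
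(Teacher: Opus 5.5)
Your proposal is correct and is essentially the paper's proof: the paper writes $\nabla^+$ in the $k$-th slot as a multiple of $\partial^+=\bigl(\begin{smallmatrix} \partial_{22} & -\partial_{21} \\ -\partial_{12} & \partial_{11} \end{smallmatrix}\bigr)$, uses Lemma \ref{intertwiner-lem} to turn it into $\bigl(\begin{smallmatrix} a^*_2a_4 & -a^*_2a_3 \\ -a^*_1a_4 & a^*_1a_3 \end{smallmatrix}\bigr)$ acting on $\bigl(\begin{smallmatrix} a_3 \\ a_4 \end{smallmatrix}\bigr)$, and concludes from $[a_3,a_4]=0$. The convention check you were worried about comes out with no transpose and no need for $\dagger$, because the definition of $\Phi^+_n$ already puts $a_3$ in the first component of $\BB S$ and $a_4$ in the second.
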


\begin{proof}
Using Lemma \ref{intertwiner-lem}, for each $k=1,\dots,n$, we have:
\begin{multline*}
\nabla^+_k \Phi^+_n \bigl( P^{(n)}_{l,\mu,\nu} \bigr)
=-2(1 \otimes \cdots \underset{\text{$k$-th place}}{\otimes \partial^+ \otimes}
\cdots \otimes 1)
\Phi^+_0 \biggl( \begin{pmatrix} a_3 \\ a_4 \end{pmatrix}
  \otimes \dots \otimes 
  \begin{pmatrix} a_3 \\ a_4 \end{pmatrix} \biggr)
P^{(n)}_{l,\nu\,\underline{\mu}}  \\
= -2 \Phi^+_0 \biggl( \begin{pmatrix} a_3 \\ a_4 \end{pmatrix}
\otimes \dots \otimes \underset{\text{$k$-th place}}
  {\begin{pmatrix} a^*_2a_4 & -a^*_2a_3 \\ -a^*_1a_4 & a^*_1a_3 \end{pmatrix}
  \begin{pmatrix} a_3 \\ a_4 \end{pmatrix}} \otimes \dots
  \begin{pmatrix} a_3 \\ a_4 \end{pmatrix} \biggr)
P^{(n)}_{l,\mu,\nu} =0.
\end{multline*}
\end{proof}

The map $\Phi^+_n$ extends by linearity to a map of vector spaces
\[
\Phi^+_n: W^+_n \to {\cal F}^+_n.
\]

\begin{lem}  \label{Z-mult-lem1}
Let $C = \bigl(\begin{smallmatrix} c_{11} & c_{12} \\
  c_{21} & c_{22} \end{smallmatrix}\bigr) \in \HC$, then, for any $p \in W^+_1$,
\[
CZ \cdot \Phi^+_0 \Bigl(
\bigl(\begin{smallmatrix} a_3 \\ a_4 \end{smallmatrix}\bigr) (p)\Bigr)
= - \Phi^+_0 \Bigl(
c_{11} \bigl(\begin{smallmatrix} a_1 \\ 0 \end{smallmatrix}\bigr)
+ c_{12} \bigl(\begin{smallmatrix} a_2 \\ 0 \end{smallmatrix}\bigr)
+ c_{21} \bigl(\begin{smallmatrix} 0 \\ a_1 \end{smallmatrix}\bigr)
+ c_{22} \bigl(\begin{smallmatrix} 0 \\ a_2 \end{smallmatrix}\bigr) \Bigr) (p).
\]
\end{lem}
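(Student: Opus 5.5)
By linearity in $C$, it suffices to prove the identity for the four elementary matrices $C=E_{ij}$ ($i,j\in\{1,2\}$). For $s=\bigl(\begin{smallmatrix} s_1\\ s_2\end{smallmatrix}\bigr)$ the vector $E_{ij}Zs$ has $i$-th entry $z_{j1}s_1+z_{j2}s_2$ and zero in the other entry. So the lemma is equivalent to the two scalar identities
\[
z_{j1}\,\Phi^+_0(a_3p)+z_{j2}\,\Phi^+_0(a_4p) = -\Phi^+_0(a_jp), \qquad j=1,2,
\]
for all $p\in W^+_1$. Here $a_3,a_4$ act by $\partial/\partial w^*_3$, $\partial/\partial w^*_4$, and $a_1,a_2$ by multiplication by $w_1,w_2$. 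Since each of these operators changes the degree by $-1$, both sides are images under $\Phi^+_0$ of elements of $W^+_0$, so the identities make sense. By linearity again, I will check them on the basis monomials $p=P^{(1)}_{l,\mu,\nu}$ of \eqref{n-monomial-basis} with $n=1$.

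Next, I compute both sides explicitly in terms of the $p^{l'}_{\mu',\nu'}$ of \eqref{monomial-basis}. Differentiating $P^{(1)}_{l,\mu,\nu}$ in $w^*_3$ gives $(l+\frac12-\mu)$ times a multiple of $p^{l}_{\mu+\frac12,\nu}$, and differentiating in $w^*_4$ gives $(l+\frac12+\mu)$ times a multiple of $p^{l}_{\mu-\frac12,\nu}$. Multiplying by $w_1$ (resp.\ $w_2$) gives a multiple of $p^{l+\frac12}_{\mu,\nu-\frac12}$ (resp.\ $p^{l+\frac12}_{\mu,\nu+\frac12}$). The scalar multiples come from the factorials and the sign $(-1)^{2l}$ in the normalizations. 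Applying $\Phi^+_0$ then turns the claim into a three-term recurrence among matrix coefficients. For $j=1$ it reads
\[
(l+\tfrac12-\mu)\, z_{11}\, t^l_{\nu\,\underline{\mu+\frac12}}(Z) + (l+\tfrac12+\mu)\, z_{12}\, t^l_{\nu\,\underline{\mu-\frac12}}(Z) = \kappa\, t^{l+\frac12}_{\nu-\frac12\,\underline{\mu}}(Z),
\]
with an explicit constant $\kappa$ (expected $\kappa=l-\nu+1$ after normalization, up to the sign that produces the minus in the lemma). For $j=2$ the analogous relation has $z_{21},z_{22}$ in place of $z_{11},z_{12}$, and $\nu+\frac12$ in place of $\nu-\frac12$ on the right.

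To prove these recurrences, I will use the contour integral \eqref{t}. Write $F(s)=(sz_{11}+z_{21})^{l+\frac12-\mu}(sz_{12}+z_{22})^{l+\frac12+\mu}$, which is the integrand polynomial of $t^{l+\frac12}_{\cdot\,\underline{\mu}}$. Then
\[
\frac{dF}{ds} = (l+\tfrac12-\mu)\, z_{11}\,(sz_{11}+z_{21})^{l-\frac12-\mu}(sz_{12}+z_{22})^{l+\frac12+\mu} + (l+\tfrac12+\mu)\, z_{12}\,(sz_{11}+z_{21})^{l+\frac12-\mu}(sz_{12}+z_{22})^{l-\frac12+\mu}.
\]
Multiplying by the appropriate power $s^{-l+\nu}$, integrating around the loop, and integrating by parts, $\oint F'(s)s^{m}\,ds=-m\oint F(s)s^{m-1}\,ds$, yields the $j=1$ identity. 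For $j=2$, I will use the derivative in $z_{21},z_{22}$ instead. Equivalently, I will use $\partial F/\partial z_{21}$ and $\partial F/\partial z_{22}$ combined via $sz_{11}+z_{21}$, or directly the Euler-type identity $z_{21}\partial_{z_{21}}+z_{22}\partial_{z_{22}}+s\,\frac{d}{ds}=\deg$ acting on $F$. That produces the relation without the extra factor of $s$. Alternatively, the second formula of Lemma \ref{intertwiner-lem} (the raising operators $a^*_3a_1$, etc.) already encodes multiplication-type recurrences and can be used as a cross-check.

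The main obstacle is bookkeeping rather than ideas. The factorial normalizations and signs $(-1)^{2l}$ in $p^l_{\mu,\nu}$ and $P^{(1)}_{l,\mu,\nu}$, and the shift of $l$ by $\frac12$ (which flips $(-1)^{2l}$), must combine exactly to give the overall factor $-1$ and the correct constant $\kappa$. I will also have to handle the boundary cases ($\mu=\pm(l+\frac12)$, where one term vanishes, and $l=0$) and check that they are consistent with the general formula.
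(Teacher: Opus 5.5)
Your proposal is correct and follows the paper's proof. The paper likewise reduces to basis monomials using $a_1P^{(1)}_{l,\mu,\nu}=-(l-\nu+1)\,p^{l+\frac12}_{\mu,\nu-\frac12}$ and $a_2P^{(1)}_{l,\mu,\nu}=-(l+\nu+1)\,p^{l+\frac12}_{\mu,\nu+\frac12}$, so your constants are $\kappa=l-\nu+1$ for $j=1$ and $l+\nu+1$ for $j=2$, and the minus sign in the lemma comes from the flip of $(-1)^{2l}$. The only difference is that the paper cites Lemma 23 of \cite{FL1} for the resulting recurrences on $t^l_{\nu\,\underline{\mu}}$, while you derive them directly from \eqref{t} by integration by parts in $s$ and the Euler identity in $(s,z_{21},z_{22})$, which is a valid self-contained substitute.
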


\begin{proof}
The result follows from Lemma 23 in \cite{FL1} and observation
\[  
a_1 P^{(1)}_{l,\mu,\nu} = -(l-\nu+1) p^{l+\frac12}_{\mu,\nu-\frac12}
\qquad \text{and} \qquad
a_2 P^{(1)}_{l,\mu,\nu} = -(l+\nu+1) p^{l+\frac12}_{\mu,\nu+\frac12}.
\]
\end{proof}

\begin{thm}  \label{n-reg-iso-thm}
  For each $n>0$, the map $\Phi^+_n$ is an isomorphism between representations
  $(\pi^+_n, W^+_n)$ and $(\pi_{nl}, {\cal F}^+_n)$ of
  $\mathfrak{sl}(4,\BB C) = \mathfrak{sl}(2,\HC)$.
\end{thm}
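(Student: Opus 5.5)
The plan has three parts. First I show that $\Phi^+_n$ is a linear isomorphism $W^+_n \to {\cal F}^+_n$. Then I check that it intertwines the off-diagonal generators $\bigl(\begin{smallmatrix} 0 & B \\ 0 & 0 \end{smallmatrix}\bigr)$ and $\bigl(\begin{smallmatrix} 0 & 0 \\ C & 0 \end{smallmatrix}\bigr)$, $B,C \in \HC$. Finally I conclude as in Proposition \ref{harmonic+iso-prop}, since these matrices generate $\mathfrak{sl}(2,\HC)$.

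\emph{Bijectivity.} By Lemma \ref{n-reg+-lem}, $\Phi^+_n$ lands in ${\cal F}^+_n$. Applying $a_3,a_4$ $n$ times does not change the $V_l$ factor in $w_1,w_2$, and it lowers the $V_{l+\frac n2}$ factor to degree $2l$ in $w^*_3,w^*_4$. So $\Phi^+_n$ maps the $K$-type $V_l \boxtimes V_{l+\frac n2}$ into the span of $t^l_{\nu\,\underline{\mu'}}(Z)\otimes(\text{vectors in } \BB S^{\odot n})$, that is, into homogeneous polynomials of degree $2l$. The map $p \mapsto (a_3,a_4)^{\otimes n}p$ from degree-$(2l+n)$ polynomials in $w^*_3,w^*_4$ to $\BB S^{\odot n}$-valued degree-$2l$ polynomials is injective, because it is the $SL(2)$-equivariant embedding $V_{l+\frac n2} \hookrightarrow V_l \otimes V_{\frac n2}$. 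Composing with the injective map $\Phi^+_0$ shows that $\Phi^+_n$ is injective. For surjectivity I would compare dimensions degree by degree. From \cite{nreg}, the degree-$2l$ part of ${\cal F}^+_n$ has $K$-type $V_l\boxtimes V_{l+\frac n2}$, of dimension $(2l+1)(2l+n+1)$, and this equals the dimension of the corresponding piece of $W^+_n$.

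\emph{Action of $B$.} By Lemma \ref{pi-Lie_alg-action}, the action of $\bigl(\begin{smallmatrix} 0 & B \\ 0 & 0 \end{smallmatrix}\bigr)$ on ${\cal F}^+_n$ is $-\tr(B\partial)$, which acts only on the scalar polynomial part. By Lemma \ref{intertwiner-lem}, $\partial_{ij}$ on ${\cal H}^+$ corresponds under $\Phi^+_0$ to $-a^*_ia_{j+2}$ (suitably indexed). These operators commute with $a_3,a_4$, since $[a^*_k,a_m]=0$ for $k\le 2<m$. Hence the $B$-action commutes through the $(a_3,a_4)^{\otimes n}$ factor and matches the first formula of Lemma \ref{pi+-action-lem}.

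\emph{Action of $C$.} This is the main obstacle. On ${\cal F}^+_n$ the action is $\tr(ZCZ\partial + CZ)f + \sum_k (1\otimes\cdots\otimes CZ\otimes\cdots\otimes 1)f$, by the $n$-regular analogue of Lemma \ref{dr-action-lem}. On $W^+_n$ the corresponding element is a combination of $a^*_{i+2}a_j$, $i,j\in\{1,2\}$. When I move this through $(a_3,a_4)^{\otimes n}$, each commutator $[a^*_{i+2}a_j, a_{m}]$ (with $m=3,4$) produces a term in which one tensor slot $a_m$ is replaced by $\pm a_j$; this uses Lemma \ref{commutator-lem1}. The remaining term is $a^*_{i+2}a_j$ acting in degree zero, and by Proposition \ref{harmonic+iso-prop} it gives $\pi^0_l$, i.e.\ $\tr(ZCZ\partial+CZ)$ plus the scalar shift. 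The $n$ commutator terms are exactly what Lemma \ref{Z-mult-lem1} converts into $CZ$ acting on the $k$-th slot. I apply that lemma slot by slot, treating the other $n-1$ slots as fixed, which is legitimate because those $a_3,a_4$ commute with everything involved. The delicate part is matching signs and the scalar normalization: $\pi^0_l$ contains an extra multiple of $\tr(CZ)$ compared with the $n$-regular action, and the identity matrix acts differently on the two sides (compare the Remark). I would fix this by checking on the lowest $K$-type $l=0$ and restricting to $\mathfrak{sl}$ throughout.
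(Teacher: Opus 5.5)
Your proposal is correct and is essentially the paper's proof: intertwining is checked on the generators $\bigl(\begin{smallmatrix} 0 & B \\ C & 0 \end{smallmatrix}\bigr)$ via Lemmas \ref{intertwiner-lem} and \ref{Z-mult-lem1}, using exactly your commutator bookkeeping. In fact there is no leftover $\tr(CZ)$ term to correct, because $\pi^0_l$ and $\pi_{nl}$ carry the same factor $N(cZ+d)^{-1}$, and the extra $(cZ+d)^{-1}$ factors of $\pi_{nl}$ are precisely the slotwise $CZ$ terms your commutators produce. The only difference is that the paper gets surjectivity from the irreducibility of $(\pi_{nl},{\cal F}^+_n)$ (Theorem 26 in \cite{nreg}), whereas you use a degree-by-degree dimension count; that works equally well given the $K$-type description in the same reference.
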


\begin{proof}
  The action of $\mathfrak{sl}(2,\HC)$ on ${\cal F}^+_n$ via $\pi_{nl}$
  is spelled out in Lemma 2 in \cite{nreg}. By equation (15) in \cite{qreg}
  and Lemmas \ref{intertwiner-lem}, \ref{Z-mult-lem1}, $\Phi^+_n$ intertwines
  the actions on $W^+_n$ and ${\cal F}^+_n$ of matrices of the form
  $\bigl(\begin{smallmatrix} 0 & B \\ C & 0 \end{smallmatrix}\bigr)
  \in \mathfrak{sl}(2,\HC)$, $B, C \in \HC$.
  Since these matrices generate $\mathfrak{sl}(2,\HC)$, it follows that
  $\Phi^+_n$ is an intertwining map between $(\pi^+_n, W^+_n)$ and
  $(\pi_{nl}, {\cal F}^+_n)$.
  It is clear that $\Phi^+_n$ is injective.
  By Theorem 26 in \cite{nreg}, $(\pi_{nl}, {\cal F}^+_n)$ is irreducible.
  Hence $\Phi^+_n$ is onto, and the result follows.
\end{proof}

Next, we consider the case of polynomials of negative degrees $W^+_{-n}$, $n>0$.
This space has a basis consisting of monomials of the form
\begin{equation}  \label{-n-monomial-basis}
  Q^{(n)}_{l,\mu,\nu}(w_1,w_2,w^*_3,w^*_4) =
  \tfrac{(-1)^{2l}}{(l+\frac{n}2-\nu)!(l+\frac{n}2+\nu)!}
  (w_1)^{l+\frac{n}2-\nu}(w_2)^{l+\frac{n}2+\nu}(w^*_3)^{l-\mu}(w^*_4)^{l+\mu},
\end{equation}
\[
l = 0, \tfrac12, 1, \tfrac32, \dots, \quad
-l \le \mu \le l, \quad
-l-\tfrac{n}2 \le \nu \le l+\tfrac{n}2, \quad
\mu,\: \nu+\tfrac{n}2 \in \BB Z +l.
\]
For a fixed $l$, these monomials span the $K$-type
$V_{l+\frac{n}2} \boxtimes V_l$.
By direct computation we obtain:

\begin{lem}  \label{pi+-action-lem2}
For $n<0$, elements
$\bigl(\begin{smallmatrix} 0 & B \\ C & 0 \end{smallmatrix}\bigr)
\in \mathfrak{sl}(4,\BB C)$, $B, C \in \mathfrak{gl}(2,\BB C)$, act on
$W^+_n$ via $\pi^+_n$ as follows:
\begin{align*}
\begin{pmatrix} a^*_1a_3 & a^*_1a_4 \\ a^*_2a_3 & a^*_2a_4 \end{pmatrix}
Q^{(n)}_{l,\mu,\nu} &= - \left(\begin{smallmatrix}
  (l-\mu) Q^{(n)}_{l-\frac12,\mu+\frac12,\nu+\frac12} &
  (l+\mu) Q^{(n)}_{l-\frac12,\mu-\frac12,\nu+\frac12} \\
  (l-\mu) Q^{(n)}_{l-\frac12,\mu+\frac12,\nu-\frac12} &
  (l+\mu) Q^{(n)}_{l-\frac12,\mu-\frac12,\nu-\frac12} \end{smallmatrix}\right),  \\
\begin{pmatrix} a^*_3a_1 & a^*_3a_2 \\ a^*_4a_1 & a^*_4a_2 \end{pmatrix}
Q^{(n)}_{l,\mu,\nu} &= - \left(\begin{smallmatrix}
  (l+\tfrac{n}2-\nu+1) Q^{(n)}_{l+\frac12,\mu-\frac12,\nu-\frac12} &
  (l+\tfrac{n}2+\nu+1) Q^{(n)}_{l+\frac12,\mu-\frac12,\nu+\frac12} \\
  (l+\tfrac{n}2-\nu+1) Q^{(n)}_{l+\frac12,\mu+\frac12,\nu-\frac12} &
  (l+\tfrac{n}2+\nu+1) Q^{(n)}_{l+\frac12,\mu+\frac12,\nu+\frac12}
\end{smallmatrix}\right).
\end{align*}
\end{lem}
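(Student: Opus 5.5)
The plan is a direct computation on the monomial basis \eqref{-n-monomial-basis}. Throughout I use the $n$ appearing in the formula for $Q^{(n)}_{l,\mu,\nu}$, so the monomials span $W^+_{-n}$ with $n>0$; the ``$n<0$'' in the statement refers to the degree of the space.

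First I would record how the relevant operators act on $W^+=\BB C[w_1,w_2,w^*_3,w^*_4]$. The annihilation operators $a^*_1,a^*_2,a_3,a_4$ act as $\partial/\partial w_1$, $\partial/\partial w_2$, $\partial/\partial w^*_3$, $\partial/\partial w^*_4$. The creation operators $a_1,a_2,a^*_3,a^*_4$ act as multiplication by $w_1,w_2,w^*_3,w^*_4$. In every entry of the two matrices the two indices are distinct, so normal ordering plays no role and the two factors commute. Hence:
\begin{itemize}
\item $a^*_ja_k$ with $j\in\{1,2\}$, $k\in\{3,4\}$ is the second-order operator $\partial_{w_j}\partial_{w^*_k}$;
\item $a^*_ka_j$ with $k\in\{3,4\}$, $j\in\{1,2\}$ is multiplication by $w^*_kw_j$.
\end{itemize}

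Next I would compute the first matrix entry by entry, taking $a^*_1a_3$ as the model case. Differentiating $Q^{(n)}_{l,\mu,\nu}$ in $w_1$ and in $w^*_3$ produces the factors $(l+\tfrac n2-\nu)$ and $(l-\mu)$. It also lowers the exponents of $w_1$ and $w^*_3$ by one, and these are exactly the exponents of $Q^{(n)}_{l-\frac12,\mu+\frac12,\nu+\frac12}$. Comparing normalizations:
\begin{itemize}
\item the factor $(l+\tfrac n2-\nu)$ is absorbed by the factorial $(l+\tfrac n2-\nu)!$, which becomes $(l+\tfrac n2-\nu-1)!$;
\item the other factorial $(l+\tfrac n2+\nu)!$ is unchanged;
\item the sign $(-1)^{2l}$ becomes $(-1)^{2l-1}$, which produces the overall minus sign.
\end{itemize}
This leaves $-(l-\mu)\,Q^{(n)}_{l-\frac12,\mu+\frac12,\nu+\frac12}$. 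The other three entries follow by the symmetry $w_1\leftrightarrow w_2$, $\nu\leftrightarrow-\nu$ and $w^*_3\leftrightarrow w^*_4$, $\mu\leftrightarrow-\mu$.

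For the second matrix, take $a^*_3a_1$, which multiplies by $w^*_3w_1$. This raises the exponents of $w_1$ and $w^*_3$ by one, giving the exponent pattern of $Q^{(n)}_{l+\frac12,\mu-\frac12,\nu-\frac12}$. Renormalizing $1/(l+\tfrac n2-\nu)!$ to $1/(l+\tfrac n2-\nu+1)!$ gives the factor $(l+\tfrac n2-\nu+1)$, and $(-1)^{2l}=-(-1)^{2l+1}$ gives the sign. The remaining entries again follow by the same symmetries.

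Finally I would check the boundary cases, where a coefficient vanishes and the target index pair falls outside the allowed range. Examples are $\mu=l$ in the first matrix, or $l=0$. In these cases the formula must still be correct with the out-of-range $Q$ interpreted as zero, and it is, since the derivative kills the monomial exactly when the coefficient is zero. The whole argument is routine bookkeeping. The only real obstacle is keeping the sign and factorial conventions consistent so that the stated coefficients come out exactly; this is the same computation as in Lemma \ref{pi+-action-lem}, with the roles of the $w$ and $w^*$ shifts by $\tfrac n2$ interchanged.
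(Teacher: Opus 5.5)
Your direct computation on the monomial basis is correct and is essentially the paper's approach (the paper simply says ``by direct computation''); all eight entries check out, including the signs from $(-1)^{2l}$ and the factorial renormalizations. One small precision in your boundary remark: when $\nu=l+\frac n2$ the $w_1$-derivative kills the monomial, yet the displayed coefficient $(l-\mu)$ need not vanish, so what makes the formula hold there is that the target $Q^{(n)}_{l-\frac12,\mu+\frac12,\nu+\frac12}$ is out of range (equivalently $1/(-1)!=0$), not that the coefficient is zero.
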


For each monomial $Q^{(n)}_{l,\mu,\nu}$, we define a function on $\HC$
with values in $\underbrace{\BB S' \odot \dots \odot \BB S'}_{\text{$n$ times}}$:
\[
\Psi^+_n \bigl( Q^{(n)}_{l,\mu,\nu} \bigr) =
\Phi^+_0 \bigl( \underbrace{(a^*_1, a^*_2) \otimes \dots \otimes 
  (a^*_1, a^*_2)}_{\text{$n$ times}} \bigr) Q^{(n)}_{l,\mu,\nu}.
\]
(Note that the operators $a^*_1$ and $a^*_2$ act by differentiation with
respect to $w_1$ and $w_2$ respectively. Thus, applying them $n$ times to
$Q^{(n)}_{l,\mu,\nu}$ results in a polynomial of degree zero.)

\begin{lem}  \label{n-reg--lem}
Each function
\[
\Psi^+_n \bigl( Q^{(n)}_{l,\mu,\nu} \bigr) :
\HC \to \underbrace{\BB S' \odot \dots \odot \BB S'}_{\text{$n$ times}}
\]
is right $n$-regular.
\end{lem}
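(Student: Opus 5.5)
The proof will mirror the proof of Lemma \ref{n-reg+-lem}, with left and right exchanged: the columns $\bigl(\begin{smallmatrix} a_3 \\ a_4 \end{smallmatrix}\bigr)$ are replaced by the rows $(a^*_1, a^*_2)$, and $\nabla^+$ applied on the left is replaced by $\overleftarrow{\nabla^+}$ applied on the right. Right $n$-regularity means $\Psi^+_n(Q^{(n)}_{l,\mu,\nu}) (\overleftarrow{1 \otimes \cdots \otimes \nabla^+ \otimes \cdots \otimes 1}) = 0$, with $\nabla^+$ acting in the $k$-th tensor factor, for each $k=1,\dots,n$. Symmetry of the values is automatic, since the operators $a^*_1, a^*_2$ commute; so it suffices to check regularity in a single factor $k$.

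First, write $\overleftarrow{\nabla^+}$ in the matrix realization. As in the proof of Lemma \ref{n-reg+-lem}, $\nabla^+ = 2\partial^+$, where $\partial^+$ is the $2 \times 2$ matrix of partials, the adjugate of $\bigl(\begin{smallmatrix} \partial_{11} & \partial_{12} \\ \partial_{21} & \partial_{22} \end{smallmatrix}\bigr)$ (possibly transposed). For a row-valued $g=(g_1,g_2)$, $g\overleftarrow{\nabla^+}$ is then $2\,(g_1,g_2)\,\partial^+$. Next, commute the scalar differential operators $\partial_{ij}$ past $\Phi^+_0$ using the first identity of Lemma \ref{intertwiner-lem}. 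It says that $\partial_{ij}\Phi^+_0 = -\Phi^+_0 \circ (a^*_i a_{j+2})$, with $i,j \in \{1,2\}$, on degree zero polynomials. This identity applies because after applying the other $n-1$ factors and the $k$-th factor $a^*_1$ or $a^*_2$, we land in $W^+_0$.

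This turns the $k$-th factor into
\[
(a^*_1, a^*_2)\begin{pmatrix} a^*_2a_4 & -a^*_1a_4 \\ -a^*_2a_3 & a^*_1a_3 \end{pmatrix}
\]
(or the analogous arrangement, depending on conventions). Its first entry is $a^*_1a^*_2a_4 - a^*_2a^*_1a_3$-type combinations; more precisely the entries are $a^*_1a^*_2a_4 - a^*_2a^*_2a_3$ style products. The key point is to get the adjugate arrangement right, so that the entries become $a_4(a^*_1a^*_2 - a^*_2a^*_1)$ and $a_3(-a^*_1a^*_1 + \dots)$ cancellations. These vanish because the $a^*_i$ commute with each other and with $a_3,a_4$, by \eqref{Heisenberg-rels}: the $a^*_1,a^*_2$ differentiate in $w_1,w_2$, while $a_3,a_4$ differentiate in $w^*_3,w^*_4$. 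Concretely, the product is
\[
\bigl(a^*_1a^*_2a_4 - a^*_2a^*_1a_4,\; -a^*_1a^*_2a_3 + a^*_2a^*_1a_3\bigr) = 0,
\]
after choosing the transposed adjugate matching right multiplication.

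The only real obstacle is bookkeeping. I need to determine precisely which adjugate arrangement of $\partial_{ij}$ represents $\overleftarrow{\nabla^+}$ on rows, versus $\nabla^+$ on columns as in Lemma \ref{n-reg+-lem}. I also need to confirm that the index correspondence $\partial_{ij} \leftrightarrow a^*_i a_{j+2}$ from Lemma \ref{intertwiner-lem} makes the row contraction antisymmetric. I would fix this by checking directly the simplest case $n=1$, $l=0$, $Q^{(1)}_{0,0,\pm1/2}$, where $\Psi^+_1$ gives constant or linear rows. Since the analogous cancellation was verified for columns in Lemma \ref{n-reg+-lem}, and $\det$-adjugate identities are symmetric under transposition, the row version follows by the same computation.
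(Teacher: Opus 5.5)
Your route is the paper's. You write $\overleftarrow{\nabla^+}$ in the $k$-th factor as $2\partial^+$ and push each $\partial_{ij}$ through $\Phi^+_0$ by Lemma~\ref{intertwiner-lem}; this is legitimate because the $n$ row operators bring $Q^{(n)}_{l,\mu,\nu}$ into $W^+_0$. You then note that the $k$-th slot becomes $(a^*_1,a^*_2)$ times a matrix of quadratic operators, killed by $[a^*_1,a^*_2]=0$. The product you finally display is exactly the one in the paper's proof.

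The arrangement you defer as ``bookkeeping'' is the whole content of the proof. It needs neither a test case nor any transposition. In the realization \eqref{H-matrix-realization} a direct computation gives $\nabla^+=\sum_i e_i\,\partial/\partial z^i=2\partial^+$ with
\[
\partial^+=\begin{pmatrix}\partial_{22} & -\partial_{21}\\ -\partial_{12} & \partial_{11}\end{pmatrix}.
\]
The operator $g\overleftarrow{\nabla^+}$ is simply the row $g$ multiplied on the right by this same operator matrix, the one that acts on columns in Lemma~\ref{n-reg+-lem}. Substituting $\partial_{ij}\Phi^+_0=-\Phi^+_0\,a^*_ia_{j+2}$ entrywise turns it into $-M$, where
\[
M=\begin{pmatrix} a^*_2a_4 & -a^*_2a_3 \\ -a^*_1a_4 & a^*_1a_3 \end{pmatrix}.
\]
This is the same $M$ as in Lemma~\ref{n-reg+-lem}. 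It annihilates the column $(a_3,a_4)^T$ from the left because $[a_3,a_4]=0$, and the row $(a^*_1,a^*_2)$ from the right because $[a^*_1,a^*_2]=0$.

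The matrix you first wrote down is $M^T$, and
\[
(a^*_1,a^*_2)M^T=\bigl(a^*_1a^*_2a_4-(a^*_2)^2a_3,\;-(a^*_1)^2a_4+a^*_2a^*_1a_3\bigr),
\]
which is nonzero, e.g.\ on $Q^{(1)}_{\frac12,\frac12,0}\propto w_1w_2w^*_4$. So the vanishing does not follow from any symmetry of adjugates under transposition, as your last paragraph suggests. It follows from using the correct $\partial^+$, which is identical for the left and the right operator.
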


\begin{proof}
Using Lemma \ref{intertwiner-lem}, for each $k=1,\dots,n$, we have:
\begin{multline*}
\Psi^+_n \bigl( Q^{(n)}_{l,\mu,\nu} \bigr) \overleftarrow{\nabla^+_k}
= -2 \Phi^+_0 \bigl( (a^*_1, a^*_2) \otimes \dots \otimes (a^*_1, a^*_2) \bigr)
Q^{(n)}_{l,\mu,\nu} (\overleftarrow{1 \otimes \cdots
  \underset{\text{$k$-th place}}{\otimes \partial^+ \otimes}
  \cdots \otimes 1})  \\
= -2 \Phi^+_0 \bigl( (a^*_1, a^*_2) \otimes \dots \otimes
\underset{\text{$k$-th place}}{(a^*_1, a^*_2)
    \begin{pmatrix} a^*_2a_4 & -a^*_2a_3 \\ -a^*_1a_4 & a^*_1a_3 \end{pmatrix}}
  \otimes \cdots \otimes (a^*_1, a^*_2) \bigr) Q^{(n)}_{l,\mu,\nu} =0.
\end{multline*}
\end{proof}

The map $\Psi^+_n$ extends by linearity to a map of vector spaces
\[
\Psi^+_n: W^+_{-n} \to {\cal G}^+_n,
\]
where ${\cal G}^+_n$ is the space of polynomial right $n$-regular functions
on $\HC$ introduced in Section 7 of \cite{nreg}.

\begin{lem}  \label{Z-mult-lem2}
Let $C = \bigl(\begin{smallmatrix} c_{11} & c_{12} \\
 c_{21} & c_{22} \end{smallmatrix}\bigr) \in \HC$, then, for any $q \in W^+_{-1}$,
\[
\Phi^+_0 \bigl( (a^*_1,a^*_2)q \bigr) \cdot ZC
= - \Phi^+_0 \bigl( c_{11} (a^*_3, 0) + c_{12} (0, a^*_3)
+ c_{21} (a^*_4, 0) +c_{22} (0, a^*_4) \bigr) (q).
\]
\end{lem}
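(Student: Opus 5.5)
This is the mirror image of Lemma \ref{Z-mult-lem1}, and I would prove it the same way: reduce it to a multiplication identity for matrix coefficients, namely the right-multiplication counterpart of Lemma 23 in \cite{FL1}. By linearity it suffices to take $q$ to be a basis monomial of $W^+_{-1}$, that is, $q = Q^{(1)}_{l,\mu,\nu}$ with $l \ge 0$, $-l\le\mu\le l$ and $-l-\frac12\le\nu\le l+\frac12$. First I would compute the left-hand side. Since $a^*_1, a^*_2$ act as $\partial/\partial w_1$ and $\partial/\partial w_2$, applying them to $Q^{(1)}_{l,\mu,\nu}$ gives, up to the normalizing factorials, multiples of the degree-zero monomials $p^{l+\frac12}_{\mu,\nu\mp\frac12}$, and the coefficients simplify after cancellation. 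Then $\Phi^+_0$ turns the row $(a^*_1 q, a^*_2 q)$ into a row of two matrix coefficients of the form $t^{l+\frac12}_{\nu\mp\frac12\,\underline{\mu}}(Z)$, which are polynomials of degree $2l+1$.

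Next I would multiply this row on the right by $ZC$, where $ZC=\bigl(\begin{smallmatrix} z_{11}&z_{12}\\ z_{21}&z_{22}\end{smallmatrix}\bigr)C$. I would expand the products $z_{ij}\,t^{l+\frac12}_{\cdot\,\underline{\cdot}}(Z)$ using the multiplication formula from Lemma 23 of \cite{FL1} (or its transposed version). That formula expresses $z_{ij}t^{l'}_{n\,\underline{m}}$ as a combination of $t^{l'+\frac12}$-terms and $N(Z)\,t^{l'-\frac12}$-terms. Because the left-hand side comes from a single monomial $q$, the $N(Z)$-terms must either cancel in the row combination or assemble into the expected harmonic form, and I would verify this by direct substitution. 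The cleanest way to organize the computation is to use the contour integral \eqref{t}. There, multiplying by $sz_{11}+z_{21}$ or $sz_{12}+z_{22}$ raises $l-\mu$ or $l+\mu$ inside the integrand, and this is precisely what multiplication by $w^*_3$ or $w^*_4$ does on the monomial side.

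For the right-hand side I would compute directly. The operators $a^*_3, a^*_4$ are multiplication by $w^*_3, w^*_4$, so, for example,
\[
a^*_3 Q^{(1)}_{l,\mu,\nu} = c_1\, p^{l+\frac12}_{\mu-\frac12,\nu}, \qquad a^*_4 Q^{(1)}_{l,\mu,\nu} = c_2\, p^{l+\frac12}_{\mu+\frac12,\nu}
\]
for explicit constants $c_1, c_2$ (their sign comes from $(-1)^{2l}$ versus $(-1)^{2l+1}$). Applying $\Phi^+_0$ to these gives the entries $t^{l+1}_{\nu\,\underline{\mu\mp\frac12}}$, placed in the row positions dictated by $c_{11}(a^*_3,0)+c_{12}(0,a^*_3)+c_{21}(a^*_4,0)+c_{22}(0,a^*_4)$. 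Finally I would match the coefficient of each $c_{ij}$ on both sides. This is a check of four identities of the form
\[
z_{i1}t^{l+\frac12}_{\nu-\frac12\,\underline{\mu}} + z_{i2}t^{l+\frac12}_{\nu+\frac12\,\underline{\mu}} = (\text{const})\, t^{l+1}_{\nu\,\underline{\mu\mp\frac12}},
\]
each of which should follow from the identity $(sz_{11}+z_{21})\cdot(\ldots) = \ldots$ under the integral.

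I expect the main obstacle to be bookkeeping rather than anything conceptual. One has to get the correct transposed version of Lemma 23 of \cite{FL1}, in which right multiplication by $Z$ acts on the $\nu$-index (the $s$-power) through $s^{-l+\nu}$, and then track the signs $(-1)^{2l}$ and the factorial normalizations so that the overall factor comes out to exactly $-1$. I would first check the case $l=0$ by hand to fix the signs before doing the general computation.
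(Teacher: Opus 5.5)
Your strategy matches the paper's. You reduce to monomials $q=Q^{(1)}_{l,\mu,\nu}$, compute the actions of $a^*_1,\dots,a^*_4$, and finish with the right-multiplication identity for matrix coefficients (Lemma 23 of \cite{FL1}, immediate from \eqref{t}). Your formulas for $a^*_3q$ and $a^*_4q$ are also right, with $c_1=c_2=-1$ cancelling the minus sign in the statement. But the lemma consists of nothing except this bookkeeping, and as written your version breaks at each concrete step.

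\emph{Wrong levels.} Since $a^*_1,a^*_2$ lower the $w$-degree, $a^*_1Q^{(1)}_{l,\mu,\nu}=p^l_{\mu,\nu+\frac12}$ and $a^*_2Q^{(1)}_{l,\mu,\nu}=p^l_{\mu,\nu-\frac12}$ exactly. So the left-hand row is $\bigl(t^l_{\nu+\frac12\,\underline{\mu}}(Z),\,t^l_{\nu-\frac12\,\underline{\mu}}(Z)\bigr)$, of degree $2l$. Your $t^{l+\frac12}_{\nu\mp\frac12\,\underline{\mu}}$ has inadmissible indices, since $\mu\in\BB Z+l$. Likewise $\Phi^+_0\bigl(p^{l+\frac12}_{\mu\mp\frac12,\nu}\bigr)=t^{l+\frac12}_{\nu\,\underline{\mu\mp\frac12}}$, not $t^{l+1}_{\nu\,\underline{\mu\mp\frac12}}$.

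\emph{Wrong multiplication pattern.} The $j$-th entry of a row $(x_1,x_2)$ times $Z$ is $z_{1j}x_1+z_{2j}x_2$, not $z_{i1}x_1+z_{i2}x_2$. The identities you intend to verify are false even after the levels are fixed. Take $l=\mu=\frac12$, $\nu=0$: then $t^{\frac12}_{-\frac12\,\underline{\frac12}}=z_{12}$ and $t^{\frac12}_{\frac12\,\underline{\frac12}}=z_{22}$. Neither $z_{12}(z_{11}+z_{22})$ nor $z_{12}z_{21}+z_{22}^2$ is a multiple of $t^1_{0\,\underline{0}}=z_{11}z_{22}+z_{12}z_{21}$ or of $t^1_{0\,\underline{1}}=2z_{12}z_{22}$.

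\emph{What actually has to be checked.} The right-hand side equals $\bigl(t^{l+\frac12}_{\nu\,\underline{\mu-\frac12}}(Z),\,t^{l+\frac12}_{\nu\,\underline{\mu+\frac12}}(Z)\bigr)C$, so it suffices to show
\[
z_{1j}\,t^l_{\nu+\frac12\,\underline{\mu}}(Z)+z_{2j}\,t^l_{\nu-\frac12\,\underline{\mu}}(Z)
=t^{l+\frac12}_{\nu\,\underline{\mu\mp\frac12}}(Z), \qquad j=1,2,
\]
with $-$ for $j=1$ and $+$ for $j=2$. In \eqref{t} this is immediate:
\[
s^{-l+\nu+\frac12}z_{1j}+s^{-l+\nu-\frac12}z_{2j}=s^{-(l+\frac12)+\nu}(sz_{1j}+z_{2j}).
\]
So the row combination inserts exactly one extra factor $sz_{11}+z_{21}$ or $sz_{12}+z_{22}$ into the integrand. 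This is the mechanism you correctly matched with multiplication by $w^*_3$ or $w^*_4$.

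No $N(Z)$-terms ever arise and the constant is exactly $1$, so your hedge about $N(Z)$-terms cancelling or assembling is unnecessary. The boundary cases $\nu=\pm(l+\frac12)$ are automatic: there $a^*_1q$ or $a^*_2q$ vanishes, and the corresponding out-of-range $t^l$ is zero by \eqref{t}.
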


\begin{proof}
The result follows from Lemma 23 in \cite{FL1} and observation
\[  
a^*_3 Q^{(1)}_{l,\mu,\nu} = - p^{l+\frac12}_{\mu-\frac12,\nu}
\qquad \text{and} \qquad
a^*_4 Q^{(1)}_{l,\mu,\nu} = - p^{l+\frac12}_{\mu+\frac12,\nu}.
\]
\end{proof}

\begin{thm}  \label{n-reg-iso-thm2}
  For each $n>0$, the map $\Psi^+_n$ is an isomorphism between representations
  $(\pi^+_{-n}, W^+_{-n})$ and $(\pi_{nr}, {\cal G}^+_n)$ of
  $\mathfrak{sl}(4,\BB C) = \mathfrak{sl}(2,\HC)$.
\end{thm}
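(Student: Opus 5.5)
I will follow the proof of Theorem \ref{n-reg-iso-thm}, mirroring it on the right-regular side. The plan has three parts: check intertwining on a generating set of $\mathfrak{sl}(2,\HC)$, then show injectivity, then deduce surjectivity from irreducibility.

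\textbf{Intertwining on generators.} The action $\pi_{nr}$ of $\mathfrak{sl}(2,\HC)$ on ${\cal G}^+_n$ is given explicitly in Lemma 2 of \cite{nreg}. It suffices to check that $\Psi^+_n$ intertwines the elements $\bigl(\begin{smallmatrix} 0 & B \\ 0 & 0 \end{smallmatrix}\bigr)$ and $\bigl(\begin{smallmatrix} 0 & 0 \\ C & 0 \end{smallmatrix}\bigr)$, $B,C \in \HC$, since these generate $\mathfrak{sl}(2,\HC)$. By equation (15) of \cite{qreg}, these elements act on $W^+_{-n}$ through the matrices of quadratic operators $a^*_ia_j$ appearing in Lemma \ref{pi+-action-lem2}.

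For the $B$-part, $\pi_{nr}$ acts by $-\tr(B\partial)$. The operators $a^*_ka_m$ with $k\in\{1,2\}$, $m\in\{3,4\}$ commute with each factor $(a^*_1,a^*_2)$. So I can move them past the $n$ factors in the definition of $\Psi^+_n$ and apply the first formula of Lemma \ref{intertwiner-lem} to the resulting degree-zero polynomial. This gives $-\tr(B\partial)$ applied to $\Psi^+_n(q)$.

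For the $C$-part, the operators $a^*_3a_1,\dots,a^*_4a_2$ do not commute with $a^*_1,a^*_2$. Commuting them through the $n$ factors $(a^*_1,a^*_2)$ produces two kinds of terms, using Lemma \ref{commutator-lem1}:
\begin{itemize}
\item a term in which the operator acts on a degree-zero polynomial, which by the second formula of Lemma \ref{intertwiner-lem} gives the first-order part $\tr(ZCZ\partial + ZC)$, with the trace correction coming from normal ordering;
\item $n$ commutator terms in which one factor $(a^*_1,a^*_2)$ is replaced by $(a^*_3,0)$, $(0,a^*_3)$, $(a^*_4,0)$ or $(0,a^*_4)$.
\end{itemize}
Lemma \ref{Z-mult-lem2}, applied in the $k$-th tensor slot, identifies each commutator term with right multiplication by $ZC$ in that slot. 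Summing over $k$ gives exactly $g(ZC\otimes1\otimes\cdots+\cdots+1\otimes\cdots\otimes ZC)$, as required by $\pi_{nr}$.

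I expect this step to be the main obstacle: keeping track of signs and of the scalar shift, i.e. the trace term and the normal-ordering constants, so that they match Lemma 2 of \cite{nreg} exactly. The Remark after Proposition \ref{harmonic+iso-prop} already shows that the scalar parts need not agree naively. This is why I restrict to $\mathfrak{sl}$ and verify only on the off-diagonal generators.

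\textbf{Injectivity.} By Lemma \ref{n-reg--lem}, $\Psi^+_n$ maps into ${\cal G}^+_n$. Applying $(a^*_1,a^*_2)^{\otimes n}$ to a monomial $Q^{(n)}_{l,\mu,\nu}$ gives a nonzero combination of the $p^{l+\frac n2}_{\mu',\nu'}$. Distinct $(l,\mu,\nu)$ go to linearly independent $\BB S'^{\odot n}$-valued combinations, since their $K$-types $V_{l+\frac n2}\boxtimes V_l$ are distinct or are distinguished by weights. Then $\Phi^+_0$ is an isomorphism onto ${\cal H}^+$, so $\Psi^+_n$ is injective.

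\textbf{Surjectivity.} By Theorem 26 of \cite{nreg}, $(\pi_{nr},{\cal G}^+_n)$ is irreducible. The image of $\Psi^+_n$ is a nonzero invariant subspace, so it is all of ${\cal G}^+_n$, and $\Psi^+_n$ is an isomorphism.
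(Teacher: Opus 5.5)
Your proposal is correct and follows the paper's proof essentially step for step: intertwining is checked on the generators $\bigl(\begin{smallmatrix} 0 & B \\ C & 0 \end{smallmatrix}\bigr)$ via equation (15) of \cite{qreg} and Lemmas \ref{intertwiner-lem}, \ref{Z-mult-lem2}, injectivity is immediate, and surjectivity comes from the irreducibility of $(\pi_{nr},{\cal G}^+_n)$ (Theorem 26 of \cite{nreg}). There are a few harmless slips: the $n$-fold $w_1,w_2$-derivatives of $Q^{(n)}_{l,\mu,\nu}$ land in the span of the $p^{l}_{\mu,\nu'}$, not $p^{l+\frac n2}$, and injectivity is most simply seen from the fact that a nonzero polynomial homogeneous of degree $\ge n$ in $w_1,w_2$ has a nonzero $n$-th order partial derivative; also, normal ordering plays no role for the off-diagonal operators $a^*_3a_1,\dots,a^*_4a_2$, and the $\tr(ZC)$ term is just the $\pi^0_l$-action on the degree-zero part (cf. Proposition \ref{harmonic+iso-prop}).
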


\begin{proof}
  The action of $\mathfrak{sl}(2,\HC)$ on ${\cal G}^+_n$ via $\pi_{nr}$
  is spelled out in Lemma 2 in \cite{nreg}.   By equation (15) in \cite{qreg}
  and Lemmas \ref{intertwiner-lem}, \ref{Z-mult-lem2}, $\Psi^+_n$ intertwines
  the actions on $W^+_{-n}$ and ${\cal G}^+_n$ of matrices of the form
  $\bigl(\begin{smallmatrix} 0 & B \\ C & 0 \end{smallmatrix}\bigr)
  \in \mathfrak{sl}(2,\HC)$, $B, C \in \HC$.
  Since these matrices generate $\mathfrak{sl}(2,\HC)$, it follows that
  $\Psi^+_n$ is an intertwining map between $(\pi^+_{-n}, W^+_{-n})$ and
  $(\pi_{nr}, {\cal G}^+_n)$.
  It is clear that $\Psi^+_n$ is injective.
  By Theorem 26 in \cite{nreg}, $(\pi_{nr}, {\cal G}^+_n)$ is irreducible.
  Hence $\Psi^+_n$ is onto, and the result follows.
\end{proof}

Theorems \ref{n-reg-iso-thm} and \ref{n-reg-iso-thm2} as well as
Proposition \ref{harmonic+iso-prop} are well known, see e.g. \cite{JV1}.
We have stated these results in our notations for future use in the paper.

\subsection{The Metaplectic Representation $W^-$ of $\mathfrak{sp}(8,\BB C)$
and the $n$-Regular Functions Regular at Infinity}

We turn our attention to
\[
W^- = w_1^{-1}w_2^{-1}(w^*_3)^{-1}(w^*_4)^{-1} \cdot
\BB C\bigl[w_1^{-1},w_2^{-1},(w^*_3)^{-1},(w^*_4)^{-1}\bigr]
= V_- \otimes V_-^*,
\]
\[
V_-= w_1^{-1} w_2^{-1} \cdot \BB C\bigl[w_1^{-1},w_2^{-1}\bigr],\quad
V_-^*= (w^*_3)^{-1}(w^*_4)^{-1} \cdot \BB C\bigl[(w^*_3)^{-1},(w^*_4)^{-1}\bigr].
\]
Observe that $W^-_0$ (polynomials of degree zero in $W^-$) has a basis
consisting of monomials of the form
\begin{equation}  \label{monomial-tilde-basis}
  \tilde p^l_{\mu,\nu}(w_1,w_2,w^*_3,w^*_4) = (l-\nu)!(l+\nu)!
  (w_1)^{-(l-\nu+1)}(w_2)^{-(l+\nu+1)}(w^*_3)^{-(l-\mu+1)}(w^*_4)^{-(l+\mu+1)},
\end{equation}
\[
l = 0, \tfrac12, 1, \tfrac32, \dots, \quad
  -l \le \mu,\:\nu \le l, \quad \mu,\:\nu \in \BB Z +l.
\]
For a fixed $l$, these monomials span the $K$-type $V_l \boxtimes V_l$.
To each monomial \eqref{monomial-tilde-basis} we associate a harmonic function
$N(Z)^{-1} \cdot t^l_{\mu \, \underline{\nu}}(Z^{-1})$ on $\HC$.
This results in a vector space isomorphism
\begin{align*}
  \Phi^-_0 : W^-_0 \xrightarrow{\sim} {\cal H}^-
  &= \bigl\{ \phi \in \BB C[z_{11},z_{12},z_{21},z_{22}, N(Z)^{-1}] ;\:
  N(Z)^{-1} \cdot \phi(Z^{-1}) \in {\cal H^+} \bigr\}  \\
  &= \BB C\text{-span of } \bigl\{
  N(Z)^{-1} \cdot t^l_{\mu \, \underline{\nu}}(Z^{-1}) \bigr\}.
\end{align*}

By direct computation and identity (13) in \cite{qreg} we obtain:

\begin{lem}  \label{-intertwiner-lem}
We have the following intertwining relations:
\begin{align*}
\Phi^-_0 \begin{pmatrix} a^*_1a_3 & a^*_1a_4 \\ a^*_2a_3 & a^*_2a_4 \end{pmatrix}
\tilde p^l_{\mu,\nu} &=
\frac1{N(Z)} \left(\begin{smallmatrix}
  (l-\mu+1) t^{l+\frac12}_{\mu-\frac12\,\underline{\nu-\frac12}}(Z^{-1}) &
  (l+\mu+1) t^{l+\frac12}_{\mu+\frac12\,\underline{\nu-\frac12}}(Z^{-1}) \\
  (l-\mu+1) t^{l+\frac12}_{\mu-\frac12\,\underline{\nu+\frac12}}(Z^{-1}) &
  (l+\mu+1) t^{l+\frac12}_{\mu+\frac12\,\underline{\nu+\frac12}}(Z^{-1})
\end{smallmatrix}\right)  \\
&= - \bigl(\begin{smallmatrix} \partial_{11} & \partial_{12} \\
  \partial_{21} & \partial_{22} \end{smallmatrix}\bigr)
\bigl( N(Z)^{-1} \cdot t^l_{\mu\,\underline{\nu}}(Z^{-1}) \bigr),  \\
\Phi^-_0 \begin{pmatrix} a^*_3a_1 & a^*_3a_2 \\ a^*_4a_1 & a^*_4a_2 \end{pmatrix}
\tilde p^l_{\mu,\nu} &= \frac1{N(Z)}
\left(\begin{smallmatrix}
  (l-\nu) t^{l-\frac12}_{\mu+\frac12\,\underline{\nu+\frac12}}(Z^{-1}) &
  (l+\nu) t^{l-\frac12}_{\mu+\frac12\,\underline{\nu-\frac12}}(Z^{-1}) \\
  (l-\nu) t^{l-\frac12}_{\mu-\frac12\,\underline{\nu+\frac12}}(Z^{-1}) &
  (l+\nu) t^{l-\frac12}_{\mu-\frac12\,\underline{\nu-\frac12}}(Z^{-1})
\end{smallmatrix}\right).
\end{align*}
\end{lem}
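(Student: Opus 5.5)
The plan is to verify both identities of Lemma \ref{-intertwiner-lem} directly on the monomial basis $\tilde p^l_{\mu,\nu}$. Then I will convert the resulting monomials into the functions $N(Z)^{-1} t^{l'}_{\mu'\,\underline{\nu'}}(Z^{-1})$ via $\Phi^-_0$. Finally, I will match the outcome with a derivative computation that uses identity (13) of \cite{qreg}.

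\textbf{The action on $W^-_0$.} The annihilation operators $a^*_1,a^*_2,a_3,a_4$ act on $W^-$ by genuine differentiation in $w_1,w_2,w^*_3,w^*_4$. The operators $a_1,a_2,a^*_3,a^*_4$ act by multiplication, truncated when an exponent becomes nonnegative.

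For the first identity, $a^*_ia_j$ with $i\in\{1,2\}$, $j\in\{3,4\}$ acts by $\partial_{w_i}\partial_{w^*_j}$, and no truncation occurs. For example,
\[
a^*_1a_3\,\tilde p^l_{\mu,\nu} = (l-\nu+1)(l-\mu+1)(l-\nu)!(l+\nu)!\,
w_1^{-(l-\nu+2)}w_2^{-(l+\nu+1)}(w^*_3)^{-(l-\mu+2)}(w^*_4)^{-(l+\mu+1)}.
\]
This is a multiple of $\tilde p^{l+\frac12}_{\mu-\frac12,\nu-\frac12}$. Absorbing the factorial normalizations, I obtain $(l-\mu+1)\tilde p^{l+\frac12}_{\mu-\frac12,\nu-\frac12}$. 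The other three entries work the same way, and the result is exactly the displayed matrix after applying $\Phi^-_0$.

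For the second identity, $a^*_3a_1$ multiplies by $w_1w^*_3$. This lowers $l$ by $\frac12$ and produces the coefficient $(l-\nu)$. When $l-\nu=0$ the truncation kills the term, which is consistent with that coefficient vanishing. A point needing care is the sign of $\pm$ from the ordering $a^*_ka_l$ versus $:a_la^*_k:$. Since $k\neq l$ here, no normal-ordering correction arises.

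\textbf{The derivative identity.} I will show that $-\partial\bigl(N(Z)^{-1}t^l_{\mu\,\underline{\nu}}(Z^{-1})\bigr)$ equals the displayed matrix. Here $\partial$ is the matrix of $\partial_{ij}$ arranged as in Lemma \ref{intertwiner-lem}. The approach is to invoke identity (13) of \cite{qreg}, which expresses exactly this derivative of the inverted harmonic function in terms of $t^{l+\frac12}$ evaluated at $Z^{-1}$.

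If I had to rederive that identity, I would use the chain rule for $Z\mapsto Z^{-1}$ and $\partial N(Z)^{-1} = -N(Z)^{-1}Z^{-1}$ (in the appropriate transpose convention). I would combine this with Lemma 23 of \cite{FL1}, which handles multiplication of $t^l$ by entries of $Z$, and Lemma 22 of \cite{FL1}, which handles derivatives of $t^l$. Harmonicity then cancels the extra terms.

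\textbf{Main obstacle.} The main difficulty is bookkeeping. The index swap $\mu\leftrightarrow\nu$ in $\Phi^-_0$ (note $t^l_{\mu\,\underline{\nu}}$ rather than $t^l_{\nu\,\underline{\mu}}$) has to be tracked, and so does the exact placement of matrix entries (row and column versus transpose). I would fix conventions by checking the case $l=0$: there $\tilde p^0_{0,0}\mapsto N(Z)^{-1}$, and its derivative is computed by hand. That check pins down the matrix arrangement and sign, and the general case then follows by the formulas above.
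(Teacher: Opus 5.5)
Your proposal is correct and is essentially the paper's own argument: a direct computation of the quadratic operators on the monomials $\tilde p^l_{\mu,\nu}$, followed by identity (13) of \cite{qreg} for the derivative form. Your coefficients $(l-\mu+1)$, $(l+\mu+1)$, $(l\mp\nu)$ are right. One small addition for the second identity: truncation also occurs at $\mu=\pm l$, not only at $\nu=\pm l$, and there it is matched by the vanishing of $t^{l-\frac12}_{n\,\underline{m}}$ for $|n|>l-\frac12$, which follows automatically from the contour-integral definition \eqref{t}.
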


\begin{prop}  \label{harmonic-iso-prop}
  The isomorphism $\Phi^-_0$ intertwines the actions $\pi^-_0$ on $W^-_0$ and
  $\pi^0_l$ on ${\cal H}^-$ of $\mathfrak{sl}(4,\BB C) = \mathfrak{sl}(2,\HC)$.
\end{prop}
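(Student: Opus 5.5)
The argument will follow the proof of Proposition \ref{harmonic+iso-prop}. Since $\mathfrak{sl}(2,\HC)$ is generated as a Lie algebra by matrices of the form $\bigl(\begin{smallmatrix} 0 & B \\ C & 0 \end{smallmatrix}\bigr)$ with $B, C \in \HC$, it suffices to check that $\Phi^-_0$ intertwines the actions of these two families. The action of the full Lie algebra is then forced: brackets of intertwined operators are intertwined. The target action $\pi^0_l$ on ${\cal H}^-$ is given by Lemma 17 in \cite{FL1}. It is the same differential-operator formula as on ${\cal H}^+$, since ${\cal H}^-$ is an invariant subspace of harmonic functions with singularities.

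\textbf{Step 1 (the $B$-generators).} The element $\bigl(\begin{smallmatrix} 0 & B \\ 0 & 0 \end{smallmatrix}\bigr)$ acts on ${\cal H}^-$ by $-\tr(B\partial)$. Using the identification of $\mathfrak{gl}(4,\BB C)$ with quadratic operators (as in equation (15) of \cite{qreg}), it corresponds on $W^-_0$ to the matrix of operators $a^*_ka_l$ with $k\in\{1,2\}$, $l \in\{3,4\}$. The first identity of Lemma \ref{-intertwiner-lem} says exactly that
\[
\Phi^-_0\bigl(a^*_ka_l\,\tilde p^l_{\mu,\nu}\bigr) = -\partial_{kl}\bigl(N(Z)^{-1}t^l_{\mu\,\underline{\nu}}(Z^{-1})\bigr).
\]
This matches $-\tr(B\partial)$ entry by entry, provided the sign and transpose conventions line up the same way they did for $\Phi^+_0$. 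Since the $\tilde p^l_{\mu,\nu}$ form a basis of $W^-_0$, this case is settled.

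\textbf{Step 2 (the $C$-generators).} Here $\bigl(\begin{smallmatrix} 0 & 0 \\ C & 0 \end{smallmatrix}\bigr)$ acts on ${\cal H}^-$ by $\tr(ZCZ\partial + CZ)$, up to the normalization for the scalar-valued case in Lemma 17 of \cite{FL1}. On $W^-_0$ it corresponds to the operators $a^*_ka_l$ with $k\in\{3,4\}$, $l\in\{1,2\}$. The second identity of Lemma \ref{-intertwiner-lem} gives the $W^-_0$ side as explicit combinations of $N(Z)^{-1}t^{l-\frac12}(Z^{-1})$.

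I need to show these agree with $\tr(ZCZ\partial+CZ)$ applied to $N(Z)^{-1}t^l_{\mu\,\underline{\nu}}(Z^{-1})$. I would use the symmetry $Z\mapsto Z^{-1}$, i.e.\ conjugation by the inversion element $\bigl(\begin{smallmatrix} 0 & 1 \\ 1 & 0 \end{smallmatrix}\bigr)$. The map $\phi(Z)\mapsto N(Z)^{-1}\phi(Z^{-1})$ is, up to sign, the action $\pi^0_l$ of this element, so it swaps the roles of the $B$- and $C$-generators. Then $\tr(ZCZ\partial+CZ)$ acting on $N(Z)^{-1}t(Z^{-1})$ becomes $-\tr(C\partial)$ acting on $t$, evaluated at $Z^{-1}$ and multiplied by $N(Z)^{-1}$. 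On the other side, Lemma \ref{intertwiner-lem} computes $\partial_{kl} t^l_{\mu\,\underline{\nu}}$ in terms of $t^{l-\frac12}$. Comparing coefficients with the second identity of Lemma \ref{-intertwiner-lem}, the indices $\mu,\nu$ appear swapped, reflecting $t^l_{\mu\,\underline{\nu}}$ versus $t^l_{\nu\,\underline{\mu}}$, and the factors $(l\mp\nu)$ match.

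\textbf{Main obstacle.} I expect the sign and index bookkeeping in Step 2 to be the hardest part. Conjugating by the inversion introduces a sign, and the transposition of $\mu,\nu$ together with the transpose relating $C$ to the matrix of $a^*_ka_l$ must all be reconciled. If the inversion argument becomes unwieldy, the fallback is a direct differentiation of $N(Z)^{-1}t^l(Z^{-1})$ using identity (13) of \cite{qreg}, which is the source of Lemma \ref{-intertwiner-lem}. I would not need to treat the scalar matrices separately, since they lie outside $\mathfrak{sl}$; the corresponding remark to the one after Proposition \ref{harmonic+iso-prop} explains why the statement is restricted to $\mathfrak{sl}(4,\BB C)$.
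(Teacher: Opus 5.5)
Your proposal is correct and follows the paper's proof. The paper likewise checks the intertwining on the generators $\bigl(\begin{smallmatrix} 0 & B \\ C & 0 \end{smallmatrix}\bigr)$, using Lemma \ref{-intertwiner-lem} together with the formulas for $\pi^0_l$ on ${\cal H}^-$ (citing Lemma 17 of \cite{FL1} and equation (16) of \cite{qreg}), and then uses that these matrices generate $\mathfrak{sl}(2,\HC)$. Your inversion argument for the $C$-generators is a valid self-contained substitute for that citation, and the sign and index bookkeeping you worry about causes no trouble: $\pi^0_l\bigl(\begin{smallmatrix} 0 & 1 \\ 1 & 0 \end{smallmatrix}\bigr)$ is exactly $\phi(Z)\mapsto N(Z)^{-1}\phi(Z^{-1})$ with no sign, the second identity of Lemma \ref{-intertwiner-lem} is entrywise $N(Z)^{-1}(\partial t^l_{\mu\,\underline{\nu}})(Z^{-1})$, and since $\bigl(\begin{smallmatrix} 0 & 0 \\ C & 0 \end{smallmatrix}\bigr)$ with $C=(c_{ij})$ corresponds to $-\sum_{i,j} c_{ij}\, a^*_{i+2}a_j$, both sides equal $N(Z)^{-1}\bigl(-\tr(C\partial)\, t^l_{\mu\,\underline{\nu}}\bigr)(Z^{-1})$.
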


\begin{proof}
  The action $\pi^0_l$ on ${\cal H}^-$ of $\mathfrak{sl}(2,\HC)$ is spelled
  out in Lemma 17 in \cite{FL1}.
  By equation (16) in \cite{qreg} and Lemma \ref{-intertwiner-lem},
  $\Phi^-_0$ intertwines the actions on $W^-_0$ and ${\cal H}^-$ of matrices
  of the form
  $\bigl(\begin{smallmatrix} 0 & B \\ C & 0 \end{smallmatrix}\bigr)
  \in \mathfrak{gl}(2,\HC)$, $B, C \in \HC$.
  Since these matrices generate $\mathfrak{sl}(2,\HC)$, the result follows.
\end{proof}

We turn our attention to $W^-_n$ -- polynomials of degree $n$ in $W^-$ --
with $n>0$. This space has a basis consisting of monomials of the form
\begin{multline}  \label{tilde-n-monomial-basis}
  \tilde P^{(n)}_{l,\mu,\nu}(w_1,w_2,w^*_3,w^*_4)  \\
  = (l+\tfrac{n}2-\nu)!(l+\tfrac{n}2+\nu)!   (w_1)^{-(l+\frac{n}2-\nu+1)}
  (w_2)^{-(l+\frac{n}2+\nu+1)}(w^*_3)^{-(l-\mu+1)}(w^*_4)^{-(l+\mu+1)},
\end{multline}
\[
l = 0, \tfrac12, 1, \tfrac32, \dots, \quad
-l \le \mu \le l, \quad
-l-\tfrac{n}2 \le \nu \le l+\tfrac{n}2, \quad
\mu,\:\nu+\tfrac{n}2 \in \BB Z +l.
\]
For a fixed $l$, these monomials span the $K$-type
$V_{l+\frac{n}2} \boxtimes V_l$.
By direct computation we obtain:

\begin{lem}  \label{pi--action-lem}
For $n>0$, elements
$\bigl(\begin{smallmatrix} 0 & B \\ C & 0 \end{smallmatrix}\bigr)
\in \mathfrak{sl}(4,\BB C)$, $B, C \in \mathfrak{gl}(2,\BB C)$, act on
$W^-_n$ via $\pi^-_n$ as follows:
\begin{align*}
\begin{pmatrix} a^*_1a_3 & a^*_1a_4 \\ a^*_2a_3 & a^*_2a_4 \end{pmatrix}
\tilde P^{(n)}_{l,\mu,\nu} &= \left(\begin{smallmatrix}
  (l-\mu+1) \tilde P^{(n)}_{l+\frac12,\mu-\frac12,\nu-\frac12} &
  (l+\mu+1) \tilde P^{(n)}_{l+\frac12,\mu+\frac12,\nu-\frac12} \\
  (l-\mu+1) \tilde P^{(n)}_{l+\frac12,\mu-\frac12,\nu+\frac12} &
  (l+\mu+1) \tilde P^{(n)}_{l+\frac12,\mu+\frac12,\nu+\frac12}
\end{smallmatrix}\right),  \\
\begin{pmatrix} a^*_3a_1 & a^*_3a_2 \\ a^*_4a_1 & a^*_4a_2 \end{pmatrix}
\tilde P^{(n)}_{l,\mu,\nu} &= \left(\begin{smallmatrix}
  (l+\tfrac{n}2-\nu) \tilde P^{(n)}_{l-\frac12,\mu+\frac12,\nu+\frac12} &
  (l+\tfrac{n}2+\nu) \tilde P^{(n)}_{l-\frac12,\mu+\frac12,\nu-\frac12} \\
  (l+\tfrac{n}2-\nu) \tilde P^{(n)}_{l-\frac12,\mu-\frac12,\nu+\frac12} &
  (l+\tfrac{n}2+\nu) \tilde P^{(n)}_{l-\frac12,\mu-\frac12,\nu-\frac12}
\end{smallmatrix}\right).
\end{align*}
\end{lem}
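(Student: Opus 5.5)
This is a direct computation on monomials: apply each of the eight operators $a^*_ka_l$ ($k\in\{1,2\}$, $l\in\{3,4\}$ or $k\in\{3,4\}$, $l\in\{1,2\}$) to $\tilde P^{(n)}_{l,\mu,\nu}$ using the rules for $W^-$. The annihilation operators $a^*_1,a^*_2,a_3,a_4$ act on $W^-$ by differentiation in $w_1,w_2,w^*_3,w^*_4$. The creation operators $a_1,a_2,a^*_3,a^*_4$ act by multiplication by $w_1,w_2,w^*_3,w^*_4$, with the rule that a resulting monomial having some non-negative exponent is set to zero. Since $k\ne l$ in every case, each product is simply the composition of two commuting operators, so normal ordering plays no role.

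For the first matrix, take $a^*_1a_3$ as the model entry. First $a_3=\partial/\partial w^*_3$ acts on $(w^*_3)^{-(l-\mu+1)}$, giving the factor $-(l-\mu+1)$ and the exponent $-(l-\mu+2)=-((l+\tfrac12)-(\mu-\tfrac12)+1)$. Then $a^*_1=\partial/\partial w_1$ acts on $(w_1)^{-(l+\frac n2-\nu+1)}$, giving the factor $-(l+\tfrac n2-\nu+1)$ and the exponent $-((l+\tfrac12)+\tfrac n2-(\nu-\tfrac12)+1)$. The new indices are therefore $(l+\tfrac12,\mu-\tfrac12,\nu-\tfrac12)$.

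The factorial normalization of $\tilde P^{(n)}$ is $(l+\tfrac n2-\nu)!\,(l+\tfrac n2+\nu)!$. Passing to $\tilde P^{(n)}_{l+\frac12,\mu-\frac12,\nu-\frac12}$ changes this to $(l+\tfrac n2-\nu+1)!\,(l+\tfrac n2+\nu)!$, which absorbs the factor $(l+\tfrac n2-\nu+1)$. The two minus signs cancel, and we are left with the coefficient $(l-\mu+1)$, as claimed. The other three entries of the first matrix follow in the same way, replacing $w_1$ by $w_2$ and/or $w^*_3$ by $w^*_4$, which flips the signs of the $\nu$ and/or $\mu$ shifts.

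For the second matrix, take $a^*_3a_1$ as the model entry. Here $a_1$ multiplies by $w_1$, lowering the magnitude of the exponent: $(w_1)^{-(l+\frac n2-\nu)}$. This is still a negative power provided $l+\tfrac n2-\nu\ge1$. Then $a^*_3$ multiplies by $w^*_3$, giving $(w^*_3)^{-(l-\mu)}$, which is still negative provided $l-\mu\ge1$. The resulting monomial has indices $(l-\tfrac12,\mu+\tfrac12,\nu+\tfrac12)$. The factorial prefactor needed is $(l+\tfrac n2-\nu-1)!\,(l+\tfrac n2+\nu)!$, so $(l+\tfrac n2-\nu)!/(l+\tfrac n2-\nu-1)!=(l+\tfrac n2-\nu)$ appears as the coefficient with a plus sign, as claimed.

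The only delicate point is the truncation at the boundary of the index set. If $l-\mu=0$, the operator $a^*_3$ produces a monomial with $w^*_3$ to a non-negative power, so it is zero in $W^-$. The claimed formula still holds because $\tilde P^{(n)}_{l-\frac12,\mu+\frac12,\dots}$ then has $\mu+\tfrac12>l-\tfrac12$, so it lies outside the index range and is interpreted as zero. Similarly, if $l+\tfrac n2-\nu=0$, the coefficient $(l+\tfrac n2-\nu)$ vanishes, which is consistent with the truncation. When $l=0$, $l-\tfrac12<0$ and everything vanishes, consistent with the convention.

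I expect this bookkeeping of the boundary cases, together with the conventions for out-of-range indices, to be the main thing to check carefully. The remaining entries follow by the $w_1\leftrightarrow w_2$ and $w^*_3\leftrightarrow w^*_4$ symmetry.
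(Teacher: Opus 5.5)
Your proposal is correct and follows the paper's route: the paper also obtains this lemma by direct computation, applying differentiation (for $a^*_1,a^*_2,a_3,a_4$) and truncated multiplication (for $a_1,a_2,a^*_3,a^*_4$) to the monomials $\tilde P^{(n)}_{l,\mu,\nu}$ and absorbing factors into the factorial normalization, exactly as you do. Your treatment of the boundary cases is consistent with the natural convention: the formally out-of-range $\tilde P^{(n)}$ is a monomial with a zero exponent, hence zero in $W^-$.
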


For each monomial $\tilde P^{(n)}_{l,\mu,\nu}$, we define a function on
$\HC^{\times}$
with values in $\underbrace{\BB S \odot \dots \odot \BB S}_{\text{$n$ times}}$:
\[
\Phi^-_n \bigl( \tilde P^{(n)}_{l,\mu,\nu} \bigr) =
\Phi^-_0 \biggl( \underbrace{\begin{pmatrix} a_3 \\ a_4 \end{pmatrix}
  \otimes \dots \otimes 
  \begin{pmatrix} a_3 \\ a_4 \end{pmatrix}}_{\text{$n$ times}} \biggr)
\tilde P^{(n)}_{l,\mu,\nu}.
\]
(Note that applying the operators $a_3$ and $a_4$ $n$ times to
$\tilde P^{(n)}_{l,\mu,\nu}$ results in a polynomial of degree zero.)
The same argument as in the proof of Lemma \ref{n-reg+-lem} shows:

\begin{lem}
Each function
\[
\Phi^-_n \bigl( \tilde P^{(n)}_{l,\mu,\nu} \bigr) :
\HC^{\times} \to \underbrace{\BB S \odot \dots \odot \BB S}_{\text{$n$ times}}
\]
is left $n$-regular.
\end{lem}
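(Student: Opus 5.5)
The plan is to mirror the proof of Lemma \ref{n-reg+-lem}, with Lemma \ref{-intertwiner-lem} playing the role of Lemma \ref{intertwiner-lem}. Fix $k \in \{1,\dots,n\}$ and apply $\nabla^+_k$, the operator $\nabla^+$ acting on the $k$-th tensor factor of $\BB S \odot \dots \odot \BB S$, to $\Phi^-_n(\tilde P^{(n)}_{l,\mu,\nu})$. Each component of this function is $\Phi^-_0$ applied to a degree-zero element of $W^-$, namely a product of $n$ operators $a_3$ or $a_4$ applied to $\tilde P^{(n)}_{l,\mu,\nu}$. The differential operator therefore acts on functions of the form $\Phi^-_0(q)$ with $q \in W^-_0$.

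In the matrix realization, $\nabla^+$ equals $2\partial^+$, where $\partial^+$ is the adjugate-type matrix $\bigl(\begin{smallmatrix} \partial_{22} & -\partial_{21} \\ -\partial_{12} & \partial_{11} \end{smallmatrix}\bigr)$ (the same identification used in the proof of Lemma \ref{n-reg+-lem}). The first identity of Lemma \ref{-intertwiner-lem} says that the matrix $\bigl(\partial_{ij}\bigr)$ applied to $\Phi^-_0(q)$ equals $-\Phi^-_0$ of the matrix $\bigl(\begin{smallmatrix} a^*_1a_3 & a^*_1a_4 \\ a^*_2a_3 & a^*_2a_4 \end{smallmatrix}\bigr)$ applied to $q$. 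Rearranging entries, $\partial^+$ acting on the $k$-th factor corresponds under $\Phi^-_0$ to inserting the matrix $\bigl(\begin{smallmatrix} a^*_2a_4 & -a^*_2a_3 \\ -a^*_1a_4 & a^*_1a_3 \end{smallmatrix}\bigr)$ into the $k$-th slot. As a result,
\[
\nabla^+_k \Phi^-_n \bigl( \tilde P^{(n)}_{l,\mu,\nu} \bigr)
= \pm 2\, \Phi^-_0 \biggl( \begin{pmatrix} a_3 \\ a_4 \end{pmatrix}
\otimes \dots \otimes
\begin{pmatrix} a^*_2a_4 & -a^*_2a_3 \\ -a^*_1a_4 & a^*_1a_3 \end{pmatrix}
\begin{pmatrix} a_3 \\ a_4 \end{pmatrix}
\otimes \dots \otimes \begin{pmatrix} a_3 \\ a_4 \end{pmatrix} \biggr)
\tilde P^{(n)}_{l,\mu,\nu}.
\]

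The $k$-th slot contains the column $\bigl(a^*_2(a_4a_3 - a_3a_4),\; a^*_1(-a_4a_3 + a_3a_4)\bigr)^T$. This vanishes because $[a_3,a_4]=0$ and all operators occurring here commute with one another on $W^-$: the $a_j$ act by differentiation, and $a^*_1, a^*_2$ act by differentiation in $w_1, w_2$. So the whole expression is zero, and since $k$ was arbitrary the function is left $n$-regular. Symmetry of the values under permuting tensor factors holds because the tensor of identical columns is symmetric.

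The main point needing care is the first step. We must check that the first identity of Lemma \ref{-intertwiner-lem} applies on all of $W^-_0$, including the truncated action on $W^-$ where multiplication creating non-negative powers gives zero. Only annihilation-type operators ($a^*_1, a^*_2, a_3, a_4$, all differentiations) appear, so no truncation issues arise. The rest is sign bookkeeping identical to Lemma \ref{n-reg+-lem}.
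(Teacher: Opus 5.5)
Your proposal is correct and is essentially the paper's own argument. The paper proves this lemma by repeating the proof of Lemma \ref{n-reg+-lem} with Lemma \ref{-intertwiner-lem} in place of Lemma \ref{intertwiner-lem}. Then $\nabla^+_k$ becomes the insertion of $\bigl(\begin{smallmatrix} a^*_2a_4 & -a^*_2a_3 \\ -a^*_1a_4 & a^*_1a_3 \end{smallmatrix}\bigr)$ in front of the $k$-th column $\bigl(\begin{smallmatrix} a_3 \\ a_4 \end{smallmatrix}\bigr)$, which vanishes because $[a_3,a_4]=0$. Your remark that only differentiation operators occur, so the truncated action on $W^-$ causes no trouble, is the point that justifies the transfer; the undetermined sign $\pm 2$ is in fact $-2$, though it plays no role.
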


The map $\Phi^-_n$ extends by linearity to a map of vector spaces
\[
\Phi^-_n: W^-_n \to {\cal F}^-_n,
\]
where ${\cal F}^-_n$ is the space of polynomial left $n$-regular functions
on $\HC^{\times}$ that are regular at infinity introduced in
Section 7 of \cite{nreg}.

\begin{lem}  \label{-Z-mult-lem1}
Let $C = \bigl(\begin{smallmatrix} c_{11} & c_{12} \\
  c_{21} & c_{22} \end{smallmatrix}\bigr) \in \HC$, then, for any
$\tilde p \in W^-_1$,
\[
CZ \cdot \Phi^-_0 \Bigl(
\bigl(\begin{smallmatrix} a_3 \\ a_4 \end{smallmatrix}\bigr) (\tilde p)\Bigr)
= - \Phi^-_0 \Bigl(
c_{11} \bigl(\begin{smallmatrix} a_1 \\ 0 \end{smallmatrix}\bigr)
+ c_{12} \bigl(\begin{smallmatrix} a_2 \\ 0 \end{smallmatrix}\bigr)
+ c_{21} \bigl(\begin{smallmatrix} 0 \\ a_1 \end{smallmatrix}\bigr)
+ c_{22} \bigl(\begin{smallmatrix} 0 \\ a_2 \end{smallmatrix}\bigr) \Bigr)
(\tilde p).
\]
\end{lem}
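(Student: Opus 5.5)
By linearity in $\tilde p$ and in $C$, I would check the identity on the monomial basis $\tilde P^{(1)}_{l,\mu,\nu}$ of $W^-_1$ and separately for each of the four matrix units $c_{ij}$. This mirrors the proof of Lemma \ref{Z-mult-lem1}. In that proof the identity came from Lemma 23 in \cite{FL1}, which expresses $Z \cdot t^{l}$-type vectors as combinations of $t^{l+\frac12}$, together with an explicit formula for $a_1,a_2$ on the monomial basis. Here the analogue of Lemma 23 of \cite{FL1} is needed for the functions $N(Z)^{-1} t^l_{\mu\,\underline{\nu}}(Z^{-1})$. This is what identity (13) in \cite{qreg} provides, and it was already used for Lemma \ref{-intertwiner-lem}.

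\textbf{Step 1: the right-hand side.} I compute $a_1 \tilde P^{(1)}_{l,\mu,\nu}$ and $a_2 \tilde P^{(1)}_{l,\mu,\nu}$ in $W^-$. On $W^-$, $a_1$ and $a_2$ act by multiplication by $w_1$ and $w_2$, with monomials having a nonnegative power set to zero. With $n=1$ the $w_1$-exponent is $-(l+\frac12-\nu+1)$, so multiplying by $w_1$ gives a constant multiple of $\tilde p^{\,l'}_{\mu',\nu'}$ with $l'=l$ (the $w^*$-exponents are unchanged) and shifted $\nu$. Comparing factorial normalizations, I expect
\[
a_1 \tilde P^{(1)}_{l,\mu,\nu} = (l+\tfrac12-\nu)\, \tilde p^{\,l}_{\mu,\nu+\frac12},
\qquad
a_2 \tilde P^{(1)}_{l,\mu,\nu} = (l+\tfrac12+\nu)\, \tilde p^{\,l}_{\mu,\nu-\frac12}.
\]
The coefficient vanishes precisely when the truncation in $W^-$ kills the monomial. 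The exact index conventions follow from \eqref{monomial-tilde-basis} and \eqref{tilde-n-monomial-basis}.

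\textbf{Step 2: the left-hand side.} Here $(a_3,a_4)^T\tilde P^{(1)}_{l,\mu,\nu}$ is a column of degree-zero monomials, namely multiples of $\tilde p^{\,l+\frac12}_{\mu\mp\frac12,\nu}$, with coefficients coming from differentiating in $w^*_3,w^*_4$. Applying $\Phi^-_0$ gives a column of functions $N(Z)^{-1} t^{l+\frac12}_{\cdot\,\underline{\cdot}}(Z^{-1})$. Then I multiply by $CZ$ and write $Z = N(Z)\,(Z^{-1})^{+}$, or equivalently $Z\cdot N(Z)^{-1} = (Z^{-1})^+$. The left-hand side thus becomes $C\,(W^{+})\,\bigl(t^{l+\frac12}(W)\bigr)$ evaluated at $W=Z^{-1}$. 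The combination of $t^{l+\frac12}$ entries appearing here is the one that collapses to $t^l$ terms, by the $Z^+$-multiplication version of Lemma 23 in \cite{FL1}, equivalently (13) in \cite{qreg}. The result is $N(Z)^{-1} t^{l}_{\cdot\,\underline{\cdot}}(Z^{-1})$ times explicit coefficients, matching $\Phi^-_0$ of Step 1.

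\textbf{Main obstacle.} The main difficulty is bookkeeping. Inversion swaps the roles of the indices: $\Phi^-_0$ uses $t^l_{\mu\,\underline{\nu}}$ with $\mu,\nu$ transposed relative to $\Phi^+_0$. The map $Z\mapsto Z^{-1}$ also turns left multiplication by $Z$ into multiplication by $(Z^{-1})^+$, which changes signs and swaps entries through the cofactor matrix. Matching the coefficients $(l+\frac12\mp\nu)$ and the overall minus sign therefore requires care. I would first verify the $c_{11}$ case on the lowest $K$-type ($l=0$) as a sanity check of signs, then do the general case. As a fallback, one can use the intertwining of $\Phi^-_0$ from Proposition \ref{harmonic-iso-prop} together with the commutators in Lemma \ref{commutator-lem1}. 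That reduces the lemma to a single case, with the others obtained by applying $\mathfrak{k}$-elements.
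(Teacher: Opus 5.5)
Your proposal is correct and is the paper's proof: the paper likewise combines Lemma 23 of \cite{FL1} with exactly your Step 1 formulas, $a_1 \tilde P^{(1)}_{l,\mu,\nu} = (l-\nu+\tfrac12)\tilde p^l_{\mu,\nu+\frac12}$ and $a_2 \tilde P^{(1)}_{l,\mu,\nu} = (l+\nu+\tfrac12)\tilde p^l_{\mu,\nu-\frac12}$. For Step 2, set $W=Z^{-1}$, so that $Z\,N(Z)^{-1}=W^+$; Lemma 23 of \cite{FL1} evaluated at $W$ writes your column of $t^{l+\frac12}(W)$'s as $W$ times the column of $t^l(W)$'s, and then $W^+W=N(W)$ gives the right-hand side directly, so neither a separate $Z^+$-version nor your fallback is needed.
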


\begin{proof}
The result follows from Lemma 23 in \cite{FL1} and observation
\[  
a_1 \tilde P^{(1)}_{l,\mu,\nu} = (l-\nu+\tfrac12) \tilde p^l_{\mu,\nu+\frac12}
\qquad \text{and} \qquad
a_2 \tilde P^{(1)}_{l,\mu,\nu} = (l+\nu+\tfrac12) \tilde p^l_{\mu,\nu-\frac12}.
\]
\end{proof}

\begin{thm}  \label{-n-reg-iso-thm}
  For each $n>0$, the map $\Phi^-_n$ is an isomorphism between representations
  $(\pi^-_n, W^-_n)$ and $(\pi_{nl}, {\cal F}^-_n)$ of
  $\mathfrak{sl}(4,\BB C) = \mathfrak{sl}(2,\HC)$.
\end{thm}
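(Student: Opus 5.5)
The argument follows the proof of Theorem \ref{n-reg-iso-thm}, with the "$+$" ingredients replaced by their counterparts at infinity. The action $\pi_{nl}$ of $\mathfrak{sl}(2,\HC)$ on ${\cal F}^-_n$ is given by Lemma 2 in \cite{nreg}. Since matrices of the form $\bigl(\begin{smallmatrix} 0 & B \\ C & 0 \end{smallmatrix}\bigr)$, $B,C \in \HC$, generate $\mathfrak{sl}(2,\HC)$, it suffices to check that $\Phi^-_n$ intertwines the actions of these two families on $W^-_n$ and ${\cal F}^-_n$. The $B$-part acts on ${\cal F}^-_n$ by $-\tr(B\partial)$. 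The $C$-part acts by $\tr(ZCZ\partial + CZ)$ plus the sum over tensor slots of left multiplication by $CZ$ in the $k$-th factor. On the $W^-_n$ side, equation (16) in \cite{qreg} expresses these elements as the quadratic operators $a^*_ka_l$ in the off-diagonal blocks, exactly as in the proof of Proposition \ref{harmonic-iso-prop}.

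For the $B$-part, write $\Phi^-_n = \Phi^-_0 \circ \bigl(\begin{smallmatrix} a_3 \\ a_4 \end{smallmatrix}\bigr)^{\otimes n}$. The operators $a^*_1a_3, \dots, a^*_2a_4$ commute with $a_3,a_4$ by the Heisenberg relations \eqref{Heisenberg-rels}. We can therefore move them past the $n$ annihilation operators and apply Lemma \ref{-intertwiner-lem} on $W^-_0$, which identifies them with $-\partial_{ij}$. This handles $-\tr(B\partial)$ directly.

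For the $C$-part, the operator $a^*_3a_1$ (and its analogues) does not commute with $\bigl(\begin{smallmatrix} a_3 \\ a_4 \end{smallmatrix}\bigr)^{\otimes n}$: pushing it through produces $n$ commutator terms. In each such term one copy of $a_3$ or $a_4$ is replaced by $a_1$ or $a_2$, using $[a_3,a^*_3]=1$ and $[a_4,a^*_4]=1$. The remaining term, with $a^*_3a_1$ acting on $W^-_0$, is identified with the scalar part $\tr(ZCZ\partial + CZ)$ via Proposition \ref{harmonic-iso-prop}. For this identification we use that the scalar matrices act trivially on $W^-_0$ but by the shift described in the remark following Proposition \ref{harmonic+iso-prop}, and we check that the degree-$n$ shift accounts for the discrepancy. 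The commutator terms are then matched with the multiplications $CZ$ acting in the $k$-th tensor slot. For this we apply Lemma \ref{-Z-mult-lem1} slot by slot, treating the other $n-1$ slots as spectators: each is reduced to a degree-one element of $W^-_1$ by the remaining $a_3,a_4$. I expect this bookkeeping to be the main obstacle. One must check that the signs and the normal-ordering constants from $:a_ka^*_l:$ combine exactly with the diagonal $\tr(CZ)$ term, and that the $n$ contributions add up rather than interfere. The symmetry of $\BB S^{\odot n}$ makes the contributions of different slots identical up to permutation, so one slot suffices.

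Once $\Phi^-_n$ is known to intertwine, injectivity is clear. The images of the monomials $\tilde P^{(n)}_{l,\mu,\nu}$ are nonzero, since $a_3,a_4$ act on the negative powers in $W^-$ by nonvanishing differentiation. They are also linearly independent, since they lie in distinct $K$-types and weight spaces, which are respected by $\Phi^-_0$. By Theorem 26 in \cite{nreg}, $(\pi_{nl},{\cal F}^-_n)$ is irreducible, so the nonzero image of $\Phi^-_n$ is a submodule and hence all of ${\cal F}^-_n$. Thus $\Phi^-_n$ is an isomorphism.
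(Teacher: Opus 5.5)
Your proposal is correct and follows the same route as the paper's proof. You check intertwining on the generators $\bigl(\begin{smallmatrix} 0 & B \\ C & 0 \end{smallmatrix}\bigr)$ using equation (16) of \cite{qreg} with Lemmas \ref{-intertwiner-lem} and \ref{-Z-mult-lem1}, and then get surjectivity from injectivity plus the irreducibility of $(\pi_{nl},{\cal F}^-_n)$ (Theorem 26 in \cite{nreg}); your commutator expansion only makes explicit what the paper leaves implicit. The scalar-shift and normal-ordering corrections you anticipate do not arise, because these generators are traceless and the operators $a^*_{i+2}a_j$ involved have distinct indices, so Proposition \ref{harmonic-iso-prop} already matches the $\tr(ZCZ\partial + CZ)$ part exactly.
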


\begin{proof}
  The action of $\mathfrak{sl}(2,\HC)$ on ${\cal F}^-_n$ via $\pi_{nl}$
  is spelled out in Lemma 2 in \cite{nreg}. By equation (16) in \cite{qreg}
  and Lemmas \ref{-intertwiner-lem}, \ref{-Z-mult-lem1}, $\Phi^-_n$ intertwines
  the actions on $W^-_n$ and ${\cal F}^-_n$ of matrices of the form
  $\bigl(\begin{smallmatrix} 0 & B \\ C & 0 \end{smallmatrix}\bigr)
  \in \mathfrak{sl}(2,\HC)$, $B, C \in \HC$.
  Since these matrices generate $\mathfrak{sl}(2,\HC)$, it follows that
  $\Phi^-_n$ is an intertwining map between $(\pi^-_n, W^-_n)$ and
  $(\pi^-_{nl}, {\cal F}^-_n)$.
  It is clear that $\Phi^-_n$ is injective.
  By Theorem 26 in \cite{nreg}, $(\pi_{nl}, {\cal F}^-_n)$ is irreducible.
  Hence $\Phi^-_n$ is onto, and the result follows.
\end{proof}

We conclude with the case of polynomials of negative degrees $W^-_{-n}$, $n>0$.
This space has a basis consisting of monomials of the form
\begin{multline}  \label{-tilde-n-monomial-basis}
  \tilde Q^{(n)}_{l,\mu,\nu}(w_1,w_2,w^*_3,w^*_4) \\
  = (l-\nu)!(l+\nu)! (w_1)^{-(l-\nu+1)}(w_2)^{-(l+\nu+1)}(w^*_3)^{-(l+\tfrac{n}2-\mu+1)}
  (w^*_4)^{-(l+\tfrac{n}2+\mu+1)},
\end{multline}
\[
l = 0, \tfrac12, 1, \tfrac32, \dots, \quad
-l-\tfrac{n}2 \le \mu \le l+\tfrac{n}2, \quad
-l \le \nu \le l, \quad
\mu+\tfrac{n}2,\: \nu \in \BB Z +l.
\]
For a fixed $l$, these monomials span the $K$-type
$V_l \boxtimes V_{l+\frac{n}2}$.
By direct computation we obtain:

\begin{lem}  \label{pi--action-lem2}
For $n<0$, elements
$\bigl(\begin{smallmatrix} 0 & B \\ C & 0 \end{smallmatrix}\bigr)
\in \mathfrak{sl}(4,\BB C)$, $B, C \in \mathfrak{gl}(2,\BB C)$, act on
$W^-_n$ via $\pi^-_n$ as follows:
\begin{align*}
\begin{pmatrix} a^*_1a_3 & a^*_1a_4 \\ a^*_2a_3 & a^*_2a_4 \end{pmatrix}
\tilde Q^{(n)}_{l,\mu,\nu} &= \left(\begin{smallmatrix}
  (l+\tfrac{n}2-\mu+1) \tilde Q^{(n)}_{l+\frac12,\mu-\frac12,\nu-\frac12} &
  (l+\tfrac{n}2+\mu+1) \tilde Q^{(n)}_{l+\frac12,\mu+\frac12,\nu-\frac12} \\
  (l+\tfrac{n}2-\mu+1) \tilde Q^{(n)}_{l+\frac12,\mu-\frac12,\nu+\frac12} &
  (l+\tfrac{n}2+\mu+1) \tilde Q^{(n)}_{l+\frac12,\mu+\frac12,\nu+\frac12}
\end{smallmatrix}\right),  \\
\begin{pmatrix} a^*_3a_1 & a^*_3a_2 \\ a^*_4a_1 & a^*_4a_2 \end{pmatrix}
\tilde Q^{(n)}_{l,\mu,\nu} &= \left(\begin{smallmatrix}
  (l-\nu) \tilde Q^{(n)}_{l-\frac12,\mu+\frac12,\nu+\frac12} &
  (l+\nu) \tilde Q^{(n)}_{l-\frac12,\mu+\frac12,\nu-\frac12} \\
  (l-\nu) \tilde Q^{(n)}_{l-\frac12,\mu-\frac12,\nu+\frac12} &
  (l+\nu) \tilde Q^{(n)}_{l-\frac12,\mu-\frac12,\nu-\frac12} \end{smallmatrix}\right).
\end{align*}
\end{lem}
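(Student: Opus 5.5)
The plan is to compute the action of the sixteen quadratic operators in the two $2\times 2$ arrays directly on the monomial $\tilde Q^{(n)}_{l,\mu,\nu}$. On $W^-$ the annihilation operators $a^*_1,a^*_2,a_3,a_4$ act as $\partial/\partial w_1$, $\partial/\partial w_2$, $\partial/\partial w^*_3$, $\partial/\partial w^*_4$. The creation operators $a_1,a_2,a^*_3,a^*_4$ act as multiplication by $w_1,w_2,w^*_3,w^*_4$, followed by truncation of any monomial with a non-negative exponent. In every entry we have $k\ne l$ (one index from $\{1,2\}$, one from $\{3,4\}$), so $:a^*_ka_l:=a^*_ka_l$ and no normal-ordering constants appear. (In the statement ``$n<0$'' should read $n>0$; the monomials $\tilde Q^{(n)}$ span $W^-_{-n}$ for $n>0$, and this is how we read it.) We treat the two matrices separately.

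For the first matrix, an entry $a^*_ja_m$ with $j\in\{1,2\}$, $m\in\{3,4\}$ is a product of two derivatives. Take $a^*_1a_3$. Differentiating $(w_1)^{-(l-\nu+1)}$ gives $-(l-\nu+1)(w_1)^{-(l-\nu+2)}$, and differentiating $(w^*_3)^{-(l+\frac n2-\mu+1)}$ gives $-(l+\frac n2-\mu+1)(w^*_3)^{-(l+\frac n2-\mu+2)}$. The two minus signs cancel. The new exponents match $\tilde Q^{(n)}_{l+\frac12,\mu-\frac12,\nu-\frac12}$. We then compare normalizations: the prefactor $(l-\nu)!(l+\nu)!$ becomes $(l-\nu+1)!(l+\nu)!$ at the new indices, and this absorbs the factor $(l-\nu+1)$. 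This leaves exactly $(l+\frac n2-\mu+1)\tilde Q^{(n)}_{l+\frac12,\mu-\frac12,\nu-\frac12}$. The other three entries follow by the symmetric bookkeeping: replacing $w_1$ by $w_2$ shifts $\nu\mapsto\nu+\frac12$ instead of $\nu-\frac12$, and replacing $w^*_3$ by $w^*_4$ shifts $\mu\mapsto\mu+\frac12$ and gives the coefficient $(l+\frac n2+\mu+1)$. No truncation occurs because exponents only become more negative.

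For the second matrix, an entry $a^*_ma_j$ with $m\in\{3,4\}$, $j\in\{1,2\}$ multiplies by $w^*_m$ and by $w_j$. Take $a^*_3a_1$. It raises the exponent of $w_1$ to $-(l-\nu)$ and that of $w^*_3$ to $-(l+\frac n2-\mu)$, with no scalar produced. Relative to the new normalization $(l-\nu-1)!(l+\nu)!$, the coefficient is $(l-\nu)!/(l-\nu-1)!=l-\nu$. The index shift is $l\mapsto l-\frac12$, $\nu\mapsto\nu+\frac12$, $\mu\mapsto\mu+\frac12$, which is the claimed entry. The analogous computations give the remaining three entries.

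The one point needing care, and the main obstacle, is the boundary case. When $l-\nu=0$ (respectively $l+\nu=0$), multiplication by $w_1$ (respectively $w_2$) produces exponent $0$, so the monomial is killed in $W^-=W/W^{aux}$. This matches the vanishing coefficient $l\mp\nu=0$ on the right side, where $\tilde Q^{(n)}_{l-\frac12,\dots}$ with out-of-range $\nu$ is understood as zero. For $w^*_3$ and $w^*_4$, the exponent after multiplication is $-(l+\frac n2\mp\mu)$. We must check that $|\mu\pm\frac12|\le l-\frac12+\frac n2$ whenever this exponent is negative, so that those factors never cause a spurious truncation beyond the one already absorbed by the coefficient. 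Finally, we verify that the ranges of $\mu$ and $\nu$ stay consistent, and that when $l=0$ every entry of the second matrix vanishes.
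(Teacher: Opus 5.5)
Your proposal is correct and is the paper's own argument: the paper proves the lemma ``by direct computation'' on the monomials $\tilde Q^{(n)}_{l,\mu,\nu}$, and your coefficients, index shifts, sign cancellation and factorial bookkeeping all check out. One correction to your boundary discussion: multiplication by $w^*_3$ (resp.\ $w^*_4$) \emph{does} truncate when $\mu=l+\frac n2$ (resp.\ $\mu=-(l+\frac n2)$), even when the coefficient $l\mp\nu$ is nonzero. For example, $a^*_3a_1\tilde Q^{(1)}_{\frac12,1,-\frac12}=0$ although $l-\nu=1$. The formula survives there only because the target index $\mu+\frac12$ (resp.\ $\mu-\frac12$) is out of range, not because of the coefficient, so your ``out of range means zero'' convention must cover $\mu$ as well as $\nu$.
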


For each monomial $\tilde Q^{(n)}_{l,\mu,\nu}$, we define a function on
$\HC^{\times}$ with values in
$\underbrace{\BB S' \odot \dots \odot \BB S'}_{\text{$n$ times}}$:
\[
\Psi^-_n \bigl( \tilde Q^{(n)}_{l,\mu,\nu} \bigr) =
\Phi^-_0 \bigl( \underbrace{(a^*_1, a^*_2) \otimes \dots \otimes 
  (a^*_1, a^*_2)}_{\text{$n$ times}} \bigr) \tilde Q^{(n)}_{l,\mu,\nu}.
\]
(Note that applying the operators $a^*_1$ and $a^*_2$ $n$ times to
$\tilde Q^{(n)}_{l,\mu,\nu}$ results in a polynomial of degree zero.)
The same argument as in the proof of Lemma \ref{n-reg--lem} shows:

\begin{lem}
Each function
\[
\Psi^-_n \bigl( \tilde Q^{(n)}_{l,\mu,\nu} \bigr) :
\HC^{\times} \to \underbrace{\BB S' \odot \dots \odot \BB S'}_{\text{$n$ times}}
\]
is right $n$-regular.
\end{lem}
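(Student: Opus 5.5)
We argue exactly as in the proof of Lemma \ref{n-reg--lem}, with Lemma \ref{-intertwiner-lem} taking the role of Lemma \ref{intertwiner-lem}.

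Fix $k \in \{1,\dots,n\}$. The operator $\overleftarrow{\nabla^+_k}$ acts on the $k$-th tensor factor of $\BB S'\odot\dots\odot\BB S'$ and is the identity on the others. In the matrix realization \eqref{H-matrix-realization}, $\nabla^+$ acts on a row as right multiplication by $2\bigl(\begin{smallmatrix} \partial_{22} & -\partial_{21} \\ -\partial_{12} & \partial_{11}\end{smallmatrix}\bigr)$, up to the normalizations already used in Lemma \ref{n-reg--lem}.

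Lemma \ref{-intertwiner-lem} shows that $\Phi^-_0$ carries the matrix of operators $(a^*_ia_j)$ on $W^-_0$ to $-(\partial_{ij})$ acting on ${\cal H}^-$. Because of this, we can move the derivatives inside $\Phi^-_0$. This gives
\[
\Psi^-_n\bigl(\tilde Q^{(n)}_{l,\mu,\nu}\bigr)\overleftarrow{\nabla^+_k}
= -2\,\Phi^-_0\Bigl((a^*_1,a^*_2)\otimes\dots\otimes
(a^*_1,a^*_2)\begin{pmatrix} a^*_2a_4 & -a^*_2a_3 \\ -a^*_1a_4 & a^*_1a_3\end{pmatrix}\otimes\dots\otimes(a^*_1,a^*_2)\Bigr)\tilde Q^{(n)}_{l,\mu,\nu}.
\]
One point needs justification here. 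The derivatives act on $\Phi^-_0$ applied to a degree zero element, and that element is obtained from $\tilde Q^{(n)}_{l,\mu,\nu}$ by the $n$ annihilation operators $a^*_1,a^*_2$. Since $a^*_1,a^*_2$ commute with $a^*_i a_j$, the reordering is legitimate. The entrywise identity of Lemma \ref{-intertwiner-lem} then applies componentwise to each degree zero polynomial.

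The $k$-th factor in the display is
\[
(a^*_1 a^*_2 a_4 - a^*_2 a^*_1 a_4,\; -a^*_1 a^*_2 a_3 + a^*_2 a^*_1 a_3).
\]
Both entries vanish because $[a^*_1,a^*_2]=0$ by \eqref{Heisenberg-rels}. Therefore $\Psi^-_n(\tilde Q^{(n)}_{l,\mu,\nu})\overleftarrow{\nabla^+_k}=0$ for every $k$.

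To finish, we check that the values lie in the symmetric power. The operators $a^*_1,a^*_2$ commute, so the tensor $(a^*_1,a^*_2)^{\otimes n}$ applied to $\tilde Q^{(n)}_{l,\mu,\nu}$ is symmetric.

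The main obstacle is making sure the truncated action on $W^-$ causes no trouble. The operators $a_3,a_4$ are differentiation operators on $W^-$, not the modified creation operators, so they behave exactly as on $W$. Consequently the commutation relations used above hold on $W^-$ as well. Apart from this point, the argument is a sign and notation check against Lemma \ref{-intertwiner-lem}.
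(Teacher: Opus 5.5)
Your proposal is correct and is essentially the paper's proof. The paper simply says the argument of Lemma~\ref{n-reg--lem} carries over: Lemma~\ref{-intertwiner-lem} converts $\overleftarrow{\nabla^+_k}$ into $-2\Phi^-_0$ applied with the operator row $(a^*_1,a^*_2)\bigl(\begin{smallmatrix} a^*_2a_4 & -a^*_2a_3 \\ -a^*_1a_4 & a^*_1a_3 \end{smallmatrix}\bigr)$ in the $k$-th slot, and this row vanishes because $a^*_1$ and $a^*_2$ commute. Your extra checks are also correct and fill in points the paper leaves implicit: the values are symmetric, and on $W^-$ only the untruncated differentiation operators $a^*_1,a^*_2,a_3,a_4$ are involved.
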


The map $\Psi^-_n$ extends by linearity to a map of vector spaces
\[
\Psi^-_n: W^-_{-n} \to {\cal G}^-_n,
\]
where ${\cal G}^-_n$ is the space of polynomial right $n$-regular functions
on $\HC^{\times}$ introduced in Section 7 of \cite{nreg}.

\begin{lem}  \label{-Z-mult-lem2}
Let $C = \bigl(\begin{smallmatrix} c_{11} & c_{12} \\
  c_{21} & c_{22} \end{smallmatrix}\bigr) \in \HC$,
then, for any $\tilde q \in W^-_{-1}$,
\[
\Phi^-_0 \bigl( (a^*_1,a^*_2) \tilde q \bigr) \cdot ZC
= - \Phi^-_0 \bigl( c_{11} (a^*_3, 0) + c_{12} (0, a^*_3)
+ c_{21} (a^*_4, 0) +c_{22} (0, a^*_4) \bigr) (\tilde q).
\]
\end{lem}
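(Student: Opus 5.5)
The plan is to follow the pattern of the proofs of Lemmas \ref{Z-mult-lem2} and \ref{-Z-mult-lem1}: reduce the statement to a check on the monomial basis $\tilde Q^{(1)}_{l,\mu,\nu}$ of $W^-_{-1}$, and identify both sides using a multiplication identity for matrix coefficients. Both sides are linear in $\tilde q$ and in $C$. So it suffices to take $\tilde q = \tilde Q^{(1)}_{l,\mu,\nu}$ (with $-l \le \nu \le l$ and $-l-\frac12 \le \mu \le l+\frac12$) and to check the four choices $C = E_{11},E_{12},E_{21},E_{22}$ of matrix units.

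\textbf{Left-hand side.} First compute $(a^*_1,a^*_2)\tilde Q^{(1)}_{l,\mu,\nu}$. Since $a^*_1,a^*_2$ differentiate in $w_1,w_2$, differentiating $(w_1)^{-(l-\nu+1)}$ gives the factor $-(l-\nu+1)$, and similarly for $w_2$. After renormalizing the factorial prefactors, we expect
\[
(a^*_1,a^*_2)\tilde Q^{(1)}_{l,\mu,\nu} = -\bigl( \tilde p^{l+\frac12}_{\mu,\nu-\frac12},\; \tilde p^{l+\frac12}_{\mu,\nu+\frac12} \bigr),
\]
up to the exact index and sign bookkeeping. Applying $\Phi^-_0$ turns this into the row vector $-N(Z)^{-1}\bigl(t^{l+\frac12}_{\mu\,\underline{\nu-\frac12}}(Z^{-1}), t^{l+\frac12}_{\mu\,\underline{\nu+\frac12}}(Z^{-1})\bigr)$. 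We then multiply on the right by $ZC$.

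\textbf{Right-hand side.} Similarly compute $a^*_3\tilde Q^{(1)}_{l,\mu,\nu}$ and $a^*_4\tilde Q^{(1)}_{l,\mu,\nu}$. Since $a^*_3,a^*_4$ act by multiplication by $w^*_3,w^*_4$ on $W^-$, this raises the negative exponents of $w^*_3$ or $w^*_4$ by one and gives (up to sign and factorial normalization) $\tilde p^l_{\mu\mp\frac12,\nu}$. There is no truncation issue, because the exponents are $-(l+\frac12\mp\mu+1)\le -1$ and remain negative after adding one. Applying $\Phi^-_0$ gives $N(Z)^{-1}t^l_{\mu\mp\frac12\,\underline{\nu}}(Z^{-1})$, placed in the appropriate slots according to $C$.

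\textbf{Matching.} The key step is to rewrite $N(Z)^{-1}\,t^{l+\frac12}(Z^{-1})\cdot Z$ as a combination of $N(Z)^{-1}t^l(Z^{-1})$. We would use $Z = N(Z)\,(Z^{-1})^+$ together with the multiplication rule for $t^{l+\frac12}(W)\,W^+$ with $W=Z^{-1}$. That rule is the transpose/dual form of Lemma 23 in \cite{FL1}, and it is also identity (13) in \cite{qreg}, which was already used in Lemma \ref{-intertwiner-lem}. It expresses the entries of $\bigl(t^{l+\frac12}_{\mu\,\underline{\nu-\frac12}}(W), t^{l+\frac12}_{\mu\,\underline{\nu+\frac12}}(W)\bigr)W^+$ as multiples of $N(W)\,t^l_{\mu\pm\frac12\,\underline{\nu}}(W)$, with coefficients depending only on $l$ and $\mu$. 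Since $N(W)=N(Z)^{-1}$, this cancels the factor $N(Z)$ coming from $Z = N(Z)(Z^{-1})^+$.

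The expected obstacle is exactly this step: getting the normalizations, signs and index shifts of that identity to match the factorial normalization of $\tilde p^l_{\mu,\nu}$ and $\tilde Q^{(1)}_{l,\mu,\nu}$, so that the scalar coefficients agree and produce the overall sign $-1$. I would first verify the identity on the lowest case $l=0$, where the functions involved are explicit, to fix the sign conventions. After that, the general matching becomes a direct comparison of coefficients, column by column of $C$.
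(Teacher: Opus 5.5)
Your plan is the paper's proof: reduce to $\tilde q=\tilde Q^{(1)}_{l,\mu,\nu}$, use $(a^*_1,a^*_2)\tilde Q^{(1)}_{l,\mu,\nu}=-\bigl(\tilde p^{l+\frac12}_{\mu,\nu-\frac12},\tilde p^{l+\frac12}_{\mu,\nu+\frac12}\bigr)$, which you have right, and the matrix-coefficient multiplication identity (Lemma 23 in \cite{FL1}). With $W=Z^{-1}$ and $Z=N(Z)\,W^+$, that identity reads
\[
\bigl(t^{l+\frac12}_{\mu\,\underline{\nu-\frac12}}(W),\,t^{l+\frac12}_{\mu\,\underline{\nu+\frac12}}(W)\bigr)\,W^+
= N(W)\,\bigl(t^{l}_{\mu+\frac12\,\underline{\nu}}(W),\,t^{l}_{\mu-\frac12\,\underline{\nu}}(W)\bigr),
\]
with all coefficients equal to $1$; this follows at once from $\sum_{\mu} t^l_{\mu\,\underline{\nu}}(W)\,s^{l-\mu}=\alpha^{l-\nu}\beta^{l+\nu}$, where $(\alpha,\beta)=(s,1)\,W$, together with $WW^+=N(W)$.

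The only corrections are bookkeeping. Multiplication by $w^*_3$ raises $\mu$, so $a^*_3\tilde Q^{(1)}_{l,\mu,\nu}=\tilde p^l_{\mu+\frac12,\nu}$ and $a^*_4\tilde Q^{(1)}_{l,\mu,\nu}=\tilde p^l_{\mu-\frac12,\nu}$ exactly, with no extra sign or factor; your $\mu\mp\frac12$ is reversed. Also, truncation in $W^-$ does occur at $\mu=\pm(l+\frac12)$, but this is harmless because the corresponding $t^l_{\pm(l+1)\,\underline{\nu}}$ vanish.
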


\begin{proof}
The result follows from Lemma 23 in \cite{FL1} and observation
\[  
a^*_3 \tilde Q^{(1)}_{l,\mu,\nu} = \tilde p^l_{\mu+\frac12,\nu}
\qquad \text{and} \qquad
a^*_4 \tilde Q^{(1)}_{l,\mu,\nu} = \tilde p^l_{\mu-\frac12,\nu}.
\]
\end{proof}

\begin{thm}  \label{-n-reg-iso-thm2}
  For each $n>0$, the map $\Psi^-_n$ is an isomorphism between representations
  $(\pi^-_{-n}, W^-_{-n})$ and $(\pi_{nr}, {\cal G}^-_n)$ of
  $\mathfrak{sl}(4,\BB C) = \mathfrak{sl}(2,\HC)$.
\end{thm}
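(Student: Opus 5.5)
The plan follows the pattern of the proofs of Theorems \ref{n-reg-iso-thm}, \ref{n-reg-iso-thm2} and \ref{-n-reg-iso-thm}. The argument reduces to three facts: $\Psi^-_n$ intertwines a generating set of $\mathfrak{sl}(2,\HC)$, it is injective, and its target is irreducible.

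\emph{Step 1: intertwining.} The action $\pi_{nr}$ of $\mathfrak{sl}(2,\HC)$ on ${\cal G}^-_n$ is given explicitly in Lemma 2 of \cite{nreg}. For $\bigl(\begin{smallmatrix} 0 & B \\ 0 & 0 \end{smallmatrix}\bigr)$ it acts by $-\tr(B\partial)$. For $\bigl(\begin{smallmatrix} 0 & 0 \\ C & 0 \end{smallmatrix}\bigr)$ it acts by $\tr(ZCZ\partial + nZC)$, plus the sum over tensor factors of right multiplication by $ZC$ in each factor.

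On the metaplectic side, equation (16) in \cite{qreg} expresses these elements through the quadratic operators $a^*_ka_l$ with $k\in\{1,2\}$, $l\in\{3,4\}$, or vice versa. We write
\[
\Psi^-_n = \Phi^-_0 \circ \bigl((a^*_1,a^*_2)^{\otimes n}\bigr).
\]
Using the commutation relations of Lemma \ref{commutator-lem1}, we commute these quadratic operators past the $n$ annihilation operators $a^*_1,a^*_2$. Each commutator $[a^*_3a_1, a^*_1]$, and the like, produces a term where one factor $(a^*_1,a^*_2)$ is replaced by $(a^*_3,0)$, $(0,a^*_3)$, and so on. The remaining term is the quadratic operator applied after $(a^*_1,a^*_2)^{\otimes n}$, which lands in $W^-_0$.

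For the remaining term, Lemma \ref{-intertwiner-lem} (equivalently Proposition \ref{harmonic-iso-prop}) converts the action of $\Phi^-_0$ on $W^-_0$ into the scalar-function part of $\pi_{nr}$: the differential operators together with the $\tr$-terms. For the commutator terms, Lemma \ref{-Z-mult-lem2} converts them into right multiplication by $ZC$ in the corresponding tensor factor. This is applied factorwise; the factor in question sees an element of $W^-_{-1}$ after the other $n-1$ operators have been applied.

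Matching the constants $n$ in the trace terms is the bookkeeping point I expect to be the main obstacle. I would check it on a lowest $K$-type vector. Since the elements $\bigl(\begin{smallmatrix} 0 & B \\ C & 0 \end{smallmatrix}\bigr)$ generate $\mathfrak{sl}(2,\HC)$, intertwining on them gives intertwining on all of $\mathfrak{sl}(2,\HC)$.

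\emph{Step 2: injectivity.} For fixed $l$, the monomials $\tilde Q^{(n)}_{l,\mu,\nu}$ span the $K$-type $V_l \boxtimes V_{l+\frac n2}$. Applying $(a^*_1,a^*_2)^{\otimes n}$ lowers the $w_1,w_2$ exponents, producing nonzero multiples of distinct $\tilde p$-monomials in the components of $\BB S'^{\odot n}$. Then $\Phi^-_0$ is a vector space isomorphism. Thus distinct basis monomials go to linearly independent functions, so $\Psi^-_n$ is injective. Alternatively, the intertwining from Step 1 reduces this to checking one nonzero image.

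\emph{Step 3: surjectivity.} Theorem 26 in \cite{nreg} says $(\pi_{nr},{\cal G}^-_n)$ is irreducible. The image of $\Psi^-_n$ is a nonzero submodule, so it is all of ${\cal G}^-_n$, and $\Psi^-_n$ is an isomorphism.
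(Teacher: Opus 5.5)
Your proposal matches the paper's proof. Both show intertwining on the generators $\bigl(\begin{smallmatrix} 0 & B \\ C & 0 \end{smallmatrix}\bigr)$ using Lemma 2 of \cite{nreg}, equation (16) of \cite{qreg} and Lemmas \ref{-intertwiner-lem}, \ref{-Z-mult-lem2}, then get surjectivity from injectivity plus irreducibility of $(\pi_{nr},{\cal G}^-_n)$ (Theorem 26 of \cite{nreg}). One correction: the scalar factor in $\pi_{nr}$ is $N(a'-Zc')^{-1}$ for every $n$, so $\bigl(\begin{smallmatrix} 0 & 0 \\ C & 0 \end{smallmatrix}\bigr)$ acts by $\tr(ZCZ\partial + ZC)$ plus the $n$ right multiplications by $ZC$, with no factor $n$ in the trace term. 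The scalar part is then matched directly by Proposition \ref{harmonic-iso-prop}, so the bookkeeping obstacle you anticipate does not arise.
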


\begin{proof}
  The action of $\mathfrak{sl}(2,\HC)$ on ${\cal G}^-_n$ via $\pi_{nr}$
  is spelled out in Lemma 2 in \cite{nreg}. By equation (16) in \cite{qreg}
  and Lemmas \ref{-intertwiner-lem}, \ref{-Z-mult-lem2}, $\Psi^-_n$ intertwines
  the actions on $W^-_{-n}$ and ${\cal G}^-_n$ of matrices of the form
  $\bigl(\begin{smallmatrix} 0 & B \\ C & 0 \end{smallmatrix}\bigr)
  \in \mathfrak{sl}(2,\HC)$, $B, C \in \HC$.
  Since these matrices generate $\mathfrak{sl}(2,\HC)$, it follows that
  $\Psi^-_n$ is an intertwining map between $(\pi^-_{-n}, W^-_{-n})$ and
  $(\pi_{nr}, {\cal G}^-_n)$.
  It is clear that $\Psi^-_n$ is injective.
  By Theorem 26 in \cite{nreg}, $(\pi_{nr}, {\cal G}^-_n)$ is irreducible.
  Hence $\Psi^-_n$ is onto, and the result follows.
\end{proof}

\subsection{Invariant Bilinear Pairing}

In this subsection we describe an $\mathfrak{sp}(8,\BB C)$-invariant bilinear
pairing between $(\pi^+,W^+)$ and $(\pi^-,W^-)$.
This pairing can be used to construct projectors onto $W^{\pm}_n$.

\begin{thm}
We have an $\mathfrak{sp}(8,\BB C)$-invariant bilinear pairing between
$(\pi^+,W^+)$ and $(\pi^-,W^-)$
\begin{equation}  \label{invar-pairing}
  \langle f, g \rangle = \frac1{(2\pi i)^4} \oint f(iw_1,iw_2,iw^*_3,iw^*_4)
  \cdot g (w_1,w_2,w^*_3,w^*_4) \,dw_1 dw_2 dw^*_3 dw^*_4,
\end{equation}
$f \in W^+$, $g \in W^-$.
The integral is taken over the torus $(S^1)^4$ inside $\BB C^4$, each
circle $S^1$ going once around the origin in the counterclockwise direction.
\end{thm}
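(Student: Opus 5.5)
The plan is to check invariance on the spanning set of quadratic operators from Proposition~\ref{iso-prop}, part 8. Invariance of \eqref{invar-pairing} means
\[
\langle \pi^+(X) f, g \rangle + \langle f, \pi^-(X) g \rangle = 0
\]
for all $X$ in $\mathfrak{sp}(8,\BB C) = \BB C \Span\{:a_ka^*_l:,\: a_ka_l,\: a^*_ka^*_l\}$. Since these operators are built from the linear operators $a_k$, $a^*_k$, I would first compute the transposes of the linear operators with respect to the pairing, and then pass to quadratic ones.

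\emph{Step 0: reduce to an algebraic residue.} The integral \eqref{invar-pairing} simply extracts the coefficient of $w_1^{-1}w_2^{-1}(w^*_3)^{-1}(w^*_4)^{-1}$ in $f(iw_1,iw_2,iw^*_3,iw^*_4)\, g(w_1,w_2,w^*_3,w^*_4)$. This description makes sense for any $g \in W$, not only for $g \in W^-$.

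Now take $f \in W^+$ and a monomial $g \in W^{aux}$. Some exponent of $g$ is $\ge 0$. Since $f$ is polynomial, every monomial of $fg$ has that exponent $\ge 0$, so the residue vanishes. Hence the formula defines a pairing $W^+ \times W \to \BB C$ that annihilates $W^+ \times W^{aux}$, and it descends to $W^+ \times W^-$. The action $\pi^-$ is by definition the action of the same operators on $W$, read modulo $W^{aux}$. So it suffices to prove the identity $\langle Xf, g\rangle = -\langle f, Xg\rangle$ for $f \in W^+$ and $g \in W$, with $X$ acting on $W$ by the untruncated operators.

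\emph{Step 1: linear operators.} Write $\tilde f(w) = f(iw)$.
\begin{itemize}
\item For a multiplication operator, say $w_1$: $\widetilde{w_1 f} = i w_1 \tilde f$. Therefore $\langle w_1 f, g\rangle = \langle f, i w_1 g\rangle$.
\item For a derivative, say $\partial_{w_1}$: $\widetilde{\partial_{w_1} f} = -i\,\partial_{w_1}\tilde f$. A total derivative has zero residue, so integrating by parts gives $\langle \partial_{w_1} f, g\rangle = i\langle f, \partial_{w_1} g\rangle$.
\end{itemize}
The same holds for every variable. Thus, with respect to the pairing, each of the eight operators $a_k$, $a^*_k$ has transpose equal to $i$ times the same operator on $W$.

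\emph{Step 2: quadratic operators.} The transpose reverses products, so for linear operators $A$, $B$ we get $(AB)^t = B^tA^t = i^2 BA = -BA$.
\begin{itemize}
\item For $a_ka_l$ and $a^*_ka^*_l$: these factors commute by \eqref{Heisenberg-rels}, so the transpose is $-a_ka_l$, respectively $-a^*_ka^*_l$.
\item For the symmetrized $:a_ka^*_l: = \tfrac12(a_ka^*_l + a^*_la_k)$: the transpose is $-\tfrac12(a^*_la_k + a_ka^*_l) = -:a_ka^*_l:$.
\end{itemize}
Hence $\langle Xf,g\rangle = -\langle f, Xg\rangle$ for every generator $X$, and by linearity for all of $\mathfrak{sp}(8,\BB C)$. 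This is exactly the required invariance.

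\emph{Main obstacle.} The only delicate point is Step 0: making sure the truncated action on $W^-$ is compatible with the pairing. Concretely, one must check that the terms discarded when a multiplication produces a nonnegative power contribute nothing to the residue against a polynomial $f$. This is the vanishing on $W^+ \times W^{aux}$ established above. The integration by parts in Step 1 is legitimate because everything involved is a Laurent polynomial.
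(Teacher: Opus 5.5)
Your proof is correct, but it is organized differently from the paper's.

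The paper checks invariance directly. It applies each generator $:a_ka^*_l:$, $a_ka_l$, $a^*_ka^*_l$ to the monomials $(w_1)^{\alpha_1}(w_2)^{\alpha_2}(w^*_3)^{\alpha_3}(w^*_4)^{\alpha_4}$ spanning $W^+$ and $(w_1)^{-(\beta_1+1)}\cdots(w^*_4)^{-(\beta_4+1)}$ spanning $W^-$. It uses the truncated action on $W^-$ exactly as defined, and simply declares the coefficient comparison immediate.

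You argue structurally instead:
\begin{itemize}
\item You extend the pairing to a residue pairing on $W^+\times W$.
\item You show it kills $W^+\times W^{aux}$, so the truncation defining $\pi^-$ can be dropped.
\item You note that each of the eight Heisenberg generators is $i$ times its own transpose. Hence every quadratic element, symmetrized or not, is minus its own transpose. The reversal step only needs $W^+$ to be stable under each $a_k$ and $a^*_k$, which is clear.
\end{itemize}

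Your route replaces the case-by-case monomial check, including the boundary cases where $\pi^-$ sends a monomial to zero, by two one-line identities. It also explains why the substitution $w\mapsto iw$ appears in the formula. Without it, multiplications would be self-transpose and derivatives anti-self-transpose. Then products of two multiplications, such as $a_1a_2 = w_1w_2$ or $:a_1a^*_3:\; = w_1w^*_3$, or of two derivatives, would be self-transpose rather than skew.

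The paper's route is more pedestrian. Its advantage is that it stays entirely inside the explicit description of $(\pi^-,W^-)$ and needs no auxiliary extension of the pairing to $W$.
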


\begin{proof}
By Proposition \ref{iso-prop}, $\mathfrak{sp}(8,\BB C)$ is generated
as a vector space by operators $:a_ka^*_l:$, $a_ka_l$, $a^*_ka^*_l$,
$1 \le k,l \le 4$.
Computing the actions of these quadratic operators on monomials
\[
(w_1)^{\alpha_1}(w_2)^{\alpha_2}(w^*_3)^{\alpha_3}(w^*_4)^{\alpha_4}
\qquad \text{and} \qquad
(w_1)^{-(\beta_1+1)}(w_2)^{-(\beta_2+1)}(w^*_3)^{-(\beta_3+1)}(w^*_4)^{-(\beta_4+1)},
\]
$\alpha_1, \alpha_2, \alpha_3, \alpha_4,
\beta_1, \beta_2, \beta_3, \beta_4 \ge 0$,
spanning $W^+$ and $W^-$ respectively, it is immediate that
the bilinear form \eqref{invar-pairing} is $\mathfrak{sp}(8,\BB C)$-invariant.
\end{proof}

\begin{cor}
For each $n \in \BB Z$, this pairing \eqref{invar-pairing} restricts to an
$\mathfrak{sl}(4,\BB C)$-invariant bilinear pairing $\langle f, g \rangle_n$
between $(\pi^+_n, W^+_n)$ and $(\pi^-_{-n}, W^-_{-n})$.
\end{cor}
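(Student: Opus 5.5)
The plan is to compute the pairing on monomials and then invoke invariance. First I evaluate \eqref{invar-pairing} on a pair of basis monomials
\[
f=(w_1)^{\alpha_1}(w_2)^{\alpha_2}(w^*_3)^{\alpha_3}(w^*_4)^{\alpha_4}\in W^+,
\]
\[
g=(w_1)^{-(\beta_1+1)}(w_2)^{-(\beta_2+1)}(w^*_3)^{-(\beta_3+1)}(w^*_4)^{-(\beta_4+1)}\in W^-.
\]
Substituting $iw_k$ into $f$ only produces the constant factor $i^{\alpha_1+\alpha_2+\alpha_3+\alpha_4}$. The residue integral over $(S^1)^4$ then factors into four one-variable contour integrals. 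Each of these equals $1$ exactly when $\alpha_k=\beta_k$ and vanishes otherwise. Hence
\[
\langle f,g\rangle = i^{|\alpha|}\,\delta_{\alpha\beta}.
\]
So the monomial bases of $W^+$ and $W^-$ are dual to each other up to nonzero scalars, and the pairing is nondegenerate.

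Next, I record the degrees. The monomial $f$ has degree $-\alpha_1-\alpha_2+\alpha_3+\alpha_4$. The monomial $g$ has degree $(\beta_1+1)+(\beta_2+1)-(\beta_3+1)-(\beta_4+1)=\beta_1+\beta_2-\beta_3-\beta_4$. When $\alpha=\beta$ these two degrees are negatives of each other. Therefore $\langle W^+_n, W^-_m\rangle=0$ unless $m=-n$. Moreover, restricting to $W^+_n\times W^-_{-n}$ still pairs the monomial bases dually, so the restricted pairing $\langle\,,\rangle_n$ is nondegenerate.

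Finally, I check invariance. The subalgebra $\mathfrak{gl}(4,\BB C)=\BB C\Span\{:a_ka^*_l:\}$ of $\mathfrak{sp}(8,\BB C)$ preserves the degree on both $W^+$ and $W^-$; this is why the spaces $W^\pm_n$ are $\pi^\pm_n$-submodules. It contains $\mathfrak{sl}(4,\BB C)$. For $X\in\mathfrak{sl}(4,\BB C)$, $f\in W^+_n$ and $g\in W^-_{-n}$, the invariance of the full pairing from the preceding theorem gives
\[
\langle \pi^+(X)f,g\rangle+\langle f,\pi^-(X)g\rangle=0.
\]
Since $\pi^+(X)f=\pi^+_n(X)f$ and $\pi^-(X)g=\pi^-_{-n}(X)g$, this is precisely invariance of $\langle\,,\rangle_n$.

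The only point needing care is the degree bookkeeping: on $W^-$ the exponents are shifted by $1$, and the substitution $w\mapsto iw$ contributes a scalar. Both effects cancel in the way described above, so I expect no real obstacle.
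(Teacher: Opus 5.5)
Your proposal is correct and follows the route the paper intends: the corollary is an immediate consequence of the $\mathfrak{sp}(8,\BB C)$-invariance of \eqref{invar-pairing}, because the degree-preserving subalgebra $\BB C\Span\{:a_ka^*_l:\} \supset \mathfrak{sl}(4,\BB C)$ maps $W^+_n$ and $W^-_{-n}$ into themselves. Your monomial computation $\langle f,g\rangle = i^{|\alpha|}\delta_{\alpha\beta}$ is also correct; it shows that $W^+_n$ pairs trivially with $W^-_m$ for $m \ne -n$ and nondegenerately with $W^-_{-n}$, a point the paper leaves implicit.
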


\begin{rem}
When $n \in \BB Z$ is fixed, in the integral formula \eqref{invar-pairing} for
$\langle f, g \rangle_n$, the integration with respect to one of the variables
$w_1$, $w_2$, $w^*_3$ or $w^*_4$ can be dropped without changing the result.
\end{rem}

Using the pairing \eqref{invar-pairing} we can construct the projectors onto
$W^{\pm}_n$. These projectors can be interpreted as analogues of the
Cauchy-Fueter formula for $n$-regular functions (see Theorem 9 in \cite{nreg}
for a different analogue).

\subsection{Isomorphisms between Left and Right $n$-Regular Functions}

Consider a map $\sigma: W \to W$ defined by switching the variables
$w_1 \leftrightarrow w^*_3$ and $w_2 \leftrightarrow w^*_4$:
\[
(\sigma f)(w_1,w_2,w^*_3,w^*_4) = f(w^*_3,w^*_4,w_1,w_2).
\]
Clearly, $\sigma$ restricts to vector space isomorphisms
$\sigma^+_n: W^+_n \simeq W^+_{-n}$ and $\sigma^-_n: W^-_n \simeq W^-_{-n}$
for all $n \in \BB Z$.

\begin{prop}  \label{left-right-reg-iso1}
The map $\sigma: W \to W$ is an automorphism of representations
$(\pi^+, W^+)$ and $(\pi^-, W^-)$ restricted to
\[
\bigl\{ \bigl(\begin{smallmatrix} X & Y \\ Y & X \end{smallmatrix}\bigr);\:
X, Y \in \mathfrak{sp}(4,\BB C) \bigr\}
\simeq  \mathfrak{sp}(4,\BB C) \otimes
\bigl\{ \bigl(\begin{smallmatrix} x & y \\ y & x \end{smallmatrix}\bigr);\:
x, y \in \BB C \bigr\}.
\]

For each $n \in \BB Z$, the map $\sigma$ restricts to isomorphisms of
representations $\sigma^+_n: (\pi^+_n, W^+_n) \simeq (\pi^+_{-n}, W^+_{-n})$ and
$\sigma^-_n: (\pi^-_n, W^-_n) \simeq (\pi^-_{-n}, W^-_{-n})$ restricted to
$\mathfrak{sp}(4,\BB C)$.
\end{prop}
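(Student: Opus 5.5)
The plan is to compute how $\sigma$ conjugates the generators of $\mathfrak{sp}(8,\BB C)$ and then identify the subalgebra fixed by this conjugation. Since $\sigma$ swaps $w_1 \leftrightarrow w^*_3$ and $w_2 \leftrightarrow w^*_4$, conjugation by $\sigma$ permutes the Heisenberg generators: multiplication operators go to multiplication operators and derivatives to derivatives. Concretely,
\[
\sigma a_1 \sigma^{-1} = a^*_3,\quad \sigma a_2 \sigma^{-1} = a^*_4,\quad
\sigma a^*_3 \sigma^{-1} = a_1,\quad \sigma a^*_4 \sigma^{-1} = a_2,
\]
and similarly
\[
\sigma a^*_1 \sigma^{-1} = a_3,\quad \sigma a^*_2\sigma^{-1}=a_4,\quad \sigma a_3\sigma^{-1}=a^*_1,\quad \sigma a_4\sigma^{-1}=a^*_2.
\]
So $\sigma$ induces the automorphism $\theta(x)=\sigma x\sigma^{-1}$ of $\mathfrak{sp}(8,\BB C)$, realized by quadratic operators as in Proposition \ref{iso-prop}. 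This automorphism preserves the relations \eqref{Heisenberg-rels} up to sign: for instance $[a_1,a^*_1]=-1$ maps to $[a^*_3,a_3]=-1$, so the relations are genuinely preserved. I would also check that $\theta$ preserves the normal-ordered products, which it clearly does.

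Next I would describe $\theta$ in the matrix picture of part 7 of Proposition \ref{iso-prop}, that is, as conjugation on $U_7$. In the basis $\{a_1,a_2,a_3,a_4,a^*_1,a^*_2,-a^*_3,-a^*_4\}$, $\theta$ swaps $a_1\leftrightarrow a^*_3$, $a_2\leftrightarrow a^*_4$, $a^*_1\leftrightarrow a_3$ and $a^*_2\leftrightarrow a_4$. The sign convention on $U_7$ matters here; I expect that after a suitable reordering of the basis (pairing $(a_1,a_2,a^*_1,a^*_2)$ against $(a^*_3,a^*_4,a_3,a_4)$), $\theta$ becomes conjugation by the block swap $J=\bigl(\begin{smallmatrix}0&1\\1&0\end{smallmatrix}\bigr)$, possibly twisted by signs. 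Then $\mathfrak{sp}(4,\BB C)$ in the statement is the one acting on each 4-dimensional half. The fixed-point algebra of $\mathrm{Ad}\,J$ is exactly $\{\bigl(\begin{smallmatrix} X&Y\\Y&X\end{smallmatrix}\bigr)\}$, and I would verify that it sits in $\mathfrak{sp}(8)$ with $X,Y\in\mathfrak{sp}(4)$.

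The main obstacle is the bookkeeping in this identification: choosing the embedding of $\mathfrak{sp}(4,\BB C)\otimes\{\text{circulant }2\times2\}$ so that the stated fixed-point subalgebra is literally the centralizer of $\sigma$. To do this I would write explicit generators of the fixed algebra, namely $x+\theta(x)$ for $x$ running over quadratic generators, such as $:a_1a^*_1:+:a^*_3a_3:$ and $a_1a_2+a^*_3a^*_4$. I would then check that they match the matrices $\bigl(\begin{smallmatrix} X & Y \\ Y & X \end{smallmatrix}\bigr)$, and that the diagonal copy $Y=0$ agrees with the embedding $\mathfrak{g}_6$ of part 6 of Proposition \ref{iso-prop}. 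Since $\theta$ is an involution, $x$ commutes with $\sigma$ exactly when $\theta(x)=x$, and every element of the form $x+\theta(x)$ is fixed. This gives $\sigma\pi^\pm(x)=\pi^\pm(x)\sigma$, so $\sigma$ commutes with the action on $W$. Because $\sigma$ preserves $W^+$ (polynomials) and $W^{aux}$ (monomials with some nonnegative exponent), it also descends to $W^-=W/W^{aux}$.

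Finally, for the second claim: $\sigma$ negates the degree, since $\deg w_k=-1$ and $\deg w^*_k=+1$ are swapped, so $\sigma$ maps $W^\pm_n$ onto $W^\pm_{-n}$ bijectively. The diagonal copy $\mathfrak{sp}(4,\BB C)$ (with $Y=0$) lies inside $\mathfrak{gl}(4,\BB C)$, which preserves degrees. Therefore the restrictions $\sigma^\pm_n$ intertwine $\pi^\pm_n$ with $\pi^\pm_{-n}$ on $\mathfrak{sp}(4,\BB C)$.
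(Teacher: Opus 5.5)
Your plan is essentially the paper's proof. The paper records the same relations $\sigma a_1=a^*_3\sigma$, $\sigma a^*_1=a_3\sigma$, etc., and lists the $\theta$-fixed quadratic combinations. It then realizes these by commutators on the span $U$ of the $a_k$ and $a^*_k$ as matrices $\bigl(\begin{smallmatrix} X & Y \\ Y & X \end{smallmatrix}\bigr)$, and for the second claim it uses the degree-preserving copy $\mathfrak{g}_6$.

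One bookkeeping point must be corrected, however. The reordering you propose, with blocks $\operatorname{span}(a_1,a_2,a^*_1,a^*_2)$ and $\operatorname{span}(a^*_3,a^*_4,a_3,a_4)$, does make $\theta$ a block swap. But in that basis the $Y=0$ part is $\{x+\theta(x)\}$ with $x$ quadratic in $a_1,a_2,a^*_1,a^*_2$. It contains, for instance, $a_1a_2+a^*_3a^*_4$, which shifts degree by $\pm 2$. So in that basis the diagonal copy is not $\mathfrak{g}_6$, and your planned check against $\mathfrak{g}_6$ would fail. Your final assertion that the $Y=0$ copy lies inside $\mathfrak{gl}(4,\BB C)$ would also be false there.

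The paper instead orders $U$ as $\{a_1,a_2,a_3,a_4,a^*_3,a^*_4,a^*_1,a^*_2\}$. Then $\theta$ is again the plain block swap, with no sign twists. The Gram matrix of $(u,v)\mapsto[u,v]$ is $\bigl(\begin{smallmatrix} 0 & S \\ S & 0 \end{smallmatrix}\bigr)$ with $S=\bigl(\begin{smallmatrix} 0 & -I \\ I & 0 \end{smallmatrix}\bigr)$, so invariance of this form forces exactly $X,Y\in\mathfrak{sp}(4,\BB C)$. Moreover, $Y=0$ now means preserving both the span of the $a_k$ and the span of the $a^*_k$. That is the fixed algebra intersected with $\mathfrak{gl}(4,\BB C)$, which is precisely $\mathfrak{g}_6$.

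Alternatively, for the second claim you can skip the matrix picture entirely. The generators of $\mathfrak{g}_6$ are $:a_ka^*_l:+:a_{l+2}a^*_{k+2}:$, $a_ka^*_{l+2}+a_la^*_{k+2}$ and $a_{k+2}a^*_l+a_{l+2}a^*_k$. Each is visibly of the form $x+\theta(x)$ with $x$ degree-preserving. Together with $\sigma(W^\pm_n)=W^\pm_{-n}$, this gives the isomorphisms $\sigma^\pm_n$.
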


\begin{proof}
We have the following commutation relations:
\[
\sigma (a_1 f) = a^*_3 (\sigma f), \quad
\sigma (a^*_1 f) = a_3 (\sigma f), \quad
\sigma (a_3 f) = a^*_1 (\sigma f), \quad
\sigma (a^*_3 f) = a_1 (\sigma f),
\]
\[
\sigma (a_2 f) = a^*_4 (\sigma f), \quad
\sigma (a^*_2 f) = a_4 (\sigma f), \quad
\sigma (a_4 f) = a^*_2 (\sigma f), \quad
\sigma (a^*_4 f) = a_2 (\sigma f).
\]
Therefore, operators of the form
\[
a_ka_l+a^*_{k+2}a^*_{l+2}, \quad a^*_ka^*_l+a_{k+2}a_{l+2}, \quad
a_ka_{l+2}+a^*_{k+2}a^*_l,
\]
\[
:a_ka^*_l:+:a_{l+2}a^*_{k+2}:,\quad
a_ka^*_{l+2}+a_la^*_{k+2}, \quad
a_{k+2}a^*_l+a_{l+2}a^*_k , \qquad 1 \le k,l \le 2,
\]
commute with $\sigma$.
These operators act on the space of all linear operators on $W$ by commutators,
and, by Lemma \ref{commutator-lem1}, they preserve a complex vector subspace
$U$ with basis $\{a_1,a_2,a_3,a_4,a^*_3,a^*_4,a^*_1,a^*_2\}$.
Thus, we have a faithful realization of these operators as $8 \times 8$ matrices
of the form $\bigl(\begin{smallmatrix} X & Y \\ Y & X \end{smallmatrix}\bigr)$,
where $X$ and $Y$ have the form
$\bigl( \begin{smallmatrix} A & B \\ C & -A^T \end{smallmatrix}\bigr)$
with $B^T=B$, $C^T=C$. In other words, $X, Y \in \mathfrak{sp}(4,\BB C)$.

The restrictions $\sigma^+_n: W^+_n \simeq W^+_{-n}$ commute with operators
\[
:a_ka^*_l:+:a_{l+2}a^*_{k+2}:,\quad
a_ka^*_{l+2}+a_la^*_{k+2}, \quad
a_{k+2}a^*_l+a_{l+2}a^*_k , \qquad 1 \le k,l \le 2.
\]
These operators correspond to matrices of the form
$\bigl(\begin{smallmatrix} A & 0 \\ 0 & A \end{smallmatrix}\bigr)$ with
$A \in \mathfrak{sp}(4,\BB C)$.

The proof that $\sigma^-: (\pi^-_n, W^-_n) \simeq (\pi^-_{-n}, W^-_{-n})$
when restricted to $\mathfrak{sp}(4,\BB C)$ is similar.
\end{proof}




From Theorems \ref{n-reg-iso-thm}, \ref{n-reg-iso-thm2}, \ref{-n-reg-iso-thm}
and \ref{-n-reg-iso-thm2} we obtain isomorphisms between left and right
$n$-regular functions:

\begin{cor}  \label{left-right-regular-cor}
We have isomorphisms of $\mathfrak{sp}(4,\BB C)$-modules
$(\pi_{nl}, {\cal F}^+_n) \simeq (\pi_{nr}, {\cal G}^+_n)$ and
$(\pi_{nl}, {\cal F}^-_n) \simeq (\pi_{nr}, {\cal G}^-_n)$,
for all positive integers $n$.
\end{cor}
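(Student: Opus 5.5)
The plan is to compose the isomorphisms already established, so that everything is transported through the metaplectic model $W^\pm$. For the spaces regular at the origin, consider
\[
\Psi^+_n \circ \sigma^+_n \circ (\Phi^+_n)^{-1} : {\cal F}^+_n \longrightarrow {\cal G}^+_n .
\]
Each factor is a linear bijection of the following kind:
\begin{itemize}
\item By Theorem \ref{n-reg-iso-thm}, $\Phi^+_n : (\pi^+_n, W^+_n) \to (\pi_{nl},{\cal F}^+_n)$ is an isomorphism of $\mathfrak{sl}(4,\BB C)$-modules.
\item By Proposition \ref{left-right-reg-iso1}, $\sigma^+_n : W^+_n \to W^+_{-n}$ intertwines $\pi^+_n$ and $\pi^+_{-n}$ restricted to $\mathfrak{sp}(4,\BB C)$.
\item By Theorem \ref{n-reg-iso-thm2}, $\Psi^+_n : (\pi^+_{-n}, W^+_{-n}) \to (\pi_{nr},{\cal G}^+_n)$ is an isomorphism of $\mathfrak{sl}(4,\BB C)$-modules.
\end{itemize}
Similarly, for the spaces regular at infinity I would use
\[
\Psi^-_n \circ \sigma^-_n \circ (\Phi^-_n)^{-1},
\]
invoking Theorems \ref{-n-reg-iso-thm} and \ref{-n-reg-iso-thm2} together with the second half of Proposition \ref{left-right-reg-iso1}.

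The one point that needs care is that the copies of $\mathfrak{sp}(4,\BB C)$ appearing in these results must be the same subalgebra. In Proposition \ref{left-right-reg-iso1}, $\mathfrak{sp}(4,\BB C)$ is the diagonal copy spanned by the operators
\[
:a_ka^*_l:+:a_{l+2}a^*_{k+2}:, \qquad a_ka^*_{l+2}+a_la^*_{k+2}, \qquad a_{k+2}a^*_l+a_{l+2}a^*_k, \qquad 1 \le k,l \le 2.
\]
All of these lie in $\mathfrak{gl}(4,\BB C)=\BB C\Span\{:a_ka^*_l:\}$. This is the algebra $\mathfrak{g}_6$ of Proposition \ref{iso-prop} with $r=2$.

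I would check that this subalgebra actually lies in $\mathfrak{sl}(4,\BB C)$, since Theorems \ref{n-reg-iso-thm}--\ref{-n-reg-iso-thm2} only give intertwining for $\mathfrak{sl}(4,\BB C)$ and not for the full $\mathfrak{gl}(4,\BB C)$. This holds because $\mathfrak{sp}(4,\BB C)$ is simple, hence perfect, so it is contained in $[\mathfrak{gl}_4,\mathfrak{gl}_4]=\mathfrak{sl}_4$. Concretely, the trace of $:a_ka^*_k:+:a_{k+2}a^*_{k+2}:$ vanishes, because the signs $\mp_k$ and $\mp_{k+2}$ are opposite. Under the identification $\mathfrak{sl}(4,\BB C)=\mathfrak{sl}(2,\HC)$ used in the theorems, this $\mathfrak{sp}(4,\BB C)$ is therefore a subalgebra acting on ${\cal F}^\pm_n$ and ${\cal G}^\pm_n$ via $\pi_{nl}$ and $\pi_{nr}$.

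Once this is established, restricting the three intertwiners to this common subalgebra makes each composite an $\mathfrak{sp}(4,\BB C)$-equivariant linear bijection, which proves the corollary. I expect no real obstacle here. The only step requiring attention is the bookkeeping that the diagonal $\mathfrak{sp}(4,\BB C)$ of Proposition \ref{left-right-reg-iso1} sits inside the $\mathfrak{sl}(4,\BB C)$ for which the $\Phi^\pm_n,\Psi^\pm_n$ are intertwiners, and that the same embedding is used on both the left and right sides.
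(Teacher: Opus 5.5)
Your proposal is correct and follows the paper's own argument. The corollary is obtained by composing $\Psi^\pm_n\circ\sigma^\pm_n\circ(\Phi^\pm_n)^{-1}$, using Theorems \ref{n-reg-iso-thm}, \ref{n-reg-iso-thm2}, \ref{-n-reg-iso-thm}, \ref{-n-reg-iso-thm2} together with Proposition \ref{left-right-reg-iso1}, and your verification that the diagonal $\mathfrak{sp}(4,\BB C)$ (the algebra $\mathfrak{g}_6$ with $r=2$) lies inside the $\mathfrak{sl}(4,\BB C)$ on which $\Phi^\pm_n,\Psi^\pm_n$ intertwine is a correct detail the paper leaves implicit.
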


\begin{prop}  \label{nreg-irreducibility-prop}
  After restricting to $\mathfrak{sp}(4,\BB C)$,
  the $n$-regular functions $(\pi_{nl}, {\cal F}^{\pm}_n)$ and
  $(\pi_{nr}, {\cal G}^{\pm}_n)$ remain irreducible, for all $n>0$.
\end{prop}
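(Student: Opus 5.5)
By Theorems \ref{n-reg-iso-thm}, \ref{n-reg-iso-thm2}, \ref{-n-reg-iso-thm} and \ref{-n-reg-iso-thm2}, together with Proposition \ref{left-right-reg-iso1}, it suffices to show that each $(\pi^{\pm}_n, W^{\pm}_n)$, $n>0$, is irreducible as a module over the copy of $\mathfrak{sp}(4,\BB C)$ consisting of the operators
\[
:a_ka^*_l:+:a_{l+2}a^*_{k+2}:,\quad a_ka^*_{l+2}+a_la^*_{k+2},\quad a_{k+2}a^*_l+a_{l+2}a^*_k,\qquad 1\le k,l\le 2.
\]
The cases $W^+_{-n}$ and $W^-_{-n}$ then follow through $\sigma^{\pm}_n$. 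I treat $W^+_n$ in detail; $W^-_n$ is analogous, with the roles of raising and lowering operators interchanged.

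\emph{Step 1: Cartan subalgebra and weights.} The Cartan subalgebra spanned by $:a_1a^*_1:+:a_3a^*_3:$ and $:a_2a^*_2:+:a_4a^*_4:$ acts diagonally on monomials, with weights that are (up to constant shifts) $(\alpha_3-\alpha_1,\ \alpha_4-\alpha_2)$ for $w_1^{\alpha_1}w_2^{\alpha_2}(w^*_3)^{\alpha_3}(w^*_4)^{\alpha_4}$. Within $W^+_n$ we have $\alpha_3+\alpha_4-\alpha_1-\alpha_2=n$, so the weight spaces are the sets of monomials lying on lines $(\alpha_1+t,\alpha_2+s,\alpha_3+t,\alpha_4+s)$.

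\emph{Step 2: finding a cyclic vector.} The operators $a_ka^*_{l+2}+a_la^*_{k+2}$, with $a^*_{l+2}=w^*$-multiplication and $a_k$ acting by multiplication by $w_k$, look like they raise degrees, but each $a^*_{l+2}$ term is multiplication by $w^*_{l+2}$. I would compute the actions directly. The lowering-type operators $a_{k+2}a^*_l+a_{l+2}a^*_k$ differentiate in both $w^*$ and $w$; for example, $a_3a^*_1=\partial_{w^*_3}\partial_{w_1}$. Applied to any nonzero $f\in W^+_n$, repeated application of these operators decreases the total degree $\alpha_1+\alpha_2$. I would show that a suitable sequence sends any nonzero element of the submodule to a nonzero multiple of a monomial with $\alpha_1=\alpha_2=0$, i.e.\ a degree-$n$ polynomial in $w^*_3,w^*_4$ alone (the $l=0$ $K$-type $V_0\boxtimes V_{n/2}$ of $W^+_n$). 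Since the $\mathfrak{gl}(2)$ part $:a_ka^*_l:+:a_{l+2}a^*_{k+2}:$ acts on the span of such monomials as $\mathfrak{sl}(2)$ on $V_{n/2}$, which is irreducible, the submodule contains that whole space.

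\emph{Step 3: generation.} Next, the raising operators $a_ka^*_{l+2}+a_la^*_{k+2}$ together with the $\mathfrak{gl}(2)$ part generate all of $W^+_n$ from $\BB C[w^*_3,w^*_4]_n$. I would argue by induction on $l$, the $K$-type index. The operators $w_kw^*_{l+2}+w_lw^*_{k+2}$, multiplied by elements of lower $K$-types and combined with the diagonal $\mathfrak{sl}(2)$ action, produce the $K$-type $V_l\boxtimes V_{l+n/2}$ of $W^+_n$ restricted to the diagonal $\mathfrak{sl}(2)$. Concretely, the symmetric elements $w_kw^*_{l+2}+w_lw^*_{k+2}$ span the $3$-dimensional symmetric part of $\BB C^2\otimes\BB C^2$. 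It suffices to show that multiplication by these, followed by the diagonal $\mathfrak{sl}(2)$ action, reaches every polynomial of bidegree $(d,d+n)$ in $(w,w^*)$. This amounts to a statement about the ideal generated by $\{w_1w^*_3,\ w_2w^*_4,\ w_1w^*_4+w_2w^*_3\}$ (in the given index pairing). That ideal misses multiples of the antisymmetric element $w_1w^*_4-w_2w^*_3$, but those are recovered by the $K$-action and by the lowering operators applied from higher levels.

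\emph{Main obstacle.} I expect Step 2 to be the hardest part: proving that from an arbitrary nonzero vector one can descend to $\alpha_1=\alpha_2=0$ without annihilating it. My first attempt would be a leading-term argument. Order monomials lexicographically, pick the top monomial in $w$, and apply $a_3a^*_1$ (and $a_4a^*_2$) as pure partial derivatives. Here $n>0$ ensures $\alpha_3\ge\alpha_1$ can be arranged along a weight line, so the derivative of the chosen monomial is nonzero and cannot cancel against others in the same weight space. Should this fail, I would instead compare the $K$-type multiplicities of the diagonal $\mathfrak{sp}(4)$ maximal compact $\mathfrak{gl}(2)$ with a known branching, for example the decomposition $\mathcal{MP}\otimes\mathcal{MP}=\bigoplus_n \mathcal{V}_n$ cited in the introduction.
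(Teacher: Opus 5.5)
Your architecture is a legitimate alternative to the paper's. You show that every nonzero submodule of $W^+_n$ contains the $K$-type $V_0=\BB C[w^*_3,w^*_4]_n$, and that $V_0$ generates $W^+_n$. The paper instead uses that $W^{\pm}_{\pm n}$ remain unitary under $\mathfrak{sp}(4,\BB R)\subset\mathfrak{u}(2,2)$, so a reducible restriction would split as $U_1\oplus U_2$. It then needs only your Step 2, since both summands would meet the irreducible space $V_0$.

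However, neither of your steps is established, and the mechanisms you sketch do not work. In particular, nothing you wrote uses $n>0$ correctly, although both steps are false for $n=0$.

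In Step 2, the pure operators $2a_3a^*_1=2\partial_{w_1}\partial_{w^*_3}$ and $2a_4a^*_2=2\partial_{w_2}\partial_{w^*_4}$ send distinct monomials to distinct monomials. So in bidegree $(d,d+n)$, $d\ge1$, their joint kernel is spanned by $w_2^d(w^*_3)^{d+n}$ and $w_1^d(w^*_4)^{d+n}$, and your leading-term descent stalls there. You must use the mixed operator $a_3a^*_2+a_4a^*_1$. It sends $c\,w_2^d(w^*_3)^{d+n}+c'\,w_1^d(w^*_4)^{d+n}$ to $d(d+n)\bigl(c\,w_2^{d-1}(w^*_3)^{d+n-1}+c'\,w_1^{d-1}(w^*_4)^{d+n-1}\bigr)$. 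These two monomials are distinct unless $d=1$, $n=0$, and this is exactly where $n>0$ enters: for $n=0$, $w_2w^*_3-w_1w^*_4$ is killed by every lowering operator $a_{k+2}a^*_l+a_{l+2}a^*_k$.

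Your stated reason, that $n>0$ lets one arrange $\alpha_3\ge\alpha_1$ along a weight line, rests on Step 1, whose weights are wrong. The element $:a_1a^*_1:+:a_3a^*_3:$ acts on $w_1^{\alpha_1}w_2^{\alpha_2}(w^*_3)^{\alpha_3}(w^*_4)^{\alpha_4}$ by $\alpha_1+\alpha_3+1$, not by $\alpha_3-\alpha_1$ plus a constant. You should also note that submodules are graded by bidegree, because the central element of the diagonal $\mathfrak{gl}(2)$ acts by $\alpha_1+\alpha_2+\alpha_3+\alpha_4+2$.

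In Step 3, the recovery mechanism you propose is empty. Every lowering operator contains some $\partial_{w_k}$, so all of them kill $V_0$, and the diagonal $\mathfrak{gl}(2)$ preserves $V_0$. By PBW, the submodule generated by $V_0$ is therefore exactly $\BB C[q_1,q_2,q_3]\cdot V_0$, where $q_1=w_1w^*_3$, $q_2=w_2w^*_4$, $q_3=w_1w^*_4+w_2w^*_3$ are the raising operators. Lowering operators ``from higher levels'' and the $K$-action contribute nothing. Your argument would thus equally ``prove'' the false case $n=0$, where $w_1w^*_4-w_2w^*_3\notin\BB C[q_1,q_2,q_3]$.

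What you actually need is a monomial computation. A monomial of bidegree $(d,d+n)$ is $q_1$ or $q_2$ times a monomial unless it equals $w_2^d(w^*_3)^{d+n}$ or $w_1^d(w^*_4)^{d+n}$. For the first exceptional monomial,
\[ w_2^d(w^*_3)^{d+n}=q_3\,w_2^{d-1}(w^*_3)^{d+n-1}-q_1\,w_2^{d-1}w^*_4(w^*_3)^{d+n-2}, \]
and the symmetric identity with $q_2$ handles $w_1^d(w^*_4)^{d+n}$. This requires $d+n\ge2$, i.e.\ it uses $n\ge1$. Alternatively, drop Step 3 altogether and invoke unitarity as the paper does. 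Your fallback via the decomposition of ${\cal MP}\otimes{\cal MP}$ would simply cite the statement being proved.
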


\begin{proof}
By Theorems \ref{n-reg-iso-thm}, \ref{n-reg-iso-thm2}, \ref{-n-reg-iso-thm}
and \ref{-n-reg-iso-thm2}, it is sufficient to show that 
$(\pi^{\pm}_{\pm n}, W^{\pm}_{\pm n})$ remain irreducible after restricting to
$\mathfrak{sp}(4,\BB C)$.
By Theorems 26,  27 in \cite{nreg}, $(\pi^{\pm}_{\pm n}, W^{\pm}_{\pm n})$ are
irreducible unitary representations of $\mathfrak{u}(2,2)$.
When restricted to $\mathfrak{sp}(4,\BB R) \subset \mathfrak{u}(2,2)$,
they remain unitary.
Consider the case of $(\pi^+_n, W^+_n)$, the other cases are similar.
If $(\pi^+_n, W^+_n)$ is no longer irreducible after the restriction to
$\mathfrak{sp}(4,\BB C)$, it must be a direct sum of two proper
subrepresentations: $W^+_n = U_1 \oplus U_2$.
If $f_1 \in U_1$ and $f_2 \in U_2$ are two non-zero representatives,
applying to $f_1$ and $f_2$ operators belonging to $\mathfrak{sp}(4,\BB C)$
of the form
\[
a_{k+2}a^*_l+a_{l+2}a^*_k , \qquad 1 \le k,l \le 2,
\]
we see that both $U_1$ and $U_2$
contain non-zero polynomials in $w^*_3$ and $w^*_4$ only.
These polynomials must have degree $n$.
However, polynomials of degree $n$ in $w^*_3$ and $w^*_4$ form an irreducible
$U(2)$-type $V_{\frac{n}2}$ of $W^+_n$ and must be a subset of exactly
one of $U_1$, $U_2$.
This contradiction proves that $(\pi^+_n, W^+_n)$ is irreducible
as an $\mathfrak{sp}(4,\BB C)$-module.
\end{proof}

\begin{rem}
When $n=0$, by Propositions \ref{harmonic+iso-prop}, \ref{harmonic-iso-prop},
$(\pi^+_0, W^+_0) \simeq (\pi^0_l, {\cal H}^+)$
and $(\pi^-_0, W^-_0) \simeq (\pi^0_l, {\cal H}^-)$
-- the harmonic functions regular at the origin and infinity respectively.
Restricting these to $\mathfrak{sp}(4,\BB C)$ results in {\em reducible}
modules.
\end{rem}

\subsection{Metaplectic Representation and Twistors}

We conclude this section with a brief outline the relation of the metaplectic
representation of $\mathfrak{sp}(8,\BB C)$ with twistor theory.
For more details see, for example, \cite{BE, HT, WW} and references therein.
Start with an all-encompassing space of Laurent polynomials
\[
W = \BB C \bigl[ w_1,w_2,w^*_3,w^*_4,
  w_1^{-1},w_2^{-1},(w^*_3)^{-1},(w^*_4)^{-1} \bigr],
\]
a subspace of polynomials
\[
W^+ = \BB C \bigl[ w_1,w_2,w^*_3,w^*_4 \bigr]
\]
and a quotient space
\[
W^- = w_1^{-1} w_2^{-1} (w^*_3)^{-1} (w^*_4)^{-1} \cdot
\BB C \bigl[ w_1^{-1},w_2^{-1},(w^*_3)^{-1},(w^*_4)^{-1} \bigr].
\]
Express these spaces as
\begin{align*}
W^+ = V_+ \otimes V^*_+, \qquad
&V_+ = \BB C \bigl[ w_1,w_2], \qquad
V^*_+ = \BB C \bigr[w^*_3,w^*_4 \bigr],  \\
W^- = V_- \otimes V^*_-, \qquad
&V_- = w_1^{-1} w_2^{-1} \cdot \BB C \bigl[ w_1^{-1}, w_2^{-1}],  \\
\hphantom{W^- = V_- \otimes V^*_-,} \qquad
&V^*_- = (w^*_3)^{-1} (w^*_4)^{-1} \cdot
\BB C \bigr[(w^*_3)^{-1},(w^*_4)^{-1} \bigr].
\end{align*}
Note that one could also introduce
\[
W^* = \BB C \bigl[ w^*_1,w^*_2,w_3,w_4,
  (w^*_1)^{-1},(w^*_2)^{-1},w_3^{-1},w_4^{-1} \bigr],
\]
and similar sub and quotient spaces
\begin{align*}
(W^*)^+ &= \BB C \bigl[ w^*_1,w^*_2,w_3,w_4 \bigr],  \\
(W^*)^- &= (w^*_1)^{-1} (w^*_2)^{-1} w_3^{-1} w_4^{-1} \cdot
\BB C \bigl[ (w^*_1)^{-1},(w^*_2)^{-1},w_3^{-1},w_4^{-1} \bigr].
\end{align*}
Then we have decompositions
\[
W^+ = \bigoplus_{n \in \BB Z} W^+_n \qquad \text{and} \qquad
W^- = \bigoplus_{n \in \BB Z} W^-_n,
\]
where $W^{\pm}_n$ consist of homogeneous polynomials of degree $n$
and realize irreducible representations belonging to the minimal discrete
series of $SU(2,2)$.
Decompositions of $(W^*)^+$ and $(W^*)^-$  are similar.

Now, reshuffle these Laurent polynomials and consider
\[
H = \BB C \bigl[ w_1,w_2,w_3,w_4,
  w_1^{-1},w_2^{-1},w_3^{-1},w_4^{-1} \bigr]
\]
together with subquotients
\begin{align*}
H^+ &=
w_3^{-1}w_4^{-1} \cdot \BB C \bigl[ w_1,w_2,w_3^{-1},w_4^{-1} \bigr],  \\
H^- &=
w_1^{-1}w_2^{-1} \cdot \BB C \bigl[ w_1^{-1},w_2^{-1},w_3,w_4 \bigr].
\end{align*}
Additionally, introduce the dual space
\[
H^* = \BB C \bigl[ w^*_1,w^*_2,w^*_3,w^*_4,
  (w^*_1)^{-1},(w^*_2)^{-1},(w^*_3)^{-1},(w^*_4)^{-1} \bigr]
\]
together with subquotients
\begin{align*}
(H^*)^+ &= (w^*_3)^{-1}(w^*_4)^{-1} \cdot
\BB C \bigl[ w^*_1,w^*_2,(w^*_3)^{-1},(w^*_4)^{-1} \bigr],  \\
(H^*)^- &= (w^*_1)^{-1}(w^*_2)^{-1} \cdot
\BB C \bigl[ (w^*_1)^{-1},(w^*_2)^{-1},w^*_3,w^*_4 \bigr].
\end{align*}
Then we have decompositions
\[
H^+ = \bigoplus_{m \in \BB Z} H^+_m \qquad \text{and} \qquad
H^- = \bigoplus_{m \in \BB Z} H^-_m,
\]
and similar decompositions of $(H^*)^+$ and $(H^*)^-$,
where $H^{\pm}_m$ consist of homogeneous polynomials of degree $m$.

On the other hand, in twistor theory we have a realization of
representations of $SU(2,2)$ via the sheaf cohomology.
Recall the standard Penrose notations
\[
\BB T = \BB C^4, \qquad \BB T^* = (\BB C^4)^*,  \qquad
\BB P = \BB CP^3=\BB P \BB T,
\]
\[
\BB P^+= \BB P^I = \BB P \setminus S^2_{\infty}, \qquad
\BB P^-= \BB P^J = \BB P \setminus S^2_0.
\]
The Klein correspondence produces two fiber bundles
\[
\begin{tikzcd}
\BB C P^1 \ar[r,hook] & \BB P^+ \ar[d,->>]  \\ & \BB H
\end{tikzcd}
\qquad \text{and} \qquad
\begin{tikzcd}
\BB C P^1 \ar[r,hook] & \BB P^-
\ar[d,->>]  \\ & \BB H^{\times} \cup \{\infty\}.
\end{tikzcd}
\]
Then the degenerate series of representations of $SU(2,2)$ can be realized as
\[
H^1(\BB P^+, {\cal O}(m)) \qquad \text{and} \qquad
H^1(\BB P^-, {\cal O}(m)),  \qquad m \in \BB Z.
\]
In this context the sheaf cohomology is computed via the \v{C}ech
cohomology approach, and in our case $\BB P^+$, $\BB P^-$ are covered
by just two open sets, each open set being diffeomorphic to $\BB C^3$.
The representatives of the basis of cocycles of the \v{C}ech cohomology are
precisely the spaces $H^+_m$ and $H^-_m$.
To get the precise relation between the metaplectic representation and
twistor theory, we note the degree shift in $H^{\pm}_m$ by $2$ due to the
factors $(w^*_3)^{-1}(w^*_4)^{-1}$ and $(w^*_1)^{-1}(w^*_2)^{-1}$.
Explicitly, for $n >0$,
by Theorems \ref{n-reg-iso-thm} and \ref{n-reg-iso-thm2}, we have:
\begin{align*}
W^+_n &= \{\text{left $n$-regular functions regular at $0$}\},  \\
W^-_n &= \{\text{left $n$-regular functions regular at $\infty$}\}
\end{align*}
in the metaplectic realization match respectively
\begin{align*}
H^1(\BB P^+, {\cal O}(-n-2)) &= H^+_{-n-2}
= \Bigl\{\begin{smallmatrix} \text{spin $\frac{n}2$ zero mass} \\
\text{states (particles) on $\BB P^+$} \end{smallmatrix}\Bigr\},  \\
H^1(\BB P^-, {\cal O}(-n-2)) &= H^-_{-n-2}
= \Bigl\{\begin{smallmatrix} \text{spin $\frac{n}2$ zero mass} \\
\text{states (particles) on $\BB P^-$} \end{smallmatrix}\Bigr\}
\end{align*}
in the \v{C}ech cohomology realization.
Similarly, by Theorems \ref{-n-reg-iso-thm} and \ref{-n-reg-iso-thm2},
\begin{align*}
W^+_{-n} &= \{\text{right $n$-regular functions regular at $0$}\},  \\
W^-_{-n} &= \{\text{right $n$-regular functions regular at $\infty$}\}
\end{align*}
in the metaplectic realization match respectively
\begin{align*}
H^1(\BB P^+, {\cal O}(n-2)) &= H^+_{n-2} =
\Bigl\{\begin{smallmatrix} \text{spin $-\frac{n}2$ zero mass}  \\
\text{states (particles) on $\BB P^+$} \end{smallmatrix}\Bigr\},  \\
H^1(\BB P^-, {\cal O}(n-2)) &= H^-_{n-2} =
\Bigl\{\begin{smallmatrix} \text{spin $-\frac{n}2$ zero mass}  \\
\text{states (particles) on $\BB P^-$} \end{smallmatrix}\Bigr\}
\end{align*}
in the \v{C}ech cohomology realization.
The spin $0$ case corresponds to the harmonic functions
(Propositions  \ref{harmonic+iso-prop} and \ref{harmonic-iso-prop}).
This is why in twistor theory there always is a shift by $2$ in the line bundles
${\cal O}(\mp n-2)$.


Also, one can work with $\BB P^* =\BB P \BB T^*$ instead of $\BB P$,
then one would get another realization of the $n$-regular functions.
(This corresponds to switching from $W$ to $W^*$.)
Thus, our metaplectic construction and twistor realization are complementary
to each other.

The relation between our treatment of the metaplectic representation and
twistor theory is beneficial for both: Our approach yields explicit formulas
for the action of $\mathfrak{sl}(4,\BB C)$ in the \v{C}ech cohomology basis.
On the other hand, twistor theory suggests a more invariant realization of the
metaplectic representation that does not require a specific choice of
coordinates and bases.

\section{Restriction of Geometric Constructions to $\mathfrak{sp}(4,\BB C)$}  \label{4}

In this section we start developing quaternionic analysis using our
new approach based on the conformal group symmetry reduction to
$Sp(4,\BB R)$.

\subsection{Basics}

We regard the group $U(2,2)$ as a subgroup of $GL(2,\HC)$,
as described in Subsection 3.5 of \cite{FL1}. That is,
$U(2,2)$ is realized as the subgroup of $GL(2,\HC)$ consisting of elements
preserving the Hermitian form on $\BB C^4$ given by the $4 \times 4$ matrix
$\bigl(\begin{smallmatrix} 1 & 0 \\ 0 & -1 \end{smallmatrix}\bigr)$.
Explicitly:
\begin{multline}  \label{U(2,2)}
U(2,2) = \Bigl\{ \bigl(\begin{smallmatrix} a & b \\
  c & d \end{smallmatrix}\bigr) \in GL(2,\HC) ;\:
\bigl(\begin{smallmatrix} a & b \\ c & d \end{smallmatrix}\bigr)
\bigl(\begin{smallmatrix} 1 & 0 \\ 0 & -1 \end{smallmatrix}\bigr)
\bigl(\begin{smallmatrix} a^* & c^* \\ b^* & d^* \end{smallmatrix}\bigr)
= \bigl(\begin{smallmatrix} 1 & 0 \\ 0 & -1 \end{smallmatrix}\bigr) \Bigr\}  \\
= \biggl\{ \bigl(\begin{smallmatrix} a & b \\ c & d \end{smallmatrix}\bigr)
\in GL(2,\HC) ;\: a,b,c,d \in \HC,\:
\begin{smallmatrix} a^*a = 1+c^*c \\ d^*d = 1+b^*b \\ a^*b=c^*d
\end{smallmatrix} \biggr\}.
\end{multline}
The group $U(2,2)$ acts on $\HC$ (with singularities) by fractional
linear transformations $Z \mapsto (aZ+b)(cZ+d)^{-1}$
preserving $U(2) \subset \HC$ and open domains
\[
\BB D^+ = \{ Z \in \HC;\: ZZ^*<1 \}, \qquad
\BB D^- = \{ Z \in \HC;\: ZZ^*>1 \},
\]
where the inequalities $ZZ^*<1$ and $ZZ^*>1$ mean that the self-adjoint
matrix $ZZ^*-1$ is negative and positive definite respectively.
The Lie algebra of $U(2,2)$ is
\begin{equation}  \label{u(2,2)-algebra}
\mathfrak{u}(2,2) = \Bigl\{
\bigl(\begin{smallmatrix} A & B \\ B^* & D \end{smallmatrix}\bigr) ;\:
A,B,D \in \HC ,\: A=-A^*, D=-D^* \Bigr\}.
\end{equation}

A ``standard'' realization of the Lie algebra $\mathfrak{sp}(4,\BB R)$ is as
the subalgebra of $\mathfrak{gl}(4,\BB R)$ preserving the symplectic form
$\bigl( \begin{smallmatrix} 0 & 1 \\ -1 & 0 \end{smallmatrix}\bigr)$:
\[
\mathfrak{sp}(4,\BB R)' = \Bigl\{
\bigl(\begin{smallmatrix} A & B \\ C & -A^T \end{smallmatrix}\bigr) ;\:
A,B,C \in \mathfrak{gl}(2,\BB R) ,\: B^T=B, C^T=C \Bigr\}.
\]
Consider an element
$\frac1{\sqrt{2}} \bigl(\begin{smallmatrix} -i & -1 \\
  1 & i \end{smallmatrix}\bigr) \in GL(2,\HC)$
with inverse $\frac1{\sqrt{2}} \bigl(\begin{smallmatrix} i & 1 \\
  -1 & -i \end{smallmatrix}\bigr)$
and a conjugate copy of $\mathfrak{sp}(4,\BB R)$:
\begin{align*}
\mathfrak{sp}(4,\BB R)
&= \tfrac12 \bigl(\begin{smallmatrix} -i & -1 \\ 1 & i \end{smallmatrix}\bigr)
\mathfrak{sp}(4,\BB R)'
\bigl(\begin{smallmatrix} i & 1 \\ -1 & -i \end{smallmatrix}\bigr)  \\
&= \Bigl\{\bigl(\begin{smallmatrix} A & B \\
  \bar B & -A^T \end{smallmatrix}\bigr) ;\:
A,B \in \HC ,\: A^*=-A, B^T=B \Bigr\}.
\end{align*}
This realizes $\mathfrak{sp}(4,\BB R)$ as a subalgebra of $\mathfrak{u}(2,2)$.
Then the action of the corresponding Lie group $Sp(4,\BB R) \subset U(2,2)$
by fractional linear transformations preserves the space of symmetric matrices
$\HC \text{-sym}$ defined by \eqref{H-sym}.
Complexify $\mathfrak{sp}(4,\BB R)$:
\[
\mathfrak{sp}(4,\BB C) = \BB C \otimes \mathfrak{sp}(4,\BB R)
= \Bigl\{\bigl(\begin{smallmatrix} A & B \\ C & -A^T \end{smallmatrix}\bigr);\:
A,B,C \in \HC ,\: B^T=B,\: C^T=C \Bigr\}.
\]
We realize $\mathfrak{u}(2)$ as diagonal elements of $\mathfrak{sp}(4,\BB R)$:
\begin{equation}  \label{u(2)}
\mathfrak{u}(2) = \Bigl\{ \bigl(\begin{smallmatrix} A & 0 \\
0 & -A^T \end{smallmatrix}\bigr) \in \mathfrak{sp}(4,\BB R);\:
A \in \HC ,\: A^*=-A \Bigr\}.
\end{equation}
The corresponding group is
\[
U(2) = \Bigl\{ \bigl(\begin{smallmatrix} a & 0 \\
0 & a^{-1T} \end{smallmatrix}\bigr) \in U(2,2);\: a \in \HC,\: aa^*=1 \Bigr\}.
\]
We frequently decompose representations of $\mathfrak{sp}(4,\BB C)$
into $K$-types -- representations of either group $U(2)$ or its Lie algebra
\eqref{u(2)}.

\subsection{Solid Harmonic Functions $R^m_l(Z)$}

In this paper we consider various $\mathfrak{sp}(4,\BB C)$-modules,
and their $K$-types will be described using the solid harmonic functions
$R^m_l(Z)$. These in turn are defined in terms of the associated Legendre
polynomials.

We use integer parameters $l, m$ with $l \ge 0$.
Recall that the {\em associated Legendre polynomials} can be defined as
(see, for example, \cite{GR,V})
\[
P^m_l(x) = \frac{(-1)^m}{2^l l!} (1-x^2)^{\frac{m}2} \frac{d^{l+m}}{dx^{l+m}}
\bigl[ (x^2-1)^l \bigr],
\qquad -l \le m \le l.
\]
These are $(1-x^2)^{\frac{m}2}$ times a polynomial of degree $l-m$ that is
either even or odd depending on the parity of $l+m$. We have:
\begin{equation}  \label{Pm-m}
P^{-m}_l(x) = (-1)^m \tfrac{(l-m)!}{(l+m)!} P^m_l(x)
\end{equation}
(identity 8.737(1) in \cite{GR}).
The orthogonality relation
\[
\int_{-1}^1 P^m_l(x) \cdot P^{-m}_{l'}(x)\,dx = (-1)^m \frac2{2l+1} \delta_{ll'}
\]
(identity 7.112(1) in \cite{GR} can be rewritten as
\begin{equation}  \label{P-orthogonality}
\int_0^{\pi} P^m_l(\cos\theta) \cdot P^{-m}_{l'}(\cos\theta) \cdot
\sin\theta\,d\theta = (-1)^m \frac2{2l+1} \delta_{ll'}.
\end{equation}

We consider the 3-dimensional space of symmetric quaternions
\[
\BB H \text{-sym} = \{ Z = x^0e_0+x^1e_1+x^3e_3;\: x^0,x^1,x^3 \in \BB R \}.
\]
In matrix realization \eqref{H-matrix-realization}, elements of
$\BB H \text{-sym}$ appear as
\[
Z = \begin{pmatrix} z & it \\ it & \bar z \end{pmatrix},
\qquad z=x+iy,\: \bar z=x-iy \in \BB C,\: t \in \BB R.
\]
We use the spherical coordinates on $\BB H \text{-sym}$:
\[
Z = \begin{pmatrix} z & it \\ it & \bar z \end{pmatrix}
= r \begin{pmatrix} \sin\theta e^{i\phi} & i\cos\theta \\
  i\cos\theta & \sin\theta e^{-i\phi} \end{pmatrix},
\qquad r \ge 0,\: 0 \le \theta \le \pi,\: 0 \le \phi \le 2\pi.
\]
The {\em solid harmonic polynomials} can be defined in terms of the
{\em spherical harmonics} $Y^m_l(\theta,\phi)$:
\[
R^m_l(Z) = \text{(constant factor)} \cdot r^l \cdot Y^m_l(\theta,\phi)
\]
(see, for example, \cite{H}).
However, there are different conventions for the constant factor.
For this reason, we define the solid harmonic polynomials as
\[
R^m_l(Z) = 
r^l \cdot P^m_l(\cos\theta) e^{im\phi}
= (|z|^2+t^2)^{\frac{l}2} \cdot \Bigl( \frac{z}{|z|} \Bigr)^m \cdot
P^m_l \bigl( t(|z|^2+t^2)^{-\frac12} \bigr).
\]
Observe that
\[
r^{l-|m|} \cdot P^m_l(\cos\theta)
  = (\sin\theta)^{|m|} \cdot \text{homogeneous polynomial in $x$, $y$, $t$
    of degree $l-|m|$}
\]
and  
\[
r^{|m|} (\sin\theta)^{|m|} e^{im\phi} =
\begin{cases} (x+iy)^m & \text{if $m \ge 0$}, \\
(x-iy)^{|m|} & \text{if $m \le 0$}. \end{cases}
\]
Hence $R^m_l(Z)$ is a homogeneous polynomial in $x,y,t$ or
$z_{11}, z_{12}, z_{22}$ of degree $l$. These polynomials are harmonic:
\begin{align*}
\Delta_3 R^m_l(Z) &= 0, \qquad \text{where } \\
\Delta_3 &= \bigl(\tfrac{\partial}{\partial x}+i\tfrac{\partial}{\partial y} \bigr)
\bigl(\tfrac{\partial}{\partial x}-i\tfrac{\partial}{\partial y} \bigr)
+ \tfrac{\partial^2}{\partial t^2}
= 4 \tfrac{\partial}{\partial z_{11}} \tfrac{\partial}{\partial z_{22}}
+ \tfrac{\partial^2}{\partial t^2}.
\end{align*}

Relation \eqref{Pm-m} implies that
\[
R^m_l(Z^+) = (-1)^l \tfrac{(l+m)!}{(l-m)!} R^{-m}_l(Z), \qquad
R^m_l(Z^{-1}) = (-1)^l \tfrac{(l+m)!}{(l-m)!} N(Z)^{-l} \cdot R^{-m}_l(Z).
\]

Let $S^2_r = \{ X \in \BB H;\: X^T=X,\: N(X)=r^2 \}$
be the sphere of radius $r$ centered at the origin, and let
$dS = r^2 \sin\theta \,d\theta d\phi$ be the standard Euclidean area element
on that sphere. Then \eqref{P-orthogonality} implies orthogonality relations
for the solid harmonic polynomials:
\begin{equation}  \label{R-orthogonality}
\frac1{4\pi}\iint_{Z \in S^2_r} R^m_l(Z) \cdot R^{-m'}_{l'}(Z) \,dS
= (-1)^m \frac{r^{2l+2}}{2l+1} \delta_{ll'} \delta_{mm'}.
\end{equation}

Recall the symmetric quaternions $\HC\text{-sym}$, $\HC^{\times}\text{-sym}$
defined by \eqref{H-sym}-\eqref{H-sym-x}.

\begin{prop}  \label{basis-prop}
The functions
\[
N(Z)^k \cdot R^m_l(Z), \qquad
k,l,m \in \BB Z, \quad k,l\ge 0, \quad -l \le m \le l,
\]
form a basis of $\BB C [z_{11}, z_{12}, z_{22}]$
-- polynomials on $\HC \text{-sym}$.

And the functions
\[
N(Z)^k \cdot R^m_l(Z), \qquad
k,l,m \in \BB Z, \quad l\ge 0, \quad -l \le m \le l,
\]
form a basis of $\BB C [z_{11}, z_{12}, z_{22}, N(Z)^{-1}]$
-- polynomials on $\HC^{\times} \text{-sym}$.
\end{prop}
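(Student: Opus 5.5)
This is the classical decomposition of polynomials in three variables into powers of the quadratic form times harmonics, transported to the present coordinates. On $\HC\text{-sym}$ we have $Z=\bigl(\begin{smallmatrix} z_{11} & z_{12} \\ z_{12} & z_{22}\end{smallmatrix}\bigr)$, so $N(Z)=z_{11}z_{22}-z_{12}^2$. In the real coordinates this is $N(Z)=|z|^2+t^2=x^2+y^2+t^2$, a nondegenerate quadratic form in three variables. The Laplacian $\Delta_3=4\partial_{11}\partial_{22}+\partial_t^2$ is the dual operator to $N(Z)$, up to a constant. The plan is to prove the statement degree by degree, using the Fischer decomposition.

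\textbf{Step 1 (harmonics).} Let $\mathcal P_d$ denote the homogeneous polynomials of degree $d$ in $z_{11},z_{12},z_{22}$; its dimension is $\binom{d+2}{2}$. Let $\mathcal H_d$ be its harmonic part. By the preceding discussion, each $R^m_l$ is a homogeneous harmonic polynomial of degree $l$. The $2l+1$ functions $R^m_l$, $-l\le m\le l$, have distinct $e^{im\phi}$ dependence, i.e.\ distinct weights under the rotation $z\mapsto e^{i\alpha}z$. Each is nonzero, since $P^m_l\ne0$. Hence they are linearly independent, which gives $\dim\mathcal H_l\ge 2l+1$.

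\textbf{Step 2 (Fischer decomposition).} We show $\mathcal P_d=\mathcal H_d\oplus N(Z)\,\mathcal P_{d-2}$. Take the Fischer inner product $\langle p,q\rangle=p(\partial)\bar q$ with respect to orthonormal real coordinates $x,y,t$. Under it, multiplication by $x^2+y^2+t^2$ is adjoint to $\Delta_3$. Therefore $\mathcal H_d$ is exactly the orthogonal complement of $N\mathcal P_{d-2}$. Multiplication by $N$ is injective, so
\[
\dim\mathcal H_d=\binom{d+2}2-\binom d2=2d+1 .
\]
It follows that the $R^m_d$ form a basis of $\mathcal H_d$.

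\textbf{Step 3 (the polynomial ring).} Iterating Step 2 gives
\[
\mathcal P_d=\bigoplus_{k\ge0,\ 2k+l=d} N^k\mathcal H_l .
\]
So the functions $N^kR^m_l$ with $k,l\ge0$ form a basis of $\BB C[z_{11},z_{12},z_{22}]$. Alternatively, one can verify linear independence via the orthogonality relation \eqref{R-orthogonality} on spheres combined with homogeneity, and obtain spanning by the dimension count above.

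\textbf{Step 4 (Laurent polynomials).} Every element of $\BB C[z_{11},z_{12},z_{22},N^{-1}]$ has the form $N^{-K}p$ with $p$ a polynomial. Hence the functions $N^kR^m_l$ with $k\in\BB Z$ span. For independence, suppose a finite relation $\sum c_{klm}N^kR^m_l=0$ holds. Multiply by a large power $N^K$ to obtain a relation among the polynomial basis elements of Step 3. All its coefficients vanish, so all $c_{klm}=0$.

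\textbf{Main obstacle.} The only nontrivial point is Step 2, the adjointness and the resulting dimension count. Over $\BB C$ we must apply the Fischer product in the real coordinates $x,y,t$, which are complex linear combinations of $z_{11},z_{22}$, rather than in $z_{11},z_{12},z_{22}$ directly. Everything else is bookkeeping.
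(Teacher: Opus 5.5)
Your proof is correct. The paper states this proposition without proof, treating it as the classical decomposition of polynomials in the variables $x,y,t$ into powers of $N(Z)=x^2+y^2+t^2$ times solid harmonics. Your argument is exactly the standard justification that the paper leaves implicit: linear independence of the $R^m_l$ from their distinct weights under $z\mapsto e^{i\alpha}z$, the Fischer decomposition taken in the real coordinates $x,y,t$ (giving dimension $2d+1$ for the degree-$d$ harmonics), and clearing denominators to handle $N(Z)^{-1}$.
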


We express the products $t \cdot R^m_l(Z)$, $z_{11} \cdot R^m_l(Z)$ and
$z_{22} \cdot R^m_l(Z)$ in terms of these basis functions.
Let $s=t/r$. Identity 8.731(2) in \cite{GR}
\[
(l-m+1) P^m_{l+1}(s) = (2l+1) sP^m_l(s) - (l+m) P^m_{l-1}(s)
\]
implies that
\begin{equation}  \label{tR}
t \cdot R^m_l(Z) = \frac{l-m+1}{2l+1} R^m_{l+1}(Z)
+ \frac{l+m}{2l+1} N(Z) \cdot R^m_{l-1}(Z).
\end{equation}
Identity 8.734(2) in \cite{GR}
\[
\sqrt{1-s^2} P^m_l(s) =
-\frac1{2l+1} \bigl( P^{m+1}_{l+1}(s) - P^{m+1}_{l-1}(s) \bigr)
\]
implies that
\begin{equation}  \label{z_{11}R}
z_{11} \cdot R^m_l(Z) =
-\frac1{2l+1} R^{m+1}_{l+1}(Z) + \frac1{2l+1} N(Z) \cdot R^{m+1}_{l-1}(Z).
\end{equation}
Identity 8.735(5) in \cite{GR}
\[
\sqrt{1-s^2} P^m_l(s) = \frac1{2l+1} \bigl( (l-m+1)(l-m+2)P^{m-1}_{l+1}(s)
- (l+m-1)(l+m)P^{m-1}_{l-1}(s) \bigr)
\]
implies that
\begin{equation}  \label{z_{22}R}
z_{22} \cdot R^m_l(Z) = \frac{(l-m+1)(l-m+2)}{2l+1} R^{m-1}_{l+1}(Z)
- \frac{(l+m)(l+m-1)}{2l+1} N(Z) \cdot R^{m-1}_{l-1}(Z).
\end{equation}


Next, we express the derivatives $\frac{\partial}{\partial t} R^m_l(Z)$,
$\frac{\partial}{\partial z_{11}} R^m_l(Z)$ and
$\frac{\partial}{\partial z_{22}} R^m_l(Z)$ in terms of these basis functions.
Using identity 8.731(1) in \cite{GR}
\[
(s^2-1) \frac{d}{ds} P^m_l(s) = ls P^m_l(s) - (l+m) P^m_{l-1}(s)
\]
and writing $\cos\theta = t/r$ imply that
\begin{equation}  \label{dtR}
\frac{\partial}{\partial t} R^m_l(Z) = (l+m) R^m_{l-1}(Z).
\end{equation}
Identities 8.735(2) and 8.733(1) in \cite{GR}
\begin{align*}
\sqrt{1-s^2} P^m_l(s) &= (l-m+1) s P^{m-1}_l(s) - (l+m-1) P^{m-1}_{l-1}(s),  \\
(1-s^2) \frac{d}{ds} P^m_l(s) &=
(l+m)(l-m+1) \sqrt{1-s^2} P^{m-1}_l(s) + msP^m_l(s)
\end{align*}
together imply
\begin{multline}  \label{dz_{11}R}
2 \frac{\partial}{\partial z_{11}} R^m_l(Z) = 
\Bigl(\frac{\partial}{\partial x}-i\frac{\partial}{\partial y} \Bigr) R^m_l(Z)\\
= r^{l-1} \cdot e^{i(m-1)\phi} \Bigl(
l \sqrt{1-s^2} P^m_l(s) + \frac{m}{\sqrt{1-s^2}} P^m_l(s)
- s \sqrt{1-s^2} (P^m_l)'(s) \Bigr)  \\
= r^{l-1} \cdot e^{i(m-1)\phi} \Bigl(
l \sqrt{1-s^2} P^m_l(s) + \frac{m}{\sqrt{1-s^2}} P^m_l(s)  \\
\hspace{1.5in} - \frac{s}{\sqrt{1-s^2}}
\bigl( (l+m)(l-m+1) \sqrt{1-s^2} P^{m-1}_l(s) + msP^m_l(s) \bigr) \Bigr)  \\
= (l+m) r^{l-1} \cdot e^{i(m-1)\phi} \bigl(
\sqrt{1-s^2} P^m_l(s) - (l-m+1)sP^{m-1}_l(s)  \bigr)  \\
= -(l+m)(l+m-1) r^{l-1} \cdot e^{i(m-1)\phi} \cdot P^{m-1}_{l-1}(t/r)
= -(l+m)(l+m-1) R^{m-1}_{l-1}(Z).
\end{multline}
Identities 8.735(3) and 8.733(1) in \cite{GR}
\begin{align*}
P^m_{l-1}(s) - s P^m_l(s) &= (l-m+1) \sqrt{1-s^2} P^{m-1}_l(s),  \\
(1-s^2) \frac{d}{ds} P^m_l(s) &=
-\sqrt{1-s^2} P^{m+1}_l(s) - msP^m_l(s)
\end{align*}
together imply
\begin{multline}  \label{dz_{22}R}
2 \frac{\partial}{\partial z_{22}} R^m_l(Z) =
\Bigl(\frac{\partial}{\partial x}+i\frac{\partial}{\partial y} \Bigr) R^m_l(Z)\\
= r^{l-1} \cdot e^{i(m+1)\phi} \Bigl(
l \sqrt{1-s^2} P^m_l(s) - \frac{m}{\sqrt{1-s^2}} P^m_l(s)
- s \sqrt{1-s^2} (P^m_l)'(s) \Bigr)  \\
= r^{l-1} \cdot e^{i(m+1)\phi} \Bigl(
l \sqrt{1-s^2} P^m_l(s) - \frac{m}{\sqrt{1-s^2}} P^m_l(s)  \\
\hspace{2in} + \frac{s}{\sqrt{1-s^2}}
\bigl( \sqrt{1-s^2} P^{m+1}_l(s) + msP^m_l(s) \bigr) \Bigr)  \\
= r^{l-1} \cdot e^{i(m+1)\phi} \bigl(
(l-m) \sqrt{1-s^2} P^m_l(s) + sP^{m+1}_l(s)  \bigr)  \\
= r^{l-1} \cdot e^{i(m+1)\phi} \cdot P^{m+1}_{l-1}(t/r)
= R^{m+1}_{l-1}(Z).
\end{multline}
Combining the formulas \eqref{tR}-\eqref{dz_{22}R}, we obtain expressions for
the partial derivatives of the basis functions in Proposition \ref{basis-prop}:
\begin{equation}  \label{dtNR}
\tfrac{\partial}{\partial t} \bigl( N(Z)^k \cdot R^m_l(Z) \bigr)
= \tfrac{(2k+2l+1)(l+m)}{2l+1} N(Z)^k \cdot R^m_{l-1}(Z)
+ \tfrac{2k(l-m+1)}{2l+1} N(Z)^{k-1} \cdot R^m_{l+1}(Z),
\end{equation}
\begin{multline}  \label{dz_{11}NR}
2 \tfrac{\partial}{\partial z_{11}} \bigl( N(Z)^k \cdot R^m_l(Z) \bigr)
= - \tfrac{(2k+2l+1)(l+m)(l+m-1)}{2l+1} N(Z)^k \cdot R^{m-1}_{l-1}(Z)  \\
+ \tfrac{2k(l-m+1)(l-m+2)}{2l+1} N(Z)^{k-1} \cdot R^{m-1}_{l+1}(Z),
\end{multline}
\begin{equation}  \label{dz_{22}NR}
2 \tfrac{\partial}{\partial z_{22}} \bigl( N(Z)^k \cdot R^m_l(Z) \bigr)
= \tfrac{2k+2l+1}{2l+1} N(Z)^k \cdot R^{m+1}_{l-1}(Z)
- \tfrac{2k}{2l+1} N(Z)^{k-1} \cdot R^{m+1}_{l+1}(Z).
\end{equation}


\subsection{Representation $(\rho_M,M)$}  \label{rho_M-subsection}

In this subsection we construct a particular $\mathfrak{sp}(4,\BB C)$-module
$(\rho_M,M)$ that will be crucial to our construction of
$\mathfrak{sp}(4,\BB C)$-invariant trilinear forms in Section \ref{7}.

Let $\alpha, \beta \in \BB Z$, and consider actions of $Sp(4,\BB C)$
on the space of $\BB C$-valued functions on $\HC^{\times} \text{-sym}$:
\[
\rho_{\alpha,\beta}(h): \: f(Z) \: \mapsto \:
\bigl( \rho_{\alpha,\beta}(h)f \bigr)(Z) =
\frac{f \bigl( (aZ+b)(cZ+d)^{-1} \bigr)}
{N(cZ+d)^{\alpha} \cdot N(a'-Zc')^{\beta}},
\]
$h = \bigl( \begin{smallmatrix} a' & b' \\ c' & d' \end{smallmatrix} \bigr)
  \in Sp(4,\BB R) \subset GL(2,\HC)$ and
$h^{-1} = \bigl( \begin{smallmatrix} a & b \\ c & d \end{smallmatrix} \bigr)$.
Differentiating, we obtain actions $\rho_{\alpha,\beta}$ of the Lie algebra
$\mathfrak{sp}(4,\BB C)$.
Since we are dealing with symmetric matrices, we redefine
\[
\partial = \begin{pmatrix}
  \frac{\partial}{\partial z_{11}} &  -\frac{i}2 \frac{\partial}{\partial t} \\
  -\frac{i}2 \frac{\partial}{\partial t} & \frac{\partial}{\partial z_{22}}
\end{pmatrix}.
\]

\begin{lem}  \label{rho-alpha-beta-action}
The Lie algebra actions $\rho_{\alpha,\beta}$
of $\mathfrak{sp}(4,\BB C)$ are given by
\begin{align*}
\rho_{\alpha,\beta}
\bigl( \begin{smallmatrix} A & 0 \\ 0 & -A^T \end{smallmatrix} \bigr) &:
f(Z) \mapsto -\tr\bigl(AZ \partial + ZA^T\partial + (\alpha+\beta) A\bigr) f, \\
\rho_{\alpha,\beta}
\bigl( \begin{smallmatrix} 0 & B \\ 0 & 0 \end{smallmatrix} \bigr) &:
f(Z) \mapsto - \tr (B \partial) f,  \\
\rho_{\alpha,\beta}
\bigl( \begin{smallmatrix} 0 & 0 \\ C & 0 \end{smallmatrix} \bigr) &:
f(Z) \mapsto \tr \bigl(ZCZ \partial + (\alpha+\beta) CZ\bigr) f.
\end{align*}
In particular, $\rho_{\alpha,\beta}$ and $\rho_{\alpha',\beta'}$ describe the same
actions of $\mathfrak{sp}(4,\BB C)$ and $Sp(4,\BB R)$ if and only if
$\alpha+\beta=\alpha'+\beta'$.
\end{lem}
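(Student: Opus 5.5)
We prove Lemma \ref{rho-alpha-beta-action} by differentiating the group action along one-parameter subgroups. As in the derivation of Lemmas \ref{pi-Lie_alg-action} and \ref{dr-action-lem}, it suffices to treat the three types of generators separately: $\bigl(\begin{smallmatrix} A & 0 \\ 0 & -A^T \end{smallmatrix}\bigr)$, $\bigl(\begin{smallmatrix} 0 & B \\ 0 & 0 \end{smallmatrix}\bigr)$ and $\bigl(\begin{smallmatrix} 0 & 0 \\ C & 0 \end{smallmatrix}\bigr)$, with $B$, $C$ symmetric. For each $X$ of this form we take $h=\exp(sX)$. Then $h^{-1}=1-sX+O(s^2)$, so $a=1-sA$, $b=-sB$, $c=-sC$, $d=1+sA^T$. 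Likewise $h=1+sX+O(s^2)$ gives $a'=1+sA$, $b'=sB$, $c'=sC$, $d'=1-sA^T$. Substituting into the definition of $\rho_{\alpha,\beta}(h)$ and differentiating at $s=0$ yields the formulas.

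The first ingredient is the variation of the argument, $(aZ+b)(cZ+d)^{-1}=Z+s\,\delta Z+O(s^2)$, where
\[
\delta Z=-AZ-B+ZCZ-ZA^T .
\]
Here $\delta Z$ is symmetric whenever $Z$ is, because $B$ and $C$ are symmetric. So $f$ only needs to be defined on $\HC\text{-sym}$.

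The chain rule in the coordinates $z_{11}$, $z_{22}$, $t$ is where the redefined operator $\partial$ enters. For a symmetric variation $\delta Z$ with entries $\delta z_{11}$, $\delta z_{22}$ and off-diagonal entry $i\,\delta t$, we have
\[
\tr(\delta Z\,\partial)f = \delta z_{11}\,\partial_{z_{11}}f+\delta z_{22}\,\partial_{z_{22}}f+2\cdot i\,\delta t\cdot(-\tfrac{i}{2})\partial_t f .
\]
The right-hand side equals $df(\delta Z)$. This is the step where the normalization $-\frac{i}{2}\frac{\partial}{\partial t}$ must be checked; it is the main point needing care. Once it is verified, the vector field part of the action is $\tr(\delta Z\,\partial)f$.

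Expanding $\delta Z$ gives three terms:
\begin{itemize}
\item $-\tr(AZ\partial)-\tr(ZA^T\partial)$ from the first generator;
\item $-\tr(B\partial)$ from the second;
\item $\tr(ZCZ\partial)$ from the third.
\end{itemize}

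The multiplier terms come from $N(1+sM)^{-\gamma}=1-\gamma s\,\tr M+O(s^2)$, using $N=\det$ on $2\times 2$ matrices.

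For the factor $N(cZ+d)^{-\alpha}$ we have $cZ+d=1+s(A^T-CZ)$. Its contribution is $-\alpha\,\tr A+\alpha\,\tr(CZ)$, using $\tr A^T=\tr A$.

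For the factor $N(a'-Zc')^{-\beta}$ we have $a'-Zc'=1+s(A-ZC)$. Its contribution is $-\beta\,\tr A+\beta\,\tr(ZC)$.

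Since $\tr(ZC)=\tr(CZ)$, the two multipliers together give $-(\alpha+\beta)\tr A+(\alpha+\beta)\tr(CZ)$, and $B$ contributes nothing. Adding these to the vector field terms gives the three stated formulas.

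The final assertion follows from the formulas. The Lie algebra action depends on $(\alpha,\beta)$ only through $\alpha+\beta$. Conversely, the $A$-formula with $A=1$ (or the $C$-formula) distinguishes different values of $\alpha+\beta$. Finally, since $Sp(4,\BB R)$ is connected, actions that agree infinitesimally (on functions where the action is defined) agree on the group.
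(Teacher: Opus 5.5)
Your proposal is correct and is the routine computation the paper leaves implicit: the lemma is stated without proof, by analogy with Lemma~\ref{pi-Lie_alg-action}, and you differentiate along $\exp(sX)$ with the symmetric $\partial$ normalized exactly as you checked. For the group-level part there is a more direct route than your connectedness argument: $h^{-1}=J^{-1}h^TJ$ with $J=\bigl(\begin{smallmatrix} 0 & 1 \\ -1 & 0 \end{smallmatrix}\bigr)$ gives $c=-c'^T$ and $d=a'^T$, hence $cZ+d=(a'-Zc')^T$ and $N(cZ+d)=N(a'-Zc')$ for symmetric $Z$, so $\rho_{\alpha,\beta}(h)$ visibly depends only on $\alpha+\beta$ (the same identity underlies Lemma~\ref{Jacobian_lemma}).
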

  
\begin{cor}
The spaces
\[
\BB C [z_{11}, z_{12}, z_{22}, N(Z)^{-1}]
\qquad \text{and} \qquad
N(Z)^{\frac12} \cdot \BB C [z_{11}, z_{12}, z_{22}, N(Z)^{-1}]
\]
are invariant under the $\rho_{\alpha,\beta}$ actions of $\mathfrak{sp}(4,\BB C)$.
\end{cor}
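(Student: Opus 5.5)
We need to show that the two spaces $\BB C[z_{11},z_{12},z_{22},N(Z)^{-1}]$ and $N(Z)^{\frac12}\cdot\BB C[z_{11},z_{12},z_{22},N(Z)^{-1}]$ are invariant under $\rho_{\alpha,\beta}$. The plan is to use the explicit formulas of Lemma \ref{rho-alpha-beta-action}. Every element of $\mathfrak{sp}(4,\BB C)$ is a sum of elements of the three listed block forms, so it suffices to check each block type on each space.

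\textbf{The polynomial space.} Write $Z=\bigl(\begin{smallmatrix} z_{11} & z_{12}\\ z_{12} & z_{22}\end{smallmatrix}\bigr)$ with $z_{12}=it$, so that $\partial$ is the redefined matrix of $\frac{\partial}{\partial z_{11}}$, $\frac12\frac{\partial}{\partial z_{12}}$, $\frac{\partial}{\partial z_{22}}$. The operators $\tr(AZ\partial)$, $\tr(ZA^T\partial)$, $\tr(B\partial)$ and $\tr(ZCZ\partial)$ are first order operators whose coefficients are polynomial in the entries of $Z$, of degree at most $2$. The zeroth order terms $\tr(A)$ and $\tr(CZ)$ are polynomial as well. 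Hence it is enough to check two things: that $\BB C[z_{11},z_{12},z_{22}]$ is preserved, which is clear, and that each operator maps $N(Z)^{-k}$ back into the space.

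Since the operators are derivations plus multiplication by a polynomial, the second point reduces to showing that each vector field $D$ satisfies $D\,N(Z)=p\cdot N(Z)$ with $p$ polynomial. Then
\[
D\,N(Z)^{s}=s\,p\,N(Z)^{s}
\]
for any exponent $s$. We verify this for each block type.
\begin{itemize}
\item For $B$: $\tr(B\partial)N(Z)=\tr(BZ^{-1})N(Z)$, since $\partial\det Z=\det Z\cdot Z^{-1}$ for symmetric $Z$. The normalization of $\partial$ with the factor $\frac12$ on the off-diagonal entries is exactly what makes this identity hold. The result $\tr(BZ^{+})$ is a polynomial (the adjugate), not a multiple $pN$, but that is harmless: $D\,N^{s}=s\,N^{s-1}\tr(BZ^{+})$, which still lies in the space.
\item For $A$: $\tr(AZ\partial+ZA^T\partial)N=2\tr(A)N$. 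This follows by differentiating $\det(e^{\epsilon A}Ze^{\epsilon A^T})$.
\item For $C$: $\tr(ZCZ\partial)N=\tr(ZCZZ^{-1})N=\tr(CZ)N$, again polynomial times $N$.
\end{itemize}
Combined with the Leibniz rule, these identities give invariance of the first space.

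\textbf{The half-integer twist.} For the second space the same identities apply with $s=\frac12$. In each case $D\bigl(N^{\frac12}g\bigr)=N^{\frac12}\bigl(Dg+\tfrac12 q\,N^{-1}g\bigr)$, where $q$ is one of $2\tr(A)N$, $\tr(BZ^{+})$, $\tr(CZ)N$. The bracket lies in $\BB C[z_{11},z_{12},z_{22},N^{-1}]$, so the second space is invariant too.

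The only step needing care is confirming $\partial N(Z)=Z^{+}$, i.e. the adjugate, with the paper's normalization of $\partial$ on symmetric matrices. Here $N=z_{11}z_{22}+t^2$, so this is a direct check:
\[
\frac{\partial N}{\partial z_{11}}=z_{22},\qquad -\frac i2\,\frac{\partial N}{\partial t}=-it=-z_{12},\qquad \frac{\partial N}{\partial z_{22}}=z_{11}.
\]
These are exactly the entries of the adjugate. All remaining steps are routine.
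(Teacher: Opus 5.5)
Your argument is correct and is exactly what the paper intends: the corollary is stated as an immediate consequence of the explicit formulas in Lemma~\ref{rho-alpha-beta-action}, and your check that $\partial N(Z)=Z^+$ supplies the detail the paper leaves implicit, namely that each first-order part sends $N(Z)$ to a polynomial. One cosmetic point: the reduction you actually need, and use, is only that $D\,N(Z)$ be a polynomial, not a polynomial multiple of $N(Z)$, as your own treatment of the $B$-block shows.
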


\begin{rem}
Technically, $N(Z)^{\frac12}$ is not a well-defined function on
$\HC^{\times} \text{-sym}$. We can think of 
$N(Z)^{\frac12} \cdot \BB C [z_{11}, z_{12}, z_{22}, N(Z)^{-1}]$
as $\BB C$-valued functions on a subset of the classical quaternions $\BB H$
\[
\BB H^{\times} \text{-sym} =
\{ Z = x^0e_0+x^1e_1+x^3e_3;\: Z \ne 0,\: x^0,x^1,x^3 \in \BB R \}
\]
or a suitable open neighborhood of $\BB H^{\times} \text{-sym}$ in
$\HC^{\times} \text{-sym}$.
\end{rem}

We have an analogue of Theorems 31-32 in \cite{ATMP} (for convenience they
are restated as Theorems \ref{rho-decomposition}-\ref{rho'-decomposition} here):

\begin{thm}  \label{rho-alpha-beta-thm}
The only proper $\mathfrak{sp}(4,\BB C)$-invariant subspaces of
$\bigl(\rho_{\alpha,\beta},\BB C [z_{11}, z_{12}, z_{22}, N(Z)^{-1}] \bigr)$ are
\begin{align*}
&\BB C\text{-span of }
\bigl\{ N(Z)^k \cdot R^m_l(Z);\: k \ge 0 \bigr\}, \\
&\BB C\text{-span of }
\bigl\{ N(Z)^k \cdot R^m_l(Z);\: k \le -(l+\alpha+\beta) \bigr\}, \\
&\BB C\text{-span of }
\bigl\{ N(Z)^k \cdot R^m_l(Z);\: 0 \le k \le -(l+\alpha+\beta)\bigr\}
\qquad \text{if $\alpha+\beta \le 0$},
\end{align*}
and the sum of the first two subspaces (see Figure \ref{decomposition-fig1}).

The only proper $\mathfrak{sp}(4,\BB C)$-invariant subspaces of
$\bigl(\rho_{\alpha,\beta},
N(Z)^{\frac12} \cdot \BB C [z_{11}, z_{12}, z_{22}, N(Z)^{-1}] \bigr)$ are
\begin{align*}
&\BB C\text{-span of }
\bigl\{ N(Z)^k \cdot R^m_l(Z);\: k \ge -(l+\tfrac12) \bigr\}, \\
&\BB C\text{-span of }
\bigl\{ N(Z)^k \cdot R^m_l(Z);\: k \le \tfrac12-\alpha-\beta \bigr\}, \\
&\BB C\text{-span of } \bigl\{ N(Z)^k \cdot R^m_l(Z);\:
-(l+\tfrac12) \le k \le \tfrac12 -\alpha-\beta \bigr\}
\end{align*}
and -- when $\alpha+\beta \ge 3$ --  the sum of the first two subspaces
(see Figure \ref{decomposition-fig2}).
\end{thm}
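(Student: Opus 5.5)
The plan is to compute explicitly how the generators of $\mathfrak{sp}(4,\BB C)$ act on the basis $N(Z)^k R^m_l(Z)$ from Proposition \ref{basis-prop}, and then read off the invariant subspaces from the coefficients that vanish. By Lemma \ref{rho-alpha-beta-action}, the diagonal part $\bigl(\begin{smallmatrix} A & 0 \\ 0 & -A^T\end{smallmatrix}\bigr)$ generates the $K$-action of $\mathfrak{u}(2)$. On $\HC$-sym this acts as $Z \mapsto AZ + ZA^T$, i.e. by the symmetric square of the standard representation twisted by a determinant character. Hence each $\BB C$-span $\{N(Z)^k R^m_l(Z);\: -l\le m\le l\}$ is an irreducible $K$-type of dimension $2l+1$, and $K$-types with distinct $(k,l)$ are distinguished by the central character together with $l$: the center acts by the scalar $-(2k+l+\alpha+\beta)\cdot\tr A/2$ up to normalization. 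Consequently every invariant subspace is a span of full $K$-types indexed by $(k,l)$. The same applies to the half-integer shifted space $N(Z)^{1/2}\cdot\BB C[\dots]$, where $k\in\BB Z+\frac12$.

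Next I will compute $\rho_{\alpha,\beta}$ of $\bigl(\begin{smallmatrix}0&B\\0&0\end{smallmatrix}\bigr)$, which is $-\tr(B\partial)$. Formulas \eqref{dtNR}, \eqref{dz_{11}NR} and \eqref{dz_{22}NR} already express it as a combination of two moves, $(k,l)\to(k,l-1)$ with coefficient proportional to $(2k+2l+1)$, and $(k,l)\to(k-1,l+1)$ with coefficient proportional to $2k$. For $\bigl(\begin{smallmatrix}0&0\\C&0\end{smallmatrix}\bigr)$, acting by $\tr(ZCZ\partial+(\alpha+\beta)CZ)$, I will use Euler-type identities together with the multiplication formulas \eqref{tR}, \eqref{z_{11}R} and \eqref{z_{22}R}. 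This produces moves $(k,l)\to(k,l+1)$ with coefficient proportional to $(2k+l+\alpha+\beta)$ or a similar linear factor, and $(k,l)\to(k+1,l-1)$ with coefficient proportional to $(2k+2l+2\alpha+2\beta-1)$ or similar.

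The key step, and the main obstacle, is getting these linear factors exactly right. The loci where they vanish are the boundary lines $k=0$ (from the factor $2k$), $k=-(l+\alpha+\beta)$, and $k=-(l+\frac12)$ (from $2k+2l+1$), together with $k=\frac12-\alpha-\beta$ in the half-integer case. I would first pin the coefficients down on highest weight vectors $N^kR^l_l$, since by $K$-irreducibility the nonvanishing of the transition between two $K$-types is detected on a single vector. As a sanity check, I would compare against Theorems 31--32 of \cite{ATMP}, which is the $\mathfrak{gl}(2,\HC)$ analogue.

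Finally, with the transition graph on the lattice of $(k,l)$ determined, invariant subspaces correspond to subsets closed under all nonzero arrows. I will check that the arrows are nonzero everywhere except across the boundary lines, so that each region bounded by those lines is strongly connected. This gives the listed subspaces: the region $k\ge0$, the region $k\le-(l+\alpha+\beta)$, their intersection when it is nonempty (which requires $\alpha+\beta\le0$), and their sum. The analogous reading applies in the half-integer case, where the sum is invariant only when $\alpha+\beta\ge3$, because for smaller values the regions overlap or a crossing arrow is nonzero.
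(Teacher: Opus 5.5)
Your route is the same as the paper's. You split the module into the multiplicity-free $K$-types spanned by $N(Z)^kR^m_l(Z)$ for fixed $(k,l)$. You read the $\bigl(\begin{smallmatrix}0&B\\0&0\end{smallmatrix}\bigr)$-arrows off \eqref{dtNR}--\eqref{dz_{22}NR}, and your factors $2k+2l+1$ and $2k$ there are correct. You then compute the $\bigl(\begin{smallmatrix}0&0\\C&0\end{smallmatrix}\bigr)$-arrows and take the regions closed under all nonvanishing arrows.

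\textbf{The gap: the $C$-coefficients.} This decisive step is not carried out, and the factors you tentatively write down are wrong. With $2k+l+\alpha+\beta$ and $2k+2l+2\alpha+2\beta-1$, the walls would sit at $k=-\tfrac12(l+\alpha+\beta)$ and $k=\tfrac12-l-\alpha-\beta$. Those are not the lines you then list.

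The correct factors are:
\begin{itemize}
\item $k+l+\alpha+\beta$ for the arrow $(k,l)\to(k,l+1)$;
\item $2k+2\alpha+2\beta-1$ for the arrow $(k,l)\to(k+1,l-1)$.
\end{itemize}

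You can get them without further Legendre identities, using the inversion
$$\rho_{\alpha,\beta}\bigl(\begin{smallmatrix}0&1\\1&0\end{smallmatrix}\bigr)\colon f(Z)\mapsto N(Z)^{-(\alpha+\beta)}f(Z^{-1}).$$
This map is an involution, normalizes $\mathfrak{sp}(4,\BB C)$, and exchanges the $B$- and $C$-blocks. By $R^m_l(Z^{-1})=(-1)^l\tfrac{(l+m)!}{(l-m)!}N(Z)^{-l}R^{-m}_l(Z)$, it sends the $K$-type $(k,l)$ to $(-(k+l+\alpha+\beta),l)$. Transporting the $B$-arrow $(k,l)\to(k,l-1)$ (factor $2k+2l+1$) and the $B$-arrow $(k,l)\to(k-1,l+1)$ (factor $2k$) gives exactly the two $C$-arrows above, with factors $-(2k+2\alpha+2\beta-1)$ and $-2(k+l+\alpha+\beta)$.

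Also keep the parity split straight:
\begin{itemize}
\item For $k\in\BB Z$, only the factors $k$ and $k+l+\alpha+\beta$ can vanish, giving walls $k=0$ and $k=-(l+\alpha+\beta)$.
\item For $k\in\tfrac12+\BB Z$, only $2k+2l+1$ and $2k+2\alpha+2\beta-1$ can vanish, giving walls $k=-(l+\tfrac12)$ and $k=\tfrac12-\alpha-\beta$.
\end{itemize}
In both cases $l=0$ kills the downward arrows trivially.

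\textbf{The condition $\alpha+\beta\ge3$.} Your explanation of this condition is also wrong. A sum of invariant subspaces is always invariant. Moreover, in the half-integer case the two regions always overlap, since the third listed subspace is nonempty for every $\alpha+\beta$. The condition is about properness. The complement of the union is $\{\tfrac12-\alpha-\beta<k<-(l+\tfrac12)\}$, which is nonempty (for instance $l=0$, $k=-\tfrac32$) exactly when $\alpha+\beta\ge3$. For $\alpha+\beta\le2$ the sum is the whole module. In the integer case the analogous complement $\{-(l+\alpha+\beta)<k<0\}$ is nonempty for large $l$, so there the sum is always proper.

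With these corrections, your last step goes through as in the paper: check that each region between the walls is linked by nonvanishing arrows, and read off the invariant subspaces.
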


\begin{figure}
\begin{center}
\setlength{\unitlength}{1mm}
\begin{picture}(120,60)
\multiput(10,7)(10,0){11}{\circle*{1}}
\multiput(10,17)(10,0){11}{\circle*{1}}
\multiput(10,27)(10,0){11}{\circle*{1}}
\multiput(10,37)(10,0){11}{\circle*{1}}
\multiput(10,47)(10,0){11}{\circle*{1}}

\thicklines
\put(60,0){\vector(0,1){60}}
\put(0,7){\vector(1,0){120}}

\thinlines
\put(58,7){\line(0,1){10}}
\put(60,5){\line(1,0){10}}
\put(71.4,8.4){\line(-1,1){8.6}}
\qbezier(70,5)(74.8,5)(71.4,8.4)
\qbezier(58,7)(58,5)(60,5)
\qbezier(58,17)(58,21.4)(62.8,17)

\put(56.5,7){\line(0,1){45}}
\put(60,3.5){\line(1,0){55}}
\qbezier(56.5,7)(56.5,3.5)(60,3.5)

\put(15,3){\line(1,0){55}}
\put(71.8,9.8){\line(-1,1){42.2}}
\qbezier(70,3)(78.6,3)(71.8,9.8)

\put(62,56){$l$}
\put(117,9){$k$}
\end{picture}
\end{center}
\caption{Decomposition of
$\bigl(\rho_{\alpha,\beta},\BB C [z_{11}, z_{12}, z_{22}, N(Z)^{-1}] \bigr)$
into irred. components for $\alpha+\beta=-1$.}
\label{decomposition-fig1}
\end{figure}

\begin{figure}
\begin{center}
\setlength{\unitlength}{1mm}
\begin{picture}(120,60)
\multiput(10,7)(10,0){11}{\circle*{1}}
\multiput(10,17)(10,0){11}{\circle*{1}}
\multiput(10,27)(10,0){11}{\circle*{1}}
\multiput(10,37)(10,0){11}{\circle*{1}}
\multiput(10,47)(10,0){11}{\circle*{1}}

\thicklines
\put(75,0){\vector(0,1){60}}
\put(0,7){\vector(1,0){120}}

\thinlines
\put(52,27){\line(0,1){25}}
\put(48.6,25.6){\line(-1,1){26.4}}
\qbezier(52,27)(52,22.2)(48.6,25.6)

\put(73,4){\line(1,0){42}}
\put(66,7){\line(-1,1){45}}
\qbezier(73,4)(69,4)(66,7)

\put(5,4){\line(1,0){45}}
\put(53,7){\line(0,1){45}}
\qbezier(50,4)(53,4)(53,7)

\put(77,54){$l$}
\put(117,9){$k$}
\put(57,1){$-\tfrac32$}
\end{picture}
\end{center}
\caption{Decomp. of $\bigl(\rho_{\alpha,\beta},
  N(Z)^{\frac12} \cdot \BB C [z_{11}, z_{12}, z_{22}, N(Z)^{-1}] \bigr)$
  into irred. components for $\alpha+\beta=3$.}
\label{decomposition-fig2}
\end{figure}

\begin{proof}
The proof is similar to that of Theorem 7 in \cite{desitter}.
Note that the basis elements $N(Z)^k \cdot R^m_l(Z)$ have $k \in \BB Z$
in the first case and $k \in \tfrac12 + \BB Z$ in the second case.
For $k$ and $l$ fixed, these functions span an irreducible representation
$V_l$ of $\mathfrak{u}(2)$.
(Recall that $\mathfrak{u}(2)$ is realized as a subalgebra of
$\mathfrak{sp}(4,\BB C)$ as in \eqref{u(2)}.)

To determine the effect of matrices of the kind
$\bigl( \begin{smallmatrix} 0 & B \\ 0 & 0 \end{smallmatrix} \bigr)
\in \mathfrak{sp}(4,\BB C)$ with $B \in \HC$, $B^T=B$,
we use Lemma \ref{rho-alpha-beta-action} describing their action together with
\eqref{dtNR}-\eqref{dz_{22}NR} and compute
\begin{multline}  \label{B-action}
\partial \bigl(N(Z)^k \cdot R^m_l(Z) \bigr) =
  \tfrac{2k+2l+1}{2(2l+1)} N(Z)^k
  \begin{pmatrix} -(l+m)(l+m-1)R^{m-1}_{l-1}(Z) & -i(l+m) R^m_{l-1}(Z) \\
    -i(l+m) R^m_{l-1}(Z) & R^{m+1}_{l-1}(Z) \end{pmatrix}  \\
  +\tfrac{k}{2l+1} N(Z)^{k-1}
  \begin{pmatrix} (l-m+1)(l-m+2)R^{m-1}_{l+1}(Z) & -i(l-m+1) R^m_{l+1}(Z) \\
    -i(l-m+1) R^m_{l+1}(Z) & -R^{m+1}_{l+1}(Z) \end{pmatrix}.
\end{multline}
Next we determine the effect of matrices of the kind
$\bigl( \begin{smallmatrix} 0 & 0 \\ C & 0 \end{smallmatrix} \bigr)
\in \mathfrak{sp}(4,\BB C)$ with $C \in \HC$, $C^T=C$.
Again, we use Lemma \ref{rho-alpha-beta-action} together with
\eqref{tR}-\eqref{z_{22}R}, \eqref{dtNR}-\eqref{dz_{22}NR} and compute
\begin{multline*}  
  Z \partial \bigl(N(Z)^k \cdot R^m_l(Z) \bigr) Z
  + (\alpha+\beta) N(Z)^k \cdot Z \cdot R^m_l(Z) \\
  = \tfrac{k+l+\alpha+\beta}{2l+1} N(Z)^k
  \begin{pmatrix} -R^{m+1}_{l+1}(Z) & i(l-m+1) R^m_{l+1}(Z) \\
    i(l-m+1) R^m_{l+1}(Z) & (l-m+1)(l-m+2)R^{m-1}_{l+1}(Z) \end{pmatrix}  \\
  +\tfrac{2k+2\alpha+2\beta-1}{2(2l+1)} N(Z)^{k+1}
  \begin{pmatrix} R^{m+1}_{l-1}(Z) & i(l+m) R^m_{l-1}(Z) \\
    i(l+m) R^m_{l-1}(Z) & -(l+m)(l+m-1)R^{m-1}_{l-1}(Z) \end{pmatrix}.
\end{multline*}

The actions of
$\bigl( \begin{smallmatrix} A & 0 \\ 0 & -A^T \end{smallmatrix} \bigr)$,
$\bigl( \begin{smallmatrix} 0 & B \\ 0 & 0 \end{smallmatrix} \bigr)$ and
$\bigl( \begin{smallmatrix} 0 & 0 \\ C & 0 \end{smallmatrix} \bigr)$
are illustrated in Figure \ref{actions}. In the diagram describing
$\rho_{\alpha,\beta}
\bigl( \begin{smallmatrix} 0 & B \\ 0 & 0 \end{smallmatrix} \bigr)$,
the vertical arrow disappears if $l=0$ or $2k+2l+1=0$,
and the diagonal arrow disappears if $k=0$.
Similarly, in the diagram describing
$\rho_{\alpha,\beta}
\bigl( \begin{smallmatrix} 0 & 0 \\ C & 0 \end{smallmatrix} \bigr)$,
the vertical arrow disappears if $k+l+\alpha+\beta=0$, and the
diagonal arrow disappears if $2k+2\alpha+2\beta-1=0$ or $l=0$.
This proves the result.
\end{proof}

\begin{figure}
\begin{center}
\begin{subfigure}[b]{0.31\textwidth}
\centering
\includegraphics[scale=0.2]{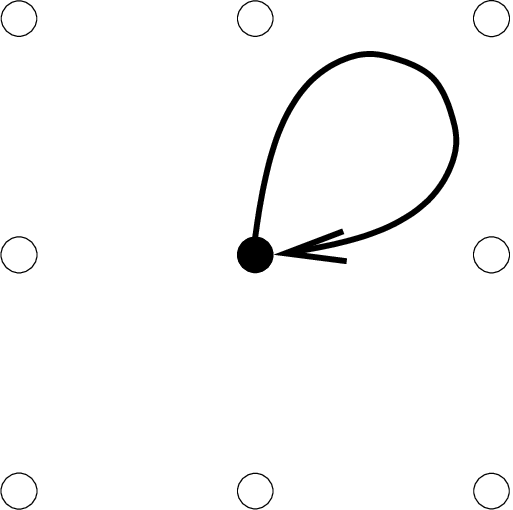}
\caption{Action of
  $\rho_{\alpha,\beta}
  \bigl(\begin{smallmatrix} A & 0 \\ 0 & -A^T \end{smallmatrix}\bigr)$}
\end{subfigure}
\quad
\begin{subfigure}[b]{0.31\textwidth}
\centering
\includegraphics[scale=0.2]{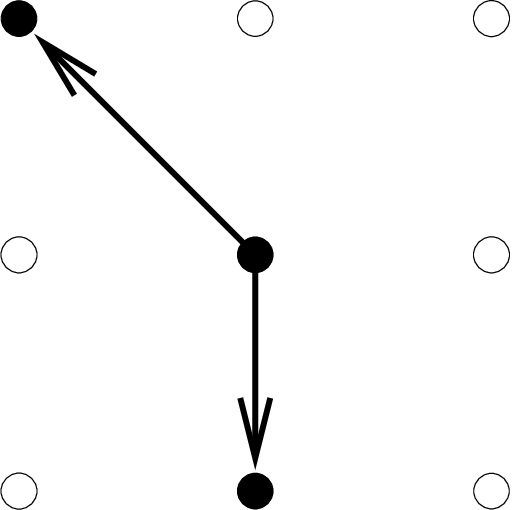}
\caption{Action of
  $\rho_{\alpha,\beta}
  \bigl(\begin{smallmatrix} 0 & B \\ 0 & 0 \end{smallmatrix}\bigr)$}
\end{subfigure}
\quad
\begin{subfigure}[b]{0.31\textwidth}
\centering
\includegraphics[scale=0.2]{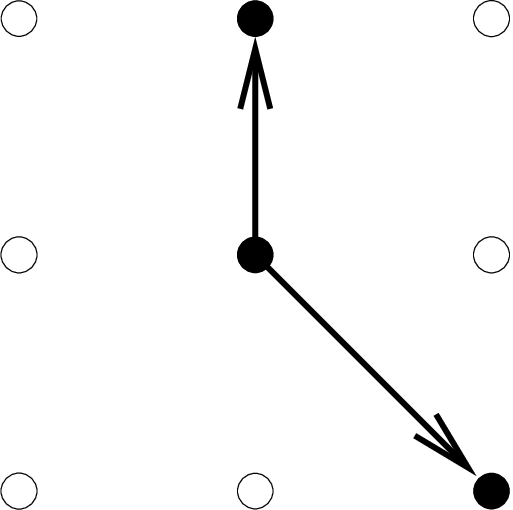}
\caption{Action of
  $\rho_{\alpha,\beta}
  \bigl(\begin{smallmatrix} 0 & 0 \\ C & 0 \end{smallmatrix}\bigr)$}
\end{subfigure}
\end{center}
\caption{}
\label{actions}
\end{figure}

Changing the value $\alpha+\beta$ has the effect of moving the sloped
``wall'' $k=-(l+\alpha+\beta)$ in Figure \ref{decomposition-fig1} and the
vertical ``wall'' $k=\tfrac12-\alpha-\beta$ in Figure \ref{decomposition-fig2}.
Let
\[
(\rho_M, M) = \bigl(\rho_{\alpha,\beta},
N(Z)^{\frac12} \cdot \BB C [z_{11}, z_{12}, z_{22}, N(Z)^{-1}] \bigr)
\qquad \text{with $\alpha+\beta=3$}.
\]
This representation is distinguished by the property that it is
the only $\mathfrak{sp}(4,\BB C)$-module among those listed in
Theorem \ref{rho-alpha-beta-thm} that has the trivial one-dimensional
representation appearing as a quotient (Figure \ref{decomposition-fig2}).
Thus, we have an $\mathfrak{sp}(4,\BB C)$-invariant map
\begin{equation}  \label{M-integral}
(\rho_M, M) \to \BB C.
\end{equation}
In this regard, the representation $(\rho_M, M)$ of $\mathfrak{sp}(4,\BB C)$
is similar to the representation $(\rho,\Sh)$ of $\mathfrak{sl}(2,\HC)$
(Theorem 31 and Figure 2 in \cite{ATMP}).

\subsection{Realization of the Map $(\rho_M,M) \to \BB C$ as an Integral}

We would like to express the map \eqref{M-integral} as an integral.
For this reason we introduce a set
\[
\Gamma = \{ U \in \HC \text{-sym};\: U^*U=I \}
\]
(the set of symmetric unitary matrices).

\begin{lem}
The set $\Gamma$ is $Sp(4,\BB R)$-invariant.
\end{lem}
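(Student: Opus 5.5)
We need to show that for $h \in Sp(4,\BB R) \subset U(2,2)$ and $U \in \Gamma$, the image $(aU+b)(cU+d)^{-1}$ is again symmetric and unitary (and that $cU+d$ is invertible). The plan splits into two parts: symmetry and unitarity.

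For symmetry, we use the fact, noted above, that $Sp(4,\BB R)$ acts by fractional linear transformations preserving $\HC\text{-sym}$. Concretely, write $h = \bigl(\begin{smallmatrix} a & b \\ c & d \end{smallmatrix}\bigr)$. The conjugated realization of $\mathfrak{sp}(4,\BB R)$ consists of matrices preserving the symplectic form $J=\bigl(\begin{smallmatrix} 0 & 1 \\ -1 & 0 \end{smallmatrix}\bigr)$; this holds because the conjugating element $\frac1{\sqrt2}\bigl(\begin{smallmatrix} -i & -1 \\ 1 & i \end{smallmatrix}\bigr)$ itself satisfies $g^TJg=J$ up to a check. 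Hence $h^TJh=J$, that is, $a^Tc=c^Ta$, $b^Td=d^Tb$ and $a^Td-c^Tb=1$. The standard computation then gives
\[
(cU+d)^T\bigl((aU+b)-(aU+b)^T\bigr)\text{-type identity}:\quad
(cU+d)^T(aU+b)-(aU+b)^T(cU+d)=U^T(c^Ta-a^Tc)U+U^T(c^Tb-a^Td)+(d^Ta-b^Tc)U+(d^Tb-b^Td).
\]
For symmetric $U$ this reduces to $-U+U=0$, so $W=(aU+b)(cU+d)^{-1}$ is symmetric.

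For unitarity, we use that $h\in U(2,2)$ preserves the form $\mathrm{diag}(1,-1)$. From \eqref{U(2,2)} in the form $h^*\,\mathrm{diag}(1,-1)\,h=\mathrm{diag}(1,-1)$ we get
\[
(aU+b)^*(aU+b)-(cU+d)^*(cU+d)=\bigl(U^*\ 1\bigr)h^*\,\mathrm{diag}(1,-1)\,h\bigl(\begin{smallmatrix}U\\1\end{smallmatrix}\bigr)=U^*U-1=0.
\]
So $(aU+b)^*(aU+b)=(cU+d)^*(cU+d)$. To conclude $W^*W=1$ we need $cU+d$ invertible. Suppose $(cU+d)v=0$. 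Then $(aU+b)v=0$ by the displayed identity, so $h\bigl(\begin{smallmatrix}Uv\\v\end{smallmatrix}\bigr)=0$, which forces $v=0$ since $h$ is invertible. Then $W^*W=(cU+d)^{-*}(cU+d)^*(cU+d)(cU+d)^{-1}=1$.

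The main obstacle is bookkeeping. We must confirm that the realized $Sp(4,\BB R)$ satisfies both $h^TJh=J$ and $h^*\,\mathrm{diag}(1,-1)\,h=\mathrm{diag}(1,-1)$; the paper's formulas are stated via $hIh^*$, but this is equivalent for groups. We must also be careful whether the group acts via $h$ or $h^{-1}$, which is harmless since $\Gamma$ is invariant under the group and hence under inverses. At the Lie algebra level, one can double-check the conditions directly from the form $\bigl(\begin{smallmatrix}A&B\\\bar B&-A^T\end{smallmatrix}\bigr)$ with $A^*=-A$ and $B^T=B$: such a matrix $X$ satisfies $X^TJ+JX=0$ and $X^*\,\mathrm{diag}(1,-1)+\mathrm{diag}(1,-1)X=0$. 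Since $Sp(4,\BB R)$ is connected, these identities integrate to the group.
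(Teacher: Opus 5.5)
Your proof is correct, but it takes a different route from the paper's.

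\textbf{The paper's route.} The paper works infinitesimally. For $X=\bigl(\begin{smallmatrix} A & B \\ \bar B & -A^T \end{smallmatrix}\bigr)$ it expands the action of $I+tX$ to first order, $U\mapsto U+t(AU+B+UA^T-U\bar B U)$. It checks directly that the first-order term keeps $U$ symmetric and $UU^*=I$, using $A^*=-A$ and $B^T=B$. It then appeals to connectedness of $Sp(4,\BB R)$.

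\textbf{Your route.} You work with finite group elements and split $\Gamma=\HC\text{-sym}\cap U(2)$.
\begin{itemize}
\item The $J$-symplectic identities $a^Tc=c^Ta$, $b^Td=d^Tb$, $a^Td-c^Tb=1$ give symmetry of $(aU+b)(cU+d)^{-1}$.
\item The $U(2,2)$ identity gives $(aU+b)^*(aU+b)=(cU+d)^*(cU+d)$, and hence unitarity.
\end{itemize}

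\textbf{Comparison.} The paper's route is shorter and needs only the explicit form of $\mathfrak{sp}(4,\BB R)$. However, it leaves two things implicit: the passage from tangency of the generating vector fields to invariance under the group, and the fact that the fractional linear map is defined on all of $\Gamma$.

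Your route handles every group element at once. Your kernel argument also shows that $cU+d$ is invertible for every $U\in\Gamma$, so the action really is defined there. It further makes visible that the lemma is just the intersection of two facts the paper states earlier: $U(2,2)$ preserves $U(2)$, and $Sp(4,\BB R)$ preserves $\HC\text{-sym}$.

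\textbf{The deferred check.} The ``check'' you postpone does go through, and needs no connectedness at all. The element $g=\frac1{\sqrt2}\bigl(\begin{smallmatrix} -i & -1 \\ 1 & i \end{smallmatrix}\bigr)$ satisfies $g^TJg=J$ and $g^*\,\mathrm{diag}(1,-1)\,g=-iJ$. Hence for any real $h'$ with $h'^TJh'=J$, the conjugate $h=gh'g^{-1}$ satisfies both $h^TJh=J$ and $h^*\,\mathrm{diag}(1,-1)\,h=\mathrm{diag}(1,-1)$.
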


\begin{proof}
Let $U \in \Gamma$ and
$X= \bigl(\begin{smallmatrix} A & B \\ \bar B & -A^T \end{smallmatrix}\bigr)
\in \mathfrak{sp}(4,\BB R)$ with $A,B \in \HC$, $A^*=-A$, $B^T=B$.
Working modulo terms of order $t^2$, $I+tX$ acts on $U$ by
\[
U \mapsto U + t(AU + B + UA^T - U \bar B U),
\]
which is symmetric and unitary:
\begin{multline*}
\bigl( U + t(AU + B + UA^T - U \bar B U) \bigr)
\bigl( U + t(AU + B + UA^T - U \bar B U) \bigr)^*  \\
= UU^* + t(A + BU^* + UA^TU^* - U \bar B)
+ t(A^* + UB^* + UA^{T*}U^* - U \bar B^* U^*) =I.
\end{multline*}
Since $Sp(4,\BB R)$ is connected, this proves the result.
\end{proof}  

The following lemma identifies $\Gamma$ is a quotient of $S^2 \times S^1$:

\begin{lem}
Let
\[
\widetilde{\Gamma} =
\{X \in \BB H;\: N(X)=1,\: X^T=X \} \times \{ z \in \BB C ;\: |z|=1 \}.
\]
Then we have a 2-to-1 covering
\[
\widetilde{\Gamma} \to \Gamma,\quad (X,z) \mapsto zX.
\]
This identifies $\Gamma$ with the quotient of
$\widetilde{\Gamma} = S^2 \times S^1$ modulo the
relation $(X,z) \sim (-X,-z)$. In particular, $\Gamma$ is {\em not} orientable.
\end{lem}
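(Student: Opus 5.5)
We will show that the map $(X,z)\mapsto zX$ lands in $\Gamma$, is surjective, and has fibers exactly $\{(X,z),(-X,-z)\}$; since it is a local diffeomorphism, it is then a 2-to-1 covering.

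First, the map is well defined. For $X\in\BB H$ with $X^T=X$, the matrix realization \eqref{H-matrix-realization} gives $X^*X = X^+X = N(X)\,I$, because on real quaternions the Hermitian adjoint coincides with quaternionic conjugation. Hence if $N(X)=1$ and $|z|=1$, then $(zX)^*(zX)=|z|^2 X^*X = I$. Also $zX$ is symmetric, so $zX\in\Gamma$.

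Next, surjectivity. Let $U\in\Gamma$. Since $U$ is unitary, $|\det U|=1$. Choose $z$ with $z^2=\det U$, so $|z|=1$, and set $X=z^{-1}U$. Then $X$ is symmetric and unitary with $\det X=1$, so $X\in SU(2)$. In the matrix realization, $SU(2)$ is exactly the set of real quaternions with $N=1$, so $X\in\BB H$, $N(X)=1$ and $X^T=X$.

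Then the fibers. Suppose $zX=z'X'$. Taking determinants gives $z^2=z'^2$, so $z'=\pm z$ and correspondingly $X'=\pm X$. The two preimages are distinct because $z\neq -z$. The map is smooth, and a dimension count gives $3=2+1$. By the surjectivity construction, locally one can choose a continuous branch of $\sqrt{\det U}$, which yields a local inverse. So the map is a 2-to-1 covering, and it identifies $\Gamma$ with $\widetilde\Gamma/\{(X,z)\sim(-X,-z)\}$.

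Finally, non-orientability. The deck transformation is $\tau(X,z)=(-X,-z)$, acting on $S^2\times S^1$. The antipodal map on $S^2$ reverses orientation, since it is $-\mathrm{id}$ on $\BB R^3$, which has determinant $-1$. Rotation by $\pi$ on $S^1$ preserves orientation. Hence $\tau$ is an orientation-reversing free involution. A quotient of an orientable connected manifold by a free involution is orientable exactly when the involution preserves orientation, so $\Gamma$ is not orientable.

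The only step needing care is the identification $X^*=X^+$ for real quaternions, and with it the fact that symmetric elements of $SU(2)$ are exactly the unit vectors of $\BB H\text{-sym}$. This follows by direct inspection of \eqref{H-matrix-realization}: the matrices with $x^2=0$ are $\bigl(\begin{smallmatrix} x^0-ix^3 & -ix^1 \\ -ix^1 & x^0+ix^3\end{smallmatrix}\bigr)$, and every symmetric matrix in $SU(2)$ has this form with $(x^0)^2+(x^1)^2+(x^3)^2=1$.
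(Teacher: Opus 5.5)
Your proof is correct. The paper states this lemma without proof, so there is no argument to compare against. Your verification fills in exactly the routine check the authors leave implicit:
\begin{itemize}
\item well-definedness, via $X^*=X^+$ on $\BB H$;
\item surjectivity, by choosing $z$ with $z^2=\det U$ and using that the real unit quaternions are $SU(2)$ in the matrix realization;
\item the fibres, via $\det(zX)=z^2$;
\item non-orientability, since $(X,z)\mapsto(-X,-z)$ is a free, orientation-reversing involution of the connected manifold $S^2\times S^1$.
\end{itemize}

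One phrase should go: the ``dimension count $3=2+1$'' does not by itself give a local diffeomorphism. The covering property comes entirely from your local continuous branch $s(U)$ of $\sqrt{\det U}$. It yields two disjoint sections $U\mapsto(\pm s(U)^{-1}U,\pm s(U))$ whose images together make up the whole preimage of a small neighbourhood in $\Gamma$.
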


We introduce a space
$\widetilde{\BB H}^{\times} = \BB H^{\times} \times S^1$ together with a
map $\widetilde{\BB H}^{\times} \to \HC^{\times}$
\[
(X,z) \mapsto zX, \qquad X \in \BB H^{\times},\: |z|=1,
\]
which is a 2-to-1 covering of the image.
Then $\widetilde{\Gamma} = S^2 \times S^1$ is naturally a subset of
$\widetilde{\BB H}^{\times}$ and an orientable 2-to-1 covering of $\Gamma$.
The functions $N(Z)^k$, $k \in \tfrac12 \BB Z$, are well-defined on
$\widetilde{\BB H}^{\times}$.

The tangent space at $1 \in U(2)$ can be identified with
the Minkowski space $\BB M$ as in Subsection 3.5 in \cite{FL1}.
Likewise, the tangent space at $1 \in \Gamma = \HC \text{-sym} \cap U(2)$
can be identified with
\begin{equation}  \label{Gamma-orientation}
\BB M \cap \HC \text{-sym} =
\{ Z = -ix^0e_0+x^1e_1+x^3e_3;\: x^0,x^1,x^3 \in \BB R \}.
\end{equation}
We orient $\BB M \cap \HC \text{-sym}$ and $T_1\Gamma$ so that
$\{ -ie_0, e_1, e_3 \}$ is a positively oriented basis,
which in turn determines an orientation on $\widetilde{\Gamma}$.

Consider a holomorphic differential 3-form on $\HC \text{-sym}$:
\[
dZ^3 = dz^0 \wedge dz^1 \wedge dz^3.
\]
We have an analogue of Lemma 61 in \cite{FL1}.

\begin{lem}  \label{Jacobian_lemma}
On $\HC \text{-sym}$ we have:
\[
dZ^3 = N(cZ+d)^3  \,d \tilde Z^3 = N(a'-Zc')^3 \,d \tilde Z^3,
\]
where $\bigl(\begin{smallmatrix} a & b \\ c & d \end{smallmatrix}\bigr)
\in Sp(4,\BB C)$,
$\bigl(\begin{smallmatrix} a & b \\ c & d \end{smallmatrix}\bigr)^{-1}
= \bigl(\begin{smallmatrix} a' & b' \\ c' & d' \end{smallmatrix}\bigr)$,
and $\tilde Z = (aZ+b)(cZ+d)^{-1}$.
\end{lem}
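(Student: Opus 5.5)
The plan is to reduce the statement to a computation of the differential of the map $Z \mapsto \tilde Z = (aZ+b)(cZ+d)^{-1}$, restricted to the 3-dimensional space $\HC\text{-sym}$. That differential is then identified as a linear map of the form $X \mapsto g X g^T$, whose determinant on symmetric $2\times2$ matrices is classical.

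First, differentiate $\tilde Z$ as in the proof of Lemma 61 in \cite{FL1}:
\[
d\tilde Z = a\,dZ\,(cZ+d)^{-1} - (aZ+b)(cZ+d)^{-1} c\,dZ\,(cZ+d)^{-1} = (a - \tilde Z c)\,dZ\,(cZ+d)^{-1}.
\]
Next, use the symplectic relations to identify $a-\tilde Z c$. Write $h=\bigl(\begin{smallmatrix} a & b \\ c & d\end{smallmatrix}\bigr)$ with inverse $\bigl(\begin{smallmatrix} a' & b' \\ c' & d'\end{smallmatrix}\bigr)$. For $h\in Sp(4,\BB C)$ in the realization of Subsection 4.1 (block form $\bigl(\begin{smallmatrix} A & B \\ C & -A^T\end{smallmatrix}\bigr)$ at the Lie algebra level), the group satisfies $h^T J h = J$ with $J=\bigl(\begin{smallmatrix} 0 & 1 \\ -1 & 0\end{smallmatrix}\bigr)$. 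Hence $h^{-1} = -J h^T J$, which gives $a'=d^T$, $b'=-b^T$, $c'=-c^T$, $d'=a^T$. The same relations give that $\tilde Z$ is symmetric when $Z$ is, and that $(cZ+d)^{-1\,T}=a-\tilde Zc$. Concretely, one checks
\[
(a-\tilde Z c)(cZ+d)^T = \bigl(a(Z c^T + d^T) - \tilde Z c(Zc^T+d^T)\bigr),
\]
and using $\tilde Z (cZ+d) = aZ+b$ together with the relations $ad^T - bc^T = 1$, $ab^T=ba^T$, $cd^T=dc^T$ (from $hJh^T=J$), this simplifies to the identity.

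Therefore $d\tilde Z = g^T\,dZ\,g$ with $g=(cZ+d)^{-1}$. The linear map $X\mapsto g^T X g$ on symmetric $2\times 2$ matrices (the symmetric square of $g^T$) has determinant $(\det g)^3$. The change of coordinates $z^0,z^1,z^3 \leftrightarrow z_{11},z_{12},z_{22}$ is a constant linear map, so it does not affect the ratio. This gives $d\tilde Z^3 = N(cZ+d)^{-3}\,dZ^3$, which is the first equality.

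For the second equality, $a-\tilde Zc=(cZ+d)^{-1\,T}$, and the analogous identity $(cZ+d)^{-1}=a'-\tilde Z c'$, suitably arranged, shows that $N(a'-Zc')=N(cZ+d)$. Alternatively, one can compute $N(cZ+d)N(a'-\tilde Zc')$... it is cleanest to prove directly the analogue of \cite{FL1}: $(a'-Zc')^{-1}$ relates to $cZ+d$ via transposes, using $a'=d^T$ and $c'=-c^T$. Indeed $a'-Zc' = d^T + Zc^T = (cZ+d)^T$ for symmetric $Z$, so the determinants agree.

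The main obstacle is keeping the sign and transpose conventions of the conjugated symplectic form straight. After the conjugation by $\frac1{\sqrt2}\bigl(\begin{smallmatrix} -i & -1\\ 1 & i\end{smallmatrix}\bigr)$, one must confirm that $J$ is still the preserved form for the complexified algebra. This follows from the displayed description of $\mathfrak{sp}(4,\BB C)$ in Subsection 4.1, whose block shape is exactly the Lie algebra of $\{h: h^TJh=J\}$, and from connectedness of $Sp(4,\BB C)$.
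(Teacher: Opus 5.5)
Your proof is correct and is essentially the argument the paper intends when it calls this "an analogue of Lemma 61 in \cite{FL1}". You differentiate the fractional linear map to get $d\tilde Z=(a'-Zc')^{-1}\,dZ\,(cZ+d)^{-1}$, use $a'-Zc'=(cZ+d)^T$ for symmetric $Z$ and symplectic $h$, and then use that $X\mapsto g^TXg$ has determinant $(\det g)^3$ on symmetric $2\times 2$ matrices. One aside is wrong: $(cZ+d)^{-1}=a'-\tilde Zc'$ is false in general (the correct identity is $(cZ+d)^{-1}=c'\tilde Z+d'$), but your final argument for the second equality does not rely on it.
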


Then the (locally defined) 3-form
\[
N(Z)^{-\frac32} \, dZ^3
\]
is (locally) invariant under the action of $U(2) \subset Sp(4,\BB C)$.
Thus, we have a $\mathfrak{u}(2)$-invariant measure on $\widetilde{\Gamma}$
-- the pull-back of $N(Z)^{-\frac32} \, dZ^3$.
We have an analogue of Lemma 60 in \cite{FL1}.

\begin{lem}
We have:
\[
\int_{\widetilde{\Gamma}} N(Z)^{-\frac32} \, dZ^3 = -8\pi^2i.
\]
\end{lem}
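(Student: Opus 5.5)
The plan is to compute the integral directly in coordinates adapted to the covering $\widetilde{\Gamma} = S^2 \times S^1 \to \Gamma$, $(X,z) \mapsto zX$. Write $X = x^0e_0 + x^1e_1 + x^3e_3$ with $x^0,x^1,x^3 \in \BB R$ and $(x^0)^2+(x^1)^2+(x^3)^2 = 1$, and $z = e^{i\theta}$. Then $Z = zX$ has coordinates $z^j = z x^j$ for $j=0,1,3$.

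\emph{Step 1: the factor $N(Z)^{-\frac32}$.} We have $N(Z) = z^2 N(X) = z^2$. On $\widetilde{\BB H}^{\times}$ the half-integer powers of $N$ are defined consistently with this, so $N(Z)^{-\frac32} = z^{-3}$ on $\widetilde{\Gamma}$.

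\emph{Step 2: pull back $dZ^3$.} Use $dz^j = z\,dx^j + x^j\,dz$ and expand the wedge product $dz^0 \wedge dz^1 \wedge dz^3$.
\begin{itemize}
\item The term $z^3\,dx^0 \wedge dx^1 \wedge dx^3$ vanishes when restricted to $S^2$, because it is a $3$-form on a $2$-dimensional manifold.
\item Terms containing $dz$ twice vanish.
\end{itemize}
What remains is
\[
dZ^3 = z^2\,dz \wedge \omega, \qquad
\omega = x^0\,dx^1\wedge dx^3 - x^1\,dx^0\wedge dx^3 + x^3\,dx^0\wedge dx^1 .
\]
Here $\omega$ is the standard area form on the unit sphere, oriented by the outward normal, and $\int_{S^2}\omega = 4\pi$. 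Combining with Step 1, the integrand becomes $z^{-1}\,dz \wedge \omega = i\,d\theta \wedge \omega$.

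\emph{Step 3: the orientation.} This is the main point requiring care. At the base point $(X,z) = (e_0,1)$, which maps to $1 \in \Gamma$, the tangent vectors are:
\begin{itemize}
\item $\partial_\theta$, which maps to $ie_0$;
\item the sphere directions $\partial_{x^1}$ and $\partial_{x^3}$, which map to $e_1$ and $e_3$.
\end{itemize}
The prescribed positive basis $\{-ie_0, e_1, e_3\}$ therefore corresponds to $(-\partial_\theta, \partial_{x^1}, \partial_{x^3})$. At $X = e_0$ the form $\omega$ restricts to $dx^1 \wedge dx^3$, which is positive on $(\partial_{x^1},\partial_{x^3})$. Hence $d\theta \wedge \omega$ evaluates to $-1$ on the positive basis. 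So the orientation of $\widetilde{\Gamma}$ is the opposite of the product orientation $d\theta \wedge \omega$.

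\emph{Step 4: conclude.} With this orientation,
\[
\int_{\widetilde{\Gamma}} N(Z)^{-\frac32}\,dZ^3 = -\,i \cdot 2\pi \cdot 4\pi = -8\pi^2 i .
\]
The only real obstacle is making the orientation conventions consistent. This means checking that the orientation transported from $T_1\Gamma$ along the connected manifold $\widetilde{\Gamma}$ matches the sign computed at the base point. That holds because both orientations are given by nowhere-vanishing forms on a connected orientable manifold.
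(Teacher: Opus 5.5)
Your computation is correct, and it is essentially the argument the paper leaves implicit: the paper gives no proof beyond calling the lemma an analogue of Lemma 60 in \cite{FL1}, and its subsequent formula for $dZ^3$ on $\Gamma_s$ in the coordinates $(\theta,\phi,\psi)$, with $N(Z)^{\frac12}=e^{i\psi}$, is the same pull-back calculation written in spherical coordinates. One point you should make explicit in Step 3 is that the deck transformation $(X,z)\mapsto(-X,-z)$ reverses the orientation of $\widetilde{\Gamma}$, so lifting the orientation of $T_1\Gamma$ at the other preimage $(-e_0,-1)$ of $1$ would give $+8\pi^2 i$; your base point $(e_0,1)$, where $N(Z)^{\frac12}=1$, is the choice consistent with the stated value.
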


\begin{cor}  \label{M-integral-cor}
The $\mathfrak{sp}(4,\BB C)$-invariant map $(\rho_M,M) \to \BB C$
(map \eqref{M-integral}) can be expressed as an integral as
\[
f \mapsto \frac{i}{8\pi^2} \int_{\widetilde{\Gamma}} f(Z) \, dZ^3, \qquad f \in M.
\]
\end{cor}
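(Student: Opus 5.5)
The plan has two parts. First, show that the functional $f \mapsto \frac{i}{8\pi^2}\int_{\widetilde{\Gamma}} f(Z)\,dZ^3$ on $M$ is $\mathfrak{sp}(4,\BB C)$-invariant. Second, compute its values on the basis $N(Z)^k R^m_l(Z)$ and check that it is nonzero and kills everything except the trivial quotient.

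\textbf{Step 1: invariance.} Since $\widetilde{\Gamma}$ is $Sp(4,\BB R)$-invariant (the lemma on $\Gamma$, lifted to the double cover), invariance reduces to a change of variables. Take $f \in M$ and $h \in Sp(4,\BB R)$ close to the identity. Then
\[
\int_{\widetilde{\Gamma}} (\rho_M(h)f)(Z)\,dZ^3
= \int_{\widetilde{\Gamma}} \frac{f(\tilde Z)}{N(cZ+d)^{\alpha}N(a'-Zc')^{\beta}}\,dZ^3 .
\]
By Lemma \ref{Jacobian_lemma} and Lemma \ref{rho-alpha-beta-action}, only $\alpha+\beta=3$ matters, so we may take $\alpha=3$, $\beta=0$. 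The denominator $N(cZ+d)^3$ then exactly cancels the Jacobian, giving $\int_{\widetilde{\Gamma}} f(\tilde Z)\,d\tilde Z^3$. This equals the original integral because $h$ maps the cycle $\widetilde{\Gamma}$ to itself, and for $h$ near $1$ the orientation is preserved.

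There is one subtlety: $f$ involves half-integer powers of $N(Z)$, so we must check that the chosen branches are compatible with the lift to $\widetilde{\Gamma}$. Working infinitesimally avoids this. Differentiating the identity above shows that the integral of $\rho_M(X)f$ vanishes for $X\in\mathfrak{sp}(4,\BB R)$, and hence for all $X\in\mathfrak{sp}(4,\BB C)$ by complex linearity. Equivalently, one can check directly that $\rho_M(X)f\,dZ^3$ restricted to $\widetilde{\Gamma}$ is exact, being a Lie derivative of a closed holomorphic 3-form.

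\textbf{Step 2: evaluation on the basis.} On $\widetilde{\Gamma}$ we write $Z=zX$ with $N(X)=1$, so $N(Z)=z^2$ and $N(Z)^k R^m_l(Z) = z^{2k+l}R^m_l(X)$. The integral factors into an $S^1$-integral and an $S^2$-integral. The $S^2$ part vanishes unless $l=0$, by the orthogonality relations \eqref{R-orthogonality} applied with $R^0_0=1$. For $l=0$, the $S^1$ part picks out a single value of $k$: since $dZ^3$ contributes $z^3$ times a form in $dz$ and on $X$, we need $2k+3 = 0$, that is $k=-\tfrac32$. This is exactly the trivial component $N(Z)^{-\frac32}$ in Figure \ref{decomposition-fig2}, where $k=\tfrac12-\alpha-\beta=-\tfrac52$ lies on the wall and the trivial quotient sits at $k=-\tfrac32$, $l=0$.

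The normalization then follows from the lemma $\int_{\widetilde{\Gamma}} N(Z)^{-\frac32}dZ^3=-8\pi^2 i$, which gives value $1$ on $N(Z)^{-\frac32}$. A nonzero invariant functional on $M$ factors through a one-dimensional trivial quotient. By Theorem \ref{rho-alpha-beta-thm} that quotient is unique, so this integral is the map \eqref{M-integral} up to the chosen normalization.

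\textbf{Main obstacle.} The delicate point is Step 1: keeping track of signs and branches of $N(Z)^{1/2}$ and of orientation on the nonorientable $\Gamma$ versus its cover $\widetilde{\Gamma}$. I would resolve it by working purely at the Lie algebra level on $\widetilde{\Gamma}$, where all powers $N(Z)^k$ are single-valued.
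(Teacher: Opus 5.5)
Your proof is correct and follows the paper's route: the $Sp(4,\mathbb{R})$-invariance of $\Gamma$ together with the Jacobian identity $dZ^3=N(cZ+d)^3\,d\tilde Z^3$ (which $\rho_M$ with $\alpha+\beta=3$ exactly compensates) gives invariance, and $\int_{\widetilde{\Gamma}}N(Z)^{-\frac32}\,dZ^3=-8\pi^2 i$ plus uniqueness of the trivial quotient fixes the constant. Your Step 2 basis computation (the same orthogonality argument the paper uses for the $\Gamma_s$ version) already shows that the integral kills every $N(Z)^kR^m_l(Z)$ except $N(Z)^{-\frac32}$, i.e.\ the codimension-one invariant subspace of Theorem~\ref{rho-alpha-beta-thm}, so it yields invariance by itself and Step 1 serves only as a cross-check.
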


We can give another description of the map \eqref{M-integral}.
For each real $s \in (0,1)$, let
\[
\Gamma_s = \{ U=z^0e_0+z^1e_1+z^2e_2+z^3e_3 \in \HC ;\: U^*U=I,\: |z^2|=s \}.
\]
We can parametrize $\Gamma_s$ using $0 \le \theta \le \pi$ and
$0 \le \phi, \psi \le 2\pi$ as follows
\begin{align}
\Gamma_s &= \Bigr\{ U=e^{i\psi} X \in \HC;\:
\begin{smallmatrix} X=x^0e_0+x^1e_1+x^2e_2+x^3e_3 \in \BB H, \\
  x^2=s,\: N(X)=1,\: 0 \le \psi \le 2\pi \end{smallmatrix} \Bigl\}  \\
&= \biggl\{ e^{i\psi} \begin{pmatrix} r\sin\theta e^{i\phi} & ir\cos\theta - s \\
  ir\cos\theta + s & r\sin\theta e^{-i\phi} \end{pmatrix} \in \HC;\:
\begin{smallmatrix} 0 \le \theta \le \pi, \\
  0 \le \phi, \psi \le 2\pi \end{smallmatrix} \biggr\},  \label{Gamma_s-param}
\end{align}
where $r=\sqrt{1-s^2}$.
In particular, $\Gamma_s$ is diffeomorphic to $S^2 \times S^1$.
Note that as $s \to 0^+$, the sets $\Gamma_s$ ``fold'' into $\Gamma$.

\begin{lem}  \label{Gamma_s-inversion-lemma}
For each $s \in (0,1)$, the set $\Gamma_s$ is invariant under the inversion
$Z \mapsto Z^{-1}$. In terms of the parametrization \eqref{Gamma_s-param},
the inversion switches
\[
\theta \mapsto \theta, \qquad
\phi \mapsto \pi-\phi \mod 2\pi, \qquad
\psi \mapsto \pi-\psi \mod 2\pi.
\]
\end{lem}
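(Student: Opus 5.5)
We compute the inverse of a point of $\Gamma_s$ directly from the parametrization \eqref{Gamma_s-param}. Every $U \in \Gamma_s$ is $U = e^{i\psi} X$ with $X \in \BB H$, $N(X)=1$ and $x^2 = s$. Since $X$ is a classical quaternion of norm one, $X^{-1} = X^+$. Hence
\[
U^{-1} = e^{-i\psi} X^+, \qquad X^+ = x^0e_0 - x^1e_1 - x^2e_2 - x^3e_3 .
\]
The $e_2$-coordinate of $X^+$ is $-s$, not $s$, so $e^{-i\psi}X^+$ is not yet in the normal form of \eqref{Gamma_s-param}. We fix this by rewriting $e^{-i\psi}X^+ = e^{i(\pi-\psi)} \cdot (-X^+)$. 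The quaternion $-X^+ = -x^0e_0 + x^1e_1 + x^2e_2 + x^3e_3$ is real, has norm $1$, and has $e_2$-coordinate $s$. Therefore $U^{-1} \in \Gamma_s$, which proves the invariance. Alternatively, one can note that $U^{-1} = U^*$ for unitary $U$ and that the conditions defining $\Gamma_s$ are visibly preserved; but the quaternionic computation above also yields the parameter transformation directly.

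It remains to read off the new parameters. From \eqref{Gamma_s-param} and \eqref{H-matrix-realization}, the matrix entries of $X$ give
\[
x^0 - ix^3 = r\sin\theta e^{i\phi}, \qquad -ix^1 - x^2 = ir\cos\theta - s .
\]
Thus $x^1 = -r\cos\theta$, $x^2 = s$, $x^0 = r\sin\theta\cos\phi$ and $x^3 = -r\sin\theta\sin\phi$.

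For $-X^+$ the coordinates $x^1$, $x^2$, $x^3$ are unchanged and $x^0$ is replaced by $-x^0$. The unchanged $x^1$ gives $\theta \mapsto \theta$. For the remaining entry, the new value of $x^0 - ix^3$ is $-r\sin\theta\cos\phi + ir\sin\theta\sin\phi = r\sin\theta\, e^{i(\pi-\phi)}$, which gives $\phi \mapsto \pi-\phi$. The scalar factor $e^{i(\pi-\psi)}$ found above gives $\psi \mapsto \pi - \psi \bmod 2\pi$.

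The only step needing real care is the sign bookkeeping. One must normalize the overall scalar $e^{i\psi}$ so that the $e_2$-coordinate comes back to $+s$ rather than $-s$. This is exactly what forces the shift by $\pi$ in both $\psi$ and $\phi$. One should also check the degenerate points $\sin\theta = 0$, where $\phi$ is undefined and the formula holds trivially.
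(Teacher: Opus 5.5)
Your computation is correct: $U^{-1}=e^{-i\psi}X^+=e^{i(\pi-\psi)}(-X^+)$, where $-X^+$ is a real unit quaternion with $e_2$-coordinate $s$, and reading off the matrix entries gives exactly $(\theta,\phi,\psi)\mapsto(\theta,\pi-\phi,\pi-\psi)$. The paper states this lemma without proof, and your direct verification is the natural one. The only point you use tacitly is that the decomposition $U=e^{i\psi}X$ with $x^2=s>0$ is unique up to $\psi \bmod 2\pi$: $e^{i\alpha}X$ real forces $e^{i\alpha}=\pm1$, and the sign of $x^2$ rules out $-1$. This uniqueness is what makes the new parameter values well defined.
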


We can express the differential 3-form $dZ^3 = dz^0 \wedge dz^1 \wedge dz^3$
in terms of the parametrization \eqref{Gamma_s-param}:

\begin{lem}  \label{dZonGamma_s}
For each $s \in (0,1)$, on $\Gamma_s$ we have
\[
dZ^3 = dz^0 \wedge dz^1 \wedge dz^3 =
-i \sin\theta e^{3i\psi} \, d\theta \wedge d\phi \wedge d\psi.
\]
\end{lem}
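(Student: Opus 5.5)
The plan is a direct computation from the parametrization \eqref{Gamma_s-param}. First I will read off the coordinates $z^0,z^1,z^2,z^3$ of $U\in\Gamma_s$ by comparing with the matrix realization \eqref{H-matrix-realization}, whose entries are $z^0\mp iz^3$ on the diagonal and $-iz^1\mp z^2$ off the diagonal. Matching entries gives
\[
z^0 = e^{i\psi} r\sin\theta\cos\phi, \qquad
z^1 = -e^{i\psi} r\cos\theta, \qquad
z^3 = -e^{i\psi} r\sin\theta\sin\phi, \qquad
z^2 = s e^{i\psi}.
\]
In particular,
\[
(z^0,z^1,z^3)=e^{i\psi} r\, w(\theta,\phi), \qquad
w=(\sin\theta\cos\phi,\,-\cos\theta,\,-\sin\theta\sin\phi).
\]
Here $w$ is a unit vector in $\BB R^3$, and $r=\sqrt{1-s^2}$ is a constant on $\Gamma_s$.

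Next, differentiating gives $d(e^{i\psi}r w)=e^{i\psi}r\,(dw + i\,w\,d\psi)$. Expanding $dz^0\wedge dz^1\wedge dz^3$ multilinearly, the term involving only $dw$ vanishes, since $w$ depends on just the two variables $\theta,\phi$. Terms containing $d\psi$ twice also vanish. What remains is
\[
dz^0\wedge dz^1\wedge dz^3 = i\,e^{3i\psi}r^3\,
\det\bigl(w,\partial_\theta w,\partial_\phi w\bigr)\, d\psi\wedge d\theta\wedge d\phi .
\]
This is the standard identity relating a cone over the unit sphere to the spherical area element.

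It then remains to evaluate the determinant. Since $w$ is an orientation-reversed reparametrization of the round sphere, I expect $\det(w,\partial_\theta w,\partial_\phi w)=\pm\sin\theta$. I will compute it directly as a $3\times 3$ determinant, including the sign coming from the minus signs in $w$. After that I reorder the forms, using $d\psi\wedge d\theta\wedge d\phi = d\theta\wedge d\phi\wedge d\psi$ (a cyclic permutation), to reach the form $-i\sin\theta\, e^{3i\psi}\,d\theta\wedge d\phi\wedge d\psi$.

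The main obstacle is pure bookkeeping: getting the sign right, and tracking the constant factor $r^3=(1-s^2)^{3/2}$. That factor arises naturally from the homogeneity of $dZ^3$, and I will check carefully how it enters the final formula. In particular I will confirm whether it should appear explicitly, or whether it is harmless for the intended application in the limit $s\to0^+$, where $r\to1$.
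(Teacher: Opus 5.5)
Your approach is the natural proof; the paper gives no argument beyond the parametrization \eqref{Gamma_s-param}. Your reduction is also correct: with $w=(\sin\theta\cos\phi,-\cos\theta,-\sin\theta\sin\phi)$ one gets $\det(w,\partial_\theta w,\partial_\phi w)=-\sin\theta$, so
\[
dz^0\wedge dz^1\wedge dz^3=-i\,(1-s^2)^{3/2}\,\sin\theta\, e^{3i\psi}\,d\theta\wedge d\phi\wedge d\psi \qquad \text{on } \Gamma_s .
\]

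The question you left open has a definite answer: the factor $r^3=(1-s^2)^{3/2}$ does not go away. Each of $z^0,z^1,z^3$ carries a factor $r$, and $r$ is constant on $\Gamma_s$, so there is no $dr$-term to compensate. Hence the identity as printed is off by this constant for every $s\in(0,1)$. The printed formula is only the $s\to0^+$ limit, i.e.\ the analogous formula on $\widetilde{\Gamma}$. You should state and prove the lemma with the factor, rather than try to reach the printed form.

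The discrepancy is harmless for how the lemma is used. A constant factor does not affect Lemma \ref{Gamma_s-Jacobian-lemma}, since the inversion maps $\Gamma_s$ to itself. In the integral formula for \eqref{M-integral}, the factor is in fact what makes things exact. With the branch $N(Z_{sym})^{1/2}=e^{i\psi}\sqrt{1-s^2}$, one finds $\int_{\Gamma_s}N(Z_{sym})^{-3/2}\,dZ^3=-8\pi^2 i$ for every $s$. The printed formula would instead give $-8\pi^2 i\,(1-s^2)^{-3/2}$, which has the same limit as $s\to0^+$.
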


The tangent space at $Z_0=\sqrt{1-s^2} + se^2 \in \Gamma_s$ can be identified
with \eqref{Gamma-orientation}.
As before, we orient this space and $T_{Z_0}\Gamma_s$ so that
$\{ -ie_0, e_1, e_3 \}$ is a positively oriented basis,
which in turn determines an orientation on $\Gamma_s$.

Let us revisit the
$\mathfrak{sp}(4,\BB C)$-invariant map $(\rho_M,M) \to \BB C$
(map \eqref{M-integral}). Note that the square root of $N(Z)$ can be
defined on $\Gamma_s$ by declaring
\[
N(e^{i\psi} X)^{\frac12} = e^{i\psi}, \qquad
\begin{smallmatrix} X=x^0e_0+x^1e_1+x^2e_2+x^3e_3 \in \BB H, \\
  x^2=s,\: N(X)=1,\: 0 \le \psi \le 2\pi. \end{smallmatrix}
\]
We can extend functions on $\HC\text{-sym}$ to functions on $\HC$ by making
them constant along the $e_2$-direction:
\[
f(Z_{sym}+z^2e_2)=f(Z_{sym}), \qquad Z_{sym} \in \HC\text{-sym},\:z^2 \in \BB C.
\]
Define the square root of $N(Z_{sym})$ along $\Gamma_s$ by declaring the value
of $N(e^{i\psi} X_{sym})^{\frac12}$ at $e^{i\psi} (X_{sym}+se_2)$ to be
\[
e^{i\psi} \sqrt{N(X_{sym})} = e^{i\psi} \sqrt{1-s^2}, \qquad
X_{sym} \in \BB H\text{-sym},\: N(X_{sym}+se_2)=1,\: 0 \le \psi \le 2\pi.
\]
From the orthogonality relations \eqref{R-orthogonality}
with $R^{-m'}_{l'}(Z) = R^0_0(Z)=1$ and Lemma \ref{dZonGamma_s}
we obtain:

\begin{cor}
The $\mathfrak{sp}(4,\BB C)$-invariant map $(\rho_M,M) \to \BB C$
(map \eqref{M-integral}) can be expressed as an integral as
\[
f \mapsto \lim_{s \to 0^+} \frac{i}{8\pi^2} \int_{\Gamma_s} f(Z) \, dZ^3,
\qquad f \in M.
\]
\end{cor}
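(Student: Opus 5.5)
The plan is to compute both sides on the basis $N(Z)^k R^m_l(Z)$ of $M$, $k \in \tfrac12+\BB Z$, $l \ge 0$, $-l \le m \le l$ (Proposition \ref{basis-prop}). By Theorem \ref{rho-alpha-beta-thm} with $\alpha+\beta=3$, the invariant map \eqref{M-integral} is unique up to scalar. It vanishes on the maximal proper submodule, which is the sum of the spans of $\{k \ge -(l+\tfrac12)\}$ and $\{k \le -\tfrac52\}$. Hence it is nonzero only on $N(Z)^{-\frac32}$, the single basis vector with $l=0$, $k=-\tfrac32$, and it is normalized by the previous corollary, which gives the value $1$ on $N(Z)^{-\frac32}$. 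So it suffices to show two things: first, the limit $\lim_{s\to0^+}\frac{i}{8\pi^2}\int_{\Gamma_s} N(Z)^k R^m_l(Z)\,dZ^3$ exists; second, it equals $\delta_{l0}\delta_{m0}\delta_{k,-3/2}$.

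Fix $s$ and use the parametrization \eqref{Gamma_s-param}. On $\Gamma_s$ the function is extended constantly in the $e_2$ direction, so $Z_{sym}=e^{i\psi}X_{sym}$, where $X_{sym}$ lies on the sphere of radius $r=\sqrt{1-s^2}$ in $\BB H\text{-sym}$ with angles $\theta,\phi$. The chosen square root gives $N(Z_{sym})^k = e^{2ik\psi} r^{2k}$, and homogeneity gives $R^m_l(Z_{sym}) = e^{il\psi} r^l P^m_l(\cos\theta)e^{im\phi}$. Lemma \ref{dZonGamma_s} gives $dZ^3=-i\sin\theta e^{3i\psi}\,d\theta\wedge d\phi\wedge d\psi$. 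The integral therefore factors as
\[
-i\, r^{2k+l}\int_0^{2\pi} e^{i(2k+l+3)\psi}\,d\psi \cdot \int_0^{2\pi}\!\!\int_0^\pi P^m_l(\cos\theta)e^{im\phi}\sin\theta\,d\theta\, d\phi .
\]
By \eqref{R-orthogonality} with $R^0_0=1$, equivalently \eqref{P-orthogonality}, the angular factor is $4\pi\,\delta_{l0}\delta_{m0}$. With $l=0$, the $\psi$-integral is $2\pi\,\delta_{2k+3,0}$; note that $2k+3\in\BB Z$ because $k\in\tfrac12+\BB Z$. The result is $-8\pi^2 i\, r^{-3}\delta_{l0}\delta_{m0}\delta_{k,-3/2}$. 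Multiplying by $\frac{i}{8\pi^2}$ gives $r^{-3}=(1-s^2)^{-3/2}\to1$ as $s\to0^+$. This matches the normalization, and the statement follows by linearity.

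The step I expect to need the most care is the bookkeeping of orientation and branch conventions. I need the orientation induced by $\{-ie_0,e_1,e_3\}$ at $Z_0$ to agree with the ordering $d\theta\wedge d\phi\wedge d\psi$ used in Lemma \ref{dZonGamma_s}, so that the overall sign is $+1$. I also need the square-root convention on $\Gamma_s$ to give exactly the phase $e^{2ik\psi}$. I would verify both by evaluating the tangent vectors $\partial_\theta,\partial_\phi,\partial_\psi$ at $Z_0$ and comparing them with the analogous computation for $\widetilde\Gamma$ in the previous corollary.
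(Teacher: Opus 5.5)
Your proposal is correct and follows the paper's own argument: evaluate on the basis $N(Z)^kR^m_l(Z)$ using Lemma \ref{dZonGamma_s} and the orthogonality relations \eqref{R-orthogonality} with $R^0_0=1$. The orientation check you flag comes out positive, since at $Z_0$ the vectors $(\partial_\theta,\partial_\phi,\partial_\psi)$ project to $(re_1,-re_3,ire_0)$, which has the same orientation as $(-ie_0,e_1,e_3)$; incidentally, a direct Jacobian computation gives $dZ^3=-i(1-s^2)^{3/2}\sin\theta\, e^{3i\psi}\,d\theta\wedge d\phi\wedge d\psi$, so your factor $r^{-3}$ is an artifact of the lemma as printed, the integral is actually independent of $s$, and the limit is unaffected.
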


Lemmas \ref{Gamma_s-inversion-lemma} and \ref{dZonGamma_s} imply
an analogue of Lemma \ref{Jacobian_lemma}:

\begin{lem}  \label{Gamma_s-Jacobian-lemma}
For each $s \in (0,1)$, the pull-back of $dZ^3 = dz^0 \wedge dz^1 \wedge dz^3$
restricted to $\Gamma_s$ under the inversion map $Z \mapsto Z^{-1}$ on
$\Gamma_s$ is
\[
- N(Z)^{-3} dZ^3.
\]
\end{lem}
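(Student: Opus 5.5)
The claim is that the inversion $\iota: Z \mapsto Z^{-1}$ on $\Gamma_s$ pulls $dZ^3$ back to $-N(Z)^{-3}\,dZ^3$. I will compute the pull-back directly in the coordinates $(\theta,\phi,\psi)$ of \eqref{Gamma_s-param}, using the two preceding lemmas and no further geometry.

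First, by Lemma \ref{dZonGamma_s}, on $\Gamma_s$ we have
\[
dZ^3 = -i\sin\theta\, e^{3i\psi}\, d\theta\wedge d\phi\wedge d\psi .
\]
By Lemma \ref{Gamma_s-inversion-lemma}, $\iota$ acts in these coordinates as $(\theta,\phi,\psi)\mapsto(\theta,\pi-\phi,\pi-\psi)$, modulo $2\pi$. Pulling back the coordinate form gives
\[
\iota^*(d\theta\wedge d\phi\wedge d\psi) = d\theta\wedge(-d\phi)\wedge(-d\psi) = d\theta\wedge d\phi\wedge d\psi .
\]
For the coefficient, $\sin\theta$ is unchanged, while
\[
e^{3i\psi}\mapsto e^{3i(\pi-\psi)} = -e^{-3i\psi}.
\]
Hence
\[
\iota^* dZ^3 = -(-i\sin\theta)\,e^{-3i\psi}\, d\theta\wedge d\phi\wedge d\psi .
\]

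Next I need $N(Z)$ on $\Gamma_s$. Write $Z=e^{i\psi}X$ with $X\in\BB H$ and $N(X)=1$. Then
\[
N(Z) = e^{2i\psi}N(X) = e^{2i\psi},\qquad N(Z)^{-3} = e^{-6i\psi}.
\]
Therefore
\[
-N(Z)^{-3}\,dZ^3 = -(-i\sin\theta)\,e^{3i\psi}e^{-6i\psi}\, d\theta\wedge d\phi\wedge d\psi = -(-i\sin\theta)\,e^{-3i\psi}\, d\theta\wedge d\phi\wedge d\psi,
\]
which matches $\iota^* dZ^3$.

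The one point needing care is the inversion formula itself, in particular that $\psi\mapsto\pi-\psi$ rather than $-\psi$. From $Z^{-1}=e^{-i\psi}X^{-1}=e^{-i\psi}X^+$, and $X^+$ has $e_2$-coordinate $-s$, so the representation with $x^2=s$ forces $Z^{-1}=e^{i(\pi-\psi)}(-X^+)$. This is exactly what Lemma \ref{Gamma_s-inversion-lemma} records, and it is what produces the minus sign. Everything else is routine substitution.
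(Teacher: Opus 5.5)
Your proof is correct and is the paper's own argument: the paper obtains this lemma directly from Lemma \ref{Gamma_s-inversion-lemma} and Lemma \ref{dZonGamma_s}, combining the coordinate change $(\theta,\phi,\psi)\mapsto(\theta,\pi-\phi,\pi-\psi)$ with $N(Z)=e^{2i\psi}$ on $\Gamma_s$, exactly as you do. Your computation compares $\iota^* dZ^3$ only with $dZ^3$ itself, so any constant prefactor in Lemma \ref{dZonGamma_s} cancels from both sides; a direct computation in fact gives an extra factor $(1-s^2)^{3/2}$ there, and this does not affect your conclusion.
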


\subsection{Reproducing Kernel Expansions}

In this subsection we find basis expansions of $N(Z_1 - Z_2)^{-\frac12}$ and
$N(Z_1 - Z_2)^{-\frac32}$ with $Z_1, Z_2 \in \HC\text{-sym}$.
These functions are reproducing kernels of certain spaces of functions on
$\HC^{\times}\text{-sym}$, and their expansions should be seen as analogues
of expansions of the reproducing kernels $N(Z_1 - Z_2)^{-1}$ and
$N(Z_1 - Z_2)^{-2}$ with $Z_1, Z_2 \in \HC$ given in \cite{FL1}.

\begin{lem}
Let $Z_1, Z_2 \in \BB H \text{-sym}$, then,
for each fixed non-negative integer $l$, we have:
\begin{equation}  \label{R-m-sum-identity}
\sum_{m=-l}^l (-1)^m R^m_l(Z_1) \cdot R^{-m}_l(Z_2)
= 2^{-l} \cdot P_l \bigl( \tr(Z_1Z_2^+) \bigr).
\end{equation}
\end{lem}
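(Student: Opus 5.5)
The plan is to reduce \eqref{R-m-sum-identity} to the classical addition theorem for spherical harmonics. I would do it in three steps: compute $\tr(Z_1Z_2^+)$ geometrically, convert the left side to associated Legendre functions, then compare with the addition theorem.

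\emph{Geometric meaning of the trace.} Write $Z_j = \bigl(\begin{smallmatrix} z_j & it_j \\ it_j & \bar z_j \end{smallmatrix}\bigr)$, with spherical coordinates $(r_j,\theta_j,\phi_j)$ as above. The quaternionic conjugate is the adjugate matrix, $Z_2^+ = \bigl(\begin{smallmatrix} \bar z_2 & -it_2 \\ -it_2 & z_2 \end{smallmatrix}\bigr)$. A direct multiplication then gives
\[
\tr(Z_1Z_2^+) = z_1\bar z_2 + \bar z_1 z_2 + 2t_1t_2 = 2(x_1x_2+y_1y_2+t_1t_2) = 2r_1r_2\cos\gamma ,
\]
where $\gamma$ is the angle between $Z_1$ and $Z_2$ viewed as vectors in $\BB R^3$. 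By the spherical law of cosines,
\[
\cos\gamma=\cos\theta_1\cos\theta_2+\sin\theta_1\sin\theta_2\cos(\phi_1-\phi_2).
\]

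\emph{Rewriting the left side.} Each summand on the left of \eqref{R-m-sum-identity} is homogeneous of degree $l$ in $Z_1$ and of degree $l$ in $Z_2$. Accordingly, I will read the right side as the bihomogeneous polynomial
\[
r_1^lr_2^l\,P_l\bigl(\tr(Z_1Z_2^+)/(2r_1r_2)\bigr), \qquad r_j=N(Z_j)^{1/2}.
\]
Since $P_l$ has the parity of $l$, this is a genuine polynomial in $\tr(Z_1Z_2^+)$ and $N(Z_1)N(Z_2)$. Its top term in $\tr(Z_1Z_2^+)$ is $2^{-l}$ times the leading coefficient of $P_l$, which is the source of the factor $2^{-l}$. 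By \eqref{Pm-m} we have $R^{-m}_l(Z_2) = (-1)^m\frac{(l-m)!}{(l+m)!}\,r_2^lP^m_l(\cos\theta_2)e^{-im\phi_2}$. Hence the left side equals
\[
r_1^lr_2^l\sum_{m=-l}^l \frac{(l-m)!}{(l+m)!}\,P^m_l(\cos\theta_1)P^m_l(\cos\theta_2)\,e^{im(\phi_1-\phi_2)}.
\]

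\emph{Invoking the addition theorem.} The classical addition theorem, in the form valid with the present normalization of $P^m_l$ (e.g.\ \cite{GR} 8.794, or \cite{V}), states that this sum equals $P_l(\cos\gamma)$. Combined with $\cos\gamma=\tr(Z_1Z_2^+)/(2r_1r_2)$ from the first step, this gives \eqref{R-m-sum-identity}.

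The main obstacle is bookkeeping, not ideas. First, the Condon--Shortley sign $(-1)^m$ in the definition of $P^m_l$ must be tracked through \eqref{Pm-m}, so that the $(-1)^m$ in \eqref{R-m-sum-identity} cancels exactly. Second, the homogenization convention for $P_l(\tr(Z_1Z_2^+))$ has to be fixed consistently with the $2^{-l}$. If one wants to avoid quoting the addition theorem, an alternative is an invariance argument. The left side is invariant under the simultaneous rotation $Z_j\mapsto aZ_ja^T$, $a\in SU(2)$, because the pairing $\sum(-1)^mR^m_lR^{-m}_l$ is the invariant form on $V_l$, as the orthogonality relations \eqref{R-orthogonality} show. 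It is also harmonic in $Z_1$. So it must be a multiple of the zonal harmonic $r_1^lr_2^lP_l(\cos\gamma)$. The constant is then fixed by setting $Z_2=\bigl(\begin{smallmatrix}0&i\\ i&0\end{smallmatrix}\bigr)$, i.e.\ $\theta_2=0$, where only $m=0$ survives and $P_l(1)=1$.
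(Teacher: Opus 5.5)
Your argument is correct and is essentially the paper's proof: both compute $\tr(Z_1Z_2^+)=2r_1r_2\cos\gamma$ in spherical coordinates and apply the Legendre addition theorem (GR 8.794(1)). Your reading of the right-hand side as $r_1^lr_2^lP_l\bigl(\tr(Z_1Z_2^+)/(2r_1r_2)\bigr)$ is exactly the expression the paper's computation produces. The paper invokes the addition theorem only for $\theta_1+\theta_2<\pi$ and then extends the identity by polynomiality, also to $\HC\text{-sym}$; for integer $l$ on $\BB H\text{-sym}$ your version does not need that extension step.
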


\begin{proof}
Assume first that $Z_1, Z_2 \in \BB H\text{-sym}$
and write using the spherical coordinates
\[
Z_1 = r_1 \begin{pmatrix} \sin\theta_1 e^{i\phi_1} & i\cos\theta_1 \\
  i\cos\theta_1 & \sin\theta_1 e^{-i\phi_1} \end{pmatrix}, \quad
Z_2 = r_2 \begin{pmatrix} \sin\theta_2 e^{i\phi_2} & i\cos\theta_2 \\
  i\cos\theta_2 & \sin\theta_2 e^{-i\phi_2} \end{pmatrix},
\]
where
$r_1= \sqrt{N(Z_1)} \ge 0$, $r_2 = \sqrt{N(Z_2)} \ge 0$,
$0 \le \theta_1, \theta_2 \le \pi$, $0 \le \phi_1, \phi_2 \le 2\pi$.
And assume further that $\theta_1+\theta_2 < \pi$.
Identity 8.794(1) in \cite{GR}
\begin{multline*}
P_l \bigl( \cos\theta_1 \cdot \cos\theta_2 +
\sin\theta_1 \cdot \sin\theta_2 \cdot \cos\phi \bigr)  \\
= P_l(\cos\theta_1) \cdot P_l(\cos\theta_2) + 
2\sum_{m=1}^{\infty} (-1)^m P^m_l(\cos\theta_1) \cdot
P^{-m}_l(\cos\theta_2) \cdot \cos(m\phi),
\end{multline*}
where $0 \le \theta_1, \theta_2 < \pi$, $\theta_1+\theta_2 < \pi$, $\phi$ real,
implies that
\begin{multline*}
\sum_{m=-l}^l (-1)^m R^m_l(Z_1) \cdot R^{-m}_l(Z_2)
= (r_1r_2)^l \sum_{m=-l}^l (-1)^m P^m_l(\cos\theta_1) \cdot
P^{-m}_l(\cos\theta_2) e^{im(\phi_1-\phi_2)}  \\
= (r_1r_2)^l \cdot P_l \bigl( \cos\theta_1 \cdot \cos\theta_2 +
\sin\theta_1 \cdot \sin\theta_2 \cdot \cos (\phi_1-\phi_2) \bigr)
= (r_1r_2)^l \cdot P_l \bigl( \tfrac1{2r_1r_2} \tr(Z_1Z_2^+) \bigr).
\end{multline*}
Since both sides are polynomials in $Z_1, Z_2$ of degree $l$, the identity
\eqref{R-m-sum-identity} holds for all $Z_1, Z_2 \in \HC\text{-sym}$.
\end{proof}

The following expansion should be regarded as an analogue of the matrix
coefficient expansion of the reproducing kernel $N(Z_1 - Z_2)^{-1}$
with $Z_1, Z_2 \in \HC$ given in Proposition 25 in \cite{FL1}.

\begin{prop}
We have the following series expansion
\[
N(Z_2)^{-\frac12} \cdot N(1 - Z_1Z_2^{-1})^{-\frac12}
= \sum_{l=0}^{\infty} \biggl( \sum_{m=-l}^l
(-1)^m R^m_l(Z_1) \cdot N(Z_2)^{-l-\frac12} \cdot R^{-m}_l(Z_2) \biggr),
\]
which converges uniformly on compact subsets in the region
$\{ Z_1, Z_2 \in \BB H\text{-sym} ;\: N(Z_1) < N(Z_2) \}$.
(The proof shows that the region of convergence is a much larger subset of
$\HC\text{-sym} \times \HC\text{-sym}$, but identifying this subset is
complicated due to the presence of the fractional powers.)
\end{prop}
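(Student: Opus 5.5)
The plan is to reduce the statement to the Legendre generating function combined with the addition formula \eqref{R-m-sum-identity}. First compute, for $Z_1,Z_2\in\BB H\text{-sym}$,
\[
N(Z_2)\cdot N(1-Z_1Z_2^{-1}) = N(Z_2-Z_1) = N(Z_1)+N(Z_2)-\tr(Z_1Z_2^+).
\]
This uses $N(A-B)=N(A)+N(B)-\tr(AB^+)$, which holds for all quaternions since $N$ is the quadratic form with polarization $\tr(AB^+)$. Write $r_1=N(Z_1)^{1/2}$, $r_2=N(Z_2)^{1/2}$, $\rho=r_1/r_2<1$, and $x=\tfrac1{2r_1r_2}\tr(Z_1Z_2^+)$. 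For real symmetric quaternions, $x=\cos\gamma$ with $\gamma$ the angle between the two vectors in $\BB R^3$, so $|x|\le1$. Then
\[
N(Z_2)^{-\frac12}N(1-Z_1Z_2^{-1})^{-\frac12}= r_2^{-1}(1-2\rho x+\rho^2)^{-\frac12},
\]
where the positive branch is used on $\BB H\text{-sym}$; this is consistent with the branch on the left-hand side, since $N(1-Z_1Z_2^{-1})$ is near $1$ when $Z_1$ is small.

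Next, expand via the classical generating function $(1-2\rho x+\rho^2)^{-1/2}=\sum_{l\ge0}\rho^lP_l(x)$, valid for $|x|\le1$, $0\le\rho<1$. Then use the computation in the proof of \eqref{R-m-sum-identity}, in its homogeneous form:
\[
\sum_{m}(-1)^mR^m_l(Z_1)R^{-m}_l(Z_2)=(r_1r_2)^lP_l(x).
\]
(The displayed statement of \eqref{R-m-sum-identity} is normalized slightly differently, but the proof gives exactly this.) Multiplying by $N(Z_2)^{-l-\frac12}=r_2^{-2l-1}$ turns the $l$-th term of the claimed series into $r_2^{-1}\rho^lP_l(x)$, which matches the expansion term by term.

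For uniform convergence on compact subsets of $\{N(Z_1)<N(Z_2)\}$, use $|P_l(x)|\le1$ for $|x|\le1$. On a compact set, $\rho\le\rho_0<1$ and $r_2$ is bounded below, so the Weierstrass M-test with majorant $C\rho_0^l$ applies. One caveat at $Z_1=0$, where $x$ is undefined: every term with $l\ge1$ vanishes there, so the estimate $|r_1^lr_2^{-l-1}P_l(x)|\le r_2^{-1}\rho^l$ still holds.

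The main obstacle is justifying the normalization of the addition formula and the branch of the square root. Both are settled by the positivity of $N(Z_2-Z_1)$ on real symmetric quaternions and by the spherical-coordinate computation already carried out in the lemma.
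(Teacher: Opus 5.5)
Your proposal is correct and follows the paper's proof. Both rewrite $N(1-Z_1Z_2^{-1})$ as $1-\tr(Z_1Z_2^+)N(Z_2)^{-1}+N(Z_1)N(Z_2)^{-1}$, then combine the Legendre generating function (identity 8.921 in the cited table of Gradshteyn--Ryzhik) with the homogeneous form $(r_1r_2)^lP_l(x)$ of \eqref{R-m-sum-identity}, which is what its proof actually establishes; your M-test via $|P_l(x)|\le 1$ and your branch discussion only fill in details the paper leaves implicit.
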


\begin{proof}
Identity 8.921 in \cite{GR}
\begin{equation}  \label{GR8.921}
(1-2rz+r^2)^{-\frac12} = \sum_{l=0}^{\infty} r^l \cdot P_l(z), \qquad
|r| < \min \bigl\{ |z \pm \sqrt{z^2-1}| \bigr\}
\end{equation}
together with \eqref{R-m-sum-identity} imply that
\begin{multline*}
\sum_{l=0}^{\infty} \biggl( \sum_{m=-l}^l
(-1)^m R^m_l(Z_1) \cdot N(Z_2)^{-l-\frac12} \cdot R^{-m}_l(Z_2) \biggr)  \\
= N(Z_2)^{-\frac12} \sum_{l=0}^{\infty}
\Bigl( \tfrac{\sqrt{N(Z_1)}}{\sqrt{N(Z_2)}} \Bigr)^l
\cdot P_l \Bigl( \tfrac{\tr(Z_1Z_2^+)}{2\sqrt{N(Z_1) \cdot N(Z_2)}} \Bigr)  \\
= N(Z_2)^{-\frac12} \cdot \bigl( 1 - \tr(Z_1Z_2^+) \cdot N(Z_2)^{-1}
+ N(Z_1) \cdot N(Z_2)^{-1} \bigr)^{-\frac12}  \\
= N(Z_2)^{-\frac12} \cdot N(1 - Z_1Z_2^{-1})^{-\frac12}.
\end{multline*}
\end{proof}


The following expansion should be regarded as an analogue of the matrix
coefficient expansion of the reproducing kernel $N(Z_1 - Z_2)^{-2}$
with $Z_1, Z_2 \in \HC$ given in Proposition 27 in \cite{FL1}.

\begin{prop}
We have the following series expansion
\begin{multline}  \label{N-3/2-expansion}
N(Z_2)^{-\frac32} \cdot N(1 - Z_1Z_2^{-1})^{-\frac32}  \\
= \sum_{k,l=0}^{\infty} \biggl( \sum_{m=-l}^l
(-1)^m (2l+1) N(Z_1)^k \cdot R^m_l(Z_1)
\cdot N(Z_2)^{-k-l-\frac32} \cdot R^{-m}_l(Z_2) \biggr),
\end{multline}
which converges uniformly on compact subsets in the region
$\{ Z_1, Z_2 \in \BB H\text{-sym} ;\: N(Z_1) < N(Z_2) \}$.
(Again, the proof shows that the region of convergence is a much larger subset
of $\HC\text{-sym} \times \HC\text{-sym}$, but identifying this subset is
complicated due to the presence of the fractional powers.)
\end{prop}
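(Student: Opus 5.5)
The plan is to reduce \eqref{N-3/2-expansion} to the previous proposition via the Gegenbauer generating function. Write $\rho = \sqrt{N(Z_1)/N(Z_2)}$ and $x = \tr(Z_1Z_2^+)/(2\sqrt{N(Z_1)N(Z_2)})$. As in the previous proof, for $Z_1,Z_2 \in \BB H\text{-sym}$ we have
\[
N(Z_2)^{-\frac32} N(1-Z_1Z_2^{-1})^{-\frac32} = N(Z_2)^{-\frac32}(1-2\rho x+\rho^2)^{-\frac32}.
\]
By \eqref{R-m-sum-identity}, and since $(r_1r_2)^l P_l(x)$ is the value of $\sum_m (-1)^m R^m_l(Z_1)R^{-m}_l(Z_2)$, the right-hand side of \eqref{N-3/2-expansion} equals
\[
N(Z_2)^{-\frac32}\sum_{k,l\ge0}(2l+1)\rho^{2k+l}P_l(x)
= N(Z_2)^{-\frac32}\,\frac{1}{1-\rho^2}\sum_{l\ge0}(2l+1)\rho^l P_l(x).
\]
Here the sum over $k$ is a geometric series, valid because $\rho<1$ on the stated region.

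The key step is then the classical identity
\[
\sum_{l\ge0}(2l+1)\rho^lP_l(x) = \frac{1-\rho^2}{(1-2\rho x+\rho^2)^{3/2}}.
\]
I would derive it from \eqref{GR8.921} by applying the operator $1+2\rho\,\frac{\partial}{\partial\rho}$ to $(1-2\rho x+\rho^2)^{-1/2} = \sum_l \rho^l P_l(x)$. Termwise this gives $\sum(2l+1)\rho^lP_l$, while on the closed form it gives
\[
(1-2\rho x+\rho^2)^{-\frac32}\bigl[(1-2\rho x+\rho^2) - 2\rho(\rho-x)\bigr] = (1-\rho^2)(1-2\rho x+\rho^2)^{-\frac32}.
\]
The factor $1-\rho^2$ cancels, which yields the claimed equality.

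The remaining point, which I expect to be the main technical obstacle, is justifying convergence and the interchanges. For real $Z_1,Z_2$ we have $|x|\le1$ by Cauchy--Schwarz, so $|P_l(x)|\le1$. The double series is therefore dominated by $\sum_{k,l}(2l+1)\rho^{2k+l}$, which converges uniformly while $\rho$ stays at most some $\rho_0<1$. That holds on compact subsets of $\{N(Z_1)<N(Z_2)\}$, including the degenerate case $Z_1=0$, where only the $l=0$ terms survive. Absolute convergence justifies regrouping the sum in $k$, and termwise differentiation of the power series in $\rho$ is legitimate inside its radius of convergence. Finally, I would note that the branch of $N(\cdot)^{-\frac32}$ agrees with the positive real one used in \eqref{GR8.921}, since $1-2\rho x+\rho^2>0$ for real arguments.
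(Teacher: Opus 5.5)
Your proof is correct and follows the paper's argument. You collapse the $m$-sum via \eqref{R-m-sum-identity}, sum the geometric series in $k$, and use $\sum_l (2l+1)\rho^l P_l(x) = (1-\rho^2)(1-2\rho x+\rho^2)^{-3/2}$, obtained by differentiating \eqref{GR8.921}.

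Your operator $1+2\rho\,\partial/\partial\rho$ is the precise version of what the paper loosely calls "applying the degree operator". You also rightly use the homogeneous form $(r_1r_2)^l P_l(x)$ of \eqref{R-m-sum-identity}, as it appears in that lemma's proof, rather than the literal statement $2^{-l}P_l(\tr(Z_1Z_2^+))$.
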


\begin{proof}
Applying the degree operator $r \frac{\partial}{\partial r}$ to \eqref{GR8.921},
we find that
\[
\frac{1-r^2}{(1-2rz+r^2)^{\frac32}} =
\sum_{l=0}^{\infty} (2l+1) r^l \cdot P_l(z), \qquad
|r| < \min \bigl\{ |z \pm \sqrt{z^2-1}| \bigr\}.
\]
Using this identity together with \eqref{R-m-sum-identity}, we obtain that
\begin{multline*}
\sum_{k,l=0}^{\infty} \biggl( \sum_{m=-l}^l (-1)^m (2l+1) N(Z_1)^k \cdot R^m_l(Z_1)
\cdot N(Z_2)^{-k-l-\frac32} \cdot R^{-m}_l(Z_2) \biggr)  \\
= N(Z_2)^{-\frac32} \sum_{k,l=0}^{\infty} (2l+1) \frac{N(Z_1)^k}{N(Z_2)^k} \cdot
\Bigl( \tfrac{\sqrt{N(Z_1)}}{\sqrt{N(Z_2)}} \Bigr)^l
\cdot P_l \Bigl( \tfrac{\tr(Z_1Z_2^+)}{2\sqrt{N(Z_1) \cdot N(Z_2)}} \Bigr)  \\
= \frac{N(Z_2)^{-\frac12}}{N(Z_2)-N(Z_1)}
\sum_{l=0}^{\infty} (2l+1) \Bigl( \tfrac{\sqrt{N(Z_1)}}{\sqrt{N(Z_2)}} \Bigr)^l
\cdot P_l \Bigl( \tfrac{\tr(Z_1Z_2^+)}{2\sqrt{N(Z_1) \cdot N(Z_2)}} \Bigr)  \\
= N(Z_2)^{-\frac32} \cdot \bigl( 1 - \tr(Z_1Z_2^+) \cdot N(Z_2)^{-1}
+ N(Z_1) \cdot N(Z_2)^{-1} \bigr)^{-\frac32}  \\
= N(Z_2)^{-\frac32} \cdot N(1 - Z_1Z_2^{-1})^{-\frac32}.
\end{multline*}
\end{proof}

\begin{rem}
We prefer to write the reproducing kernels in the form
\[
N(Z_2)^{-\frac12} \cdot N(1 - Z_1Z_2^{-1})^{-\frac12}
\qquad \text{and} \qquad
N(Z_2)^{-\frac32} \cdot N(1 - Z_1Z_2^{-1})^{-\frac32}
\]
because expressions like $N(Z_2 - Z_1)^{-\frac12}$ and
$N(Z_2 - Z_1)^{-\frac32}$ are ambiguous.
\end{rem}

\subsection{Invariant Pairings and Reproducing Formulas}

Recall the $\mathfrak{sp}(4,\BB C)$-modules
\[
\bigl(\rho_{\alpha,\beta}, \BB C [z_{11}, z_{12}, z_{22}, N(Z)^{-1}] \bigr)
\quad \text{and} \quad
\bigl(\rho_{\alpha,\beta},
N(Z)^{\frac12} \cdot \BB C [z_{11}, z_{12}, z_{22}, N(Z)^{-1}] \bigr)
\]
introduced in Subsection \ref{rho_M-subsection}.
The multiplication map induces an $\mathfrak{sp}(4,\BB C)$-equivariant map
\begin{multline*}
\bigl(\rho_{\alpha_1,\beta_1},\BB C [z_{11}, z_{12}, z_{22}, N(Z)^{-1}] \bigr) \otimes
\bigl(\rho_{\alpha_2,\beta_2},
N(Z)^{\frac12} \cdot \BB C [z_{11}, z_{12}, z_{22}, N(Z)^{-1}] \bigr) \\
\to
\bigl(\rho_{\alpha_{1+2},\beta_{1+2}},
N(Z)^{\frac12} \cdot \BB C [z_{11}, z_{12}, z_{22}, N(Z)^{-1}] \bigr).
\end{multline*}
In particular, when $\alpha_1+\alpha_2+\beta_1+\beta_2=3$, we obtain an
$\mathfrak{sp}(4,\BB C)$-equivariant map
\[
\bigl(\rho_{\alpha_1,\beta_1},\BB C [z_{11}, z_{12}, z_{22}, N(Z)^{-1}] \bigr) \otimes
\bigl(\rho_{\alpha_2,\beta_2},
N(Z)^{\frac12} \cdot \BB C [z_{11}, z_{12}, z_{22}, N(Z)^{-1}] \bigr)
\to (\rho_M, M).
\]
Combining this map with the $\mathfrak{sp}(4,\BB C)$-invariant map
\eqref{M-integral}, we obtain an $\mathfrak{sp}(4,\BB C)$-invariant map
\[
\bigl(\rho_{\alpha_1,\beta_1},\BB C [z_{11}, z_{12}, z_{22}, N(Z)^{-1}] \bigr) \otimes
\bigl(\rho_{\alpha_2,\beta_2},
N(Z)^{\frac12} \cdot \BB C [z_{11}, z_{12}, z_{22}, N(Z)^{-1}] \bigr)
\to \BB C.
\]
In other words, we have an $\mathfrak{sp}(4,\BB C)$-invariant bilinear pairing
between
\[
\bigl(\rho_{\alpha_1,\beta_1},\BB C [z_{11}, z_{12}, z_{22}, N(Z)^{-1}] \bigr)
\quad \text{and} \quad
\bigl(\rho_{\alpha_2,\beta_2},
N(Z)^{\frac12} \cdot \BB C [z_{11}, z_{12}, z_{22}, N(Z)^{-1}] \bigr).
\]
Combining these observations with Corollary \ref{M-integral-cor}, we obtain
an analogue of Proposition 69 in \cite{FL1}.

\begin{prop}
When $\alpha_1+\alpha_2+\beta_1+\beta_2=3$, we have an
$\mathfrak{sp}(4,\BB C)$-invariant bilinear pairing between
\[
\bigl(\rho_{\alpha_1,\beta_1},\BB C [z_{11}, z_{12}, z_{22}, N(Z)^{-1}] \bigr)
\quad \text{and} \quad
\bigl(\rho_{\alpha_2,\beta_2},
N(Z)^{\frac12} \cdot \BB C [z_{11}, z_{12}, z_{22}, N(Z)^{-1}] \bigr).
\]
This pairing can be expressed as
\[
\langle f_1, f_2 \rangle_M =
\frac{i}{8\pi^2} \int_{\widetilde{\Gamma}} f_1(Z) \cdot f_2(Z) \, dZ^3.
\]
\end{prop}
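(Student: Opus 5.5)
The plan is to assemble the pairing from two invariant maps and then identify the integral formula through Corollary \ref{M-integral-cor}.

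\textbf{Step 1: multiplication is equivariant.} Let $f_1$ lie in the first space and $f_2$ in the second. At the group level, $\rho_{\alpha_1,\beta_1}(h)f_1$ and $\rho_{\alpha_2,\beta_2}(h)f_2$ carry the factors $N(cZ+d)^{-\alpha_j}N(a'-Zc')^{-\beta_j}$, $j=1,2$. These factors multiply, so
\[
\rho_{\alpha_1,\beta_1}(h)f_1 \cdot \rho_{\alpha_2,\beta_2}(h)f_2
= \rho_{\alpha_1+\alpha_2,\beta_1+\beta_2}(h)(f_1f_2).
\]
At the Lie algebra level this can be checked directly from Lemma \ref{rho-alpha-beta-action}. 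The operators $-\tr(B\partial)$ and the vector-field parts $\tr(AZ\partial+ZA^T\partial)$, $\tr(ZCZ\partial)$ are first order with no constant term, so they obey the Leibniz rule. The zeroth-order terms $-(\alpha+\beta)\tr A$ and $(\alpha+\beta)\tr(CZ)$ are additive in $\alpha+\beta$. Hence
\[
X(f_1f_2) = (Xf_1)f_2 + f_1(Xf_2)
\]
for all $X\in\mathfrak{sp}(4,\BB C)$, with actions $\rho_{\alpha_1,\beta_1}$, $\rho_{\alpha_2,\beta_2}$ and $\rho_{\alpha_1+\alpha_2,\beta_1+\beta_2}$ respectively. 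The product $f_1f_2$ lies in $N(Z)^{\frac12}\cdot\BB C[z_{11},z_{12},z_{22},N(Z)^{-1}]$.

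\textbf{Step 2: land in $M$.} When $\alpha_1+\alpha_2+\beta_1+\beta_2=3$, Lemma \ref{rho-alpha-beta-action} says the action $\rho_{\alpha_1+\alpha_2,\beta_1+\beta_2}$ coincides with $\rho_M$, because only the sum $\alpha+\beta$ matters. So multiplication gives an $\mathfrak{sp}(4,\BB C)$-equivariant map into $(\rho_M,M)$.

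\textbf{Step 3: compose and realize as an integral.} Compose this map with the invariant map \eqref{M-integral} $M\to\BB C$. By Theorem \ref{rho-alpha-beta-thm}, that map exists as the projection onto the trivial quotient of Figure \ref{decomposition-fig2}. The composite is a bilinear form $\langle f_1,f_2\rangle$. It is invariant because the Leibniz identity of Step 1 gives
\[
\langle Xf_1,f_2\rangle + \langle f_1,Xf_2\rangle = \ell\bigl(X(f_1f_2)\bigr) = 0,
\]
where $\ell$ is the invariant functional \eqref{M-integral}. Corollary \ref{M-integral-cor} then gives
\[
\ell(f) = \frac{i}{8\pi^2}\int_{\widetilde\Gamma} f\,dZ^3,
\]
which yields the stated formula for $\langle f_1,f_2\rangle_M$.

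\textbf{Main obstacle: the square root.} The one delicate point is the half-integer power $N(Z)^{\frac12}$. On $\HC^{\times}$-sym it is only locally defined, so products and integrals must be interpreted on the double cover $\widetilde{\BB H}^{\times}$. There the powers $N(Z)^k$, $k\in\frac12\BB Z$, are genuine functions. The branch used in forming $f_1f_2$ must be the same one used in Corollary \ref{M-integral-cor}. Since $f_1$ involves only integral powers, this reduces to noting that $f_2$ and $f_1f_2$ both use the branch fixed on $\widetilde\Gamma$, which is automatic.
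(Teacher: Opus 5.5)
Your proposal is correct and follows the paper's own argument. The paper likewise notes that multiplication gives an $\mathfrak{sp}(4,\BB C)$-equivariant map into $\bigl(\rho_{\alpha_1+\alpha_2,\beta_1+\beta_2}, N(Z)^{\frac12}\cdot\BB C[z_{11},z_{12},z_{22},N(Z)^{-1}]\bigr)$, which is $(\rho_M,M)$ when the exponents sum to $3$; it then composes with the invariant map \eqref{M-integral} and invokes Corollary \ref{M-integral-cor} for the integral formula. Your Leibniz-rule check and your remark about using the double cover $\widetilde{\Gamma}$ simply spell out details the paper leaves implicit.
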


From \eqref{R-orthogonality} we obtain the orthogonality relations.

\begin{lem}
We have the following orthogonality relations:
\[  
\Bigl\langle N(Z)^k \cdot R^m_l(Z),
N(Z)^{-k'-l'-\frac32} \cdot R^{-m'}_{l'}(Z) \Bigr\rangle_M
= \frac{(-1)^m}{2l+1} \delta_{kk'} \delta_{ll'} \delta_{mm'}.
\]
\end{lem}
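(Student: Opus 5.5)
The plan is to compute the pairing directly from the integral formula for $\langle\,\cdot\,,\cdot\,\rangle_M$ over $\widetilde{\Gamma}$. On $\widetilde\Gamma$ we write $Z = zX$ with $X\in\BB H\text{-sym}$, $N(X)=1$, and $|z|=1$. The product of the two basis functions is
\[
N(Z)^{k-k'-l'-\frac32}\, R^m_l(Z)\, R^{-m'}_{l'}(Z).
\]
Since $R^m_l$ is homogeneous of degree $l$ and $N(zX)=z^2$, this product equals
\[
z^{2k-2k'-2l'-3+l+l'}\, R^m_l(X)\, R^{-m'}_{l'}(X).
\]

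Next, express the measure. Using the $U(2)$-invariant form $N(Z)^{-\frac32}dZ^3$, we have $dZ^3 = N(Z)^{\frac32}\cdot\bigl(N(Z)^{-\frac32}dZ^3\bigr)$, and $N(Z)^{\frac32} = z^3$. The invariant form itself should factor as a constant times $dS\wedge \frac{dz}{iz}$, where $dS$ is the area form on $S^2_1$. One can see this from homogeneity: scaling $Z\mapsto zX$ contributes a factor $z^3$ to $dZ^3$, and that factor is cancelled by $N^{-3/2}$. The constant is then fixed by the normalization $\int_{\widetilde\Gamma}N(Z)^{-\frac32}dZ^3=-8\pi^2 i$. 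This gives a product integral:
\[
\langle\cdot,\cdot\rangle_M = \frac{i}{8\pi^2}\cdot c \int_{|z|=1} z^{\,2k-2k'+l-l'}\,\frac{dz}{iz}\;\int_{S^2_1} R^m_l(X)R^{-m'}_{l'}(X)\,dS .
\]

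Now evaluate the two factors separately. By the orthogonality relations \eqref{R-orthogonality} at $r=1$, the sphere integral equals $4\pi(-1)^m\delta_{ll'}\delta_{mm'}/(2l+1)$. Once $l=l'$, the circle integral gives $2\pi\,\delta_{kk'}$. Multiplying these by the constant, which was determined by the $k=k'$, $l=l'=m=0$ case where the sphere integral is $4\pi$, yields exactly $(-1)^m/(2l+1)\,\delta_{kk'}\delta_{ll'}\delta_{mm'}$.

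As a consistency check, the orthogonality relations \eqref{R-orthogonality} themselves suggest this normalization, since the constant term pairs to $1$. It is also consistent with invariance, because $N(Z)^{-k-l-3/2}R^{-m}_l$ lies in the component pairing with the one containing $N^kR^m_l$.

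The main obstacle is the careful factorization of $dZ^3$ restricted to $\widetilde\Gamma$ into the $S^2$ area form times $d\psi$, with the correct sign and orientation. The paper's orientation conventions and the value $-8\pi^2i$ must combine to give a positive constant. To handle this, I would verify the factorization at the point $Z=1$ using the basis $\{-ie_0,e_1,e_3\}$, and then propagate it by $U(2)$-invariance. Alternatively, the $\Gamma_s$ picture of Lemma \ref{dZonGamma_s} gives an explicit form $-i\sin\theta e^{3i\psi}d\theta\, d\phi\, d\psi$, which makes the computation transparent: the $\psi$-integral of $e^{i\psi(2k-2k'+l-l'-3+3)}$ gives the Kronecker delta directly.
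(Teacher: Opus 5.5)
Your proposal is correct and carries out exactly the computation the paper leaves implicit when it says the lemma follows from \eqref{R-orthogonality}. On $\widetilde{\Gamma}$ one has $dZ^3=-i\sin\theta\, e^{3i\psi}\,d\theta\wedge d\phi\wedge d\psi$ and $N(Z)^{\frac12}=e^{i\psi}$, so the pairing factors as $\frac{1}{8\pi^2}\cdot 2\pi\,\delta_{kk'}\cdot 4\pi\,\frac{(-1)^m}{2l+1}\delta_{ll'}\delta_{mm'}$, just as you found. The homogeneity heuristic alone does not show that the density on the sphere is the round $dS$. The direct computation you offer as an alternative settles this, as does $U(2)$-invariance combined with the transitivity of $U(2)$ on $\Gamma$.
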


Recall that, by Theorem \ref{rho-alpha-beta-thm}, $(\rho_M,M)$ has invariant
subspaces
\begin{align}
&\BB C\text{-span of }
\bigl\{ N(Z)^k \cdot R^m_l(Z);\: k \ge -(l+\tfrac12) \bigr\} \qquad \text{and}
\label{MM^+}  \\
&\BB C\text{-span of }
\bigl\{ N(Z)^k \cdot R^m_l(Z);\: k \le -\tfrac52 \bigr\}.  \label{MM^-}
\end{align}
From the series expansion \eqref{N-3/2-expansion} of
$N(W)^{-\frac32} \cdot N(1-ZW^{-1})^{-\frac32}$
we obtain an analogue of Corollary 14 in \cite{desitter}.

\begin{thm}
\begin{enumerate}
\item
If $Z \in \BB D^+$, then the map
\[
f \mapsto (\P^+ f)(Z) = 
\frac i{8\pi^2} \int_{W \in \widetilde{\Gamma}}
\frac{N(Z)^{-\frac32} \cdot f(W) \,dW^3} {N(1-ZW^{-1})^{\frac32}}
\]
annihilates \eqref{MM^-} and projects $M$ onto the quotient module
\begin{equation}    \label{quotient1}
\frac{N(Z)^{\frac12} \cdot \BB C [z_{11}, z_{12}, z_{22}, N(Z)^{-1}]}
{\BB C\text{-span of }
\bigl\{ N(Z)^k \cdot R^m_l(Z);\: k \le -\tfrac52 \bigr\}}.
\end{equation}
\item
If $Z \in \BB D^-$, then the map
\[
f \mapsto (\P^- f)(Z) = 
\frac i{8\pi^2} \int_{W \in \widetilde{\Gamma}}
\frac{N(Z)^{-\frac32} \cdot f(W) \,dW^3} {N(1-WZ^{-1})^{\frac32}}
\]
annihilates \eqref{MM^+} and projects $M$ onto the quotient module
\begin{equation}    \label{quotient2}
\frac{N(Z)^{\frac12} \cdot \BB C [z_{11}, z_{12}, z_{22}, N(Z)^{-1}]}
{\BB C\text{-span of }
\bigl\{ N(Z)^k \cdot R^m_l(Z);\: k \ge -(l+\tfrac12) \bigr\}}.
\end{equation}
\end{enumerate}
In particular, these maps $\P^+$ and $\P^-$ provide reproducing formulas
for functions in the quotient spaces \eqref{quotient1} and \eqref{quotient2}
respectively.
\end{thm}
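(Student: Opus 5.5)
We substitute the series expansion \eqref{N-3/2-expansion} into the integrals and apply the orthogonality relations for $\langle\cdot,\cdot\rangle_M$ term by term.

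For part 1, we take $Z \in \BB D^+$ and $W \in \widetilde{\Gamma}$. Then $N(W)$ has absolute value $1$, while $Z$ is "smaller" than $W$. Using \eqref{N-3/2-expansion} with $Z_1=Z$, $Z_2=W$, we write the kernel as
\[
N(W)^{-\frac32} N(1-ZW^{-1})^{-\frac32} = \sum_{k,l,m} (-1)^m(2l+1)\, N(Z)^k R^m_l(Z)\cdot N(W)^{-k-l-\frac32}R^{-m}_l(W).
\]
Note that $\P^+$ has the factor $N(Z)^{-\frac32}$ rather than $N(W)^{-\frac32}$. So we first rewrite the kernel $N(Z)^{-3/2}N(1-ZW^{-1})^{-3/2}$, or more likely read it as $N(W)^{-3/2}N(1-ZW^{-1})^{-3/2}$ up to the normalization of the half-integer powers on the double cover. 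This is the point to check. The expansion converges uniformly in $W\in\widetilde\Gamma$ for fixed $Z\in\BB D^+$; we would justify this by the absolute convergence of \eqref{GR8.921} when $\|Z\|<1$. Hence we may integrate term by term.

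For a basis element $f = N(W)^{k'}R^{m'}_{l'}(W)$ with $k'\in\tfrac12+\BB Z$, the $(k,l,m)$ term gives
\[
(-1)^m(2l+1)\, N(Z)^kR^m_l(Z)\,\bigl\langle N(W)^{k'}R^{m'}_{l'},\, N(W)^{-k-l-\frac32}R^{-m}_l\bigr\rangle_M .
\]
We then invoke the orthogonality lemma with the roles arranged so that the pairing is between the integer-power and half-integer-power spaces. The total power lies in $\tfrac12+\BB Z$ when combined with $N(W)^{\frac12}\cdot$(integer), so that is the bookkeeping we track. Orthogonality kills every term except $l=l'$, $m=-m'$ (or $m=m'$, after sign bookkeeping), and the matching of $N$-exponents. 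That term returns $f(Z)$ exactly: the factor $2l+1$ cancels the $\frac1{2l+1}$ and the $(-1)^m$ signs cancel. When no term matches, the result is zero.

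The step we expect to be the real obstacle is identifying which $k'$ are reached. The expansion only contains $k\ge0$ on the $Z$-side, equivalently exponents $-k-l-\frac32\le -l-\frac32$ on the $W$-side. In the half-integer setting, the reached $f$ are precisely those with $k'\ge -(l+\tfrac12)$-type indices, and the killed ones are those in \eqref{MM^-}. Once the exponent ranges are verified, this shows that $\P^+$ annihilates \eqref{MM^-} and reproduces representatives of the quotient \eqref{quotient1}.

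Equivariance comes from two facts. First, $\P^+$ is built from the invariant pairing of Corollary \ref{M-integral-cor}. Second, the kernel transforms correctly, which follows from Lemma \ref{Jacobian_lemma} together with the transformation of $N(Z_1-Z_2)$ under $Sp(4,\BB R)$. Alternatively, equivariance follows directly from the fact that the image is the quotient module and the map is identity on its basis.

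Part 2 is symmetric. For $Z\in\BB D^-$ we expand $N(1-WZ^{-1})^{-\frac32}$ using \eqref{N-3/2-expansion} with $Z_1=W$, $Z_2=Z$, and again integrate term by term. Now the $W$-side carries $N(W)^kR^m_l(W)$ with $k\ge0$. Pairing against these picks out $f$ with exponent $\le -\tfrac52$-type (negative) indices and annihilates \eqref{MM^+}. The orthogonality constant again cancels $2l+1$, giving the reproducing formula on \eqref{quotient2}. Alternatively, one may deduce part 2 from part 1 via the inversion $Z\mapsto Z^{-1}$ using Lemma \ref{Gamma_s-Jacobian-lemma} and the relation $R^m_l(Z^{-1})\propto N(Z)^{-l}R^{-m}_l(Z)$, though the direct computation seems cleaner.
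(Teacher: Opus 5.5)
Your overall strategy is the same as the paper's: substitute the expansion \eqref{N-3/2-expansion}, integrate term by term over $\widetilde{\Gamma}$, and apply the orthogonality relations for $\langle\cdot,\cdot\rangle_M$. Your part 2 is essentially right. Part 1, however, has a genuine gap at exactly the point you flagged.

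The factor $N(Z)^{-\frac32}$ in $\P^+$ is not a typo, and your preferred reading $N(W)^{-\frac32}N(1-ZW^{-1})^{-\frac32}$ makes the map identically zero on $M$. With that kernel, the $W$-factor of each term is $N(W)^{-k-l-\frac32}R^{-m}_l(W)$, which carries a half-integer power of $N$. Multiplying it by $f\in M$, which also carries half-integer powers, gives an integrand with only integer powers of $N(W)$. Such an integrand integrates to zero over $\widetilde{\Gamma}$. To see this, expand it in the basis $N(W)^jR^\mu_\lambda(W)$ with $j\in\BB Z$: only $\lambda=0$ survives the sphere integral, and then the degree $2j$ can never equal $-3$. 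The orthogonality lemma pairs integer-power functions against half-integer-power functions, so it does not apply to your pairing. Your claim that the surviving term ``returns $f(Z)$ exactly'' is also inconsistent: your $Z$-side factors $N(Z)^kR^m_l(Z)$ have integer $k\ge0$, whereas $f$ has half-integer exponents.

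The fix is to keep the kernel as stated and write $N(1-ZW^{-1})^{-\frac32}=N(W)^{\frac32}\cdot N(W)^{-\frac32}N(1-ZW^{-1})^{-\frac32}$. The $(k,l,m)$ term of the kernel then becomes
\[
(-1)^m(2l+1)\,N(Z)^{k-\frac32}R^m_l(Z)\cdot N(W)^{-k-l}R^{-m}_l(W).
\]
Now the $W$-factor has integer powers, and the orthogonality lemma gives $\langle N^{-k-l}R^{-m}_l,\,N^{k'}R^{m'}_{l'}\rangle_M=\frac{(-1)^m}{2l+1}$ exactly when $k'=k-\frac32$, $l'=l$, $m'=m$, and $0$ otherwise.

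So $\P^+$ fixes $N^{k'}R^{m'}_{l'}$ for every $k'\ge-\frac32$ and kills every $k'\le-\frac52$, which is precisely \eqref{MM^-}. Your stated reproduced range ``$k'\ge-(l+\frac12)$'' is wrong. That range is \eqref{MM^+}, which overlaps \eqref{MM^-} in the span of $N^kR^m_l$ with $-(l+\frac12)\le k\le-\frac52$, so it cannot be the set of reproduced elements.

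In part 2 your kernel matches the statement, with integer-power $W$-factors $N(W)^kR^m_l(W)$. The reproduced elements there are $N^{k'}R^{m'}_{l'}$ with $k'=-k-l-\frac32\le-(l'+\frac32)$ and $m'=-m$, not ``$\le-\frac52$''. With these ranges, $\P^-$ annihilates \eqref{MM^+} as required.
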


\section{Realizations of Regular Functions and
  Related Spaces of Functions}  \label{5}

In this section we discuss different ways of realizing regular functions.
Additionally, we discuss related $\mathfrak{sp}(4,\BB C)$-modules of functions,
such as quasi regular functions and analogues of regular functions on
$\HC^{\times}\text{-sym}$.

\subsection{The Standard Realization of Regular Functions}  \label{standard-realiz-subsection}

In previous papers \cite{FL1}-\cite{qreg} we treated the left and right
regular functions as representations of either $\mathfrak{gl}(2,\HC)$ or
$\mathfrak{su}(2,2)$ (when restricted to
$\mathfrak{su}(2,2) \subset \mathfrak{gl}(2,\HC)$
these representations are unitary).
When we restrict these representations further to $\mathfrak{sp}(4,\BB C)$,
they remain irreducible unitary (Proposition \ref{nreg-irreducibility-prop}),
and the distinction between left and right regular functions disappears
-- they become isomorphic (Corollary \ref{left-right-regular-cor}).

Consider vector spaces
\begin{align*}
{\cal V}_{res} &=
\{\text{$\BB S$-valued polynomial functions on $\HC^{\times} \text{-sym}$}\}
= \BB S \otimes \BB C [z_{11}, z_{12}, z_{22}, N(Z)^{-1}],  \\
{\cal V}'_{res} &=
\{\text{$\BB S'$-valued polynomial functions on $\HC^{\times} \text{-sym}$}\}
= \BB S' \otimes \BB C [z_{11}, z_{12}, z_{22}, N(Z)^{-1}].
\end{align*}
The Lie algebra $\mathfrak{sp}(4,\BB C)$ acts on these spaces by
differentiating the following group actions:
\begin{align*}
\pi_l(h): \: f(Z) \: &\mapsto \: \bigl( \pi_l(h)f \bigr)(Z) =
\frac {(cZ+d)^{-1}}{N(cZ+d)} \cdot f \bigl( (aZ+b)(cZ+d)^{-1} \bigr),  \\
\pi_r(h): \: g(Z) \: &\mapsto \: \bigl( \pi_r(h)g \bigr)(Z) =
g \bigl( (a'-Zc')^{-1}(-b'+Zd') \bigr) \cdot \frac {(a'-Zc')^{-1}}{N(a'-Zc')},
\end{align*}
where $f \in {\cal V}_{res}$, $g \in {\cal V}'_{res}$,
$h = \bigl(\begin{smallmatrix} a' & b' \\ c' & d' \end{smallmatrix}\bigr)
\in Sp(4,\BB R) \subset GL(2,\HC)$ and 
$h^{-1} = \bigl(\begin{smallmatrix} a & b \\ c & d \end{smallmatrix}\bigr)$.
These are the same expressions as \eqref{pi_l}-\eqref{pi_r}, and
we think of $(\pi_l, {\cal V}_{res})$ and $(\pi_r, {\cal V}'_{res})$
as respectively representations $(\pi_l, {\cal V})$ and $(\pi_r, {\cal V}')$
of $\mathfrak{gl}(2,\HC)$ restricted to $\mathfrak{sp}(4,\BB C)$ and
$\HC^{\times} \text{-sym}$.
We essentially restate Lemma \ref{pi-Lie_alg-action}:

\begin{lem}  \label{pi-action-lem}
The Lie algebra action $\pi_l$ of $\mathfrak{sp}(4,\BB C)$ on ${\cal V}_{res}$
is given by
\begin{align*}
\pi_l \bigl( \begin{smallmatrix} A & 0 \\ 0 & -A^T \end{smallmatrix} \bigr) &:
f(Z) \mapsto - \tr (AZ \partial + ZA^T\partial + A) f - A^Tf,  \\
\pi_l \bigl( \begin{smallmatrix} 0 & B \\ 0 & 0 \end{smallmatrix} \bigr) &:
f(Z) \mapsto - \tr (B \partial) f,  \\
\pi_l \bigl( \begin{smallmatrix} 0 & 0 \\ C & 0 \end{smallmatrix} \bigr) &:
f(Z) \mapsto \tr (ZCZ \partial + CZ) f + CZf.
\end{align*}
The Lie algebra action $\pi_r$ of $\mathfrak{sp}(4,\BB C)$ on ${\cal V}'_{res}$
is given by
\begin{align*}
\pi_r \bigl( \begin{smallmatrix} A & 0 \\ 0 & -A^T \end{smallmatrix} \bigr) &:
g(Z) \mapsto - \tr (AZ \partial + ZA^T\partial + A) g - gA,  \\
\pi_r \bigl( \begin{smallmatrix} 0 & B \\ 0 & 0 \end{smallmatrix} \bigr) &:
g(Z) \mapsto - \tr (B \partial) g,  \\
\pi_r \bigl( \begin{smallmatrix} 0 & 0 \\ C & 0 \end{smallmatrix} \bigr) &:
g(Z) \mapsto \tr (ZCZ \partial + ZC) g + gZC.
\end{align*}
\end{lem}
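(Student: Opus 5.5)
The lemma is a specialization of Lemma \ref{pi-Lie_alg-action} to the subalgebra $\mathfrak{sp}(4,\BB C)\subset\mathfrak{gl}(2,\HC)$, together with a check that the formulas make sense for functions defined only on $\HC^{\times}\text{-sym}$ with the redefined $\partial$. The plan is to write each generator of $\mathfrak{sp}(4,\BB C)$ as a sum of the basic block matrices treated in Lemma \ref{pi-Lie_alg-action} and add the resulting operators by linearity. The generators are $\bigl(\begin{smallmatrix} A & 0 \\ 0 & -A^T \end{smallmatrix}\bigr)$, $\bigl(\begin{smallmatrix} 0 & B \\ 0 & 0 \end{smallmatrix}\bigr)$ with $B^T=B$, and $\bigl(\begin{smallmatrix} 0 & 0 \\ C & 0 \end{smallmatrix}\bigr)$ with $C^T=C$.

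\textbf{The $B$ and $C$ blocks.} These are literally special cases of Lemma \ref{pi-Lie_alg-action}, so their formulas transfer verbatim once the meaning of $\partial$ is fixed.

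\textbf{The diagonal generator for $\pi_l$.} Split it as $\bigl(\begin{smallmatrix} A & 0 \\ 0 & 0 \end{smallmatrix}\bigr)+\bigl(\begin{smallmatrix} 0 & 0 \\ 0 & D \end{smallmatrix}\bigr)$ with $D=-A^T$. Lemma \ref{pi-Lie_alg-action} gives
\[
-\tr(AZ\partial)f+\tr(ZD\partial+D)f+Df=-\tr(AZ\partial)f-\tr(ZA^T\partial)f-\tr(A^T)f-A^Tf,
\]
and $\tr A^T=\tr A$ produces the stated formula.

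\textbf{The diagonal generator for $\pi_r$.} The same split applies. Lemma \ref{pi-Lie_alg-action} gives
\[
-\tr(AZ\partial+A)g-gA+\tr(ZD\partial)g=-\tr(AZ\partial+ZA^T\partial+A)g-gA,
\]
which is the stated formula.

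\textbf{Justifying the restriction.} This is the main point needing care. In Lemma \ref{pi-Lie_alg-action}, $\partial$ is the full $2\times2$ matrix of partials in $z_{11},z_{12},z_{21},z_{22}$, whereas here functions live on $\HC\text{-sym}$ and $\partial$ has been redefined. The argument should run as follows.

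1. The group $Sp(4,\BB R)$ preserves $\HC\text{-sym}$. So for $X\in\mathfrak{sp}(4,\BB C)$, the vector field generating the fractional linear action is tangent to $\HC\text{-sym}$.
2. The operators $\tr(AZ\partial+ZA^T\partial)$, $\tr(B\partial)$ and $\tr(ZCZ\partial)$ are exactly these vector fields. Applied to a function on $\HC$, they depend only on the restriction of that function to $\HC\text{-sym}$.
3. The redefined $\partial$ is precisely the restriction of the full $\partial$ pairing against symmetric matrices. For a symmetric matrix $S$, $\tr(S\partial)$ involves the off-diagonal derivatives only through $S_{12}\partial_{21}+S_{21}\partial_{12}=S_{12}(\partial_{12}+\partial_{21})$. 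In the coordinate $t$ (where the off-diagonal entries equal $it$), this combination equals $-i\,\partial/\partial t$, matching the entries $-\tfrac{i}2\tfrac{\partial}{\partial t}$ of the redefined $\partial$.
4. The symmetry needed in step 3 holds in each case. The matrix $B$ is symmetric, and so is $ZCZ$ when $Z$ and $C$ are symmetric. For $AZ+ZA^T$, the symmetry holds as well, since $(AZ)^T=ZA^T$.

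I expect this identification to be the only nontrivial step, and the factor $\tfrac12$ in the $t$-entries to be the place where a sign or constant error could enter. I would double-check it on a simple case such as $B=\bigl(\begin{smallmatrix}0&1\\1&0\end{smallmatrix}\bigr)$ applied to $f=t$.
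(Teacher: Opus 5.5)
Your proposal is correct and follows the paper's approach. The paper gives no separate argument and presents the lemma as a restatement of Lemma~\ref{pi-Lie_alg-action} for $\mathfrak{sp}(4,\BB C)$. Your splitting of the diagonal generator, using $\tr A^T=\tr A$, reproduces the stated formulas exactly. Your check that the full $\partial$ agrees with the redefined symmetric $\partial$ is also accurate: $\partial_{12}+\partial_{21}=-i\,\partial/\partial t$ on $\HC\text{-sym}$, and $B$, $ZCZ$, $AZ+ZA^T$ are symmetric there. This check is the one point the paper leaves implicit, and it is valid even for non-regular extensions, since those formulas come directly from differentiating the group action.
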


The inversions on ${\cal V}_{res}$ and ${\cal V}'_{res}$ produced by
$\bigl(\begin{smallmatrix} 0 & 1 \\ 1 & 0 \end{smallmatrix}\bigr)
\in GL(2,\HC)$ are:
\[
\pi_l \bigl(\begin{smallmatrix} 0 & 1 \\ 1 & 0 \end{smallmatrix}\bigr):\:
f(Z) \mapsto \tfrac{Z^{-1}}{N(Z)} \cdot f(Z^{-1}), \qquad
\pi_r\bigl(\begin{smallmatrix} 0 & 1 \\ 1 & 0 \end{smallmatrix}\bigr):\:
g(Z) \mapsto - g(Z^{-1}) \cdot \tfrac{Z^{-1}}{N(Z)}.
\]

There are the spaces of functions regular at the origin and infinity:
\begin{align}
{\cal V}_{res}^+ &=
\{\text{$\BB S$-valued polynomial functions on $\HC \text{-sym}$}\}
= \BB S \otimes \BB C [z_{11}, z_{12}, z_{22}],  \label{V^+}  \\
{\cal V}_{res}^- &= \bigl\{
f \in \BB S \otimes \BB C[z_{11},z_{12},z_{22}, N(Z)^{-1}] ;\:
N(Z)^{-1} \cdot Z^{-1} \cdot f(Z^{-1}) \in {\cal V}_{res}^+ \bigr\}, \label{V^-} \\
{\cal V}'^+_{res} &=
\{\text{$\BB S'$-valued polynomial functions on $\HC \text{-sym}$}\}
= \BB S' \otimes \BB C [z_{11}, z_{12}, z_{22}], \label{V'^+}  \\
{\cal V}'^-_{res} &= \bigl\{
g \in \BB S' \otimes \BB C[z_{11},z_{12},z_{22}, N(Z)^{-1}] ;\:
N(Z)^{-1} \cdot g(Z^{-1}) \cdot Z^{-1} \in {\cal V}'^+_{res} \bigr\}.  \label{V'^-}
\end{align}

We can describe the $K$-types of $(\pi_l, {\cal V}_{res}^{\pm})$. Let
\[
{\bf v^t_{l,m}}(Z)
= \begin{pmatrix} (l+m+1) R^m_l(Z) \\ i R^{m+1}_l(Z) \end{pmatrix}, \qquad
\begin{smallmatrix} l=0,1,2,3,\dots, \\ -l-1 \le m \le l, \end{smallmatrix}
\]
and
\[
{\bf v^b_{l,m}}(Z)
= \begin{pmatrix} (l-m) R^m_l(Z) \\ -i R^{m+1}_l(Z) \end{pmatrix}, \qquad
\begin{smallmatrix} l=1,2,3,4,\dots, \\ -l \le m \le l-1. \end{smallmatrix}
\]
For a fixed $l$, these functions span $U(2)$-invariant subspaces of dimensions
$2l+2$ and $2l$ respectively.
Let $A_1=\bigl( \begin{smallmatrix} 0 & 1 \\ 0 & 0 \end{smallmatrix} \bigr)$
and $A_2=\bigl( \begin{smallmatrix} 0 & 0 \\ 1 & 0 \end{smallmatrix} \bigr)$,
then
\begin{align*}
\pi_l \bigl( \begin{smallmatrix} A_1 & 0 \\ 0 & -A_1^T \end{smallmatrix} \bigr)
   {\bf v^t_{l,m}}(Z) &= i(l-m+1)(l+m+1) {\bf v^t_{l,m-1}}(Z),  \\
\pi_l \bigl( \begin{smallmatrix} A_2 & 0 \\ 0 & -A_2^T \end{smallmatrix} \bigr)
   {\bf v^t_{l,m}}(Z) &= -i {\bf v^t_{l,m+1}}(Z);  \\
\pi_l \bigl( \begin{smallmatrix} A_1 & 0 \\ 0 & -A_1^T \end{smallmatrix} \bigr)
{\bf v^b_{l,m}}(Z) &= i(l-m)(l+m) {\bf v^b_{l,m-1}}(Z),  \\
\pi_l \bigl( \begin{smallmatrix} A_2 & 0 \\ 0 & -A_2^T \end{smallmatrix} \bigr)
{\bf v^b_{l,m}}(Z) &= -i {\bf v^b_{l,m+1}}(Z).
\end{align*}
The $K$-types of $(\pi_r, {\cal V}'^{\pm}_{res})$ are obtained by transposing
the ${\bf v^t_{l,m}}(Z)$'s and ${\bf v^b_{l,m}}(Z)$'s.

We have analogues of Lemma 23 in \cite{FL1}:
\begin{align}
Z \cdot \begin{pmatrix} (l+m+1) R^m_l(Z) \\ i R^{m+1}_l(Z) \end{pmatrix}
&= \begin{pmatrix} - R^{m+1}_{l+1}(Z) \\ i(l-m+1) R^m_{l+1}(Z) \end{pmatrix},
\label{Lemma23-analogue1}  \\
Z \cdot \begin{pmatrix} (l-m) R^m_l(Z) \\ -i R^{m+1}_l(Z) \end{pmatrix}
&= N(Z) \cdot \begin{pmatrix} R^{m+1}_{l-1}(Z) \\ i(l+m) R^m_{l-1}(Z)
\end{pmatrix}.  \label{Lemma23-analogue2}
\end{align}

We describe the effect of the inversions on the basis functions:

\begin{lem}  \label{V-inversion-lem}
Let $\bigl(\begin{smallmatrix} 0 & 1 \\ 1 & 0 \end{smallmatrix}\bigr)
\in GL(2,\HC)$, then
\begin{align*}
\pi_l \bigl(\begin{smallmatrix} 0 & 1 \\ 1 & 0 \end{smallmatrix}\bigr)
\bigl( N(Z)^k \cdot {\bf v^t_{l,m}}(Z) \bigr) &=
(-1)^l \tfrac{(l+m+1)!}{(l-m)!} N(Z)^{-(k+l+2)} \cdot {\bf v^b_{l+1,-m-1}}(Z),  \\
\pi_l \bigl(\begin{smallmatrix} 0 & 1 \\ 1 & 0 \end{smallmatrix}\bigr)
\bigl( N(Z)^k \cdot {\bf v^b_{l+1,m}}(Z) \bigr) &=
(-1)^l \tfrac{(l+m+1)!}{(l-m)!} N(Z)^{-(k+l+2)} \cdot {\bf v^t_{l,-m-1}}(Z).
\end{align*}
\end{lem}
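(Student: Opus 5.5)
The plan is a direct computation from the explicit inversion formula
\[
\pi_l \bigl(\begin{smallmatrix} 0 & 1 \\ 1 & 0 \end{smallmatrix}\bigr) f(Z) = \tfrac{Z^{-1}}{N(Z)} \cdot f(Z^{-1}).
\]
First, I will compute the inversion of the scalar parts. Since $N(Z^{-1}) = N(Z)^{-1}$, the identity $R^m_l(Z^{-1}) = (-1)^l \tfrac{(l+m)!}{(l-m)!} N(Z)^{-l} R^{-m}_l(Z)$ (stated after \eqref{Pm-m}) gives
\[
\bigl(N^k \cdot {\bf v^t_{l,m}}\bigr)(Z^{-1}) = N(Z)^{-k-l} (-1)^l
\begin{pmatrix} (l+m+1)\tfrac{(l+m)!}{(l-m)!} R^{-m}_l(Z) \\ i\tfrac{(l+m+1)!}{(l-m-1)!} R^{-m-1}_l(Z) \end{pmatrix}.
\]
Factoring out $\tfrac{(l+m+1)!}{(l-m)!}$, the remaining column is $\bigl(R^{-m}_l,\ i(l-m)R^{-m-1}_l\bigr)^T$.

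Next, I need to apply $Z^{-1} = Z^+/N(Z)$. For symmetric $Z$ one has $Z^+ = \mathrm{adj}(Z)$, that is, the matrix $\bigl(\begin{smallmatrix} z_{22} & -z_{12} \\ -z_{12} & z_{11}\end{smallmatrix}\bigr)$. The cleanest route is probably not to multiply out with \eqref{z_{11}R}--\eqref{z_{22}R} directly. Instead, I will write $Z^+ = N(Z) Z^{-1}$ and observe that the column above equals, up to reindexing, the right-hand side of \eqref{Lemma23-analogue2} with $(l,m)$ replaced by $(l+1,-m-1)$. Indeed, that right-hand side is $N(Z)\bigl(R^{-m}_{l}, i(l-m)R^{-m-1}_{l}\bigr)^T$, and this is exactly $Z\,{\bf v^b_{l+1,-m-1}}$.

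Hence the column equals $N(Z)^{-1} Z\,{\bf v^b_{l+1,-m-1}}(Z)$, and applying $Z^{-1}$ gives $N(Z)^{-1}{\bf v^b_{l+1,-m-1}}$. Collecting the powers of $N$, namely $N^{-1}$ from the prefactor, $N^{-k-l}$ from the substitution, and $N^{-1}$ from this step, gives the exponent $-(k+l+2)$. This matches the first formula.

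For the second formula I will expand $\bigl(N^k {\bf v^b_{l+1,m}}\bigr)(Z^{-1})$ in the same way. The factorial ratios coming from $R^m_{l+1}$ and $R^{m+1}_{l+1}$ give the column
\[
N^{-k-l-1}(-1)^{l+1}\tfrac{(l+m+1)!}{(l-m)!}\bigl(R^{-m}_{l+1},\ -i(l-m+1)\cdots\bigr)^T.
\]
I will then recognize this, up to sign, as $Z\,{\bf v^t_{l,-m-1}}$ via \eqref{Lemma23-analogue1} with $m\to -m-1$. Multiplying by $Z^{-1}/N$ then yields ${\bf v^t_{l,-m-1}}$ with $N^{-(k+l+2)}$. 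Alternatively, the second formula follows from the first because the inversion is an involution: applying it twice gives the identity, since $Z^{-1}/N(Z)\cdot Z/N(Z)^{-1}=1$. Then $\pi_l$ of the inversion, applied to $N^{-(k+l+2)}{\bf v^b_{l+1,-m-1}}$, is forced, and reindexing $k\to -(k+l+2)$, $m\to -m-1$ gives the second formula. The factorial ratio is symmetric under $m\to -m-1$, so the constant comes out right.

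The main obstacle is the bookkeeping of signs and factorial constants, especially checking the $(-1)^l$ versus $(-1)^{l+1}$ and the factor $i$ in the second component. I will use the involution argument to avoid the second direct computation, so only the first identity needs careful checking against \eqref{Lemma23-analogue2}.
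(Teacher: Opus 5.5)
Your proposal is correct and is exactly the direct computation the paper intends: the lemma is stated without proof, immediately after the identity for $R^m_l(Z^{-1})$ and \eqref{Lemma23-analogue1}--\eqref{Lemma23-analogue2}, which are precisely the ingredients you use. Two small slips, neither of which affects validity:
\begin{itemize}
\item In the direct computation for ${\bf v^b_{l+1,m}}$, the second entry of the column is $-i(l+m+2)R^{-m-1}_{l+1}$, not $-i(l-m+1)\cdots$; this is what \eqref{Lemma23-analogue1} with $m\to -m-1$ produces.
\item In the involution argument, the constant comes out right because $m\mapsto -m-1$ \emph{inverts} the ratio $\tfrac{(l+m+1)!}{(l-m)!}$, which compensates for the reciprocal introduced by $\pi_l(h)^2=1$. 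The ratio is not symmetric under this substitution.
\end{itemize}
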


\begin{thm}  \label{1reg-res-irred-thm}
The $\mathfrak{sp}(4,\BB C)$-modules
\[
(\pi_l, {\cal V}^+_{res}), \quad
(\pi_l, {\cal V}^-_{res}), \quad
(\pi_r, {\cal V}'^+_{res}), \quad
(\pi_r, {\cal V}'^-_{res})
\]
are irreducible, isomorphic respectively to
\[
(\pi_l, {\cal V}^+), \quad
(\pi_l, {\cal V}^-), \quad
(\pi_r, {\cal V}'^+), \quad
(\pi_r, {\cal V}'^-)
\]
and have $\mathfrak{sp}(4,\BB R)$-invariant unitary structures.
The $K$-types of $(\pi_l, {\cal V}_{res})$ are $V_{l+\frac12}$,
$l=0,1,2,3,\dots$, spanned by $N(Z)^k \cdot {\bf v^t_{l,m}}(Z)$
and by $N(Z)^k \cdot {\bf v^b_{l+1,m}}(Z)$,  $-l-1 \le m \le l$.

Furthermore,
\begin{align*}
{\cal V}_{res}^+ &= \BB C\text{-span of }
\bigl\{ N(Z)^k \cdot {\bf v^t_{l,m}}(Z),\: N(Z)^k \cdot {\bf v^b_{l+1,m}}(Z) ;\:
k,l \ge 0 \bigr\},  \\
{\cal V}_{res}^- &= \BB C\text{-span of }
\bigl\{ N(Z)^{-(k+l+2)} \cdot {\bf v^t_{l,m}}(Z),\:
N(Z)^{-(k+l+2)} \cdot {\bf v^b_{l+1,m}}(Z) ;\: k,l \ge 0 \bigr\}.
\end{align*}
\end{thm}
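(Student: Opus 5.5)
The key idea is that restriction to $\HC\text{-sym}$ is injective on regular functions. Once that is established, every claim in the statement is inherited from the full $\mathfrak{gl}(2,\HC)$-modules via Proposition \ref{nreg-irreducibility-prop}, apart from an explicit description of the image.

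\emph{Step 1: the restriction map.} Consider the restriction map ${\cal V}^{\pm} \to {\cal V}^{\pm}_{res}$, $f \mapsto f|_{\HC\text{-sym}}$, and its right-regular analogue. The group actions in Subsection \ref{standard-realiz-subsection} are given by the same formulas \eqref{pi_l}-\eqref{pi_r}, and $Sp(4,\BB R)$ preserves $\HC\text{-sym}$. Hence restriction intertwines the $\mathfrak{sp}(4,\BB C)$-actions; at the Lie algebra level this is Lemma \ref{pi-action-lem}, a restatement of Lemma \ref{pi-Lie_alg-action}.

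\emph{Step 2: injectivity.} I would prove injectivity from the regularity equation $\nabla^+f=0$. This is a Cauchy--Kovalevskaya type system, which expresses $\partial f/\partial z^2$ in terms of the derivatives along $e_0,e_1,e_3$:
\[
e_2\, \partial f/\partial z^2 = -(e_0\partial_0+e_1\partial_1+e_3\partial_3)f.
\]
So a polynomial regular function vanishing on the hyperplane $z^2=0$ vanishes identically. Alternatively, since $(\pi_l,{\cal V}^{\pm})$ is irreducible as an $\mathfrak{sp}(4,\BB C)$-module by Proposition \ref{nreg-irreducibility-prop}, it suffices to note that the kernel is a submodule not containing the constant function ${\bf v^t_{0,0}}$; the kernel is therefore zero. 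In either case the image is isomorphic to $(\pi_l,{\cal V}^{\pm})$, hence irreducible and unitary with respect to $\mathfrak{sp}(4,\BB R)$, because the $\mathfrak{su}(2,2)$-unitary structure restricts.

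\emph{Step 3: identifying the image (main obstacle).} It remains to show that the image is all of ${\cal V}^+_{res}=\BB S\otimes\BB C[z_{11},z_{12},z_{22}]$, which is the step I expect to be hardest. I would argue by $K$-type counting. The $K$-types of ${\cal V}^+$ under $SU(2)\times SU(2)$ are $V_{l}\boxtimes V_{l+\frac12}$, and restricted to the diagonal $U(2)$ these become $V_{l+\frac12}\otimes V_l=\bigoplus_{j} V_{j+\frac12}$ for $0\le j\le 2l$. The degree-$d$ part of ${\cal V}^+_{res}$ is $\BB S\otimes\{N^kR_{l'}\}$ with $2k+l'=d$, i.e.\ $\bigoplus V_{\frac12}\otimes V_{l'}$. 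Comparing dimensions degree by degree shows that both spaces have dimension $2\binom{d+2}{2}$ in homogeneous degree $d$: a degree-$d$ left regular polynomial is determined by its restriction, and its $\BB S$-valued restriction, a polynomial in 3 variables, is arbitrary. The restriction map preserves degree and is injective, so equality of dimensions in each degree gives surjectivity.

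\emph{Step 4: explicit $K$-types and the case at infinity.} For the explicit $K$-types, I would check directly from the formulas preceding \eqref{Lemma23-analogue1} that $N^k{\bf v^t_{l,m}}$ and $N^k{\bf v^b_{l+1,m}}$ span $V_{l+\frac12}$ under $\mathfrak{u}(2)$, via the $A_1,A_2$ action computed there. These vectors give the decomposition $\BB S\otimes V_l=V_{l+\frac12}\oplus V_{l-\frac12}$, so together with Proposition \ref{basis-prop} they form a basis of ${\cal V}^+_{res}$. For ${\cal V}^-_{res}$, apply the inversion $\pi_l\bigl(\begin{smallmatrix}0&1\\1&0\end{smallmatrix}\bigr)$, which by definition \eqref{V^-} carries ${\cal V}^+_{res}$ onto ${\cal V}^-_{res}$. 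Lemma \ref{V-inversion-lem} then turns the basis $N^k{\bf v^t},N^k{\bf v^b}$ with $k\ge0$ into $N^{-(k+l+2)}{\bf v^b},N^{-(k+l+2)}{\bf v^t}$, which is the stated spanning set. The right-regular cases follow by transposition, or via Corollary \ref{left-right-regular-cor}.
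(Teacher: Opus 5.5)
Your proposal is correct and follows essentially the paper's route. Restriction intertwines the actions, and it is injective by the irreducibility from Proposition~\ref{nreg-irreducibility-prop}. It is onto because $\dim{\cal V}^{+}(d)=\dim{\cal V}^{+}_{res}(d)=(d+1)(d+2)$ in each degree: the paper quotes Proposition 21 of \cite{FL1} for the first dimension where you count $K$-types, and your use of the inversion to handle ${\cal V}^-$ is clean.

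One small point. The phrase ``its restriction is arbitrary'' is circular as a justification of the dimension. It is nevertheless true, because your Cauchy--Kovalevskaya recursion gives the explicit inverse $f_0\mapsto\exp(z^2e_2D)f_0$ with $D=\sum_{j=0,1,3}e_j\,\partial/\partial z^j$, and this by itself already proves that restriction is bijective.
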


\begin{proof}
By Theorem 28 in \cite{FL1}, $(\pi_l, {\cal V}^{\pm})$ and
$(\pi_r, {\cal V}'^{\pm})$ are irreducible unitary representations of
$\mathfrak{u}(2,2)$.
By Proposition \ref{nreg-irreducibility-prop}, when restricted to
$\mathfrak{sp}(4,\BB R) \subset \mathfrak{u}(2,2)$,
they remain irreducible and unitary.



We have the restriction operators $\Res: {\cal V} \to {\cal V}_{res}$ and
$\Res: {\cal V}' \to {\cal V}'_{res}$ which take $\BB S$- or $\BB S'$-valued
polynomial functions on $\HC^{\times}$ and restrict them to
$\BB S$- or $\BB S'$-valued polynomial functions on $\HC^{\times} \text{-sym}$.
Clearly, the operators $\Res$ produce intertwining maps of
$\mathfrak{sp}(4,\BB C)$-modules
$(\pi_l, {\cal V}^{\pm}) \to (\pi_l, {\cal V}_{res})$ and
$(\pi_r, {\cal V}^{\pm}) \to (\pi_l, {\cal V}'_{res})$.
The images of $(\pi_l, {\cal V}^{\pm})$ lie in ${\cal V}^{\pm}_{res}$
and the images of $(\pi_r, {\cal V}'^{\pm})$ lie in ${\cal V}'^{\pm}_{res}$,
and -- by the irreducibility -- the maps $\Res$ are isomorphisms of
$(\pi_l, {\cal V}^{\pm})$ and $(\pi_r, {\cal V}'^{\pm})$ with their images.
For each integer $d$, let
\begin{align*}
{\cal V}^{\pm}(d) &= \{ f \in {\cal V}^{\pm};\:
  \text{$f$ is homogeneous of degree $d$} \},  \\
{\cal V}^{\pm}_{res}(d) &= \{ f \in {\cal V}^{\pm}_{res};\:
  \text{$f$ is homogeneous of degree $d$} \},  \\
{\cal V}'^{\pm}(d) &= \{ g \in {\cal V}'^{\pm};\:
  \text{$g$ is homogeneous of degree $d$} \},  \\
{\cal V}'^{\pm}_{res}(d) &= \{ g \in {\cal V}'^{\pm}_{res};\:
  \text{$g$ is homogeneous of degree $d$} \}.
\end{align*}
The dimensions of ${\cal V}^{\pm}(d)$ and ${\cal V}'^{\pm}(d)$ were computed
in Proposition 21 in \cite{FL1}:
\[
\dim {\cal V}^{\pm}(d) = \dim {\cal V}'^{\pm}(d) = (d+1)(d+2).
\]
A simple dimension count shows that
\[
\dim {\cal V}^{\pm}_{res}(d) = \dim {\cal V}'^{\pm}_{res}(d) = (d+1)(d+2)
\]
as well. This proves that the maps $\Res$ are isomorphisms
$(\pi_l, {\cal V}^{\pm}) \to (\pi_l, {\cal V}^{\pm}_{res})$ and
$(\pi_r, {\cal V}^{\pm}) \to (\pi_l, {\cal V}'^{\pm}_{res})$.
\end{proof}

\subsection{Restriction of the Anti Regular Functions}  \label{anti-reg-realiz-subsection}
  
Similarly, we can restrict anti regular functions to
$\mathfrak{sp}(4,\BB C)$-modules
\[
(\pi_{la}, {\cal V}^+_{a,res}), \quad
(\pi_{la}, {\cal V}^-_{a,res}), \quad
(\pi_{ra}, {\cal V}'^+_{a,res}), \quad
(\pi_{ra}, {\cal V}'^-_{a,res}),
\]
where
\begin{align*}
{\cal V}_{a,res}^+ &= \BB S \otimes \BB C [z_{11}, z_{12}, z_{22}],  \\
{\cal V}_{a,res}^- &= \bigl\{
  f \in \BB S \otimes \BB C[z_{11},z_{12},z_{22}, N(Z)^{-1}] ;\:
  N(Z)^{-2} \cdot Z \cdot f(Z^{-1}) \in {\cal V}_{a,res}^+ \bigr\},  \\
{\cal V}'^+_{a,res} &= \BB S' \otimes \BB C [z_{11}, z_{12}, z_{22}],  \\
{\cal V}'^-_{a,res} &= \bigl\{
  g \in \BB S' \otimes \BB C[z_{11},z_{12},z_{22}, N(Z)^{-1}] ;\:
  N(Z)^{-2} \cdot g(Z^{-1}) \cdot Z \in {\cal V}'^+_{a,res} \bigr\}.
\end{align*}
These modules are isomorphic respectively to
\[
(\pi_{la}, {\cal V}^+_a), \quad
(\pi_{la}, {\cal V}^-_a), \quad
(\pi_{ra}, {\cal V}'^+_a), \quad
(\pi_{ra}, {\cal V}'^-_a)
\]
from Subsection \ref{anti-reg-subsect} restricted to
$\mathfrak{sp}(4,\BB C)$.

We can describe the $K$-types of $(\pi_{la}, {\cal V}^{\pm}_{a,res})$. Let
\[
{\bf u^t_{l,m}}(Z)
= \begin{pmatrix} R^m_l(Z) \\ i(l+m) R^{m-1}_l(Z) \end{pmatrix}, \qquad
\begin{smallmatrix} l=0,1,2,3,\dots, \\ -l \le m \le l+1; \end{smallmatrix}
\]
and
\[
{\bf u^b_{l,m}}(Z)
= \begin{pmatrix} R^m_l(Z) \\ -i(l-m+1) R^{m-1}_l(Z) \end{pmatrix}, \qquad
\begin{smallmatrix} l=1,2,3,4,\dots, \\ -l+1 \le m \le l. \end{smallmatrix}
\]
For a fixed $l$, these functions span $U(2)$-invariant subspaces of dimensions
$2l+2$ and $2l$ respectively.
Let $A_1=\bigl( \begin{smallmatrix} 0 & 1 \\ 0 & 0 \end{smallmatrix} \bigr)$
and $A_2=\bigl( \begin{smallmatrix} 0 & 0 \\ 1 & 0 \end{smallmatrix} \bigr)$,
then
\begin{align*}
\pi_{la} \bigl( \begin{smallmatrix} A_1 & 0 \\ 0 & -A_1^T \end{smallmatrix}\bigr)
   {\bf u^t_{l,m}}(Z) &= i(l+m)(l-m+2) {\bf u^t_{l,m-1}}(Z),  \\
\pi_{la} \bigl( \begin{smallmatrix} A_2 & 0 \\ 0 & -A_2^T \end{smallmatrix}\bigr)
   {\bf u^t_{l,m}}(Z) &= -i {\bf u^t_{l,m+1}}(Z);  \\
\pi_{la} \bigl( \begin{smallmatrix} A_1 & 0 \\ 0 & -A_1^T \end{smallmatrix}\bigr)
{\bf u^b_{l,m}}(Z) &= i(l-m+1)(l+m-1) {\bf u^b_{l,m-1}}(Z),  \\
\pi_{la} \bigl( \begin{smallmatrix} A_2 & 0 \\ 0 & -A_2^T \end{smallmatrix}\bigr)
{\bf u^b_{l,m}}(Z) &= -i {\bf u^b_{l,m+1}}(Z).
\end{align*}
The $K$-types of $(\pi_{ra}, {\cal V}'^{\pm}_{a,res})$ are obtained by transposing
the ${\bf u^t_{l,m}}(Z)$'s and ${\bf u^b_{l,m}}(Z)$'s.

We describe the effect of the inversions on the basis functions:

\begin{lem}  \label{U-inversion-lem}
Let $\bigl(\begin{smallmatrix} 0 & 1 \\ 1 & 0 \end{smallmatrix}\bigr)
\in GL(2,\HC)$, then
\begin{align*}
\pi_{la} \bigl(\begin{smallmatrix} 0 & 1 \\ 1 & 0 \end{smallmatrix}\bigr)
\bigl( N(Z)^k \cdot {\bf u^t_{l,m}}(Z) \bigr) &= (-1)^{l+1}
\tfrac{(l+m)!}{(l-m+1)!} N(Z)^{-(k+l+2)} \cdot {\bf u^b_{l+1,-m+1}}(Z),  \\
\pi_{la} \bigl(\begin{smallmatrix} 0 & 1 \\ 1 & 0 \end{smallmatrix}\bigr)
\bigl( N(Z)^k \cdot {\bf u^b_{l+1,m}}(Z) \bigr) &= (-1)^{l+1}
\tfrac{(l+m)!}{(l-m+1)!} N(Z)^{-(k+l+2)} \cdot {\bf u^t_{l,-m+1}}(Z).
\end{align*}
\end{lem}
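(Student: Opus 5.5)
The plan is to compute directly from the definition of $\pi_{la}$ at $h=\bigl(\begin{smallmatrix} 0 & 1 \\ 1 & 0 \end{smallmatrix}\bigr)$. Since $h=h^{-1}$, we have $a'=d'=0$, $b'=c'=1$. Then $a'-Zc'=-Z$ and $(a'-Zc')^{-1}(-b'+Zd')=Z^{-1}$. With $N(-Z)=N(Z)$, this gives
\[
\pi_{la}\bigl(\begin{smallmatrix} 0 & 1 \\ 1 & 0 \end{smallmatrix}\bigr):\ f(Z)\mapsto -\tfrac{Z}{N(Z)^2}\cdot f(Z^{-1}).
\]
For $f=N(Z)^k\,{\bf u^t_{l,m}}(Z)$, we have $N(Z^{-1})^k=N(Z)^{-k}$. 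The relation $R^m_l(Z^{-1})=(-1)^l\tfrac{(l+m)!}{(l-m)!}N(Z)^{-l}R^{-m}_l(Z)$ stated before Proposition \ref{basis-prop} then rewrites both components of ${\bf u^t_{l,m}}(Z^{-1})$. The factorials combine into a common factor: for the second component use $(l+m)\tfrac{(l+m-1)!}{(l-m+1)!}=\tfrac{(l+m)!}{(l-m+1)!}$, and for the first use $\tfrac{(l+m)!}{(l-m)!}=(l-m+1)\tfrac{(l+m)!}{(l-m+1)!}$. This reduces the claim to the vector identity
\[
-Z\cdot\begin{pmatrix}(l-m+1)R^{-m}_l(Z)\\ iR^{-m+1}_l(Z)\end{pmatrix}
=(-1)\cdot N(Z)^{0}\cdot\begin{pmatrix}R^{-m+1}_{l+1}(Z)\\ -i(l+m+1)R^{-m}_{l+1}(Z)\end{pmatrix}\cdot(\pm1),
\]
with the exact overall sign still to be fixed. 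The powers of $N$ total $-k-l-2$, which matches the exponent in the statement.

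The key step is identity \eqref{Lemma23-analogue1} with $m$ replaced by $m':=-m$. That identity reads $Z\cdot\bigl((l+m'+1)R^{m'}_l,\ iR^{m'+1}_l\bigr)^T=\bigl(-R^{m'+1}_{l+1},\ i(l-m'+1)R^{m'}_{l+1}\bigr)^T$, and its left-hand vector is exactly the vector above. Its right-hand side is $-{\bf u^b_{l+1,-m+1}}(Z)$, because ${\bf u^b_{l+1,-m+1}}=\bigl(R^{-m+1}_{l+1},\ -i(l+m+1)R^{-m}_{l+1}\bigr)^T$. Collecting signs gives $(-1)\cdot(-1)^l\cdot(-1)=(-1)^{l}$. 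The statement has $(-1)^{l+1}$, so the sign bookkeeping needs care; the discrepancy likely comes from the sign convention of the relation $R^m_l(Z^{-1})$ (through $(Z^+)$ versus $Z^{-1}=Z^+/N(Z)$) or from the inversion formula, and I would recheck it against Lemma \ref{V-inversion-lem}.

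For the second formula, I would apply the involution $\pi_{la}(h)^2=\mathrm{id}$, which holds because $h^2=1$. Applying $\pi_{la}(h)$ to the first formula, with $k$ replaced by $-(k+l+2)$ and $m$ by $-m+1$, gives the second formula. The coefficient needs checking: the reindexing $m\mapsto -m+1$ must send $\tfrac{(l+m)!}{(l-m+1)!}$ to its reciprocal, which is consistent with involutivity. Alternatively, the second formula can be obtained directly using \eqref{Lemma23-analogue2} with $l+1$ in place of $l$, which produces the factor $N(Z)$ that shifts the exponent.

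I expect the main obstacle to be this sign and factorial bookkeeping, especially pinning down the overall sign $(-1)^{l+1}$ and the ranges of $m$. The structural content reduces immediately to \eqref{Lemma23-analogue1} and \eqref{Lemma23-analogue2}.
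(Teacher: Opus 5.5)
Your calculation is correct, and so is the sign it produces. The discrepancy you found is in the statement, not in your bookkeeping. With $h=\bigl(\begin{smallmatrix} 0 & 1 \\ 1 & 0 \end{smallmatrix}\bigr)$, both formulas hold with coefficient $(-1)^{l}\tfrac{(l+m)!}{(l-m+1)!}$, so the $(-1)^{l+1}$ is a misprint. The paper states this lemma without proof, so there is no argument there to reconcile with.

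Neither of the suspects you name is at fault:
\begin{enumerate}
\item The relation $R^m_l(Z^{-1})=(-1)^l\tfrac{(l+m)!}{(l-m)!}N(Z)^{-l}R^{-m}_l(Z)$ is right; for example, $R^1_1(Z^{-1})=-z_{22}/N(Z)=-2N(Z)^{-1}R^{-1}_1(Z)$.
\item The formula $\pi_{la}(h)f(Z)=-\tfrac{Z}{N(Z)^2}f(Z^{-1})$ follows from the definition with $a'=d'=0$, $b'=c'=1$. It is also the formula the paper itself uses when identifying ${\cal U}^{\pm}(1)/{\cal U}^{\pm}(2)$ with anti regular functions.
\end{enumerate}

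The case $k=l=m=0$ settles it:
\[
\pi_{la}(h)\,{\bf u^t_{0,0}}(Z)=-\frac{Z}{N(Z)^2}\begin{pmatrix}1\\0\end{pmatrix}
=-N(Z)^{-2}\begin{pmatrix}z_{11}\\ it\end{pmatrix}=N(Z)^{-2}\,{\bf u^b_{1,1}}(Z),
\]
because ${\bf u^b_{1,1}}=(R^1_1,-iR^0_1)^T=(-z_{11},-it)^T$. The coefficient is $+1$, whereas the statement predicts $-1$.

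As a consistency check, your method applied to Lemma \ref{V-inversion-lem} reproduces the paper's sign $(-1)^l$ there. In that case ${\bf v^t_{l,m}}(Z^{-1})$ is a multiple of ${\bf u^t_{l,-m}}$, and $Z^+{\bf u^t_{l,m'}}={\bf v^b_{l+1,m'-1}}$. So your conventions match the paper's. The same slip recurs in Lemma \ref{U-inversion-lem2}, where the signs for $\pi'_l$ and $\pi'_r$ are interchanged.

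The only thing missing from your proposal is the conclusion. State the corrected lemma and write the key identity as $-Z\,{\bf v^t_{l,-m}}(Z)={\bf u^b_{l+1,-m+1}}(Z)$, which is \eqref{Lemma23-analogue1} with $m\to-m$; that finishes the first formula.

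Both of your routes to the second formula work and give the same coefficient $(-1)^l\tfrac{(l+m)!}{(l-m+1)!}$:
\begin{enumerate}
\item the involution $\pi_{la}(h)^2=\mathrm{id}$, together with the reciprocal factorials under $m\mapsto -m+1$;
\item directly, via $Z\,{\bf v^b_{l+1,-m}}(Z)=N(Z)\,{\bf u^t_{l,-m+1}}(Z)$ from \eqref{Lemma23-analogue2}.
\end{enumerate}
Note that the involution cannot detect the misprint: flipping the sign in both formulas at once is equally consistent with $\pi_{la}(h)^2=\mathrm{id}$. The sign has to come from the direct computation, as you did.

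Later arguments only use that these coefficients are nonzero, so the misprint does not affect the rest of the paper.
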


\begin{thm}
  The $K$-types of $(\pi_{la}, {\cal V}^{\pm}_{a,res})$ are $V_{l+\frac12}$,
  $l=0,1,2,3,\dots$,
  spanned by $N(Z)^k \cdot {\bf u^t_{l,m}}(Z)$ and by
  $N(Z)^k \cdot {\bf u^b_{l+1,m}}(Z)$, $-l \le m \le l+1$.

Furthermore,
\begin{align*}
{\cal V}^+_{a,res} &= \BB C\text{-span of }
\bigl\{ N(Z)^k \cdot {\bf u^t_{l,m}}(Z),\: N(Z)^k \cdot {\bf u^b_{l+1,m}}(Z) ;\:
k,l \ge 0 \bigr\},  \\
{\cal V}^-_{a,res} &= \BB C\text{-span of }
\bigl\{ N(Z)^{-(k+l+2)} \cdot {\bf u^t_{l,m}}(Z),\:
N(Z)^{-(k+l+2)} \cdot {\bf u^b_{l+1,m}}(Z) ;\: k,l \ge 0 \bigr\}.
\end{align*}
\end{thm}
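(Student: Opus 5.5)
The plan is to deduce everything from the corresponding statements for regular functions, via the $\dagger$ isomorphism of Proposition \ref{reg-anti-reg-iso-prop}, together with a direct check on basis vectors.

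\textbf{Step 1: check the candidate vectors.} First I verify that each ${\bf u^t_{l,m}}$ and ${\bf u^b_{l,m}}$ lies in ${\cal V}^+_{a,res}$. This is automatic for polynomial $\BB S$-valued functions, since ${\cal V}^+_{a,res}=\BB S\otimes\BB C[z_{11},z_{12},z_{22}]$. Next I check that for fixed $l$ they span an irreducible $\mathfrak u(2)$-module. The formulas for $\pi_{la}$ of $A_1,A_2$ displayed just before the statement show the action of raising and lowering operators. The ${\bf u^t_{l,m}}$, $-l\le m\le l+1$, form a string of length $2l+2$; since the lowering coefficient $(l+m)(l-m+2)$ vanishes only at the ends, they span $V_{l+\frac12}$. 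Similarly, ${\bf u^b_{l+1,m}}$, $-l\le m\le l+1$, form a string of length $2l+2$ spanning $V_{l+\frac12}$. Multiplication by $N(Z)^k$ commutes with $\mathfrak u(2)$ up to a scalar, because $N$ is $U(2)$-semi-invariant. Hence $N(Z)^k{\bf u^t_{l,m}}$ and $N(Z)^k{\bf u^b_{l+1,m}}$ are $K$-types $V_{l+\frac12}$.

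\textbf{Step 2: they form a basis of ${\cal V}^+_{a,res}$.} Let $\BB C[z_{11},z_{12},z_{22}]_d$ denote the homogeneous polynomials of degree $d$ on $\HC\text{-sym}$. By Proposition \ref{basis-prop}, the $N^kR^m_l$ with $2k+l=d$ form a basis of $\BB C[z_{11},z_{12},z_{22}]_d$. The space $\BB S\otimes\BB C[z_{11},z_{12},z_{22}]_d$ therefore has dimension $(d+1)(d+2)$.

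Now count the proposed vectors of degree $d$. These are $N^k{\bf u^t_{l,m}}$ with $2k+l=d$, and $N^k{\bf u^b_{l+1,m}}$ with $2k+l+1=d$. The count is
\[
\textstyle\sum_{2k+l=d}(2l+2)+\sum_{2k+l=d-1}(2l+2)=(d+1)(d+2),
\]
which matches. Linear independence follows by looking at the top coordinate. For fixed $(k,l)$ with $2k+l=d$, the top entries $N^kR^m_l$ of the ${\bf u^t}$ vectors and the top entries $N^{k'}R^m_{l+1}$ of the ${\bf u^b}$ vectors (with $2k'+l+2=d$, i.e.\ $k'=k-1$) are distinct basis monomials, except at the extreme $m$. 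There one uses the bottom entries instead. Alternatively, the vectors lie in distinct $\mathfrak u(2)$-isotypic pieces, distinguished by $l$ and by a $t$/$b$ eigenvalue of a central element of $\mathfrak u(2)$. I would use this cleaner argument: a ${\bf u^t}$ string and a ${\bf u^b}$ string with the same $l$ have the same $\mathfrak{su}(2)$-type. They are nevertheless not proportional, since they are distinguished by the relative sign and coefficients in the second coordinate.

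\textbf{Step 3: the ``$-$'' space.} The description of ${\cal V}^-_{a,res}$ follows from the inversion, Lemma \ref{U-inversion-lem}. By its definition, ${\cal V}^-_{a,res}$ is the image of ${\cal V}^+_{a,res}$ under $\pi_{la}\bigl(\begin{smallmatrix}0&1\\1&0\end{smallmatrix}\bigr)$, which is $f\mapsto N(Z)^{-2}Z f(Z^{-1})$ up to sign. That lemma sends $N^k{\bf u^t_{l,m}}$ to a multiple of $N^{-(k+l+2)}{\bf u^b_{l+1,-m+1}}$, and conversely. The nonzero factorials keep this a bijection on the bases. The inversion normalizes $\mathfrak u(2)$ (it acts by $A\mapsto -A^T$ conjugation), so $K$-types go to $K$-types of the same dimension. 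In all, ${\cal V}^-_{a,res}$ has the stated basis and $K$-types $V_{l+\frac12}$.

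\textbf{Main obstacle.} The hard part is the independence in Step 2, precisely at coinciding $(k,l)$ degrees. If the isotypic argument is murky, I would fall back on a direct leading-term comparison using \eqref{tR}--\eqref{z_{22}R}. A cleaner alternative is to transport the known result for ${\cal V}_{res}$ (Theorem \ref{1reg-res-irred-thm}) through $\dagger$ and the isomorphism ${\cal V}'_{res}\simeq{\cal V}_{a,res}$. One then only needs to check that the transposes of ${\bf v^t},{\bf v^b}$ map under $\dagger$ to multiples of ${\bf u^t},{\bf u^b}$ after a shift $m\mapsto m\pm1$.
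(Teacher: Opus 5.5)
Your Steps~1 and~3 are how the statement follows in the paper, which gives no separate proof. There, the displayed $\pi_{la}(A_1),\pi_{la}(A_2)$ formulas exhibit the strings as $V_{l+\frac12}$, Proposition~\ref{basis-prop} with a degree count identifies ${\cal V}^+_{a,res}$, and Lemma~\ref{U-inversion-lem} carries the basis over to ${\cal V}^-_{a,res}$.

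Step~2, however, is muddled, and the ``main obstacle'' you name does not exist. In your top-coordinate version the degrees are off: $N^{k'}{\bf u^b_{l+1,m}}$ has degree $2k'+l+1$, so it never lies in the same degree as $N^k{\bf u^t_{l,m}}$. More generally, the type $V_{s+\frac12}$ arises only from $N^k{\bf u^t_{s,m}}$, of degree $2k+s$, and from $N^k{\bf u^b_{s+1,m}}$, of degree $2k+s+1$, which have opposite parity. So in degree $d$ your vectors span exactly one copy of each of $V_{\frac12},V_{\frac32},\dots,V_{d+\frac12}$. These are pairwise non-isomorphic $\mathfrak{su}(2)$-modules, so their sum is automatically direct, and its dimension $\sum_{s=0}^d(2s+2)=(d+1)(d+2)$ matches the count you did.

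This is the precise form of your isotypic remark. The center of $\mathfrak u(2)$ acts on homogeneous functions by a scalar that is an injective function of the degree, and within a fixed degree the $\mathfrak{su}(2)$-types are all distinct. Hence no comparison of second coordinates or ``non-proportionality'' is needed; note also that non-proportionality of individual vectors would not by itself give independence of the whole family.

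Your $\dagger$ fallback is also valid: $({\bf v'^t_{l,m}})^{\dagger}=-i\,{\bf u^t_{l,m+1}}$ and $({\bf v'^b_{l,m}})^{\dagger}=i\,{\bf u^b_{l,m+1}}$, and $\dagger$ maps ${\cal V}'^{\pm}_{res}$ onto ${\cal V}^{\pm}_{a,res}$ intertwining $\pi_r$ with $\pi_{la}$. But it only moves the same independence question to the basis claim in Theorem~\ref{1reg-res-irred-thm}. The parity argument above is the cleaner way to close Step~2.
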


\subsection{Realization of Regular Functions Using
  Half-Integer Powers of $N(Z)$}

We consider
\begin{align*}
{\cal V}_{a,res} &=
\{\text{$\BB S$-valued polynomial functions on $\HC^{\times} \text{-sym}$}\}  \\
&= \BB S \otimes \BB C [z_{11}, z_{12}, z_{22}, N(Z)^{-1}],  \\
{\cal V}'_{\frac12} &=
\BB S' \otimes N(Z)^{\frac12} \cdot \BB C [z_{11}, z_{12}, z_{22}, N(Z)^{-1}].
\end{align*}
The Lie algebra $\mathfrak{sp}(4,\BB C)$ acts on these spaces by
differentiating the following group actions:
\begin{align*}
\pi_{la}(h): \: f(Z) \: &\mapsto \: \bigl( \pi_{la}(h)f \bigr)(Z) =
\frac{a'-Zc'}{N(a'-Zc')^2} \cdot f \bigl( (a'-Zc')^{-1}(-b'+Zd') \bigr),  \\
\pi_r(h): \: g(Z) \: &\mapsto \: \bigl( \pi_r(h)g \bigr)(Z) =
g \bigl( (a'-Zc')^{-1}(-b'+Zd') \bigr) \cdot \frac{(a'-Zc')^{-1}}{N(a'-Zc')},
\end{align*}
where $f \in {\cal V}_{a,res}$, $g \in {\cal V}'_{\frac12}$,
$h = \bigl(\begin{smallmatrix} a' & b' \\ c' & d' \end{smallmatrix}\bigr)
\in Sp(4,\BB R) \subset GL(2,\HC)$.

On the one hand, $(\pi_{la}, {\cal V}_{a,res})$ contains the left anti regular
functions $(\pi_{la}, {\cal V}^{\pm}_{a,res})$ as
$\mathfrak{sp}(4,\BB C)$-submodules, and it has been established
that these are isomorphic to the regular functions $(\pi_l, {\cal V}^{\pm})$
(Proposition \ref{reg-anti-reg-iso-prop} and Subsections
\ref{standard-realiz-subsection}-\ref{anti-reg-realiz-subsection}).
On the other hand, $(\pi_r, {\cal V}'_{\frac12})$ is the dual of
$(\pi_{la}, {\cal V}_{a,res})$, since we have an
$\mathfrak{sp}(4,\BB C)$-invariant pairing
\begin{equation}  \label{half-integer-pairing}
(\pi_r, {\cal V}'_{\frac12}) \times (\pi_{la}, {\cal V}_{a,res})
\to (\rho_M,M) \to \BB C,
\end{equation}
where the first arrow is the multiplication map
${\cal V}'_{\frac12} \times {\cal V}_{a,res} \ni (g,f) \mapsto gf \in M$
and the second arrow is the projection \eqref{M-integral}.
At the same time, the dual of the space of regular functions is the space
of regular functions itself (functions regular at infinity are
dual to the functions regular at the origin).
We conclude that $(\pi_r, {\cal V}'_{\frac12})$ contains the regular functions
as a {\em quotient space}.

Let
\[
{\bf v'^t_{l,m}}(Z) = \bigl( (l+m+1) R^m_l(Z),\: i R^{m+1}_l(Z) \bigr), \qquad
\begin{smallmatrix} l=0,1,2,3,\dots, \\ -l-1 \le m \le l, \end{smallmatrix}
\]
and
\[
{\bf v'^b_{l,m}}(Z) = \bigl( (l-m) R^m_l(Z),\: -i R^{m+1}_l(Z) \bigr), \qquad
\begin{smallmatrix} l=1,2,3,4,\dots, \\ -l \le m \le l-1, \end{smallmatrix}
\]
be the transposes of ${\bf v^t_{l,m}}(Z)$ and ${\bf v^b_{l,m}}(Z)$ respectively.
By \eqref{B-action}, the subspace of ${\cal V}'_{\frac12}$
\begin{equation}  \label{I^+}
{\cal I}^+_{{\cal V}'_{\frac12}} = \BB C\text{-span of }
\bigl\{ N(Z)^k \cdot {\bf v'^t_{l,m}}(Z),\: N(Z)^{k-1} \cdot {\bf v'^b_{l+1,m}}(Z)
;\: k \ge -(l+\tfrac12),\: l \ge 0 \bigr\}
\end{equation}
is $\mathfrak{sp}(4,\BB C)$-invariant.
Applying the inversion
$\pi_r \bigl(\begin{smallmatrix} 0 & 1 \\ 1 & 0 \end{smallmatrix}\bigr)$,
we obtain another $\mathfrak{sp}(4,\BB C)$-invariant subspace of
${\cal V}'_{\frac12}$.
Using Lemma \ref{V-inversion-lem}, we can identify this subspace as
\begin{equation}  \label{I^-}
{\cal I}^-_{{\cal V}'_{\frac12}} = \BB C\text{-span of }
\bigl\{ N(Z)^k \cdot {\bf v'^t_{l,m}}(Z),\: N(Z)^{k-1} \cdot {\bf v'^b_{l+1,m}}(Z)
;\: k \le -\tfrac12,\: l \ge 0 \bigr\}.
\end{equation}
Therefore,
\begin{multline}  \label{V^0}
{\cal V}'^0_{\frac12} =
{\cal I}^+_{{\cal V}'_{\frac12}} \cap {\cal I}^-_{{\cal V}'_{\frac12}}  \\
= \BB C\text{-span of } \bigl\{ N(Z)^k \cdot {\bf v'^t_{l,m}}(Z),\:
N(Z)^{k-1} \cdot {\bf v'^b_{l+1,m}}(Z) ;\:
-(l+\tfrac12) \le k \le -\tfrac12,\: l \ge 0 \bigr\}
\end{multline}
is an $\mathfrak{sp}(4,\BB C)$-submodule of ${\cal V}'_{\frac12}$.
Working with $K$-types as in the proof of Theorem \ref{rho-alpha-beta-thm},
we obtain:

\begin{prop}  \label{V'_{1/2}-prop}
The only proper $\mathfrak{sp}(4,\BB C)$-invariant subspaces of 
$(\pi_r, {\cal V}'_{\frac12})$ are
\[
{\cal I}^+_{{\cal V}'_{\frac12}}, \qquad {\cal I}^-_{{\cal V}'_{\frac12}}
\qquad \text{and} \qquad
{\cal V}'^0_{\frac12} =
{\cal I}^+_{{\cal V}'_{\frac12}} \cap {\cal I}^-_{{\cal V}'_{\frac12}}.
\]
These invariant subspaces can be characterized as the annihilators of
\[
(\pi_{la}, {\cal V}^+_{a,res}), \qquad (\pi_{la}, {\cal V}^-_{a,res})
\qquad \text{and} \qquad (\pi_{la}, {\cal V}^+_{a,res} \oplus {\cal V}^-_{a,res})
\]
respectively via the pairing \eqref{half-integer-pairing}.

In particular, the quotient $\mathfrak{sp}(4,\BB C)$-module
$\bigl( \pi_r, {\cal V}'_{\frac12}/{\cal V}'^0_{\frac12} \bigr)$
decomposes into a direct sum of two irreducible submodules
\begin{align*}
{\cal V}'^+_{\frac12} &= \BB C\text{-span of }
\bigl\{ N(Z)^k \cdot {\bf v'^t_{l,m}}(Z),\:
N(Z)^{k-1} \cdot {\bf v'^b_{l+1,m}}(Z) ;\:
k \ge \tfrac12,\: l \ge 0 \bigr\} , \\
{\cal V}'^-_{\frac12} &= \BB C\text{-span of }
\bigl\{ N(Z)^k \cdot {\bf v'^t_{l,m}}(Z),\:
N(Z)^{k-1} \cdot {\bf v'^b_{l+1,m}}(Z) ;\:
k \le -(l+\tfrac32),\: l \ge 0 \bigr\}
\end{align*}
isomorphic to $(\pi_l, {\cal V}^+) \simeq (\pi_r, {\cal V}'^+)$ and
$(\pi_l, {\cal V}^-) \simeq (\pi_r, {\cal V}'^-)$ respectively.
\end{prop}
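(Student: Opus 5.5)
The plan is to run the $K$-type argument of Theorem \ref{rho-alpha-beta-thm} for $(\pi_r,{\cal V}'_{\frac12})$. The $K$-types are $V_{l+\frac12}$, $l \ge 0$. A basis of ${\cal V}'_{\frac12}$ is given by $N(Z)^k \cdot {\bf v'^t_{l,m}}(Z)$ and $N(Z)^{k-1}\cdot{\bf v'^b_{l+1,m}}(Z)$ with $k\in\frac12+\BB Z$, $l\ge0$. I index $K$-types by pairs $(k,l)$ plus a label t/b. The pairing of the two families with the same $(k,l)$ is forced by the $U(2)$-action: both span a copy of $V_{l+\frac12}$ with the same central character, and \eqref{Lemma23-analogue1}--\eqref{Lemma23-analogue2} relate $Z\cdot{\bf v^t}$ to ${\bf v^b}$. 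This step is bookkeeping.

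Next I compute the action of $\pi_r\bigl(\begin{smallmatrix}0&B\\0&0\end{smallmatrix}\bigr)=-\tr(B\partial)$ and of $\pi_r\bigl(\begin{smallmatrix}0&0\\C&0\end{smallmatrix}\bigr)$ on each basis vector. For the $B$-action I use \eqref{B-action}, contracting $\partial(N^kR^m_l)$ with the row vectors. For the $C$-action I use $\tr(ZCZ\partial+ZC)g+gZC$ together with \eqref{tR}--\eqref{z_{22}R} and \eqref{Lemma23-analogue1}--\eqref{Lemma23-analogue2}. The result should be that each operator moves a $K$-type only to a few neighbours (for instance $(k,l)\to(k,l\pm1)$ and $(k,l)\to(k\mp1,l\pm1)$ with t/b switches), with explicit scalar coefficients. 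The key output is the list of vanishing coefficients. I expect coefficients proportional to $2k+2l+1$ and $k+\frac12$ (or close variants), which vanish exactly on the lines $k=-(l+\frac12)$ and $k=-\frac12$. These are precisely the walls defining ${\cal I}^\pm_{{\cal V}'_{\frac12}}$ in \eqref{I^+}--\eqref{I^-}.

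I expect the main obstacle to be this computation, and in particular checking that no \emph{other} coefficients vanish, so that each connected region is generated from any of its $K$-types. Invariance of ${\cal I}^+$ is already known from \eqref{B-action}, and invariance of ${\cal I}^-$ follows from the inversion via Lemma \ref{V-inversion-lem}. To reduce the work, I would therefore only verify the following. Within each of the three regions — $k\ge\frac12$, the strip $-(l+\frac12)\le k\le-\frac12$, and $k\le-(l+\frac32)$ — the $B$- and $C$-actions connect all $K$-types. Moreover, the arrows across the walls point only outward from the strip ${\cal V}'^0_{\frac12}$, into ${\cal I}^+\setminus{\cal V}'^0$ or ${\cal I}^-\setminus{\cal V}'^0$, and never back. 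Given this arrow diagram, a standard argument lists the invariant subspaces. Any invariant subspace is a sum of $K$-types, and the set of $K$-types it contains must be closed under the arrows. The closed sets are exactly ${\cal V}'^0$, ${\cal I}^+$, ${\cal I}^-$, ${\cal I}^+ + {\cal I}^-$, and the whole space. It remains to check that ${\cal I}^+ + {\cal I}^-$ is everything, since the $K$-types with $k\ge\frac12$ and those with $k\le-(l+\frac32)$ together with the strip cover all pairs $(k,l)$. Hence ${\cal I}^+ + {\cal I}^-$ is not proper, and the proper invariant subspaces are the three listed.

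For the annihilator characterization I use the pairing \eqref{half-integer-pairing} and the orthogonality lemma for $\langle\cdot,\cdot\rangle_M$. That lemma shows $N^k R^m_l$ pairs nontrivially only with $N^{-k-l-\frac32}R^{-m}_l$. Using the explicit bases of ${\cal V}^\pm_{a,res}$ (by ${\bf u^t}$, ${\bf u^b}$), I match index ranges. The vectors in ${\cal V}^+_{a,res}$ have $k'\ge0$, so their partners have exponent $\le -l-\frac32$ and lie outside ${\cal I}^+$; this gives ${\cal I}^+\subset({\cal V}^+_{a,res})^\perp$. Equality then follows because the annihilator is invariant and, by the classification, must be one of the listed subspaces; a dimension/$K$-type comparison selects which one. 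The same argument applies to ${\cal V}^-_{a,res}$, and the intersection statement follows.

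Finally, ${\cal V}'_{\frac12}/{\cal V}'^0_{\frac12}$ splits as the images of ${\cal I}^+$ and ${\cal I}^-$, which are ${\cal V}'^+_{\frac12}$ and ${\cal V}'^-_{\frac12}$; each is irreducible by the connectivity step. By the nondegenerate pairing these are dual to ${\cal V}^-_{a,res}$ and ${\cal V}^+_{a,res}$ respectively. These in turn are isomorphic to ${\cal V}^\mp$, and the duals of ${\cal V}^\mp$ are ${\cal V}^\pm$, using Proposition \ref{reg-anti-reg-iso-prop}, Theorem \ref{1reg-res-irred-thm} and Corollary \ref{left-right-regular-cor}. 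This gives the stated identifications.
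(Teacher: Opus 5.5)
Your strategy is the paper's own. The paper proves the proposition only by saying it follows from working with $K$-types as in the proof of Theorem~\ref{rho-alpha-beta-thm}, after obtaining ${\cal I}^+$ from \eqref{B-action}, ${\cal I}^-$ by inversion, and the quotients by duality with $(\pi_{la},{\cal V}_{a,res})$. However, the one step that carries the content is stated backwards.

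You propose to verify that arrows across the walls point only outward from the strip ${\cal V}'^0_{\frac12}$ and never back. That is false, and if it held it would refute the proposition:
\begin{itemize}
\item No arrow can leave the strip, because ${\cal V}'^0_{\frac12}={\cal I}^+\cap{\cal I}^-$ is invariant.
\item If no arrow entered the strip from $\{k\ge\frac12\}$, the span of those $K$-types would be closed under the action. It cannot reach $\{k\le-(l+\frac32)\}$ because ${\cal I}^+$ is invariant. That span would then be an extra invariant subspace.
\end{itemize}
The list of closed sets you write down is the one for the opposite orientation.

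What you must actually check is that each of the three regions is connected, and that each outer region has a nonzero arrow into the strip. The $B$-action with $B$ off-diagonal is a multiple of $\frac{\partial}{\partial t}$, and
$\frac{\partial}{\partial t}\bigl(N(Z)^{\frac12}\cdot(1,0)\bigr)=N(Z)^{-\frac12}\cdot(t,0)=\frac13N(Z)^{-\frac12}\bigl({\bf v'^t_{1,0}}(Z)+{\bf v'^b_{1,0}}(Z)\bigr)$.
Here $N(Z)^{-\frac12}{\bf v'^t_{1,0}}(Z)$ lies in the strip. In general, the coefficient from $N(Z)^{\frac12}{\bf v'^t_{l,m}}(Z)$ to $N(Z)^{-\frac12}{\bf v'^t_{l+1,m}}(Z)$ is $\frac{l-m+1}{2l+3}\neq 0$. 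The arrows from $\{k\le-(l+\frac32)\}$ into the strip come from the $C$-action, by conjugating with the inversion. These incoming arrows are exactly what make ${\cal V}'^0_{\frac12}$ the unique irreducible submodule, with ${\cal V}'^{\pm}_{\frac12}$ appearing only as quotients.

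A second correction concerns your bookkeeping remark. $N(Z)^k{\bf v'^t_{l,m}}(Z)$ and $N(Z)^{k-1}{\bf v'^b_{l+1,m}}(Z)$ do not have the same central character. The element $A=iI$ of $\mathfrak{u}(2)$ acts by $-i(2d+3)$ on functions of degree $d$, and these two degrees are $2k+l$ and $2k+l-1$. Since $2k$ is odd, every $U(2)$-type of ${\cal V}'_{\frac12}$ has multiplicity one. That is what justifies your assertion that every invariant subspace is a sum of $K$-types. With the multiplicity two you describe, you would need combination $K$-types, as the paper does for $\mathring{\cal W}'_{res}$.

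With these fixes the remaining steps go through. The annihilator statement can even be read off directly, since the orthogonality relations for $\langle\cdot,\cdot\rangle_{1-reg}$ match the $K$-type bases of ${\cal V}'_{\frac12}$ and ${\cal V}_{a,res}$ one-to-one.
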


\subsection{Invariant Pairing, Reproducing Kernel Expansion and
  a Reproducing Formula for Regular Functions}

In this subsection we derive a reproducing kernel expansion for the
regular functions realized as ${\cal V}'^{\pm}_{\frac12}$ and
obtain reproducing formulas that can be regarded as analogues of
Cauchy-Fueter formulas.

By Corollary \ref{M-integral-cor}, the $\mathfrak{sp}(4,\BB C)$-invariant
pairing \eqref{half-integer-pairing} between $(\pi_r, {\cal V}'_{\frac12})$
and $(\pi_{la}, {\cal V}_{a,res})$ can be expressed as
\[
\langle g,f \rangle_{1-reg} = \frac{i}{8\pi^2} \int_{\widetilde{\Gamma}}
g(Z) \cdot f(Z) \, dZ^3,
\qquad g \in {\cal V}'_{\frac12}, \: f \in {\cal V}_{a,res}.
\]
From \eqref{R-orthogonality} we obtain the orthogonality relations.

\begin{lem}
We have the following orthogonality relations:
\begin{align*}
\Bigl\langle N(Z)^{k+\frac12} \cdot {\bf v'^t_{l,m}}(Z),
N(Z)^{-k'-l'-2} \cdot {\bf u^t_{l',-m'}}(Z) \Bigr\rangle_{1-reg}
&= (-1)^m \delta_{kk'} \delta_{ll'} \delta_{mm'},  \\
\Bigl\langle N(Z)^{k+\frac12} \cdot {\bf v'^b_{l,m}}(Z),
N(Z)^{-k'-l'-2} \cdot {\bf u^b_{l',-m'}}(Z) \Bigr\rangle_{1-reg}
&= (-1)^m \delta_{kk'} \delta_{ll'} \delta_{mm'},  \\
\Bigl\langle N(Z)^{k+\frac12} \cdot {\bf v'^t_{l,m}}(Z),
N(Z)^{-k'-l'-2} \cdot {\bf u^b_{l',-m'}}(Z) \Bigr\rangle_{1-reg}
&= 0,  \\
\Bigl\langle N(Z)^{k+\frac12} \cdot {\bf v'^b_{l,m}}(Z),
N(Z)^{-k'-l'-2} \cdot {\bf u^t_{l',-m'}}(Z) \Bigr\rangle_{1-reg}
&= 0.
\end{align*}
\end{lem}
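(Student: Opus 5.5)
The plan is to reduce everything to the scalar orthogonality relations \eqref{R-orthogonality}, in the form of the orthogonality lemma for $\langle\ ,\ \rangle_M$ stated just before, applied componentwise.

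First I will compute the row-times-column products $g\cdot f$. For the $({\bf t},{\bf t})$ case,
\[
{\bf v'^t_{l,m}}(Z)\cdot {\bf u^t_{l',-m'}}(Z)
= (l+m+1) R^m_l R^{-m'}_{l'} - (l'-m') R^{m+1}_l R^{-m'-1}_{l'},
\]
multiplied by $N(Z)^{k+\frac12-k'-l'-2} = N(Z)^{k-k'-l'-\frac32}$. Each product $N(Z)^{k}R^m_l\cdot N(Z)^{-k'-l'-\frac32}R^{-m''}_{l'}$ is paired against the invariant functional on $M$. The $M$-orthogonality lemma then shows that only terms with $k=k'$, $l=l'$ and matching $m$ indices survive, with value $(-1)^{m}/(2l+1)$ for the $R^m_lR^{-m}_l$ term and $(-1)^{m+1}/(2l+1)$ for the $R^{m+1}_lR^{-m-1}_l$ term. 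Concretely, $\frac{i}{8\pi^2}\int_{\widetilde\Gamma}$ of such a product reduces, using $dZ^3$ restricted to the radius-one sphere times the circle, to the sphere average \eqref{R-orthogonality}. When $k\ne k'$, the $e^{i\psi}$ integration over the $S^1$ factor kills the term, since $N(Z)^{1/2}=e^{i\psi}$ there and the Jacobian factor $e^{3i\psi}$ from Lemma \ref{dZonGamma_s}-type computations combines with $N(Z)^{-3/2}$.

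With $l=l'$ and $m=m'$, the total is
\[
\frac{(-1)^m}{2l+1}\bigl((l+m+1)+(l-m)\bigr) = (-1)^m ,
\]
which is the first relation. The $({\bf b},{\bf b})$ case works the same way. There ${\bf v'^b_{l,m}}\cdot{\bf u^b_{l,-m}} = (l-m)R^m_lR^{-m}_l - (l+m+1)R^{m+1}_lR^{-m-1}_l$ (taking care with the sign $(-i)(-i)=-1$), which yields $\frac{(-1)^m}{2l+1}(2l+1)=(-1)^m$. For the mixed cases the same surviving terms give coefficients $(l+m+1)-(l+m+1)=0$ and $(l-m)-(l-m)=0$.

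The main obstacle is bookkeeping rather than conceptual. One has to make sure the index ranges match: ${\bf v'^t_{l,m}}$ has $-l-1\le m\le l$, ${\bf u^t_{l,-m}}$ requires $-l\le -m\le l+1$, and the ${\bf b}$ ranges are shifted by one. Boundary terms such as $R^{l+1}_l=0$ then vanish consistently. One also has to confirm that the factorial normalizations and the $(-1)^m$ signs from the lemma combine exactly as above. Finally, one must check that the half-integer power conventions for $N(Z)^{1/2}$ on $\widetilde\Gamma$ agree with those used in Corollary \ref{M-integral-cor}.
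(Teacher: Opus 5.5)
Your proposal is correct and is the same route as the paper. The paper simply says these relations follow from \eqref{R-orthogonality}. Your componentwise reduction via the $\langle\,,\,\rangle_M$ orthogonality lemma spells out the computation the paper leaves implicit: the $S^1$-integration forces $k=k'$, and the coefficient sums $(l+m+1)+(l-m)=2l+1$ and $(l-m)+(l+m+1)=2l+1$ in the diagonal cases, and cancellation in the mixed cases, give the stated relations.
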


Observe that
\begin{multline}  \label{uv+uv}
{\bf u^t_{l,-m}}(Z_1) \cdot {\bf v'^t_{l,m}}(Z_2)
+ {\bf u^b_{l,-m}}(Z_1) \cdot {\bf v'^b_{l,m}}(Z_2)  \\
= (2l+1) \begin{pmatrix} R^m_l(Z_2) \cdot R^{-m}_l(Z_1) & 0 \\
  0 & -R^{m+1}_l(Z_2) \cdot R^{-m-1}_l(Z_1) \end{pmatrix}.
\end{multline}
We can obtain the reproducing kernels for ${\cal V}'^{\pm}_{\frac12}$
and ${\cal V}^{\pm}_{a,res}$:

\begin{prop}
We have the following series expansions
\begin{multline*}
N(Z_2)^{-\frac32} \cdot N(1 - Z_1Z_2^{-1})^{-\frac32}  \\
= \sum_{k,l=0}^{\infty} \biggl( \sum_{m=-l-1}^l (-1)^m
\frac{N(Z_1)^k}{N(Z_2)^{k+l+\frac32}} \cdot
\Bigl( {\bf u^t_{l,-m}}(Z_1) \cdot {\bf v'^t_{l,m}}(Z_2)
+ {\bf u^b_{l,-m}}(Z_1) \cdot {\bf v'^b_{l,m}}(Z_2) \Bigr) \biggr),
\end{multline*}
which converges uniformly on compact subsets in the region
$\{ Z_1, Z_2 \in \BB H\text{-sym} ;\: N(Z_1) < N(Z_2) \}$, and
\begin{multline*}
\frac{N(Z_2)^{\frac12}}{N(Z_1)^2} \cdot N(1 - Z_2Z_1^{-1})^{-\frac32}
\cdot Z_1 Z_2^{-1} =
\sum_{k,l=0}^{\infty} \biggl( \sum_{m=-l-1}^l (-1)^m
\frac{N(Z_2)^{k-\frac12}}{N(Z_1)^{k+l+2}}  \\
\times \Bigl( N(Z_2) \cdot {\bf u^t_{l,-m}}(Z_1) \cdot {\bf v'^t_{l,m}}(Z_2)
+ {\bf u^b_{l+1,-m}}(Z_1) \cdot {\bf v'^b_{l+1,m}}(Z_2) \Bigr) \biggr),
\end{multline*}
which converges uniformly on compact subsets in the region
$\{ Z_1, Z_2 \in \BB H\text{-sym} ;\: N(Z_2) < N(Z_1) \}$.
(Again, the proof shows that the region of convergence is a much larger subset
of $\HC\text{-sym} \times \HC\text{-sym}$, but identifying this subset is
complicated due to the presence of the fractional powers.)
\end{prop}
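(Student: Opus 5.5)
The plan for the first expansion is to reduce it to the scalar expansion \eqref{N-3/2-expansion} using the identity \eqref{uv+uv}. For fixed $k,l$, I sum $(-1)^m\bigl({\bf u^t_{l,-m}}(Z_1)\cdot{\bf v'^t_{l,m}}(Z_2)+{\bf u^b_{l,-m}}(Z_1)\cdot{\bf v'^b_{l,m}}(Z_2)\bigr)$ over $-l-1\le m\le l$. By \eqref{uv+uv} this is a diagonal $2\times 2$ matrix.

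The upper-left entry is $(2l+1)\sum_m(-1)^mR^m_l(Z_2)R^{-m}_l(Z_1)$. The term $m=-l-1$ drops out because $R^{-l-1}_l=0$.

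For the lower-right entry I substitute $m'=m+1$, so $-(-1)^m=(-1)^{m'}$. It becomes $(2l+1)\sum_{m'=-l}^{l}(-1)^{m'}R^{m'}_l(Z_2)R^{-m'}_l(Z_1)$, where the boundary term $m'=l+1$ vanishes. So both diagonal entries agree.

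By \eqref{R-m-sum-identity} this common value is $(2l+1)2^{-l}P_l(\tr(Z_1Z_2^+))$, which is symmetric in $Z_1,Z_2$. It therefore equals $(2l+1)\sum_m(-1)^mR^m_l(Z_1)R^{-m}_l(Z_2)$. The whole series is thus the right-hand side of \eqref{N-3/2-expansion} times the identity matrix, and convergence on compact subsets of $\{N(Z_1)<N(Z_2)\}$ is inherited from that proposition.

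For the second expansion I start from the same scalar identity with the roles of $Z_1$ and $Z_2$ swapped, valid for $N(Z_2)<N(Z_1)$:
\[
N(Z_1)^{-\frac32}N(1-Z_2Z_1^{-1})^{-\frac32}\cdot I=\sum_{k,l}(-1)^m(2l+1)\frac{N(Z_2)^k}{N(Z_1)^{k+l+\frac32}}R^m_l(Z_2)R^{-m}_l(Z_1)\cdot I .
\]
Using the first part, I rewrite the matrix $(2l+1)R^m_lR^{-m}_l\cdot I$ summed over $m$ as $\sum_m(-1)^m\bigl({\bf u^t_{l,-m}}(Z_1){\bf v'^t_{l,m}}(Z_2)+{\bf u^b_{l,-m}}(Z_1){\bf v'^b_{l,m}}(Z_2)\bigr)$. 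I then multiply by the scalar $N(Z_2)^{\frac12}N(Z_1)^{-\frac12}$ and on the right by $Z_1Z_2^{-1}=Z_1Z_2^+/N(Z_2)$. Since the summand is a scalar multiple of the identity, I may write the factor $Z_1$ on the left, so the term becomes $Z_1\,{\bf u}(Z_1)\cdot{\bf v'}(Z_2)\,Z_2^+$.

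The key inputs are then analogues of \eqref{Lemma23-analogue1}-\eqref{Lemma23-analogue2} for the columns ${\bf u}$ and for the rows ${\bf v'}$ multiplied on the right by $Z^+$. I will derive them from \eqref{tR}-\eqref{z_{22}R} in the same way as \eqref{Lemma23-analogue1}. I expect the following pattern:
\begin{itemize}
\item $Z\,{\bf u^t_{l,\cdot}}$ is a multiple of $N(Z)\,{\bf u^t_{l-1,\cdot}}$ plus nothing else, or of ${\bf u^b_{l+1,\cdot}}$, with analogous statements for ${\bf u^b}$;
\item ${\bf v'}\,Z^+$ likewise shifts $l$ by $\pm1$ and exchanges the superscripts t and b, possibly producing a factor $N(Z)$.
\end{itemize}

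After these substitutions I reindex in $k$ and $l$. Terms with a factor $N(Z_2)$ produce the $N(Z_2)\,{\bf u^t_{l,-m}}(Z_1){\bf v'^t_{l,m}}(Z_2)$ summand, and the others produce ${\bf u^b_{l+1,-m}}(Z_1){\bf v'^b_{l+1,m}}(Z_2)$. The target powers $N(Z_2)^{k-\frac12}/N(Z_1)^{k+l+2}$ are consistent with this: one factor of $N(Z_1)^{-\frac12}$ comes from the prefactor, and the $l$-shift changes $N(Z_1)^{-l}$ to $N(Z_1)^{-(l+1)}$.

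As a cross-check, or as an alternative route if the bookkeeping becomes messy, I can apply the inversions of Lemmas \ref{U-inversion-lem} and \ref{V-inversion-lem} (the transposed version for ${\bf v'}$) to the first expansion in the variable $Z_1$. That sends $N(Z_1)^k{\bf u^t_{l,\cdot}}$ to $N(Z_1)^{-(k+l+2)}{\bf u^b_{l+1,\cdot}}$, which is exactly the shape of the second formula, while the left-hand side transforms into the stated kernel.

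The main obstacle is this bookkeeping step. I need the exact constants in the multiplication rules for $Z\,{\bf u}$ and ${\bf v'}\,Z^+$, together with the signs $(-1)^m$ after the shifts $m\mapsto -m\pm1$, so that all factorials cancel and the coefficients come out exactly $1$. Convergence in $\{N(Z_2)<N(Z_1)\}$ again follows from the scalar expansion, since only finitely many terms are rearranged for each total degree.
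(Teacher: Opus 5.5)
Your argument for the first expansion is correct and is the paper's own: \eqref{N-3/2-expansion} combined with \eqref{uv+uv}, with the boundary terms vanishing.

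\textbf{The gap.} It is in your main route to the second expansion. The multiplication rules you plan to derive for $Z\,{\bf u}$ and ${\bf v'}\,Z^+$ do not exist. Left multiplication by $Z$ does not preserve ${\bf u}$-type columns, and right multiplication by $Z^+$ does not preserve ${\bf v'}$-type rows. For example
\[
Z\,{\bf u^t_{1,0}}(Z)=\bigl(z_{11}t-\tfrac12 tz_{22},\; it^2+\tfrac i2 z_{22}^2\bigr)^T .
\]
Under the rotation $Z\mapsto DZD$, $D=\operatorname{diag}(e^{i\alpha/2},e^{-i\alpha/2})$, the first entry picks up both $e^{i\alpha}$ and $e^{-i\alpha}$. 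By contrast, every basis vector $N(Z)^k{\bf u^{t}_{l,m}}$, $N(Z)^k{\bf u^{b}_{l,m}}$ (or ${\bf v}$) has first entry a multiple of $R^m_l$, which scales only by $e^{im\alpha}$. The row ${\bf v'^t_{1,0}}(Z)\,Z^+$ behaves the same way.

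So $Z_1{\bf u}(Z_1)\,{\bf v'}(Z_2)Z_2^+$ is not a product of basis vectors, and no ``$l\pm1$, t$\leftrightarrow$b'' rule applies. The target form would appear only after re-expanding each factor and cancelling cross terms between different $m$. What \eqref{Lemma23-analogue1}--\eqref{Lemma23-analogue2} actually say is that $Z$ maps ${\bf v}$-columns to ${\bf u}$-columns:
\[
Z{\bf v^t_{l,m}}=-{\bf u^b_{l+1,m+1}},\qquad Z{\bf v^b_{l,m}}=N(Z){\bf u^t_{l-1,m+1}}.
\]
Multiplying by $Z^+$ and transposing gives ${\bf u'^t_{l,m}}Z^+={\bf v'^b_{l+1,m-1}}$ and ${\bf u'^b_{l,m}}Z^+=-N(Z){\bf v'^t_{l-1,m-1}}$.

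\textbf{The fix.} Factor the swapped scalar kernel through ${\bf v}(Z_1)\,{\bf u'}(Z_2)$ instead. The same computation as \eqref{uv+uv} gives
\[
\sum_{m=-l}^{l+1}(-1)^m\bigl({\bf v^t_{l,-m}}(Z_1){\bf u'^t_{l,m}}(Z_2)+{\bf v^b_{l,-m}}(Z_1){\bf u'^b_{l,m}}(Z_2)\bigr)=(2l+1)\sum_{m=-l}^{l}(-1)^mR^m_l(Z_1)R^{-m}_l(Z_2)\cdot I ,
\]
which is the paper's next proposition with $Z_1,Z_2$ swapped. Take the resulting expansion of $N(Z_1)^{-\frac32}N(1-Z_2Z_1^{-1})^{-\frac32}$ and multiply by $N(Z_1)^{-\frac12}N(Z_2)^{-\frac12}$, by $Z_1$ on the left, and by $Z_2^+=N(Z_2)Z_2^{-1}$ on the right.
\begin{itemize}
\item The t-terms become $-{\bf u^b_{l+1,-m+1}}(Z_1){\bf v'^b_{l+1,m-1}}(Z_2)$.
\item The b-terms become $-N(Z_1)N(Z_2)\,{\bf u^t_{l-1,-m+1}}(Z_1){\bf v'^t_{l-1,m-1}}(Z_2)$.
\item The shift $m\mapsto m+1$ absorbs the minus signs into $(-1)^m$, and the extra boundary terms vanish.
\item After relabeling $l$ in the b-terms, the powers are exactly $N(Z_2)^{k-\frac12}N(Z_1)^{-(k+l+2)}$, with no reindexing in $k$.
\end{itemize}
Convergence is then inherited block by block.

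\textbf{Your fallback.} The inversion route is the paper's own proof, which simply says the second expansion follows from the first by applying the inversions. It must act in both variables: $\pi_{la}\bigl(\begin{smallmatrix}0&1\\1&0\end{smallmatrix}\bigr)$ in $Z_1$ and $\pi_r\bigl(\begin{smallmatrix}0&1\\1&0\end{smallmatrix}\bigr)$ in $Z_2$. Their factors $-Z_1/N(Z_1)^2$ and $-Z_2^{-1}/N(Z_2)$ turn the kernel into the stated one on $\{N(Z_2)<N(Z_1)\}$. Inverting $Z_1$ alone does not give the stated kernel.

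Compute the constants there directly rather than quoting them:
\begin{itemize}
\item The $\pi_r$-inversion carries an extra minus sign relative to $\pi_l$, so the transposed Lemma \ref{V-inversion-lem} has $(-1)^{l+1}$.
\item A direct check gives $+N(Z)^{-k-2}{\bf u^b_{1,1}}$ for the $\pi_{la}$-inversion of $N(Z)^k{\bf u^t_{0,0}}$, the opposite of the sign printed in Lemma \ref{U-inversion-lem}.
\end{itemize}
With the correct constants, the product of the two inversion factors is $-1$, which is what turns $(-1)^m$ into $(-1)^{-m-1}$.
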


\begin{proof}
The first expansion follows from the series expansion
\eqref{N-3/2-expansion} and identity \ref{uv+uv}.
The second expansion follows from the first by applying the inversions.
\end{proof}

Let
\[
{\bf u'^t_{l,m}}(Z)
= \bigl( R^m_l(Z) ,\: i(l+m) R^{m-1}_l(Z) \bigr), \qquad
\begin{smallmatrix} l=0,1,2,3,\dots, \\ -l \le m \le l+1; \end{smallmatrix}
\]
and
\[
{\bf u'^b_{l,m}}(Z)
= \bigl( R^m_l(Z) ,\: -i(l-m+1) R^{m-1}_l(Z) \bigr), \qquad
\begin{smallmatrix} l=1,2,3,4,\dots, \\ -l+1 \le m \le l. \end{smallmatrix}
\]
be the transposes of ${\bf u^t_{l,m}}(Z)$ and ${\bf u^b_{l,m}}(Z)$ respectively.
Transposing the above series expansions, we obtain reproducing kernel
expansions for ${\cal V}^{\pm}_{\frac12}$ and ${\cal V}'^{\pm}_{a,res}$:

\begin{prop}
We have the following series expansions
\begin{multline*}
N(Z_2)^{-\frac32} \cdot N(1 - Z_1Z_2^{-1})^{-\frac32}  \\
= \sum_{k,l=0}^{\infty} \biggl( \sum_{m=-l}^{l+1} (-1)^m
\frac{N(Z_1)^k}{N(Z_2)^{k+l+\frac32}} \cdot
\Bigl( {\bf v^t_{l,-m}}(Z_2) \cdot {\bf u'^t_{l,m}}(Z_1)
+ {\bf v^b_{l,-m}}(Z_2) \cdot {\bf u'^b_{l,m}}(Z_1) \Bigr) \biggr),
\end{multline*}
which converges uniformly on compact subsets in the region
$\{ Z_1, Z_2 \in \BB H\text{-sym} ;\: N(Z_1) < N(Z_2) \}$, and
\begin{multline*}
\frac{N(Z_2)^{\frac12}}{N(Z_1)^2} \cdot N(1 - Z_2Z_1^{-1})^{-\frac32}
\cdot Z_2^{-1} Z_1 =
\sum_{k,l=0}^{\infty} \biggl( \sum_{m=-l}^{l+1} (-1)^m
\frac{N(Z_2)^{k-\frac12}}{N(Z_1)^{k+l+2}}  \\
\times \Bigl( N(Z_2) \cdot {\bf v^t_{l,-m}}(Z_2) \cdot {\bf u'^t_{l,m}}(Z_1)
+ {\bf v^b_{l+1,-m}}(Z_2) \cdot {\bf u'^b_{l+1,m}}(Z_1) \Bigr) \biggr),
\end{multline*}
which converges uniformly on compact subsets in the region
$\{ Z_1, Z_2 \in \BB H\text{-sym} ;\: N(Z_2) < N(Z_1) \}$.
(Again, the proof shows that the region of convergence is a much larger subset
of $\HC\text{-sym} \times \HC\text{-sym}$, but identifying this subset is
complicated due to the presence of the fractional powers.)
\end{prop}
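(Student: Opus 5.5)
The plan is to obtain both expansions by transposing the two expansions in the preceding proposition, so that no new summation is needed. The key algebraic fact is how transposition acts on the row/column vectors. By definition, ${\bf u'^t_{l,m}} = ({\bf u^t_{l,m}})^T$, ${\bf u'^b_{l,m}} = ({\bf u^b_{l,m}})^T$, ${\bf v'^t_{l,m}} = ({\bf v^t_{l,m}})^T$ and ${\bf v'^b_{l,m}} = ({\bf v^b_{l,m}})^T$. Hence for a column $u$ and a row $v'=v^T$,
\[
(u \cdot v')^T = v \cdot u^T ,
\]
which is a column-times-row $2\times 2$ matrix. Transposing the left sides, the scalar factors $N(Z_i)^{\dots}$ and $N(1-Z_1Z_2^{-1})^{-\frac32}$ are unaffected. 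For the second expansion I use that $Z^T=Z$ for $Z \in \HC\text{-sym}$, so $(Z_1Z_2^{-1})^T = (Z_2^{-1})^T Z_1^T = Z_2^{-1}Z_1$. This matches the left sides of the new statement exactly.

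The remaining point is the index relabeling. The old sums run over $m$ from $-l-1$ to $l$ and involve ${\bf u}_{l,-m}$ and ${\bf v'}_{l,m}$. The new ones run over $m$ from $-l$ to $l+1$ and involve ${\bf v}_{l,-m}$ and ${\bf u'}_{l,m}$. I substitute $m \mapsto -m$ in the transposed sums. The range $-l-1\le m\le l$ then becomes $-l \le m \le l+1$, and ${\bf v^t_{l,m}}(Z_2){\bf u'^t_{l,-m}}(Z_1)$ becomes ${\bf v^t_{l,-m}}(Z_2){\bf u'^t_{l,m}}(Z_1)$, with the sign $(-1)^{-m}=(-1)^m$ unchanged. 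The same substitution works for the $b$-terms and for the terms with index $l+1$ in the second expansion. For the $b$-terms I must check that the summation range is compatible with the domains of ${\bf v^b_{l,m}}$, ${\bf u^b_{l,m}}$; terms outside the natural domains are absent or zero in both versions.

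Convergence is inherited directly, since transposition is entrywise and preserves uniform convergence on compacta in the same regions. The only step needing care is this bookkeeping of index ranges for the $b$-vectors, together with the order reversal in the matrix product $Z_1Z_2^{-1}$. Symmetry of $Z_1, Z_2$ handles the latter, so I expect no real obstacle.
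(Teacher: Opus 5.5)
Your proposal is correct and is the paper's argument: the paper obtains this proposition simply by transposing the two expansions of the preceding proposition. Your $m\mapsto -m$ relabeling, the observation that $(Z_1Z_2^{-1})^T=Z_2^{-1}Z_1$ for symmetric matrices, and the check that the out-of-range $b$-terms vanish make explicit the bookkeeping the paper leaves to the reader.
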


From these series expansions we can obtain various reproducing formulas.
Here is an example.

\begin{thm}
\begin{enumerate}
\item
If $Z \in \BB D^+$, then the map
\[
g \mapsto (\P^+ g)(Z) = 
\frac i{8\pi^2} \int_{W \in \widetilde{\Gamma}}
\frac{N(Z)^{\frac12} \cdot g(W) \cdot WZ^{-1} \,dW^3}
     {N(W)^2 \cdot N(1-ZW^{-1})^{\frac32}}
\]
annihilates \eqref{I^-} and projects ${\cal V}'_{\frac12}$ onto the quotient
module
\begin{equation}    \label{quotient1V}
{\cal V}'^+_{\frac12} ={\cal V}'_{\frac12} / {\cal I}^-_{{\cal V}'_{\frac12}}
= \BB C\text{-span of } \biggl\{ \begin{matrix}
N(Z)^k \cdot {\bf v'^t_{l,m}}(Z), \\ N(Z)^{k-1} \cdot {\bf v'^b_{l+1,m}}(Z)
\end{matrix} ;\: k \ge \tfrac12,\: l \ge 0 \biggr\}.
\end{equation}
\item
If $Z \in \BB D^-$, then the map
\[
g \mapsto (\P^- g)(Z) = 
\frac i{8\pi^2} \int_{W \in \widetilde{\Gamma}}
\frac{N(Z)^{-\frac32} \cdot g(W) \,dW^3}{N(1-WZ^{-1})^{\frac32}}
\]
annihilates \eqref{I^+} and projects ${\cal V}'_{\frac12}$ onto the quotient
module
\begin{equation}    \label{quotient2V}
{\cal V}'^-_{\frac12} = {\cal V}'_{\frac12} / {\cal I}^+_{{\cal V}'_{\frac12}}
= \BB C\text{-span of } \biggl\{ \begin{matrix}
N(Z)^k \cdot {\bf v'^t_{l,m}}(Z),\\ N(Z)^{k-1} \cdot {\bf v'^b_{l+1,m}}(Z)
\end{matrix} ;\: k \le -(l+\tfrac32),\: l \ge 0 \biggr\}.
\end{equation}
\end{enumerate}
In particular, these maps $\P^+$ and $\P^-$ provide reproducing formulas
for functions in the quotient spaces \eqref{quotient1V} and \eqref{quotient2V}
respectively.
\end{thm}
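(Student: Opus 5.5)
The plan is to treat $\P^+$ and $\P^-$ as pairings of $g$ against explicit kernels, and to evaluate them on the $K$-type basis of ${\cal V}'_{\frac12}$ using the two reproducing kernel expansions of the preceding Proposition (the transposed versions) together with the orthogonality relations for $\langle\,,\rangle_{1-reg}$.

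\textbf{Part (2), $Z \in \BB D^-$.}
Here the kernel $N(Z)^{-\frac32} N(1-WZ^{-1})^{-\frac32}$ is expanded by the first series, with $Z_1=W$ and $Z_2=Z$, on the region $N(W)<N(Z)$. Its terms have the form
\[
N(W)^k\,{\bf u'^{t}_{l,m}}(W) \quad\text{or}\quad N(W)^k\,{\bf u'^{b}_{l,m}}(W),
\]
times ${\bf v^{t/b}_{l,-m}}(Z)\, N(Z)^{-k-l-\frac32}$. Uniform convergence lets me exchange sum and integral. Each term is then a pairing of $g$ against a basis element of ${\cal V}'^+_{a,res}$.

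After transposing, the orthogonality lemma (in its transposed form) applies. It shows that:
\begin{itemize}
\item a basis vector $N(W)^{k'}{\bf v'^t_{l,m}}(W)$ of ${\cal V}'_{\frac12}$ pairs nontrivially only when $k' = -k-l-\frac32$ for some $k\ge 0$, i.e.\ exactly when $k'\le -(l+\frac32)$;
\item in that case the output is the same basis function in the variable $Z$, with coefficient $1$ once the factors $(-1)^m$ cancel.
\end{itemize}
The ${\bf v'^b_{l+1,m}}$ vectors are handled the same way. So $\P^-$ kills every basis vector with $k' \ge -(l+\frac12)$, which is precisely ${\cal I}^+_{{\cal V}'_{\frac12}}$ by \eqref{I^+}, and it reproduces those with $k'\le -(l+\frac32)$. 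That is the claim about \eqref{quotient2V}.

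\textbf{Part (1), $Z\in\BB D^+$.}
I use the second expansion instead, now with $N(Z)<N(W)$. The kernel
\[
N(Z)^{\frac12}N(W)^{-2}N(1-ZW^{-1})^{-\frac32}\,WZ^{-1}
\]
should match that expansion with $Z_1=W$ and $Z_2=Z$. Its terms are $N(W)^{-k-l-2}\,{\bf u'}(W)$ paired against functions of $Z$ of the form $N(Z)^{k+\frac12}{\bf v}(Z)$ or $N(Z)^{k-\frac12}{\bf v^b_{l+1}}(Z)$, with $k\ge 0$. Orthogonality then shows that:
\begin{itemize}
\item basis vectors $N^{k'}{\bf v'^t_{l,m}}$ with $k'\ge\frac12$, and $N^{k'-1}{\bf v'^b_{l+1,m}}$ with $k'\ge \frac12$, are reproduced;
\item all others, i.e.\ $k'\le -\frac12$, which span ${\cal I}^-_{{\cal V}'_{\frac12}}$ by \eqref{I^-}, are annihilated.
\end{itemize}

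\textbf{Equivariance.}
$\P^\pm$ intertwine the $\mathfrak{sp}(4,\BB C)$ actions. This follows from invariance of the pairing \eqref{half-integer-pairing}, or directly from the $K$-type computation combined with Proposition \ref{V'_{1/2}-prop}. Hence they induce isomorphisms of the quotients \eqref{quotient1V}, \eqref{quotient2V} onto their images.

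\textbf{Main obstacle.}
The main risk is bookkeeping: matching the precise form of the kernel in part (1) with the transposed second expansion. This includes the left versus right placement of $WZ^{-1}$ versus $Z^{-1}W$, the half-integer exponent shifts $N(Z)^{k\pm\frac12}$, and the index ranges of $m$ for ${\bf u'}$ versus ${\bf v'}$. A further point is that orthogonality was stated with the pairing in a fixed order, so it must be applied via transposition. I would first verify the exponents on the lowest $K$-type $l=0$, $k=0$, then trust the general pattern.
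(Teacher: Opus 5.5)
Your proposal is correct and is essentially the paper's (implicit) argument: substitute the reproducing kernel expansions, integrate term by term over $\widetilde{\Gamma}$, and read off the result from the orthogonality relations for $\langle\,,\rangle_{1-reg}$. You can also drop the transposition step you flagged as the main risk. The kernel of $\P^+$ is exactly the second series of the first (untransposed) expansion with $Z_1=W$, $Z_2=Z$, since it carries $Z_1Z_2^{-1}=WZ^{-1}$, and for $\P^-$ the scalar kernel matches the first series there. So the row $g(W)$ pairs directly, via the orthogonality lemma as stated, with $N(W)^{-k-l-2}{\bf u}(W)\in{\cal V}^-_{a,res}$ for $\P^+$ and with $N(W)^{k}{\bf u}(W)\in{\cal V}^+_{a,res}$ for $\P^-$, and the output rows ${\bf v'}(Z)$ come out as written.
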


\subsection{Explicit Isomorphisms between Realizations
$(\pi_r, {\cal V}'^{\pm}_{\frac12})$ and $(\pi_{ra}, {\cal V}'^{\pm}_{a,res})$
of Regular Functions}

In this subsection we construct isomorphisms between realization
$(\pi_r, {\cal V}'^{\pm}_{\frac12})$ of regular functions and
right anti regular functions $(\pi_{ra}, {\cal V}'^{\pm}_{a,res})$.
Then by Propositions \ref{reg-anti-reg-iso-prop} and
\ref{nreg-irreducibility-prop}, $\mathfrak{sp}(4,\BB C)$-modules
$(\pi_{ra}, {\cal V}'^{\pm}_{a,res}) \simeq (\pi_l, {\cal V}^{\pm})$.
Similarly, one can construct explicit isomorphisms between
$(\pi_l, {\cal V}^{\pm}_{\frac12})$ and left anti regular functions
$(\pi_{la}, {\cal V}^{\pm}_{a,res}) \simeq (\pi_r, {\cal V}'^{\pm}) $.

We consider a map from ${\cal V}'_{\frac12}$ into $\BB S'$-valued functions
on $\BB D^+ \sqcup \BB D^-$
\begin{equation}  \label{integral-iso}
(\Iso g)(W) = \frac{i}{8\pi^2} \int_{Z \in \widetilde{\Gamma}}
g(Z) \cdot \frac{Z-W}{N(Z-W)^2} \, dZ^3,
\qquad g \in {\cal V}'_{\frac12}.
\end{equation}
If $W \in \BB D^+ \sqcup \BB D^-$, the integrand has no singularities,
and the result is a holomorphic anti regular function in $W$
(annihilated by the operator $\overleftarrow{\nabla}$ applied on the right).
We will see shortly that the result depends on whether $W$ lies in
$\BB D^+$ or $\BB D^-$. Thus, the expression \eqref{integral-iso} determines
two different maps, and we use notations $\Iso^+$ and $\Iso^-$ to signify
that $W \in \BB D^+$ and $W \in \BB D^-$ respectively.
Recall that the action $\pi_{ra}$ was described in \eqref{pi_ra}.

\begin{thm}
The integral operators \eqref{integral-iso} have the following properties:
\begin{enumerate}
\item
  If $W \in \BB D^+$, then $\Iso^+$ maps ${\cal V}'_{\frac12}$ into
  ${\cal V}'^+_a$, annihilates ${\cal I}^-_{{\cal V}'_{\frac12}}$ and
  provides an isomorphism
  \[
  (\pi_r, {\cal V}'^+_{\frac12}) =
  \bigl( \pi_r, {\cal V}'_{\frac12} / {\cal I}^-_{{\cal V}'_{\frac12}} \bigr)
  \to (\pi_{ra}, {\cal V}'^+_a).
  \]

\item  
  If $W \in \BB D^-$, then $\Iso^-$ maps ${\cal V}'_{\frac12}$ into
  ${\cal V}'^-_a$, annihilates ${\cal I}^+_{{\cal V}'_{\frac12}}$ and
  provides an isomorphism
  \[
  (\pi_r, {\cal V}'^-_{\frac12}) =
  \bigl( \pi_r, {\cal V}'_{\frac12} / {\cal I}^+_{{\cal V}'_{\frac12}} \bigr)
  \to (\pi_{ra}, {\cal V}'^-_a).
  \]
\end{enumerate}
\end{thm}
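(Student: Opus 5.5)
The plan is to compute $\Iso^{\pm}$ on the $K$-type basis of ${\cal V}'_{\frac12}$ by expanding the kernel $\frac{Z-W}{N(Z-W)^2}$ in a series, as was done for the reproducing kernels. Then I will check equivariance and identify the images.

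\emph{Kernel expansion.} The kernel $\frac{Z-W}{N(Z-W)^2}$ is, up to sign, $\pi_{ra}$ applied in $W$ to the Cauchy--Fueter-type kernel. For $Z\in\widetilde\Gamma$ (so $N(Z)=1$ in norm) and $W\in\BB D^+$, I expand it via the matrix coefficient expansion of $N(Z-W)^{-2}$ from Proposition 27 in \cite{FL1}, differentiated by $\nabla$ in $W$. Equivalently, since $\frac{Z-W}{N(Z-W)^2} = -\frac12\, \nabla_W N(Z-W)^{-1}$ up to normalization, I will use the expansion of $N(Z-W)^{-1}$ from Proposition 25 of \cite{FL1}. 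This produces a sum over anti regular polynomials in $W$ (elements of ${\cal V}'^+_a$) times functions of $Z$ regular at infinity. For $W\in\BB D^-$ I use the inverted expansion instead.

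\emph{Restriction and pairing.} I then restrict the $Z$-factors to $\HC\text{-sym}$ and pair them with $g$ via $\langle\cdot,\cdot\rangle_{1-reg}$ (Corollary \ref{M-integral-cor}), using the orthogonality relations for ${\bf v'}$ against ${\bf u}$. The $Z$-factors, restricted, lie in $(\pi_{la},{\cal V}^-_{a,res})$ for $W\in\BB D^+$, respectively in ${\cal V}^+_{a,res}$ for $W\in\BB D^-$. This is seen by comparing with Lemma \ref{U-inversion-lem} and the description of ${\cal V}^{\pm}_{a,res}$.

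\emph{Image and annihilation.} Since $W\in\BB D^+$ and the integrand is holomorphic in $W$ and annihilated by $\overleftarrow{\nabla}$, $\Iso^+ g$ is a polynomial right anti regular function. It is therefore in ${\cal V}'^+_a$. By Proposition \ref{V'_{1/2}-prop}, ${\cal I}^-_{{\cal V}'_{\frac12}}$ is exactly the annihilator of ${\cal V}^-_{a,res}$ under the pairing. Since the $Z$-factors lie in ${\cal V}^-_{a,res}$, $\Iso^+$ kills ${\cal I}^-_{{\cal V}'_{\frac12}}$ and descends to ${\cal V}'^+_{\frac12}$.

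\emph{Equivariance.} For $h\in Sp(4,\BB R)$, I substitute $Z\mapsto h\cdot Z$ in the integral, using Lemma \ref{Jacobian_lemma} for $dZ^3$ and the $Sp(4,\BB R)$-invariance of $\widetilde\Gamma$. I combine this with the standard identity
\[
Z-W=(a'-\tilde W c')^{-1}\,(\tilde Z-\tilde W)\,(cZ+d)^{-1}\ \text{(up to the usual form)},
\]
together with $N$ of the same. The conformal factors then distribute as $\pi_r$ on $g$ and $\pi_{ra}$ on the output. Alternatively, I verify equivariance infinitesimally on generators using Lemma \ref{pi-action-lem} and Lemma \ref{pi-a-Lie_alg-action}, and integrate by parts using invariance of the pairing.

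\emph{Isomorphism.} $\Iso^+$ is a nonzero intertwiner from the irreducible module ${\cal V}'^+_{\frac12}$ (Proposition \ref{V'_{1/2}-prop}) to the irreducible module ${\cal V}'^+_a\simeq{\cal V}^+$ (Propositions \ref{reg-anti-reg-iso-prop}, \ref{nreg-irreducibility-prop}). It is therefore an isomorphism. Nonvanishing can be checked on the lowest $K$-type: $g=N(Z)^{\frac12}{\bf v'^t_{0,m}}$ pairs with the constant term of the expansion to give a nonzero constant spinor. The $\BB D^-$ case is identical after applying the inversions $\pi_r\bigl(\begin{smallmatrix}0&1\\1&0\end{smallmatrix}\bigr)$ and $\pi_{ra}\bigl(\begin{smallmatrix}0&1\\1&0\end{smallmatrix}\bigr)$, using Lemma \ref{Gamma_s-Jacobian-lemma}-style behavior of $dZ^3$ on $\widetilde\Gamma$.

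\emph{Main obstacle.} The hardest step is the equivariance: the kernel lives on all of $\HC$, but we integrate over the double cover $\widetilde\Gamma$ with half-integer powers of $N$. Tracking branches of $N(Z)^{\frac12}$ in $g$ against the integer powers in the kernel, and confirming that the factors $N(cZ+d)^{3}$ cancel correctly, needs care. If the group-level check gets messy, I would instead verify the result directly on $K$-type bases via the series expansion, since both sides are then explicit.
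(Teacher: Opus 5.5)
Your overall plan matches the paper's proof. Equivariance comes from the substitution $Z\mapsto h\cdot Z$, using Lemma~\ref{Jacobian_lemma} and $\tilde Z-\tilde W=(a'-Zc')^{-1}(Z-W)(cW+d)^{-1}$ (Lemma 10 of \cite{FL1}). Then comes a separated expansion of the kernel, and irreducibility at the end. Killing ${\cal I}^-_{{\cal V}'_{\frac12}}$ through its description in Proposition~\ref{V'_{1/2}-prop} as the annihilator of ${\cal V}^-_{a,res}$ is a valid, slightly cleaner replacement for the paper's argument. The paper instead uses a degree count plus the observation that $N(Z)^k{\bf v'^t_{l,m}}$ with $k<-(l+\frac12)$ generates ${\cal I}^-_{{\cal V}'_{\frac12}}$. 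The $\BB D^-$ case via the inversions also works, up to an overall sign. The real problem is that the expansion you propose to use does not have the property your annihilation step needs.

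\textbf{The kernel identity and the resulting expansion.} Your kernel identity is wrong: $\nabla_W N(Z-W)^{-1}=2(Z-W)^+/N(Z-W)^2$, and $\nabla_W N(Z-W)^{-2}$ is not the kernel either. The correct identity is
$\frac{Z-W}{N(Z-W)^2}=\frac12\nabla^+_W N(Z-W)^{-1}=-\frac12\nabla^+_Z N(Z-W)^{-1}$.
More importantly, suppose you differentiate the scalar expansion of Proposition 25 of \cite{FL1} in $W$. Then the $Z$-factors stay the scalar functions $N(Z)^{-1}\cdot t^l_{\mu\,\underline{\nu}}(Z^{-1})$. These are not $\BB S$-valued anti regular functions, so your claim that the restricted $Z$-factors lie in ${\cal V}^-_{a,res}$ is false, and the annihilator argument does not apply. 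What you need is an expansion $\sum_j\phi_j(Z)\psi_j(W)$ whose $Z$-factors have columns in ${\cal V}^-_a$. You can get one in either of two ways:
\begin{itemize}
\item Apply the $\dagger$ maps to the Cauchy--Fueter expansion (Proposition 26 of \cite{FL1}, Proposition 113 of \cite{ATMP}). This is exactly how the paper obtains \eqref{kernel-expansion1}--\eqref{kernel-expansion2}.
\item Differentiate in $Z$ instead of $W$. The columns of $\nabla^+_Z\bigl(N(Z)^{-1}t^l_{\mu\,\underline{\nu}}(Z^{-1})\bigr)$ are left anti regular, since $\nabla\nabla^+=\square$, and regular at infinity.
\end{itemize}

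\textbf{Two smaller errors.} First, ``holomorphic and anti regular on $\BB D^+$, hence polynomial'' is not a valid inference. Polynomiality comes from homogeneity in the expansion: for $g$ homogeneous of degree $d$, only the finitely many terms whose $Z$-factor has degree $-(d+3)$ survive the integration, and none survive if $d<0$. This is the paper's argument.

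Second, your nonvanishing check uses the wrong $K$-type. $N(Z)^{\frac12}{\bf v'^t_{0,m}}$ is homogeneous of degree $1$, so it meets the terms of degree one in $W$, not the constant term. The degree-zero part of ${\cal V}'^+_{\frac12}$ is spanned by $N(Z)^{-\frac12}{\bf v'^b_{1,m}}$. A cleaner route to injectivity:
\begin{itemize}
\item the restricted $Z$-factors form a basis of ${\cal V}^-_{a,res}$;
\item the $W$-factors are linearly independent;
\item the induced pairing of ${\cal V}'_{\frac12}/{\cal I}^-_{{\cal V}'_{\frac12}}$ with ${\cal V}^-_{a,res}$ is non-degenerate.
\end{itemize}
Together these give that $\Iso^+$ is injective on the quotient directly. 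The paper itself simply appeals to irreducibility at this point.
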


\begin{proof}
  For $h = \bigl(\begin{smallmatrix} a' & b' \\ c' & d' \end{smallmatrix}\bigr)
  \in Sp(4,\BB R)$ sufficiently close to the identity, let
  $h^{-1} = \bigl(\begin{smallmatrix} a & b \\ c & d \end{smallmatrix}\bigr)$,
  $\tilde Z = (aZ+b)(cZ+d)^{-1}$ and $\tilde W = (aW+b)(cW+d)^{-1}$.
  By Lemma 10 in \cite{FL1} and Lemma \ref{Jacobian_lemma} we have:
  \begin{multline*}
\int_{Z \in \widetilde{\Gamma}}
(\pi_r(h)g)(Z) \cdot \frac{Z-W}{N(Z-W)^2} \, dZ^3
= \int_{Z \in \widetilde{\Gamma}} g(\tilde Z) \cdot
\frac{(a'-Zc')^{-1}}{N(a'-Zc')} \cdot \frac{Z-W}{N(Z-W)^2} \, dZ^3  \\
= \int_{Z \in \widetilde{\Gamma}} \frac{g(\tilde Z)}{N(a'-Zc')^3} \cdot
\frac{\tilde Z - \tilde W}{N(\tilde Z - \tilde W)^2} \cdot
\frac{cW+d}{N(cW+d)^2} \, dZ^3  \\
= \int_{\tilde Z \in \widetilde{\Gamma}} g(\tilde Z) \cdot
\frac{\tilde Z - \tilde W}{N(\tilde Z - \tilde W)} \, d\tilde Z^3
\cdot \frac{cW+d}{N(cW+d)^2}.
\end{multline*}
  This proves that the operators \eqref{integral-iso} intertwine the
  $\mathfrak{sp}(4,\BB C)$-actions $\pi_r$ and $\pi_{ra}$.

To show that $\Iso^{\pm}$ map ${\cal V}'_{\frac12}$ into ${\cal V}'^{\pm}_a$
and annihilate ${\cal I}^{\mp}_{{\cal V}'_{\frac12}}$ respectively, we use the
matrix coefficient expansions of the Cauchy-Fueter kernel
(Proposition 26 in \cite{FL1} and Proposition 113 in \cite{ATMP}).
Using the maps \eqref{dagger},
we obtain matrix coefficient expansions for $\frac{Z-W}{N(Z-W)^2}$:
\begin{equation}  \label{kernel-expansion1}
\frac{Z-W}{N(Z-W)^2}
  = \frac1{N(Z)} \sum_{l,m,n} \left(\begin{smallmatrix}
t^{l+\frac 12}_{m \, \underline{n+ \frac 12}}(Z^{-1})  \\
- t^{l+\frac 12}_{m \, \underline{n- \frac 12}}(Z^{-1})
\end{smallmatrix}\right)
\cdot
 \left(\begin{smallmatrix}
(l+m+ \frac 12) t^l_{n \, \underline{m- \frac 12}}(W), &
-(l-m+ \frac 12) t^l_{n \, \underline{m+ \frac 12}}(W) \end{smallmatrix}\right),
\end{equation}
which converges uniformly on compact subsets in the region
$\{ (Z,W) \in \HC^{\times} \times \HC; \: WZ^{-1} \in \BB D^+ \}$.
The sum is taken first over all
$m =-l-\frac 12 ,-l+\frac 32,\dots,l+\frac 12$ and $n =-l,-l+1,\dots,l$,
then over $l=0,\frac 12, 1, \frac 32,\dots$.
Similarly,
\begin{equation}  \label{kernel-expansion2}
\frac{Z-W}{N(Z-W)^2} = - \frac1{N(W)} \sum_{l,m,n}
\left(\begin{smallmatrix} t^{l-\frac 12}_{n - \frac 12 \, \underline{m}}(Z) \\
- t^{l-\frac 12}_{n + \frac 12 \, \underline{m}}(Z) \end{smallmatrix}\right)
\cdot
\left(\begin{smallmatrix} (l+m+\frac12) t^l_{m+\frac12 \, \underline{n}}(W^{-1}), &
- (l-m+\frac12) t^l_{m-\frac12 \, \underline{n}}(W^{-1}) \end{smallmatrix}\right),
\end{equation}
which converges uniformly on compact subsets in the region
$\{ (Z,W) \in \HC \times \HC^{\times}; \: ZW^{-1} \in \BB D^+ \}$.
The sum is taken first over all
$m =-l+\frac 12 ,-l+\frac 32,\dots,l-\frac 12$ and $n =-l,-l+1,\dots,l$,
then over $l =\frac 12, 1, \frac 32, 2, \dots$.

Let $g(Z) \in {\cal V}'_{\frac12}$, and assume that $g \ne 0$ and homogeneous
of degree $d$. If $d \ge 0$, the kernel expansion \eqref{kernel-expansion1} has
only finitely many terms of degree $-(d+3)$ in $Z$ that can potentially survive
integration over $\widetilde{\Gamma}$. And if $d<0$, no terms in
\eqref{kernel-expansion1} can survive integration over $\widetilde{\Gamma}$.
Any $g$ of the form $N(Z)^k \cdot {\bf v'^t_{l,m}}(Z)$ with
$k<-(l+\frac12)$ is annihilated by $\Iso^+$ and  generates entire
${\cal I}^-_{{\cal V}'_{\frac12}}$.
This proves $\Iso^+g \in {\cal V}'^+_a$, and that $\Iso^+g=0$ for
$g \in {\cal I}^-_{{\cal V}'_{\frac12}}$.
Similarly, using  \eqref{kernel-expansion2}, one shows that the image of
$\Iso^-$ lies in ${\cal V}'^-_a$ and that $\Iso^-$ annihilates
${\cal I}^+_{{\cal V}'_{\frac12}}$.
Then the result follows from the irreducibility of
$\mathfrak{sp}(4,\BB C)$-modules
$(\pi_r, {\cal V}'^{\pm}_{\frac12})$ and $(\pi_{ra}, {\cal V}'^{\pm}_{a,res})$.
\end{proof}

\subsection{Quasi Regular Functions Restricted to $\mathfrak{sp}(4,\BB C)$}  \label{QReg-subsection}

In this subsection we study the quasi (anti) regular functions restricted to
$\mathfrak{sp}(4,\BB C)$.
As $\mathfrak{gl}(2,\HC)$-modules, the quasi (anti) regular functions were
thoroughly studied in \cite{qreg}.

Recall from \cite{qreg} that
\begin{align*}
{\cal U} &= \{ f \in \BB C[z^0,z^1,z^2,z^3,N(Z)^{-1}] \otimes \BB S ;\:
\nabla\square f =0 \},  \\
{\cal U}' &= \{ g \in \BB C[z^0,z^1,z^2,z^3,N(Z)^{-1}] \otimes \BB S' ;\:
(\square g) \overleftarrow{\nabla} =0 \}
\end{align*}
be respectively the polynomial left and right quasi anti regular functions on
$\HC^{\times}$.
The Lie algebra $\mathfrak{gl}(2,\HC)$ acts on these spaces by
differentiating the following group actions:
\begin{align*}
\pi'_l(h): \: f(Z) \: &\mapsto \: \bigl( \pi'_l(h)f \bigr)(Z) =
\frac{a'-Zc'}{N(a'-Zc')} \cdot f \bigl( (a'-Zc')^{-1}(-b'+Zd') \bigr),  \\
\pi'_r(h): \: g(Z) \: &\mapsto \: \bigl( \pi'_r(h)g \bigr)(Z) =
g \bigl( (aZ+b)(cZ+d)^{-1} \bigr) \cdot \frac{cZ+d}{N(cZ+d)},
\end{align*}
where $f \in {\cal U}$, $g \in {\cal U}'$,
$h = \bigl(\begin{smallmatrix} a' & b' \\ c' & d' \end{smallmatrix}\bigr)
\in GL(2,\HC)$ and 
$h^{-1} = \bigl(\begin{smallmatrix} a & b \\ c & d \end{smallmatrix}\bigr)$.

Restricting to $\mathfrak{sp}(4,\BB C)$ and $\HC^{\times} \text{-sym}$,
we obtain representations $(\pi'_l, {\cal U}_{res})$ and
$(\pi'_r, {\cal U}'_{res})$ of $\mathfrak{sp}(4,\BB C)$, where
\begin{align*}
{\cal U}_{res} &= \{\text{$\BB S$-valued polynomial
  functions on $\HC^{\times} \text{-sym}$}\}  \\
&= \BB S \otimes \BB C [z_{11}, z_{12}, z_{22}, N(Z)^{-1}],  \\
{\cal U}'_{res} &= \{\text{$\BB S'$-valued polynomial
  functions on $\HC^{\times} \text{-sym}$}\}  \\
&= \BB S' \otimes \BB C [z_{11}, z_{12}, z_{22}, N(Z)^{-1}].
\end{align*}

We have the following analogue of Lemma 68 in \cite{FL1} and
Lemma 17 in \cite{qreg}:

\begin{lem}  \label{pi-prime-action-lem}
The Lie algebra action $\pi'_l$ of $\mathfrak{sp}(4,\BB C)$ on ${\cal U}_{res}$
is given by
\begin{align*}
\pi'_l \bigl( \begin{smallmatrix} A & 0 \\ 0 & -A^T \end{smallmatrix} \bigr) &:
f(Z) \mapsto - \tr (AZ \partial + ZA^T\partial + A) f + Af,  \\
\pi'_l \bigl( \begin{smallmatrix} 0 & B \\ 0 & 0 \end{smallmatrix} \bigr) &:
f(Z) \mapsto - \tr (B \partial) f,  \\
\pi'_l \bigl( \begin{smallmatrix} 0 & 0 \\ C & 0 \end{smallmatrix} \bigr) &:
f(Z) \mapsto \tr (ZCZ \partial + ZC) f - ZCf.
\end{align*}
The Lie algebra action $\pi'_r$ of $\mathfrak{sp}(4,\BB C)$ on ${\cal U}'_{res}$
is given by
\begin{align*}
\pi'_r \bigl( \begin{smallmatrix} A & 0 \\ 0 & -A^T \end{smallmatrix} \bigr) &:
g(Z) \mapsto - \tr (AZ \partial + ZA^T\partial + A) g + gA^T,  \\
\pi'_r \bigl( \begin{smallmatrix} 0 & B \\ 0 & 0 \end{smallmatrix} \bigr) &:
g(Z) \mapsto - \tr (B \partial) g,  \\
\pi'_r \bigl( \begin{smallmatrix} 0 & 0 \\ C & 0 \end{smallmatrix} \bigr) &:
g(Z) \mapsto \tr (ZCZ \partial + CZ) g - gCZ.
\end{align*}
\end{lem}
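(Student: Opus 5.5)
The plan is to differentiate the group actions $\pi'_l$ and $\pi'_r$ along one-parameter subgroups $h_t = \exp(tX)$ with $X \in \mathfrak{sp}(4,\BB C)$, and then restrict to $\HC^{\times}\text{-sym}$. The same computation done on all of $\HC$ for $\mathfrak{gl}(2,\HC)$ already exists: Lemma 68 in \cite{FL1} and Lemma 17 in \cite{qreg}. So the efficient route is to quote the $\mathfrak{gl}(2,\HC)$ formulas and specialize them. An element of $\mathfrak{sp}(4,\BB C)$ has the form $\bigl(\begin{smallmatrix} A & B \\ C & -A^T \end{smallmatrix}\bigr)$ with $B^T=B$, $C^T=C$. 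It decomposes as the sum of the four pieces $\bigl(\begin{smallmatrix} A & 0 \\ 0 & 0 \end{smallmatrix}\bigr)$, $\bigl(\begin{smallmatrix} 0 & B \\ 0 & 0 \end{smallmatrix}\bigr)$, $\bigl(\begin{smallmatrix} 0 & 0 \\ C & 0 \end{smallmatrix}\bigr)$ and $\bigl(\begin{smallmatrix} 0 & 0 \\ 0 & D \end{smallmatrix}\bigr)$ with $D=-A^T$. By linearity, the action of the diagonal element is the sum of the actions of the $A$-block and the $D$-block.

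The steps are as follows.

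\textbf{Step 1.} Record the $\mathfrak{gl}(2,\HC)$ actions. From $\pi'_l(h)f = \frac{a'-Zc'}{N(a'-Zc')} f(\dots)$ with $h=1+tX$ (so $a'=1+tA$, $b'=tB$, $c'=tC$, $d'=1+tD$), the first-order terms give:
\begin{itemize}
\item $A$-block: $-\tr(AZ\partial + A)f + Af$;
\item $B$-block: $-\tr(B\partial)f$;
\item $C$-block: $\tr(ZCZ\partial + ZC)f - ZCf$;
\item $D$-block: $\tr(ZD\partial)f$.
\end{itemize}
These come from expanding $(a'-Zc')^{-1}(-b'+Zd') \approx Z + t(-B + ZD - AZ + ZCZ)$ and $N(1+tY)\approx 1+t\tr Y$.

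\textbf{Step 2.} Set $D=-A^T$ and add the $A$- and $D$-block contributions. This gives $-\tr(AZ\partial + ZA^T\partial + A)f + Af$, matching the claim. The $B$ and $C$ formulas carry over unchanged.

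\textbf{Step 3.} Do the analogous expansion for $\pi'_r$, using $h^{-1} = 1 - tX$ (so $a=1-tA$, $b=-tB$, $c=-tC$, $d=1-tD$). Then $(aZ+b)(cZ+d)^{-1} \approx Z + t(-AZ - B + ZCZ + ZD)$, and the factor $\frac{cZ+d}{N(cZ+d)}$ gives $1 + t(-CZ - D + \tr(CZ) + \tr D)$. The resulting contributions are:
\begin{itemize}
\item $D$-block: $\tr(ZD\partial + D)g - gD$, which becomes $-\tr(ZA^T\partial + A)g + gA^T$ after substituting $D=-A^T$;
\item $A$-block: $-\tr(AZ\partial)g$;
\item $C$-block: $\tr(ZCZ\partial + CZ)g - gCZ$.
\end{itemize}
Summing the $A$- and $D$-blocks gives the stated formula.

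\textbf{Step 4 (main subtlety).} Justify that the restriction to $\HC\text{-sym}$ is legitimate. Since $Sp(4)$ preserves symmetric matrices, the vector field $-B+ZD-AZ+ZCZ$ is tangent to $\HC\text{-sym}$ whenever $Z$ is symmetric, $D=-A^T$, and $B,C$ are symmetric. Indeed, $ZD - AZ = -(AZ + ZA^T)$ and $ZCZ$ are both symmetric. Consequently the operator $\tr(Y\partial)$ with the redefined $\partial$ on $\HC\text{-sym}$ computes the correct directional derivative. To confirm this, I would check that for symmetric $Y$, $\tr(Y\partial)$ with the redefined $\partial$ (whose off-diagonal entries are $-\frac{i}{2}\frac{\partial}{\partial t}$) agrees with the full $\tr(Y\partial)$ applied to any extension of $f$. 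This is the one point I expect to require care: the convention $z_{12}=z_{21}=it$, so that $\frac{\partial}{\partial z_{12}} + \frac{\partial}{\partial z_{21}}$ restricts to $-i\frac{\partial}{\partial t}$.
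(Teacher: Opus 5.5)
Your proposal is correct and takes essentially the same route as the paper. The paper gives no separate proof: it presents the lemma as the $\mathfrak{sp}(4,\BB C)$-specialization of the $\mathfrak{gl}(2,\HC)$ formulas (Lemma 68 in \cite{FL1}, Lemma 17 in \cite{qreg}), obtained by setting $D=-A^T$, $B^T=B$, $C^T=C$ and restricting to $\HC\text{-sym}$. Your block-by-block first-order expansions reproduce exactly this, and they match the conventions of Lemma~\ref{pi-Lie_alg-action}. Your tangency check is also right: the combined vector field $-B-(AZ+ZA^T)+ZCZ$ is symmetric, so $\tr(\,\cdot\,\partial)$ with the redefined symmetric $\partial$ computes the correct directional derivative.
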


Note that the $\mathfrak{sp}(4,\BB C)$-module structures of
$(\pi'_l, {\cal U}_{res})$ and $(\pi'_r, {\cal U}'_{res})$ differ from
the restrictions of the anti regular functions
$(\pi_{la}, {\cal V}_{a,res})$ and $(\pi_{ra}, {\cal V}'_{a,res})$
only by the powers of $N(a'-Zc')$ and $N(cZ+d)$.
For this reason, their $K$-types are very similar. Recall
\begin{align*}
{\bf u^t_{l,m}}(Z)
&= \begin{pmatrix} R^m_l(Z) \\ i(l+m) R^{m-1}_l(Z) \end{pmatrix}, \qquad
\begin{smallmatrix} l=0,1,2,3,\dots, \\ -l \le m \le l+1, \end{smallmatrix}  \\
{\bf u^b_{l,m}}(Z)
&= \begin{pmatrix} R^m_l(Z) \\ -i(l-m+1) R^{m-1}_l(Z) \end{pmatrix}, \qquad
\begin{smallmatrix} l=1,2,3,4,\dots, \\ -l+1 \le m \le l; \end{smallmatrix}  \\
{\bf u'^t_{l,m}}(Z)
&= \Bigl( R^m_l(Z), \: i(l+m) R^{m-1}_l(Z) \Bigr), \qquad
\begin{smallmatrix} l=0,1,2,3,\dots, \\ -l \le m \le l+1, \end{smallmatrix}  \\
{\bf u'^b_{l,m}}(Z)
&= \Bigl( R^m_l(Z), \: -i(l-m+1) R^{m-1}_l(Z) \Bigr), \qquad
\begin{smallmatrix} l=1,2,3,4,\dots, \\ -l+1 \le m \le l. \end{smallmatrix}
\end{align*}
In this context Lemma \ref{U-inversion-lem} becomes

\begin{lem}  \label{U-inversion-lem2}
Let $\bigl(\begin{smallmatrix} 0 & 1 \\ 1 & 0 \end{smallmatrix}\bigr)
\in GL(2,\HC)$, then
\begin{align*}
\pi'_l \bigl(\begin{smallmatrix} 0 & 1 \\ 1 & 0 \end{smallmatrix}\bigr)
\bigl( N(Z)^k \cdot {\bf u^t_{l,m}}(Z) \bigr) &= (-1)^{l+1}
\tfrac{(l+m)!}{(l-m+1)!} N(Z)^{-(k+l+1)} \cdot {\bf u^b_{l+1,-m+1}}(Z),  \\
\pi'_l \bigl(\begin{smallmatrix} 0 & 1 \\ 1 & 0 \end{smallmatrix}\bigr)
\bigl( N(Z)^k \cdot {\bf u^b_{l+1,m}}(Z) \bigr) &= (-1)^{l+1}
\tfrac{(l+m)!}{(l-m+1)!} N(Z)^{-(k+l+1)} \cdot {\bf u^t_{l,-m+1}}(Z);  \\
\pi'_r \bigl(\begin{smallmatrix} 0 & 1 \\ 1 & 0 \end{smallmatrix}\bigr)
\bigl( N(Z)^k \cdot {\bf u'^t_{l,m}}(Z) \bigr) &= (-1)^l
\tfrac{(l+m)!}{(l-m+1)!} N(Z)^{-(k+l+1)} \cdot {\bf u'^b_{l+1,-m+1}}(Z),  \\
\pi'_r \bigl(\begin{smallmatrix} 0 & 1 \\ 1 & 0 \end{smallmatrix}\bigr)
\bigl( N(Z)^k \cdot {\bf u'^b_{l+1,m}}(Z) \bigr) &= (-1)^l
\tfrac{(l+m)!}{(l-m+1)!} N(Z)^{-(k+l+1)} \cdot {\bf u'^t_{l,-m+1}}(Z).
\end{align*}
\end{lem}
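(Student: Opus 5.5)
The plan is to deduce the $\pi'_l$ formulas from Lemma \ref{U-inversion-lem}, then obtain the $\pi'_r$ formulas by transposition.

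First compute the inversion operators explicitly. For $h=\bigl(\begin{smallmatrix} 0 & 1 \\ 1 & 0 \end{smallmatrix}\bigr)$ we have $h^{-1}=h$. So $a'=d'=0$ and $b'=c'=1$, and likewise $a=d=0$, $b=c=1$. Substituting into the defining formulas of $\pi'_l$ and $\pi'_r$ gives
\[
\pi'_l(h) f(Z) = \frac{-Z}{N(-Z)}\, f\bigl((-Z)^{-1}(-1)\bigr) = -\frac{Z}{N(Z)}\, f(Z^{-1}),
\qquad
\pi'_r(h) g(Z) = g(Z^{-1})\cdot \frac{Z}{N(Z)}.
\]
From \eqref{pi_la}, the same computation gives $\pi_{la}(h) f(Z) = -\frac{Z}{N(Z)^2}\, f(Z^{-1})$. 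Hence on $\BB S$-valued functions
\[
\pi'_l(h) = N(Z)\cdot \pi_{la}(h).
\]

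Now apply this to the basis functions. Lemma \ref{U-inversion-lem} gives $\pi_{la}(h)\bigl(N(Z)^k {\bf u^t_{l,m}}\bigr)$ as $(-1)^{l+1}\tfrac{(l+m)!}{(l-m+1)!}\, N(Z)^{-(k+l+2)}\, {\bf u^b_{l+1,-m+1}}$. Multiplying by $N(Z)$ gives exactly the claimed exponent $-(k+l+1)$ with the same coefficient. The ${\bf u^b_{l+1,m}}$ formula follows in the same way. If one prefers not to rely on Lemma \ref{U-inversion-lem} as a black box, one can verify it directly. That check uses $R^m_l(Z^{-1}) = (-1)^l\tfrac{(l+m)!}{(l-m)!}N(Z)^{-l}R^{-m}_l(Z)$ together with the analogue of \eqref{Lemma23-analogue1}--\eqref{Lemma23-analogue2} for $Z\cdot{\bf u^t_{l,m}}$. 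Its only delicate point is that the factorial ratios for the two components agree after reindexing.

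For $\pi'_r$, transpose. If $g={\bf u'^t}$ is the transpose of $f={\bf u^t}$, then since $Z$ is symmetric,
\[
\pi'_r(h) g = g(Z^{-1})\frac{Z}{N(Z)} = \Bigl(\frac{Z}{N(Z)} f(Z^{-1})\Bigr)^T = -\bigl(\pi'_l(h) f\bigr)^T.
\]
Here $Z^T=Z$ holds on $\HC\text{-sym}$, and $(Z^{-1})^T = Z^{-1}$. The extra minus sign turns $(-1)^{l+1}$ into $(-1)^l$, which yields the last two formulas.

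The only real obstacle is sign bookkeeping: the sign coming from $N(-Z)=N(Z)$ together with the $-Z$ factor, and the overall sign in the comparison between $\pi_{la}$ and $\pi'_l$. I would double-check these on the simplest case $l=0$, $k=0$, ${\bf u^t_{0,0}}=(1,0)^T$, where both sides can be computed by hand.
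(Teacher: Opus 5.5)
Your route is the paper's own: the paper gives no separate argument and simply says that Lemma \ref{U-inversion-lem} ``becomes'' this lemma. Both of your structural steps are correct, namely $\pi'_l(h)=N(Z)\,\pi_{la}(h)$ for $h=\bigl(\begin{smallmatrix}0&1\\1&0\end{smallmatrix}\bigr)$ and $\pi'_r(h)(f^T)=-\bigl(\pi'_l(h)f\bigr)^T$. The gap is the black box. Lemma \ref{U-inversion-lem} is stated without proof and is off by a global sign, so your argument passes on a false identity.

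The sanity check you proposed but did not perform already shows this. On one side, $\pi'_l(h)\,{\bf u^t_{0,0}}=-N(Z)^{-1}Z\,(1,0)^T=-N(Z)^{-1}(z_{11},z_{12})^T$. On the other, ${\bf u^b_{1,1}}=(R^1_1,-iR^0_1)^T=(-z_{11},-z_{12})^T$, since $R^1_1=-z_{11}$, $R^0_1=t$ and $z_{12}=it$. So the coefficient is $+1$, not $(-1)^{0+1}=-1$. Concretely, at $Z=I$ the left side is $(-1,0)^T$ while the claimed right side is $(1,0)^T$.

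The direct computation you sketched settles the general case. From $R^m_l(Z^{-1})=(-1)^l\tfrac{(l+m)!}{(l-m)!}N(Z)^{-l}R^{-m}_l(Z)$ one gets
\[
{\bf u^t_{l,m}}(Z^{-1})=(-1)^l\tfrac{(l+m)!}{(l-m+1)!}N(Z)^{-l}\,{\bf v^t_{l,-m}}(Z).
\]
Next, \eqref{Lemma23-analogue1} gives $Z\,{\bf v^t_{l,-m}}(Z)=-{\bf u^b_{l+1,-m+1}}(Z)$. Combining these,
\[
\pi'_l(h)\bigl(N(Z)^k\,{\bf u^t_{l,m}}(Z)\bigr)=(-1)^{l}\tfrac{(l+m)!}{(l-m+1)!}\,N(Z)^{-(k+l+1)}\,{\bf u^b_{l+1,-m+1}}(Z).
\]
Since $\pi'_l(h)^2=1$ (indeed $\tfrac{Z}{N(Z)}\cdot\tfrac{Z^{-1}}{N(Z)^{-1}}=1$), the ${\bf u^b}$ formula follows with coefficient $(-1)^l\tfrac{(l+m)!}{(l-m+1)!}$ after relabeling. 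Your transposition step then produces $(-1)^{l+1}$ in both $\pi'_r$ formulas.

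So the signs $(-1)^{l+1}$ and $(-1)^l$ in the statement should be interchanged, and Lemma \ref{U-inversion-lem} should likewise read $(-1)^l$. As written, the lemma cannot be proved. The fix is to replace the citation of Lemma \ref{U-inversion-lem} by this direct computation and correct the signs. The error is harmless for the paper's later uses of the lemma, which only need the inversion to carry each $K$-type to a nonzero multiple of another.
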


We have $\mathfrak{sp}(4,\BB C)$-submodules of
$(\pi'_l, {\cal U}_{res})$ and $(\pi'_r, {\cal U}'_{res})$ consisting of the
functions regular at the origin and infinity:
\begin{align*}
{\cal U}_{res}^+ &=
\{\text{$\BB S$-valued polynomial functions on $\HC \text{-sym}$}\}  \\
&= \BB S \otimes \BB C [z_{11}, z_{12}, z_{22}],  \\
{\cal U}_{res}^- &= \bigl\{
  f \in \BB S \otimes \BB C[z_{11},z_{12},z_{22}, N(Z)^{-1}] ;\:
  N(Z)^{-1} \cdot Z \cdot f(Z^{-1}) \in {\cal U}_{res}^+ \bigr\},  \\
{\cal U}'^+_{res} &=
\{\text{$\BB S'$-valued polynomial functions on $\HC \text{-sym}$}\}  \\
&= \BB S' \otimes \BB C [z_{11}, z_{12}, z_{22}],  \\
{\cal U}'^-_{res} &= \bigl\{
  g \in \BB S' \otimes \BB C[z_{11},z_{12},z_{22}, N(Z)^{-1}] ;\:
  N(Z)^{-1} \cdot g(Z^{-1}) \cdot Z \in {\cal U}'^+_{res} \bigr\}.
\end{align*}

\begin{thm}  \label{U-K-type-decomp-thm}
The $K$-types of $(\pi'_l, {\cal U}_{res})$ are $V_{l+\frac12}$,
$l=0,1,2,3,\dots$,
spanned by $N(Z)^k \cdot {\bf u^t_{l,m}}(Z)$ and by
$N(Z)^k \cdot {\bf u^b_{l+1,m}}(Z)$, $-l \le m \le l+1$.
Furthermore,
\begin{align*}
{\cal U}^+_{res} &= \BB C\text{-span of }
\bigl\{ N(Z)^k \cdot {\bf u^t_{l,m}}(Z),\: N(Z)^k \cdot {\bf u^b_{l+1,m}}(Z) ;\:
k,l \ge 0 \bigr\},  \\
{\cal U}^-_{res} &= \BB C\text{-span of }
\bigl\{ N(Z)^{-(k+l+1)} \cdot {\bf u^t_{l,m}}(Z),\:
N(Z)^{-(k+l+1)} \cdot {\bf u^b_{l+1,m}}(Z) ;\: k,l \ge 0 \bigr\}.
\end{align*}

Similarly, the $K$-types of $(\pi'_r, {\cal U}'_{res})$ are $V_{l+\frac12}$,
$l=0,1,2,3,\dots$,
spanned by $N(Z)^k \cdot {\bf u'^t_{l,m}}(Z)$ and by
$N(Z)^k \cdot {\bf u'^b_{l+1,m}}(Z)$, $-l \le m \le l+1$.
Furthermore,
\begin{align*}
{\cal U}'^+_{res} &= \BB C\text{-span of }
\bigl\{ N(Z)^k \cdot {\bf u'^t_{l,m}}(Z),\: N(Z)^k \cdot {\bf u'^b_{l+1,m}}(Z) ;\:
k,l \ge 0 \bigr\},  \\
{\cal U}'^-_{res} &= \BB C\text{-span of }
\bigl\{ N(Z)^{-(k+l+1)} \cdot {\bf u'^t_{l,m}}(Z),\:
N(Z)^{-(k+l+1)} \cdot {\bf u'^b_{l+1,m}}(Z) ;\: k,l \ge 0 \bigr\}.
\end{align*}
\end{thm}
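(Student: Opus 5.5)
The plan is to reduce everything to the already-established description of the anti regular modules $(\pi_{la},{\cal V}^{\pm}_{a,res})$, whose $K$-types were identified via the vectors ${\bf u^t_{l,m}}$, ${\bf u^b_{l,m}}$. The $K$-type statement and the description of ${\cal U}^+_{res}$ require almost no new work, because the spaces involved are literally the same.

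\emph{Step 1: the $K$-types.} Compare Lemma \ref{pi-prime-action-lem} with the analogous formulas for $\pi_{la}$ in Lemma \ref{pi-a-Lie_alg-action}, restricted to $\mathfrak{sp}(4,\BB C)$ and to the diagonal elements $\bigl(\begin{smallmatrix} A & 0 \\ 0 & -A^T \end{smallmatrix}\bigr)$, $A \in \mathfrak{u}(2)$. The two $\mathfrak{u}(2)$-actions differ only by a scalar multiple of $\tr A$. This scalar comes from the different powers of $N(a'-Zc')$ and $N(cZ+d)$, whose differential on the diagonal is proportional to $\tr(A)$. Hence the $\mathfrak{su}(2)$ part of the action is identical, and the formulas for $\bigl(\begin{smallmatrix} A_i & 0 \\ 0 & -A_i^T \end{smallmatrix}\bigr)$, $i=1,2$, acting on ${\bf u^t_{l,m}}$, ${\bf u^b_{l,m}}$ carry over verbatim. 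Multiplication by $N(Z)^k$ commutes with $\mathfrak{su}(2)$ because $N$ is $U(2)$-semi-invariant. We then need that $\{N(Z)^k{\bf u^t_{l,m}},\, N(Z)^k{\bf u^b_{l+1,m}}\}$ is a basis of $\BB S \otimes \BB C[z_{11},z_{12},z_{22},N(Z)^{-1}]$. Proposition \ref{basis-prop} gives a basis $N(Z)^kR^m_l$ of scalar functions. The vectors ${\bf u^t_{l,m}}$ and ${\bf u^b_{l,m}}$ are obtained from $R^m_l e_1$ and $R^{m-1}_l e_2$ by an invertible $2\times 2$ change of basis for fixed $(l,m)$; the determinant is, up to a factor of $i$, equal to $-(l-m+1)-(l+m) = -(2l+1) \neq 0$. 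Edge cases $m=l+1$ or $m=-l$ occur where only ${\bf u^t}$ exists. This settles the $K$-type claim, and the primed version is handled by transposition.

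\emph{Step 2: ${\cal U}^+_{res}$ and ${\cal U}^-_{res}$.} For ${\cal U}^+_{res}$, restrict the basis of Step 1 to $k \ge 0$. These functions are polynomials, and conversely every polynomial expands in them with $k\ge 0$ by Proposition \ref{basis-prop}. For ${\cal U}^-_{res}$, apply the inversion $\pi'_l\bigl(\begin{smallmatrix} 0 & 1 \\ 1 & 0 \end{smallmatrix}\bigr): f \mapsto N(Z)^{-1} Z f(Z^{-1})$. By definition, ${\cal U}^-_{res}$ is exactly the preimage of ${\cal U}^+_{res}$ under this involution, up to sign. By Lemma \ref{U-inversion-lem2}, the map sends $N^k{\bf u^t_{l,m}}$ to a nonzero multiple of $N^{-(k+l+1)}{\bf u^b_{l+1,-m+1}}$ and $N^k{\bf u^b_{l+1,m}}$ to a multiple of $N^{-(k+l+1)}{\bf u^t_{l,-m+1}}$. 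Since it is an involution, it maps ${\cal U}^+_{res}$ bijectively onto ${\cal U}^-_{res}$. As $k\ge0$ ranges, and $m\mapsto -m+1$ permutes the index range $-l\le m\le l+1$, we obtain exactly the stated span.

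The only real obstacle is verifying Lemma \ref{U-inversion-lem2} itself, if one does not simply accept it. Its proof would use $R^m_l(Z^{-1}) = (-1)^l\frac{(l+m)!}{(l-m)!}N(Z)^{-l}R^{-m}_l(Z)$ together with an analogue of \eqref{Lemma23-analogue1}--\eqref{Lemma23-analogue2} for $Z\cdot{\bf u}$. That computation is mechanical but sign-sensitive. The primed statements follow by the same argument with $\pi'_r$ and the transposed vectors.
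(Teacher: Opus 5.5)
Your proposal is correct and follows the paper's own reasoning. The paper gives no separate proof: it justifies the statement by noting that $(\pi'_l,{\cal U}_{res})$ differs from the restricted anti regular module $(\pi_{la},{\cal V}_{a,res})$ only by a power of $N(a'-Zc')$, so the $K$-types are those already found for ${\bf u^t_{l,m}}$ and ${\bf u^b_{l,m}}$, and it then obtains ${\cal U}^-_{res}$ from ${\cal U}^+_{res}$ via the inversion formulas of Lemma~\ref{U-inversion-lem2}. Your argument uses only that those inversion coefficients are nonzero, so it does not depend on the overall signs in that lemma.
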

  
Let ${\cal U}^{\pm}(1)$ and ${\cal U}'^{\pm}(1)$ denote the respective kernels
of the natural restriction maps ${\cal U}^{\pm} \to {\cal U}^{\pm}_{res}$ and
${\cal U}'^{\pm} \to {\cal U}'^{\pm}_{res}$:
\begin{align*}
{\cal U}^{\pm}(1) &=
\Bigl\{ f \in {\cal U}^{\pm};\: f \bigr|_{\HC^{\times} \text{-sym}} =0 \Bigr\},  \\
{\cal U}'^{\pm}(1) &=
\Bigl\{ g \in {\cal U}'^{\pm};\: g \bigr|_{\HC^{\times} \text{-sym}} =0 \Bigr\}.
\end{align*}
And let ${\cal U}^{\pm}(2)$ and ${\cal U}'^{\pm}(2)$ denote the subspaces of
${\cal U}^{\pm}(1)$ and ${\cal U}'^{\pm}(1)$ respectively vanishing on the
symmetric quaternions to the order of two. More precisely, recall
that the quaternionic unit $e_2$ corresponds to the skew-symmetric
matrix $\bigl(\begin{smallmatrix} 0 & -1 \\ 1 & 0 \end{smallmatrix}\bigr)$.
Thus, the symmetric quaternions are characterized as those whose
$e_2$-coordinate $z^2$ is zero. Then
\begin{align*}
{\cal U}^{\pm}(2) &= \Bigl\{ f \in {\cal U}^{\pm} ;\:
f \bigr|_{\HC^{\times} \text{-sym}} =0 \text{ and }
\frac{\partial f}{\partial z^2} \Bigr|_{\HC^{\times} \text{-sym}} =0 \Bigr\},  \\
{\cal U}'^{\pm}(2) &= \Bigl\{ g \in {\cal U}'^{\pm} ;\:
g \bigr|_{\HC^{\times} \text{-sym}} =0 \text{ and }
\frac{\partial g}{\partial z^2} \Bigr|_{\HC^{\times} \text{-sym}} =0 \Bigr\}.
\end{align*}

\begin{thm}
The representations $(\pi'_l, {\cal U}^{\pm})$ and $(\pi'_r, {\cal U}'^{\pm})$
restricted to $\mathfrak{sp}(4,\BB C)$ decompose into three irreducible
components each:
\[
{\cal U}^{\pm}_{res} = {\cal U}^{\pm} / {\cal U}^{\pm}(1), \quad
{\cal U}^{\pm}(1) / {\cal U}^{\pm}(2), \quad {\cal U}^{\pm}(2)
\]
and
\[
{\cal U}'^{\pm}_{res} = {\cal U}'^{\pm} / {\cal U}'^{\pm}(1), \quad
{\cal U}'^{\pm}(1) / {\cal U}'^{\pm}(2), \quad {\cal U}'^{\pm}(2).
\]
The middle quotients are isomorphic to the (anti) regular functions:
\begin{equation}  \label{UV-iso}
\bigl( \pi'_l, {\cal U}^{\pm}(1)/{\cal U}^{\pm}(2) \bigr) \simeq
(\pi_{la}, {\cal V}^{\pm}_a) \quad \text{and} \quad
\bigl( \pi'_r, {\cal U}'^{\pm}(1)/{\cal U}'^{\pm}(2) \bigr) \simeq
(\pi_{ra}, {\cal V}'^{\pm}_a).
\end{equation}
\end{thm}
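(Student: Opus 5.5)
The plan is to prove the statement for $(\pi'_l,{\cal U}^+)$ and then get the other cases by symmetry. The case ${\cal U}^-$ follows from ${\cal U}^+$ by applying the inversion $\pi'_l\bigl(\begin{smallmatrix} 0&1\\1&0\end{smallmatrix}\bigr)$ (compare Lemma \ref{U-inversion-lem2}); this inversion preserves $\HC^{\times}\text{-sym}$ and the filtration by order of vanishing in $z^2$. The right-handed cases follow by transposing, as was done for ${\cal V}'$.

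\textbf{Step 1: the filtration is invariant.} The subgroup $Sp(4,\BB R)$ preserves $\HC\text{-sym}$, and the factor $\frac{a'-Zc'}{N(a'-Zc')}$ is smooth near $\HC^{\times}\text{-sym}$. Hence the group action preserves the subspace of functions vanishing on the symmetric quaternions, and also the subspace of functions vanishing there to order two. So ${\cal U}^+\supset{\cal U}^+(1)\supset{\cal U}^+(2)$ is a chain of $\mathfrak{sp}(4,\BB C)$-submodules.

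\textbf{Step 2: identify the graded pieces.}
\begin{itemize}
\item \emph{Restriction quotient.} Restriction gives an injection ${\cal U}^+/{\cal U}^+(1)\hookrightarrow{\cal U}^+_{res}$. To show it is onto, I would count dimensions degree by degree, as in the proof of Theorem \ref{1reg-res-irred-thm}. From \cite{qreg}, the homogeneous part of ${\cal U}^+$ of degree $d$ is a sum of $K$-types of $\mathfrak{gl}(2,\HC)$. Alternatively, solve $\nabla\square f=0$ with arbitrary Cauchy data on $z^2=0$: this is a third order equation in which $\partial_{z^2}^3$ has nonzero coefficient, so the data consist of $f$, $\partial_{z^2}f$ and $\partial_{z^2}^2 f$ restricted to $z^2=0$.
\item \emph{Middle quotient.} The map $f\mapsto \partial_{z^2}f|_{\HC\text{-sym}}$ embeds ${\cal U}^+(1)/{\cal U}^+(2)$ into $\BB S$-valued polynomials on $\HC\text{-sym}$.
\item \emph{Bottom piece.} ${\cal U}^+(2)$ is parametrized by $\partial^2_{z^2}f|_{\HC\text{-sym}}$.
\end{itemize}
The Cauchy–Kovalevskaya picture makes all three maps bijections onto polynomial spaces. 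For each of them I would compute the induced $\mathfrak{sp}(4,\BB C)$-action. This follows from the formula in Lemma \ref{pi-prime-action-lem} extended to non-symmetric $Z$: differentiating the action once or twice in $z^2$ shifts the weight, which amounts to multiplying by $N(cZ+d)$-type factors.

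\textbf{Step 3: irreducibility.} The aim is to see the middle quotient as an action of type $\pi_{la}$ twisted by a power of $N$. For ${\cal U}^+_{res}$, irreducibility follows from Theorem \ref{U-K-type-decomp-thm}. One checks that the $B$- and $C$-type operators, computed via \eqref{B-action} and the $C$-formula in the proof of Theorem \ref{rho-alpha-beta-thm}, connect all $K$-types $N^k{\bf u}_{l,m}$ with $k,l\ge0$. The arrows vanish only at $k=0$ or on the boundary walls, which lie outside this range.

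For the middle quotient, I would construct an explicit intertwiner to $(\pi_{la},{\cal V}^+_a)$, first at the level of lowest $K$-types. The plan is to take an anti regular $\phi\in{\cal V}^+_a$ and form $f=z^2\cdot\Phi(\phi)$, where $\Phi$ is a natural $\mathfrak{gl}(2,\HC)$-type correction making $\nabla\square f=0$. Then I would check equivariance only on the generators $\bigl(\begin{smallmatrix}0&B\\0&0\end{smallmatrix}\bigr)$ and $\bigl(\begin{smallmatrix}0&0\\C&0\end{smallmatrix}\bigr)$, mimicking Theorem \ref{n-reg-iso-thm}. The two sides can then be compared using their $K$-type descriptions: both have $K$-types $V_{l+\frac12}$ with the same pattern of arrows. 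Irreducibility of the middle quotient and of ${\cal U}^+(2)$ would then follow by the same $K$-type arrow analysis.

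\textbf{Main obstacle.} I expect the hardest step to be Step 2 for the middle and bottom pieces. One must show that the normal-derivative data transform under $\mathfrak{sp}(4,\BB C)$ by a closed formula; a priori the derivative in $z^2$ of the $C$-action mixes in lower-order data. For ${\cal U}^+(1)$, however, the lower-order data vanish, so the action is clean, and this is what makes the filtration work. I would first verify this directly on the $C$-operator, then identify the resulting module with $(\pi_{la},{\cal V}^+_a)$ by matching $K$-types and the actions of the $B$- and $C$-type operators.
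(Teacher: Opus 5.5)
Your proposal is correct and rests on the same map as the paper, $f\mapsto \partial f/\partial z^2|_{\HC^{\times}\text{-sym}}$ on ${\cal U}^{\pm}(1)$, but you verify it by a different route.

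\textbf{The paper's route.} It observes that the $\bigl(\begin{smallmatrix}0&B\\0&0\end{smallmatrix}\bigr)$-actions obviously commute with this map. It handles the $\bigl(\begin{smallmatrix}0&0\\C&0\end{smallmatrix}\bigr)$-actions by a single computation with the inversion, which conjugates the $B$-part of $\mathfrak{sp}(4,\BB C)$ onto the $C$-part. Surjectivity then comes from the irreducibility of ${\cal V}^{\pm}_a$.

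\textbf{Your route.} The group-level chain rule you sketch works and is more informative. For $h\in Sp(4,\BB R)$ and symmetric $Z$ one has $cZ+d=(a'-Zc')^T$, so $d\tilde Z=M\,dZ\,M^T$ with $M=(a'-Zc')^{-1}$, and $Me_2M^T=\det(M)\,e_2$. Since $f(\tilde Z)=0$ and $\tilde Z$ stays symmetric, only the term $(\partial_{z^2}f)(\tilde Z)\,N(a'-Zc')^{-1}$ survives. This gives exactly the $\pi_{la}$ cocycle $\frac{a'-Zc'}{N(a'-Zc')^2}$, with no residual twist by a power of $N$. The same computation identifies ${\cal U}^+(2)$ with $\BB S\otimes\BB C[z_{11},z_{12},z_{22}]$ carrying the cocycle $\frac{a'-Zc'}{N(a'-Zc')^3}$. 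Your Cauchy--Kovalevskaya parametrization also proves surjectivity of ${\cal U}^+\to{\cal U}^+_{res}$ and of the middle map, which the paper takes for granted. This works because the recursion for $\partial^{j+3}_{z^2}f|_{z^2=0}$ strictly lowers degree, so polynomial data give polynomial solutions.

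\textbf{What this makes unnecessary.} Your Step~3 detour is not needed. You do not need the undefined correction $\Phi$ in $f=z^2\cdot\Phi(\phi)$, nor any matching of $K$-type arrow patterns. The middle layer is isomorphic on the nose to $(\pi_{la},{\cal V}^+_{a,res})\simeq(\pi_{la},{\cal V}^+_a)$. It therefore inherits irreducibility from Propositions \ref{reg-anti-reg-iso-prop} and \ref{nreg-irreducibility-prop}.

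\textbf{What still needs work.} A genuine $K$-type argument is still required for the top and bottom layers. The scalar formula \eqref{B-action} does not apply verbatim there, because the $B$-operators mix ${\bf u^t}$ and ${\bf u^b}$. The paper's device avoids computing any $C$-arrows: use only the $B$-arrows to descend to the lowest $K$-type, then pass through the inversion to ${\cal U}^-_{res}$.
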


\begin{proof}
Consider the maps ${\cal U}^{\pm}(1) \to {\cal V}^{\pm}_a$ and
${\cal U}'^{\pm}(1) \to {\cal V}'^{\pm}_a$ given by the differentiation in the
$e_2$-direction followed by the restriction to $\HC^{\times} \text{-sym}$:
\begin{equation}  \label{UV-intertwiner}
f(Z) \mapsto \frac{\partial}{\partial z^2} f(Z) \Bigr|_{\HC^{\times} \text{-sym}}.
\end{equation}
The Lie algebra actions $\pi_{la}$, $\pi_{ra}$, $\pi'_l$ and $\pi'_r$ are
described in Lemmas \ref{pi-Lie_alg-action} and \ref{pi-prime-action-lem}.
Clearly, the map \eqref{UV-intertwiner} intertwines the actions of
$\bigl(\begin{smallmatrix} 0 & B \\ 0 & 0 \end{smallmatrix}\bigr)
\in \mathfrak{sp}(4,\BB C)$, $B \in \HC\text{-sym}$.
To show that the map \eqref{UV-intertwiner} intertwines the actions of
$\bigl(\begin{smallmatrix} 0 & 0 \\ C & 0 \end{smallmatrix}\bigr)
\in \mathfrak{sp}(4,\BB C)$, $C \in \HC\text{-sym}$,
we consider the effect of inversions:
\begin{multline*}
\frac{\partial}{\partial z^2} \Bigl(
\pi'_l \bigl(\begin{smallmatrix} 0 & 1 \\ 1 & 0 \end{smallmatrix}\bigr)
f(Z) \Bigr) \Bigr|_{\HC^{\times} \text{-sym}}
= - \frac{\partial}{\partial z^2}
\Bigl( \frac{Z}{N(Z)} \cdot f(Z^+/N(Z)) \Bigr) \Bigr|_{\HC^{\times} \text{-sym}}  \\
= - \frac{Z}{N(Z)} \cdot \frac{\partial}{\partial z^2}
\bigl( f(Z^+/N(Z)) \bigr) \Bigr|_{\HC^{\times} \text{-sym}}
= \Bigl( \frac{Z}{N(Z)^2} \cdot \frac{\partial}{\partial z^2} f(Z)
\Bigr|_{Z^{-1}} \Bigr)\Bigr|_{\HC^{\times} \text{-sym}}  \\
= - \pi_{la} \bigl(\begin{smallmatrix} 0 & 1 \\ 1 & 0 \end{smallmatrix}\bigr)
\frac{\partial}{\partial z^2} f(Z) \Bigr|_{\HC^{\times} \text{-sym}}.
\end{multline*}
It follows that the map \eqref{UV-intertwiner} intertwines the actions of
$\bigl(\begin{smallmatrix} 0 & 0 \\ C & 0 \end{smallmatrix}\bigr)
\in \mathfrak{sp}(4,\BB C)$ as well.
By the irreducibility of regular functions, the map \eqref{UV-intertwiner}
descends to the isomorphisms \eqref{UV-iso}.

It remains to prove the irreducibility of the quotients.
We compute the effect of
$\pi'_l \bigl( \begin{smallmatrix} 0 & B \\ 0 & 0 \end{smallmatrix} \bigr)$,
$B \in \HC\text{-sym}$, on the $K$-types:
\begin{multline*}
\tfrac{\partial}{\partial t} \bigl( N(Z)^k \cdot {\bf u^t_{l,m}}(Z) \bigr)
= \tfrac{(2k+2l+1)(l+m)}{2l+1} N(Z)^k \cdot {\bf u^t_{l-1,m}}(Z)  \\
+ \tfrac{2k(l-m+2)}{2l+3} N(Z)^{k-1} \cdot {\bf u^t_{l+1,m}}(Z)
- \tfrac{2k(2m-1)}{(2l+1)(2l+3)} N(Z)^{k-1} \cdot {\bf u^b_{l+1,m}}(Z),
\end{multline*}
\begin{multline*}
2 \tfrac{\partial}{\partial z_{11}} \bigl( N(Z)^k \cdot {\bf u^t_{l,m}}(Z) \bigr)
= -\tfrac{(2k+2l+1)(l+m)(l+m-1)}{2l+1} N(Z)^k \cdot {\bf u^t_{l-1,m-1}}(Z)  \\
+ \tfrac{2k(l-m+2)(l-m+3)}{2l+3} N(Z)^{k-1} \cdot {\bf u^t_{l+1,m-1}}(Z)
- \tfrac{2k(l+m)(l-m+2)}{(2l+1)(2l+3)} N(Z)^{k-1} \cdot {\bf u^b_{l+1,m-1}}(Z),
\end{multline*}
\begin{multline*}
2 \tfrac{\partial}{\partial z_{22}} \bigl( N(Z)^k \cdot {\bf u^t_{l,m}}(Z) \bigr)
= \tfrac{2k+2l+1}{2l+1} N(Z)^k \cdot {\bf u^t_{l-1,m+1}}(Z)  \\
- \tfrac{2k}{2l+3} N(Z)^{k-1} \cdot {\bf u^t_{l+1,m+1}}(Z)
- \tfrac{4k}{(2l+1)(2l+3)} N(Z)^{k-1} \cdot {\bf u^b_{l+1,m+1}}(Z);
\end{multline*}
\begin{multline*}
\tfrac{\partial}{\partial t} \bigl( N(Z)^k \cdot {\bf u^b_{l,m}}(Z) \bigr)
= - \tfrac{(2k+2l+1)(2m-1)}{(2l+1)(2l-1)} N(Z)^k \cdot {\bf u^t_{l-1,m}}(Z)  \\
+ \tfrac{(2k+2l+1)(l+m-1)}{2l-1} N(Z)^k \cdot {\bf u^b_{l-1,m}}(Z)
+ \tfrac{2k(l-m+1)}{2l+1} N(Z)^{k-1} \cdot {\bf u^b_{l+1,m}}(Z),
\end{multline*}
\begin{multline*}
2 \tfrac{\partial}{\partial z_{11}} \bigl( N(Z)^k \cdot {\bf u^b_{l,m}}(Z) \bigr)
= - \tfrac{2(2k+2l+1)(l-m+1)(l+m-1)}{(2l+1)(2l-1)} N(Z)^k \cdot {\bf u^t_{l-1,m-1}}(Z)  \\
- \tfrac{(2k+2l+1)(l+m-1)(l+m-2)}{2l-1} N(Z)^k \cdot {\bf u^b_{l-1,m-1}}(Z)
+ \tfrac{2k(l-m+1)(l-m+2)}{2l+1} N(Z)^{k-1} \cdot {\bf u^b_{l+1,m-1}}(Z),
\end{multline*}
\begin{multline*}
2 \tfrac{\partial}{\partial z_{22}} \bigl( N(Z)^k \cdot {\bf u^b_{l,m}}(Z) \bigr)
= - \tfrac{2(2k+2l+1)}{(2l+1)(2l-1)} N(Z)^k \cdot {\bf u^t_{l-1,m+1}}(Z)  \\
+ \tfrac{2k+2l+1}{2l-1} N(Z)^k \cdot {\bf u^b_{l-1,m+1}}(Z)
- \tfrac{2k}{2l+1} N(Z)^{k-1} \cdot {\bf u^b_{l+1,m+1}}(Z).
\end{multline*}

Now we prove that the $\mathfrak{sp}(4,\BB C)$-module
${\cal U}^+_{res} = {\cal U}^+ / {\cal U}^+(1)$ is irreducible by showing that
any non-zero element $f_0 \in {\cal U}^+_{res}$ generates the whole space.
Since all $K$-types of $(\pi'_l, {\cal U}^+_{res})$ have multiplicity one,
a non-zero element $f_0 \in {\cal U}^+_{res}$ generates at least one
$K$-type of the kind $N(Z)^{k_0} \cdot {\bf u^t_{l_0,m}}(Z)$ or
$N(Z)^{k_0} \cdot {\bf u^b_{l_0,m}}(Z)$ for some $k_0 \ge 0$.
The above formulas show that such a $K$-type generates a $K$-type of the kind
${\bf u^t_{k_0+l_0,m}}(Z)$ or ${\bf u^b_{k_0+l_0,m}}(Z)$.
These $K$-types, in turn, generate the lowest $K$-type ${\bf u^t_{0,m}}(Z)$.
To show that this $K$-type generates entire ${\cal U}^+_{res}$, it is enough to
apply the inversion and show that $N(Z)^{-1} \cdot {\bf u^b_{1,m}}(Z)$
generates ${\cal U}^-_{res}$.
Indeed, the above formulas show that this $K$-type generates the $K$-types of the
kind $N(Z)^{-1} \cdot {\bf u^t_{0,m}}(Z)$, $N(Z)^{-l-1} \cdot {\bf u^t_{l,m}}(Z)$ and
$N(Z)^{-l-1} \cdot {\bf u^b_{l+1,m}}(Z)$, $l \ge 0$.
These $K$-types, in turn, generate $N(Z)^{-l-1} \cdot {\bf u^t_{l-k,m}}(Z)$ and
$N(Z)^{-l-1} \cdot {\bf u^b_{l-k+1,m}}(Z)$, $0 \le k \le l$, and,
by Theorem \ref{U-K-type-decomp-thm}, all of ${\cal U}^-_{res}$.

The remaining cases are similar, the only difference is a shift in the power of $N(Z)$
when applying the inversion.
\end{proof}

\subsection{Regular Functions on $\HC^{\times}\text{-sym}$}

In this subsection we introduce spaces of solutions of Dirac-type equations
on $\HC^{\times}\text{-sym}$ together with actions of
$\mathfrak{sp}(4,\BB C)$. On the one hand, these are symplectic analogues
of the well-familiar $\mathfrak{gl}(2,\HC)$-modules of left and right regular
functions $(\pi_l, {\cal V})$ and $(\pi_r, {\cal V}')$.
On the other hand, these spaces appear in the trilinear form that will be
constructed in Subsection \ref{3form-main-subsect}.

We consider solutions of Dirac-type equations on $\HC^{\times}\text{-sym}$:
\begin{align*}
\widetilde {\cal V}_{3-dim} &= \Bigl\{ \begin{smallmatrix}
\text{holomorphic $\BB S$-valued solutions of $\partial^+f=0$}  \\
\text{on $\HC \text{-sym}$ (possibly with singularities)}
\end{smallmatrix} \Bigr\},  \\
\widetilde {\cal V}'_{3-dim} &= \Bigl\{ \begin{smallmatrix}
\text{holomorphic $\BB S'$-valued solutions of
$g \overleftarrow{\partial^+}=0$} \\
\text{on $\HC \text{-sym}$ (possibly with singularities)}
\end{smallmatrix} \Bigr\},
\end{align*}
where $\partial^+$ is a Dirac-type operator
\[
\partial^+ = \begin{pmatrix}
  \frac{\partial}{\partial z_{22}} & \frac{i}2 \frac{\partial}{\partial t} \\
  \frac{i}2 \frac{\partial}{\partial t} & \frac{\partial}{\partial z_{11}}
\end{pmatrix}
\]
factoring the Laplacian on $\HC \text{-sym}$:
\[
\Delta_3 = 4 \partial^+ \partial = 4 \partial \partial^+.
\]

\begin{thm}
\begin{enumerate}
\item
The space $\widetilde {\cal V}_{3-dim}$ of left regular functions
$f: \HC \text{-sym} \to \BB S$ (possibly with singularities)
is invariant under the following action of $Sp(4,\BB R)$:
\begin{multline*}
\pi^{3-dim}_l(h): \: f(Z) \: \mapsto \: \bigl( \pi^{3-dim}_l(h)f \bigr)(Z) =
\frac {(cZ+d)^{-1}}{\sqrt{N(cZ+d)}} \cdot f \bigl( (aZ+b)(cZ+d)^{-1} \bigr),
\\
h^{-1} =
\bigl( \begin{smallmatrix} a & b \\ c & d \end{smallmatrix} \bigr)
\in Sp(4,\BB R).
\end{multline*}
\item
The space $\widetilde {\cal V}'_{3-dim}$ of right regular functions
$g: \HC \text{-sym} \to \BB S'$ (possibly with singularities)
is invariant under the following action of $Sp(4,\BB R)$:
\begin{multline*}
\pi^{3-dim}_r(h): \: g(Z) \: \mapsto \: \bigl( \pi^{3-dim}_r(h)g \bigr)(Z) =
g \bigl( (a'-Zc')^{-1}(-b'+Zd') \bigr)
\cdot \frac {(a'-Zc')^{-1}}{\sqrt{N(a'-Zc')}},  \\
h =
\bigl( \begin{smallmatrix} a' & b' \\ c' & d' \end{smallmatrix} \bigr)
\in Sp(4,\BB R).
\end{multline*}
\end{enumerate}
\end{thm}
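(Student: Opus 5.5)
The plan is to mimic the standard proof of conformal invariance of regular functions (Theorem \ref{r-action-thm}), adapted to the 3-dimensional space $\HC\text{-sym}$ with the half-integer weight $N(cZ+d)^{-1/2}$. Since $Sp(4,\BB R)$ is connected, we reduce to a Lie algebra statement: differentiate $\pi^{3-dim}_l$ and $\pi^{3-dim}_r$ and check that each generator of $\mathfrak{sp}(4,\BB C)$ maps solutions of $\partial^+ f=0$ (resp.\ $g\overleftarrow{\partial^+}=0$) to solutions. The generators fall into three types: $\bigl(\begin{smallmatrix} A & 0 \\ 0 & -A^T\end{smallmatrix}\bigr)$, $\bigl(\begin{smallmatrix} 0 & B \\ 0 & 0\end{smallmatrix}\bigr)$ with $B^T=B$, and $\bigl(\begin{smallmatrix} 0 & 0 \\ C & 0\end{smallmatrix}\bigr)$ with $C^T=C$. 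Following Lemmas \ref{rho-alpha-beta-action} and \ref{pi-action-lem}, the infinitesimal action should be
\begin{align*}
\pi^{3-dim}_l \bigl( \begin{smallmatrix} A & 0 \\ 0 & -A^T \end{smallmatrix} \bigr) &: f \mapsto -\tr(AZ\partial + ZA^T\partial + \tfrac12 A)f - A^Tf, \\
\pi^{3-dim}_l \bigl( \begin{smallmatrix} 0 & B \\ 0 & 0 \end{smallmatrix} \bigr) &: f \mapsto -\tr(B\partial) f, \\
\pi^{3-dim}_l \bigl( \begin{smallmatrix} 0 & 0 \\ C & 0 \end{smallmatrix} \bigr) &: f \mapsto \tr(ZCZ\partial + \tfrac12 CZ) f + CZf.
\end{align*}
These are obtained by the same differentiation as in Lemma \ref{pi-action-lem}, with the scalar factor exponent $1$ replaced by $\frac12$.

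Translations are immediate, since $\tr(B\partial)$ has constant coefficients and commutes with $\partial^+$. For the $A$-part, $\partial^+$ transforms covariantly under $Z\mapsto aZa^T$. Because $Z$ and $\partial$ are symmetric, one computes $\partial^+\bigl(f(aZa^T)\bigr)=\det(a)\,(a^{T})^{-1\,\dagger\text{-type}}$-conjugated $(\partial^+ f)(aZa^T)$. It is cleanest to verify this directly on the generators $A_1$, $A_2$ and diagonal $A$ using the explicit entries of $\partial^+$. With the correct vector factor $a^{T}$ (or $A^T$ infinitesimally), the equation is preserved.

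The real content is the inversion. Rather than handling $C$ directly, I would show that the inversion $\pi^{3-dim}_l\bigl(\begin{smallmatrix} 0&1\\1&0\end{smallmatrix}\bigr): f(Z)\mapsto Z^{-1}N(Z)^{-1/2} f(Z^{-1})$ (up to sign and the choice of square root branch) preserves solutions. Since the $C$-generators are conjugates of the $B$-generators by this inversion, this suffices. Its group element is not in $Sp(4,\BB R)$ proper in this realization, but it normalizes the action, exactly as used in the proof of the theorem on ${\cal U}^\pm(1)/{\cal U}^\pm(2)$.

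The key identity to establish is
\[
\partial^+\bigl( Z^{-1}N(Z)^{-1/2} f(Z^{-1}) \bigr) = -N(Z)^{-1/2}\cdot(\text{matrix in }Z)\cdot(\partial^+ f)(Z^{-1}).
\]
This is the 3-dimensional analogue of Lemma 10 in \cite{FL1}. I expect this step to be the main obstacle. The weight $-\frac12$ must exactly cancel the extra terms coming from differentiating $Z^{-1}=Z^+/N(Z)$, and here the symmetric Laplacian has effective dimension $3$ rather than $4$. I would first verify that $\partial^+\bigl(Z^{-1}N(Z)^{-1/2}\bigr)=0$: this function is $\partial$ of the fundamental solution $N(Z)^{-1/2}$ of $\Delta_3=4\partial^+\partial$, so it is annihilated by $\partial^+$ away from $0$. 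Then I would use the chain rule with $\partial_Z(W)$ for $W=Z^{-1}$, namely $\partial_Z = -W\,\partial_W\, W$ (symmetrized), to move $\partial^+$ past the composition. Alternatively, one can verify the identity on the homogeneous $K$-type basis $N(Z)^k{\bf v^t_{l,m}}$, $N(Z)^k{\bf v^b_{l,m}}$ using \eqref{dtNR}-\eqref{dz_{22}NR} together with an inversion formula like Lemma \ref{V-inversion-lem}.

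The right-handed statement then follows by transposition, since $g\overleftarrow{\partial^+}=0$ if and only if $(\partial^+)^T g^T=0$ and $(\partial^+)^T=\partial^+$ on symmetric matrices. The factor $(a'-Zc')^{-1}$ transposes to the left-action factor for the transposed group element, which lies in $Sp(4,\BB R)$ again.
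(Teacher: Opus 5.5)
Your proposal is correct. The paper gives no proof of this theorem. It states it as the $3$-dimensional analogue of Theorem \ref{r-action-thm} and passes directly to the infinitesimal formulas of Lemma \ref{pi-3dim-Lie\_alg-action}. Your generator-by-generator check is the standard argument the paper implicitly relies on: translations, $Z\mapsto aZa^T$, and the inversion, which conjugates the $B$-generators into the $C$-generators.

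The two steps you left schematic both work out.

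\textbf{Levi part.} With $W=aZa^T$ one has $\partial_Z=a^T\partial_W a$. Hence
\[
\partial^+\bigl(a^Tf(aZa^T)\bigr)=\det(a)\,a^+(\partial^+f)(aZa^T).
\]
This replaces your garbled intermediate formula.

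\textbf{Inversion.} Put $W=Z^{-1}$ and use $\partial_Z=-W\partial_W W$, with the coefficients not differentiated. Since $W^+\cdot WN(W)^{1/2}f=N(W)^{3/2}f$, one gets
\[
\partial^+\Bigl(\frac{Z^{-1}}{N(Z)^{1/2}}\,f(Z^{-1})\Bigr)=-\frac{Z}{N(Z)^{5/2}}\,(\partial^+f)(Z^{-1}).
\]
The term $\partial^+_W N(W)^{3/2}=\tfrac32 N(W)^{1/2}W$ cancels exactly against $\partial^+_W W^+=\tfrac32$. This is what forces the exponent $\tfrac12$; in four dimensions the exponent is $1$ and the corresponding constant is $2$. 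Your preliminary check $\partial^+\bigl(Z^{-1}N(Z)^{-1/2}\bigr)=0$ is the special case of constant $f$.

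\textbf{Transposition.} No new group element is needed. For symplectic $h$ one has $h^{-1}=\bigl(\begin{smallmatrix} d'^T & -b'^T \\ -c'^T & a'^T\end{smallmatrix}\bigr)$. Therefore $\bigl(\pi^{3-dim}_r(h)g\bigr)^T=\pi^{3-dim}_l(h)\,g^T$ with the same $h$.
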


Differentiating $\pi^{3-dim}_l$ and $\pi^{3-dim}_r$, we obtain actions
of the Lie algebra $\mathfrak{sp}(4,\BB C)$, which we still denote by
$\pi^{3-dim}_l$ and $\pi^{3-dim}_r$ respectively.
We spell out these Lie algebra actions (cf. Lemma \ref{pi-Lie_alg-action}):

\begin{lem}  \label{pi-3dim-Lie_alg-action}
The Lie algebra action $\pi^{3-dim}_l$ of $\mathfrak{sp}(4,\BB C)$ on
$\widetilde {\cal V}_{3-dim}$ is given by
\begin{align*}
\pi^{3-dim}_l \bigl( \begin{smallmatrix} A & 0 \\ 0 & -A^T
\end{smallmatrix} \bigr) &:
f(Z) \mapsto - \tr (AZ \partial + ZA^T \partial + \tfrac12 A) f - A^Tf,  \\
\pi^{3-dim}_l \bigl( \begin{smallmatrix} 0 & B \\ 0 & 0
\end{smallmatrix} \bigr) &:
f(Z) \mapsto - \tr (B \partial) f,  \\
\pi^{3-dim}_l \bigl( \begin{smallmatrix} 0 & 0 \\ C & 0
\end{smallmatrix} \bigr) &:
f(Z) \mapsto \tr (ZCZ \partial + \tfrac12 CZ) f + CZf.
\end{align*}

Similarly, the Lie algebra action $\pi^{3-dim}_r$ of
$\mathfrak{sp}(4,\BB C)$ on $\widetilde {\cal V}'_{3-dim}$ is given by
\begin{align*}
\pi^{3-dim}_r \bigl( \begin{smallmatrix} A & 0 \\ 0 & -A^T
\end{smallmatrix} \bigr) &:
g(Z) \mapsto - \tr (AZ \partial + ZA^T \partial + \tfrac12 A) g - gA,  \\
\pi^{3-dim}_r \bigl( \begin{smallmatrix} 0 & B \\ 0 & 0
\end{smallmatrix} \bigr) &:
g(Z) \mapsto - \tr (B \partial) g,  \\
\pi^{3-dim}_r \bigl( \begin{smallmatrix} 0 & 0 \\ C & 0
\end{smallmatrix} \bigr) &:
g(Z) \mapsto \tr (ZCZ \partial + \tfrac12 ZC) g + gZC.
\end{align*}
\end{lem}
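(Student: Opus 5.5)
The plan is to differentiate the group actions $\pi^{3-dim}_l$ and $\pi^{3-dim}_r$ along one-parameter subgroups $h_t=\exp(tX)$, with $X$ running over the three kinds of generators of $\mathfrak{sp}(4,\BB C)$ in the block form $\bigl(\begin{smallmatrix} A & 0 \\ 0 & -A^T\end{smallmatrix}\bigr)$, $\bigl(\begin{smallmatrix} 0 & B \\ 0 & 0\end{smallmatrix}\bigr)$, $\bigl(\begin{smallmatrix} 0 & 0 \\ C & 0\end{smallmatrix}\bigr)$ with $B^T=B$, $C^T=C$. The computation runs parallel to Lemma \ref{pi-Lie_alg-action}. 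The only structural differences are that the scalar factor is $N(cZ+d)^{-1/2}$ rather than $N(cZ+d)^{-1}$, and that $Z$ is restricted to symmetric matrices, where the redefined operator $\partial$ is used.

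First I write $h_t^{-1}=\exp(-tX)$, so that to first order $a=1-tA$, $b=-tB$, $c=-tC$, $d=1+tA^T$ (with $A$, $B$, $C$ zero where appropriate). Then $\tilde Z=(aZ+b)(cZ+d)^{-1}$ has derivative at $t=0$ equal to $-AZ-ZA^T-B+ZCZ$. Since $Z$ is symmetric and $A Z+ZA^T$, $B$, $ZCZ$ are all symmetric, this variation stays in $\HC$-sym. The chain rule then requires the identity $\frac{d}{dt}f(Z+tY)=\tr(Y\partial)f$ for symmetric $Y$, with $\partial$ as redefined in Subsection \ref{rho_M-subsection}. This is the one point to check: writing $Y=\bigl(\begin{smallmatrix} y_{11} & y_{12}\\ y_{12} & y_{22}\end{smallmatrix}\bigr)$ and $z_{12}=it$, the off-diagonal entries $-\frac i2\partial_t$ of $\partial$ pair with $2y_{12}$ to give $y_{12}\partial_{z_{12}}$, which is correct. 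This produces the terms $-\tr(AZ\partial+ZA^T\partial)$, $-\tr(B\partial)$ and $\tr(ZCZ\partial)$, exactly as in Lemma \ref{rho-alpha-beta-action}.

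Next I differentiate the prefactors. The matrix $(cZ+d)^{-1}$ has derivative $-(-CZ+A^T)=CZ-A^T$, which gives the terms $+CZf$ and $-A^Tf$. The scalar $N(cZ+d)^{-1/2}$ has derivative $-\frac12\tr(-CZ+A^T)=\frac12\tr(CZ)-\frac12\tr A$, using $\frac{d}{dt}N(1+tY)=\tr Y$ and $\tr A^T=\tr A$. These are precisely the terms with coefficient $\frac12$ in the statement, and they are half of the corresponding terms in Lemma \ref{pi-Lie_alg-action}.

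For $\pi^{3-dim}_r$ the argument is identical. The derivative of $(a'-Zc')^{-1}(-b'+Zd')$ equals $-(AZ+ZA^T+B)+ZCZ$ once the $h$ entries $a'=1+tA$, $b'=tB$, $c'=tC$, $d'=1-tA^T$ are substituted. The right factor $(a'-Zc')^{-1}$ contributes $-A+ZC$, acting on the right as $-gA+gZC$. The factor $N(a'-Zc')^{-1/2}$ contributes $-\frac12\tr(A-ZC)$.

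I expect no genuine obstacle. The only care needed is sign bookkeeping between $h$ and $h^{-1}$, together with the factor-of-$\frac12$ convention in $\partial$ that makes $\tr(Y\partial)$ the directional derivative on $\HC$-sym.
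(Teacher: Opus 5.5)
Your proposal is correct and follows the same route the paper intends: the lemma is stated without proof as the direct derivative of the group actions along one-parameter subgroups, modeled on Lemma \ref{pi-Lie_alg-action}. Your first-order expansions, the check that $\tr(Y\partial)$ is the directional derivative on $\HC$-sym for the redefined $\partial$, and the observation that the exponent $-\tfrac12$ on $N$ halves the trace terms are all accurate.
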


Lemma \ref{pi-3dim-Lie_alg-action} implies that the Lie algebra actions
$\pi^{3-dim}_l$ and $\pi^{3-dim}_r$ preserve the spaces of polynomial
regular functions on $\HC^{\times}\text{-sym}$:
\begin{align*}
{\cal V}_{3-dim}^+ &=
\bigl\{ f \in \BB S \otimes \BB C [z_{11}, z_{12}, z_{22},N(Z)^{-1}];\:
\partial^+ f=0 \bigr\},  \\
{\cal V}_{3-dim}^- &= \Bigl\{ f \in \BB S \otimes
N(Z)^{\frac12} \cdot \BB C[z_{11},z_{12},z_{22}, N(Z)^{-1}] ;\:
\partial^+ f=0 \Bigr\},  \\
{\cal V}'^+_{3-dim} &=
\bigl\{ g \in \BB S' \otimes \BB C [z_{11}, z_{12}, z_{22},N(Z)^{-1}];\:
g \overleftarrow{\partial^+}=0 \bigr\},  \\
{\cal V}'^-_{3-dim} &= \Bigl\{ g \in \BB S' \otimes
N(Z)^{\frac12} \cdot \BB C[z_{11},z_{12},z_{22}, N(Z)^{-1}] ;\:
g \overleftarrow{\partial^+}=0 \bigr\}.
\end{align*}

We can identify the $K$-types of ${\cal V}_{3-dim}^{\pm}$ and
${\cal V}'^{\pm}_{3-dim}$.

\begin{thm}
The $\mathfrak{sp}(4,\BB C)$-modules
\[
(\pi^{3-dim}_l, {\cal V}_{3-dim}^+), \qquad
(\pi^{3-dim}_l, {\cal V}_{3-dim}^-), \qquad
(\pi^{3-dim}_r, {\cal V}'^+_{3-dim}), \qquad
(\pi^{3-dim}_r, {\cal V}'^-_{3-dim})
\]
are irreducible.
The $K$-types of these modules are $V_{l+\frac12}$, $l=0,1,2,3,\dots$,
spanned by respectively
\[
{\bf v^t_{l,m}}(Z), \qquad
N(Z)^{-l-\frac32} \cdot {\bf v^b_{l+1,m}}(Z), \qquad
{\bf v'^t_{l,m}}(Z), \qquad
N(Z)^{-l-\frac32} \cdot {\bf v'^b_{l+1,m}}(Z),
\]
where $-l-1 \le m \le l$. We have:
\begin{align*}
{\cal V}_{3-dim}^+ &=
\{\text{$\BB S$-valued polynomial solutions of $\partial^+f=0$ on
  $\HC \text{-sym}$}\}  \\
&= \bigl\{ f \in \BB S \otimes \BB C [z_{11}, z_{12}, z_{22}];\:
\partial^+ f=0 \bigr\}  \\
&= \BB C\text{-span of }
\bigl\{ {\bf v^t_{l,m}}(Z) ;\: l \ge 0,\: -l-1 \le m \le l \bigr\},  \\
{\cal V}_{3-dim}^- &= \Bigl\{
f \in \BB S \otimes N(Z)^{\frac12} \cdot \BB C[z_{11},z_{12},z_{22}, N(Z)^{-1}] ;\:
\tfrac{Z^{-1}}{N(Z)^{\frac12}} \cdot f(Z^{-1}) \in {\cal V}_{3-dim}^+ \Bigr\} \\
&= \BB C\text{-span of } \bigl\{ N(Z)^{-l-\frac12} \cdot {\bf v^b_{l,m}}(Z) ;\:
l \ge 1,\: -l \le m \le l-1 \bigr\},  \\
{\cal V}'^+_{3-dim} &=
\{\text{$\BB S'$-valued polynomial solutions of
  $g \overleftarrow{\partial^+}=0$ on $\HC \text{-sym}$}\}  \\
&= \bigl\{ g \in \BB S' \otimes \BB C [z_{11}, z_{12}, z_{22}];\:
g \overleftarrow{\partial^+}=0 \bigr\}  \\
&= \BB C\text{-span of }
\bigl\{ {\bf v'^t_{l,m}}(Z) ;\: l \ge 0,\: -l-1 \le m \le l \bigr\},  \\
{\cal V}'^-_{3-dim} &= \Bigl\{
g \in \BB S' \otimes N(Z)^{\frac12} \cdot \BB C[z_{11},z_{12},z_{22}, N(Z)^{-1}] ;\:
g(Z^{-1}) \cdot \tfrac{Z^{-1}}{N(Z)^{\frac12}} \in {\cal V}'^+_{3-dim} \Bigr\}  \\
&= \BB C\text{-span of } \bigl\{ N(Z)^{-l-\frac12} \cdot {\bf v'^b_{l,m}}(Z) ;\:
l \ge 1,\: -l \le m \le l-1 \bigr\}.
\end{align*}
\end{thm}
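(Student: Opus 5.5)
We reduce to the $K$-type description of the ambient spaces. Any element of $\BB S\otimes\BB C[z_{11},z_{12},z_{22},N(Z)^{-1}]$ (or its $N(Z)^{\frac12}$-twist) is a sum of $N(Z)^k\cdot{\bf v^t_{l,m}}(Z)$ and $N(Z)^{k}\cdot{\bf v^b_{l+1,m}}(Z)$, by Proposition \ref{basis-prop} and the $K$-type analysis behind Theorem \ref{1reg-res-irred-thm}. The operator $\partial^+$ commutes with $\mathfrak{u}(2)$ up to the change of representation. So the first step is to compute $\partial^+$ on these basis elements using \eqref{dtNR}--\eqref{dz_{22}NR}, exactly as \eqref{B-action} was obtained. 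We expect
\[
\partial^+\bigl(N(Z)^k{\bf v^t_{l,m}}\bigr)=c_1\,k\,N(Z)^{k-1}{\bf (\ldots)}_{l+1}, \qquad
\partial^+\bigl(N(Z)^k{\bf v^b_{l+1,m}}\bigr)=c_2\,(2k+2l+3)\,N(Z)^{k}{\bf (\ldots)}_{l},
\]
with nonzero constants $c_1,c_2$. The factor $k$ comes from the $N(Z)^{k-1}$ terms, and the factor $2k+2l+3$ from the $N(Z)^k$ terms. In particular ${\bf v^t_{l,m}}$ should be killed by $\partial^+$; this can be checked directly from \eqref{dz_{11}R}, \eqref{dz_{22}R}, \eqref{dtR}, since no $N(Z)$ terms appear when $k=0$.

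Within each $K$-type $V_{l+\frac12}$ and each power of $N(Z)$, the images of distinct basis vectors land in distinct $K$-types or distinct powers of $N(Z)$. Hence the kernel of $\partial^+$ is spanned by the basis vectors whose coefficient vanishes.

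\begin{itemize}
\item In the integer-power space this leaves exactly $k=0$ for ${\bf v^t}$. The ${\bf v^b}$ coefficient $2k+2l+3$ never vanishes for integer $k$. This gives ${\cal V}^+_{3-dim}$, and also shows it equals the polynomial solutions on $\HC$-sym.
\item In the $N(Z)^{\frac12}$-twisted space, the ${\bf v^t}$ coefficient never vanishes, so no ${\bf v^t}$ survives. For ${\bf v^b}$ only $k=-l-\frac32$ survives, giving $N(Z)^{-l-\frac32}{\bf v^b_{l+1,m}}$.
\end{itemize}

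The right versions follow by transposition, since $g\overleftarrow{\partial^+}$ is the transpose of $\partial^+ g^T$ for our symmetric operator.

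Next we check the inversion characterizations of ${\cal V}^-_{3-dim}$ and ${\cal V}'^-_{3-dim}$. This is an analogue of Lemma \ref{V-inversion-lem} with $N(Z)$ replaced by $N(Z)^{\frac12}$: the computation there, via \eqref{Lemma23-analogue1} and $R^m_l(Z^{-1})=(-1)^l\frac{(l+m)!}{(l-m)!}N(Z)^{-l}R^{-m}_l(Z)$, shows that $\frac{Z^{-1}}{N(Z)^{1/2}}{\bf v^b_{l+1,m}}(Z^{-1})N(Z^{-1})^{-l-\frac32}$ is a multiple of ${\bf v^t_{l,-m-1}}(Z)$. Since $\pi^{3-dim}_l\bigl(\begin{smallmatrix}0&1\\1&0\end{smallmatrix}\bigr)$ preserves the solution space, this exchanges ${\cal V}^+_{3-dim}$ and ${\cal V}^-_{3-dim}$. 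Consequently irreducibility need only be proved for the ``$+$'' modules and transported by inversion. Invariance under $\mathfrak{sp}(4,\BB C)$ itself follows from the preceding theorem together with Lemma \ref{pi-3dim-Lie_alg-action}, since the formulas preserve polynomial degrees in the appropriate sense.

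For irreducibility of ${\cal V}^+_{3-dim}$, each $K$-type occurs once (one for each $l$). We then use two actions:
\begin{itemize}
\item $\pi^{3-dim}_l\bigl(\begin{smallmatrix}0&B\\0&0\end{smallmatrix}\bigr)=-\tr(B\partial)$ lowers $l$ by one with nonzero coefficients (factors $l+m$, etc., by \eqref{dtR}--\eqref{dz_{22}R}), so every nonzero submodule contains the lowest $K$-type ${\bf v^t_{0,m}}$.
\item The $C$-action raises $l$. Using \eqref{Lemma23-analogue1} and \eqref{tR}--\eqref{z_{22}R}, $\tr(ZCZ\partial+\frac12CZ)f+CZf$ applied to ${\bf v^t_{l,m}}$ yields a nonzero multiple of ${\bf v^t_{l+1,\cdot}}$ after projecting away from $\partial^+$-non-solutions. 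Its coefficient should be proportional to $l+\frac32$-type factors, which never vanish.
\end{itemize}
Together these show that the lowest $K$-type generates everything.

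The main obstacle is this raising step. It requires a careful bookkeeping of the $N(Z)^{k+1}$ terms, which must cancel since the result is again a solution, and a check that the surviving coefficient is nonzero. I would try first to deduce it by applying the inversion to the lowering computation for ${\cal V}^-_{3-dim}$, thereby avoiding a direct calculation.
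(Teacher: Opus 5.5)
Your proposal follows the paper's proof. The paper also computes $\partial^+$ on the basis $N(Z)^k{\bf v^t_{l,m}}$, $N(Z)^k{\bf v^b_{l,m}}$, obtaining the factors $k$ and $2k+2l+1$, to identify the kernels. It then proves irreducibility from the lowering $B$-action on ${\bf v^t_{l,m}}$ together with the inversion, which exchanges ${\bf v^t_{l,m}}$ and $N(Z)^{-l-\frac32}{\bf v^b_{l+1,-m-1}}$.

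The raising step you left open is closed exactly the way you suggest. The paper computes the $B$-action on $N(Z)^{-l-\frac32}{\bf v^b_{l+1,m}}$, which raises $l$ with visibly nonzero coefficients; for example $\frac{\partial}{\partial z_{22}}$ gives coefficient $1$. It then transports this to the $C$-action on ${\cal V}^+_{3-dim}$ via the inversion. No direct computation is needed, and no projection either, since ${\cal V}^+_{3-dim}$ is already invariant.
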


\begin{proof}
To find the $K$-types of ${\cal V}_{3-dim}^+$, we consider the space of {\em all}
polynomial $\BB S$-valued functions on $\HC^{\times} \text{-sym}$:
\[
\bigl( \pi^{3-dim}_l, \BB S \otimes \BB C [z_{11}, z_{12}, z_{22},N(Z)^{-1}] \bigr).
\]
This $\mathfrak{sp}(4,\BB C)$-module has $K$-types $V_{l+\frac12}$,
$l=0,1,2,3,\dots$, spanned by $N(Z)^k \cdot {\bf v^t_{l,m}}(Z)$
and by $N(Z)^k \cdot {\bf v^b_{l+1,m}}(Z)$,  $-l-1 \le m \le l$.
(Cf. Theorem \ref{1reg-res-irred-thm}.)
Using \eqref{dtNR}-\eqref{dz_{22}NR}, we compute:  
\begin{align*}
\partial^+ \bigl( N(Z)^k \cdot {\bf v^t_{l,m}}(Z) \bigr) &=
k N(Z)^{k-1} \cdot \begin{pmatrix} -R^{m+1}_{l+1}(Z) \\
  i(l-m+1) R^m_{l+1}(Z) \end{pmatrix},  \\
\partial^+ \bigl( N(Z)^k \cdot {\bf v^b_{l,m}}(Z) \bigr) &=
(2k+2l+1) N(Z)^k \cdot \begin{pmatrix} R^{m+1}_{l-1}(Z) \\
  i(l+m+1) R^m_{l-1}(Z) \end{pmatrix}.
\end{align*}
Therefore, the subspace of functions $f$ in
$\BB S \otimes \BB C [z_{11}, z_{12}, z_{22},N(Z)^{-1}]$ satisfying
$\partial^+ f=0$ is exactly
\[
\BB C\text{-span of }
\bigl\{ {\bf v^t_{l,m}}(Z) ;\: l \ge 0,\: -l-1 \le m \le l \bigr\}.
\]

The irreducibility of $(\pi^{3-dim}_l, {\cal V}_{3-dim}^+)$ follows from
explicit computation of $\pi^{3-dim}_l
\bigl(\begin{smallmatrix} 0 & B \\ 0 & 0 \end{smallmatrix}\bigr)$,
$B \in \HC \text{-sym}$, on the $K$-types:
\begin{align*}
\tfrac{\partial}{\partial t} {\bf v^t_{l,m}}(Z) &=
(l+m+1) {\bf v^t_{l-1,m}}(Z), \\
2\tfrac{\partial}{\partial z_{11}} {\bf v^t_{l,m}}(Z) &=
-(l+m)(l+m+1) {\bf v^t_{l-1,m-1}}(Z), \\
2\tfrac{\partial}{\partial z_{22}} {\bf v^t_{l,m}}(Z) &=
{\bf v^t_{l-1,m+1}}(Z);  \\
\tfrac{\partial}{\partial t}
\bigl( N(Z)^{-l-\frac32} \cdot {\bf v^b_{l+1,m}}(Z)\bigr) &=
-(l-m+1) N(Z)^{-l-\frac52} \cdot {\bf v^b_{l+2,m}}(Z), \\
\tfrac{\partial}{\partial z_{11}}
\bigl( N(Z)^{-l-\frac32} \cdot {\bf v^b_{l+1,m}}(Z)\bigr) &=
-(l-m+1)(l-m+2) N(Z)^{-l-\frac52} \cdot {\bf v^b_{l+2,m-1}}(Z), \\
\tfrac{\partial}{\partial z_{22}}
\bigl( N(Z)^{-l-\frac32} \cdot {\bf v^b_{l+1,m}}(Z)\bigr) &=
N(Z)^{-l-\frac52} \cdot {\bf v^b_{l+2,m+1}}(Z),
\end{align*}
and the effect of the inversions (cf. Lemma \ref{V-inversion-lem}):
\begin{align*}
\pi^{3-dim}_l
\bigl(\begin{smallmatrix} 0 & 1 \\ 1 & 0 \end{smallmatrix}\bigr)
 {\bf v^t_{l,m}}(Z) &= (-1)^l \tfrac{(l+m+1)!}{(l-m)!}
 N(Z)^{-l-\frac32} \cdot {\bf v^b_{l+1,-m-1}}(Z),  \\
\pi^{3-dim}_l
\bigl(\begin{smallmatrix} 0 & 1 \\ 1 & 0 \end{smallmatrix}\bigr)
\bigl( N(Z)^{-l-\frac32} \cdot {\bf v^b_{l+1,m}}(Z) \bigr) &=
(-1)^l \tfrac{(l+m+1)!}{(l-m)!} {\bf v^t_{l,-m-1}}(Z).
\end{align*}

The other cases are treated similarly.
\end{proof}

\section{Structure of $(\rho'_2, \mathring {\cal W}'_{res})$ and Related
$\mathfrak{sp}(4,\BB C)$-Modules}  \label{6}

In this section we introduce an $\mathfrak{sp}(4,\BB C)$-module
$(\rho'_2, \mathring {\cal W}'_{res})$ and identify some of its
irreducible components. Then we introduce closely related
$\mathfrak{sp}(4,\BB C)$-modules $(\rho^*_2, \mathring {\cal W}^*_{res})$
and $(\rho'_2, \mathring {\cal W}'_{\frac12})$
consisting of $\HC\text{-sym}$ valued functions.
All these modules contain an irreducible component isomorphic to
$(\pi_{dl},{\cal F}^+) \simeq (\pi_{dr},{\cal G}^+)$ and another irreducible
component isomorphic to $(\pi_{dl},{\cal F}^-) \simeq (\pi_{dr},{\cal G}^-)$.

\subsection{$K$-types of $(\rho'_2, \mathring {\cal W}'_{res})$}  \label{W'-subsection}

Recall from \cite{FL1,ATMP} that
\begin{align*}
{\cal W}' &= \{\text{$\HC$-valued polynomial functions on $\HC^{\times}$}\}  \\
&= \HC \otimes \BB C [z_{11}, z_{12}, z_{21}, z_{22}, N(Z)^{-1}].
\end{align*}
The Lie algebra $\mathfrak{gl}(2,\HC)$ acts on this space by
differentiating the following group action:
\[
\rho'_2(h): \: F(Z) \: \mapsto \: \bigl( \rho'_2(h)F \bigr)(Z) =
\frac {a'-Zc'}{N(a'-Zc')} \cdot F \bigl( (aZ+b)(cZ+d)^{-1} \bigr)
\cdot \frac {cZ+d}{N(cZ+d)},
\]
where $F \in {\cal W}'$,
$h = \bigl(\begin{smallmatrix} a' & b' \\ c' & d' \end{smallmatrix}\bigr)
\in GL(2,\HC)$ and 
$h^{-1} = \bigl(\begin{smallmatrix} a & b \\ c & d \end{smallmatrix}\bigr)$.
Restricting to $\mathfrak{sp}(4,\BB C)$ and $\HC^{\times} \text{-sym}$,
we obtain a representation $(\rho'_2, {\cal W}'_{res})$ of
$\mathfrak{sp}(4,\BB C)$, where
\begin{align*}
{\cal W}'_{res} &= \{\text{$\HC$-valued polynomial
  functions on $\HC^{\times} \text{-sym}$}\}  \\
&= \HC \otimes \BB C [z_{11}, z_{12}, z_{22}, N(Z)^{-1}].
\end{align*}
We have the following restatement of Lemma 26 in \cite{ATMP}:

\begin{lem}  \label{rho'_2-action-lem}
The Lie algebra action $\rho'_2$ of $\mathfrak{sp}(4,\BB C)$ on ${\cal W}'_{res}$
is given by
\begin{align*}
\rho'_2 \bigl( \begin{smallmatrix} A & 0 \\ 0 & -A^T \end{smallmatrix} \bigr) &:
F(Z) \mapsto - \tr (AZ \partial + ZA^T \partial + 2A) F +AF+FA^T,  \\
\rho'_2 \bigl( \begin{smallmatrix} 0 & B \\ 0 & 0 \end{smallmatrix} \bigr) &:
F(Z) \mapsto - \tr (B \partial) F,  \\
\rho'_2 \bigl( \begin{smallmatrix} 0 & 0 \\ C & 0 \end{smallmatrix} \bigr) &:
F(Z) \mapsto \tr (ZCZ \partial +2ZC) F - ZCF - FCZ.
\end{align*}
\end{lem}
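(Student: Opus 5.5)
The plan is to obtain Lemma \ref{rho'_2-action-lem} by restricting the $\mathfrak{gl}(2,\HC)$-action $\rho'_2$ to $\mathfrak{sp}(4,\BB C)$. The full action is recorded in Lemma 26 of \cite{ATMP}, and it can be recovered by differentiating the group action $\rho'_2(h)$. Since $\rho'_2$ is assembled from the factor $\frac{a'-Zc'}{N(a'-Zc')}$ on the left and $\frac{cZ+d}{N(cZ+d)}$ on the right, I would first write the $\mathfrak{gl}(2,\HC)$-action by combining two known pieces. The left factor is the one appearing in $\pi'_l$, and the right factor is the one appearing in $\pi'_r$. Expanding to first order, a generator $\bigl(\begin{smallmatrix} A & B \\ C & D\end{smallmatrix}\bigr)$ should give
\[
F \mapsto -\tr(AZ\partial)F - \tr(B\partial)F + \tr(ZCZ\partial)F + \tr(ZD\partial)F + (\text{multiplier terms}).
\]
The multiplier terms come from $\frac{d}{dt}$ of $N(1\mp t\,\cdot)^{-1}$ and of the matrix prefactors. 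Concretely, this yields $-\tr A$ and $+A$ acting on the left from $a'=1+tA$. It yields $\tr(ZC)$ and $-ZC$ acting on the left from $c'=tC$. It yields $\tr(CZ)+\tr D$ together with $-FCZ-FD$ acting on the right from $cZ+d$, where $c=-tC$ and $d=1-tD$; for the right-hand factor I would recheck these signs against Lemma \ref{pi-prime-action-lem}. This reproduces Lemma 26 of \cite{ATMP}, which I can simply cite.

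Next I specialize to $\mathfrak{sp}(4,\BB C)$, using elements with $D=-A^T$ and $B,C$ symmetric, and restrict to $\HC^{\times}\text{-sym}$. Here I must use the redefined operator $\partial$ with the $-\frac{i}2\frac{\partial}{\partial t}$ entries. I need to check that $\tr(M\partial)$ acting on functions of symmetric $Z$ agrees with the restriction of the four-variable $\tr(M\partial)$ whenever $M$ is symmetric. This holds because only the symmetric part of $M$ pairs with the symmetric coordinates. The same check applies to $ZCZ$ when $C$ is symmetric, and to $AZ+ZA^T$, since these matrices are tangent to $\HC\text{-sym}$. I would then combine terms. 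The $A$-part gives $-\tr(AZ\partial)-\tr(ZA^T\partial)$ for the vector field. Its multipliers are $-\tr A$ from the left factor and $+\tr D=-\tr A$ from the right factor, totaling $-2\tr A$, together with $+AF$ on the left and $-FD=+FA^T$ on the right. The $C$-part gives the vector field $\tr(ZCZ\partial)$, the scalar $\tr(ZC)+\tr(CZ)=2\tr(ZC)$, and the matrix terms $-ZCF-FCZ$. The $B$-part is simply $-\tr(B\partial)$.

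The main obstacle is bookkeeping. The sign and transpose conventions for $d=-A^T$ inside $h^{-1}$ versus $h$ are easy to get wrong. The first-order identification of the $-\frac{i}{2}\partial_t$ normalization also needs care, in particular the factor of $\frac12$ coming from $z_{12}=z_{21}=it$. I would settle both by testing on a linear function such as $F=t$, and by cross-checking against Lemmas \ref{pi-action-lem} and \ref{rho-alpha-beta-action}, which use the same conventions with $\alpha+\beta=2$ for the scalar parts.
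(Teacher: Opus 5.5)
Your proposal is correct and takes the same route as the paper, which gives no separate argument: it restates Lemma~26 of \cite{ATMP} for $\mathfrak{gl}(2,\HC)$ and specializes to $D=-A^T$ with $B,C$ symmetric on $\HC^{\times}\text{-sym}$. Your first-order expansion of the two multiplier factors is right (it gives $-2\tr A$ and $AF+FA^T$ for the $A$-part, and $2\tr(ZC)$ and $-ZCF-FCZ$ for the $C$-part), and so is your observation that the tangent vector $-AZ-ZA^T-B+ZCZ$ is symmetric, which is what makes the redefined $\partial$ (with the factor $-\tfrac{i}{2}\partial_t$) the correct operator here.
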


Observe that $(\rho'_2, {\cal W}'_{res})$ decomposes into a direct sum of
two invariant subspaces -- functions with values in the symmetric matrices in
$\HC$, $\HC\text{-sym}$, and functions with antisymmetric values.
We are interested in the symmetric part only and call it
$\mathring {\cal W}'_{res}$:
\begin{align*}
\mathring {\cal W}'_{res} &= \{\text{$\HC\text{-sym}$-valued polynomial
  functions on $\HC^{\times} \text{-sym}$}\}  \\
&= \HC\text{-sym} \otimes \BB C [z_{11}, z_{12}, z_{22}, N(Z)^{-1}].
\end{align*}

In this subsection we describe the $K$-types of
$(\rho'_2, \mathring {\cal W}'_{res})$. Consider
\begin{align*}
{\bf T}^m_l(Z) &= \begin{pmatrix} R^{m+1}_l(Z) & i(l+m+1) R^m_l(Z) \\
i(l+m+1) R^m_l(Z) & -(l+m)(l+m+1) R^{m-1}_l(Z) \end{pmatrix},
\quad \text{\footnotesize $-l-1 \le m \le l+1$} ,\\
{\bf M}^m_l(Z) &= \begin{pmatrix} R^{m+1}_l(Z) & im R^m_l(Z) \\
im R^m_l(Z) & (l+m)(l-m+1) R^{m-1}_l \end{pmatrix},
\quad \text{\footnotesize $-l \le m \le l$},  \\
{\bf B}^m_l(Z) &= \begin{pmatrix} R^{m+1}_l(Z) & -i(l-m) R^m_l(Z) \\
-i(l-m) R^m_l(Z) & -(l-m)(l-m+1) R^{m-1}_l(Z) \end{pmatrix},
\quad \text{\footnotesize $-l+1 \le m \le l-1$}.
\end{align*}
For a fixed $l$, these functions span $U(2)$-invariant subspaces of dimensions
$2l+2$, $2l$ and $2l-1$ respectively.
(The letters {\bf T}, {\bf M} and {\bf B} stand respectively for ``top'',
``middle'' and ``bottom''.)
Let $A_1=\bigl( \begin{smallmatrix} 0 & 1 \\ 0 & 0 \end{smallmatrix} \bigr)$
and $A_2=\bigl( \begin{smallmatrix} 0 & 0 \\ 1 & 0 \end{smallmatrix} \bigr)$,
then
\begin{align*}
\rho'_2 \bigl(\begin{smallmatrix} A_1 & 0 \\ 0 & -A_1^T \end{smallmatrix}\bigr)
{\bf T}^m_l(Z) &= i(l+m+1)(l-m+2) {\bf T}^{m-1}_l(Z),  \\
\rho'_2 \bigl(\begin{smallmatrix} A_2 & 0 \\ 0 & -A_2^T \end{smallmatrix}\bigr)
{\bf T}^m_l(Z) &= -i {\bf T}^{m+1}_l(Z);  \\
\rho'_2 \bigl(\begin{smallmatrix} A_1 & 0 \\ 0 & -A_1^T \end{smallmatrix}\bigr)
{\bf M}^m_l(Z) &= i(l+m)(l-m+1) {\bf M}^{m-1}_l(Z),  \\
\rho'_2 \bigl(\begin{smallmatrix} A_2 & 0 \\ 0 & -A_2^T \end{smallmatrix}\bigr)
{\bf M}^m_l(Z) &= -i {\bf M}^{m+1}_l(Z);  \\
\rho'_2 \bigl(\begin{smallmatrix} A_1 & 0 \\ 0 & -A_1^T \end{smallmatrix}\bigr)
{\bf B}^m_l(Z) &= i(l+m-1)(l-m) {\bf B}^{m-1}_l(Z),  \\
\rho'_2 \bigl(\begin{smallmatrix} A_2 & 0 \\ 0 & -A_2^T \end{smallmatrix}\bigr)
{\bf B}^m_l(Z) &= -i {\bf B}^{m+1}_l(Z).
\end{align*}

We describe the effect of the inversion on the basis functions:

\begin{lem}  \label{W'-inversion-lem}
Let $\bigl(\begin{smallmatrix} 0 & 1 \\ 1 & 0 \end{smallmatrix}\bigr)
\in GL(2,\HC)$, then
\begin{align*}
\rho'_2 \bigl(\begin{smallmatrix} 0 & 1 \\ 1 & 0 \end{smallmatrix}\bigr)
\bigl( N(Z)^k \cdot {\bf T}^m_l(Z) \bigr) =
&\tfrac{(-1)^{l+1}}{2l+3} \tfrac{(l+m+1)!}{(l-m+1)!} N(Z)^{-(k+l+1)}
\cdot {\bf T}^{-m}_l(Z) \\
&+ (-1)^{l+1} \tfrac{2(l+1)}{2l+3} \tfrac{(l+m+1)!}{(l-m+1)!} N(Z)^{-(k+l+2)}
  \cdot {\bf B}^{-m}_{l+2}(Z),  \\
\rho'_2 \bigl(\begin{smallmatrix} 0 & 1 \\ 1 & 0 \end{smallmatrix}\bigr)
\bigl( N(Z)^k \cdot {\bf M}^m_l(Z) \bigr) =
&(-1)^l \tfrac{(l+m)!}{(l-m)!} N(Z)^{-(k+l+1)} \cdot {\bf M}^{-m}_l(Z),  \\
\rho'_2 \bigl(\begin{smallmatrix} 0 & 1 \\ 1 & 0 \end{smallmatrix}\bigr)
\bigl( N(Z)^k \cdot {\bf B}^m_{l+1}(Z) \bigr) =
&(-1)^l \tfrac{2(l+1)}{2l+1} \tfrac{(l+m)!}{(l-m)!} N(Z)^{-(k+l+1)}
\cdot {\bf T}^{-m}_{l-1}(Z) \\
&- \tfrac{(-1)^l}{2l+1} \tfrac{(l+m)!}{(l-m)!} N(Z)^{-(k+l+2)}
\cdot {\bf B}^{-m}_{l+1}(Z).
\end{align*}
\end{lem}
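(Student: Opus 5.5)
The plan is a direct computation from the definition of the action. The inversion is the group element $h=\bigl(\begin{smallmatrix} 0 & 1 \\ 1 & 0 \end{smallmatrix}\bigr)$, which equals its own inverse. In the formula for $\rho'_2(h)$ it gives $a=d=0$, $b=c=1$ and $a'=d'=0$, $b'=c'=1$. This yields
\[
\rho'_2 \bigl(\begin{smallmatrix} 0 & 1 \\ 1 & 0 \end{smallmatrix}\bigr) F(Z)
= -\frac{Z}{N(Z)} \cdot F(Z^{-1}) \cdot \frac{Z}{N(Z)}
= -\frac{Z \, F(Z^{-1})\, Z}{N(Z)^2},
\]
with the sign coming from $N(-Z)=N(Z)$. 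First I will record this formula. Then I will substitute $F=N(Z)^k\cdot{\bf T}^m_l$, $N(Z)^k \cdot {\bf M}^m_l$ and $N(Z)^k \cdot {\bf B}^m_{l+1}$. Since $N(Z^{-1})^k = N(Z)^{-k}$, the power of $N$ only shifts, and everything reduces to computing the matrix products $Z\,{\bf X}(Z^{-1})\,Z$.

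For ${\bf X}(Z^{-1})$ I use the relation stated earlier,
\[
R^m_l(Z^{-1}) = (-1)^l \tfrac{(l+m)!}{(l-m)!} N(Z)^{-l} R^{-m}_l(Z).
\]
Each entry of ${\bf X}^m_l(Z^{-1})$ then becomes $N(Z)^{-l}$ times a solid harmonic with $m$ reversed, and these can be collected into a matrix of the same shape with the index $-m$. Next I compute $Z\cdot Y\cdot Z$ for that matrix $Y$. The cleanest route is to split $Y$ into columns and apply \eqref{Lemma23-analogue1}--\eqref{Lemma23-analogue2}, together with the product formulas \eqref{tR}--\eqref{z_{22}R}, first to $ZY$ and then to $(ZY)Z$ by transposition, since everything is symmetric. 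Each multiplication by $Z$ either raises $l$ by one or lowers it by one while producing an $N(Z)$ factor. So $Z\,{\bf X}_l\,Z$ lands in combinations of $N(Z)^{j}\cdot(\text{type }l-2,\ l,\ l+2)$.

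A shortcut cuts down the work. Because the inversion is a $U(2)$-intertwiner up to the twist $m\mapsto -m$, the image of a $K$-type of dimension $d$ in the $(k,l)$ slot must lie in the span of $K$-types of the same dimension. The degree count, degree $2-2k-l-4$ with an appropriate power of $N$, then restricts the candidates to exactly those appearing in the statement: ${\bf T}_l$ and ${\bf B}_{l+2}$ both have dimension $2l+3$ (up to indexing), ${\bf M}_l$ is alone, and ${\bf B}_{l+1}$ pairs with ${\bf T}_{l-1}$. So I only need to determine the coefficients. I will get them by comparing a single convenient entry, for instance the $(1,1)$ entry with the top value of $m$, where the associated Legendre polynomials reduce to monomials such as $R^{l}_l\propto z_{11}^l$. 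I will then use \eqref{Pm-m} to fix signs and factorials.

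The main obstacle is the ${\bf T}$ and ${\bf B}$ cases. There the image is a genuine mixture of two $K$-types, so one entry does not suffice. I will match two independent entries, the $(1,1)$ and $(1,2)$ entries, at fixed $m$, and solve a $2\times2$ system for the coefficients, which produces the factors $\tfrac{1}{2l+3}$, $\tfrac{2(l+1)}{2l+3}$ and similar. As a consistency check, applying the inversion twice must give the identity, because $h^2=1$. This requires the two $2\times 2$ coefficient matrices from the ${\bf T}$ and ${\bf B}$ formulas to be mutually inverse, which checks the constants. The ${\bf M}$ case needs no mixing and follows from the single-entry comparison.
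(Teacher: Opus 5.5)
Your plan is correct and is essentially the direct computation the paper intends (the lemma is stated there without a written proof). The ingredients you list all go through: the inversion acts by $F(Z)\mapsto -Z\,F(Z^{-1})\,Z/N(Z)^2$, you use the identity for $R^m_l(Z^{-1})$, and you apply \eqref{Lemma23-analogue1}--\eqref{Lemma23-analogue2} to the columns and then to the rows. Your $K$-type/degree reduction is valid, but ${\bf M}_l$ is not alone by dimension: it shares dimension $2l+1$ with ${\bf T}_{l-1}$ and ${\bf B}_{l+1}$, and only the parity of the degree isolates it. The check via $h^2=1$ also works.

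There are two small slips. First, ${\bf X}^m_l(Z^{-1})$ has $R^{-m-1}_l$ in the $(1,1)$ slot and $R^{-m+1}_l$ in the $(2,2)$ slot, so its diagonal is swapped relative to ${\bf X}^{-m}_l$; this is exactly what makes the columns fit the ${\bf v}$-pattern. Second, if you read the coefficients off at a single $m$, you must obtain the factorials $\tfrac{(l+m)!}{(l-m)!}$ and $\tfrac{(l+m+1)!}{(l-m+1)!}$ from the $A_1,A_2$ ladder formulas, using that conjugation by the inversion sends $A_1$ to $-A_1^T=-A_2$; alternatively, simply keep $m$ general throughout the computation.
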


\begin{thm}
The functions
\begin{equation*}  
  N(Z)^k \cdot {\bf T}^m_l(Z), \qquad N(Z)^k \cdot {\bf M}^m_l(Z),
  \qquad N(Z)^k \cdot {\bf B}^m_l(Z)
\end{equation*}
form a $K$-type basis of $(\rho'_2,\mathring {\cal W}'_{res})$.
More precisely, for $k$, $l$ fixed, as representations of $SU(2)$,
\begin{align*}
  \BB C\text{-span of } \bigl\{ N(Z)^k \cdot {\bf T}^m_l(Z) ;\:
  -l-1 \le m \le l+1  \bigr\} &\simeq V_{l+1},  \\
  \BB C\text{-span of } \bigl\{ N(Z)^k \cdot {\bf M}^m_l(Z) ;\:
  -l \le m \le l \bigr\} &\simeq V_l,  \\
  \BB C\text{-span of } \bigl\{ N(Z)^k \cdot {\bf B}^m_l(Z) ;\:
  -l+1 \le m \le l-1 \bigr\} &\simeq V_{l-1}.
\end{align*}
\end{thm}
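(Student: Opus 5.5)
The plan has two parts: show that the listed functions span $\mathring{\cal W}'_{res}$ and are linearly independent, and show that for fixed $k,l$ each family spans an $SU(2)$-irreducible piece of the stated dimension.

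For spanning, write $\mathring{\cal W}'_{res} = \HC\text{-sym} \otimes \BB C[z_{11},z_{12},z_{22},N(Z)^{-1}]$. By Proposition \ref{basis-prop}, the scalar functions $N(Z)^k R^m_l(Z)$ form a basis of the scalar factor. Fix $k$ and the total homogeneity degree $d=2k+l$, and compare graded pieces of $\mathring{\cal W}'_{res}$ in which the $N(Z)$-power is fixed and the entries are harmonic of degree $l$. Each entry is a combination of $R^{m'}_l$ (plus $N(Z)$ times lower harmonics), so the $\{N(Z)^k\}\times(\text{harmonic degree } l)$ layer of symmetric matrices has dimension $3(2l+1)$. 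The counts match, since $(2l+3)+(2l+1)+(2l-1) = 3(2l+1)$. The ${\bf T},{\bf M},{\bf B}$ functions at level $(k,l)$ all have entries $N(Z)^k R^{\cdot}_l$, so we only need their linear independence within that layer.

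Independence goes weight by weight. Under the Cartan element $\mathrm{diag}(A,-A^T)$ with $A$ diagonal, the matrix $N(Z)^k{\bf T}^m_l$ has weight determined by $m$ (its entries are $R^{m+1},R^m,R^{m-1}$ placed consistently), and the same holds for ${\bf M}^m_l$ and ${\bf B}^m_l$. For a given $m$ there are at most three vectors, and each lies in the span of the three matrices
\[
\bigl(\begin{smallmatrix} R^{m+1}_l & 0\\ 0&0\end{smallmatrix}\bigr),\quad
\bigl(\begin{smallmatrix} 0& R^m_l\\ R^m_l&0\end{smallmatrix}\bigr),\quad
\bigl(\begin{smallmatrix} 0&0\\0& R^{m-1}_l\end{smallmatrix}\bigr).
\]
Their coefficient vectors are
\[
\bigl(1,\, i(l+m+1),\, -(l+m)(l+m+1)\bigr),\quad
\bigl(1,\, im,\, (l+m)(l-m+1)\bigr),\quad
\bigl(1,\, -i(l-m),\, -(l-m)(l-m+1)\bigr).
\]
The second coordinates are $i(l+m+1)$, $im$, $-i(l-m)$, which are distinct for $l\ge 0$, so this is a Vandermonde-like check. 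The main obstacle is verifying that the $3\times 3$ determinant is nonzero for all admissible $m$, and handling the boundary values of $m$ where only one or two of the families exist. There one checks that the omitted entries vanish: for instance ${\bf T}^{l+1}_l$ has only its first entry nonzero, because $R^{l+2}_l=0$ forces the corresponding entries to be zero. I would compute the determinant symbolically, expecting it to factor as a nonzero multiple of $(2l+1)$ times a product of positive factors.

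For the $SU(2)$ structure I would use the displayed formulas for $\rho'_2$ of $A_1$ and $A_2$. These act as lowering and raising operators on each chain. The coefficients $i(l+m+1)(l-m+2)$, $i(l+m)(l-m+1)$ and $i(l+m-1)(l-m)$ vanish exactly at the bottom of each chain, and the raising operator terminates because the next vector is zero. Each chain is therefore an irreducible $\mathfrak{sl}(2)$-module of dimension $2l+3$, $2l+1$ and $2l-1$, namely $V_{l+1}$, $V_l$ and $V_{l-1}$.

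The formulas for $A_1$ and $A_2$ themselves follow from Lemma \ref{rho'_2-action-lem} together with \eqref{dz_{11}R}--\eqref{dz_{22}R} and the $z$-multiplication identities \eqref{tR}--\eqref{z_{22}R}. Multiplying by the $U(2)$-invariant $N(Z)^k$ does not change the action, so the result holds for every $k$.
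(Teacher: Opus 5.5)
There is a genuine gap at $l=0$, where both your counting step and your chain step fail; in fact the statement, read literally, is false there. Since $R^1_0=0$ (because $P^1_0=0$), and the other two coefficients of ${\bf M}^0_0$ are $m=0$ and $(l+m)(l-m+1)=0$, you get ${\bf M}^0_0(Z)\equiv 0$.

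The layer $\HC\text{-sym}\otimes N(Z)^k\cdot R^0_0$ is $3$-dimensional. It is already spanned by the ${\bf T}_0$-chain ${\bf T}^{-1}_0=E_{11}$, ${\bf T}^0_0=i(E_{12}+E_{21})$, ${\bf T}^1_0=-2E_{22}$ ($E_{ij}$ the matrix units). This is $V_1\otimes V_0=V_1$, which leaves no room for a $V_0$.

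Your identity $(2l+3)+(2l+1)+(2l-1)=3(2l+1)$ balances at $l=0$ only because it gives the empty ${\bf B}$-family $-1$ members; the honest count is $3+1+0=4>3$. Likewise, your chain argument would declare $\{N(Z)^k\cdot{\bf M}^0_0\}$ an irreducible module of dimension $2l+1=1$, although it consists of the zero vector. The only $SU(2)$-trivial types are $N(Z)^k\cdot{\bf B}^0_1(Z)=-N(Z)^k\cdot Z$, in the layer $l=1$, so later mentions of $N(Z)^k\cdot{\bf M}^0_0(Z)$ in the paper refer to the zero function.

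So you must state that the ${\bf M}$-family is taken only for $l\ge1$; the ${\bf B}$-family is automatically empty at $l=0$. Then run the count and the chain argument in that range.

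The paper gives no proof of this theorem, so there is nothing to compare your route against; it is the natural one, and with that correction it works. Your ``main obstacle'' closes as follows. Distinct middle coordinates prove nothing by themselves. However, for $|m|\le l-1$ the $3\times3$ determinant of your three coefficient vectors equals $2i\,l(l+1)(2l+1)$, independent of $m$.

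At $m=l$ there is no ${\bf B}^l_l$, and $R^{l+1}_l=0$. Here ${\bf T}^l_l$ and ${\bf M}^l_l$ have coordinates $\bigl(i(2l+1),\,-2l(2l+1)\bigr)$ and $(il,\,2l)$ with respect to $(E_{12}+E_{21})R^l_l$, $E_{22}R^{l-1}_l$. Their determinant is again $2i\,l(l+1)(2l+1)$, and its vanishing at $l=0$ is exactly the degeneracy above. At $m=-l$ they have coordinates $(1,\,i)$ and $(1,\,-il)$ with respect to $E_{11}R^{-l+1}_l$, $(E_{12}+E_{21})R^{-l}_l$, with determinant $-i(l+1)$.

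One slip: ${\bf T}^{l+1}_l$ has only its $(2,2)$ entry $-(2l+1)(2l+2)R^l_l$ nonzero. It is ${\bf T}^{-l-1}_l=E_{11}R^{-l}_l$ that has only its $(1,1)$ entry.

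Also, $N(Z)$ is not $U(2)$-invariant; it transforms by a character of $U(2)$. It is, however, annihilated by the vector fields of $A_1$ and $A_2$, which is all your chain argument uses.
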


Note that the $K$-types $N(Z)^k \cdot {\bf B}^{m=0}_{l=1}(Z)$ and
$N(Z)^k \cdot {\bf M}^m_l(Z)$, $-l \le m \le l$, have multiplicity one.
On the other hand,
$N(Z)^k \cdot {\bf T}^m_l(Z)$ and $N(Z)^{k-1} \cdot {\bf B}^m_{l+2}(Z)$,
$-l-1 \le m \le l+1$, span isomorphic representations of $U(2)$.
We introduce an additional family of ``combination'' $K$-types of
$(\rho'_2, \mathring {\cal W}'_{res})$:
\[
{\bf C}^m_{k,l}(Z) = 2k N(Z)^{k-1} \cdot {\bf B}^m_{l+1}(Z)
- (2k+2l+1) N(Z)^k \cdot {\bf T}^m_{l-1}(Z),  \quad
\begin{smallmatrix} k \in \BB Z, \:  l \ge 0, \\
-l \le m \le l. \end{smallmatrix}
\]
From Lemma \ref{W'-inversion-lem} we find that the effect of the inversion
on these $K$-types is
\begin{equation}  \label{I-inversion}
\rho'_2 \bigl(\begin{smallmatrix} 0 & 1 \\ 1 & 0 \end{smallmatrix}\bigr)
\bigl( {\bf C}^m_{k,l}(Z) \bigr) =
(-1)^l \tfrac{(l+m)!}{(l-m)!} {\bf C}^{-m}_{-k-l,l}(Z).
\end{equation}

     

\subsection{Decomposition of
  $(\rho'_2, \mathring {\cal W}'^+_{res} \oplus \mathring {\cal W}'^-_{res})$
  into Irreducible Components}  \label{IrredDecom-subsection}

By Corollary 36 in \cite{ATMP}, $(\rho'_2, {\cal W}')$ contains an invariant
subspace
\begin{equation*}
{\cal M} = \bigl\{ F \in {\cal W}';\:
\M F=0, \: \tr ( \partial \circ \square F) =0 \bigr\}.
\end{equation*}
Inside ${\cal M}$ we have
\begin{align}
{\cal M}^+ &= \HC \otimes [z_{11},z_{12},z_{21},z_{22}] \cap {\cal M},
\label{M^+}\\
{\cal M}^- &= \bigl\{ F \in {\cal M};\:
\tfrac{Z}{N(Z)} \cdot F(Z^{-1}) \cdot \tfrac{Z}{N(Z)} \in {\cal M}^+ \bigr\}.
\label{M^-}
\end{align}
Note that ${\cal M}^+ \oplus {\cal M}^-$ is a proper subspace of ${\cal M}$;
indeed, it can be seen from Proposition 42 in \cite{ATMP} that the quotient
${\cal M} / ({\cal M}^+ \oplus {\cal M}^-)$ is one-dimensional.
In \cite{ATMP} we decomposed the $\mathfrak{gl}(2,\HC)$-module
$(\rho'_2,{\cal W}')$ into irreducible components.
It follows from this decomposition (Theorems 32, 49, Corollary 41
and Lemma 48 in \cite{ATMP}) that ${\cal M}^{\pm}$ contain three
$\mathfrak{gl}(2,\HC)$-irreducible components each. More precisely,
${\cal M}^+$ contains a unique irreducible submodule 
$\partial^+({\cal BH}^+)$ isomorphic to $(\rho', {\cal BH}^+/{\cal I}'_0)$
realized using the biharmonic functions regular at the origin, and
the quotient
\[
\biggl( \rho'_2, \frac{{\cal M}^+}{\partial^+({\cal BH}^+)} \biggr) \simeq
(\pi_{dl},{\cal F}^+) \oplus (\pi_{dr},{\cal G}^+)
\]
-- direct sum of the doubly left regular and doubly right regular functions
regular at the origin.
Similarly, ${\cal M}^-$ contains a unique irreducible submodule 
$\partial^+({\cal BH}^-)$ isomorphic to $(\rho', {\cal BH}^-/{\cal I}'_0)$
realized using the biharmonic functions regular at infinity, and
the quotient
\[
\biggl( \rho'_2, \frac{{\cal M}^-}{\partial^+({\cal BH}^-)} \biggr) \simeq
(\pi_{dl},{\cal F}^-) \oplus (\pi_{dr},{\cal G}^-)
\]
-- direct sum of the doubly left regular and doubly right regular functions
regular at infinity.
In particular, there are four indecomposable
submodules of ${\cal M} \subset {\cal W}'$ containing the doubly regular
functions:
\begin{equation*}  
\partial^+({\cal BH}^+) + {\cal F}^+, \qquad
\partial^+({\cal BH}^+) + {\cal G}^+, \qquad
\partial^+({\cal BH}^-) + {\cal F}^-, \qquad
\partial^+({\cal BH}^-) + {\cal G}^-,
\end{equation*}
each composed of two irreducible components.
The first two submodules contain an irreducible subspace
$\partial^+({\cal BH}^+)$, and the latter two contain an irreducible subspace
$\partial^+({\cal BH}^-)$.
The quotients
\begin{equation}  \label{2-reg-quotients}
\frac{\partial^+({\cal BH}^+) + {\cal F}^+}{\partial^+({\cal BH}^+)}, \qquad
\frac{\partial^+({\cal BH}^+) + {\cal G}^+}{\partial^+({\cal BH}^+)}, \qquad
\frac{\partial^+({\cal BH}^-) + {\cal F}^-}{\partial^+({\cal BH}^-)}, \qquad
\frac{\partial^+({\cal BH}^-) + {\cal G}^-}{\partial^+({\cal BH}^-)}
\end{equation}
are isomorphic respectively to
\[
(\pi_{dl},{\cal F}^+), \qquad
(\pi_{dr},{\cal G}^+), \qquad
(\pi_{dl},{\cal F}^-), \qquad
(\pi_{dr},{\cal G}^-).
\]
By Proposition \ref{nreg-irreducibility-prop}, after restricting to
$\mathfrak{sp}(4,\BB C)$, the quotients \eqref{2-reg-quotients} remain
irreducible.



We have the restriction operator
$\Res: {\cal W}' \to \mathring {\cal W}'_{res}$
which takes an $\HC$-valued polynomial function on $\HC^{\times}$ and restricts
it to an $\HC\text{-sym}$-valued polynomial function on
$\HC^{\times} \text{-sym}$. Clearly, $\Res$ produces intertwining maps of
$\mathfrak{sp}(4,\BB C)$-representations from
$(\rho'_2, \partial^+({\cal BH}^+))$,
$(\rho'_2, \partial^+({\cal BH}^+) + {\cal F}^+)$,
$(\rho'_2, \partial^+({\cal BH}^+) + {\cal G}^+)$,
$(\rho'_2, \partial^+({\cal BH}^-))$,
$(\rho'_2, \partial^+({\cal BH}^-) + {\cal F}^-)$,
$(\rho'_2, \partial^+({\cal BH}^-) + {\cal G}^-)$ into
$(\rho'_2,{\cal W}'_{res})$.
The goal of this subsection is to identify the images of these representations
and to decompose these images into irreducible components.

Introduce $\mathfrak{sp}(4,\BB C)$-invariant subspaces of
$\mathring {\cal W}'_{res}$:
\begin{align}
\mathring {\cal W}'^+_{res}
&= \HC\text{-sym} \otimes \BB C [z_{11}, z_{12}, z_{22}]  \label{W^+}  \\
&= \BB C\text{-span of } \bigl\{ N(Z)^k \cdot {\bf T}^m_l(Z),\:
N(Z)^k \cdot {\bf M}^m_l(Z),\: N(Z)^k \cdot {\bf B}^m_{l+1}(Z);\:
k,l \ge 0 \bigr\},  \\
&= \BB C\text{-span of } \bigl\{ N(Z)^k \cdot {\bf M}^m_l(Z),\:
N(Z)^k \cdot {\bf B}^m_{l+1}(Z),\: {\bf C}^m_{k,l+1}(Z);\: k,l \ge 0 \bigr\},  \\
\mathring {\cal W}'^-_{res} &= \bigl\{ F \in \mathring {\cal W}'_{res};\:
\tfrac{Z}{N(Z)} \cdot F(Z^{-1}) \cdot \tfrac{Z}{N(Z)}
\in \mathring {\cal W}'^+_{res} \bigr\}  \\
&= \BB C\text{-span of } \bigl\{ N(Z)^k \cdot {\bf M}^m_l(Z),\:
N(Z)^{k-1} \cdot {\bf B}^m_{l+1}(Z),\: {\bf C}^m_{k,l+1}(Z);\:
k \le -(l+1),\: l \ge 0 \bigr\}  \label{W^-}
\end{align}
(the last equality follows from Lemma \ref{W'-inversion-lem}
and \eqref{I-inversion}).
The inversion interchanges
\[
\partial^+({\cal BH}^+) + {\cal F}^+ \longleftrightarrow
\partial^+({\cal BH}^-) + {\cal F}^-, \qquad 
\partial^+({\cal BH}^+) + {\cal G}^+ \longleftrightarrow
\partial^+({\cal BH}^-) + {\cal G}^-,
\]
\[
\partial^+({\cal BH}^+) \longleftrightarrow \partial^+({\cal BH}^-), \qquad
\mathring {\cal W}'^+_{res} \longleftrightarrow \mathring {\cal W}'^-_{res}.
\]
It follows that
\begin{equation}  \label{containment+}
\Res \bigl( \partial^+({\cal BH}^+) \bigr), \quad
\Res \bigl( \partial^+({\cal BH}^+) + {\cal F}^+ \bigr), \quad
\Res \bigl( \partial^+({\cal BH}^+) + {\cal G}^+ \bigr) \quad
\subset \mathring {\cal W}'^+_{res}
\end{equation}
and
\[
\Res \bigl( \partial^+({\cal BH}^-) \bigr), \quad
\Res \bigl( \partial^+({\cal BH}^-) + {\cal F}^- \bigr), \quad
\Res \bigl( \partial^+({\cal BH}^-) + {\cal G}^- \bigr) \quad
\subset \mathring {\cal W}'^-_{res}.
\]

We compute the effect of
$\rho'_2 \bigl( \begin{smallmatrix} 0 & B \\ 0 & 0 \end{smallmatrix} \bigr)$,
$B \in \HC\text{-sym}$, on the $K$-types:
\begin{multline*}
\tfrac{\partial}{\partial t} \bigl( N(Z)^k \cdot {\bf T}^m_l(Z) \bigr)
= \tfrac{(2k+2l+1)(l+m+1)}{2l+1} N(Z)^k \cdot {\bf T}^m_{l-1}(Z)  \\
+ \tfrac{2k(l+1)(l-m+2)}{(2l+3)(l+2)} N(Z)^{k-1} \cdot {\bf T}^m_{l+1}(Z)
- \tfrac{2km}{(l+1)(l+2)} N(Z)^{k-1} \cdot {\bf M}^m_{l+1}(Z)  \\
- \tfrac{2k(l+m+1)}{(2l+1)(2l+3)(l+1)} N(Z)^{k-1} \cdot {\bf B}^m_{l+1}(Z),
\end{multline*}
\begin{multline*}
2 \tfrac{\partial}{\partial z_{11}} \bigl( N(Z)^k \cdot {\bf T}^m_l(Z) \bigr)
= -\tfrac{(2k+2l+1)(l+m)(l+m+1)}{2l+1} N(Z)^k \cdot {\bf T}^{m-1}_{l-1}(Z)  \\
+\tfrac{2k(l+1)(l-m+2)(l-m+3)}{(2l+3)(l+2)} N(Z)^{k-1} \cdot {\bf T}^{m-1}_{l+1}(Z)
- \tfrac{2k(l+m+1)(l-m+2)}{(l+1)(l+2)} N(Z)^{k-1} \cdot {\bf M}^{m-1}_{l+1}(Z)  \\
+\tfrac{2k(l+m)(l+m+1)}{(2l+1)(2l+3)(l+1)} N(Z)^{k-1} \cdot {\bf B}^{m-1}_{l+1}(Z),
\end{multline*}
\begin{multline*}
2 \tfrac{\partial}{\partial z_{22}} \bigl( N(Z)^k \cdot {\bf T}^m_l(Z) \bigr)
= \tfrac{2k+2l+1}{2l+1} N(Z)^k \cdot {\bf T}^{m+1}_{l-1}(Z)  \\
- \tfrac{2k(l+1)}{(2l+3)(l+2)} N(Z)^{k-1} \cdot {\bf T}^{m+1}_{l+1}(Z)
- \tfrac{2k}{(l+1)(l+2)} N(Z)^{k-1} \cdot {\bf M}^{m+1}_{l+1}(Z)  \\
- \tfrac{2k}{(2l+1)(2l+3)(l+1)} N(Z)^{k-1} \cdot {\bf B}^{m+1}_{l+1}(Z);
\end{multline*}
\begin{multline*}
\tfrac{\partial}{\partial t} \bigl( N(Z)^k \cdot {\bf M}^m_l(Z) \bigr)
= - \tfrac{m(2k+2l+1)}{l(2l+1)} N(Z)^k \cdot {\bf T}^m_{l-1}(Z)  \\
+ \tfrac{(l+1)(2k+2l+1)(l+m)}{l(2l+1)} N(Z)^k \cdot {\bf M}^m_{l-1}(Z)
+ \tfrac{2kl(l-m+1)}{(2l+1)(l+1)} N(Z)^{k-1} \cdot {\bf M}^m_{l+1}(Z)  \\
- \tfrac{2km}{(2l+1)(l+1)} N(Z)^{k-1} \cdot {\bf B}^m_{l+1}(Z),
\end{multline*}
\begin{multline*}
2 \tfrac{\partial}{\partial z_{11}} \bigl( N(Z)^k \cdot {\bf M}^m_l(Z) \bigr)
= - \tfrac{(2k+2l+1)(l+m)(l-m+1)}{l(2l+1)} N(Z)^k \cdot {\bf T}^{m-1}_{l-1}(Z)  \\
- \tfrac{(l+1)(2k+2l+1)(l+m)(l+m-1)}{l(2l+1)} N(Z)^k \cdot {\bf M}^{m-1}_{l-1}(Z)
+ \tfrac{2kl(l-m+1)(l-m+2)}{(2l+1)(l+1)} N(Z)^{k-1} \cdot {\bf M}^{m-1}_{l+1}(Z) \\
- \tfrac{2k(l+m)(l-m+1)}{(2l+1)(l+1)} N(Z)^{k-1} \cdot {\bf B}^{m-1}_{l+1}(Z),
\end{multline*}
\begin{multline*}
2 \tfrac{\partial}{\partial z_{22}} \bigl( N(Z)^k \cdot {\bf M}^m_l(Z) \bigr)
= - \tfrac{2k+2l+1}{l(2l+1)} N(Z)^k \cdot {\bf T}^{m+1}_{l-1}(Z)  \\
+ \tfrac{(l+1)(2k+2l+1)}{l(2l+1)} N(Z)^k \cdot {\bf M}^{m+1}_{l-1}(Z)
- \tfrac{2kl}{(2l+1)(l+1)} N(Z)^{k-1} \cdot {\bf M}^{m+1}_{l+1}(Z)  \\
- \tfrac{2k}{(2l+1)(l+1)} N(Z)^{k-1} \cdot {\bf B}^{m+1}_{l+1}(Z);
\end{multline*}
\begin{multline*}
\tfrac{\partial}{\partial t} \bigl( N(Z)^k \cdot {\bf B}^m_l(Z) \bigr)
= - \tfrac{(2k+2l+1)(l-m)}{l(2l-1)(2l+1)} N(Z)^k \cdot {\bf T}^m_{l-1}(Z)
- \tfrac{m(2k+2l+1)}{l(l-1)} N(Z)^k \cdot {\bf M}^m_{l-1}(Z)  \\
+ \tfrac{l(2k+2l+1)(l+m-1)}{(2l-1)(l-1)} N(Z)^k \cdot {\bf B}^m_{l-1}(Z)
+ \tfrac{2k(l-m)}{2l+1} N(Z)^{k-1} \cdot {\bf B}^m_{l+1}(Z),
\end{multline*}
\begin{multline*}
2 \tfrac{\partial}{\partial z_{11}} \bigl( N(Z)^k \cdot {\bf B}^m_l(Z) \bigr)
= - \tfrac{(2k+2l+1)(l-m)(l-m+1)}{l(2l-1)(2l+1)}
N(Z)^k \cdot {\bf T}^{m-1}_{l-1}(Z)  \\
- \tfrac{(2k+2l+1)(l-m)(l+m-1)}{l(l-1)} N(Z)^k \cdot {\bf M}^{m-1}_{l-1}(Z)
- \tfrac{l(2k+2l+1)(l+m-1)(l+m-2)}{(2l-1)(l-1)}
N(Z)^k \cdot {\bf B}^{m-1}_{l-1}(Z)  \\
+ \tfrac{2k(l-m)(l-m+1)}{2l+1} N(Z)^{k-1} \cdot {\bf B}^{m-1}_{l+1}(Z),
\end{multline*}
\begin{multline*}
2 \tfrac{\partial}{\partial z_{22}} \bigl( N(Z)^k \cdot {\bf B}^m_l(Z) \bigr)
= \tfrac{2k+2l+1}{l(2l-1)(2l+1)} N(Z)^k \cdot {\bf T}^{m+1}_{l-1}(Z)
- \tfrac{2k+2l+1}{l(l-1)} N(Z)^k \cdot {\bf M}^{m+1}_{l-1}(Z)  \\
+ \tfrac{l(2k+2l+1)}{(2l-1)(l-1)} N(Z)^k \cdot {\bf B}^{m+1}_{l-1}(Z)
- \tfrac{2k}{2l+1} N(Z)^{k-1} \cdot {\bf B}^{m+1}_{l+1}(Z);
\end{multline*}
\begin{align*}
\tfrac{\partial}{\partial t} {\bf C}^m_{k,l}(Z)
&= \tfrac{(2k+2l+1)(l+m)}{2l-1} {\bf C}^m_{k,l-1}(Z)
+ \tfrac{2k(l-m+1)}{2l+3} {\bf C}^m_{k-1,l+1}(Z),  \\
2 \tfrac{\partial}{\partial z_{11}} {\bf C}^m_{k,l}(Z)
&= -\tfrac{(2k+2l+1)(l+m)(l+m-1)}{2l-1} {\bf C}^{m-1}_{k,l-1}(Z)
+ \tfrac{2k(l-m+1)(l-m+2)}{2l+3} {\bf C}^{m-1}_{k-1,l+1}(Z),  \\
2 \tfrac{\partial}{\partial z_{22}} {\bf C}^m_{k,l}(Z)
&= \tfrac{2k+2l+1}{2l-1} {\bf C}^{m+1}_{k,l-1}(Z)
- \tfrac{2k}{2l+3} {\bf C}^{m+1}_{k-1,l+1}(Z).
\end{align*}

These formulas allow us to decompose $\mathring {\cal W}'^+_{res}$
and $\mathring {\cal W}'^-_{res}$ into irreducible components.

\begin{prop}  \label{W-indecomp-prop}
The $\mathfrak{sp}(4,\BB C)$-module $(\rho'_2, \mathring {\cal W}'^+_{res})$
is indecomposable and can be generated by a single generator of the form
${\bf M}^m_l(Z)$, $l \ge 1$, $-l \le m \le l$.
It also contains an irreducible submodule
\begin{equation}  \label{invar+-subspace}
\BB C\text{-span of } \bigl\{ {\bf C}^m_{k,l+1}(Z);\: k,l \ge 0 \bigr\}.
\end{equation}

Similarly, the $\mathfrak{sp}(4,\BB C)$-module
$(\rho'_2, \mathring {\cal W}'^-_{res})$ is indecomposable too and can be
generated by a single generator of the form
$N(Z)^{-(l+1)} \cdot {\bf M}^m_l(Z)$, $l \ge 1$, $-l \le m \le l$.
It also contains an irreducible submodule
\begin{equation}  \label{invar--subspace}
\BB C\text{-span of } \bigl\{ {\bf C}^m_{k,l+1}(Z);\:
k \le -(l+1),\: l \ge 0 \bigr\}.
\end{equation}
\end{prop}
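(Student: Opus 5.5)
The proof works $K$-type by $K$-type, in the spirit of Theorem \ref{rho-alpha-beta-thm} and Theorem \ref{U-K-type-decomp-thm}. It uses the explicit formulas above for $\partial_t$, $2\partial_{z_{11}}$, $2\partial_{z_{22}}$ acting on $N(Z)^k{\bf T}^m_l$, $N(Z)^k{\bf M}^m_l$, $N(Z)^k{\bf B}^m_l$ and ${\bf C}^m_{k,l}$. These give the action of $\rho'_2\bigl(\begin{smallmatrix} 0 & B\\ 0&0\end{smallmatrix}\bigr)$. The action of $\rho'_2\bigl(\begin{smallmatrix} 0&0\\ C&0\end{smallmatrix}\bigr)$ is obtained by conjugating with the inversion, using Lemma \ref{W'-inversion-lem} and \eqref{I-inversion}. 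It suffices to treat $\mathring{\cal W}'^+_{res}$: the inversion interchanges $\mathring{\cal W}'^+_{res}$ and $\mathring{\cal W}'^-_{res}$, maps $N(Z)^0{\bf M}^m_l$ to a multiple of $N(Z)^{-(l+1)}{\bf M}^{-m}_l$, and maps \eqref{invar+-subspace} onto \eqref{invar--subspace} by \eqref{I-inversion}.

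\emph{Invariance and irreducibility of \eqref{invar+-subspace}.} The ${\bf C}$-formulas show that the span of ${\bf C}^m_{k,l}$ over all $k\in\BB Z$, $l\ge 0$ is stable under the $B$-operators. By \eqref{I-inversion} it is also stable under the inversion, hence under the $C$-operators. The $A$-operators preserve each $K$-type. Inside $\mathring{\cal W}'^+_{res}$, the subfamily with $k\ge 0$, $l\ge 1$ is preserved by the $B$-operators. The coefficient $2k$ kills the passage from $k=0$ to $k-1$, and the coefficient $(2k+2l+1)(l+m)/(2l-1)$ at $l=1$ leads to ${\bf C}_{k,0}$. Since ${\bf C}^0_{k,0}$ is a combination of $N^{k-1}{\bf B}_1$ and $N^k{\bf T}_{-1}$, I expect it to vanish or to be excluded, and I need to check this. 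For the $C$-operators, the stability of $\mathring{\cal W}'^+_{res}$ together with the fact that it is the intersection with the ${\bf C}$-span gives invariance. Irreducibility follows the usual pattern. A nonzero invariant subspace contains some ${\bf C}^m_{k,l}$ (the $K$-types have multiplicity one within this family). Lowering $l$ via the $B$-operators with nonzero coefficients $(2k+2l+1)$ for $k\ge 0$ reaches ${\bf C}_{k,1}$. The inverted $C$-type operators lower $k$ down to ${\bf C}_{0,1}$. From ${\bf C}_{0,1}$ the $C$-operators (raising $k$, or $l$ with $k$ fixed) regenerate everything.

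\emph{Single generator and indecomposability.} Start from ${\bf M}^m_l$ with $k=0$, $l\ge 1$. The ${\bf M}$-formulas at $k=0$ give only lowering terms, producing $N^0{\bf T}_{l-1}$ and $N^0{\bf M}_{l-1}$, and eventually ${\bf T}^m_0$ and ${\bf M}_1$. The $C$-operators, computed through inversion, then raise $k$ and $l$ and reach all $N^k{\bf M}$, $N^k{\bf B}_{l+1}$ and ${\bf C}$. This uses the bases \eqref{W^+} and the multiplicity-one structure: the ${\bf C}$-part, together with the component of $N^k{\bf T}$ complementary to ${\bf C}$, spans each doubled $K$-type. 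Indecomposability then follows because every nonzero submodule contains the irreducible submodule \eqref{invar+-subspace}. To see this, take any $K$-type vector and apply the lowering $B$-operators. The ${\bf M}$- and ${\bf B}$-formulas produce ${\bf T}$-components whose ${\bf C}$-projections are nonzero, so eventually one lands in the ${\bf C}$-family. A module with a unique minimal submodule is indecomposable.

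\emph{Main obstacle.} The hard step is computing the $C$-action honestly, via inversion conjugation of the $B$-formulas, and tracking the coefficients $2k$, $2k+2l+1$ and $k+l+\dots$ that vanish on the boundary. This decides exactly which $K$-types are reachable from ${\bf M}^m_l$. I would first verify the boundary cases $k=0$ and $l=0,1$ explicitly, since that is where generation could fail.
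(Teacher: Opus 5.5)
Your plan follows the paper's route: read the $\rho'_2\bigl(\begin{smallmatrix}0&B\\0&0\end{smallmatrix}\bigr)$-action off the derivative formulas, get the $C$-part by conjugating with the inversion (Lemma \ref{W'-inversion-lem}, \eqref{I-inversion}), and generate from the multiplicity-one $K$-types ${\bf M}^m_l$. The gap is the boundary case you postponed, and it does not come out as you expect.

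Since there is no ${\bf T}_{-1}$, we have ${\bf C}^0_{k,0}(Z)=2kN(Z)^{k-1}{\bf B}^0_1(Z)=-2kN(Z)^{k-1}Z$, which is nonzero for $k\ge 1$. Concretely, ${\bf C}^0_{1,1}=2{\bf B}^0_2-5N(Z){\bf T}^0_0$ lies in \eqref{invar+-subspace}, and
\[
\tfrac{\partial}{\partial t}{\bf C}^0_{1,1}(Z)
=\begin{pmatrix}-6z_{11}&-18it\\-18it&-6z_{22}\end{pmatrix}
=10\,{\bf B}^0_1(Z)-4\,{\bf T}^0_1(Z).
\]
But the degree-one part of \eqref{invar+-subspace} is only the span of ${\bf C}^m_{0,2}=-5{\bf T}^m_1$, so this vector lies outside it.

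The $C$-side breaks too. Take $C=\bigl(\begin{smallmatrix}0&1\\1&0\end{smallmatrix}\bigr)$. Then $\rho'_2\bigl(\begin{smallmatrix}0&0\\C&0\end{smallmatrix}\bigr)$ sends the constant ${\bf T}^0_0=-\tfrac13{\bf C}^0_{0,1}$ to $-4tC-2iZ=\tfrac{4i}{3}{\bf T}^0_1+\tfrac{2i}{3}{\bf B}^0_1$.

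So the span \eqref{invar+-subspace} is not $\mathfrak{sp}(4,\BB C)$-invariant, and the check you deferred cannot succeed. Your intersection argument actually produces $\mathring{\cal W}'^+_{res}\cap\BB C\text{-span}\{{\bf C}^m_{k,l}\}$, which contains ${\bf C}^0_{k,0}$ for $k\ge1$. The paper's proof asserts invariance from the same formulas without examining this case, so the statement itself needs correcting here.

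The fix is to use the span of ${\bf C}^m_{k,l}$ with $k,l\ge 0$. This is \eqref{invar+-subspace} together with $N(Z)^k{\bf B}^0_1$, $k\ge0$ (compare \eqref{Res(BH+)}). Correspondingly, add $N(Z)^{k-1}{\bf B}^0_1$, $k\le -1$, to \eqref{invar--subspace}. With this change your argument goes through:
\begin{itemize}
\item The $B$-operators send ${\bf C}^0_{k,0}$ only to ${\bf C}^m_{k-1,1}$, since there is no ${\bf C}_{k,-1}$.
\item The conjugated $C$-operators send ${\bf C}_{k,l}$ to ${\bf C}_{k+1,l-1}$ and ${\bf C}_{k,l+1}$.
\item Irreducibility follows by descending to the constants ${\bf C}^m_{0,1}=-3{\bf T}^m_0$ and climbing back. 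Note that it is the second term of the $B$-operators (factor $2k$) that lowers $k$; the $C$-operators raise it.
\end{itemize}
Indecomposability is then immediate. Any nonzero polynomial can be differentiated down to a nonzero constant, and the constants form the single $K$-type ${\bf T}_0$. Hence every nonzero submodule contains this minimal submodule. Alternatively, as the paper tacitly uses, the generator ${\bf M}^m_l$ spans a multiplicity-one $K$-type and so lies in one summand of any decomposition.
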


\begin{proof}
From this description of the action of
$\rho'_2 \bigl( \begin{smallmatrix} 0 & B \\ 0 & 0 \end{smallmatrix} \bigr)$,
Lemma \ref{W'-inversion-lem} and equation \eqref{I-inversion}
we see that the subspaces \eqref{invar+-subspace} and \eqref{invar--subspace}
are indeed invariant and irreducible.

As was observed earlier, the $K$-types $N(Z)^k \cdot {\bf M}^m_l(Z)$,
$-l \le m \le l$, have multiplicity one.
It follows that a particular generator ${\bf M}^m_l(Z)$ with $l \ge 1$
generates all other $N(Z)^k \cdot {\bf M}^m_l(Z)$, $k, l \ge 0$,
as well as ${\bf C}^m_{k=0,l+1}(Z)$.
The latter $K$-types generate entire \eqref{invar+-subspace}.
Then the entire $\mathring {\cal W}'^+_{res}$ is generated.

The case of $(\rho'_2, \mathring {\cal W}'^-_{res})$ is similar.
\end{proof}

\begin{cor}  \label{Res-image-cor}
We have:
\begin{align*}
\Res \bigl( \partial^+({\cal BH}^+) + {\cal F}^+ \bigr) &=
\Res \bigl( \partial^+({\cal BH}^+) + {\cal G}^+ \bigr) =
\mathring {\cal W}'^+_{res},  \\
\Res \bigl( \partial^+({\cal BH}^-) + {\cal F}^- \bigr) &=
\Res \bigl( \partial^+({\cal BH}^-) + {\cal G}^- \bigr) =
\mathring {\cal W}'^-_{res}.
\end{align*}
\end{cor}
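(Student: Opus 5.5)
We already have the containments \eqref{containment+}, so it suffices to show that the image of each module is all of $\mathring {\cal W}'^+_{res}$; the statements about $\mathring {\cal W}'^-_{res}$ then follow by applying the inversion $\rho'_2\bigl(\begin{smallmatrix} 0 & 1 \\ 1 & 0 \end{smallmatrix}\bigr)$. The inversion commutes with $\Res$ and interchanges the $\pm$ modules, as recorded just before \eqref{containment+}. For the $+$ case the plan rests on Proposition \ref{W-indecomp-prop}: $\mathring {\cal W}'^+_{res}$ is generated by any single $K$-type vector ${\bf M}^m_l(Z)$ with $l \ge 1$. Since $\Res$ is an $\mathfrak{sp}(4,\BB C)$-intertwiner, its image is a submodule. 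Therefore it is enough to show that $\Res\bigl(\partial^+({\cal BH}^+) + {\cal F}^+\bigr)$ contains a nonzero element of the multiplicity-one $K$-type spanned by ${\bf M}^m_l(Z)$, $-l\le m\le l$, for some $l \ge 1$. The same must then be shown for ${\cal G}^+$.

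The natural candidate is degree-one. The ${\bf M}$-type $K$-types $N(Z)^k{\bf M}^m_l$ are the only $K$-types of $\mathring {\cal W}'^+_{res}$ isomorphic to $V_l$ in the ``middle'' position and have multiplicity one. Hence, if the image contains any element whose ${\bf M}^m_1$-component is nonzero, projecting onto $K$-types (using the $\mathfrak{u}(2)$-action and homogeneity degree) gives ${\bf M}^m_1$ itself. I will exhibit an explicit degree-one doubly left regular function in ${\cal F}^+$, for instance a linear function of $Z$ with values in $\BB S\odot\BB S$, which I identify with $\HC$-sym via $\BB S\otimes\BB S\simeq\BB S\otimes\BB S'$ using the map $\dagger$ from \eqref{dagger}. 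I will compute its restriction to $\HC$-sym and check that the result is not a combination of ${\bf T}^m_1$ and ${\bf B}^m_{2}$ alone, that is, that it has a nonzero ${\bf M}$-component. It may be cleaner to use ${\bf M}^m_l$ with larger $l$ or higher degree, but degree one should suffice, since $V_1$ occurs in ${\cal F}^+$ in degree one. For ${\cal G}^+$ I will use the analogous degree-one doubly right regular function, or invoke the symmetry $Z\mapsto Z^T$, which fixes $\HC$-sym pointwise and interchanges left and right structures.

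The main obstacle is the explicit check that the restriction has a nonzero ${\bf M}$-component, together with the correct identification of the embedding of ${\cal F}^+$ in ${\cal M}^+\subset{\cal W}'$ from \cite{ATMP}. I need the concrete form of doubly regular functions viewed as $\HC$-valued, and must take their symmetric part. If this becomes messy, the fallback is a dimension/character argument. The image of $\partial^+({\cal BH}^+)$ alone is a proper submodule, presumably \eqref{invar+-subspace}, which contains no ${\bf M}$-types. The image of $\partial^+({\cal BH}^+)+{\cal F}^+$ is a strictly larger submodule once some element of ${\cal F}^+$ restricts outside \eqref{invar+-subspace}. By Proposition \ref{W-indecomp-prop}, any submodule strictly containing \eqref{invar+-subspace} and containing an ${\bf M}$-type vector is everything. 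Injectivity of the composite ${\cal F}^+\to \mathring{\cal W}'^+_{res}/\eqref{invar+-subspace}$ follows from the irreducibility of the quotients \eqref{2-reg-quotients} under $\mathfrak{sp}(4,\BB C)$, once a single nonvanishing element is found.
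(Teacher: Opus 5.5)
Your reduction matches the paper's. You use the containments \eqref{containment+}, you use the single-generator property of Proposition~\ref{W-indecomp-prop} (so it suffices to put one ${\bf M}^m_l(Z)$ with $l\ge 1$ into the image), and you get the $\mathring{\cal W}'^-_{res}$ case by inversion.

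What is missing is the one substantive step: an actual element of $\partial^+({\cal BH}^+)+{\cal F}^+\subset{\cal W}'$ (and likewise for ${\cal G}^+$) whose restriction has a nonzero ${\bf M}$-component. The candidate you describe is not such an element.
\begin{itemize}
\item ${\cal F}^+$ is only a subquotient of ${\cal M}^+$. Its realization in ${\cal W}'$ is not obtained by relabeling the values of $\BB S\odot\BB S$-valued functions.
\item There is a degree shift. The element $\bigl(\begin{smallmatrix} I & 0 \\ 0 & -I \end{smallmatrix}\bigr)\in\mathfrak{sp}(4,\BB C)$ acts on functions homogeneous of degree $d$ by $-2(d+2)$ under $\pi_{dl}$ (Lemma~\ref{dr-action-lem}), but by $-2(d+1)$ under $\rho'_2$ (Lemma~\ref{rho'_2-action-lem}). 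So the degree-one part of the realization in ${\cal W}'$ corresponds to the \emph{constant} doubly regular functions, and a linear $\BB S\odot\BB S$-valued function lives in degree two.
\item The map $\dagger$ identifies $\BB S\odot\BB S$ with the traceless matrices, not with $\HC\text{-sym}$.
\end{itemize}
Hence the function you would restrict is not known to lie in $\partial^+({\cal BH}^+)+{\cal F}^+$, and computing its ${\bf M}$-component proves nothing. Also, in degree one the complement of the ${\bf M}^m_1$ type is spanned by ${\bf T}^m_1$ and ${\bf B}^0_1$; ${\bf B}^m_2$ has degree two.

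The paper supplies this step from \cite{ATMP} (Proposition 42 and Theorem 49). There ${\cal F}^+$ and ${\cal G}^+$ are generated by the degree-one elements $\tilde F_{\frac12,m,-\frac12}$ and $\tilde G_{\frac12,-\frac12,n}$. Their restrictions are multiples of $F_{-1},F_0,F_1$ in \eqref{2-reg-gens}, which are exactly ${\bf M}^m_1(Z)$, so no $K$-type projection is even needed.

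Your fallback does not get around this.
\begin{itemize}
\item $\Res\bigl(\partial^+({\cal BH}^+)\bigr)$ is not \eqref{invar+-subspace}. By \eqref{Res(BH+)} it also contains $N(Z)^k{\bf B}^0_1(Z)$, and that description is itself derived in Theorem~\ref{W'-decomp-thm} from the present corollary.
\item Consequently your composite from ${\cal F}^+$ to the quotient of $\mathring{\cal W}'^+_{res}$ by \eqref{invar+-subspace} is not well defined.
\item An element of the image lying outside \eqref{invar+-subspace} yields no ${\bf M}^m_l$ with $l\ge1$. Such an element can come from $\partial^+({\cal BH}^+)$ itself, whose restriction strictly contains \eqref{invar+-subspace} yet contains no such $K$-type.
\end{itemize}
In every version you still need one explicit element of the lifted ${\cal F}^+$ (resp.\ ${\cal G}^+$) with nonzero restriction in the degree-one $V_1$ $K$-type. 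Once you have it, your multiplicity-one argument finishes, since ${\bf M}^m_1$ is the only copy of $V_1$ in degree one.

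Your transpose trick $F(Z)\mapsto F(Z^T)^T$ for passing from ${\cal F}^+$ to ${\cal G}^+$ is plausible: it commutes with $\rho'_2$ on $Sp(4,\BB C)$ and with $\Res$. You would still have to check that it carries $\partial^+({\cal BH}^+)+{\cal F}^+$ onto $\partial^+({\cal BH}^+)+{\cal G}^+$.
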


\begin{proof}
By Proposition 42 and Theorem 49 in \cite{ATMP},
${\cal F}^+$ can be generated by a single generator of the form
$\tilde F_{l=\frac12,m,n=-\frac12}(Z)$, $m=-\tfrac32,-\tfrac12,\tfrac12$,
and ${\cal G}^+$ can be generated by a single generator of the form
$\tilde G_{l=\frac12,m=-\frac12,n}(Z)$, $n=-\tfrac32,-\tfrac12,\tfrac12$.
When restricted to the symmetric quaternions, these functions produce
respectively scalar multiples of
\begin{equation}  \label{2-reg-gens}
F_{-1}= \begin{pmatrix} 2it & z_{22} \\ z_{22} & 0 \end{pmatrix}, \qquad
F_0= \begin{pmatrix} z_{11} & 0 \\  0 & -z_{22} \end{pmatrix}, \qquad
F_1= \begin{pmatrix} 0 & z_{11} \\ z_{11} & 2it \end{pmatrix} \qquad
\in \mathring {\cal W}'_{res}.
\end{equation}
Since
\[
R^{-1}_1(Z) = \tfrac12 z_{22}, \qquad R^0_1(Z) = t, \qquad R^1_1(Z) = -z_{11},
\]
up to scalar multiples, $F_{-1}$, $F_0$, $F_1$ are respectively the generators
${\bf M}^m_{l=1}(Z)$, $-1 \le m \le 1$.
Then the result follows from \eqref{containment+} and
Proposition \ref{W-indecomp-prop}.

The case of $\partial^+({\cal BH}^-) + {\cal F}^-$ and 
$\partial^+({\cal BH}^-) + {\cal G}^-$ follows by applying the inversion.
\end{proof}

\begin{thm}  \label{W'-decomp-thm}
The quotient
\begin{equation}  \label{W-quot-iso-2-reg+}
\bigl(\rho'_2, \mathring {\cal W}'^+_{res} /
\Res \bigl( \partial^+({\cal BH}^+) \bigr)\bigr)
\end{equation}
is irreducible and isomorphic to
$(\pi_{dl},{\cal F}^+) \simeq (\pi_{dr},{\cal G}^+)$
as $\mathfrak{sp}(4,\BB C)$-modules.

The $\mathfrak{sp}(4,\BB C)$-module $(\rho'_2, \mathring {\cal W}'^+_{res})$
contains an irreducible submodule \eqref{invar+-subspace}
as well as an infinite family of invariant one-dimensional subspaces spanned by
\[
N(Z)^k \cdot {\bf M}^{m=0}_{l=0}(Z),\qquad k \ge 0.
\]
The quotient
\[
\mathring {\cal W}'^+_{res} /
\BB C\text{-span of } \bigl\{ {\bf C}^m_{k,l+1}(Z);\: k,l \ge 0 \bigr\}
\]
contains another infinite family of invariant one-dimensional subspaces
spanned by
\[
N(Z)^{k'} \cdot {\bf B}^{m=0}_{l=1}(Z), \qquad k' \ge 0.
\]
The image of $\partial^+({\cal BH}^+)$ under the restriction to
the symmetric quaternions is
\begin{equation}  \label{Res(BH+)}
\Res \bigl( \partial^+({\cal BH}^+) \bigr)
= \BB C\text{-span of } \bigl\{ {\bf C}^m_{k,l+1}(Z),\:
N(Z)^k \cdot {\bf M}^0_0(Z),\: N(Z)^k \cdot {\bf B}^0_1(Z);\:
k,l \ge 0 \bigr\}.
\end{equation}

Similarly, the quotient
\begin{equation}  \label{W-quot-iso-2-reg-}
  \bigl(\rho'_2, \mathring {\cal W}'^-_{res} /
  \Res \bigl( \partial^+({\cal BH}^-) \bigr)\bigr)
\end{equation}
is irreducible and isomorphic to
$(\pi_{dl},{\cal F}^-) \simeq (\pi_{dr},{\cal G}^-)$
as $\mathfrak{sp}(4,\BB C)$-modules.

The $\mathfrak{sp}(4,\BB C)$-module
$(\rho'_2, \mathring {\cal W}'^-_{res})$
contains an irreducible submodule \eqref{invar--subspace}
as well as an infinite family of invariant one-dimensional subspaces spanned by
\[
N(Z)^k \cdot {\bf M}^{m=0}_{l=0}(Z),\qquad k \le -1.
\]
The quotient
\[
\mathring {\cal W}'^-_{res} / \BB C\text{-span of } \bigl\{ {\bf C}^m_{k,l+1}(Z)
;\:\ k \le -(l+1),\: l \ge 0 \bigr\}
\]
contains another infinite family of invariant one-dimensional subspaces
spanned by
\[
N(Z)^{k'-1} \cdot {\bf B}^{m=0}_{l=1}(Z), \qquad k' \le -1.
\]
The image of $\partial^+({\cal BH}^-)$ under the restriction to
the symmetric quaternions is
\begin{equation}  \label{Res(BH-)}
\Res \bigl( \partial^+({\cal BH}^-) \bigr)
= \BB C\text{-span of } \Bigl\{ {\bf C}^m_{k,l+1}(Z),\:
N(Z)^{k'} \cdot {\bf M}^0_0(Z),\: N(Z)^{k'-1} \cdot {\bf B}^0_1(Z);\:
\begin{smallmatrix} k \le -(l+1), \\ k' \le -1,\: l \ge 0 \end{smallmatrix}
\Bigr\}.
\end{equation}
\end{thm}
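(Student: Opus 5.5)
We work with $\mathring {\cal W}'^+_{res}$ and then transport everything to $\mathring {\cal W}'^-_{res}$ by the inversion, using Lemma \ref{W'-inversion-lem} and \eqref{I-inversion}. All submodule statements are read off from the $K$-type basis $N(Z)^k{\bf M}^m_l$, $N(Z)^k{\bf B}^m_{l+1}$, ${\bf C}^m_{k,l+1}$ ($k,l\ge0$) of $\mathring {\cal W}'^+_{res}$. We use the explicit formulas for $\rho'_2\bigl(\begin{smallmatrix} 0 & B \\ 0 & 0 \end{smallmatrix}\bigr)$ listed before Proposition \ref{W-indecomp-prop}. For $\bigl(\begin{smallmatrix} 0 & 0 \\ C & 0 \end{smallmatrix}\bigr)$ we conjugate by the inversion.

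\textbf{One-dimensional pieces.} First, $N(Z)^k{\bf M}^0_0$ spans an invariant line. Since ${\bf M}^0_0=\bigl(\begin{smallmatrix} 0&0\\0&0\end{smallmatrix}\bigr)$ is degenerate, I expect ${\bf M}^0_0$ to be a multiple of the constant antisymmetric-looking piece. More precisely, setting $l=0$ in the ${\bf M}$-formulas, the $\partial$-derivatives have every coefficient carrying a factor $m$ or $l$ or $(l+m)$, which vanish, except possibly the $N^{k-1}{\bf M}_1$ and ${\bf B}_1$ terms. These I must check vanish identically as functions. Lowering via $C$ is obtained through inversion, which sends $N^k{\bf M}^0_0$ to $N^{-k-1}{\bf M}^0_0$, and this lies in $\mathring {\cal W}'^-_{res}$. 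Hence the $C$-action on $N^k{\bf M}^0_0$ is the inverted $B$-action and again vanishes. Similarly, modulo \eqref{invar+-subspace}, the ${\bf B}$-formulas at $l=1$, $m=0$ produce only ${\bf T}^0_0$ terms. By the definition ${\bf C}^0_{k,1}=2kN^{k-1}{\bf B}^0_2-(2k+3)N^k{\bf T}^0_0$, these ${\bf T}$ terms combine into ${\bf C}$'s, which gives invariance of $N^{k'}{\bf B}^0_1$ in the quotient.

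\textbf{Identify $\Res(\partial^+({\cal BH}^+))$.} By Corollary \ref{Res-image-cor}, $\Res$ maps $\partial^+({\cal BH}^+)+{\cal F}^+$ onto $\mathring {\cal W}'^+_{res}$. Hence the quotient $\mathring {\cal W}'^+_{res}/\Res(\partial^+({\cal BH}^+))$ is a quotient of ${\cal F}^+$ (since $(\partial^+({\cal BH}^+)+{\cal F}^+)/\partial^+({\cal BH}^+)\simeq{\cal F}^+$). By Proposition \ref{nreg-irreducibility-prop}, ${\cal F}^+$ is $\mathfrak{sp}(4,\BB C)$-irreducible, so this quotient is either zero or isomorphic to ${\cal F}^+$. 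It is nonzero because ${\bf M}^m_1$ (the restricted generators \eqref{2-reg-gens}) will not lie in the span \eqref{Res(BH+)}. It therefore suffices to prove \eqref{Res(BH+)}.

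For "$\supseteq$": restrict explicit elements of $\partial^+({\cal BH}^+)$ (for instance $\partial^+$ of $N(Z)^{k+1}$-type biharmonic functions and the lowest $K$-type of $\partial^+({\cal BH}^+)$ from \cite{ATMP}) to symmetric quaternions. I would identify them as nonzero multiples of ${\bf M}^0_0$, ${\bf B}^0_1$ and a ${\bf C}$; invariance of the lines and irreducibility of \eqref{invar+-subspace} then give everything. For "$\subseteq$": the $K$-types of ${\cal F}^+$ restricted to $\mathfrak{u}(2)$ must match exactly the complement, namely $N^k{\bf M}^m_l$ with $l\ge1$ and $N^k{\bf B}^m_{l+1}$ with $l\ge1$. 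I would verify this by comparing characters: ${\cal F}^+\simeq W^+_2$ by Theorem \ref{n-reg-iso-thm}, whose $K=GL(2)\times GL(2)$ types $V_l\boxtimes V_{l+1}$ restrict to the diagonal $U(2)$ via Clebsch–Gordan. A dimension-by-degree count then forces equality.

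\textbf{Main obstacle.} The hard step is the bookkeeping in this $K$-type/character match, together with the verification that the degenerate $l=0$/$l=1$ coefficients really vanish. I expect the character comparison by homogeneity degree to settle both inclusions.
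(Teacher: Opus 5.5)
Your opening reduction matches the paper's. By Corollary~\ref{Res-image-cor}, the quotient $Q=\mathring{\cal W}'^+_{res}/\Res(\partial^+({\cal BH}^+))$ is a quotient of ${\cal F}^+$, so by Proposition~\ref{nreg-irreducibility-prop} it is either $0$ or isomorphic to ${\cal F}^+$.

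\textbf{The gap is in excluding $Q=0$, and the argument is circular.}
\begin{itemize}
\item You justify $Q\neq 0$ by saying the ${\bf M}^m_1$ are not in the span \eqref{Res(BH+)}, i.e.\ by the equality you have not yet proved.
\item Your proof of the inclusion of $\Res(\partial^+({\cal BH}^+))$ into the span on the right of \eqref{Res(BH+)} is a character count. That count forces equality only once $Q\simeq{\cal F}^+$ is known. If $Q=0$, then $\Res(\partial^+({\cal BH}^+))=\mathring{\cal W}'^+_{res}$ and the count says nothing.
\end{itemize}
$K$-type bookkeeping alone cannot break the circle. The degree-one part of $\partial^+({\cal BH}^+)$ is $\partial^+$ of all quadratic polynomials, and it contains a $V_1$-isotypic component. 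A priori this component could restrict onto the span of the ${\bf M}^m_1$; by Proposition~\ref{W-indecomp-prop}, $\Res(\partial^+({\cal BH}^+))$ would then be everything.

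The paper supplies this missing input from Corollary~43 of \cite{ATMP}: the $K$-types ${\bf M}^m_l$ and $N(Z)^{-l-1}\cdot{\bf M}^m_l$, $l\ge 1$, do not lie in $\Res(\partial^+({\cal BH}^+))\oplus\Res(\partial^+({\cal BH}^-))$. You need that citation, or an explicit degree-one computation showing that $\Res\circ\partial^+$ kills that $V_1$ component. Once $Q\simeq{\cal F}^+$ is established, your Clebsch--Gordan comparison finishes the proof. It is the paper's count $d(d+2)$, with degrees shifted by one. It does require that you first know the span on the right of \eqref{Res(BH+)} is contained in $\Res(\partial^+({\cal BH}^+))$.

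\textbf{Your plan for that containment fails for the trivial $K$-types.} The line through $N(Z)^k\cdot{\bf B}^0_1$ is invariant only modulo \eqref{invar+-subspace}. The submodule it generates is contained in that line plus \eqref{invar+-subspace}, so different $k$ are not linked. Finitely many explicit restrictions plus ``invariance of the lines'' therefore cannot reach every $k\ge 0$.

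The paper avoids this as follows:
\begin{itemize}
\item ${\cal F}^+\simeq W^+_2$ has no $SU(2)$-trivial $K$-types; your own computation $V_l\otimes V_{l+1}=V_{2l+1}\oplus\dots\oplus V_1$ shows this. Hence every trivial $K$-type of $\mathring{\cal W}'^+_{res}$ maps to zero in $Q$, i.e.\ lies in $\Res(\partial^+({\cal BH}^+))$.
\item Since \eqref{invar+-subspace} is irreducible, one nonzero element of its intersection with $\Res(\partial^+({\cal BH}^+))$ suffices. The paper uses the restriction of $\tilde H_{\frac l2,\frac l2,\frac l2}$, which is a multiple of ${\bf C}^{-l-1}_{0,l+1}$.
\end{itemize}

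\textbf{Drop the guess that ${\bf M}^0_0$ is an antisymmetric constant.}
\begin{itemize}
\item $\mathring{\cal W}'_{res}$ is $\HC$-sym valued.
\item The ${\bf M}$-formulas for $\rho'_2\bigl(\begin{smallmatrix} 0 & B\\ 0 & 0\end{smallmatrix}\bigr)$ divide by $l$ and cannot be evaluated at $l=0$.
\item The defining formula gives ${\bf M}^0_0=0$, because $R^1_0=0$ and the other entries carry a factor $m$ or $l+m$.
\item Consistently, the degree-zero part of $\mathring{\cal W}'^+_{res}$ is spanned by the ${\bf T}^m_0$. The only $SU(2)$-trivial $K$-types of $\mathring{\cal W}'^+_{res}$ are $N(Z)^k\cdot{\bf B}^0_1=-N(Z)^k Z$, in odd degrees.
\end{itemize}
So there is nothing to check for that family.
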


\begin{proof}
By Corollary 43 in \cite{ATMP}, the $K$-types ${\bf M}^m_l(Z)$ and
$N(Z)^{-l-1} \cdot {\bf M}^m_l(Z)$, $l \ge 1$, are not in
$\Res \bigl( \partial^+({\cal BH}^+) \bigr) \oplus
\Res \bigl( \partial^+({\cal BH}^-) \bigr)$.
Then Corollary \ref{Res-image-cor} together with
Proposition \ref{nreg-irreducibility-prop} imply
that we have isomorphisms of irreducible $\mathfrak{sp}(4,\BB C)$-modules
\begin{align}
\bigl(\rho'_2, \mathring {\cal W}'^+_{res} /
\Res \bigl( \partial^+({\cal BH}^+) \bigr)\bigr)
&\simeq (\pi_{dl},{\cal F}^+) \simeq (\pi_{dr},{\cal G}^+),  \label{iso+}  \\
\bigl(\rho'_2, \mathring {\cal W}'^+_{res} /
\Res \bigl( \partial^+({\cal BH}^-) \bigr)\bigr)
&\simeq (\pi_{dl},{\cal F}^-) \simeq (\pi_{dr},{\cal G}^-).
\end{align}

The statements about the irreducible invariant subspaces
\eqref{invar+-subspace} and \eqref{invar--subspace} were established
in Proposition \ref{W-indecomp-prop}.

The statements about the infinite families of invariant one-dimensional
subspaces follow from our description of the action of
$\rho'_2 \bigl( \begin{smallmatrix} 0 & B \\ 0 & 0 \end{smallmatrix} \bigr)$,
Lemma \ref{W'-inversion-lem} and equation \eqref{I-inversion}.

It remains to establish \eqref{Res(BH+)} and \eqref{Res(BH-)}.
By Proposition 42 and Corollary 43 in \cite{ATMP}, $\partial^+({\cal BH}^+)$
contains elements $\tilde H_{\frac{l}2,\frac{l}2,\frac{l}2}$ proportional to
$\bigl(\begin{smallmatrix} (z_{22})^l & 0 \\ 0 & 0 \end{smallmatrix}\bigr)$.
Then
$\Res \bigl(\begin{smallmatrix} (z_{22})^l & 0 \\ 0 & 0 \end{smallmatrix}\bigr)$
is proportional to ${\bf C}^{-l-1}_{0,l+1}(Z)$.
By Proposition \ref{W-indecomp-prop},
$\Res \bigl( \partial^+({\cal BH}^+) \bigr)$ contains the irreducible submodule
\eqref{invar+-subspace}.
On the other hand, by Lemma 48 and Theorem 49 in \cite{ATMP}, the quotients
\eqref{2-reg-quotients} cannot contain trivial one-dimensional
$SU(2)$-invariant subspaces.
Then Corollary \ref{Res-image-cor} implies that
$\Res \bigl( \partial^+({\cal BH}^+) \bigr)$ contains the submodule
\[
M^+ = \BB C\text{-span of } \bigl\{ {\bf C}^m_{k,l+1}(Z),\:
N(Z)^k \cdot {\bf M}^0_0(Z),\: N(Z)^k \cdot {\bf B}^0_1(Z);\:
k,l \ge 0 \bigr\}.
\]

Next, consider the quotient
$\bigl(\rho'_2, \mathring {\cal W}'^+_{res} / M^+ \bigr)$
which has $K$-types spanned by
\[
N(Z)^k \cdot {\bf M}^m_l(Z),\quad
N(Z)^k \cdot {\bf B}^m_{l+1}(Z),\qquad k \ge 0,\: l \ge 1.
\]
For a non-negative integer $d$, let
\begin{align*}
  \bigl(\mathring {\cal W}'^+_{res} / M^+ \bigr)(d)
  &= \bigl\{ F \in \mathring {\cal W}'^+_{res} / M^+;\:
  \text{$F$ is homogeneous of degree $d$} \bigr\},  \\
{\cal F}^+(d) &=
\{ f \in {\cal F}^+;\: \text{$f$ is homogeneous of degree $d$} \},  \\
{\cal G}^+(d) &=
\{ g \in {\cal G}^+;\: \text{$g$ is homogeneous of degree $d$} \}.
\end{align*}
A simple dimension count shows that
\[
\dim \bigl(\mathring {\cal W}'^+_{res} / M^+ \bigr)(d)
= \dim {\cal F}^+(d) = \dim {\cal G}^+(d) = d(d+2).
\]
Then \eqref{iso+} implies
$M^+ = \Res \bigl( \partial^+({\cal BH}^+) \bigr)$
and establishes \eqref{Res(BH+)}.

The case of \eqref{Res(BH-)} is similar.
\end{proof}

\begin{cor}  \label{2reg-generators-cor}
Each of the images of ${\bf M}^m_l(Z)$ and $N(Z)^{-l-1} \cdot {\bf M}^m_l(Z)$,
$l \ge 1$, $-l \le m \le l$, in the respective quotients
\eqref{W-quot-iso-2-reg+} and \eqref{W-quot-iso-2-reg-}
generates the irreducible component corresponding to the doubly regular
functions.
\end{cor}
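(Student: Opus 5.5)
The plan is to combine the irreducibility of the quotients, established in Theorem \ref{W'-decomp-thm}, with the fact that the $K$-types ${\bf M}^m_l(Z)$, $l \ge 1$, are not killed in those quotients. I treat the case of \eqref{W-quot-iso-2-reg+} first. The argument for \eqref{W-quot-iso-2-reg-} is obtained by applying the inversion $\rho'_2 \bigl(\begin{smallmatrix} 0 & 1 \\ 1 & 0 \end{smallmatrix}\bigr)$. By Lemma \ref{W'-inversion-lem}, this inversion sends ${\bf M}^m_l(Z)$ to a nonzero multiple of $N(Z)^{-l-1}\cdot{\bf M}^{-m}_l(Z)$. It also interchanges $\mathring {\cal W}'^+_{res}$ with $\mathring {\cal W}'^-_{res}$, and $\Res(\partial^+({\cal BH}^+))$ with $\Res(\partial^+({\cal BH}^-))$.

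The first step is to check that the image of ${\bf M}^m_l(Z)$ in \eqref{W-quot-iso-2-reg+} is nonzero for $l \ge 1$. This follows from the explicit description \eqref{Res(BH+)}. That subspace is spanned by the ${\bf C}^m_{k,l+1}(Z)$, the $N(Z)^k{\bf M}^0_0(Z)$ and the $N(Z)^k{\bf B}^0_1(Z)$. The functions $N(Z)^k\cdot{\bf T}$, $N(Z)^k\cdot{\bf M}$ and $N(Z)^k\cdot{\bf B}$ form a basis of $\mathring {\cal W}'_{res}$, and no element of the span involves a ${\bf M}^m_l$ with $l \ge 1$. Hence ${\bf M}^m_l(Z) \notin \Res(\partial^+({\cal BH}^+))$. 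The same fact was also invoked directly via Corollary 43 of \cite{ATMP} in the proof of Theorem \ref{W'-decomp-thm}.

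The second step is formal. Theorem \ref{W'-decomp-thm} states that the quotient \eqref{W-quot-iso-2-reg+} is irreducible and isomorphic to $(\pi_{dl},{\cal F}^+) \simeq (\pi_{dr},{\cal G}^+)$. In an irreducible $\mathfrak{sp}(4,\BB C)$-module, any nonzero vector generates the whole module. So the image of ${\bf M}^m_l(Z)$ generates the component corresponding to the doubly regular functions. Alternatively, one can argue without the irreducibility statement. Proposition \ref{W-indecomp-prop} shows that ${\bf M}^m_l(Z)$ generates all of $\mathring {\cal W}'^+_{res}$, and therefore its image generates the full quotient.

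The only point needing care is the first step, namely the linear independence of ${\bf M}^m_l$ from the spanning set \eqref{Res(BH+)}. I expect this to be a straightforward inspection of $K$-types. For fixed $k$ and $l$, the span of the ${\bf M}$-type functions is a $U(2)$-isotypic piece of multiplicity one. Its only overlap with the generators of \eqref{Res(BH+)} occurs at $l=0$, which is excluded. With this check done, the corollary follows.
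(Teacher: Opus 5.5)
Your proposal is correct and follows the paper's reasoning. The corollary is read off from Theorem \ref{W'-decomp-thm}: the quotients are irreducible, and the $K$-types ${\bf M}^m_l(Z)$ and $N(Z)^{-l-1}\cdot{\bf M}^m_l(Z)$ with $l \ge 1$ lie outside $\Res\bigl(\partial^+({\cal BH}^{\pm})\bigr)$, which the paper takes from Corollary 43 of \cite{ATMP} and you equivalently read off from \eqref{Res(BH+)}; the second case then follows by the inversion, as you describe.
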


\subsection{The Dual of $(\rho'_2, \mathring {\cal W}'_{res})$}

The dual space of $(\rho'_2, {\cal W}'_{res})$ consists of
polynomial-like functions on $\HC^{\times} \text{-sym}$ involving
half-integer powers of $N(Z)$:
\[
{\cal W}^*_{res} =
\HC \otimes N(Z)^{\frac12} \cdot \BB C [z_{11}, z_{12}, z_{22}, N(Z)^{-1}].
\]
The Lie algebra $\mathfrak{sp}(2,\BB C)$ acts on this space by
differentiating the following group action:
\[
\rho^*_2(h): \: G(Z) \: \mapsto \: \bigl( \rho^*_2(h)G \bigr)(Z) =
\frac{(cZ+d)^{-1}}{N(cZ+d)} \cdot G \bigl( (aZ+b)(cZ+d)^{-1} \bigr)
\cdot (a'-Zc')^{-1},
\]
where $G \in {\cal W}^*_{res}$,
$h = \bigl(\begin{smallmatrix} a' & b' \\ c' & d' \end{smallmatrix}\bigr)
\in Sp(4,\BB C)$ and 
$h^{-1} = \bigl(\begin{smallmatrix} a & b \\ c & d \end{smallmatrix}\bigr)$.

\begin{prop}  \label{W_res-pairing-prop}
There is a non-degenerate bilinear pairing between the
$\mathfrak{sp}(2,\BB C)$-modules $(\rho'_2, {\cal W}'_{res})$
and $(\rho^*_2, {\cal W}^*_{res})$
\[
\langle F, G \rangle_{{\cal W}_{res}} = 
\frac i{8\pi^2} \int_{\widetilde{\Gamma}} \tr \bigl(F(Z) \cdot G(Z)\bigr) \,dZ^3,
\qquad F \in {\cal W}'_{res}, \quad G \in {\cal W}^*_{res},
\]
where $R>0$. This bilinear pairing is $\mathfrak{sp}(4,\BB C)$-invariant and
independent of the choice of $R>0$.
\end{prop}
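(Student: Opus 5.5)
The plan is to reduce the statement to the invariant map $(\rho_M,M)\to\BB C$ of Corollary \ref{M-integral-cor}, exactly as was done for the pairing \eqref{half-integer-pairing}. (The phrase about $R>0$ is vestigial: the integral is over $\widetilde{\Gamma}$, so independence of $R$ is automatic.)

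First, show that the map $F\otimes G \mapsto \tr(F(Z)G(Z))$ is $\mathfrak{sp}(4,\BB C)$-equivariant from $(\rho'_2,{\cal W}'_{res})\otimes(\rho^*_2,{\cal W}^*_{res})$ to $(\rho_M,M)$. This is easiest at the group level. For $h\in Sp(4,\BB R)$,
\[
\tr\bigl((\rho'_2(h)F)(\rho^*_2(h)G)\bigr)
= \tr\Bigl(\tfrac{a'-Zc'}{N(a'-Zc')}\, F(\tilde Z)\,\tfrac{cZ+d}{N(cZ+d)}\cdot\tfrac{(cZ+d)^{-1}}{N(cZ+d)}\, G(\tilde Z)\,(a'-Zc')^{-1}\Bigr).
\]
The inner factors $cZ+d$ and $(cZ+d)^{-1}$ cancel, and cyclicity of the trace removes the conjugation by $a'-Zc'$. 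What remains is
\[
\tr\bigl(F(\tilde Z)G(\tilde Z)\bigr)\big/\bigl(N(cZ+d)^2\,N(a'-Zc')\bigr),
\]
which is $\rho_{2,1}(h)$ applied to $\tr(FG)$. Since $\alpha+\beta=3$, this is the action $\rho_M$, by Lemma \ref{rho-alpha-beta-action}.

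On the function level, $F$ has integer powers of $N(Z)$ and $G$ has half-integer powers, so $\tr(FG)$ lies in $N(Z)^{\frac12}\BB C[z_{11},z_{12},z_{22},N(Z)^{-1}]=M$. One can also check equivariance directly at the Lie algebra level using Lemma \ref{rho'_2-action-lem} and the derivative of $\rho^*_2$. In that check the matrix terms $AF+FA^T$, $-ZCF-FCZ$ cancel against the corresponding terms from $\rho^*_2$ under the trace. The scalar terms must sum to $3\tr A$ and $3\tr(CZ)$, which is the main bookkeeping to verify.

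Composing with the invariant functional of Corollary \ref{M-integral-cor} then gives an invariant pairing. It equals the stated integral $\frac{i}{8\pi^2}\int_{\widetilde\Gamma}\tr(FG)\,dZ^3$.

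For non-degeneracy, the plan is to use $K$-type bases and the orthogonality relations derived from \eqref{R-orthogonality}. By Corollary \ref{M-integral-cor} and the lemma before it, the functional picks out the coefficient of $N(Z)^{-\frac32}R^0_0$. Expand $F=\sum N(Z)^k R^m_l(Z)E_{ij}$ over a basis of matrix units, and $G$ similarly with half-integer $k$. Then
\[
\bigl\langle N^kR^m_lE_{ij},\; N^{-k'-l'-\frac32}R^{-m'}_{l'}E_{ji}\bigr\rangle
\]
is nonzero exactly when $k=k'$, $l=l'$, $m=m'$, and all other pairings vanish. The reason is that the pairing reduces to $\langle N^kR^m_l,\,N^{-k'-l'-\frac32}R^{-m'}_{l'}\rangle_M$, which the previous lemma computes as $\frac{(-1)^m}{2l+1}\delta\delta\delta$.

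This gives dual bases $\{N^kR^m_lE_{ij}\}$ and $\{(-1)^m(2l+1)N^{-k-l-\frac32}R^{-m}_lE_{ji}\}$. Since both lists run over all indices and form bases of ${\cal W}'_{res}$ and ${\cal W}^*_{res}$ respectively (Proposition \ref{basis-prop}), the pairing is non-degenerate on both sides.

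The main obstacle is the scalar bookkeeping in the equivariance check, ensuring the total weight is exactly $\alpha+\beta=3$. The group-level computation above settles this cleanly, provided one is careful that $\rho^*_2$ carries the factor $N(cZ+d)^{-1}$ and no norm factor on $(a'-Zc')^{-1}$.
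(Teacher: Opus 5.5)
Your proposal is correct and is essentially the argument the paper intends: the paper states this proposition without a separate proof, but it follows the same pattern as the pairing \eqref{half-integer-pairing} and the pairing $\langle\,,\rangle_M$. Under the trace the automorphy factors combine to $N(cZ+d)^{-2}N(a'-Zc')^{-1}$, so $\tr(FG)$ lands equivariantly in $(\rho_M,M)$ with $\alpha+\beta=3$; composing with Corollary \ref{M-integral-cor} gives the integral, and the orthogonality relations produce dual bases, which gives non-degeneracy. You are also right that the mention of $R>0$, like $\mathfrak{sp}(2,\BB C)$ in place of $\mathfrak{sp}(4,\BB C)$, is a leftover typo.
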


\begin{rem}
The dual of the $\mathfrak{gl}(2, \HC)$-module $(\rho'_2, {\cal W}')$ is
$(\rho_2, {\cal W})$ (Proposition 80 in \cite{FL1}).
However, the $\mathfrak{sp}(4, \BB C)$-module $(\rho^*_2, {\cal W}^*_{res})$
is quite different from $(\rho_2, {\cal W})$ restricted to
$\HC^{\times} \text{-sym}$ and $\mathfrak{sp}(4, \BB C)$.
\end{rem}

We have the following analogue of Lemma 25 in \cite{ATMP}:

\begin{lem}
The Lie algebra action $\rho^*_2$ of $\mathfrak{sp}(4,\BB C)$ on
${\cal W}^*_{res}$ is given by
\begin{align*}
\rho^*_2 \bigl(\begin{smallmatrix} A & 0 \\ 0 & -A^T \end{smallmatrix}\bigr) &:
G(Z) \mapsto - \tr (AZ \partial + ZA^T \partial + A) G -GA-A^TG,  \\
\rho^*_2 \bigl( \begin{smallmatrix} 0 & B \\ 0 & 0 \end{smallmatrix} \bigr) &:
G(Z) \mapsto - \tr (B \partial) G,  \\
\rho^*_2 \bigl( \begin{smallmatrix} 0 & 0 \\ C & 0 \end{smallmatrix} \bigr) &:
G(Z) \mapsto \tr (ZCZ \partial + ZC) G + CZG + GZC.
\end{align*}
\end{lem}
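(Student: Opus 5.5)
The plan is to obtain the formulas by differentiating the group action $\rho^*_2$ along one-parameter subgroups $h_s = \exp(sX)$ of $Sp(4,\BB C)$, using the conventions already in force in Lemmas \ref{pi-Lie_alg-action} and \ref{rho'_2-action-lem}. Write $h = \bigl(\begin{smallmatrix} a' & b' \\ c' & d' \end{smallmatrix}\bigr)$ and $h^{-1} = \bigl(\begin{smallmatrix} a & b \\ c & d \end{smallmatrix}\bigr)$. The group action $\rho^*_2(h)$ is a product of three factors:
\begin{itemize}
\item the left factor $(cZ+d)^{-1}N(cZ+d)^{-1}$, which is exactly the multiplier of $\pi_l$;
\item the composition $G\bigl((aZ+b)(cZ+d)^{-1}\bigr)$;
\item the right factor $(a'-Zc')^{-1}$, which is the multiplier of $\pi_r$ with the power $N(a'-Zc')^{-1}$ removed.
\end{itemize}
Differentiating at $s=0$, the derivative of a product is the sum of the derivatives of the factors. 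So the Lie algebra action is the sum of three contributions:
\begin{itemize}
\item the first-order differential operator coming from the fractional linear change of variables;
\item the left multiplier terms, read off from $\pi_l$;
\item the right multiplier terms, read off from $\pi_r$ minus the scalar term that the determinant power produces there.
\end{itemize}

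First I would handle the three types of generators separately. For $\bigl(\begin{smallmatrix} 0 & B \\ 0 & 0 \end{smallmatrix}\bigr)$, $h^{-1}$ is a translation. Both multipliers are trivial, so only $-\tr(B\partial)G$ survives.

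For the diagonal elements $\bigl(\begin{smallmatrix} A & 0 \\ 0 & -A^T \end{smallmatrix}\bigr)$, the change of variables $Z \mapsto e^{-sA}Ze^{-sA^T}$ gives $-\tr(AZ\partial + ZA^T\partial)G$. Here the redefined symmetric $\partial$ makes the two terms combine correctly on $\HC\text{-sym}$. The left multiplier $(e^{sA^T})^{-1}N(e^{sA^T})^{-1}$ contributes $-A^TG - \tr(A)G$, and the right multiplier $(e^{sA})^{-1}$ contributes $-GA$. Summing gives the stated first line. I would cross-check this against Lemma \ref{pi-action-lem}: $\pi_l$ gives $-\tr(\dots+A)f - A^Tf$, and $\pi_r$ without its trace term gives $-gA$.

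For $\bigl(\begin{smallmatrix} 0 & 0 \\ C & 0 \end{smallmatrix}\bigr)$, $h^{-1}$ has $c=-sC$ and $h$ has $c'=sC$. The change of variables $Z\mapsto Z(1-sCZ)^{-1}$ gives $\tr(ZCZ\partial)G$. The left multiplier $(1-sCZ)^{-1}N(1-sCZ)^{-1}$ contributes $CZG + \tr(CZ)G$. The right multiplier $(1-sZC)^{-1}$ contributes $GZC$. Using $\tr(CZ)=\tr(ZC)$, the sum matches the stated third line.

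The main obstacle is bookkeeping of signs and of which of $a,\dots,d$ versus $a',\dots,d'$ enters at first order. A cleaner route that avoids recomputation is to note that the left factor of $\rho^*_2$ coincides with the $\pi_l$ multiplier, while the right factor is the $\pi_r$ multiplier with $N(a'-Zc')^{-1}$ dropped. Hence
\[
\rho^*_2(X) = \text{(derivation part)} + \text{(left part of } \pi_l(X)) + \text{(right part of } \pi_r(X)) - \text{(scalar part from } N(a'-Zc')^{-1}).
\]
The last correction vanishes for $B$ and $C$ generators, and for $A$ it removes one $\tr(A)$, so we pass from Lemma \ref{pi-action-lem}'s formulas directly. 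Finally, since the three generator types span $\mathfrak{sp}(4,\BB C)$, this determines the action.
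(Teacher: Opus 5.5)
Your proof is correct and follows the route the paper intends. The paper gives no separate proof; it presents the lemma as the analogue of the corresponding lemma in its earlier work, meaning the formulas come from differentiating the group action factor by factor along the three types of one-parameter subgroups, exactly as you do. I checked all three generator cases and they reproduce the stated formulas.

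There is one slip in your closing ``cleaner route.'' The correction from $N(a'-Zc')^{-1}$ does \emph{not} vanish for the $C$ generators. Since $N(1-sZC)^{-1}$ has derivative $\tr(ZC)$ at $s=0$, this correction is precisely the $ZC$ inside the trace of $\pi_r$ in Lemma \ref{pi-action-lem}. It must be subtracted; otherwise the shortcut gives $2\tr(ZC)G$ instead of $\tr(ZC)G$. Your explicit computation for the $C$ generator already handles this correctly, so the proof itself stands.
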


Like $(\rho'_2, {\cal W}'_{res})$, $(\rho^*_2, {\cal W}^*_{res})$ decomposes into
a direct sum of two invariant subspaces -- functions with values in the
symmetric matrices in $\HC$, $\HC\text{-sym}$, and functions with
antisymmetric values. We are interested in the symmetric part only and call it
$\mathring {\cal W}^*_{res}$:
\[
\mathring {\cal W}^*_{res} = \HC\text{-sym} \otimes
N(Z)^{\frac12} \cdot \BB C [z_{11}, z_{12}, z_{22}, N(Z)^{-1}].
\]
It is easy to see that the pairing from Proposition \ref{W_res-pairing-prop}
restricts to a non-degenerate bilinear pairing between the
$\mathfrak{sp}(2,\BB C)$-modules $(\rho'_2, \mathring {\cal W}'_{res})$
and $(\rho^*_2, \mathring {\cal W}^*_{res})$.

Next, we describe the $K$-types of $(\rho^*_2, \mathring {\cal W}^*_{res})$.
Consider
\begin{align*}
\leftidx{^*}{{\bf T}}^m_l(Z) &=
\begin{pmatrix} (l+m)(l+m+1) R^{m-1}_l(Z) & i(l+m+1) R^m_l(Z) \\
i(l+m+1) R^m_l(Z) & - R^{m+1}_l(Z) \end{pmatrix},
\quad \text{\footnotesize $-l-1 \le m \le l+1$} ,\\
\leftidx{^*}{{\bf M}}^m_l(Z) &=
\begin{pmatrix} (l+m)(l-m+1) R^{m-1}_l(Z) & -im R^m_l(Z) \\
-im R^m_l(Z) & R^{m+1}_l \end{pmatrix},
\quad \text{\footnotesize $-l \le m \le l$},  \\
\leftidx{^*}{{\bf B}}^m_l(Z) &=
\begin{pmatrix} (l-m)(l-m+1) R^{m-1}_l(Z) & -i(l-m) R^m_l(Z) \\
-i(l-m) R^m_l(Z) & - R^{m+1}_l(Z) \end{pmatrix},
\quad \text{\footnotesize $-l+1 \le m \le l-1$}.
\end{align*}
For a fixed $l$, these functions span $U(2)$-invariant spaces of dimensions
$2l+2$, $2l$ and $2l-1$ respectively.
Let $A_1=\bigl( \begin{smallmatrix} 0 & 1 \\ 0 & 0 \end{smallmatrix} \bigr)$
and $A_2=\bigl( \begin{smallmatrix} 0 & 0 \\ 1 & 0 \end{smallmatrix} \bigr)$,
then
\begin{align*}
\rho^*_2 \bigl(\begin{smallmatrix} A_1 & 0 \\ 0 & -A_1^T \end{smallmatrix}\bigr)
\leftidx{^*}{{\bf T}}^m_l(Z) &=i(l+m+1)(l-m+2) \leftidx{^*}{{\bf T}}^{m-1}_l(Z),\\
\rho^*_2 \bigl(\begin{smallmatrix} A_2 & 0 \\ 0 & -A_2^T \end{smallmatrix}\bigr)
\leftidx{^*}{{\bf T}}^m_l(Z) &= -i \leftidx{^*}{{\bf T}}^{m+1}_l(Z);  \\
\rho^*_2 \bigl(\begin{smallmatrix} A_1 & 0 \\ 0 & -A_1^T \end{smallmatrix}\bigr)
\leftidx{^*}{{\bf M}}^m_l(Z) &= i(l+m)(l-m+1) \leftidx{^*}{{\bf M}}^{m-1}_l(Z), \\
\rho^*_2 \bigl(\begin{smallmatrix} A_2 & 0 \\ 0 & -A_2^T \end{smallmatrix}\bigr)
\leftidx{^*}{{\bf M}}^m_l(Z) &= -i \leftidx{^*}{{\bf M}}^{m+1}_l(Z);  \\
\rho^*_2 \bigl(\begin{smallmatrix} A_1 & 0 \\ 0 & -A_1^T \end{smallmatrix}\bigr)
\leftidx{^*}{{\bf B}}^m_l(Z) &= i(l+m-1)(l-m) \leftidx{^*}{{\bf B}}^{m-1}_l(Z), \\
\rho^*_2 \bigl(\begin{smallmatrix} A_2 & 0 \\ 0 & -A_2^T \end{smallmatrix}\bigr)
\leftidx{^*}{{\bf B}}^m_l(Z) &= -i \leftidx{^*}{{\bf B}}^{m+1}_l(Z).
\end{align*}

We describe the effect of the inversion on the basis functions:

\begin{lem}  \label{W*-inversion-lem}
Let $\bigl(\begin{smallmatrix} 0 & 1 \\ 1 & 0 \end{smallmatrix}\bigr)
\in GL(2,\HC)$, then
\begin{align*}
\rho^*_2 \bigl(\begin{smallmatrix} 0 & 1 \\ 1 & 0 \end{smallmatrix}\bigr)
\bigl( N(Z)^{k+\frac12} \cdot \leftidx{^*}{{\bf T}}^m_l(Z) \bigr) =
&\tfrac{(-1)^{l+1}}{2l+3} \tfrac{(l+m+1)!}{(l-m+1)!} N(Z)^{-(k+l+3)+\frac12}
\cdot \leftidx{^*}{{\bf T}}^{-m}_l(Z) \\
&+ (-1)^{l+1} \tfrac{2(l+1)}{2l+3} \tfrac{(l+m+1)!}{(l-m+1)!}
N(Z)^{-(k+l+4)+\frac12} \cdot \leftidx{^*}{{\bf B}}^{-m}_{l+2}(Z),  \\
\rho^*_2 \bigl(\begin{smallmatrix} 0 & 1 \\ 1 & 0 \end{smallmatrix}\bigr)
\bigl( N(Z)^{k+\frac12} \cdot \leftidx{^*}{{\bf M}}^m_l(Z) \bigr) =
&(-1)^l \tfrac{(l+m)!}{(l-m)!} N(Z)^{-(k+l+3)+\frac12} \cdot
\leftidx{^*}{{\bf M}}^{-m}_l(Z),  \\
\rho^*_2 \bigl(\begin{smallmatrix} 0 & 1 \\ 1 & 0 \end{smallmatrix}\bigr)
\bigl( N(Z)^{k+\frac12} \cdot \leftidx{^*}{{\bf B}}^m_{l+1}(Z) \bigr) =
&(-1)^l \tfrac{2(l+1)}{2l+1} \tfrac{(l+m)!}{(l-m)!} N(Z)^{-(k+l+3)+\frac12}
\cdot \leftidx{^*}{{\bf T}}^{-m}_{l-1}(Z) \\
&- \tfrac{(-1)^l}{2l+1} \tfrac{(l+m)!}{(l-m)!} N(Z)^{-(k+l+4)+\frac12}
\cdot \leftidx{^*}{{\bf B}}^{-m}_{l+1}(Z).
\end{align*}
\end{lem}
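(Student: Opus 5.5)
The plan is to derive Lemma \ref{W*-inversion-lem} from Lemma \ref{W'-inversion-lem} by transporting the inversion through a simple algebraic identification, so that no new computation with solid harmonics is needed. First I compute the inversion in $\rho^*_2$ explicitly. For $h=\bigl(\begin{smallmatrix} 0 & 1 \\ 1 & 0 \end{smallmatrix}\bigr)$ we have $h^{-1}=h$, so $a=d=a'=d'=0$ and $b=c=b'=c'=1$. The definition of $\rho^*_2$ then gives
\[
\rho^*_2(h)G(Z) = -\tfrac{Z^{-1}}{N(Z)} \cdot G(Z^{-1}) \cdot Z^{-1}.
\]
For comparison, the same substitution in the definition of $\rho'_2$ gives
\[
\rho'_2(h)F(Z) = -\tfrac{Z}{N(Z)} \cdot F(Z^{-1}) \cdot \tfrac{Z}{N(Z)}.
\]
The sign conventions must be tracked against those used in Lemma \ref{W'-inversion-lem}.

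Next I relate the two families of basis functions. Comparing the definitions, $\leftidx{^*}{{\bf T}}^m_l$, $\leftidx{^*}{{\bf M}}^m_l$ and $\leftidx{^*}{{\bf B}}^m_l$ are obtained from ${\bf T}^m_l$, ${\bf M}^m_l$ and ${\bf B}^m_l$ by the map $X \mapsto -X^+$ on $\HC\text{-sym}$, up to a sign. Concretely, $X^+$ is the adjugate: it swaps the diagonal entries and negates the off-diagonal ones. For example, ${\bf M}$ has diagonal $(R^{m+1}, (l+m)(l-m+1)R^{m-1})$ and off-diagonal $imR^m$, while $\leftidx{^*}{\bf M}$ has diagonal $((l+m)(l-m+1)R^{m-1}, R^{m+1})$ and off-diagonal $-imR^m$. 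So $\leftidx{^*}{\bf M}^m_l = ({\bf M}^m_l)^+$. Similarly, $\leftidx{^*}{\bf T}^m_l = -({\bf T}^m_l)^+$ and $\leftidx{^*}{\bf B}^m_l = -({\bf B}^m_l)^+$. I will verify these three identities entry by entry.

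The key identity is $Z^{-1} = Z^+/N(Z)$ together with $(XY)^+ = Y^+X^+$. Define $\Theta(F) = N(Z)^{-\frac32}F^+$. Then
\[
\rho^*_2(h)\Theta(F)(Z) = -\tfrac{Z^+}{N(Z)^2} \cdot N(Z)^{\frac32} F(Z^{-1})^+ \cdot \tfrac{Z^+}{N(Z)}.
\]
The right-hand side equals $N(Z)^{-\frac32}\,\bigl(-\tfrac{Z}{N(Z)}F(Z^{-1})\tfrac{Z}{N(Z)}\bigr)^+$, which is $\Theta(\rho'_2(h)F)$ possibly up to an overall sign. I must fix the exact power of $N(Z)$ in $\Theta$ by matching exponents. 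Lemma \ref{W*-inversion-lem} sends $N^{k+\frac12}$ to $N^{-(k+l+3)+\frac12}$, whereas Lemma \ref{W'-inversion-lem} sends $N^k$ to $N^{-(k+l+1)}$. With $\Theta(N^k\,{\bf X}) = N^{k+c}\,{\bf X}^+$, the condition $k+c \mapsto -(k+c)-l-3+1$ against $k \mapsto -k-l-1$ gives $2c = -1$ after accounting for the extra $N^{-1}$ factors. I will redo this exponent bookkeeping carefully, since the shift of $2$ visible in the statement ($-(k+l+3)$ versus $-(k+l+1)$) has to come out exactly. Once the shift is fixed, applying $\Theta$ to each formula of Lemma \ref{W'-inversion-lem} yields the claimed formulas. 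The coefficients are unchanged, and the signs from $\leftidx{^*}{\bf T} = -{\bf T}^+$ and $\leftidx{^*}{\bf B} = -{\bf B}^+$ cancel in pairs, because ${\bf T}$ and ${\bf B}$ mix only with each other.

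The main obstacle is this bookkeeping: settling the correct half-integer normalization of $\Theta$ and the overall sign. If the naive conjugation $\Theta$ fails to reproduce the exponent shift, the fallback is a direct check. I would verify the ${\bf M}$ case directly, using $R^m_l(Z^{-1}) = (-1)^l\tfrac{(l+m)!}{(l-m)!}N(Z)^{-l}R^{-m}_l(Z)$ and multiplication by $Z^{-1}$ on both sides via \eqref{tR}--\eqref{z_{22}R}. The ${\bf T}$ and ${\bf B}$ cases would follow in the same way, with the mixing coefficients $\tfrac{1}{2l+3}$ and $\tfrac{2(l+1)}{2l+3}$ copied from the proof of Lemma \ref{W'-inversion-lem}.
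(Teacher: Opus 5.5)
Your route is correct once the normalization is fixed, and it differs from the paper. The paper gives no argument for this lemma: like Lemma~\ref{W'-inversion-lem}, it is simply recorded as the outcome of a direct computation with the solid harmonics. You instead transport Lemma~\ref{W'-inversion-lem} through the adjugate map.

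\textbf{The normalization.} The map must be $\Theta(F)=N(Z)^{-\frac12}F^+$, not $N(Z)^{-\frac32}F^+$. With $-\frac32$ your displayed identity is off by a factor $N(Z)^2$. With $c=-\frac12$, using $Z^{-1}=Z^+/N(Z)$ and $N(Z^{-1})^{-\frac12}=N(Z)^{\frac12}$, one gets
\[
\rho^*_2(h)\,\Theta(F) = -N(Z)^{-\frac52}\bigl(Z F(Z^{-1}) Z\bigr)^+ = \Theta\bigl(\rho'_2(h)F\bigr).
\]
This holds exactly, with no sign ambiguity, since each action contributes one factor $a'-Zc'=-Z$. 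It is also the same value of $c$ that your exponent matching produces, which is why the method closes.

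\textbf{Transferring the formulas.} Your identities ${}^*{\bf M}^m_l=({\bf M}^m_l)^+$, ${}^*{\bf T}^m_l=-({\bf T}^m_l)^+$ and ${}^*{\bf B}^m_l=-({\bf B}^m_l)^+$ are correct. Apply $\Theta$ to Lemma~\ref{W'-inversion-lem} with $k$ replaced by $k+1$. Then $\Theta(N^{k+1}{\bf X})=N^{k+\frac12}{\bf X}^+$, and the exponents become $N^{-(k+l+2)-\frac12}=N^{-(k+l+3)+\frac12}$ and $N^{-(k+l+3)-\frac12}=N^{-(k+l+4)+\frac12}$. This reproduces all three formulas with identical coefficients, and the minus signs cancel in pairs as you said.

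\textbf{Scope of $\Theta$.} $\Theta$ intertwines only the inversion, not all of $\mathfrak{sp}(4,\BB C)$; it already fails for translations, since $N(Z+b)\ne N(Z)$. That is all you need here.

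\textbf{Comparison of routes.} Your argument explains the shift from $-(k+l+1)$ to $-(k+l+3)+\frac12$ and why the coefficients are unchanged. It also shows the two inversion lemmas are equivalent. The price is that it rests entirely on Lemma~\ref{W'-inversion-lem}, which the paper also states without proof, so any error there would propagate. The direct computation, via $R^m_l(Z^{-1})$ and \eqref{tR}--\eqref{z_{22}R}, is self-contained but explains none of this structure.
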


\begin{thm}
The functions
\begin{equation*}  
N(Z)^{k+\frac12} \cdot \leftidx{^*}{{\bf T}}^m_l(Z), \quad
N(Z)^{k+\frac12} \cdot \leftidx{^*}{{\bf M}}^m_l(Z), \quad
N(Z)^{k+\frac12} \cdot \leftidx{^*}{{\bf B}}^m_l(Z)
\end{equation*}
form a $K$-type basis of $(\rho^*_2,\mathring {\cal W}^*_{res})$.
More precisely, for $k$, $l$ fixed, as representations of $SU(2)$,
\begin{align*}
\BB C\text{-span of }
\bigl\{ N(Z)^{k+\frac12} \cdot \leftidx{^*}{{\bf T}}^m_l(Z) ;\:
-l-1 \le m \le l+1  \bigr\} &\simeq V_{l+1},  \\
\BB C\text{-span of }
\bigl\{ N(Z)^{k+\frac12} \cdot \leftidx{^*}{{\bf M}}^m_l(Z) ;\:
  -l \le m \le l \bigr\} &\simeq V_l,  \\
\BB C\text{-span of }
\bigl\{ N(Z)^{k+\frac12} \cdot \leftidx{^*}{{\bf B}}^m_l(Z) ;\:
  -l+1 \le m \le l-1 \bigr\} &\simeq V_{l-1}.
\end{align*}
\end{thm}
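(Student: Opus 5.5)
The plan mirrors the proof of the analogous statement for $(\rho'_2,\mathring{\cal W}'_{res})$. There are three things to show: spanning, linear independence, and $U(2)$-invariance with the stated isomorphism types.

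\emph{Spanning and independence.} By Proposition \ref{basis-prop}, the functions $N(Z)^{k+\frac12}\cdot R^m_l(Z)$, with $k\in\BB Z$, $l\ge0$, $-l\le m\le l$, form a basis of $N(Z)^{\frac12}\cdot\BB C[z_{11},z_{12},z_{22},N(Z)^{-1}]$. Tensoring with the three-dimensional space $\HC\text{-sym}$ gives a basis of $\mathring{\cal W}^*_{res}$ consisting of the matrix-valued functions with a single $N(Z)^{k+\frac12}R^{m'}_l$ in one of the three positions (11), (12)=(21), (22). I will fix $k$, $l$ and the weight under the diagonal torus of $\mathfrak u(2)$, and group the basis into the vectors
\[
\bigl(R^{m-1}_l\ \text{in }(11),\ R^m_l\ \text{in }(12),\ R^{m+1}_l\ \text{in }(22)\bigr).
\]
Generically this is a three-dimensional weight space. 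On it, $\leftidx{^*}{{\bf T}}^m_l$, $\leftidx{^*}{{\bf M}}^m_l$ and $\leftidx{^*}{{\bf B}}^m_l$ have coefficient vectors
\[
\bigl((l+m)(l+m+1),\,i(l+m+1),\,-1\bigr),\quad \bigl((l+m)(l-m+1),\,-im,\,1\bigr),\quad \bigl((l-m)(l-m+1),\,-i(l-m),\,-1\bigr).
\]
I will check that the $3\times3$ determinant is nonzero whenever all three vectors are present, i.e.\ for $|m|\le l-1$. At the edge values of $m$, some of the $R$'s vanish, since $R^{m'}_l=0$ for $|m'|>l$. There I will check directly that the surviving vectors, namely the $\leftidx{^*}{{\bf T}}$ and $\leftidx{^*}{{\bf M}}$ vectors at $m=\pm l$ and the single $\leftidx{^*}{{\bf T}}$ vector at $m=\pm(l+1)$, are independent and that their number equals the dimension of that weight space. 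A count confirms consistency: $(2l+3)+(2l+1)+(2l-1)=3(2l+1)$. This step is purely a determinant computation and is essentially identical to the one behind the ${\bf T},{\bf M},{\bf B}$ basis of $\mathring{\cal W}'_{res}$. In fact the starred vectors are obtained from the unstarred ones by transposing the matrix under the anti-diagonal flip together with a sign and the reindexing $m\mapsto -m$ on $R$ (through \eqref{Pm-m}), so independence transfers.

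\emph{$K$-type structure.} I will use the stated formulas for $\rho^*_2$ of $\bigl(\begin{smallmatrix}A_1&0\\0&-A_1^T\end{smallmatrix}\bigr)$ and $\bigl(\begin{smallmatrix}A_2&0\\0&-A_2^T\end{smallmatrix}\bigr)$. Multiplication by $N(Z)^{k+\frac12}$ commutes with the action of $\mathfrak u(2)$ up to a scalar shift from the diagonal part, because $N$ is $U(2)$-semi-invariant. Hence for fixed $k,l$ each family is stable under the raising and lowering operators. The raising operator $A_2$ acts by $-i$ times the shift $m\mapsto m+1$. The lowering coefficients $(l+m+1)(l-m+2)$, $(l+m)(l-m+1)$ and $(l+m-1)(l-m)$ are the standard ladder coefficients of $\mathfrak{sl}(2)$ after recentering. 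Each chain terminates at the stated range of $m$, of lengths $2l+3$, $2l+1$ and $2l-1$. Therefore the spans are irreducible and isomorphic to $V_{l+1}$, $V_l$, $V_{l-1}$ respectively.

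\emph{Main obstacle.} The only real work is verifying the ladder identities, i.e.\ computing $\rho^*_2$ of the $A_i$ on these matrix-valued functions using the lemma for $\rho^*_2$ together with the derivative formulas \eqref{dz_{11}R} and \eqref{dz_{22}R}. Here I will lean on the transpose/flip correspondence with the already established ${\bf T},{\bf M},{\bf B}$ computations rather than recomputing from scratch.
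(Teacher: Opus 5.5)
Your route is the one the paper leaves implicit: tensor the basis $N(Z)^{k+\frac12}R^{m'}_l(Z)$ with $\HC\text{-sym}$, group by $(k,l)$ and torus weight, check independence on each weight space, then read off the $SU(2)$-types from the $A_1,A_2$ ladder formulas. The paper states the theorem without a separate proof, right after listing the functions and their ladder actions. Your generic computation is correct: for $|m|\le l-1$ the determinant equals $i\,l\,\bigl[(l+m)(l+m+1)+2(l+m+1)(l-m+1)+(l-m)(l-m+1)\bigr]\neq0$.

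One concrete correction concerns $l=0$. There your edge-case check and your dimension count both fail, and the failure cannot be repaired, because ${}^*{\bf M}^0_0\equiv0$: its entries are $0\cdot R^{-1}_0$, $-i\cdot 0\cdot R^0_0$ and $R^1_0=0$.
\begin{itemize}
\item The $l=0$ block is $3$-dimensional, with three one-dimensional weight spaces.
\item It is spanned by ${}^*{\bf T}^{-1}_0,{}^*{\bf T}^0_0,{}^*{\bf T}^1_0$ alone, i.e.\ $V_1\otimes V_0=V_1$.
\item Your count would instead give $3+1+0=4$.
\end{itemize}
So the identity $(2l+3)+(2l+1)+(2l-1)=3(2l+1)$, and the theorem itself, hold only with the ${}^*{\bf M}$-family restricted to $l\ge1$. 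The same degeneracy affects ${\bf M}^0_0$ in the unstarred theorem. You should state this convention explicitly rather than assert independence of ${}^*{\bf T}^0_0,{}^*{\bf M}^0_0$.

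Finally, the transfer from the unstarred case involves no reindexing $m\mapsto -m$. Instead one has
$${}^*{\bf T}^m_l=-({\bf T}^m_l)^+,\qquad {}^*{\bf M}^m_l=({\bf M}^m_l)^+,\qquad {}^*{\bf B}^m_l=-({\bf B}^m_l)^+,$$
with $F^+$ the adjugate. The identity $(aFa^T)^+=N(a)^2a^{-T}F^+a^{-1}$ shows that $F\mapsto N(Z)^{\frac12}F^+$ intertwines the $U(2)$-actions of $\rho'_2$ and $\rho^*_2$, up to a character trivial on $SU(2)$. This explains why the listed ladder coefficients coincide, and it reduces the whole statement to the unstarred one.
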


Note that the $K$-types
$N(Z)^{k+\frac12} \cdot \leftidx{^*}{{\bf B}}^{m=0}_{l=1}(Z)$ and
$N(Z)^{k+\frac12} \cdot \leftidx{^*}{{\bf M}}^m_l(Z)$, $-l \le m \le l$,
have multiplicity one. On the other hand,
$N(Z)^{k+\frac12} \cdot \leftidx{^*}{{\bf T}}^m_l(Z)$ and
$N(Z)^{k-\frac12} \cdot \leftidx{^*}{{\bf B}}^m_{l+2}(Z)$, $-l-1 \le m \le l+1$,
span isomorphic representations of $U(2)$.
We introduce an additional family of ``combination'' $K$-types of
$(\rho^*_2, \mathring {\cal W}^*_{res})$:
\[
\leftidx{^*}{{\bf C}}^m_{k,l}(Z) =
(2k+1)l N(Z)^{k-\frac12} \cdot \leftidx{^*}{{\bf B}}^m_{l+1}(Z)
+ 2(k+l+1)(l+1) N(Z)^{k+\frac12} \cdot \leftidx{^*}{{\bf T}}^m_{l-1}(Z),
\]
$k \in \BB Z$, $l \ge 1$, $-l \le m \le l$.
From Lemma \ref{W*-inversion-lem} we find that the effect of the inversion
on these $K$-types is
\begin{equation}  \label{I*-inversion}
\rho^*_2 \bigl(\begin{smallmatrix} 0 & 1 \\ 1 & 0 \end{smallmatrix}\bigr)
\bigl( \leftidx{^*}{{\bf C}}^m_{k,l}(Z) \bigr) =
(-1)^{l+1} \tfrac{(l+m)!}{(l-m)!} \leftidx{^*}{{\bf C}}^{-m}_{-k-l-2,l}(Z).
\end{equation}

\begin{thm}  \label{W*-decomp-thm}
The $\mathfrak{sp}(4,\BB C)$-module $(\rho^*_2, {\cal W}^*_{res})$
contains invariant subspaces
\begin{align}
\BB C\text{-span of } &\bigl\{
N(Z)^{k+\frac12} \cdot \leftidx{^*}{{\bf M}}^m_l(Z),\:
\leftidx{^*}{{\bf C}}^m_{k,l+1}(Z);\: k \in \BB Z,\: l \ge 0 \bigr\},
\label{MI*-subspace}  \\
\BB C\text{-span of } &\left\{ \begin{matrix}
N(Z)^{k+\frac12} \cdot \leftidx{^*}{{\bf M}}^0_0(Z) \text{ with } k \in \BB Z, \\
N(Z)^{k+\frac12} \cdot \leftidx{^*}{{\bf M}}^m_l(Z) \text{ with }
k \in \BB Z,\: l \ge 1,\: -(l+1) \le k \le -2, \\
\leftidx{^*}{{\bf C}}^m_{k,l}(Z) \text{ with }
k \in \BB Z,\:l \ge 1,\: -(l+1) \le k \le -1 \end{matrix} \right\}.
\label{MI*-sub-subspace}
\end{align}
The quotient
\[
\tfrac{\BB C\text{-span of } \bigl\{
N(Z)^{k+\frac12} \cdot \leftidx{^*}{{\bf M}}^m_l(Z),\:
\leftidx{^*}{{\bf C}}^m_{k,l+1}(Z);\: k \in \BB Z,\: l \ge 0 \bigr\}}
{\BB C\text{-span of } \left\{ \begin{smallmatrix}
N(Z)^{k+\frac12} \cdot \leftidx{^*}{{\bf M}}^0_0(Z) \text{ with } k \in \BB Z, \\
N(Z)^{k+\frac12} \cdot \leftidx{^*}{{\bf M}}^m_l(Z) \text{ with }
k \in \BB Z,\: l \ge 1,\: -(l+1) \le k \le -2, \\
\leftidx{^*}{{\bf C}}^m_{k,l}(Z) \text{ with }
k \in \BB Z,\:l \ge 1,\: -(l+1) \le k \le -1 \end{smallmatrix} \right\}}
\]
decomposes into a direct sum of two irreducible modules with $K$-types
\begin{equation}  \label{W*-subquot1}
\BB C\text{-span of } \bigl\{
N(Z)^{k-\frac12} \cdot \leftidx{^*}{{\bf M}}^m_l(Z),\:
\leftidx{^*}{{\bf C}}^m_{k,l}(Z);\: k \ge 0,\: l \ge 1 \bigr\}
\end{equation}
isomorphic to $(\pi_{dl},{\cal F}^+) \simeq (\pi_{dr},{\cal G}^+)$
as $\mathfrak{sp}(4,\BB C)$-modules and
\begin{equation}    \label{W*-subquot2}
\BB C\text{-span of } \bigl\{
N(Z)^{k+\frac12} \cdot \leftidx{^*}{{\bf M}}^m_l(Z),\:
\leftidx{^*}{{\bf C}}^m_{k,l}(Z);\: l \ge 1,\: k \le -(l+2) \bigr\}
\end{equation}
isomorphic to $(\pi_{dl},{\cal F}^-) \simeq (\pi_{dr},{\cal G}^-)$
as $\mathfrak{sp}(4,\BB C)$-modules.
\end{thm}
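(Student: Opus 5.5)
The plan is to deduce Theorem \ref{W*-decomp-thm} from the already established structure of $(\rho'_2, \mathring {\cal W}'_{res})$, using the non-degenerate invariant pairing of Proposition \ref{W_res-pairing-prop}, and to confirm the pieces by direct $K$-type computations.

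\textbf{Duality.} Under $\langle\,,\rangle_{{\cal W}_{res}}$, the annihilator of an invariant subspace is invariant. The orthogonality relations \eqref{R-orthogonality} pair the $K$-type $N(Z)^k\cdot{\bf X}^m_l$ of $\mathring{\cal W}'_{res}$ only with $N(Z)^{-k-l-\frac32}\cdot\leftidx{^*}{{\bf X}}^{-m}_{l}$, where ${\bf X}\in\{{\bf T},{\bf M},{\bf B}\}$, up to mixing within the isotypic ${\bf T}/{\bf B}$ pairs. So I will first compute the pairings between the combinations ${\bf C}^m_{k,l}$ and $\leftidx{^*}{{\bf C}}^{m'}_{k',l'}$, and also between ${\bf C}$ and the complementary ${\bf T}$-type vectors.

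\textbf{Identifying the subspaces.} I expect \eqref{MI*-subspace} to be the annihilator of the span of the one-dimensional invariant subspaces $N(Z)^k\cdot{\bf B}^0_1$ from Theorem \ref{W'-decomp-thm}, and more precisely of the isotypic complement of the ${\bf C}$'s. I expect \eqref{MI*-sub-subspace} to be the annihilator of the sum
\[
\mathring{\cal W}'^+_{res}+\mathring{\cal W}'^-_{res}+\{\text{the ${\bf M}^0_0$ and ${\bf C}$ families}\}.
\]
Invariance of both subspaces will be checked directly, which is more robust than the duality argument. I will compute $\rho^*_2\bigl(\begin{smallmatrix}0&B\\0&0\end{smallmatrix}\bigr)$, that is $-\tr(B\partial)$, on $N(Z)^{k+\frac12}\cdot\leftidx{^*}{{\bf M}}^m_l$ and on $\leftidx{^*}{{\bf C}}^m_{k,l}$ using \eqref{dtNR}--\eqref{dz_{22}NR}, in analogy with the displayed formulas for ${\bf C}^m_{k,l}$. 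The $\bigl(\begin{smallmatrix}0&0\\C&0\end{smallmatrix}\bigr)$ action then follows by conjugating with the inversion, using Lemma \ref{W*-inversion-lem} and \eqref{I*-inversion}. The expected picture is a $(k,l)$ lattice diagram, as in Figures \ref{decomposition-fig1} and \ref{decomposition-fig2}. Arrows are killed by vanishing coefficients of the form $2k+2l+1$, $k$, $k+l+1$, etc., and these vanishings carve out the strip $-(l+1)\le k\le -1$ (resp. $\le -2$ for ${\bf M}$) and the isolated ${\bf M}^0_0$ line. Reading off which arrows vanish gives both invariance claims and shows the quotient splits as \eqref{W*-subquot1} $\oplus$ \eqref{W*-subquot2}: there are no arrows crossing the strip between the two regions.

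\textbf{Irreducibility and identification.} Each summand is irreducible by the usual multiplicity-one $K$-type generation argument, as in Proposition \ref{W-indecomp-prop}: any vector generates a lowest $K$-type, which generates everything. To identify the summands, the invariant pairing matches \eqref{W*-subquot1} nondegenerately with the doubly regular quotient \eqref{W-quot-iso-2-reg-} of $\mathring{\cal W}'^-_{res}$, and \eqref{W*-subquot2} with \eqref{W-quot-iso-2-reg+}. Hence each summand is the contragredient of the corresponding doubly regular module. Since ${\cal F}^-$ is dual to ${\cal F}^+$ (functions regular at infinity pair with those regular at the origin, as recalled before Proposition \ref{V'_{1/2}-prop}), this gives the isomorphisms with $(\pi_{dl},{\cal F}^\pm)\simeq(\pi_{dr},{\cal G}^\pm)$. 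A cross-check is a match of $K$-types: \eqref{W*-subquot1} should have the $K$-types $V_l$ for ${\bf M}$ and $V_l$ for ${\bf C}$ with the same counts as ${\cal F}^+(d)$, namely $d(d+2)$ as in the proof of Theorem \ref{W'-decomp-thm}.

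\textbf{Main obstacle.} The hard part is the explicit $\rho^*_2(B)$ formulas on $\leftidx{^*}{{\bf C}}^m_{k,l}$. The normalization $(2k+1)l$ and $2(k+l+1)(l+1)$ must make $\leftidx{^*}{{\bf C}}$ close under $\partial$, in the sense that it maps to $\leftidx{^*}{{\bf C}}$'s plus possibly ${\bf M}$'s with controlled coefficients. The exact vanishing locations, which fix the boundaries $-(l+1)\le k\le -1$, must come out correctly. I would first verify closure on low cases ($l=1$, small $k$) and then extend by the recurrences for $R^m_l$.
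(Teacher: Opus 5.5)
Your proposal follows the paper's proof. The paper likewise computes $\rho^*_2\bigl(\begin{smallmatrix} 0 & B \\ 0 & 0 \end{smallmatrix}\bigr)$ on $N(Z)^{k+\frac12}\cdot\leftidx{^*}{{\bf M}}^m_l(Z)$ and $\leftidx{^*}{{\bf C}}^m_{k,l}(Z)$, gets the rest of the action from Lemma \ref{W*-inversion-lem} and \eqref{I*-inversion} to deduce invariance, then identifies \eqref{W*-subquot1} and \eqref{W*-subquot2} as the duals of \eqref{W-quot-iso-2-reg-} and \eqref{W-quot-iso-2-reg+} and invokes Theorem \ref{W'-decomp-thm}. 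Your planned check that ${\bf C}^m_{k,l}(Z)$ pairs to zero with $\leftidx{^*}{{\bf C}}^{m'}_{k',l}(Z)$ does hold, and it is exactly what lies behind the paper's ``clear from the description of the $K$-types''; only such annihilation inclusions are needed, so your guessed exact-annihilator descriptions of \eqref{MI*-subspace} and \eqref{MI*-sub-subspace} are unnecessary and in any case not quite accurate as stated.
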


\begin{proof}
We compute the effect of
$\rho^*_2 \bigl( \begin{smallmatrix} 0 & B \\ 0 & 0 \end{smallmatrix} \bigr)$,
$B \in \HC\text{-sym}$, on these $K$-types:
\begin{multline*}
\tfrac{\partial}{\partial t} \bigl( N(Z)^{k+\frac12} \cdot
\leftidx{^*}{{\bf M}}^m_l(Z) \bigr)
= \tfrac{m}{l(l+1)(2l+1)} \leftidx{^*}{{\bf C}}^m_{k,l}(Z)  \\
+ \tfrac{2(l+1)(k+l+1)(l+m)}{l(2l+1)} N(Z)^{k+\frac12} \cdot
\leftidx{^*}{{\bf M}}^m_{l-1}(Z)
+ \tfrac{l(2k+1)(l-m+1)}{(l+1)(2l+1)} N(Z)^{k-\frac12} \cdot
\leftidx{^*}{{\bf M}}^m_{l+1}(Z),
\end{multline*}
\begin{multline*}
2\tfrac{\partial}{\partial z_{11}} \bigl( N(Z)^{k+\frac12} \cdot
\leftidx{^*}{{\bf M}}^m_l(Z) \bigr)
= \tfrac{(l+m)(l-m+1)}{l(l+1)(2l+1)} \leftidx{^*}{{\bf C}}^{m-1}_{k,l}(Z)  \\
- \tfrac{2(l+1)(k+l+1)(l+m)(l+m-1)}{l(2l+1)} N(Z)^{k+\frac12} \cdot
\leftidx{^*}{{\bf M}}^{m-1}_{l-1}(Z)  \\
+ \tfrac{l(2k+1)(l-m+1)(l-m+2)}{(l+1)(2l+1)} N(Z)^{k-\frac12} \cdot
\leftidx{^*}{{\bf M}}^{m-1}_{l+1}(Z),
\end{multline*}
\begin{multline*}
2\tfrac{\partial}{\partial z_{22}} \bigl( N(Z)^{k+\frac12} \cdot
\leftidx{^*}{{\bf M}}^m_l(Z) \bigr)
= \tfrac{1}{l(l+1)(2l+1)} \leftidx{^*}{{\bf C}}^{m+1}_{k,l}(Z)  \\
+ \tfrac{2(l+1)(k+l+1)}{l(2l+1)} N(Z)^{k+\frac12} \cdot
\leftidx{^*}{{\bf M}}^{m+1}_{l-1}(Z)
- \tfrac{l(2k+1)}{(l+1)(2l+1)} N(Z)^{k-\frac12} \cdot
\leftidx{^*}{{\bf M}}^{m+1}_{l+1}(Z);
\end{multline*}
\begin{multline*}
\tfrac{\partial}{\partial t} \bigl( \leftidx{^*}{{\bf C}}^m_{k,l}(Z) \bigr)
= \tfrac{2(k+l+1)(2k+1)(2l+1)m}{l(l+1)} N(Z)^{k-\frac12} \cdot
\leftidx{^*}{{\bf M}}^m_l(Z)  \\
+ \tfrac{2(l+1)(k+l+1)(l+m)}{l(2l-1)} \leftidx{^*}{{\bf C}}^m_{k,l-1}(Z)
+ \tfrac{l(2k+1)(l-m+1)}{(l+1)(2l+3)} \leftidx{^*}{{\bf C}}^m_{k-1,l+1}(Z),
\end{multline*}
\begin{multline*}
2\tfrac{\partial}{\partial z_{11}} \bigl( \leftidx{^*}{{\bf C}}^m_{k,l}(Z) \bigr)
= \tfrac{2(k+l+1)(2k+1)(2l+1)(l+m)(l-m+1)}{l(l+1)} N(Z)^{k-\frac12}
\cdot \leftidx{^*}{{\bf M}}^{m-1}_l(Z)  \\
- \tfrac{2(l+1)(k+l+1)(l+m)(l+m-1)}{l(2l-1)}
\leftidx{^*}{{\bf C}}^{m-1}_{k,l-1}(Z)
+ \tfrac{l(2k+1)(l-m+1)(l-m+2)}{(l+1)(2l+3)}
\leftidx{^*}{{\bf C}}^{m-1}_{k-1,l+1}(Z),
\end{multline*}
\begin{multline*}
2\tfrac{\partial}{\partial z_{22}} \bigl( \leftidx{^*}{{\bf C}}^m_{k,l}(Z) \bigr)
= \tfrac{2(k+l+1)(2k+1)(2l+1)}{l(l+1)} N(Z)^{k-\frac12}
\cdot \leftidx{^*}{{\bf M}}^{m+1}_l(Z)  \\
+ \tfrac{2(l+1)(k+l+1)}{l(2l-1)} \leftidx{^*}{{\bf C}}^{m+1}_{k,l-1}(Z)
- \tfrac{l(2k+1)}{(l+1)(2l+3)} \leftidx{^*}{{\bf C}}^{m+1}_{k-1,l+1}(Z).
\end{multline*}

From this description of the action of
$\rho^*_2 \bigl( \begin{smallmatrix} 0 & B \\ 0 & 0 \end{smallmatrix} \bigr)$,
Lemma \ref{W*-inversion-lem} and equation \eqref{I*-inversion}
we see that the subspaces \eqref{MI*-subspace} and \eqref{MI*-sub-subspace}
are indeed $\mathfrak{sp}(4,\BB C)$-invariant.
From the description of the $K$-types, it is clear that the quotients
\eqref{W*-subquot1} and \eqref{W*-subquot2} are dual to the quotients
\eqref{W-quot-iso-2-reg-} and \eqref{W-quot-iso-2-reg+} respectively.
Then the result follows from Theorem \ref{W'-decomp-thm}.
\end{proof}

\subsection{Structure of $(\rho'_2, \mathring {\cal W}'_{\frac12})$}

We consider a space
\[
{\cal W}'_{\frac12} =
\HC \otimes N(Z)^{\frac12} \cdot \BB C [z_{11}, z_{12}, z_{22}, N(Z)^{-1}]
\]
with $\rho'_2$ action of $\mathfrak{sp}(4,\BB C)$ introduced in
Subsection \ref{W'-subsection}.
This $\mathfrak{sp}(4,\BB C)$-module appears in the trilinear form that
will be constructed in Subsection \ref{3form-main-subsect}.
Lemma \ref{rho'_2-action-lem} readily extends to
$(\rho'_2,{\cal W}'_{\frac12})$ and implies that $(\rho'_2,{\cal W}'_{\frac12})$
decomposes into a direct sum of two invariant subspaces -- functions with
values in the symmetric matrices in $\HC$, $\HC\text{-sym}$, and
functions with antisymmetric values. As usual, we are interested in the
symmetric part only and call it $\mathring {\cal W}'_{\frac12}$:
\[
\mathring {\cal W}'_{\frac12} = \HC\text{-sym} \otimes
N(Z)^{\frac12} \cdot  \BB C [z_{11}, z_{12}, z_{22}, N(Z)^{-1}].
\]

The $K$-types of $(\rho'_2, \mathring {\cal W}'_{\frac12})$ are very similar
to those of $(\rho'_2, \mathring {\cal W}'_{res})$. Let
\[
{\bf C}^m_{k+\frac12,l}(Z) = (2k+1) N(Z)^{k-\frac12} \cdot {\bf B}^m_{l+1}(Z)
- 2(k+l+1) N(Z)^{k+\frac12} \cdot {\bf T}^m_{l-1}(Z),
\]
$k\in \BB Z$, $l \ge 0$, $-l \le m \le l$.

\begin{thm}
The functions
\begin{equation*}
N(Z)^{k+\frac12} \cdot {\bf T}^m_l(Z), \quad N(Z)^{k+\frac12} \cdot {\bf M}^m_l(Z),
\quad {\bf C}^m_{k+\frac12,l}(Z), \qquad k \in \BB Z,
\end{equation*}
form a $K$-type basis of $(\rho'_2,\mathring {\cal W}'_{\frac12})$.
More precisely, for $k$, $l$ fixed, as representations of $SU(2)$,
\begin{align*}
  \BB C\text{-span of } \bigl\{ N(Z)^{k+\frac12} \cdot {\bf T}^m_l(Z) ;\:
  -l-1 \le m \le l+1  \bigr\} &\simeq V_{l+1},  \\
  \BB C\text{-span of } \bigl\{ N(Z)^{k+\frac12} \cdot {\bf M}^m_l(Z) ;\:
  -l \le m \le l \bigr\} &\simeq V_l,  \\
  \BB C\text{-span of } \bigl\{ {\bf C}^m_{k+\frac12,l}(Z) ;\:
  -l \le m \le l \bigr\} &\simeq V_l.
\end{align*}
\end{thm}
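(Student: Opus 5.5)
The plan is to transfer the $K$-type analysis of $(\rho'_2,\mathring{\cal W}'_{res})$ to the half-integer setting. Everything in that analysis was governed by the $U(2)$-action, and multiplication by $N(Z)^{k+\frac12}$ commutes with it. Indeed, by Lemma~\ref{rho'_2-action-lem}, the operators $\rho'_2\bigl(\begin{smallmatrix} A & 0 \\ 0 & -A^T \end{smallmatrix}\bigr)$ with $A^*=-A$ act on $N(Z)^{s}F(Z)$ as $N(Z)^s$ times their action on $F$, plus a scalar term $-\tr(A)\cdot 2s\,N(Z)^sF$ coming from $\tr(AZ\partial+ZA^T\partial)N(Z)^s = 2s\tr(A)N(Z)^s$. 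This scalar only affects the action of the center of $\mathfrak{u}(2)$. The paper's convention is to denote irreducible $U(2)$-modules by $V_l$ regardless of central character, and in any case we state isomorphisms as $SU(2)$-representations. So for fixed $k,l$, the spans of $N(Z)^{k+\frac12}\cdot{\bf T}^m_l$ and $N(Z)^{k+\frac12}\cdot{\bf M}^m_l$ are isomorphic, as $SU(2)$-modules, to the spans of ${\bf T}^m_l$ and ${\bf M}^m_l$. These are $V_{l+1}$ and $V_l$ by the theorem on $K$-types of $\mathring{\cal W}'_{res}$ (the raising and lowering formulas for $A_1,A_2$ are unchanged since $\tr A_1=\tr A_2=0$).

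For ${\bf C}^m_{k+\frac12,l}$, note it is a linear combination, with coefficients independent of $m$, of $N(Z)^{k-\frac12}{\bf B}^m_{l+1}$ and $N(Z)^{k+\frac12}{\bf T}^m_{l-1}$. The same argument shows that each of these two families transforms under $A_1,A_2$ by the same matrices: the formulas for ${\bf B}^m_{l+1}$ give $i(l+m)(l-m+1)$ and $-i$, matching those of ${\bf T}^m_{l-1}$. Hence their $m$-independent combination spans a copy of $V_l$, provided it is nonzero. For $l=0$, ${\bf T}_{-1}$ is absent and ${\bf C}^0_{k+\frac12,0}=(2k+1)N(Z)^{k-\frac12}{\bf B}^0_1$ with $2k+1\ne0$, so this case is fine.

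Spanning and linear independence come next. By Proposition~\ref{basis-prop}, applied after factoring out $N(Z)^{\frac12}$, the space $N(Z)^{\frac12}\BB C[z_{11},z_{12},z_{22},N(Z)^{-1}]$ has basis $N(Z)^{k+\frac12}R^m_l$ with $k\in\BB Z$. Tensoring with the $3$-dimensional $\HC\text{-sym}$, the theorem for $\mathring{\cal W}'_{res}$ (multiplied through by $N(Z)^{\frac12}$) shows that $N(Z)^{k+\frac12}{\bf T}^m_l$, $N(Z)^{k+\frac12}{\bf M}^m_l$ and $N(Z)^{k+\frac12}{\bf B}^m_l$ form a basis. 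It remains to replace the ${\bf B}$-family by the ${\bf C}$-family.

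The main obstacle is this change of basis. Group the basis by $l$ and by total degree, or equivalently by the exponent pattern. For fixed $m$ and each degree, the ${\bf C}$'s are triangular in terms of the pair (${\bf B}_{l+1}$ at power $k-\frac12$, ${\bf T}_{l-1}$ at power $k+\frac12$), with nonzero ${\bf B}$-coefficient $2k+1$, which never vanishes for $k\in\BB Z$. In a fixed degree, the families $N^{k+\frac12}{\bf T}_{l-1}$, $k\in\BB Z$, together with the ${\bf C}$'s, therefore recover every $N^{k-\frac12}{\bf B}_{l+1}$ by solving ${\bf B}=\frac1{2k+1}({\bf C}+2(k+l+1)N{\bf T})$. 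This is an invertible unitriangular substitution, so the ${\bf T},{\bf M},{\bf C}$ families again form a basis; this differs from the integer case where $2k$ could vanish. A dimension count per degree and $K$-type confirms the bijection between the ${\bf C}^m_{k+\frac12,l}$, $l\ge0$, and the ${\bf B}^m_{l+1}$.
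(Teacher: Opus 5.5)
Your proof is correct. It is essentially the reasoning the paper leaves implicit: the paper gives no separate proof and presents this as the half-integer analogue of the $K$-type theorem for $(\rho'_2,\mathring{\cal W}'_{res})$. Your two steps make that explicit: multiply that basis by $N(Z)^{\frac12}$, which commutes with the $\mathfrak{sl}(2)$-part of the $\mathfrak{u}(2)$-action, and then trade each $N(Z)^{k-\frac12}{\bf B}^m_{l+1}$ for ${\bf C}^m_{k+\frac12,l}$ using $2k+1\neq0$. One small addition for full $U(2)$-invariance of the ${\bf C}$-spans: both summands of ${\bf C}^m_{k+\frac12,l}$ are homogeneous of the same total degree $2k+l$, and the center of $\mathfrak{u}(2)$ acts through the Euler operator, so the central characters agree.
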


Note that the $K$-types
$N(Z)^{k+\frac12} \cdot {\bf M}^m_l(Z)$, $-l \le m \le l$,
and ${\bf C}^{m=0}_{k+\frac12,l=0}(Z)$ have multiplicity one.
On the other hand, $N(Z)^{k+\frac12} \cdot {\bf T}^m_l(Z)$ and
${\bf C}^m_{k+\frac12,l+1}(Z)$, $-l-1 \le m \le l+1$,
span isomorphic representations of $U(2)$.

We modify Lemma \ref{W'-inversion-lem} and equation \eqref{I-inversion}
to describe the effect of the inversion on this basis functions:

\begin{lem}  \label{W-half-inversion-lem}
Let $\bigl(\begin{smallmatrix} 0 & 1 \\ 1 & 0 \end{smallmatrix}\bigr)
\in GL(2,\HC)$, then
\begin{align*}
\rho'_2 \bigl(\begin{smallmatrix} 0 & 1 \\ 1 & 0 \end{smallmatrix}\bigr)
\bigl( N(Z)^{k+\frac12} \cdot {\bf T}^m_l(Z) \bigr) =
&\tfrac{(-1)^{l+1} (2k+1)}{2k+2l+3} \tfrac{(l+m+1)!}{(l-m+1)!}
N(Z)^{-(k+l+2)+\frac12} \cdot {\bf T}^{-m}_l(Z)  \\
&+ \tfrac{(-1)^l}{2k+2l+3} \tfrac{2(l+1)}{2l+3} \tfrac{(l+m+1)!}{(l-m+1)!}
\cdot {\bf C}^{-m}_{-(k+l+2)+\frac12,l+1}(Z),  \\
\rho'_2 \bigl(\begin{smallmatrix} 0 & 1 \\ 1 & 0 \end{smallmatrix}\bigr)
\bigl( N(Z)^{k+\frac12} \cdot {\bf M}^m_l(Z) \bigr) =
&(-1)^l \tfrac{(l+m)!}{(l-m)!} N(Z)^{-(k+l+2)+\frac12} \cdot {\bf M}^{-m}_l(Z),  \\
\rho'_2 \bigl(\begin{smallmatrix} 0 & 1 \\ 1 & 0 \end{smallmatrix}\bigr)
\bigl( {\bf C}^m_{k+\frac12,l}(Z) \bigr) =
&(-1)^l \tfrac{(l+m)!}{(l-m)!} {\bf C}^{-m}_{-(k+l+1)+\frac12,l}(Z).
\end{align*}
\end{lem}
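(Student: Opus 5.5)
The plan is to reduce Lemma \ref{W-half-inversion-lem} to the already established Lemma \ref{W'-inversion-lem}, and not to recompute anything from scratch. The key observation is that the inversion
$\rho'_2 \bigl(\begin{smallmatrix} 0 & 1 \\ 1 & 0 \end{smallmatrix}\bigr)$
acts on an arbitrary $\HC$-valued function by
\[
F(Z) \mapsto \tfrac{Z}{N(Z)} \cdot F(Z^{-1}) \cdot \tfrac{Z}{N(Z)}
\]
(up to the overall sign fixed by the formula for $\rho'_2$). The factor $N(Z)^{k+\frac12}$ is a scalar, so it is carried by the inversion to $N(Z^{-1})^{k+\frac12} = N(Z)^{-k-\frac12}$. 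The branch of the square root is the one fixed on $\widetilde{\BB H}^{\times}$, and there $N(Z^{-1})^{\frac12}=N(Z)^{-\frac12}$ holds consistently. Consequently, for any integer $k$ and any of the basis matrices ${\bf X}\in\{{\bf T}^m_l,{\bf M}^m_l,{\bf B}^m_l\}$,
\[
\rho'_2 \bigl(\begin{smallmatrix} 0 & 1 \\ 1 & 0 \end{smallmatrix}\bigr)
\bigl( N(Z)^{k+\frac12} {\bf X}(Z) \bigr)
= N(Z)^{-\frac12}\cdot \rho'_2 \bigl(\begin{smallmatrix} 0 & 1 \\ 1 & 0 \end{smallmatrix}\bigr)\bigl( N(Z)^{k} {\bf X}(Z) \bigr).
\]
In other words, the integer-power formulas of Lemma \ref{W'-inversion-lem} apply verbatim after substituting $k\mapsto k+\frac12$ in the exponents. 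This holds because the derivation of that lemma only used that $N(Z)^k$ is a multiplicative scalar function inverted to $N(Z)^{-k}$, never that $k$ is an integer.

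The ${\bf M}$ formula is then immediate: the exponent $-(k+l+1)$ becomes $-(k+\frac12+l+1)=-(k+l+2)+\frac12$, and this matches the statement. For ${\bf C}^m_{k+\frac12,l}$, I would expand it by its definition as $(2k+1)N^{k-\frac12}{\bf B}^m_{l+1}-2(k+l+1)N^{k+\frac12}{\bf T}^m_{l-1}$ and apply the shifted versions of the ${\bf T}$ and ${\bf B}$ formulas of Lemma \ref{W'-inversion-lem} to each summand. This is exactly the computation that produced \eqref{I-inversion} for ${\bf C}^m_{k,l}=2kN^{k-1}{\bf B}^m_{l+1}-(2k+2l+1)N^k{\bf T}^m_{l-1}$. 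Indeed ${\bf C}^m_{k+\frac12,l}$ is precisely ${\bf C}^m_{k',l}$ with $k'=k+\frac12$ formally, since $2k'=2k+1$ and $2k'+2l+1=2(k+l+1)$. Hence \eqref{I-inversion} with $k'$ gives ${\bf C}^{-m}_{-k'-l,l}$, and $-k-\frac12-l=-(k+l+1)+\frac12$, which is the claimed index.

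The ${\bf T}$ formula is the only one requiring actual regrouping. Shifting the first identity of Lemma \ref{W'-inversion-lem} produces a combination of $N^{-(k+l+2)+\frac12}{\bf T}^{-m}_l$ and $N^{-(k+l+3)+\frac12}{\bf B}^{-m}_{l+2}$. I would then solve the definition of ${\bf C}^{-m}_{-(k+l+2)+\frac12,l+1}$ for its ${\bf B}^{-m}_{l+2}$ term. Writing $k''=-(k+l+2)$, that term is $(2k''+1)N^{k''-\frac12}{\bf B}^{-m}_{l+2}$, where $2k''+1=-(2k+2l+3)$. Substituting and collecting the coefficient of $N^{-(k+l+2)+\frac12}{\bf T}^{-m}_l$ should produce the stated coefficients $\frac{(-1)^{l+1}(2k+1)}{2k+2l+3}$ and $\frac{(-1)^l}{2k+2l+3}\frac{2(l+1)}{2l+3}$.

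The main obstacle is this bookkeeping: verifying that $\frac{1}{2l+3}-\frac{2(l+1)}{2l+3}\cdot\frac{2(k''+l+2)}{2k''+1}$ simplifies to $\frac{2k+1}{2k+2l+3}$ with the correct sign, and being careful about sign and branch conventions for $N^{\frac12}$ under inversion. I would double-check the result on the lowest case $l=0$, $m=0$ by direct computation.
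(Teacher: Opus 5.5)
Your plan is correct and is the paper's own route: the paper just says it modifies Lemma \ref{W'-inversion-lem} and \eqref{I-inversion}, and this works as you describe. The coefficients there are independent of $k$, the scalar factor $N(Z)^{k}$ is simply carried to $N(Z)^{-k}$ (using $N(Z^{-1})^{\frac12}=N(Z)^{-\frac12}$, the convention consistent with the stated ${\bf M}$-formula), and the derivation of \eqref{I-inversion} is a polynomial identity in $k$.

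One sign slip needs fixing in your bookkeeping. Solving the definition gives
\[
(2k''+1)N^{k''-\frac12}{\bf B}^{-m}_{l+2}={\bf C}^{-m}_{k''+\frac12,l+1}+2(k''+l+2)N^{k''+\frac12}{\bf T}^{-m}_l .
\]
Since $\tfrac{2(k''+l+2)}{2k''+1}=\tfrac{2k}{2k+2l+3}$, the quantity to check is $\tfrac{1}{2l+3}+\tfrac{2(l+1)}{2l+3}\cdot\tfrac{2k}{2k+2l+3}$, with a plus sign rather than a minus. It equals $\tfrac{2k+1}{2k+2l+3}$ because $(2k+1)(2l+3)=2k+2l+3+4k(l+1)$.
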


\begin{thm}  \label{W-half-decomp-thm}
The $\mathfrak{sp}(4,\BB C)$-module $(\rho'_2, \mathring {\cal W}'_{\frac12})$
contains an invariant subspace
\begin{equation}  \label{W-half-subspace}
\BB C\text{-span of } \left\{ \begin{matrix}
N(Z)^{k+\frac12} \cdot {\bf M}^0_0(Z),\: {\bf C}^m_{k+\frac12,l}(Z)
\text{ with } k \in \BB Z,\:l \ge 0,  \\
\begin{matrix} N(Z)^{k+\frac12} \cdot {\bf T}^m_l(Z), \\
  N(Z)^{k+\frac12} \cdot {\bf M}^m_l(Z) \end{matrix}
\text{ with } k \in \BB Z,\: l \ge 0,\: -(l+1) \le k \le -1  \end{matrix}
\right\}.
\end{equation}
The quotient
\[
\tfrac{\mathring {\cal W}'_{\frac12}}
{\BB C\text{-span of } \left\{ \begin{smallmatrix}
N(Z)^{k+\frac12} \cdot {\bf M}^0_0(Z),\: {\bf C}^m_{k+\frac12,l}(Z)
\text{ with } k \in \BB Z,\:l \ge 0,  \\
\begin{smallmatrix} N(Z)^{k+\frac12} \cdot {\bf T}^m_l(Z), \\
  N(Z)^{k+\frac12} \cdot {\bf M}^m_l(Z) \end{smallmatrix}
\text{ with } k \in \BB Z,\: l \ge 0,\: -(l+1) \le k \le -1  \end{smallmatrix}
\right\}}
\]
decomposes into a direct sum of two irreducible modules with $K$-types
\begin{equation}  \label{W-half-subquot1}
\BB C\text{-span of } \bigl\{
N(Z)^{k+\frac12} \cdot {\bf T}^m_l(Z),\: N(Z)^{k+\frac12} \cdot {\bf M}^m_{l+1}(Z);\:
k \ge 0,\: l \ge 0 \bigr\}
\end{equation}
isomorphic to $(\pi_{dl},{\cal F}^+) \simeq (\pi_{dr},{\cal G}^+)$
as $\mathfrak{sp}(4,\BB C)$-modules and
\begin{equation}    \label{W-half-subquot2}
\BB C\text{-span of } \bigl\{
N(Z)^{k+\frac12} \cdot {\bf T}^m_l(Z),\: N(Z)^{k-\frac12} \cdot {\bf M}^m_{l+1}(Z);\:
l \ge 0,\: k \le -(l+2) \bigr\}
\end{equation}
isomorphic to $(\pi_{dl},{\cal F}^-) \simeq (\pi_{dr},{\cal G}^-)$
as $\mathfrak{sp}(4,\BB C)$-modules.
\end{thm}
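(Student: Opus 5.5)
The plan follows the proof of Theorem \ref{W*-decomp-thm} closely, using the $K$-type basis of $(\rho'_2,\mathring {\cal W}'_{\frac12})$ just described.

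First I would compute the action of $\rho'_2 \bigl( \begin{smallmatrix} 0 & B \\ 0 & 0 \end{smallmatrix} \bigr)$, $B \in \HC\text{-sym}$, on the basis functions $N(Z)^{k+\frac12} \cdot {\bf T}^m_l(Z)$, $N(Z)^{k+\frac12} \cdot {\bf M}^m_l(Z)$ and ${\bf C}^m_{k+\frac12,l}(Z)$. By Lemma \ref{rho'_2-action-lem}, this action is just $-\tr(B\partial)$. The formulas already obtained in Subsection \ref{IrredDecom-subsection} for $\tfrac{\partial}{\partial t}$, $2\tfrac{\partial}{\partial z_{11}}$ and $2\tfrac{\partial}{\partial z_{22}}$ of $N(Z)^k\cdot{\bf T}^m_l$, $N(Z)^k\cdot{\bf M}^m_l$, $N(Z)^k\cdot{\bf B}^m_l$ and ${\bf C}^m_{k,l}$ were derived from \eqref{dtNR}--\eqref{dz_{22}NR}. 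These hold verbatim with $k$ replaced by $k+\frac12$, since the derivation of \eqref{dtNR}--\eqref{dz_{22}NR} never used integrality of $k$. One then re-expresses the result in the basis with ${\bf C}^m_{k+\frac12,l}$.

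The key bookkeeping is to track the coefficients that vanish, which are now $2k+1$, $2(k+l+1)$ and $2k+2l+1$ evaluated at half-integer shifts. These are the factors that produce the walls $k=-1$ and $k=-(l+1)$ (respectively $k\le -(l+2)$) in \eqref{W-half-subspace}, \eqref{W-half-subquot1} and \eqref{W-half-subquot2}. In particular, the arrow from ${\bf C}$-types back to ${\bf T}$- and ${\bf M}$-types must carry a factor that vanishes identically, or else land inside \eqref{W-half-subspace}. This is what makes the span of the ${\bf C}^m_{k+\frac12,l}$ and $N(Z)^{k+\frac12}{\bf M}^0_0$ invariant. Combining this with the inversion formulas of Lemma \ref{W-half-inversion-lem}, which give the $\bigl( \begin{smallmatrix} 0 & 0 \\ C & 0 \end{smallmatrix} \bigr)$ action by conjugation, I would check that \eqref{W-half-subspace} is $\mathfrak{sp}(4,\BB C)$-invariant. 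The inversion maps ${\bf C}$-types to ${\bf C}$-types and ${\bf M}$-types to ${\bf M}$-types, and it sends the window $-(l+1)\le k\le -1$ to itself. It also maps the $K$-types of \eqref{W-half-subquot1} to those of \eqref{W-half-subquot2} modulo the ${\bf C}$-part. Hence both of these are invariant in the quotient, and the quotient is their direct sum.

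For irreducibility and identification, I would take the following route. First, observe that \eqref{W-half-subquot1} and \eqref{W-half-subquot2} have the same $K$-types as the irreducible doubly regular quotients \eqref{W-quot-iso-2-reg+} and \eqref{W-quot-iso-2-reg-}. Next, show that every nonzero $K$-type in them is connected by nonvanishing arrows to the generator, as in Corollary \ref{2reg-generators-cor}. That generator is $N(Z)^{\frac12}{\bf M}^m_1$, respectively its inverse image under the inversion. Finally, to avoid recomputing everything, I would produce the isomorphism by duality. I would pair $\mathring {\cal W}'_{\frac12}$ with $\mathring {\cal W}'_{res}$ or $\mathring {\cal W}^*_{res}$ through the multiplication into $(\rho_M,M)$ and the map \eqref{M-integral}, and then match the quotients with the duals of the components in Theorem \ref{W*-decomp-thm} or Theorem \ref{W'-decomp-thm}. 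The main obstacle is this last identification: verifying that the pairing is $\mathfrak{sp}(4,\BB C)$-invariant with the correct weights, since $\alpha+\beta$ must equal $3$. If it is not, I would fall back on a direct check that the $K$-type diagram (arrows and nonvanishing coefficients) coincides with that of ${\cal F}^\pm$ from Theorem \ref{W'-decomp-thm}. Irreducible modules with matching multiplicity-one structure of this kind are then isomorphic, by the standard argument that the action is determined up to rescaling of basis vectors.
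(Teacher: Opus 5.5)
The invariance of the subspace \eqref{W-half-subspace} and the splitting of the quotient are handled as in the paper. The identification with ${\cal F}^\pm$, however, has a genuine gap, both in your main route and in your fallback.

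\textbf{The duality route has no valid target.}
\begin{itemize}
\item Pairing with $\mathring{\cal W}'_{res}$ is not invariant. The two $\rho'_2$-cocycles do not cancel inside $\tr(F_1F_2)$: already for $h\in U(2)$, a factor $a'^Ta'$ is left between $F_1$ and $F_2$.
\item Pairing with $\mathring{\cal W}^*_{res}$ has the right weights, $\alpha+\beta=3$. But both factors carry a half-integer power of $N(Z)$, so $\tr(FG)$ contains only integer powers and lies outside $M$. On such functions the integral over $\widetilde\Gamma$ vanishes identically: on $\widetilde\Gamma$, $N(Z)^kR^m_l(Z)\,dZ^3$ carries $e^{i(2k+l+3)\psi}$, which survives only for $l=0$, $k=-\tfrac32$.
\item The actual dual of $\mathring{\cal W}'_{\frac12}$ is $\rho^*_2$ on $\HC\text{-sym}\otimes\BB C[z_{11},z_{12},z_{22},N(Z)^{-1}]$. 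Its structure is not worked out anywhere, so it cannot serve as a known target.
\end{itemize}

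\textbf{The fallback fails as stated.} Equal $K$-types with the same pattern of nonvanishing arrows do not force an isomorphism. Rescaling $K$-types leaves invariant the products of coefficients around closed paths, such as a $\bigl(\begin{smallmatrix}0&B\\0&0\end{smallmatrix}\bigr)$-arrow followed by a $\bigl(\begin{smallmatrix}0&0\\C&0\end{smallmatrix}\bigr)$-arrow back, and these products can differ. For example, spherical principal series of $SL(2,\BB R)$ with different parameters have the same $K$-types and all arrows nonzero, yet are not isomorphic.

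\textbf{How the paper closes this step.} It matches actual coefficients through a direct vector-space isomorphism, not a duality. The isomorphism goes from \eqref{W-half-subquot1}, \eqref{W-half-subquot2} to the subquotients \eqref{W*-subquot1}, \eqref{W*-subquot2} of $\mathring{\cal W}^*_{res}$, both being spaces of half-integer powers. It is given by
\[
N(Z)^{k+\frac12}{\bf T}^m_l\leftrightsquigarrow(2k+1)N(Z)^{k-\frac12}\,\leftidx{^*}{{\bf M}}^m_{l+1}, \qquad
-(2l+1)N(Z)^{k+\frac12}{\bf M}^m_l\leftrightsquigarrow\leftidx{^*}{{\bf C}}^m_{k,l}.
\]
Note that ${\bf T}$ goes to $\leftidx{^*}{{\bf M}}$ with $l$ shifted, and ${\bf M}$ goes to $\leftidx{^*}{{\bf C}}$; it is not a match by name. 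The paper then checks two things:
\begin{itemize}
\item the $\bigl(\begin{smallmatrix}0&B\\0&0\end{smallmatrix}\bigr)$-coefficients agree exactly;
\item the inversion formulas of Lemmas \ref{W*-inversion-lem}, \ref{W-half-inversion-lem} and \eqref{I*-inversion} correspond.
\end{itemize}
Theorem \ref{W*-decomp-thm} then supplies both irreducibility and the identification with ${\cal F}^\pm$.

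\textbf{A valid alternative to your fallback: a lowest-weight argument.}
\begin{itemize}
\item First prove that \eqref{W-half-subquot1} is irreducible. Generation by a single element is not enough, as Proposition \ref{W-indecomp-prop} shows, so every $K$-type must also lead back to the generator.
\item Then observe that $N(Z)^{\frac12}{\bf T}^m_0$ is killed modulo \eqref{W-half-subspace} by every $\bigl(\begin{smallmatrix}0&B\\0&0\end{smallmatrix}\bigr)$. It also has the same $U(2)$-type, central character included, as the constants in ${\cal F}^+$.
\item Both modules are then the unique irreducible quotient of the same generalized Verma module.
\end{itemize}
The case \eqref{W-half-subquot2} is handled in the same way.
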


\begin{proof}
We compute the effect of
$\rho'_2 \bigl( \begin{smallmatrix} 0 & B \\ 0 & 0 \end{smallmatrix} \bigr)$,
$B \in \HC\text{-sym}$, on the $K$-types:
\begin{multline*}
\tfrac{\partial}{\partial t} \bigl( N(Z)^{k+\frac12} \cdot {\bf T}^m_l(Z) \bigr)
= \tfrac{2(l+2)(k+l+1)(l+m+1)}{(l+1)(2l+3)}
N(Z)^{k+\frac12} \cdot {\bf T}^m_{l-1}(Z)  \\
+ \tfrac{(2k+1)(l+1)(l-m+2)}{(2l+3)(l+2)} N(Z)^{k-\frac12} \cdot {\bf T}^m_{l+1}(Z)
- \tfrac{m(2k+1)}{(l+1)(l+2)} N(Z)^{k-\frac12} \cdot {\bf M}^m_{l+1}(Z)  \\
- \tfrac{l+m+1}{(2l+1)(2l+3)(l+1)} {\bf C}^m_{k+\frac12,l}(Z),
\end{multline*}
\begin{multline*}
2 \tfrac{\partial}{\partial z_{11}}
\bigl( N(Z)^{k+\frac12} \cdot {\bf T}^m_l(Z) \bigr)
= -\tfrac{2(l+2)(k+l+1)(l+m)(l+m+1)}{(l+1)(2l+3)}
N(Z)^{k+\frac12} \cdot {\bf T}^{m-1}_{l-1}(Z)  \\
+\tfrac{(2k+1)(l+1)(l-m+2)(l-m+3)}{(2l+3)(l+2)}
N(Z)^{k-\frac12} \cdot {\bf T}^{m-1}_{l+1}(Z)
- \tfrac{(2k+1)(l+m+1)(l-m+2)}{(l+1)(l+2)}
N(Z)^{k-\frac12} \cdot {\bf M}^{m-1}_{l+1}(Z)  \\
+\tfrac{(l+m)(l+m+1)}{(2l+1)(2l+3)(l+1)} {\bf C}^{m-1}_{k+\frac12,l}(Z),
\end{multline*}
\begin{multline*}
2 \tfrac{\partial}{\partial z_{22}}
\bigl( N(Z)^{k+\frac12} \cdot {\bf T}^m_l(Z) \bigr)
= \tfrac{2(l+2)(k+l+1)}{(l+1)(2l+3)}
N(Z)^{k+\frac12} \cdot {\bf T}^{m+1}_{l-1}(Z)  \\
- \tfrac{(2k+1)(l+1)}{(2l+3)(l+2)} N(Z)^{k-\frac12} \cdot {\bf T}^{m+1}_{l+1}(Z)
- \tfrac{2k+1}{(l+1)(l+2)} N(Z)^{k-\frac12} \cdot {\bf M}^{m+1}_{l+1}(Z)  \\
- \tfrac1{(2l+1)(2l+3)(l+1)} {\bf C}^{m+1}_{k+\frac12,l}(Z);
\end{multline*}
\begin{multline*}
\tfrac{\partial}{\partial t} \bigl( N(Z)^{k+\frac12} \cdot {\bf M}^m_l(Z) \bigr)
= - \tfrac{2m(k+l+1)}{l(l+1)} N(Z)^{k+\frac12} \cdot {\bf T}^m_{l-1}(Z)
- \tfrac{m}{(2l+1)(l+1)} {\bf C}^m_{k+\frac12,l}(Z)  \\
+ \tfrac{2(l+1)(k+l+1)(l+m)}{l(2l+1)} N(Z)^{k+\frac12} \cdot {\bf M}^m_{l-1}(Z)
+ \tfrac{l(2k+1)(l-m+1)}{(2l+1)(l+1)} N(Z)^{k-\frac12} \cdot {\bf M}^m_{l+1}(Z),
\end{multline*}
\begin{multline*}
2\tfrac{\partial}{\partial z_{11}}
\bigl( N(Z)^{k+\frac12} \cdot {\bf M}^m_l(Z) \bigr)
= - \tfrac{2(k+l+1)(l+m)(l-m+1)}{l(l+1)}
N(Z)^{k+\frac12} \cdot {\bf T}^{m-1}_{l-1}(Z)  \\
- \tfrac{2(l+1)(k+l+1)(l+m)(l+m-1)}{l(2l+1)}
N(Z)^{k+\frac12} \cdot {\bf M}^{m-1}_{l-1}(Z)
+ \tfrac{l(2k+1)(l-m+1)(l-m+2)}{(2l+1)(l+1)}
N(Z)^{k-\frac12} \cdot {\bf M}^{m-1}_{l+1}(Z) \\
- \tfrac{(l+m)(l-m+1)}{(2l+1)(l+1)} {\bf C}^{m-1}_{k+\frac12,l}(Z),
\end{multline*}
\begin{multline*}
2 \tfrac{\partial}{\partial z_{22}}
\bigl( N(Z)^{k+\frac12} \cdot {\bf M}^m_l(Z) \bigr)
= - \tfrac{2(k+l+1)}{l(l+1)} N(Z)^{k+\frac12} \cdot {\bf T}^{m+1}_{l-1}(Z)
- \tfrac1{(2l+1)(l+1)} {\bf C}^{m+1}_{k+\frac12,l}(Z)  \\
+ \tfrac{2(l+1)(k+l+1)}{l(2l+1)} N(Z)^{k+\frac12} \cdot {\bf M}^{m+1}_{l-1}(Z)
- \tfrac{l(2k+1)}{(2l+1)(l+1)} N(Z)^{k-\frac12} \cdot {\bf M}^{m+1}_{l+1}(Z);
\end{multline*}
\begin{align*}
\tfrac{\partial}{\partial t} {\bf C}^m_{k+\frac12,l}(Z)
&= \tfrac{2(k+l+1)(l+m)}{2l-1} {\bf C}^m_{k+\frac12,l-1}(Z)
+ \tfrac{(2k+1)(l-m+1)}{2l+3} {\bf C}^m_{k-\frac12,l+1}(Z),  \\
2 \tfrac{\partial}{\partial z_{11}} {\bf C}^m_{k+\frac12,l}(Z)
&= -\tfrac{2(k+l+1)(l+m)(l+m-1)}{2l-1} {\bf C}^{m-1}_{k+\frac12,l-1}(Z)
+ \tfrac{(2k+1)(l-m+1)(l-m+2)}{2l+3} {\bf C}^{m-1}_{k-\frac12,l+1}(Z),  \\
2 \tfrac{\partial}{\partial z_{22}} {\bf C}^m_{k+\frac12,l}(Z)
&= \tfrac{2(k+l+1)}{2l-1} {\bf C}^{m+1}_{k+\frac12,l-1}(Z)
- \tfrac{2k+1}{2l+3} {\bf C}^{m+1}_{k-\frac12,l+1}(Z).
\end{align*}

From this description of the action of
$\rho'_2 \bigl( \begin{smallmatrix} 0 & B \\ 0 & 0 \end{smallmatrix} \bigr)$
and Lemma \ref{W-half-inversion-lem} we see that the subspace
\eqref{W-half-subspace} is indeed $\mathfrak{sp}(4,\BB C)$-invariant.
It remains to show that the quotient modules \eqref{W-half-subquot1} and
\eqref{W-half-subquot2} are isomorphic to the quotients
\eqref{W*-subquot1} and \eqref{W*-subquot2} respectively.

Identify
\begin{align*}
N(Z)^{k+\frac12} \cdot {\bf T}^m_l(Z) &\leftrightsquigarrow
(2k+1) N(Z)^{k-\frac12} \cdot \leftidx{^*}{{\bf M}}^m_{l+1}(Z),
\qquad l \ge 0,  \\
-(2l+1) N(Z)^{k+\frac12} \cdot {\bf M}^m_l(Z) &\leftrightsquigarrow
\leftidx{^*}{{\bf C}}^m_{k,l}(Z),
\qquad l \ge 1.
\end{align*}
This produces vector space isomorphisms between \eqref{W-half-subquot1},
\eqref{W-half-subquot2} and \eqref{W*-subquot1}, \eqref{W*-subquot2} respectively.
Comparing the actions of
$\rho'_2 \bigl( \begin{smallmatrix} 0 & B \\ 0 & 0 \end{smallmatrix} \bigr)$
listed above with the actions of
$\rho^*_2 \bigl( \begin{smallmatrix} 0 & B \\ 0 & 0 \end{smallmatrix} \bigr)$
found in the proof of Theorem \ref{W*-decomp-thm}, we see that these isomorphisms
intertwine the actions of
$\bigl( \begin{smallmatrix} 0 & B \\ 0 & 0 \end{smallmatrix} \bigr)
\in \mathfrak{sp}(4,\BB C)$.
From Lemmas \ref{W*-inversion-lem}, \ref{W-half-inversion-lem} and equation
\eqref{I*-inversion} we see that these isomorphisms also intertwine the actions of
$\bigl( \begin{smallmatrix} 0 & 0 \\ C & 0 \end{smallmatrix} \bigr)
\in \mathfrak{sp}(4,\BB C)$.
Hence the $\mathfrak{sp}(4,\BB C)$-modules \eqref{W-half-subquot1} and
\eqref{W-half-subquot2} are isomorphic to \eqref{W*-subquot1} and
\eqref{W*-subquot2} respectively.
\end{proof}

We conclude this subsection with a comment that the dual of
$(\rho'_2, \mathring {\cal W}'_{\frac12})$ is
\[
\bigl( \rho^*_2,
\HC\text{-sym} \otimes \BB C [z_{11}, z_{12}, z_{22}, N(Z)^{-1}] \bigr).
\]
This dual $\mathfrak{sp}(4,\BB C)$-module
contains an irreducible submodule isomorphic to
$(\pi_{dl},{\cal F}^+) \simeq (\pi_{dr},{\cal G}^+)$
and another irreducible submodule isomorphic to
$(\pi_{dl},{\cal F}^-) \simeq (\pi_{dr},{\cal G}^-)$.

\section{Invariant Trilinear Forms}  \label{7}

In this section we study $\mathfrak{gl}(2,\HC)$-invariant and
$\mathfrak{sp}(4,\BB C)$-invariant trilinear forms
involving the space of doubly regular functions.

\subsection{The Obvious Invariant Trilinear Form}

Recall the representations $(\Sh,\rho)$ and $(\Sh',\rho')$ of
$\mathfrak{gl}(2,\HC)$:
\[
\Sh = \Sh' = \bigl\{\text{$\BB C$-valued polynomial functions on
$\HC^{\times}$}\bigr\}
= \BB C[z_{11},z_{12},z_{21},z_{22}, N(Z)^{-1}];
\]
the Lie algebra $\g{gl}(2,\HC)$ acts on $\Sh$ and $\Sh'$ by differentiating
the following group actions:
\begin{align*}
\rho(h): \: f(Z) \: &\mapsto \: \bigl( \rho(h)f \bigr)(Z) =
\frac {f \bigl( (aZ+b)(cZ+d)^{-1} \bigr)}{N(cZ+d)^2 \cdot N(a'-Zc')^2}, \\
\rho'(h): \: f(Z) \: &\mapsto \: \bigl( \rho(h)f \bigr)(Z) =
f \bigl( (aZ+b)(cZ+d)^{-1} \bigr),
\end{align*}
where $f \in \Sh$ or $\Sh'$,
$h = \bigl(\begin{smallmatrix} a' & b' \\ c' & d' \end{smallmatrix}\bigr)
\in GL(2,\HC)$ and 
$h^{-1} = \bigl(\begin{smallmatrix} a & b \\ c & d \end{smallmatrix}\bigr)$.
Recall Theorems 31 and 32 from \cite{ATMP}.

\begin{thm}  \label{rho-decomposition}
The only proper $\mathfrak{gl}(2,\HC)$-invariant subspaces of $(\rho,\Sh)$ are
\begin{align*}
\Sh^+ &= \BB C\text{-span of }
\bigl\{ N(Z)^k \cdot t^l_{n\,\underline{m}}(Z);\: k \ge 0 \bigr\}, \\
\Sh^- &= \BB C\text{-span of }
\bigl\{ N(Z)^k \cdot t^l_{n\,\underline{m}}(Z);\: k \le -(2l+4) \bigr\}, \\
{\cal I}^+ &= \BB C\text{-span of }
\bigl\{ N(Z)^k \cdot t^l_{n\,\underline{m}}(Z);\: k \ge -(2l+1) \bigr\}, \\
{\cal I}^- &= \BB C\text{-span of }
\bigl\{ N(Z)^k \cdot t^l_{n\,\underline{m}}(Z);\: k \le -3 \bigr\}, \\
{\cal J} &= \BB C\text{-span of }
\bigl\{ N(Z)^k \cdot t^l_{n\,\underline{m}}(Z);\: -(2l+1) \le k \le -3 \bigr\}
\end{align*}
and their sums (see Figure \ref{Sh-decomposition-fig1}).

The irreducible components of $(\rho,\Sh)$ are the subrepresentations
\[
(\rho, \Sh^+), \qquad (\rho, \Sh^-), \qquad (\rho, {\cal J})
\]
and the quotients
\begin{equation*}  
\bigl( \rho, {\cal I}^+/(\Sh^+ \oplus {\cal J}) \bigr), \quad
\bigl( \rho, {\cal I}^-/(\Sh^- \oplus {\cal J}) \bigr), \quad
\bigl( \rho, \Sh/({\cal I}^++{\cal I}^-) \bigr)
\end{equation*}
(see Figure \ref{Sh-decomposition-fig1-comp}).
\end{thm}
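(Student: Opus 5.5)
The plan is to mirror the proof of Theorem \ref{rho-alpha-beta-thm} (and of Theorem 7 in \cite{desitter}), now for $\mathfrak{gl}(2,\HC)$ acting on $\Sh=\BB C[z_{11},z_{12},z_{21},z_{22},N(Z)^{-1}]$ via $\rho$.

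\textbf{Step 1: a $K$-type basis.} Let $K=U(2)\times U(2)$ be the maximal compact subgroup of $U(2,2)$. The functions $N(Z)^k\cdot t^l_{n\,\underline{m}}(Z)$, with $k\in\BB Z$, $l\in\frac12\BB Z_{\ge0}$ and $-l\le m,n\le l$, form a basis of $\Sh$; this is the four-variable analogue of Proposition \ref{basis-prop}, coming from the decomposition of polynomials into harmonics times powers of $N(Z)$. For fixed $(k,l)$ these functions span the irreducible $K$-type $V_l\boxtimes V_l$. The centre of $\mathfrak{gl}(2,\HC)$ and the degree operator separate the pairs $(k,l)$, so every $K$-type has multiplicity one. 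Consequently every invariant subspace is a span of whole $K$-types $(k,l)$, and it can be read off from a directed graph on the lattice of points $(k,l)$.

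\textbf{Step 2: the edges of the graph.} Next I compute the action of $\rho\bigl(\begin{smallmatrix}0&B\\0&0\end{smallmatrix}\bigr)=-\tr(B\partial)$ on $N(Z)^k t^l_{n\,\underline m}$, using the derivative formulas for matrix coefficients (Lemma \ref{intertwiner-lem}, equivalently Lemma 22 of \cite{FL1}) together with $\partial N(Z)=N(Z)Z^{-1}$-type identities. This produces exactly two arrows:
\begin{itemize}
\item $(k,l)\to(k,l-\frac12)$, with coefficient proportional to $(k+2l+1)$;
\item $(k,l)\to(k-1,l+\frac12)$, with coefficient proportional to $k$.
\end{itemize}
For $\rho\bigl(\begin{smallmatrix}0&0\\C&0\end{smallmatrix}\bigr)$ I would avoid a second direct computation. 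Instead, apply the inversion $\rho\bigl(\begin{smallmatrix}0&1\\1&0\end{smallmatrix}\bigr)$, which sends $N(Z)^k t^l$ to a multiple of $N(Z)^{-k-2l-4}t^l$ because of the weight $N^{-2}\cdot N^{-2}$. Conjugating the $B$-arrows by the inversion gives the two $C$-arrows:
\begin{itemize}
\item $(k,l)\to(k,l+\frac12)$, vanishing when $k+2l+3=0$ (i.e. $k=-(2l+3)$), up to the exact indexing;
\item $(k,l)\to(k+1,l-\frac12)$, vanishing when $k+3=0$.
\end{itemize}

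\textbf{Step 3: read off the invariant subspaces and components.} The arrows vanish exactly along the walls $k=0$, $k=-(2l+1)$, $k=-3$ and $k=-(2l+4)$, and nowhere else. Inside each region cut out by these walls the graph is strongly connected. A subspace that is a union of $K$-types is invariant precisely when it is closed under all nonvanishing arrows. Checking closure shows that the minimal nonzero invariant subspaces are $\Sh^+$, $\Sh^-$ and ${\cal J}$, and that the other invariant subspaces are ${\cal I}^\pm$ and the sums of these. Strong connectivity of each region gives irreducibility of the three subspaces and of the three listed quotients.

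\textbf{Main obstacle.} The delicate part is the precise zero sets of the coefficients, including the off-by-one positions of the walls and the small-$l$ boundary cases such as $l=0$ and $l=\frac12$. I would pin these down by first testing the formulas on low-degree examples, for instance $N(Z)^{-3}$ and $N(Z)^{-4}$.
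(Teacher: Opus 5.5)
Your proposal is correct and follows the same approach as the source of this result. The paper only recalls the theorem from \cite{ATMP}. The argument there is the same $K$-type analysis this paper uses for Theorem \ref{rho-alpha-beta-thm}: multiplicity-one $K$-types $N(Z)^k t^l_{n\,\underline{m}}(Z)$, plus the arrows of $\rho\bigl(\begin{smallmatrix}0&B\\0&0\end{smallmatrix}\bigr)$ and $\rho\bigl(\begin{smallmatrix}0&0\\C&0\end{smallmatrix}\bigr)$. Getting the $C$-arrows by conjugating with the inversion is a harmless shortcut. Your list of walls, the strong connectivity of the six regions, and the resulting invariant subspaces and components in Step 3 are all right.

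Two small fixes are needed.

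\textbf{Step 2, off-by-one.} Conjugate the arrow $(k,l)\to(k-1,l+\frac12)$, whose coefficient is $\propto k$, by the inversion $k\mapsto -k-2l-4$. This gives the arrow $(k,l)\to(k,l+\frac12)$ with coefficient $\propto k+2l+4$, not $k+2l+3$. A direct check confirms this: $\rho\bigl(\begin{smallmatrix}0&0\\C&0\end{smallmatrix}\bigr)=\tr(ZCZ\partial+4CZ)$, so $\rho\bigl(\begin{smallmatrix}0&0\\C&0\end{smallmatrix}\bigr)N(Z)^k=(k+4)\tr(CZ)\,N(Z)^k$. Hence $N(Z)^{-4}$ is killed but $N(Z)^{-3}$ is not. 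This is the wall $k=-(2l+4)$ that you correctly use in Step 3, so Step 2 should be brought into line with it.

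\textbf{Step 1, the centre.} The centre of $\mathfrak{gl}(2,\HC)$ acts trivially on $(\rho,\Sh)$, so it separates no $K$-types. Multiplicity one comes from the degree operator together with $l$. The degree operator lies in $\mathfrak{k}_{\BB C}$: $\bigl(\begin{smallmatrix}1&0\\0&0\end{smallmatrix}\bigr)$ acts by $-(\deg+4)$.
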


\begin{figure}
\begin{center}
\setlength{\unitlength}{1mm}
\begin{picture}(120,70)
\multiput(10,10)(10,0){11}{\circle*{1}}
\multiput(10,20)(10,0){11}{\circle*{1}}
\multiput(10,30)(10,0){11}{\circle*{1}}
\multiput(10,40)(10,0){11}{\circle*{1}}
\multiput(10,50)(10,0){11}{\circle*{1}}
\multiput(10,60)(10,0){11}{\circle*{1}}

\thicklines
\put(80,0){\vector(0,1){70}}
\put(0,10){\vector(1,0){120}}

\thinlines
\put(78,10){\line(0,1){55}}
\put(80,8){\line(1,0){35}}
\qbezier(78,10)(78,8)(80,8)

\put(52,30){\line(0,1){35}}
\put(48.6,28.6){\line(-1,1){36.4}}
\qbezier(52,30)(52,25.2)(48.6,28.6)

\put(5,8){\line(1,0){35}}
\put(41.4,11.4){\line(-1,1){36.4}}
\qbezier(40,8)(44.8,8)(41.4,11.4)

\put(73,7){\line(1,0){42}}
\put(66,10){\line(-1,1){55}}
\qbezier(73,7)(69,7)(66,10)

\put(5,7){\line(1,0){45}}
\put(53,10){\line(0,1){55}}
\qbezier(50,7)(53,7)(53,10)


\put(82,67){$2l$}
\put(117,12){$k$}
\put(3,24){$\Sh^-$}
\put(33,62){${\cal J}$}
\put(112,44){$\Sh^+$}
\put(25,3){${\cal I}^-$}
\put(95,3){${\cal I}^+$}
\end{picture}
\end{center}
\caption{Decomposition of $(\rho,\Sh)$ into irreducible components.}
\label{Sh-decomposition-fig1}
\end{figure}

\begin{figure}
\begin{center}
\setlength{\unitlength}{1mm}
\begin{picture}(120,70)
\multiput(10,10)(10,0){11}{\circle*{1}}
\multiput(10,20)(10,0){11}{\circle*{1}}
\multiput(10,30)(10,0){11}{\circle*{1}}
\multiput(10,40)(10,0){11}{\circle*{1}}
\multiput(10,50)(10,0){11}{\circle*{1}}
\multiput(10,60)(10,0){11}{\circle*{1}}

\thicklines
\put(80,0){\vector(0,1){70}}
\put(0,10){\vector(1,0){120}}

\thinlines

\put(60,10){\circle{4}}

\put(72,10){\line(0,1){55}}
\put(58,20){\line(0,1){45}}
\put(68.6,8.6){\line(-1,1){10}}
\qbezier(72,10)(72,5.2)(68.6,8.6)
\qbezier(58,20)(58,19.2)(58.6,18.6)

\put(52,10){\line(0,1){10}}
\put(48.6,8.6){\line(-1,1){43.6}}
\put(51.4,21.4){\line(-1,1){43.6}}
\qbezier(52,10)(52,5.2)(48.6,8.6)
\qbezier(52,20)(52,20.8)(51.4,21.4)

\put(78,10){\line(0,1){55}}
\put(80,8){\line(1,0){35}}
\qbezier(78,10)(78,8)(80,8)

\put(52,30){\line(0,1){35}}
\put(48.6,28.6){\line(-1,1){36.4}}
\qbezier(52,30)(52,25.2)(48.6,28.6)

\put(5,8){\line(1,0){35}}
\put(41.4,11.4){\line(-1,1){36.3}}
\qbezier(40,8)(44.8,8)(41.4,11.4)

\put(82,68){$2l$}
\put(117,12){$k$}
\put(3,24){$\Sh^-$}
\put(33,62){${\cal J}$}
\put(112,44){$\Sh^+$}

\put(54,66){${\cal I}^+/(\Sh^+ \oplus {\cal J})$}
\put(-9,54){${\cal I}^-/(\Sh^- \oplus {\cal J})$}

\put(50,3){$\Sh/({\cal I}^++{\cal I}^-)$}
\end{picture}
\end{center}
\caption{Irreducible components of $(\rho,\Sh)$.}
\label{Sh-decomposition-fig1-comp}
\end{figure}

\begin{thm}  \label{rho'-decomposition}
The only proper $\mathfrak{gl}(2,\HC)$-invariant subspaces of $(\rho',\Sh')$ are
\begin{align*}
{\cal I}'_0 &= \BB C = \BB C\text{-span of }
\bigl\{ N(Z)^0 \cdot t^0_{0\,\underline{0}}(Z) \bigr\}, \\
{\cal BH}^+ &= \BB C\text{-span of }
\bigl\{ N(Z)^k \cdot t^l_{n\,\underline{m}}(Z);\: 0 \le k \le 1 \bigr\}, \\
{\cal BH}^- &= \BB C\text{-span of }
\bigl\{ N(Z)^k \cdot t^l_{n\,\underline{m}}(Z);\: -1 \le 2l+k \le 0 \bigr\}, \\
\Sh^+ &= \BB C\text{-span of }
\bigl\{ N(Z)^k \cdot t^l_{n\,\underline{m}}(Z);\: k \ge 0 \bigr\}, \\
\Sh'^- &= \BB C\text{-span of }
\bigl\{ N(Z)^k \cdot t^l_{n\,\underline{m}}(Z);\: k \le -2l \bigr\}, \\
{\cal I}'^+ &= \BB C\text{-span of }
\bigl\{ N(Z)^k \cdot t^l_{n\,\underline{m}}(Z);\: k \ge -(2l+1) \bigr\}, \\
{\cal I}'^- &= \BB C\text{-span of }
\bigl\{ N(Z)^k \cdot t^l_{n\,\underline{m}}(Z);\: k \le 1 \bigr\}, \\
{\cal J}' &= \BB C\text{-span of }
\bigl\{ N(Z)^k \cdot t^l_{n\,\underline{m}}(Z);\: -(2l+1) \le k \le 1 \bigr\}
\end{align*}
and their sums (see Figure \ref{Sh-decomposition-fig2}).

The irreducible components of $(\rho',\Sh')$ are the trivial
subrepresentation $(\rho', {\cal I}'_0)$ and the quotients
\[
(\rho', {\cal BH}^+/{\cal I}'_0), \quad (\rho', {\cal BH}^-/{\cal I}'_0),
\]
\[
(\rho', \Sh^+/{\cal BH}^+) = (\rho',\Sh/{\cal I}'^-), \quad
(\rho', \Sh'^-/{\cal BH}^-) = (\rho',\Sh/{\cal I}'^+),
\]
\[
\bigl( \rho', \Sh/({\cal I}'^++{\cal I}'^-) \bigr) =
\bigl( \rho', {\cal I}'^+/(\Sh^++{\cal BH}^-) \bigr) =
\bigl( \rho', {\cal I}'^-/(\Sh'^-+{\cal BH}^+) \bigr) =
\bigl( \rho', {\cal J}'/({\cal BH}^++{\cal BH}^-) \bigr)
\]
(see Figure \ref{Sh-decomposition-fig3},
which is essentially a shifted Figure \ref{Sh-decomposition-fig1-comp}).
\end{thm}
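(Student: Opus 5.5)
The plan is to reproduce the $K$-type argument used for Theorem \ref{rho-alpha-beta-thm} (and for Theorem 7 in \cite{desitter}), now for $\mathfrak{gl}(2,\HC)$ acting on $\Sh'$ by $\rho'$. The basis is $N(Z)^k \cdot t^l_{n\,\underline{m}}(Z)$ with $k \in \BB Z$, $l \in \frac12\BB Z_{\ge0}$. For fixed $(k,l)$ these functions span an irreducible $K$-type $V_l \boxtimes V_l$ of the maximal compact subalgebra $\mathfrak{u}(2)\oplus\mathfrak{u}(2)$. No $K$-type occurs twice, so every invariant subspace is a span of whole $K$-types. The problem therefore reduces to deciding which $K$-types generate which, and recording this as a directed graph on the lattice points $(k,2l)$.

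First I would compute the action of $\rho'\bigl(\begin{smallmatrix} 0 & B \\ 0 & 0 \end{smallmatrix}\bigr) = -\tr(B\partial)$. Using $\partial N(Z) = Z^+$, the harmonicity of the $t^l_{n\,\underline{m}}$, and Lemmas 22--23 of \cite{FL1} (the formulas behind Lemma \ref{intertwiner-lem}), I expect
\[
\partial\bigl(N(Z)^k t^l_{n\,\underline{m}}(Z)\bigr)
= (k+2l+1)\, N(Z)^k \cdot(\text{matrix of } t^{l-\frac12}\text{'s})
+ k\, N(Z)^{k-1}\cdot(\text{matrix of } t^{l+\frac12}\text{'s}),
\]
with nonzero Clebsch--Gordan-type coefficients inside the matrices. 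The effect is two arrows: a vertical one $(k,2l)\to(k,2l-1)$, which disappears exactly when $l=0$ or $k+2l+1=0$, and a diagonal one $(k,2l)\to(k-1,2l+1)$, which disappears exactly when $k=0$.

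Next, to avoid computing the $\bigl(\begin{smallmatrix} 0 & 0 \\ C & 0 \end{smallmatrix}\bigr)$ action directly, I would use the inversion $\rho'\bigl(\begin{smallmatrix} 0 & 1 \\ 1 & 0 \end{smallmatrix}\bigr): f(Z)\mapsto f(Z^{-1})$. Since $t^l(Z^{-1}) = N(Z)^{-2l}\,t^l(Z^+)$ up to reindexing, it sends $N^k t^l$ to a multiple of $N^{-k-2l} t^l$, which is the reflection $k\mapsto -k-2l$. Conjugating the $B$-arrows by this reflection gives the $C$-arrows. They vanish along the reflected walls $k=-2l$ and $k=1$, while the $B$-arrows vanish along $k=0$ and $k=-(2l+1)$.

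The invariant subspaces are then exactly the unions of lattice regions closed under all arrows. Reading off the four walls gives $\Sh^+$, $\Sh'^-$, ${\cal I}'^{\pm}$, ${\cal J}'$, the strips ${\cal BH}^{\pm}$, and the point $(0,0)$, i.e.\ ${\cal I}'_0=\BB C$. That ${\cal I}'_0$ is invariant is immediate, since constants are killed by $\partial$ and fixed by the inversion. Irreducibility of each listed quotient follows because, inside each region cut out by the walls, the surviving arrows connect every lattice point to every other.

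The main obstacle is the coefficient computation. One has to confirm that the only zeros of the arrow coefficients are the stated walls, with no accidental cancellation. The delicate spots are the corner points near $(0,0)$ and $(1,0)$, where the walls $k=0$, $k=1$, $k=-2l$, $k=-(2l+1)$ meet. There I would check by hand, on the lowest $K$-types, that the arrows are exactly as claimed, so that ${\cal BH}^{\pm}/{\cal I}'_0$ are irreducible and the identifications of the middle quotient listed in the statement all coincide.
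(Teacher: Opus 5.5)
Your proposal is correct and uses the same $K$-type argument that underlies this result. The theorem is quoted from \cite{ATMP}, whose proof is of the same type as that of Theorem \ref{rho-alpha-beta-thm}: multiplicity-one $K$-types $N(Z)^k t^l_{n\,\underline{m}}(Z)$, $B$-arrows read off from $\partial(N^k\phi)$ with coefficients $\frac{k+2l+1}{2l+1}$ and $k$, and $C$-arrows obtained by conjugating with the inversion $k\mapsto -k-2l$. A direct computation of $Z\,\partial(N^k\phi)\,Z$ confirms your reflected walls: the upward coefficient is proportional to $k+2l$ and the diagonal one to $k-1$.

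One caution about your final check. Every $(k,2l)$ with $k\ge 2$ lies in ${\cal I}'^+$, and every one with $k\le 1$ lies in ${\cal I}'^-$. So you will find ${\cal I}'^++{\cal I}'^-=\Sh'$, and the first expression $\Sh/({\cal I}'^++{\cal I}'^-)$ in the last display is zero; it appears to be carried over from Theorem \ref{rho-decomposition}. Only the other three quotients describe the middle component $\{-2l+1\le k\le -1\}$, so that one identification cannot be verified and should be dropped from what you check.
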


\begin{figure}
\begin{center}
\setlength{\unitlength}{1mm}
\begin{picture}(120,70)
\multiput(10,10)(10,0){11}{\circle*{1}}
\multiput(10,20)(10,0){11}{\circle*{1}}
\multiput(10,30)(10,0){11}{\circle*{1}}
\multiput(10,40)(10,0){11}{\circle*{1}}
\multiput(10,50)(10,0){11}{\circle*{1}}
\multiput(10,60)(10,0){11}{\circle*{1}}

\thicklines
\put(60,0){\vector(0,1){70}}
\put(0,10){\vector(1,0){120}}

\thinlines

\put(60,10){\circle{3}}

\put(50,8){\line(1,0){20}}
\put(72,10){\line(0,1){55}}
\put(48.6,8.6){\line(-1,1){43.6}}
\qbezier(48.6,8.6)(49.2,8)(50,8)
\qbezier(72,10)(72,8)(70,8)

\put(57.5,10){\line(0,1){55}}
\put(73.5,10){\line(0,1){55}}
\put(60,7.5){\line(1,0){10}}
\qbezier(57.5,10)(57.5,7.5)(60,7.5)
\qbezier(73.5,10)(73.5,7.5)(70,7.5)

\put(56.5,10){\line(0,1){55}}
\put(60,6.5){\line(1,0){55}}
\qbezier(56.5,10)(56.5,6.5)(60,6.5)

\put(50,7){\line(1,0){10}}
\put(62.1,12.1){\line(-1,1){52.9}}
\put(47.9,7.9){\line(-1,1){42.8}}
\qbezier(60,7)(67.2,7)(62.1,12.1)
\qbezier(47.9,7.9)(48.8,7)(50,7)

\put(5,6){\line(1,0){55}}
\put(62.8,12.8){\line(-1,1){52.2}}
\qbezier(60,6)(69.6,6)(62.8,12.8)

\put(50,5.5){\line(1,0){65}}
\put(46.1,6.1){\line(-1,1){41.1}}
\qbezier(46.1,6.1)(46.7,5.5)(50,5.5)

\put(5,5){\line(1,0){65}}
\put(75,10){\line(0,1){55}}
\qbezier(70,5)(75,5)(75,10)

\put(62,68){$2l$}
\put(117,12){$k$}
\put(62,62){${\cal BH}^+$}
\put(3,54){${\cal BH}^-$}
\put(3,24){$\Sh'^-$}
\put(33,62){${\cal J}'$}
\put(61,1){${\cal I}'_0$}
\put(112,44){$\Sh^+$}
\put(25,1){${\cal I}'^-$}
\put(95,1){${\cal I}'^+$}
\end{picture}
\end{center}
\caption{Decomposition of $(\rho',\Sh')$ into irreducible components.}
\label{Sh-decomposition-fig2}
\end{figure}

\begin{figure}
\begin{center}
\setlength{\unitlength}{1mm}
\begin{picture}(120,70)
\multiput(10,10)(10,0){11}{\circle*{1}}
\multiput(10,20)(10,0){11}{\circle*{1}}
\multiput(10,30)(10,0){11}{\circle*{1}}
\multiput(10,40)(10,0){11}{\circle*{1}}
\multiput(10,50)(10,0){11}{\circle*{1}}
\multiput(10,60)(10,0){11}{\circle*{1}}

\thicklines
\put(60,0){\vector(0,1){70}}
\put(0,10){\vector(1,0){120}}

\thinlines

\put(60,10){\circle{4}}

\put(72,10){\line(0,1){55}}
\put(58,20){\line(0,1){45}}
\put(68.6,8.6){\line(-1,1){10}}
\qbezier(72,10)(72,5.2)(68.6,8.6)
\qbezier(58,20)(58,19.2)(58.6,18.6)

\put(52,10){\line(0,1){10}}
\put(48.6,8.6){\line(-1,1){43.6}}
\put(51.4,21.4){\line(-1,1){43.6}}
\qbezier(52,10)(52,5.2)(48.6,8.6)
\qbezier(52,20)(52,20.8)(51.4,21.4)

\put(78,10){\line(0,1){55}}
\put(80,8){\line(1,0){35}}
\qbezier(78,10)(78,8)(80,8)

\put(52,30){\line(0,1){35}}
\put(48.6,28.6){\line(-1,1){36.4}}
\qbezier(52,30)(52,25.2)(48.6,28.6)

\put(5,8){\line(1,0){35}}
\put(41.4,11.4){\line(-1,1){36.3}}
\qbezier(40,8)(44.8,8)(41.4,11.4)

\put(62,68){$2l$}
\put(117,12){$k$}
\put(63,3){${\cal BH}^+/{\cal I}'_0$}
\put(-4,54){${\cal BH}^-/{\cal I}'_0$}
\put(-4,24){$\Sh'^-/{\cal BH}^-$}
\put(18,64){${\cal J}'/({\cal BH}^++{\cal BH}^-)$}
\put(55,5){${\cal I}'_0$}
\put(110,44){$\Sh^+/{\cal BH}^+$}
\end{picture}
\end{center}
\caption{Irreducible components of $(\rho',\Sh')$.}
\label{Sh-decomposition-fig3}
\end{figure}

The projection of $(\rho,\Sh)$ onto its trivial 1-dimensional component
$\Sh/({\cal I}^++{\cal I}^-)$ can be expressed as an integral
\begin{equation}  \label{Sh-projection}
f \: \mapsto \: \frac i{2\pi^3} \int_{U(2)_R} f(Z) \,dV,
\qquad f \in \Sh,
\end{equation}
where  $dV$ is a holomorphic 4-form defined by
\[
dV = dz^0 \wedge dz^1 \wedge dz^2 \wedge dz^3
= \frac14 dz_{11} \wedge dz_{12} \wedge dz_{21} \wedge dz_{22}.
\]

Regular functions were reviewed in Subsection \ref{1-reg-review}.
Consider a multiplication map
\begin{equation}  \label{gFf-multiplication}
(\pi_r, {\cal V}') \times (\rho'_2, {\cal W}') \times (\pi_l, {\cal V})
\to (\rho, \Sh), \qquad (g, F, f) \mapsto gFf.
\end{equation}

\begin{prop}  \label{gFf-prop}
The multiplication map \eqref{gFf-multiplication} is
$\mathfrak{gl}(2,\HC)$-equivariant.
\end{prop}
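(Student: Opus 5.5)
The plan is to check equivariance at the group level for $GL(2,\HC)$, or rather for elements near the identity, and then differentiate. Fix $h = \bigl(\begin{smallmatrix} a' & b' \\ c' & d' \end{smallmatrix}\bigr)$ with $h^{-1} = \bigl(\begin{smallmatrix} a & b \\ c & d \end{smallmatrix}\bigr)$, and write $\tilde Z = (aZ+b)(cZ+d)^{-1}$. The goal is the identity
\[
\bigl(\pi_r(h)g\bigr)(Z)\cdot\bigl(\rho'_2(h)F\bigr)(Z)\cdot\bigl(\pi_l(h)f\bigr)(Z)
= \bigl(\rho(h)(gFf)\bigr)(Z).
\]

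The key input is the standard identity relating the two ways of writing the transformed point, as in Lemma 10 of \cite{FL1}:
\[
(a'-Zc')^{-1}(-b'+Zd') = (aZ+b)(cZ+d)^{-1} = \tilde Z .
\]
Hence all three factors on the left are evaluated at the same point $\tilde Z$. Writing out the left side using \eqref{pi_l}, \eqref{pi_r} and the definition of $\rho'_2$ gives
\[
g(\tilde Z)\,\frac{(a'-Zc')^{-1}}{N(a'-Zc')}\cdot\frac{a'-Zc'}{N(a'-Zc')}\,F(\tilde Z)\,\frac{cZ+d}{N(cZ+d)}\cdot\frac{(cZ+d)^{-1}}{N(cZ+d)}\,f(\tilde Z).
\]
The matrix factors $(a'-Zc')^{-1}(a'-Zc')$ and $(cZ+d)(cZ+d)^{-1}$ cancel, since quaternionic multiplication is associative. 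The scalar factors $N(\cdot)$ commute with everything and collect into $N(a'-Zc')^{-2}N(cZ+d)^{-2}$. The result is exactly $(gFf)(\tilde Z)/\bigl(N(cZ+d)^2N(a'-Zc')^2\bigr) = (\rho(h)(gFf))(Z)$.

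It remains to pass to the Lie algebra. The identity holds for all $h$ in a neighborhood of the identity, as an identity of rational functions on $\HC^{\times}$, so it can be differentiated at $h=1$. This yields $\rho(X)(gFf) = (\pi_r(X)g)Ff + g(\rho'_2(X)F)f + gF(\pi_l(X)f)$ for $X\in\mathfrak{gl}(2,\HC)$, which is the asserted equivariance.

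One more point needs checking: the product $gFf$ must lie in $\Sh$. Here $g$ is $\BB S'$-valued, $F$ is $\HC=\BB S\otimes\BB S'$-valued and $f$ is $\BB S$-valued, so $gFf$ is scalar. It is also polynomial in $z_{ij}$ and $N(Z)^{-1}$, so $gFf\in\Sh$.

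The main obstacle is only the bookkeeping of the point identity $(a'-Zc')^{-1}(-b'+Zd')=\tilde Z$. If one prefers to avoid it, an alternative is to compare the Lie algebra formulas directly, using Lemma \ref{pi-Lie_alg-action}, Lemma 26 of \cite{ATMP} and the $\rho$-action from \cite{FL1}, on the generators $\bigl(\begin{smallmatrix} A&0\\0&0\end{smallmatrix}\bigr)$, $\bigl(\begin{smallmatrix} 0&B\\0&0\end{smallmatrix}\bigr)$, $\bigl(\begin{smallmatrix} 0&0\\C&0\end{smallmatrix}\bigr)$, $\bigl(\begin{smallmatrix} 0&0\\0&D\end{smallmatrix}\bigr)$. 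In that approach one checks that the zeroth-order terms $-gA$, $AF$, $-ZCF$, $CZf$, and so on cancel in pairs, and that the scalar trace terms add up to those of $\rho$.
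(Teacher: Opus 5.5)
Your proposal is correct and follows the paper's own proof: a group-level computation at $\tilde Z$ in which $(a'-Zc')^{-1}(a'-Zc')$ and $(cZ+d)(cZ+d)^{-1}$ cancel and the norm factors collect into $\rho(h)$, followed by differentiation. It also usefully makes explicit the identity $(a'-Zc')^{-1}(-b'+Zd')=\tilde Z$ and the differentiation step, both of which the paper uses without comment.
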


\begin{proof}
We will show that, for all $h \in GL(2,\HC)$, the map
\eqref{gFf-multiplication} commutes with the action of $h$. Writing
\[
h= \bigl(\begin{smallmatrix} a' & b' \\ c' & d' \end{smallmatrix}\bigr), \qquad
h^{-1}= \bigl(\begin{smallmatrix} a & b \\ c & d \end{smallmatrix}\bigr), \qquad
\tilde Z = (aZ+b)(cZ+d)^{-1},
\]
we obtain:
\begin{multline*}
\bigl( \pi_r(h)g \bigr)(Z) \cdot \bigl(\rho'_2(h)F \bigr)(Z) \cdot
\bigl(\pi_l(h) f \bigr)(Z)  \\
= g(\tilde Z) \cdot \frac{(a'-Zc')^{-1}}{N(a'-Zc')} \cdot
\frac{a'-Zc'}{N(a'-Zc')} \cdot F(\tilde Z) \cdot \frac{cZ+d}{N(cZ+d)}
\cdot \frac{(cZ+d)^{-1}}{N(cZ+d)} \cdot f(\tilde Z)  \\
= \frac{(gFf)(\tilde Z)}{N(cZ+d)^2 \cdot N(a'-Zc')^2}
= \bigl(\rho(h) (gFf)\bigr)(Z).
\end{multline*}
Then the  $\mathfrak{gl}(2,\HC)$-equivariance follows.
\end{proof}

Combining the multiplication map \eqref{gFf-multiplication}
with the projection \eqref{Sh-projection}, we obtain a
$\mathfrak{gl}(2,\HC)$-invariant trilinear form
\begin{equation}  \label{3form-obvious}
(\pi_r, {\cal V}') \times (\rho'_2, {\cal W}') \times (\pi_l, {\cal V})
\to \BB C, \qquad
(g, F, f) \mapsto \frac i{2\pi^3} \int_{U(2)_R} g(Z) \cdot F(Z) \cdot f(Z) \,dV.
\end{equation}

Recall from \cite{FL1,ATMP} that the dual $\mathfrak{gl}(2,\HC)$-module
of $(\rho'_2, {\cal W}')$ is $(\rho_2, {\cal W})$, where
\[
{\cal W} = \HC \otimes \BB C [z_{11}, z_{12}, z_{21}, z_{22}, N(Z)^{-1}]
\]
and the action of $\mathfrak{gl}(2,\HC)$ on ${\cal W}$ is obtained
by differentiating the following group action:
\[
\rho_2(h): \: G(Z) \: \mapsto \: \bigl( \rho_2(h)G \bigr)(Z) =
\frac {(cZ+d)^{-1}}{N(cZ+d)} \cdot G \bigl( (aZ+b)(cZ+d)^{-1} \bigr) \cdot
\frac {(a'-Zc')^{-1}}{N(a'-Zc')},
\]
$G \in {\cal W}$,
$h = \bigl(\begin{smallmatrix} a' & b' \\ c' & d' \end{smallmatrix}\bigr)
\in GL(2,\HC)$ and 
$h^{-1} = \bigl(\begin{smallmatrix} a & b \\ c & d \end{smallmatrix}\bigr)$.

Also recall that $(\rho'_2, {\cal W}')$ contains
$\mathfrak{gl}(2,\HC)$-invariant subspaces ${\cal M}^+$ and ${\cal M}^-$
described by \eqref{M^+}-\eqref{M^-}.

\begin{prop}  \label{3form-trivial}
The $\mathfrak{gl}(2,\HC)$-invariant trilinear form \eqref{3form-obvious}
is identically zero on
${\cal V}' \times ({\cal M}^+ \oplus {\cal M}^-) \times {\cal V}$.
\end{prop}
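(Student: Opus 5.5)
The plan is to use $\mathfrak{gl}(2,\HC)$-invariance to turn the trilinear form into an intertwining operator and then argue via irreducible components. Fix $F \in {\cal M}^+ \oplus {\cal M}^-$. The form \eqref{3form-obvious} defines a $\mathfrak{gl}(2,\HC)$-equivariant map
\[
\Theta : (\rho'_2, {\cal M}^+ \oplus {\cal M}^-) \to \operatorname{Hom}\bigl({\cal V}' \otimes {\cal V}, \BB C\bigr),
\]
and it suffices to show $\Theta = 0$. Since ${\cal V}={\cal V}^+\oplus{\cal V}^-$ and ${\cal V}'={\cal V}'^+\oplus{\cal V}'^-$, I split into the four pieces ${\cal V}'^{\pm}\times {\cal M}^{\pm}\times{\cal V}^{\pm}$ and treat each separately.

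First, I would dispose of most pieces by a degree count, as in the proof of the reproducing formulas: on $U(2)_R$ the projection \eqref{Sh-projection} only sees the component $\Sh/({\cal I}^++{\cal I}^-)$, i.e. homogeneous terms of total degree $-4$ whose $K$-type is trivial. If $g,F,f$ are all regular at the origin, $gFf$ is a genuine polynomial, lies in $\Sh^+\subset{\cal I}^+$, and integrates to zero. The inversion $\bigl(\begin{smallmatrix}0&1\\1&0\end{smallmatrix}\bigr)$ interchanges the $+$ and $-$ pieces (it swaps ${\cal M}^+\leftrightarrow{\cal M}^-$ and ${\cal V}^\pm\leftrightarrow{\cal V}^\mp$) and preserves the form, so the all-infinity case follows as well. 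For mixed pieces I would also use homogeneity: ${\cal V}^+$ has degrees $\ge 0$, ${\cal V}^-$ degrees $\le -3$, ${\cal M}^+$ degrees $\ge 1$ (elements of $\partial^+({\cal BH}^+)$ and the doubly regular lowest types) and ${\cal M}^-$ degrees $\le -3$. Summing degrees to $-4$ rules out several combinations outright, for instance all three at infinity and both regular functions at the origin.

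The remaining combinations I would handle representation-theoretically. A nonzero $\Theta$ restricted to ${\cal M}^+$ induces a nonzero pairing between an irreducible subquotient of ${\cal M}^+$ (namely $\partial^+({\cal BH}^+)$, ${\cal F}^+$ or ${\cal G}^+$) and a subquotient of ${\cal V}'\otimes{\cal V}$. Because ${\cal V}'^{\pm}$, ${\cal V}^{\pm}$ are unitary ladder representations, a $K$-type argument shows that the tensor products ${\cal V}'^+\otimes{\cal V}^-$ and ${\cal V}'^-\otimes{\cal V}^+$ contain the trivial $SU(2)\times SU(2)$-type with the right central character only in the contragredient of $\Sh/({\cal I}'^+ + {\cal I}'^-)$-like components (by Theorem \ref{rho'-decomposition}), never the duals of ${\cal F}^\pm,{\cal G}^\pm$ or $\partial^+({\cal BH}^\pm)\simeq{\cal BH}^\pm/{\cal I}'_0$. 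Concretely I would test $\Theta$ on the explicit generators: the lowest $K$-types of ${\cal M}^+$ generate it (Corollary 43 and Proposition 42 in \cite{ATMP} give generators $\tilde H$, $\tilde F_{1/2,m,-1/2}$, $\tilde G_{1/2,-1/2,n}$), so it suffices to check that $\int_{U(2)_R} g\,\tilde F\, f\,dV=0$ for these generators and all basis $g,f$ built from the matrix coefficients $t^l_{n\,\underline{m}}$. This reduces, via the orthogonality relations of the $t^l$ on $U(2)$, to showing that the $SU(2)\times SU(2)$-equivariant contraction of the three $K$-types has no trivial component.

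The main obstacle is this last contraction: the generators of ${\cal F}^+$ take values in traceless/symmetric parts with the ``wrong'' spin, and I expect the cancellation to come from $\M F=0$ and $\tr(\partial\circ\square F)=0$. If the $K$-type bookkeeping becomes messy, I would instead use the identity $gFf$ with $F=\partial^+\phi$ or $F$ doubly regular and integrate by parts (Stokes on $U(2)_R$) to move derivatives onto $g$ and $f$, where regularity $\nabla^+ f=0$, $g\overleftarrow{\nabla}^+=0$ kills the integrand.
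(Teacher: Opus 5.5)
Your skeleton is sound: you use equivariance of $F\mapsto\langle\cdot,F,\cdot\rangle$ and then test on generators. But the step that actually yields the vanishing is left open, and several of the surrounding claims are false.

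\textbf{The degree bounds are wrong.} The submodule $\partial^+({\cal BH}^+)\subset{\cal M}^+$ contains $\partial^+$ of linear functions, i.e.\ all constant $\HC$-valued functions; compare ${\bf C}^m_{0,1}=-3{\bf T}^m_0$ in \eqref{Res(BH+)}. So ${\cal M}^+$ starts in degree $0$, not $1$. By \eqref{M^-}, ${\cal M}^-$ reaches degree $-2$ (e.g.\ $N(Z)^{-2}ZCZ$), not $-3$. With the correct bounds, the condition ``total degree $-4$'' kills only the all-origin and all-infinity pieces. In particular ${\cal V}'^+\times{\cal M}^-\times{\cal V}^+$ is \emph{not} excluded: take $F$ of degree $-4$.

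\textbf{The tensor-product paragraph asserts the key point without proof.} The form factors through the \emph{image} of ${\cal V}\otimes{\cal V}'$ in $(\rho_2,{\cal W})$. What you need is that no irreducible constituent of this image is dual to $\partial^+({\cal BH}^\pm)$, ${\cal F}^\pm$ or ${\cal G}^\pm$. The paper imports exactly this from the corrected Proposition 89 of \cite{ATMP}, which identifies the image as ${\cal Q}^+\oplus({\cal Q}^0+N(Z)^{-2}Z^+)\oplus{\cal Q}^-$. Your proposal has no substitute for it.

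\textbf{The integration-by-parts fallback covers only part of ${\cal M}^\pm$.} It does dispose of $F=\partial^+\phi$, since $\partial^+$ moves onto $g$ and $f$. However, the doubly regular constituents occur in ${\cal M}^\pm$ only as quotients, not as derivatives, so the fallback cannot reach them.

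\textbf{How to close the gap along your own route.} The generator approach can be completed, giving a proof different from the paper's. The decisive check needs no use of $\M F=0$; it is pure $K$-type selection.
\begin{itemize}
\item The linear generators $\tilde F_{1/2,m,-1/2}$ and $\tilde G_{1/2,-1/2,n}$ span the $K$-types $V_0\boxtimes V_1$ and $V_1\boxtimes V_0$, the lowest $K$-types of ${\cal F}^+$ and ${\cal G}^+$.
\item Each generates a submodule mapping onto ${\cal F}^+$, resp.\ ${\cal G}^+$. That submodule must contain $\partial^+({\cal BH}^+)$, because this is the unique irreducible submodule of ${\cal M}^+$. Together the two generate ${\cal M}^+$.
\item Under $\operatorname{diag}(a,d)$ with $a,d\in SU(2)$ one has ${\cal V}^+(j)\simeq V_{j/2}\boxtimes V_{(j+1)/2}$ and ${\cal V}'^+(j)\simeq V_{(j+1)/2}\boxtimes V_{j/2}$. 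The inversion swaps the two factors for ${\cal V}^-(-3-j)$ and ${\cal V}'^-(-3-j)$.
\item For $g\in{\cal V}'^+(j)$, the degree condition forces $f\in{\cal V}^-(-5-j)$. This gives spins $\tfrac{j+1}2,\tfrac{j+3}2$ in the first factor and $\tfrac j2,\tfrac{j+2}2$ in the second. Both pairs differ by $1$, so a $\mathfrak k$-invariant needs spin $\ge 1$ from $F$ in \emph{both} factors, which neither generator supplies.
\item The case ${\cal V}'^-\times{\cal V}^+$ is identical. The remaining pieces die by degree, and ${\cal M}^-$ follows by inversion.
\end{itemize}

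This route avoids any knowledge of the image of $\operatorname{Mult}$, at the cost of the generation argument and explicit $K$-type bookkeeping. As written, though, your proposal stops exactly at this step, and you even expect it to need the defining equations of ${\cal M}$.
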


\begin{proof}
We can rewrite the product $gFf$ as $\tr(Ffg)$.
Then the trilinear form \eqref{3form-obvious} can be rewritten as a
composition of the $\mathfrak{gl}(2,\HC)$-equivariant multiplication map
\begin{equation}  \label{VV-mult}
\operatorname{Mult}:
(\pi_l, {\cal V}) \times (\pi_r, {\cal V}') \to (\rho_2, {\cal W}),
\qquad (f, g) \mapsto fg,
\end{equation}
followed by the $\mathfrak{gl}(2,\HC)$-invariant bilinear pairing between 
$(\rho_2, {\cal W})$ and $(\rho'_2, {\cal W}')$
from Proposition 80 in \cite{FL1}
\[
\langle F, G \rangle = \frac i{2\pi^3} \int_{U(2)_R} \tr(F(Z) \cdot G(Z)) \,dV,
\qquad F \in {\cal W}',\: G \in {\cal W}.
\]
By Proposition 89 in \cite{ATMP} (see Remark \ref{Prop89} below),
the image of ${\cal V} \otimes {\cal V}'$ in ${\cal W}$
under the multiplication map \eqref{VV-mult} is
\begin{equation}  \label{VV-image}
  {\cal Q}^+ \oplus ({\cal Q}^0 + N(Z)^{-2} \cdot Z^+) \oplus {\cal Q}^-.
\end{equation}
On the other hand, the irreducible components of ${\cal M}^+ \oplus {\cal M}^-$
are described in Subsection \ref{IrredDecom-subsection}, and none of them are
dual to the irreducible components of \eqref{VV-image}.
\end{proof}

\begin{rem}  \label{Prop89}
Proposition 89 in \cite{ATMP} should be corrected to: 
The image under the multiplication map
$\operatorname{Mult}: {\cal V} \otimes {\cal V}' \to {\cal W}$ is precisely
${\cal Q}^+ \oplus ({\cal Q}^0 + N(Z)^{-2} \cdot Z^+) \oplus {\cal Q}^-$.
\end{rem}

Because of Proposition \ref{3form-trivial} we are not interested in
the trilinear form \eqref{3form-obvious}.

\subsection{Another $\mathfrak{gl}(2,\HC)$-Invariant Trilinear Form}

Quasi anti regular functions were reviewed in
Subsection \ref{QReg-subsection}. Consider a multiplication map
\begin{equation}  \label{quasi-gFf-multiplication}
(\pi'_r, {\cal U}') \times (\rho_2, {\cal W}) \times
(\pi'_l, {\cal U}) \to (\rho, \Sh), \qquad (g, F, f) \mapsto gFf.
\end{equation}
The argument used to show Proposition \ref{gFf-prop} also shows:

\begin{prop}
The multiplication map \eqref{quasi-gFf-multiplication} is
$\mathfrak{gl}(2,\HC)$-equivariant.
\end{prop}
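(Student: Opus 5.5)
The plan is to mimic the proof of Proposition \ref{gFf-prop}: first show that the multiplication map \eqref{quasi-gFf-multiplication} commutes with the group action of every $h \in GL(2,\HC)$, and then differentiate. As before, write
$h=\bigl(\begin{smallmatrix} a' & b' \\ c' & d' \end{smallmatrix}\bigr)$, $h^{-1}=\bigl(\begin{smallmatrix} a & b \\ c & d \end{smallmatrix}\bigr)$ and $\tilde Z=(aZ+b)(cZ+d)^{-1}$. The first thing to check is that all three actions $\pi'_r$, $\rho_2$, $\pi'_l$ evaluate their argument at the same point $\tilde Z$. For $\pi'_r$ and $\rho_2$ this is immediate from their defining formulas. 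For $\pi'_l$ the argument is written as $(a'-Zc')^{-1}(-b'+Zd')$, and I will use the identity from \cite{FL1} (Lemma 10 there) that this expression equals $(aZ+b)(cZ+d)^{-1}$. This identity comes from expanding $hh^{-1}=h^{-1}h=1$.

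Next I multiply out the three transformed functions:
\[
g(\tilde Z)\cdot\frac{cZ+d}{N(cZ+d)}\cdot\frac{(cZ+d)^{-1}}{N(cZ+d)}\cdot F(\tilde Z)\cdot\frac{(a'-Zc')^{-1}}{N(a'-Zc')}\cdot\frac{a'-Zc'}{N(a'-Zc')}\cdot f(\tilde Z).
\]
The matrix factors $(cZ+d)(cZ+d)^{-1}$ and $(a'-Zc')^{-1}(a'-Zc')$ cancel. Because the norms are scalars, they can be moved to the front of the product. What remains is
\[
\frac{(gFf)(\tilde Z)}{N(cZ+d)^2\, N(a'-Zc')^2},
\]
and this is exactly $\bigl(\rho(h)(gFf)\bigr)(Z)$.

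It follows that the multiplication map is $GL(2,\HC)$-equivariant on functions defined near the relevant points. Differentiating at $h=1$ then gives the $\mathfrak{gl}(2,\HC)$-equivariance. I would also remark that the product of elements of ${\cal U}'$, ${\cal W}$ and ${\cal U}$ is a scalar polynomial in $z_{ij}$ and $N(Z)^{-1}$, so it lands in $\Sh$. The equivariance of the Lie algebra action on these polynomial spaces is an algebraic identity, so it follows from the group computation, with $h$ near the identity, by differentiation.

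I do not expect any step to be a real obstacle. The only point needing care is the matching of evaluation points in $\pi'_l$, which uses the two equivalent forms of the fractional linear transformation. I also need to keep the noncommutative factors in the correct left/right order so that they cancel adjacent to $F$.
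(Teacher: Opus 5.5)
Your proposal is correct and matches the paper's route: the paper proves this with the same group-level computation as in Proposition~\ref{gFf-prop}, in which the factors $(cZ+d)(cZ+d)^{-1}$ and $(a'-Zc')^{-1}(a'-Zc')$ cancel next to $F$, the scalar norms combine into $N(cZ+d)^{-2}N(a'-Zc')^{-2}$, and one then differentiates. Your explicit check that $(a'-Zc')^{-1}(-b'+Zd')=(aZ+b)(cZ+d)^{-1}$, which follows from $hh^{-1}=1$, spells out a step the paper leaves implicit.
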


Combining the multiplication map \eqref{quasi-gFf-multiplication}
with the projection \eqref{Sh-projection}, we obtain a
$\mathfrak{gl}(2,\HC)$-invariant trilinear form
\begin{equation}  \label{3form-easy}
(\pi'_r, {\cal U}') \times (\rho_2, {\cal W}) \times
(\pi'_l, {\cal U}) \to \BB C, \qquad
(g, F, f) \mapsto \frac i{2\pi^3} \int_{U(2)_R} g(Z) \cdot F(Z) \cdot f(Z) \,dV.
\end{equation}

By Proposition 56 in \cite{qreg}, restrictions of \eqref{3form-easy} result
in non-trivial $\mathfrak{gl}(2,\HC)$-invariant trilinear forms
\begin{align*}
(\pi'_r, {\cal U}'^+) \times
\left(\rho_2, \begin{smallmatrix} \text{submodule of ${\cal W}$} \\
  \text{containing $(\pi_{dl},{\cal F}^-)$ or $(\pi_{dr},{\cal G}^-)$}
\end{smallmatrix}\right) \times (\pi'_l, {\cal U}^+) &\to \BB C,  \\
(\pi'_r, {\cal U}'^-) \times
\left(\rho_2, \begin{smallmatrix} \text{submodule of ${\cal W}$} \\
  \text{containing $(\pi_{dl},{\cal F}^+)$ or $(\pi_{dr},{\cal G}^+)$}
\end{smallmatrix}\right) \times (\pi'_l, {\cal U}^-) &\to \BB C.
\end{align*}

The decomposition of $(\rho_2, {\cal W})$ into irreducible components
was investigated in \cite{ATMP}. In particular, any
$\mathfrak{gl}(2,\HC)$-submodule of ${\cal W}$ containing
$(\pi_{dl},{\cal F}^{\pm})$ or $(\pi_{dr},{\cal G}^{\pm})$ also
contains a submodule ${\cal Q}^0$.
However, $(\rho_2, {\cal Q}^0)$ has Gelfand-Kirillov dimension $4$,
while $(\pi_{dl},{\cal F}^{\pm})$ and $(\pi_{dr},{\cal G}^{\pm})$
have Gelfand-Kirillov dimensions $3$.
Since realizations of doubly regular functions inside ${\cal W}$
require ``much larger'' modules than the doubly regular functions
themselves, we are not interested in the trilinear form
\eqref{3form-easy} either.

\subsection{A Scalar Version of a
  $\mathfrak{gl}(2,\HC)$-Invariant Trilinear Form}

For future reference, in this subsection we briefly describe a
$\mathfrak{gl}(2,\HC)$-invariant trilinear
form involving spaces of scalar valued functions.
While this form does not involve the doubly regular functions,
it is useful as a practice model.

Recall harmonic polynomial functions on $\HC^{\times}$:
\begin{align*}
  {\cal H}^+ &=   \bigl\{ \phi \in \BB C[z_{11},z_{12},z_{21},z_{22}] ;\:
  \square \phi =0 \bigr\},  \\
  {\cal H}^- &= \bigl\{ \phi \in \BB C[z_{11},z_{12},z_{21},z_{22}, N(Z)^{-1}] ;\:
  N(Z)^{-1} \cdot \phi(Z^{-1}) \in {\cal H^+} \bigr\},  \\
  {\cal H} &= \bigl\{ \phi \in \BB C[z_{11},z_{12},z_{21},z_{22}, N(Z)^{-1}] ;\:
  \square \phi =0 \bigr\}  \\
  &= {\cal H}^+ \oplus {\cal H}^-.
\end{align*}
The Lie algebra $\mathfrak{gl}(2,\HC)$ acts on ${\cal H}^+$, ${\cal H}^-$ and
${\cal H}$ via two slightly different actions $\pi^0_l$ and $\pi^0_r$ obtained
by differentiating the following group actions respectively:
\begin{align*}
\pi^0_l(h): \: \phi(Z) \: &\mapsto \: \bigl( \pi^0_l(h) \phi \bigr)(Z) =
\frac1{N(cZ+d)} \cdot \phi \bigl( (aZ+b)(cZ+d)^{-1} \bigr),  \\
\pi^0_r(h): \: \phi(Z) \: &\mapsto \: \bigl( \pi^0_r(h) \phi \bigr)(Z) =
\frac1{N(a'-Zc')} \cdot \phi \bigl( (a'-Zc')^{-1}(-b'+Zd') \bigr),
\end{align*}
where $\phi \in {\cal H}$,
$h = \bigl(\begin{smallmatrix} a' & b' \\ c' & d' \end{smallmatrix}\bigr)
\in GL(2,\HC)$ and 
$h^{-1} = \bigl(\begin{smallmatrix} a & b \\ c & d \end{smallmatrix}\bigr)$
(Subsections 2.4-2.5, 4.1 in \cite{FL1}).

We also recall a $\mathfrak{gl}(2,\HC)$-module $(\rho_1,\Zh)$:
\[
\Zh = \BB C[z_{11},z_{12},z_{21},z_{22}, N(Z)^{-1}],
\]
and the action of $\mathfrak{gl}(2,\HC)$ is obtained
by differentiating the following group action:
\[
\rho_1(h): \: f(Z) \: \mapsto \: \bigl( \rho_1(h) f \bigr)(Z) =
\frac {f\bigl( (aZ+b)(cZ+d)^{-1} \bigr)}{N(cZ+d) \cdot N(a'-Zc')},
\]
where $f \in \Zh$,
$h = \bigl(\begin{smallmatrix} a' & b' \\ c' & d' \end{smallmatrix}\bigr)
\in GL(2,\HC)$ and 
$h^{-1} = \bigl(\begin{smallmatrix} a & b \\ c & d \end{smallmatrix}\bigr)$.
It was shown in \cite{desitter} that we have a direct sum decomposition
into irreducible components
\[
(\rho_1,\Zh) = (\rho_1,\Zh^+) \oplus (\rho_1,\Zh^0) \oplus (\rho_1,\Zh^-),
\]
where
\begin{align*}
\Zh^+ &=   \BB C[z_{11},z_{12},z_{21},z_{22}] ,  \\
\Zh^- &= \bigl\{ f \in \BB C[z_{11},z_{12},z_{21},z_{22}, N(Z)^{-1}] ;\:
  N(Z)^{-2} \cdot f(Z^{-1}) \in \Zh^+ \bigr\},
\end{align*}
and $\Zh^0$ is the unique $\mathfrak{gl}(2,\HC)$-invariant irreducible
subspace of $\Zh$ that is a direct sum complement of $\Zh^+ \oplus \Zh^-$.

Consider a multiplication map
\begin{equation}  \label{scalar-gFf-multiplication}
(\pi^0_r, {\cal H}) \times (\rho_1, \Zh) \times
(\pi^0_l, {\cal H}) \to (\rho, \Sh), \qquad (\phi_1, f, \phi_2)
\mapsto \phi_1 f \phi_2.
\end{equation}
The argument used to show Proposition \ref{gFf-prop} also shows:

\begin{prop}
The multiplication map \eqref{scalar-gFf-multiplication} is
$\mathfrak{gl}(2,\HC)$-equivariant.
\end{prop}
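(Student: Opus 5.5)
The plan is to copy the proof of Proposition \ref{gFf-prop} and check the equivariance at the group level, then differentiate. Fix $h \in GL(2,\HC)$ and write
\[
h= \bigl(\begin{smallmatrix} a' & b' \\ c' & d' \end{smallmatrix}\bigr), \qquad
h^{-1}= \bigl(\begin{smallmatrix} a & b \\ c & d \end{smallmatrix}\bigr), \qquad
\tilde Z = (aZ+b)(cZ+d)^{-1}.
\]
All three functions $\phi_1, f, \phi_2$ are scalar valued, so the products commute. The only issue is matching the automorphy factors.

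The first step is to settle the arguments. The action $\pi^0_l$ evaluates $\phi_2$ at $\tilde Z$. The actions $\pi^0_r$ and $\rho_1$ evaluate $\phi_1$ and $f$ at $(a'-Zc')^{-1}(-b'+Zd')$ and at $\tilde Z$ respectively. For $\pi^0_r$ we therefore need the identity
\[
(a'-Zc')^{-1}(-b'+Zd') = (aZ+b)(cZ+d)^{-1},
\]
which holds whenever $hh^{-1}=1$. This identity is already used implicitly in Proposition \ref{gFf-prop}, where $\pi_r$ is written with argument $\tilde Z$. It amounts to the cross-multiplication
\[
(-b'+Zd')(cZ+d) = (a'-Zc')(aZ+b).
\]
That equation follows from the block relations $a'a+b'c=1$, $a'b+b'd=0$, $c'a+d'c=0$ and $c'b+d'd=1$. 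I will verify this explicitly, since everything else depends on it.

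The second step is to multiply the three transformed functions:
\[
\frac{\phi_1(\tilde Z)}{N(a'-Zc')}\cdot\frac{f(\tilde Z)}{N(cZ+d)\,N(a'-Zc')}\cdot\frac{\phi_2(\tilde Z)}{N(cZ+d)}
= \frac{(\phi_1 f\phi_2)(\tilde Z)}{N(cZ+d)^2\,N(a'-Zc')^2}.
\]
The right-hand side is exactly $\bigl(\rho(h)(\phi_1 f \phi_2)\bigr)(Z)$. Hence the multiplication map commutes with the action of every $h$ in a neighborhood of the identity, where all the expressions are defined as rational functions.

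The third step is to differentiate at $h=1$. This gives $\mathfrak{gl}(2,\HC)$-equivariance in the sense of the Leibniz rule:
\[
\rho(X)(\phi_1 f\phi_2)=(\pi^0_r(X)\phi_1)f\phi_2+\phi_1(\rho_1(X)f)\phi_2+\phi_1f(\pi^0_l(X)\phi_2).
\]
We also need to check that the product of elements of ${\cal H}$, $\Zh$ and ${\cal H}$ lands in $\Sh = \BB C[z_{11},z_{12},z_{21},z_{22},N(Z)^{-1}]$, which is immediate.

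The only real obstacle is the argument identity in the first step. Everything else is bookkeeping of exponents, and the powers of $N(cZ+d)$ and $N(a'-Zc')$ were chosen to add up to $2$ each.
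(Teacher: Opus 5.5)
Your proposal is correct and is essentially the paper's proof: the paper simply says the group-level computation from Proposition \ref{gFf-prop} carries over, and you carry it out by matching the factors $N(a'-Zc')^{-1}$, $N(cZ+d)^{-1}N(a'-Zc')^{-1}$ and $N(cZ+d)^{-1}$ to the factor $N(cZ+d)^{-2}N(a'-Zc')^{-2}$ of $\rho$, then differentiating. You also verify $(a'-Zc')^{-1}(-b'+Zd')=(aZ+b)(cZ+d)^{-1}$ from the block relations of $hh^{-1}=1$, which the paper uses only implicitly, and your check is right, including the order of the noncommuting factors.
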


Combining the multiplication map \eqref{scalar-gFf-multiplication}
with the projection \eqref{Sh-projection}, we obtain a
$\mathfrak{gl}(2,\HC)$-invariant trilinear form
\begin{equation}  \label{3form-sc}
(\pi^0_r, {\cal H}) \times (\rho_1, \Zh) \times
(\pi^0_l, {\cal H}) \to \BB C, \qquad
(\phi_1, f, \phi_2) \mapsto \frac i{2\pi^3} \int_{U(2)_R}
\phi_1(Z) \cdot f(Z) \cdot \phi_2(Z) \,dV.
\end{equation}
Restricting this trilinear form 
to the irreducible components, we obtain twelve potential
$\mathfrak{gl}(2,\HC)$-invariant trilinear forms:
\begin{equation}  \label{3form-sc-12}
(\pi^0_r, {\cal H}^{\pm}) \times (\rho_1, \Zh^{\,\epsilon}) \times
(\pi^0_l, {\cal H}^{\pm}) \to \BB C,
\qquad \text{where }\epsilon \in \{+,0,-\}.
\end{equation}

From Proposition 4, Lemma 8 and orthogonality relations (19) in \cite{desitter}
we obtain the following result:

\begin{prop}
Of the twelve potential $\mathfrak{gl}(2,\HC)$-invariant
trilinear forms \eqref{3form-sc-12}, exactly four are non-trivial:
\begin{align*}
(\pi^0_r, {\cal H}^+) \times (\rho_1, \Zh^-) \times
(\pi^0_l, {\cal H}^+) &\to \BB C,  \\
(\pi^0_r, {\cal H}^-) \times (\rho_1, \Zh^+) \times
(\pi^0_l, {\cal H}^-) &\to \BB C,  \\
(\pi^0_r, {\cal H}^+) \times (\rho_1, \Zh^0) \times
(\pi^0_l, {\cal H}^-) &\to \BB C,  \\
(\pi^0_r, {\cal H}^-) \times (\rho_1, \Zh^0) \times
(\pi^0_l, {\cal H}^+) &\to \BB C.
\end{align*}
\end{prop}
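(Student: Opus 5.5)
The plan is to reduce everything to $K$-type bookkeeping, using the structure of the three modules recalled from \cite{desitter}. The trilinear form \eqref{3form-sc} is the composition of the multiplication map \eqref{scalar-gFf-multiplication} with the projection \eqref{Sh-projection} onto the trivial quotient $\Sh/({\cal I}^++{\cal I}^-)$. On basis elements this projection only sees the coefficient of $N(Z)^{-2}\cdot t^0_{0\,\underline{0}}(Z)$. So for each of the twelve triples I will ask one question: does some product $\phi_1 f \phi_2$, with factors in the given components, have a nonzero component along $N(Z)^{-2}$?

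First I dispose of the cases that vanish for degree reasons. Every element of ${\cal H}^+$ and $\Zh^+$ is a genuine polynomial, so its homogeneous components have degree $\ge 0$. In ${\cal H}^-$ the degrees are $\le -2$, and in $\Zh^-$ they are $\le -4$. The projection \eqref{Sh-projection} is nonzero only on the degree $-4$ component. By $U(2)$-invariance of the integral (the $\mathfrak{u}(2)$ scaling), a triple can pair nontrivially only if its degrees add up to $-4$.
\begin{itemize}
\item Triples such as $({\cal H}^+,\Zh^+,{\cal H}^\pm)$ and $({\cal H}^-,\Zh^-,{\cal H}^-)$ are excluded directly by this degree count.
\item Mixed cases such as $({\cal H}^+,\Zh^\pm,{\cal H}^-)$ and $({\cal H}^-,\Zh^-,{\cal H}^+)$ need more than degrees. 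For these I use Proposition 4 and Lemma 8 of \cite{desitter}. They identify the multiplication ${\cal H}^{\pm}\otimes{\cal H}^{\pm}\to\Zh$ as landing in $\Zh^{\pm}$, and the mixed products ${\cal H}^+\otimes{\cal H}^-$ (up to the standard identification) as landing in $\Zh^0$.
\item The form is then the invariant pairing $\Zh\times\Zh\to\BB C$ applied to the product $\phi_1\phi_2$ and to $f$. Under this pairing $\Zh^+$ is dual to $\Zh^-$, $\Zh^0$ is dual to itself, and all other pairs of components are orthogonal; this is orthogonality relations (19) of \cite{desitter}. Hence ${\cal H}^+{\cal H}^+$ can only pair with $\Zh^-$, ${\cal H}^-{\cal H}^-$ only with $\Zh^+$, and mixed products only with $\Zh^0$.
\end{itemize}
Together these leave at most the four listed triples.

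Next, for non-triviality, I exhibit explicit elements in each of the four cases. For $({\cal H}^+,\Zh^-,{\cal H}^+)$, take $\phi_1=\phi_2=1$ and $f=N(Z)^{-2}\in\Zh^-$; the integral is the normalized volume, which is nonzero. The case $({\cal H}^-,\Zh^+,{\cal H}^-)$ follows from this one by applying the inversion $\bigl(\begin{smallmatrix}0&1\\1&0\end{smallmatrix}\bigr)$, which swaps $\pm$ and preserves the form. For $({\cal H}^+,\Zh^0,{\cal H}^-)$, take $\phi_1=1$ and $\phi_2=N(Z)^{-1}$. Their product $N(Z)^{-1}$ has a nonzero projection onto $\Zh^0$, because $N(Z)^{-1}$ is neither in $\Zh^+$ nor in $\Zh^-$. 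By nondegeneracy of the self-pairing of $\Zh^0$ there is then some $f\in\Zh^0$ with nonzero pairing; concretely, the component of $N(Z)^{-1}$ in $\Zh^0$ should do. The last case follows symmetrically, by swapping the roles of $\pi^0_l$ and $\pi^0_r$.

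The main obstacle is the step involving $\Zh^0$. Here the orthogonality argument requires knowing exactly which $K$-types of $\Zh^0$ lie in the image of the mixed products, and that these $K$-types are not killed by the pairing. I would handle this with the explicit matrix-coefficient products (Lemma 8 of \cite{desitter}) rather than through abstract duality.
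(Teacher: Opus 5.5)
Your proposal is correct and follows essentially the paper's route. The paper derives the result from Proposition~4, Lemma~8 and the orthogonality relations~(19) of \cite{desitter}, which amounts to your factorization of the form through the self-pairing of $(\rho_1,\Zh)$: products ${\cal H}^{\pm}\cdot{\cal H}^{\pm}$ lie in $\Zh^{\pm}$, mixed products lie in $\Zh^0$, and the only non-orthogonal pairs are $\Zh^+\leftrightarrow\Zh^-$ and $\Zh^0\leftrightarrow\Zh^0$. Your explicit witnesses ($\phi_1=\phi_2=1$, $f=N(Z)^{-2}$; and $\phi_1=1$, $\phi_2=f=N(Z)^{-1}$, the latter already lying in $\Zh^0$) settle non-triviality, so the ``obstacle'' you flag at the end does not arise.

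One slip: $({\cal H}^+,\Zh^+,{\cal H}^-)$ is not excluded by degrees alone. For example, $1\cdot 1\cdot N(Z)^{-3}h(Z)$ with $h$ harmonic of degree $2$ has total degree $-4$. Your orthogonality step covers that case anyway.
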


\subsection{A Scalar Version of an
  $\mathfrak{sp}(4,\BB C)$-Invariant Trilinear Form}

In this subsection we describe an $\mathfrak{sp}(4,\BB C)$-invariant
trilinear form involving spaces of scalar valued functions.
This form does not involve the doubly regular functions
but serves as a simple analogue of the trilinear form that will be
introduced in Subsection \ref{3form-main-subsect}.

Let ${\cal H}^+_{3-dim}$ and ${\cal H}^-_{3-dim}$ be the spaces of $\BB C$-valued
harmonic functions on $\HC^{\times} \text{-sym}$ regular at the origin and infinity
respectively:
\begin{align*}
{\cal H}^+_{3-dim} &= \{ \phi(Z) \in \BB C [z_{11},z_{12},z_{22}] ;\: \Delta_3 \phi(Z)=0 \},  \\
{\cal H}^-_{3-dim} &=
\{ \phi(Z) \in N(Z)^{\frac12} \cdot \BB C [z_{11},z_{12},z_{22},N(Z)^{-1}] ;\:
\Delta_3 \phi(Z)=0 \}.
\end{align*}
The Lie algebra $\mathfrak{sp}(4,\BB C)$ acts on these spaces by differentiating the
$Sp(4,\BB R)$ action given by
\[
\pi^0_{3-dim}(h): \: \phi(Z) \: \mapsto \: \bigl( \pi^0_{3-dim}(h) \phi\bigr)(Z) =
\frac1{\sqrt{N(cZ+d)}} \cdot \phi \bigl( (aZ+b)(cZ+d)^{-1} \bigr), 
\]
where $\phi  \in {\cal H}^{\pm}_{3-dim}$,
$h = \bigl(\begin{smallmatrix} a' & b' \\ c' & d' \end{smallmatrix}\bigr)
\in Sp(4,\BB R) \subset GL(2,\HC)$ and 
$h^{-1} = \bigl(\begin{smallmatrix} a & b \\ c & d \end{smallmatrix}\bigr)$.
The two spaces ${\cal H}^+_{3-dim}$ and ${\cal H}^-_{3-dim}$ are interchanged by
the inversion:
\[
\pi^0_{3-dim} \bigl(\begin{smallmatrix} 0 & 1 \\ 1 & 0 \end{smallmatrix}\bigr):\:
\phi(Z) \mapsto N(Z)^{-\frac12} \cdot \phi(Z^{-1}).
\]
We can describe the $K$-types:

\begin{thm}
The two $\mathfrak{sp}(4,\BB C)$-modules $(\pi^0_{3-dim}, {\cal H}^+_{3-dim})$ and
$(\pi^0_{3-dim}, {\cal H}^-_{3-dim})$ are irreducible. They have $K$-types
\begin{align*}
{\cal H}^+_{3-dim} &= \BB C\text{-span of }
\bigl\{ R^m_l(Z);\: l \ge 0,\: -l \le m \le l \bigr\},  \\
{\cal H}^-_{3-dim} &= \BB C\text{-span of }
\bigl\{ N(Z)^{-l-\frac12} \cdot R^m_l(Z);\: l \ge 0,\: -l \le m \le l \bigr\}.
\end{align*}
More precisely, for $l$ fixed, as representations of $SU(2)$,
\begin{align*}
\BB C\text{-span of } \bigl\{ R^m_l(Z);\: -l \le m \le l \bigr\} &\simeq V_l,  \\
\BB C\text{-span of } \bigl\{ N(Z)^{-l-\frac12} \cdot R^m_l(Z);\: -l \le m \le l \bigr\}
&\simeq V_l.
\end{align*}
\end{thm}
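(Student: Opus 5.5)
The plan is to follow the pattern of the proof of the corresponding theorem for ${\cal V}^{\pm}_{3-dim}$ in Section \ref{5}, with scalar solid harmonics in place of the spinor-valued $K$-types. By Proposition \ref{basis-prop}, the functions $N(Z)^k R^m_l(Z)$ with $k\in\BB Z$ form a basis of $\BB C[z_{11},z_{12},z_{22},N(Z)^{-1}]$, and the same holds with $k\in\tfrac12+\BB Z$ for $N(Z)^{\frac12}\cdot\BB C[z_{11},z_{12},z_{22},N(Z)^{-1}]$. Since $\pi^0_{3-dim}$ agrees with $\rho_{\alpha,\beta}$ for $\alpha+\beta=\tfrac12$, for fixed $k,l$ each span $\{N(Z)^kR^m_l\}_m$ is a copy of $V_l$ under $\mathfrak{u}(2)$, as in the proof of Theorem \ref{rho-alpha-beta-thm}.

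The first step is to identify the harmonic elements. I apply $\Delta_3=4\partial_{z_{11}}\partial_{z_{22}}+\partial_t^2$ to $N(Z)^kR^m_l(Z)$. Either compose \eqref{dtNR}--\eqref{dz_{22}NR}, or use the standard radial formula in three dimensions: with $N=r^2$ and $R^m_l$ harmonic of degree $l$,
\[
\Delta_3\bigl(r^{2k}R^m_l\bigr)=2k(2k+2l+1)\,r^{2k-2}R^m_l .
\]
This vanishes exactly when $k=0$ or $k=-(l+\tfrac12)$. The $K$-types $N(Z)^kR^m_l$ are linearly independent and $\Delta_3$ maps each one to a multiple of another basis element, so the kernel is spanned by the basis elements it kills. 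This gives the stated descriptions: ${\cal H}^+_{3-dim}$ is spanned by the $R^m_l$ (polynomials), and ${\cal H}^-_{3-dim}$ by the $N(Z)^{-l-\frac12}R^m_l$ (half-integer powers). The $SU(2)$ description $V_l$ has already been noted.

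The second step is irreducibility. Using \eqref{dtR}, \eqref{dz_{11}R} and \eqref{dz_{22}R}, the operators $\pi^0_{3-dim}\bigl(\begin{smallmatrix}0&B\\0&0\end{smallmatrix}\bigr)=-\tr(B\partial)$ send $R^m_l$ to nonzero multiples of $R^{m'}_{l-1}$ for $l\ge1$. By \eqref{dtNR}--\eqref{dz_{22}NR} with $k=-(l+\tfrac12)$, the first term vanishes because $2k+2l+1=0$. The second term has coefficient $2k\neq0$, so these operators send $N^{-l-\frac12}R^m_l$ to nonzero multiples of $N^{-l-\frac32}R^{m'}_{l+1}$.

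Next I compute the inversion. By the formula $R^m_l(Z^{-1})=(-1)^l\tfrac{(l+m)!}{(l-m)!}N(Z)^{-l}R^{-m}_l(Z)$, the inversion sends $R^m_l$ to a nonzero multiple of $N^{-l-\frac12}R^{-m}_l$, and conversely. The lowering and raising operators inside each $V_l$ are supplied by the $\mathfrak{u}(2)$ action, where every $K$-type has multiplicity one.

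With these in hand, the argument runs as follows. Any nonzero submodule of ${\cal H}^+_{3-dim}$ contains a full $K$-type $V_l$. Repeated application of the $B$-operators then reaches $V_0=\BB C\cdot1$. Applying the $C$-operators, which are the inversion-conjugates of the $B$-operators, takes $1$ to all of $N^{-l-\frac12}R_l$ in the inverted picture, and conjugating back yields every $R^m_l$. The module ${\cal H}^-_{3-dim}$ is handled symmetrically, by applying the inversion.

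The main obstacle is the subtlety that the inversion is not an element of the identity component, so I cannot directly conjugate by it. I would instead verify directly from Lemma \ref{rho-alpha-beta-action} that $\tr(ZCZ\partial+\tfrac12CZ)$ sends $R^m_l$ to nonzero multiples of $R^{m'}_{l+1}$. This is the formula displayed in the proof of Theorem \ref{rho-alpha-beta-thm} with $k=0$ and $\alpha+\beta=\tfrac12$, whose vertical coefficient $k+l+\alpha+\beta=l+\tfrac12$ is never zero. Its diagonal term lands outside the kernel of $\Delta_3$ and must cancel, which is guaranteed by invariance. This gives the irreducibility of both modules.
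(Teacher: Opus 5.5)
Your proposal is correct and follows the same route the paper uses for the analogous ${\cal V}^{\pm}_{3-dim}$ theorem; the paper gives no separate proof here. That route identifies the kernel of $\Delta_3$ on the $K$-type basis $N(Z)^k R^m_l(Z)$ and then gets irreducibility from the $\bigl(\begin{smallmatrix}0&B\\0&0\end{smallmatrix}\bigr)$-action together with the inversion. The obstacle you raise is not real: $i\bigl(\begin{smallmatrix}0&1\\1&0\end{smallmatrix}\bigr)\in Sp(4,\BB C)$ induces the same inversion, and conjugating by it is used freely in the paper; moreover, in your direct workaround the diagonal term vanishes outright because its coefficient $2k+2(\alpha+\beta)-1$ equals $0$ at $k=0$, $\alpha+\beta=\tfrac12$, so no appeal to invariance is needed.
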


Next, we consider one of the $\mathfrak{sp}(4,\BB C)$-modules from
Subsection \ref{rho_M-subsection}, specifically the one obtained by differentiating
the action the group $Sp(4,\BB C)$ on the space of $\BB C$-valued functions on
$\HC^{\times} \text{-sym}$:
\[
\rho_1(h): \: f(Z) \: \mapsto \: \bigl( \rho_1(h)f \bigr)(Z) =
\frac{f \bigl( (aZ+b)(cZ+d)^{-1} \bigr)} {N(cZ+d) \cdot N(a'-Zc')},
\]
$h = \bigl( \begin{smallmatrix} a' & b' \\ c' & d' \end{smallmatrix} \bigr)
  \in Sp(4,\BB R) \subset GL(2,\HC)$ and
$h^{-1} = \bigl( \begin{smallmatrix} a & b \\ c & d \end{smallmatrix} \bigr)$.
(Thus, $(\alpha,\beta)=(1,1)$.)

We modify the trilinear form \eqref{3form-sc} by replacing the
$\mathfrak{gl}(2,\HC)$-modules $(\pi^0_r, {\cal H})$,
$(\rho_1, \Zh)$ and $(\pi^0_l, {\cal H})$ with the
$\mathfrak{sp}(4,\BB C)$-modules $(\pi^0_{3-dim}, {\cal H}_{3-dim}^{\pm})$,
$\bigl( \rho_1, N(Z)^{\frac12} \cdot \BB C [z_{11},z_{12},z_{22},N(Z)^{-1}] \bigr)$
and $(\pi^0_{3-dim}, {\cal H}_{3-dim}^{\pm})$
respectively. Thus, we consider multiplication maps
\begin{align}
(\pi^0_{3-dim}, {\cal H}^+_{3-dim}) \times
\bigl( \rho_1, N(Z)^{\frac12} \cdot \BB C [z_{11},z_{12},z_{22},N(Z)^{-1}] \bigr)
\times (\pi^0_{3-dim}, {\cal H}^+_{3-dim}) &\to (\rho_M, M),  \label{scalar-gFf-multiplication+}  \\
(\pi^0_{3-dim}, {\cal H}^-_{3-dim}) \times
\bigl( \rho_1, N(Z)^{\frac12} \cdot \BB C [z_{11},z_{12},z_{22},N(Z)^{-1}] \bigr)
\times (\pi^0_{3-dim}, {\cal H}^-_{3-dim}) &\to (\rho_M, M),  \label{scalar-gFf-multiplication-}
\end{align}
given by $(\phi_1, f, \phi_2) \mapsto \phi_1f\phi_2$.

\begin{prop}
The multiplication maps \eqref{scalar-gFf-multiplication+} and
\eqref{scalar-gFf-multiplication-} are $\mathfrak{sp}(4,\BB C)$-equivariant.
\end{prop}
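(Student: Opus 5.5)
The plan is to follow the proof of Proposition \ref{gFf-prop} and check equivariance at the group level for $h \in Sp(4,\BB R)$ close to the identity, then differentiate to get the $\mathfrak{sp}(4,\BB C)$ statement. Fix $h = \bigl(\begin{smallmatrix} a' & b' \\ c' & d' \end{smallmatrix}\bigr)$ with $h^{-1} = \bigl(\begin{smallmatrix} a & b \\ c & d \end{smallmatrix}\bigr)$, and set $\tilde Z = (aZ+b)(cZ+d)^{-1}$. The point is that all three modules act by composing with the same map $Z \mapsto \tilde Z$, multiplied by scalar factors. Since the functions are scalar valued, no ordering issues occur.

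First compute the product of the transformed functions:
\[
\bigl(\pi^0_{3-dim}(h)\phi_1\bigr)(Z) \cdot \bigl(\rho_1(h) f\bigr)(Z) \cdot \bigl(\pi^0_{3-dim}(h)\phi_2\bigr)(Z)
= \frac{(\phi_1 f \phi_2)(\tilde Z)}{N(cZ+d)^{2} \cdot N(a'-Zc')}.
\]
Here the two factors $N(cZ+d)^{-\frac12}$ combine with the factor $N(cZ+d)^{-1}$ from $\rho_1$. By Lemma \ref{rho-alpha-beta-action}, $\rho_{\alpha,\beta}$ depends only on $\alpha+\beta$. The right-hand side is $\rho_{2,1}(h)(\phi_1f\phi_2)$, and $2+1=3$, so it equals $\rho_M(h)(\phi_1 f\phi_2)$.

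Next I would check that the product really lands in $M$. Elements of ${\cal H}^{\pm}_{3-dim}$ lie in $\BB C[z_{11},z_{12},z_{22},N(Z)^{-1}]$ or in $N(Z)^{\frac12}$ times it. The middle factor lies in $N(Z)^{\frac12}\cdot\BB C[z_{11},z_{12},z_{22},N(Z)^{-1}]$. Hence for $(+,+)$ the product carries the power $N^{\frac12}$. For $(-,-)$ it carries $N^{\frac12}\cdot N^{\frac12}\cdot N^{\frac12}$, which is $N^{\frac12}$ times an integer power. In both cases the product is in $M$, so both maps are well defined.

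The only delicate step is the branch of the square roots $\sqrt{N(cZ+d)}$ and $N(Z)^{\frac12}$. I would handle this as the paper does: work on $\BB H^{\times}\text{-sym}$ (or on $\widetilde{\BB H}^{\times}$), and take $h$ near the identity so that $N(cZ+d)$ is near $1$ and the principal branch is used. Then $\sqrt{N(cZ+d)}^{\,2} = N(cZ+d)$ holds exactly. Differentiating at $h=1$ gives equivariance for $\mathfrak{sp}(4,\BB R)$, and complexifying gives it for $\mathfrak{sp}(4,\BB C)$. As an optional cross-check, the Leibniz rule applied to the formulas in Lemma \ref{rho-alpha-beta-action}, with weights $\tfrac12+1+\tfrac12 = 2$ and total $\alpha+\beta=3$, gives the same constant terms.
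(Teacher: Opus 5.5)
Your argument is correct and is the one the paper has in mind. The paper states this proposition without proof, implicitly reusing the group-level computation of Proposition \ref{gFf-prop}: the factors collect to $N(cZ+d)^{-2}N(a'-Zc')^{-1}$, i.e.\ $\rho_{2,1}(h)=\rho_M(h)$ because $\rho_{\alpha,\beta}$ depends only on $\alpha+\beta=3$, and your check that the product lies in $M$ in both the $(+,+)$ and $(-,-)$ cases, followed by differentiation at $h=1$ and complexification, completes it.
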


Combining the multiplication maps \eqref{scalar-gFf-multiplication+} and
\eqref{scalar-gFf-multiplication-} with the projection \eqref{M-integral},
we obtain two $\mathfrak{sp}(4,\BB C)$-invariant trilinear forms
\begin{align*}
(\pi^0_{3-dim}, {\cal H}^+_{3-dim}) \times
\bigl( \rho_1, N(Z)^{\frac12} \cdot \BB C [z_{11},z_{12},z_{22},N(Z)^{-1}] \bigr)
\times (\pi^0_{3-dim}, {\cal H}^+_{3-dim}) &\to \BB C,  \\
(\pi^0_{3-dim}, {\cal H}^-_{3-dim}) \times
\bigl( \rho_1, N(Z)^{\frac12} \cdot \BB C [z_{11},z_{12},z_{22},N(Z)^{-1}] \bigr)
\times (\pi^0_{3-dim}, {\cal H}^-_{3-dim}) &\to \BB C,
\end{align*}
both given by the formula
\begin{equation}  \label{3form-scalar}
  (\phi_1, f, \phi_2) \mapsto
  \frac i{8\pi^2} \int_{\widetilde{\Gamma}} \phi_1(Z) \cdot f(Z) \cdot \phi_2(Z) \,dZ^3.
\end{equation}

Recall from Theorem \ref{rho-alpha-beta-thm} that
$\bigl( \rho_1, N(Z)^{\frac12} \cdot \BB C [z_{11},z_{12},z_{22},N(Z)^{-1}] \bigr)$
has invariant subspaces
\begin{align*}
&\BB C\text{-span of } \bigl\{ N(Z)^{k+\frac12} \cdot R^m_l(Z);\: k \ge -(l+1) \bigr\}, \\
&\BB C\text{-span of } \bigl\{ N(Z)^{k+\frac12} \cdot R^m_l(Z);\: k \le -2 \bigr\}.
\end{align*}

\begin{lem} 
The restriction of the trilinear form \eqref{3form-scalar} to
\begin{align*}
(\pi^0_{3-dim}, {\cal H}^+_{3-dim}) \times
\bigl( \rho_1,
\BB C\text{-span of } \bigl\{ N(Z)^{k+\frac12} \cdot R^m_l(Z);\: k \ge -(l+1) \bigr\} \bigr)
\times (\pi^0_{3-dim}, {\cal H}^+_{3-dim}) &\to \BB C,  \\
(\pi^0_{3-dim}, {\cal H}^-_{3-dim}) \times
\bigl( \rho_1,
\BB C\text{-span of } \bigl\{ N(Z)^{k+\frac12} \cdot R^m_l(Z);\: k \le -2 \bigr\} \bigr)
\times (\pi^0_{3-dim}, {\cal H}^-_{3-dim}) &\to \BB C
\end{align*}
is identically zero.
\end{lem}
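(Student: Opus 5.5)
The plan is to reduce each statement to a degree count followed by the orthogonality relations \eqref{R-orthogonality}. The trilinear form \eqref{3form-scalar} is the composition of the multiplication map into $(\rho_M,M)$ with the projection \eqref{M-integral}. By Corollary \ref{M-integral-cor}, this projection is given by the integral over $\widetilde{\Gamma}$. By the orthogonality relations \eqref{R-orthogonality} and Lemma \ref{dZonGamma_s}, it picks out exactly the coefficient of the basis element $N(Z)^{-\frac32}\cdot R^0_0(Z)$ when an element of $M$ is expanded in the basis $N(Z)^k R^m_l(Z)$, $k\in\tfrac12+\BB Z$. This is the only $K$-type of $M$ lying in the trivial quotient shown in Figure \ref{decomposition-fig2}. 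It therefore suffices to show that, for all triples of basis vectors, the product $\phi_1 f\phi_2$ has no component along $N(Z)^{-\frac32}$.

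Both sides of the decomposition behave multiplicatively with respect to the \emph{homogeneity degree} in $Z$. The element $N(Z)^{k}R^m_l(Z)$ is homogeneous of degree $2k+l$, and the target element $N(Z)^{-\frac32}$ has degree $-3$. In the first case, take $\phi_1=R^{m_1}_{l_1}$ and $\phi_2=R^{m_2}_{l_2}$, of degrees $l_1,l_2\ge0$, and $f=N(Z)^{k+\frac12}R^m_l$ with $k\ge-(l+1)$. The degree of $f$ is $2k+1+l\ge -l-1$. Degree alone does not finish the argument, since $l$ can be large. I will therefore expand the product $R^{m_1}_{l_1}R^m_lR^{m_2}_{l_2}$ in the basis of Proposition \ref{basis-prop} as a sum of terms $N(Z)^{j}R^{m'}_{l'}$ with $j\ge0$, using repeated application of \eqref{tR}--\eqref{z_{22}R} or Clebsch--Gordan.

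The key observation is that to hit $R^0_0$ one needs $l'=0$, which forces $2j=l_1+l+l_2$. The resulting term is then $N(Z)^{k+\frac12+j}$ with exponent
\[
k+\tfrac12+\tfrac{l_1+l+l_2}2\ \ge\ -\tfrac12+\tfrac{l_1+l_2-l}{2}.
\]
Triangle inequalities for the $SU(2)$ tensor product, needed to obtain spin $0$ from spins $l_1,l,l_2$, give $l\le l_1+l_2$. Hence the exponent is at least $-\tfrac12>-\tfrac32$, so the coefficient of $N^{-\frac32}$ vanishes. An equivalent and cleaner way to phrase this: the submodule $\{k\ge -(l+\tfrac12)\}$ of $M$ from Theorem \ref{rho-alpha-beta-thm} is exactly the kernel of the projection. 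The products land in this submodule because multiplying by $R^{m}_{l}$ raises $l'+2j$ by $l$ and can lower $l'$ by at most $l$.

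For the second case I will apply the inversion $\bigl(\begin{smallmatrix}0&1\\1&0\end{smallmatrix}\bigr)$. It interchanges ${\cal H}^\pm_{3-dim}$, maps $\{k\le-2\}$ to $\{k\ge-(l+1)\}$ in the $\rho_1$-module, and the trilinear form is invariant under it (it acts in $M$ through $\rho_M$, and the projection is invariant). Alternatively, I can redo the count directly: the degrees now are $-l_1-1$, $-l_2-1$ and $2k+1+l$ with $k\le -2$. Expanding, the $R^0_0$ component has $N$-exponent
\[
-l_1-\tfrac12-l_2-\tfrac12+k+\tfrac12+\tfrac{l_1+l+l_2}{2}\ \le\ k-\tfrac12\ \le\ -\tfrac52\ <\ -\tfrac32,
\]
again using $l\le l_1+l_2$. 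The main obstacle is making the statement ``the $R^0_0$ component only appears with $j=(l_1+l+l_2)/2$ and requires the triangle inequality'' rigorous. I would handle this via $U(2)$-equivariance of multiplication: the $V_0$ component in $V_{l_1}\otimes V_l\otimes V_{l_2}$ exists only if $l\le l_1+l_2$. Degree homogeneity then pins down the $N$-exponent.
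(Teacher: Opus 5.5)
Your argument is correct, but it is organized differently from the paper's. The paper defers to the proof of Lemma \ref{3form-main-restriction-lem}. It uses the inversion to reduce to the first restriction, and then uses irreducibility of $(\pi^0_{3-dim},{\cal H}^+_{3-dim})$: the set of $\phi_1$ for which the form vanishes for all $f$ in the middle submodule and all $\phi_2$ is invariant, so it suffices to take the generator $\phi_1=R^0_0=1$. After that only pairwise orthogonality is needed. By \eqref{R-orthogonality} the integral of $N(Z)^{k+\frac12}R^m_l\cdot R^{m_2}_{l_2}$ vanishes unless $l_2=l$, and then the degree $2k+1+2l\ge-1$ is not $-3$.

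You instead handle arbitrary triples of basis vectors and replace the irreducibility reduction by the selection rule $l\le l_1+l_2$; for $l_1=0$ your estimate reduces to exactly the paper's degree count. Your route gives a basis-by-basis statement that does not use irreducibility, and it even shows that the triple products lie in the submodule $\{k\ge-(l+\tfrac12)\}$ of $M$. Your direct count in the second case also avoids the inversion, which needs some care about branches when half-integer powers of $N(Z)$ are involved. The paper's route is shorter because it needs only two-fold orthogonality.

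The step you flag as the main obstacle needs no Clebsch--Gordan theory. The product $R^{m_1}_{l_1}R^{m_2}_{l_2}$ is homogeneous of degree $l_1+l_2$, so by Proposition \ref{basis-prop} it is a combination of $N(Z)^jR^{m'}_{l'}$ with $l'\le l_1+l_2$. Hence by \eqref{R-orthogonality} it is orthogonal to $R^m_l$ on every sphere when $l>l_1+l_2$.

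One small inaccuracy: $\{k\ge-(l+\tfrac12)\}$ is not exactly the kernel of \eqref{M-integral}. The kernel is its sum with $\{k\le-\tfrac52\}$, i.e.\ the span of all basis vectors other than $N(Z)^{-\frac32}$. Your argument only uses containment, so nothing breaks.
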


The proof is the same as in the spinor case
(Lemma \ref{3form-main-restriction-lem}).

Therefore, the trilinear form \eqref{3form-scalar} descends to non-trivial
$\mathfrak{sp}(4,\BB C)$-invariant trilinear forms
\begin{align*}
(\pi^0_{3-dim}, {\cal H}^+_{3-dim}) \times
\Bigl( \rho_1, \tfrac{N(Z)^{\frac12} \cdot \BB C [z_{11},z_{12},z_{22},N(Z)^{-1}]}
{\BB C\text{-span of } \bigl\{ N(Z)^{k+\frac12} \cdot R^m_l(Z);\: k \ge -(l+1) \bigr\}} \Bigr)
\times (\pi^0_{3-dim}, {\cal H}^+_{3-dim}) &\to \BB C,  \\
(\pi^0_{3-dim}, {\cal H}^-_{3-dim}) \times
\Bigl( \rho_1, \tfrac{N(Z)^{\frac12} \cdot \BB C [z_{11},z_{12},z_{22},N(Z)^{-1}]}
{\BB C\text{-span of } \bigl\{ N(Z)^{k+\frac12} \cdot R^m_l(Z);\: k \le -2 \bigr\}} \Bigr)
\times (\pi^0_{3-dim}, {\cal H}^-_{3-dim}) &\to \BB C
\end{align*}
(cf. proof of Proposition \ref{3form-nonzero-prop}).

\begin{rem}
Similarly, one can construct $\mathfrak{sp}(4,\BB C)$-invariant trilinear forms
\[
(\pi^0_{3-dim}, {\cal H}^+_{3-dim}) \times
(\rho_1, \BB C [z_{11},z_{12},z_{22},N(Z)^{-1}]^{\pm}) \times
(\pi^0_{3-dim}, {\cal H}^-_{3-dim}) \to \BB C.
\]
\end{rem}

\subsection{An $\mathfrak{sp}(4,\BB C)$-Invariant Trilinear Form}

We modify the trilinear form \eqref{3form-easy} by replacing the
$\mathfrak{gl}(2,\HC)$-modules $(\pi'_r, {\cal U}')$,
$(\rho_2, {\cal W})$ and $(\pi'_l, {\cal U})$ with the
$\mathfrak{sp}(4,\BB C)$-modules $(\pi'_r, {\cal U}'_{res})$,
$(\rho^*_2, \mathring {\cal W}^*_{res})$ and $(\pi'_l, {\cal U}_{res})$
respectively. Thus, we consider a multiplication map
\begin{equation}  \label{quasi-gFf-res-multiplication}
(\pi'_r, {\cal U}'_{res}) \times (\rho^*_2, \mathring {\cal W}^*_{res}) \times
(\pi'_l, {\cal U}_{res}) \to (\rho_M, M), \qquad (g, F, f) \mapsto gFf.
\end{equation}
The argument used to show Proposition \ref{gFf-prop} also shows:

\begin{prop}
The multiplication map \eqref{quasi-gFf-res-multiplication} is
$\mathfrak{sp}(4,\BB C)$-equivariant.
\end{prop}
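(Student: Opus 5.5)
We verify equivariance at the group level, exactly as in the proof of Proposition \ref{gFf-prop}, and then pass to $\mathfrak{sp}(4,\BB C)$ by differentiation. Fix $h = \bigl(\begin{smallmatrix} a' & b' \\ c' & d' \end{smallmatrix}\bigr) \in Sp(4,\BB R)$ close to the identity, write $h^{-1} = \bigl(\begin{smallmatrix} a & b \\ c & d \end{smallmatrix}\bigr)$ and $\tilde Z = (aZ+b)(cZ+d)^{-1}$. For $Sp(4,\BB R)$ we have $(a'-Zc')^{-1}(-b'+Zd') = \tilde Z$ on $\HC\text{-sym}$. This follows from the identity $(a'-Zc')^{-1}(-b'+Zd') = (aZ+b)(cZ+d)^{-1}$, which holds for all of $GL(2,\HC)$ and is already used implicitly in Proposition \ref{gFf-prop} and Lemma 10 of \cite{FL1}. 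Hence all three actions $\pi'_r$, $\rho^*_2$, $\pi'_l$ evaluate their arguments at the same point $\tilde Z$.

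Next we multiply the three transformed factors:
\begin{multline*}
\bigl(\pi'_r(h)g\bigr)(Z)\cdot\bigl(\rho^*_2(h)F\bigr)(Z)\cdot\bigl(\pi'_l(h)f\bigr)(Z) \\
= g(\tilde Z)\cdot\frac{cZ+d}{N(cZ+d)}\cdot\frac{(cZ+d)^{-1}}{N(cZ+d)}\cdot F(\tilde Z)\cdot(a'-Zc')^{-1}\cdot\frac{a'-Zc'}{N(a'-Zc')}\cdot f(\tilde Z).
\end{multline*}
The matrix factors cancel in adjacent pairs, leaving
\[
\frac{(gFf)(\tilde Z)}{N(cZ+d)^2\, N(a'-Zc')} .
\]
By the definition of $\rho_{\alpha,\beta}$, this is $\rho_{2,1}(h)(gFf)$. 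Since $\alpha+\beta = 3$, Lemma \ref{rho-alpha-beta-action} identifies this action with $\rho_M$.

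It remains to check that $gFf$ lies in $M$. The function $g$ is $\BB S'$-valued and $f$ is $\BB S$-valued, both with polynomial entries in $z_{11},z_{12},z_{22},N(Z)^{-1}$. The function $F$ is $\HC\text{-sym}$-valued with entries in $N(Z)^{\frac12}\cdot\BB C[z_{11},z_{12},z_{22},N(Z)^{-1}]$. So $gFf$ is scalar and lies in $N(Z)^{\frac12}\cdot\BB C[z_{11},z_{12},z_{22},N(Z)^{-1}] = M$.

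The only delicate point is the half-integer power $N(Z)^{\frac12}$ carried by $F$. The substitution $Z\mapsto\tilde Z$ and the branch of the square root have to be handled consistently, which we do by working on the double cover $\widetilde{\BB H}^\times$ as in Subsection \ref{rho_M-subsection}. We need to check that the branch used to define $\rho^*_2$ on ${\cal W}^*_{res}$ matches the branch used for $\rho_M$. Since both actions substitute the same $\tilde Z$ and attach no extra square-root factor, the branches agree for $h$ near the identity. Differentiating the group identity then gives the $\mathfrak{sp}(4,\BB C)$-equivariance. Alternatively, one can check the identity directly on the generators $\bigl(\begin{smallmatrix} A&0\\0&-A^T\end{smallmatrix}\bigr)$, $\bigl(\begin{smallmatrix} 0&B\\0&0\end{smallmatrix}\bigr)$, $\bigl(\begin{smallmatrix} 0&0\\C&0\end{smallmatrix}\bigr)$ using Lemma \ref{pi-prime-action-lem} and the Lie algebra action of $\rho^*_2$; the matrix terms $\pm A$, $\pm A^T$, $\pm CZ$, $\pm ZC$ cancel and the scalar traces add up to $\alpha+\beta=3$.
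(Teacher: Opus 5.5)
Your proposal is correct and follows the paper's own argument, which is the group-level cancellation from Proposition \ref{gFf-prop}. The product of the three transformed factors is $\rho_{2,1}(h)(gFf)$, and by Lemma \ref{rho-alpha-beta-action} this action coincides with $\rho_M$ because $\alpha+\beta=3$. Your remarks on $gFf\in M$, on the square-root branch, and the alternative check on the Lie algebra generators are all accurate supplements.
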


Combining the multiplication map \eqref{quasi-gFf-res-multiplication}
with the projection \eqref{M-integral}, we obtain an
$\mathfrak{sp}(4,\BB C)$-invariant trilinear form
\begin{equation}    \label{3form-quasi}
(\pi'_r, {\cal U}'_{res}) \times (\rho^*_2, \mathring {\cal W}^*_{res}) \times
(\pi'_l, {\cal U}_{res}) \to \BB C, \qquad
\end{equation}
\begin{equation}  \label{3form}
(g, F, f) \mapsto \frac i{8\pi^2} \int_{\widetilde{\Gamma}} g(Z) \cdot F(Z) \cdot f(Z) \,dZ^3.
\end{equation}

The $\mathfrak{sp}(4,\BB C)$-module
$(\rho^*_2, \mathring {\cal W}^*_{\frac12})$ has invariant subspaces:
\begin{align*}
\BB C\text{-span of } &\bigl\{
N(Z)^{k+\frac12} \cdot \leftidx{^*}{{\bf M}}^m_l(Z),\:
\leftidx{^*}{{\bf C}}^m_{k,l}(Z);\: k \ge -(l+1) \bigr\},  \\
\BB C\text{-span of } &\bigl\{
N(Z)^{k-\frac12} \cdot \leftidx{^*}{{\bf M}}^m_l(Z),\:
\leftidx{^*}{{\bf C}}^m_{k,l}(Z);\: k \le -1 \bigr\}.
\end{align*}

\begin{lem}  \label{3form-quasi-restriction-lem}
The restrictions of the trilinear form \eqref{3form-quasi} to
\begin{align*}
(\pi'_r, {\cal U}'^+_{res}) \times
\bigl( \rho^*_2, \BB C\text{-span of } \bigl\{
N(Z)^{k+\frac12} \cdot \leftidx{^*}{{\bf M}}^m_l(Z),\:
\leftidx{^*}{{\bf C}}^m_{k,l}(Z);\: k \ge -(l+1) \bigr\} \bigr)
\times (\pi'_l, {\cal U}^+_{res}) &\to \BB C,  \\
(\pi'_r, {\cal U}'^-_{res}) \times
\bigl( \rho^*_2, \BB C\text{-span of } \bigl\{
N(Z)^{k-\frac12} \cdot \leftidx{^*}{{\bf M}}^m_l(Z),\:
\leftidx{^*}{{\bf C}}^m_{k,l}(Z);\: k \le -1 \bigr\} \bigr)
\times (\pi'_l, {\cal U}^-_{res}) &\to \BB C
\end{align*}
are identically zero.
\end{lem}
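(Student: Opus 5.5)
The plan is to prove the first vanishing statement directly and obtain the second from it by applying the inversion $\bigl(\begin{smallmatrix} 0 & 1 \\ 1 & 0 \end{smallmatrix}\bigr)$.

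For the first statement, take $g\in{\cal U}'^+_{res}$, $f\in{\cal U}^+_{res}$ and $F$ a $K$-type vector of the indicated subspace. Since the pairing \eqref{M-integral} is $\mathfrak{sp}(4,\BB C)$-invariant and kills exactly the two invariant subspaces \eqref{MM^+}, \eqref{MM^-} of $(\rho_M,M)$ (Theorem \ref{rho-alpha-beta-thm} with $\alpha+\beta=3$), it suffices to show that the product $gFf$ always lies in \eqref{MM^+}, i.e. in
\[
\BB C\text{-span of } \bigl\{ N(Z)^k \cdot R^m_l(Z);\: k \ge -(l+\tfrac12) \bigr\}.
\]
This subspace can be characterized as the functions $N(Z)^{\frac12}\cdot h$ such that $h$ has a singularity at $0$ no worse than $N(Z)^{-(l+1)} R^m_l(Z)$. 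Equivalently, it is $N(Z)^{-\frac12}$ times the span of $N(Z)^{k'}R^m_l(Z)$ with $k'\ge -l$, i.e. $N(Z)^{-\frac12}$ times the span of $N(Z)^{-j}\phi$ with $\phi$ a homogeneous polynomial of degree $\ge 2j$.

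The key step is to describe the $F$-subspace in the same terms. Every $N(Z)^{k+\frac12}\,\leftidx{^*}{{\bf M}}^m_l$ with $k\ge-(l+1)$, and every $\leftidx{^*}{{\bf C}}^m_{k,l}$ with $k\ge -(l+1)$, is $N(Z)^{-\frac12}$ times an $\HC\text{-sym}$-valued function. That function is a sum of terms $N(Z)^{-j}\cdot(\text{polynomial of degree}\ge 2j)$; in the ${\bf C}$ case, expand $N^{k-\frac12}\,{}^*{\bf B}_{l+1}$ and $N^{k+\frac12}\,{}^*{\bf T}_{l-1}$ and check the degree count in each term. Multiplying by the polynomials $g$ and $f$ preserves this "degree $\ge 2\times$ pole order" property, since it only raises the polynomial degree. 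Rewriting the product in the basis $N(Z)^kR^m_l$ via Proposition \ref{basis-prop}, the function $N^{-j}\phi$ with $\deg\phi=d\ge 2j$ expands into terms $N^{a-j}R_{d-2a}$ with $a\ge0$. For such terms, $k=a-j-\frac12$ and $l=d-2a$, so $k+l+\frac12=d-a-j$, which is $\ge 0$ whenever $a\le d-j$. Since $d-2a\ge0$ forces $a\le d/2\le d-j$, the condition holds. Hence $gFf$ lies in \eqref{MM^+} and the form vanishes. I expect this bookkeeping for the ${\bf C}$-combinations to be the main obstacle. If the naive count fails for ${}^*{\bf B}$ (its prefactor is $N^{k-\frac12}$), I would instead argue via $K$-types: the lowest power in the expansion is bounded using the $\partial$-action formulas in the proof of Theorem \ref{W*-decomp-thm}, which show that the subspace is generated by terms with $k\ge-(l+1)$.

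For the second statement, apply the inversion. By Lemma \ref{U-inversion-lem2}, $\pi'_l$ and $\pi'_r$ of $\bigl(\begin{smallmatrix} 0 & 1 \\ 1 & 0 \end{smallmatrix}\bigr)$ interchange ${\cal U}^\pm_{res}$ and ${\cal U}'^\pm_{res}$. By Lemma \ref{W*-inversion-lem} and \eqref{I*-inversion}, $\rho^*_2$ of the inversion carries the second $F$-subspace onto the first: $k\mapsto -k-l-2$ sends $k\le-1$ to $k\ge-(l+1)$. The multiplication map is equivariant for the inversion, and the integral over $\widetilde\Gamma$ is invariant under it up to sign, by Lemma \ref{Gamma_s-Jacobian-lemma} (or Lemma \ref{Jacobian_lemma}). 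Thus the second restriction vanishes because the first does.
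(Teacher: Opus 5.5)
\paragraph{The key step fails.}
Your overall strategy is sound: show that $gFf$ lies in \eqref{MM^+}, which the functional \eqref{M-integral} kills, then use the inversion for the second statement. The inversion step is fine. The problem is the key step, which is false as stated.

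Your reformulation of \eqref{MM^+} as $N(Z)^{-\frac12}$ times the span of $N(Z)^{-j}\phi$ with $\deg\phi\ge 2j$ is only a sufficient condition. Every element of that span has all homogeneous components of degree $\ge -1$. But \eqref{MM^+} contains, e.g., $N(Z)^{-\frac32}R^m_1(Z)$, which has degree $-2$.

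More importantly, the $F$-subspace does not satisfy your criterion. Take $l=1$ and $k=-2=-(l+1)$. Then $N(Z)^{\frac12}\cdot N(Z)^{-\frac32}\,{}^*{\bf M}^m_1(Z)=N(Z)^{-1}\,{}^*{\bf M}^m_1(Z)$ is homogeneous of degree $-1$. It therefore cannot be a sum of terms $N(Z)^{-j}\cdot(\text{polynomial of degree}\ge 2j)$. In general:
\begin{itemize}
\item $N(Z)^{k+\frac12}\,{}^*{\bf M}^m_l$ violates your count whenever $2k+l+2<0$;
\item ${}^*{\bf C}^m_{k,l}$ violates it whenever $2k+l+1<0$, e.g.\ at $k=-(l+1)$.
\end{itemize}
So the count already breaks for the ${}^*{\bf M}$ generators, not only for the ${}^*{\bf B}$ part you anticipated. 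Your $K$-type fallback does not say what would replace the count.

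\paragraph{How to repair it.}
The missing idea is to track $k+l$ in the basis of Proposition \ref{basis-prop}, rather than degree versus pole order. The subspace \eqref{MM^+} is exactly the span of $N(Z)^kR^m_l(Z)$ with $k+l\ge-\frac12$. By \eqref{tR}, \eqref{z_{11}R} and \eqref{z_{22}R}, multiplying $N^kR_l$ by $t$, $z_{11}$ or $z_{22}$ gives only combinations of $N^kR_{l+1}$ and $N^{k+1}R_{l-1}$. So $k+l$ never decreases, and \eqref{MM^+} is a $\BB C[z_{11},z_{12},z_{22}]$-module.

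Now check the entries of $F$:
\begin{itemize}
\item For $N(Z)^{k+\frac12}\,{}^*{\bf M}^m_l$ with $k\ge-(l+1)$, the entries are multiples of $N^{k+\frac12}R_l$, which lie in \eqref{MM^+}.
\item In ${}^*{\bf C}^m_{k,l}$, the ${}^*{\bf B}_{l+1}$ term contributes $N^{k-\frac12}R_{l+1}$, also in \eqref{MM^+}.
\item The ${}^*{\bf T}_{l-1}$ term contributes $N^{k+\frac12}R_{l-1}$, which lies in \eqref{MM^+} only for $k\ge -l$. However, its coefficient $2(k+l+1)(l+1)$ vanishes exactly at $k=-(l+1)$.
\end{itemize}
Hence $gFf$ lies in \eqref{MM^+} for all polynomial $g$ and $f$, and the form vanishes.

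\paragraph{Comparison with the paper.}
With this repair your route differs from the paper's. The paper uses irreducibility of ${\cal U}'^+_{res}$ to reduce to the constant generator $g_0=(1,0)$. It then uses the orthogonality relations \eqref{R-orthogonality} on $\widetilde\Gamma$ to show that every surviving integrand has homogeneity degree $\ge -1$, hence never $-3$. The repaired version of your argument needs no irreducibility and handles arbitrary polynomial $g$ directly.
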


\begin{proof}
Since the two restrictions are related by the inversion map, it is sufficient
to prove that the first restriction is zero.
Furthermore, since $(\pi'_r, {\cal U}'^+_{res})$ is irreducible,
it is sufficient to show that the trilinear form is zero for a particular generator
$g_0 \in {\cal U}'^+_{res}$, such as $g_0=(1,0)$.
Recall that the $K$-types of ${\cal U}^+_{res}$ are listed in
Theorem \ref{U-K-type-decomp-thm}.
Then the trilinear form \eqref{3form-quasi} produces integrals
of the kind
\begin{equation}  \label{zero-integral}
\int_{\widetilde{\Gamma}} (1,0) \cdot F(Z) \cdot f(Z) \,dZ^3,
\end{equation}
where $F(Z)$ is $N(Z)^{k+\frac12} \cdot \leftidx{^*}{{\bf M}}^m_l(Z)$
or $\leftidx{^*}{{\bf C}}^m_{k,l}(Z)$ with $k \ge -(l+1)$
and $f(Z)$ is $N(Z)^{k'} \cdot {\bf u^t_{l',m'}}(Z)$ or
$N(Z)^{k'} \cdot {\bf u^b_{l'+1,m'}}(Z)$ with $k, l \ge 0$.

Consider first the case
$F(Z) = N(Z)^{k+\frac12} \cdot \leftidx{^*}{{\bf M}}^m_l(Z)$.
By the orthogonality relations \eqref{R-orthogonality}, integral
\eqref{zero-integral} is zero unless $f(Z)$ is
$N(Z)^{k'} \cdot {\bf u^t_{l',m'}}(Z)$ or
$N(Z)^{k'} \cdot {\bf u^b_{l',m'}}(Z)$ with $l'=l$,
in which case the degree of homogeneity of the
integrand is $2l+2k+2k'+1 \ge -1$.
Since the degree cannot be $-3$, integral \eqref{zero-integral} is zero.

Now, consider the case
\[
F(Z) = \leftidx{^*}{{\bf C}}^m_{k,l}(Z)
= (2k+1)l N(Z)^{k-\frac12} \cdot \leftidx{^*}{{\bf B}}^m_{l+1}(Z)
+ 2(k+l+1)(l+1) N(Z)^{k+\frac12} \cdot \leftidx{^*}{{\bf T}}^m_{l-1}(Z).
\]
By the orthogonality relations \eqref{R-orthogonality},
\[
\iint_{Z \in S^2_r} \leftidx{^*}{{\bf T}}^m_l(Z)
\cdot {\bf u^b_{l',m'}}(Z) \,dS
= \iint_{Z \in S^2_r} \leftidx{^*}{{\bf B}}^m_l(Z)
\cdot {\bf u^t_{l',m'}}(Z) \,dS =0
\]
for all values of $l,l',m,m'$.
Hence, by the orthogonality relations \eqref{R-orthogonality}
again, integral \eqref{zero-integral} is zero unless $f(Z)$ is
$N(Z)^{k'} \cdot {\bf u^t_{l'-1,m'}}(Z)$ and $k \ge -l$ or
$N(Z)^{k'} \cdot {\bf u^b_{l'+1,m'}}(Z)$ with $l'=l$,
in both cases the degree of homogeneity of the integrand is at least $-1$
and cannot be $-3$. Hence integral \eqref{zero-integral} is zero.
\end{proof}

Therefore,  the trilinear form \eqref{3form-quasi} descends to
the quotients
\begin{align*}
(\pi'_r, {\cal U}'^+_{res}) \times
\Bigl( \rho^*_2, \tfrac{\mathring {\cal W}^*_{res}}
{\BB C\text{-span of } \bigl\{
N(Z)^{k+\frac12} \cdot \leftidx{^*}{{\bf M}}^m_l(Z),\:
\leftidx{^*}{{\bf C}}^m_{k,l}(Z);\: k \ge -(l+1) \bigr\}} \Bigr)
\times (\pi'_l, {\cal U}^+_{res}) &\to \BB C,  \\
(\pi'_r, {\cal U}'^-_{res}) \times
\Bigl( \rho^*_2, \tfrac{\mathring {\cal W}^*_{res}}
{\BB C\text{-span of } \bigl\{
N(Z)^{k-\frac12} \cdot \leftidx{^*}{{\bf M}}^m_l(Z),\:
\leftidx{^*}{{\bf C}}^m_{k,l}(Z);\: k \le -1 \bigr\}} \Bigr)
\times (\pi'_l, {\cal U}^-_{res}) &\to \BB C.
\end{align*}
By Theorem \ref{W*-decomp-thm}, the middle
$\mathfrak{sp}(4,\BB C)$-modules contain irreducible components
isomorphic to $(\pi_{dl},{\cal F}^-) \simeq (\pi_{dr},{\cal G}^-)$
and $(\pi_{dl},{\cal F}^+) \simeq (\pi_{dr},{\cal G}^+)$ respectively.
It is easy to see that these two invariant trilinear forms are  non-trivial
(cf. proof of Proposition \ref{3form-nonzero-prop}).

\subsection{Another $\mathfrak{sp}(4,\BB C)$-Invariant Trilinear Form}  \label{3form-main-subsect}

We modify the trilinear form \eqref{3form-obvious} by replacing the
$\mathfrak{gl}(2,\HC)$-modules $(\pi_r, {\cal V}')$,
$(\rho'_2, {\cal W}')$ and $(\pi_l, {\cal V})$ with the
$\mathfrak{sp}(4,\BB C)$-modules $(\pi^{3-dim}_r, {\cal V}'^{\pm}_{3-dim})$,
$(\rho'_2, \mathring {\cal W}'_{\frac12})$ and
$(\pi^{3-dim}_l, {\cal V}^{\pm}_{3-dim})$ respectively.
Thus, we consider multiplication maps
\begin{align}
(\pi^{3-dim}_r, {\cal V}'^+_{3-dim}) \times (\rho'_2, \mathring {\cal W}'_{\frac12})
\times (\pi^{3-dim}_l, {\cal V}^+_{3-dim}) &\to (\rho_M, M),
\label{gFf-res-multiplication+}  \\
(\pi^{3-dim}_r, {\cal V}'^-_{3-dim}) \times (\rho'_2, \mathring {\cal W}'_{\frac12})
\times (\pi^{3-dim}_l, {\cal V}^-_{3-dim}) &\to (\rho_M, M),
\label{gFf-res-multiplication-}
\end{align}
where both maps are given by the formula
\begin{equation*}
(g, F, f) \mapsto gFf.
\end{equation*}
The argument used to show Proposition \ref{gFf-prop} also shows:

\begin{prop}
The multiplication maps \eqref{gFf-res-multiplication+} and
\eqref{gFf-res-multiplication-} are $\mathfrak{sp}(4,\BB C)$-equivariant.
\end{prop}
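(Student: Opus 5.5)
We follow the proof of Proposition \ref{gFf-prop}: show that the multiplication map commutes with the action of every group element $h$ close to the identity in $Sp(4,\BB R)$, and then differentiate to obtain $\mathfrak{sp}(4,\BB C)$-equivariance. Write
\[
h=\bigl(\begin{smallmatrix} a' & b' \\ c' & d' \end{smallmatrix}\bigr), \qquad
h^{-1}=\bigl(\begin{smallmatrix} a & b \\ c & d \end{smallmatrix}\bigr), \qquad
\tilde Z=(aZ+b)(cZ+d)^{-1}.
\]
For $g\in{\cal V}'^{\pm}_{3-dim}$, $F\in\mathring{\cal W}'_{\frac12}$ and $f\in{\cal V}^{\pm}_{3-dim}$, we multiply the three actions:
\[
g(\tilde Z)\frac{(a'-Zc')^{-1}}{\sqrt{N(a'-Zc')}}\cdot
\frac{a'-Zc'}{N(a'-Zc')}F(\tilde Z)\frac{cZ+d}{N(cZ+d)}\cdot
\frac{(cZ+d)^{-1}}{\sqrt{N(cZ+d)}}f(\tilde Z).
\]
Here we use that for $h\in Sp(4,\BB R)$ the point $(a'-Zc')^{-1}(-b'+Zd')$ coincides with $\tilde Z$; this is the same identity already used implicitly in Proposition \ref{gFf-prop}. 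The matrix factors $(a'-Zc')^{-1}(a'-Zc')$ and $(cZ+d)(cZ+d)^{-1}$ cancel. What remains is
\[
\frac{(gFf)(\tilde Z)}{N(cZ+d)^{3/2}\,N(a'-Zc')^{3/2}}.
\]

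The next step is to match this with $\rho_M$. Since $\rho_M=\rho_{\alpha,\beta}$ with $\alpha+\beta=3$, Lemma \ref{rho-alpha-beta-action} shows that only the sum $\alpha+\beta$ matters at the Lie algebra level. The expression above is formally $\rho_{\frac32,\frac32}(h)(gFf)$, with half-integer exponents. Differentiating at $h=1$ gives the same formulas as in Lemma \ref{rho-alpha-beta-action}, with $\alpha+\beta=3$. Hence the map intertwines the Lie algebra actions.

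We must also check that $gFf$ lies in $M$, i.e.\ that the total power of $N(Z)$ is a half-integer. In the ``$+$'' case, $g$ and $f$ are polynomial and $F$ carries $N(Z)^{\frac12}$, so this holds. In the ``$-$'' case, $g$ and $f$ each carry $N(Z)^{\frac12}$ and $F$ carries another, giving $N(Z)^{\frac32}$ times a Laurent polynomial. That again lies in $N(Z)^{\frac12}\cdot\BB C[z_{11},z_{12},z_{22},N(Z)^{-1}]$.

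The main obstacle is the branch ambiguity of the square roots $\sqrt{N(cZ+d)}$ and $\sqrt{N(Z)}$. We plan to avoid it by working infinitesimally. Either take $h$ near the identity on $\widetilde{\BB H}^{\times}$ with principal branches, or compare the Lie algebra formulas directly. Lemma \ref{pi-3dim-Lie_alg-action} and Lemma \ref{rho'_2-action-lem} apply termwise to products. The scalar terms add up: $\tfrac12+2+\tfrac12=3$ for the diagonal action, and for the $C$-action the scalar traces give $\tfrac12 \tr(ZC)+2\tr(ZC)+\tfrac12\tr(CZ)=3\tr(CZ)$. The spinor terms telescope: $gZC\cdot F-(ZCF+FCZ)+F\cdot CZf$ cancels. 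The same cancellation happens with $A$, $A^T$ for the diagonal action. This verification is routine and confirms equivariance.
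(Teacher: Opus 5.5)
Your proposal is correct and is essentially the paper's argument: the paper says only that the group-level computation of Proposition~\ref{gFf-prop} carries over, and your Lie-algebra check confirms it (scalar terms $\tfrac12+2+\tfrac12=3$, spinor terms cancelling). You can drop the formal $\rho_{\frac32,\frac32}$ and most of the branch discussion by noting that for $h\in Sp(4,\BB C)$ one has $cZ+d=(a'-Zc')^T$ on $\HC\text{-sym}$, so $N(cZ+d)=N(a'-Zc')$ and your prefactor is exactly $N(cZ+d)^{-3}$, i.e.\ $\rho_M(h)$.
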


Combining the multiplication maps
\eqref{gFf-res-multiplication+}-\eqref{gFf-res-multiplication-}
with the projection \eqref{M-integral}, we obtain two
$\mathfrak{sp}(4,\BB C)$-invariant trilinear forms

\begin{align}
(\pi^{3-dim}_r, {\cal V}'^+_{3-dim}) \times (\rho'_2, \mathring {\cal W}'_{\frac12})
\times (\pi^{3-dim}_l, {\cal V}^+_{3-dim}) &\to \BB C, \label{3form+}  \\
(\pi^{3-dim}_r, {\cal V}'^-_{3-dim}) \times (\rho'_2, \mathring {\cal W}'_{\frac12})
\times (\pi^{3-dim}_l, {\cal V}^-_{3-dim}) &\to \BB C, \label{3form-}
\end{align}
both given by the formula
\begin{equation}  \label{3form-main}
(g, F, f) \mapsto \frac i{8\pi^2}
\int_{\widetilde{\Gamma}} g(Z) \cdot F(Z) \cdot f(Z) \,dZ^3.
\end{equation}

The $\mathfrak{sp}(4,\BB C)$-module $(\rho'_2, \mathring {\cal W}'_{\frac12})$
has invariant subspaces:
\begin{align*}
&\BB C\text{-span of } \bigl\{ N(Z)^{k+\frac12} \cdot {\bf T}^m_l(Z), \:
  N(Z)^{k+\frac12} \cdot {\bf M}^m_l(Z), \: {\bf C}^m_{k+\frac12,l}(Z) ;\:
  k \ge -(l+1) \bigr\}, \\
&\BB C\text{-span of } \bigl\{ N(Z)^{k+\frac12} \cdot {\bf T}^m_l(Z), \:
  N(Z)^{k+\frac12} \cdot {\bf M}^m_l(Z), \: {\bf C}^m_{k+\frac32,l}(Z) ;\:
  k \le -1 \bigr\}.
\end{align*}

\begin{lem}  \label{3form-main-restriction-lem}
The restrictions of the trilinear forms \eqref{3form+}-\eqref{3form-} to
\begin{align*}
(\pi^{3-dim}_r, {\cal V}'^+_{3-dim}) \times
\biggl( \rho'_2,
\BB C\text{-span of } \biggl\{ \begin{smallmatrix}
  N(Z)^{k+\frac12} \cdot {\bf T}^m_l(Z), \:
  N(Z)^{k+\frac12} \cdot {\bf M}^m_l(Z), \\
  {\bf C}^m_{k+\frac12,l}(Z) \text{ with $k \ge -(l+1)$}
\end{smallmatrix} \biggr\} \biggr)
\times (\pi^{3-dim}_l, {\cal V}^+_{3-dim}) &\to \BB C,  \\
(\pi^{3-dim}_r, {\cal V}'^-_{3-dim}) \times
\biggl( \rho'_2,
\BB C\text{-span of } \biggl\{ \begin{smallmatrix}
  N(Z)^{k+\frac12} \cdot {\bf T}^m_l(Z), \:
  N(Z)^{k+\frac12} \cdot {\bf M}^m_l(Z), \\
  {\bf C}^m_{k+\frac32,l}(Z) \text{ with $k \le -1$}
\end{smallmatrix} \biggr\} \biggr)
\times (\pi^{3-dim}_l, {\cal V}^-_{3-dim}) &\to \BB C
\end{align*}
are identically zero.
\end{lem}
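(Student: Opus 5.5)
The plan is to mirror the proof of Lemma \ref{3form-quasi-restriction-lem}. First, the two restrictions are interchanged by the inversion $\bigl(\begin{smallmatrix} 0 & 1 \\ 1 & 0 \end{smallmatrix}\bigr)$. Indeed, $\pi^{3-dim}_l$ and $\pi^{3-dim}_r$ swap ${\cal V}^{\pm}_{3-dim}$ and ${\cal V}'^{\pm}_{3-dim}$ (by the explicit inversion formulas in the theorem describing the $K$-types of these modules). By Lemma \ref{W-half-inversion-lem}, $\rho'_2$ of the inversion carries the first invariant subspace of $\mathring{\cal W}'_{\frac12}$ onto the second: it sends $k \mapsto -(k+l+2)$ on the ${\bf T}$, ${\bf M}$ terms, so $k\ge -(l+1)$ becomes $k\le -1$, and similarly for the ${\bf C}$ terms. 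The integral \eqref{3form-main} is invariant because the trilinear form is $\mathfrak{sp}(4,\BB C)$-invariant, or directly by Lemma \ref{Gamma_s-Jacobian-lemma}. So it suffices to treat the first restriction.

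Next I reduce to a single generator. Since $(\pi^{3-dim}_r, {\cal V}'^+_{3-dim})$ is irreducible and the form is invariant, the set of $g$ for which $(g,F,f)\mapsto 0$ for all $F$ in the invariant subspace and all $f\in{\cal V}^+_{3-dim}$ is a submodule. It is therefore enough to take the lowest $K$-type generator, e.g. $g_0={\bf v'^t_{0,0}}=(1,0)$ up to scalars. The remaining task is to show that
\[
\int_{\widetilde\Gamma} g_0 \cdot F(Z)\cdot {\bf v^t_{l',m'}}(Z)\,dZ^3 = 0
\]
for $F$ running over $N(Z)^{k+\frac12}{\bf T}^m_l$, $N(Z)^{k+\frac12}{\bf M}^m_l$ and ${\bf C}^m_{k+\frac12,l}$ with $k\ge -(l+1)$. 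The integrand is homogeneous, and on $\widetilde\Gamma$ the angular integration uses \eqref{R-orthogonality}. A term can survive only if its total degree is $-3$, since that is the degree of the measure $N(Z)^{-\frac32}dZ^3$ giving a constant, and if the spherical-harmonic indices match.

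The degree count goes as follows. Each entry of $F$ is $N^{k+\frac12}$ times a degree-$l$ solid harmonic, possibly combined with $N^{\pm}$ shifts in ${\bf C}$. The entries of $g_0 F {\bf v^t_{l',m'}}$ are products $R^{a}_{l}R^{b}_{l'}$ times $N^{k+\frac12}$, so the degree is $2k+1+l+l'$. By orthogonality on spheres, a nonzero angular integral requires the product to contain the $l=0$ harmonic component. Expanding $R_lR_{l'}$ via Clebsch--Gordan forces $l'\ge l$ with parity constraints, and a nonzero constant component arises only in the pairing $l'=l$ after projection. Then the degree is $2k+2l+1 \ge -1$ when $k\ge -(l+1)$, which is never $-3$. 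For the ${\bf C}^m_{k+\frac12,l}$ term, write it as $(2k+1)N^{k-\frac12}{\bf B}^m_{l+1} - 2(k+l+1)N^{k+\frac12}{\bf T}^m_{l-1}$. Pairing with $g_0$ picks the first row, and I check the analogue of the vanishing $\iint {\bf T}\cdot{\bf u^b}=\iint{\bf B}\cdot{\bf u^t}=0$, here with ${\bf v^t}$. The ${\bf B}$ piece against ${\bf v^t_{l',m'}}$ should vanish identically by \eqref{R-orthogonality}. The ${\bf T}$ piece then has degree $2k+1+2(l-1)+\dots \ge -1$ under the constraint. This is the step I expect to be the main obstacle: carefully matching the $K$-type indices so that only combinations of degree $\ge -1$ can have a nonzero spherical average. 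If a direct matching fails, I would instead use the $\mathfrak{u}(2)$-equivariance of the integral: the $K$-type of $g_0F$ must be dual to that of ${\bf v^t_{l',m'}}$, together with the homogeneity of $\widetilde\Gamma$-integration, which kills everything not of degree $-3$.
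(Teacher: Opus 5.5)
Your skeleton matches the paper's: reduce to the first restriction by inversion, use irreducibility to reduce to $g_0=(1,0)$, then use \eqref{R-orthogonality} plus the fact that only integrands of degree $-3$ survive on $\widetilde\Gamma$. Your treatment of $N(Z)^{k+\frac12}{\bf T}^m_l$ and $N(Z)^{k+\frac12}{\bf M}^m_l$ is fine. The gap is in the ${\bf C}$-terms, exactly at the step you flagged. Recall
\[
{\bf C}^m_{k+\frac12,l}=(2k+1)N(Z)^{k-\frac12}{\bf B}^m_{l+1}-2(k+l+1)N(Z)^{k+\frac12}{\bf T}^m_{l-1}.
\]
The ${\bf T}$-piece can pair only with ${\bf v^t_{l-1,m'}}$, so its degree is $2k+1+2(l-1)=2k+2l-1$. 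Under $k\ge-(l+1)$ this is only $\ge -3$, not $\ge -1$.

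At $k=-(l+1)$ the degree is exactly $-3$, and the angular integral does not vanish. One computes
\[
\iint_{S^2_r}(1,0)\,{\bf T}^m_{l-1}(Z)\,{\bf v^t_{l-1,-m-1}}(Z)\,dS=(-1)^{m+1}4\pi r^{2l}.
\]
For example, for $l=1$ one has $(1,0)\,{\bf T}^0_0\,{\bf v^t_{0,-1}}=-1$, so pairing $N(Z)^{-\frac32}{\bf T}^0_0$ with $(1,0)$ and ${\bf v^t_{0,-1}}$ gives $\frac{i}{8\pi^2}\int_{\widetilde\Gamma}(-1)N(Z)^{-\frac32}\,dZ^3=-1\neq0$. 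Your degree bound therefore does not dispose of this case. Your fallback ($\mathfrak{u}(2)$-equivariance plus homogeneity) fails too, since both arguments only exclude degrees different from $-3$.

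The missing observation is that the coefficient $2(k+l+1)$ vanishes exactly at $k=-(l+1)$. So the ${\bf T}^m_{l-1}$-piece occurs only for $k\ge -l$, where its degree is $\ge -1$. The ${\bf B}$-piece has degree $2k+2l+1\ge -1$ and dies by degree alone, so no special cancellation is needed there.

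The paper handles this uniformly. It rewrites the middle module as the span of $N(Z)^{k+\frac12}R^m_l(Z)\cdot\bigl(\begin{smallmatrix} a & b\\ b & d\end{smallmatrix}\bigr)$ with $k\ge-(l+1)$. Every monomial occurring in ${\bf T}$, ${\bf M}$, ${\bf C}$ satisfies this condition; the only borderline case is the ${\bf T}$-part of ${\bf C}$, which is killed by $2(k+l+1)=0$. A single count $2k+2l+1\ge-1$ then finishes.

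Separately, your Clebsch--Gordan remark ("forces $l'\ge l$ with parity constraints") is garbled; \eqref{R-orthogonality} directly forces $l'=l$.
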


\begin{proof}
The proof is similar to that of Lemma \ref{3form-quasi-restriction-lem}.
Since the two restrictions are related by the inversion map, it is sufficient
to prove that the first restriction is zero.
Furthermore, since $(\pi_{r.3-dim}, {\cal V}'^+_{3-dim})$ is irreducible,
it is sufficient to show that the trilinear form is zero for a particular generator
$g_0 \in {\cal V}'^+_{3-dim}$, such as $g_0=(1,0)$.
Note that the middle $\mathfrak{sp}(4,\BB C)$-module is
\[
\BB C\text{-span of } \Bigl\{ N(Z)^{k+\frac12} \cdot R^m_l(Z) \cdot
\bigl(\begin{smallmatrix} a & b \\ b & d \end{smallmatrix}\bigr),\:
a,b,d \in \BB C,\: k \ge -(l+1) \Bigr\}.
\]
Then the trilinear form \eqref{3form+} produces integrals of the kind
\begin{equation}  \label{zero-integral2}
\int_{\widetilde{\Gamma}} (1,0) \cdot  N(Z)^{k+\frac12} \cdot R^m_l(Z) \cdot
\bigl(\begin{smallmatrix} a & b \\ b & d \end{smallmatrix}\bigr)
\cdot \Bigl( \begin{smallmatrix} (l'+m'+1) R^{m'}_{l'}(Z) \\
i R^{m'+1}_{l'}(Z) \end{smallmatrix} \Bigr)  \,dZ^3.
\end{equation}
By the orthogonality relations \eqref{R-orthogonality}, integral
\eqref{zero-integral2} is zero unless $l'=l$, in which case the degree
of homogeneity of the integrand is $2l+2k+1 \ge -1$.
Since the degree cannot be $-3$, integral \eqref{zero-integral2} is zero.
\end{proof}

Therefore,  the trilinear forms \eqref{3form+}-\eqref{3form-}
descend to the quotients
\begin{align*}
(\pi^{3-dim}_r, {\cal V}'^+_{3-dim}) \times
\left( \rho'_2, \tfrac{\mathring {\cal W}'_{\frac12}}
{\BB C\text{-span of } \biggl\{ \begin{smallmatrix}
  N(Z)^{k+\frac12} \cdot {\bf T}^m_l(Z), \:
  N(Z)^{k+\frac12} \cdot {\bf M}^m_l(Z), \\
  {\bf C}^m_{k+\frac12,l}(Z) \text{ with $k \ge -(l+1)$}
\end{smallmatrix} \biggr\}} \right)
\times (\pi^{3-dim}_l, {\cal V}^+_{3-dim}) &\to \BB C,  \\
(\pi^{3-dim}_r, {\cal V}'^-_{3-dim}) \times
\left( \rho'_2, \tfrac{\mathring {\cal W}'_{\frac12}}
{\BB C\text{-span of } \biggl\{ \begin{smallmatrix}
  N(Z)^{k+\frac12} \cdot {\bf T}^m_l(Z), \:
  N(Z)^{k+\frac12} \cdot {\bf M}^m_l(Z), \\
  {\bf C}^m_{k+\frac32,l}(Z) \text{ with $k \le -1$}
\end{smallmatrix} \biggr\}} \right)
\times (\pi^{3-dim}_l, {\cal V}^-_{3-dim}) &\to \BB C.
\end{align*}
By Theorem \ref{W-half-decomp-thm}, the middle
$\mathfrak{sp}(4,\BB C)$-modules contain irreducible components
isomorphic to $(\pi_{dl},{\cal F}^-) \simeq (\pi_{dr},{\cal G}^-)$
and $(\pi_{dl},{\cal F}^+) \simeq (\pi_{dr},{\cal G}^+)$ respectively.

\begin{prop}  \label{3form-nonzero-prop}
The above two trilinear forms are non-trivial.
\end{prop}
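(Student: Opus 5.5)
To show each descended form is non-trivial, it suffices to exhibit one triple $(g,F,f)$ on which the integral \eqref{3form-main} is nonzero. By the inversion (which interchanges ${\cal V}^{\pm}_{3-dim}$, ${\cal V}'^{\pm}_{3-dim}$ and the two invariant subspaces of $\mathring {\cal W}'_{\frac12}$ via Lemma \ref{W-half-inversion-lem}), it is enough to treat the first form \eqref{3form+}. We take the simplest elements $g=g_0=(1,0)\in{\cal V}'^+_{3-dim}$ and $f={\bf v^t_{l',m'}}(Z)\in{\cal V}^+_{3-dim}$. For $F$ we take a scalar multiple of a constant symmetric matrix, namely
\[
F(Z)=N(Z)^{k+\frac12}\cdot R^m_l(Z)\cdot\bigl(\begin{smallmatrix} a & b \\ b & d \end{smallmatrix}\bigr).
\]
This $F$ must not lie in the subspace spanned by $N(Z)^{k+\frac12}{\bf T}$, $N(Z)^{k+\frac12}{\bf M}$ and ${\bf C}_{k+\frac12,l}$ with $k\ge-(l+1)$. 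By the reasoning in the proof of Lemma \ref{3form-main-restriction-lem}, this means $k\le-(l+2)$.

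The computation runs as in that proof, but in reverse. By the orthogonality relations \eqref{R-orthogonality}, after integrating over each sphere $S^2_r$ only the term with $l'=l$ and matching $m$ survives. The integrand then has degree of homogeneity $2l+2k+1$ in $N(Z)^{\frac12}$ powers. Corollary \ref{M-integral-cor}, together with the normalization $\int_{\widetilde\Gamma}N(Z)^{-\frac32}dZ^3=-8\pi^2 i$, shows the integral is nonzero exactly when this degree equals $-3$, that is, $k=-(l+2)$. So I would choose $l=l'=0$, $m=m'=0$, $k=-2$. Then $f={\bf v^t_{0,0}}=(1,0)^T$ and $F=N(Z)^{-\frac32}\bigl(\begin{smallmatrix}1&0\\0&0\end{smallmatrix}\bigr)$. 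The product is $gFf=N(Z)^{-\frac32}$, so the integral equals $\frac{i}{8\pi^2}\cdot(-8\pi^2 i)=1\neq0$.

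The step needing the most care is verifying that this $F$ has nonzero image in the quotient and indeed lies in (or generates) the doubly regular component \eqref{W-half-subquot2}. Writing $N^{-3/2}E_{11}$ in the $K$-type basis, it is a combination of $N(Z)^{-\frac32}{\bf T}^{-1}_0$ (here $k=-2$, $l=0$ lies in the range $k\le-(l+2)$ of \eqref{W-half-subquot2}) and possibly ${\bf C}$-type terms, which lie in the killed subspace. Since ${\bf T}^{-1}_0=\bigl(\begin{smallmatrix}1&0\\0&0\end{smallmatrix}\bigr)$ up to the constant entries, this is immediate, and it confirms that the form is nonzero on the doubly regular component itself. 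The ${\cal V}^-$ case then follows by applying the inversion.
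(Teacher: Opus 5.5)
Your argument is correct, but it uses a different and lighter set of test vectors than the paper.

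The paper takes $g=(0,1)$, the elements $F_{-1},F_0,F_1$ of \eqref{main-thm-pf-F}, and $f={\bf v^t_{1,m}}(Z)$. The $F$'s are $N(Z)^{-\frac52}{\bf M}^m_1(Z)$, which by \eqref{2-reg-gens} are the restrictions of the explicit generators of the doubly regular functions. The products $gFf$ are then $N(Z)^{-\frac52}$ times quadratics such as $z_{11}z_{22}$. These have to be split as a multiple of $N(Z)$ plus a harmonic polynomial; \eqref{R-orthogonality} kills the harmonic part and the $N(Z)^{-\frac32}$ part survives.

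You instead go to the lowest $K$-type of \eqref{W-half-subquot2}. For $(k,l)=(-2,0)$ one has $N(Z)^{-\frac32}{\bf T}^{-1}_0(Z)=N(Z)^{-\frac32}\bigl(\begin{smallmatrix}1&0\\0&0\end{smallmatrix}\bigr)$ exactly. With $g={\bf v'^t_{0,0}}=(1,0)$ and $f={\bf v^t_{0,0}}=(1,0)^T$ the integrand is literally $N(Z)^{-\frac32}$. The normalization $\int_{\widetilde\Gamma}N(Z)^{-\frac32}\,dZ^3=-8\pi^2 i$ then gives the exact value $1$, with no harmonic decomposition or orthogonality needed.

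What each choice buys:
\begin{itemize}
\item The paper's choice exhibits the nonvanishing directly on the distinguished doubly regular generators.
\item Yours is shorter. Since $N(Z)^{-\frac32}{\bf T}^{-1}_0$ is a basis vector lying outside \eqref{W-half-subspace}, it represents a nonzero vector of the irreducible subquotient \eqref{W-half-subquot2}, so it witnesses the same thing.
\end{itemize}

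Two minor remarks. First, homogeneity degree $-3$ is only a necessary condition for the integral to be nonzero, not ``exactly when''; your explicit computation makes this moot. Second, the check that $F$ has nonzero image in the quotient is automatic once the value is nonzero, by Lemma \ref{3form-main-restriction-lem}.
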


\begin{proof}
Since the two trilinear forms are related by the inversion map,
it is sufficient to prove that the first form is non-trivial.
We can take $g=(0,1)  \in {\cal V}'^+_{3-dim}$.
Consider elements of $\mathring {\cal W}'_{\frac12}$
\[
N(Z)^{-l-\frac32} \cdot {\bf M^m_l}(Z) =
N(Z)^{-l-\frac32} \cdot \begin{pmatrix} R^{m+1}_l(Z) & im R^m_l(Z) \\
  im R^m_l(Z) & (l+m)(l-m+1) R^{m-1}_l(Z)  \end{pmatrix},
\]
$-l \le m \le l$.
(We saw in Theorem \ref{W-half-decomp-thm} that these elements
generate the doubly regular functions inside $\mathring {\cal W}'_{\frac12}$.)
By \eqref{2-reg-gens}, the generators
$N(Z)^{-\frac52} \cdot {\bf M}^m_1(Z)$ with $l=1$ and $m=-1,0,1$
are respectively scalar multiples of
\begin{equation}  \label{main-thm-pf-F}
F_{-1} = \tfrac1{N(Z)^{\frac52}} \cdot \bigl(
\begin{smallmatrix} 2it & z_{22} \\ z_{22} & 0 \end{smallmatrix}
\bigr), \quad
F_0 = \tfrac1{N(Z)^{\frac52}} \cdot \bigl(
\begin{smallmatrix} z_{11} & 0 \\  0 & -z_{22} \end{smallmatrix}
\bigr), \quad
F_1 = \tfrac1{N(Z)^{\frac52}} \cdot \bigl(
\begin{smallmatrix} 0 & z_{11} \\ z_{11} & 2it \end{smallmatrix}
\bigr) \quad \in \mathring {\cal W}'_{\frac12}.
\end{equation}
Finally, choose
\[
{\bf v^t_{l,m}}(Z) =
\begin{pmatrix} (l+m+1) R^m_l(Z) \\ i R^{m+1}_l(Z) \end{pmatrix}
\qquad \in {\cal V}^+_{3-dim}
\]
with $l=1$, $m=-1,0,1$:
\begin{equation}  \label{main-thm-pf-g}
f_{-1} = \bigl( \begin{smallmatrix} z_{22} \\
  2it \end{smallmatrix} \bigr), \quad
f_0 = \bigl( \begin{smallmatrix} 2t \\
  -iz_{11} \end{smallmatrix} \bigr), \quad
f_1 = \bigl( \begin{smallmatrix} z_{11} \\
  0 \end{smallmatrix} \bigr) \quad
\in {\cal V}^+_{3-dim}.
\end{equation}
Then
\begin{align*}
g \cdot F_{-1} \cdot f_1 &=
N(Z)^{-\frac52} \cdot z_{11} z_{22}, \\
g \cdot F_0 \cdot f_0  &=
i N(Z)^{-\frac52} \cdot z_{11} z_{22}, \\
g \cdot F_1 \cdot f_{-1} &=
N(Z)^{-\frac52} \cdot (z_{11} z_{22} - 4t^2).
\end{align*}
Neither of the functions $z_{11} z_{22}$, $z_{11} z_{22} - 4t^2$
is harmonic on $\HC \text{-sym}$ (neither is annihilated by
$\Delta_3 = 4\frac{\partial}{\partial z_{11}} \frac{\partial}{\partial z_{22}}
+ \frac{\partial^2}{\partial t^2}$).
Hence they can be expressed in the form
\[
const \cdot N(Z) + \text{harmonic function}:
\]
\[
z_{11} z_{22} = \tfrac23 N(Z) + \tfrac13 (z_{11} z_{22} - 2t^2), \qquad
z_{11} z_{22} - 4t^2 = -\tfrac23 N(Z) + \tfrac53 (z_{11} z_{22} - 2t^2).
\]
Hence
\begin{align*}
g \cdot F_{-1} \cdot f_1  &= \tfrac23 N(Z)^{-\frac32}
+ N(Z)^{-\frac52} \cdot \text{harmonic function}, \\
g \cdot F_0 \cdot f_0 &= \tfrac{2i}3 N(Z)^{-\frac32}
+ N(Z)^{-\frac52} \cdot \text{harmonic function}, \\
g \cdot F_1 \cdot f_{-1} &= -\tfrac23  N(Z)^{-\frac32}
+ N(Z)^{-\frac52} \cdot \text{harmonic function},
\end{align*}
and the trilinear form \eqref{3form+} evaluated on the triples
\[
\langle g, F_{-1}, f_1 \rangle, \qquad \langle g, F_0, f_0 \rangle, \qquad
\langle g, F_1, f_{-1} \rangle
\]
is non-zero.
\end{proof}

\begin{rem}
Similarly, one can construct $\mathfrak{sp}(4,\BB C)$-invariant trilinear forms
\begin{align*}
(\pi^{3-dim}_r, {\cal V}'^+_{3-dim}) \times
(\rho'_2, \mathring {\cal W}'^{\pm}_{res} )
\times (\pi^{3-dim}_l, {\cal V}^-_{3-dim}) &\to \BB C,  \\
(\pi^{3-dim}_r, {\cal V}'^-_{3-dim}) \times
(\rho'_2, \mathring {\cal W}'^{\pm}_{res} )
\times (\pi^{3-dim}_l, {\cal V}^+_{3-dim}) &\to \BB C.
\end{align*}
\end{rem}

\separate

\noindent
{\em Department of Mathematics, Yale University,
P.O. Box 208283, New Haven, CT 06520-8283}\\
{\em Department of Mathematics, Indiana University,
Rawles Hall, 831 East 3rd St, Bloomington, IN 47405}   

\end{document}